\numberwithin{equation}{section}
\newtheorem{theorem}{Theorem}[section]
\newtheorem{lemma}[theorem]{Lemma}
\newtheorem{corollary}[theorem]{Corollary}
\newtheorem{example}[theorem]{Example}
\newtheorem{proposition}[theorem]{Proposition}
\newtheorem{remark}[theorem]{Remark}
\let\oldtocsection=\tocsection
\let\oldtocsubsection=\tocsubsection
\let\oldtocsubsubsection=\tocsubsubsection
\renewcommand{\tocsection}[2]{\hspace{0em}\oldtocsection{#1}{#2}}
\renewcommand{\tocsubsection}[2]{\hspace{1.5em}\oldtocsubsection{#1}{#2}}
\renewcommand{\tocsubsubsection}[2]{\hspace{0.4em}\oldtocsubsubsection{#1}{#2}}
\title{A perspective on non-commutative frame theory}
\author{Ganna Kudryavtseva}
\address{
Ganna Kudryavtseva, Faculty of Civil and Geodetic Engineering, University of Ljubljana \\ Jamova cesta~2, SI-1000 Ljubljana, SLOVENIA}
\email{ganna.kudryavtseva@fgg.uni-lj.si}
\author{Mark V. Lawson}
\address{Mark V.Lawson, Department of Mathematics
and the
Maxwell Institute for Mathematical Sciences, 
Heriot-Watt University,
Riccarton,
Edinburgh~EH14~4AS, UNITED KINGDOM}
\email{m.v.lawson@hw.ac.uk}
\thanks{The first author was partially supported by  ARRS grant P1-0288.
 The second author was partially supported by EPSRC grant EP/I033203/1.
He would also like to thank Gracinda Gomes for inviting him to CAUL in Lisbon in December 2013 where some of the work in this paper was carried out,
and Pedro Resende for useful discussions and for pointing the authors  in the direction of \cite{ReRo}.}
\subjclass[2010]{06D22, 06D75, 06F07, 06E15, 18B40, 20M18, 20M99, 22A22, 54B30, 54H10.}
\keywords{Quantale, frame, locale, localic category, topological category, \'etale category, \'etale groupoid, restriction semigroup,  weakly $E$-ample semigroup, ample semigroup, Ehresmann semigroup, inverse semigroup, pseudogroup, distributive lattice, Boolean algebra, sober space, spectral space, spatial frame}
\begin{document} 

\begin{abstract}

This paper extends the fundamental results of frame theory to a non-commutative setting where the role of locales is taken over by \'etale localic categories.
This involves ideas from quantale theory and from semigroup theory, specifically Ehresmann semigroups, restriction semigroups and inverse semigroups.
We prove several main results. 
To start with, we establish a duality between the category of complete restriction monoids and the category of \'etale localic categories. The relationship between monoids and categories is mediated by a class of quantales called restriction quantal frames. 
This result builds on the work of Pedro Resende on the connection between pseudogroups and  \'etale localic groupoids but in the process we both generalize and simplify:
for example, we do not require involutions and, in addition, 
we render his result functorial. 
A wider class of quantales, called multiplicative Ehresmann quantal frames, is put into a correspondence with those localic categories where the multiplication structure map is semiopen, and all the other structure maps are open.
We also project down to topological spaces and, as a result, extend the 
classical adjunction between locales and topological spaces to an adjunction between \'etale localic categories and \'etale  topological categories. In fact, varying morphisms, we obtain several adjunctions. 
Just as in the commutative case, we restrict these adjunctions to spatial-sober and coherent-spectral equivalences. 
The classical equivalence between coherent frames and distributive lattices is extended to an equivalence between coherent complete restriction monoids and distributive restriction semigroups. Consequently, we deduce several dualities between distributive restriction semigroups and spectral \'etale topological categories. We also specialize these dualities for the setting where the topological categories are cancellative or are groupoids. Our approach thus links, unifies and extends the approaches taken in the work by Lawson and Lenz and by Resende. 
\end{abstract}
\maketitle
\tableofcontents

\section{Introduction and preliminaries}\label{s:int:p}

\subsection{Introduction}
The first goal of this paper is to connect, unify and extend the two approaches adopted in \cite{Re} and \cite{LL1,LL2} in relating inverse semigroups with \'etale localic or topological groupoids. The paper \cite{Re} achieves this by showing how to construct \'etale localic groupoids from pseudogroups by means of a class of quantales, whereas the papers \cite{LL1,LL2} achieve this by relating distributive inverse semigroups and pseudogroups to \'etale topological groupoids making use of prime and completely prime filters. The paper \cite{Re} is clearly a generalization of classical frame and locale theory, whereas \cite{LL1,LL2} is more directly a generalization of classical Stone duality. Of course, classical Stone duality can itself be approached from frame and locale theory. See the first two chapters of \cite{J1}, for example. Thus it is entirely plausible that such a unification is possible. Both of these approaches were motivated by the theory of $C^*$-algebras but from slightly different traditions. The papers \cite{LL1,LL2} arose from the tradition going back to Renault�s influential monograph \cite{Ren} and the later book by Paterson \cite{Pat} via a sequence of papers: most notably \cite{Exel1,Exel2,Kell1,Kell2,Law3,Law4,LMS,Lenz,S}. In other words, it was an approach which arose from concrete examples and topological groupoids. The paper \cite{Re}, whilst certainly related to \cite{Pat,Ren}, can be seen as deriving more from a tradition starting in topos theory, and pursuing a route via the quantale theory introduced by Chris Mulvey \cite{Mulvey,Ros}, in which localic, rather than topological, groupoids are important.

Perhaps the key insight from \cite{Re} is that quantales play a role in mediating between semigroups and spaces.
Quantales themselves were self-consciously defined as non-commutative locales.
Whereas a topological space takes the notion of point as primary and that of open subset as secondary, 
the theory of locales takes the notion of open subset as primary and that of point as secondary.
These two versions of the notion of `space' do not quite match, instead their respective categories are, rather, related by means of an adjunction. This dichotomy between locale and space is actually the occasion for this paper, but viewed from a more general perspective.

The second goal of this paper  is to replace \'etale localic or topological {\em groupoids} by \'etale localic or topological {\em categories}.
But this raises the question of what class of semigroups should replace the inverse semigroups that seem to play such an indispensible role in the above theory.
The answer to this question begins with the observation that Resende's inverse quantal frames \cite{Re} are, 
in fact, examples of what are termed {\em Ehresmann semigroups} in \cite{Law1}\footnote{Our use of Ehresmann semigroups is not entirely coincidental.
We refer the reader to \cite{LL1} for further remarks on Ehresmann's influence on the development of the theory of frames, locales and pseudogroups.
See also \cite{Law1} for further remarks on Ehresmann's {\em Oeuvre} \cite{Ehresmann}.}.
A class of such semigroups, called {\em restriction semigroups}, will play the role in this paper that inverse semigroups play in \cite{Re} and \cite{LL1,LL2}.
Such semigroups have been around for a long time.
See the survey article by Chris Hollings \cite{Hollings} as well as the articles \cite{CGH,GH,G1,G2}.
Although our paper is a generalization of both \cite{LL1,LL2} and \cite{Re},
we would argue that working at this level of generality actually clarifies and simplifies the theory developed in those papers.
The use of localic categories generalized from \cite{Re} greatly sharpens some of our key results,
whereas the use of involutions in \cite{Re}, which we avoid in our generalization, renders the theory  superficially more complex.
One very important additional feature of our theory is that we also study morphisms and so our results are fully functorial,
something not achieved in the main result of \cite{Re}.

Our paper suggests a number of new research directions which time and space preclude our pursuing in more detail here.
First, just as non-commutative Stone duality has revolutionized the theory of inverse semigroups,
the recent work of Wehrung \cite{Wehrung} being a sign of this, so we anticipate that the theory developed in this paper will have 
an important influence on the theory of restriction semigroups: from constructing interesting examples of such semigroups to providing motivation for developing that theory. 
More generally, we wonder at how our theory might be extended to deal with restriction {\em categories} as developed by Cockett et al \cite{CGH, Cockett}.
Second, there is the important question of how our work fits into the theory of operator algebras;
the r\^ole of inverse semigroups and \'etale groupoids is of course well established
but it is natural to ask if a theory of combinatorial non-selfadjoint operator algebras associated to \'etale categories could be developed that extended the theory developed in \cite{Exel1}.
Third, there is the question of how our work fits into the broader picture provided by topos theory (particularly the programme being pursued by Olivia Caramello).
It is worth remembering that inverse semigroups, in their guise as pseudogroups, and \'etale groupoids arose from the foundations of differential geometry.
Specifically, both concepts lie at the base of Ehresmann's attempt \cite{Ehresmann} to construct the categorical foundations of the concept of a local structure.
However, the use of inverse semigroups and groupoids lead, essentially, to the construction of isomorphism classes of such structures.
Grandis \cite{G1,G2} showed how to construct more natural categories of such structures using restriction semigroups/categories.
We suspect that our theory might play a similar r\^ole within topos theory that \'etale groupoids play within the broader field of non-commutative geometry.
But above and beyond these specific research directions, there are what we might term the `philosophical'
implications of the duality theory we have established.
It has long been maintained that \'etale groupoids are concrete instances of that somewhat nebulous concept: a non-commutative topological space.
However, the fact that groupoids satisfy a notion of invertibility, important though this is,  nevertheless seems extraneous
to this generalized concept of a space. 
Our definition of an \'etale category, however, seems to be the more natural general definition of a non-commutative space
with \'etale groupoids now being seen as the invertible such spaces.

\subsection{Outline of the paper}\label{sub:outline} 
We now outline the structure of the paper and highlight the main results we obtain. We refer the reader to Subsection~\ref{sub:prelim} for the necessary background in the theory of frames and locales and in category theory as well as for references where more details can be found. For background in semigroup theory, we recommend \cite{H2}. In Subsection~\ref{sub:commutative} we provide an account of the relationship between locales and topological spaces, as well as of commutative Stone dualities which are deduced from this relationship.

In Section \ref{s:quantalization} we first address Ehresmann and restriction semigroups and prove some results needed in the sequel. We then study properties of complete restriction monoids, the latter being non-regular analogues of  (complete) pseudogroups. Further, we introduce and study Ehresmann quantales. These are unital quantales, whose multiplicative monoids carry the structure of  Ehresmann monoids. Restriction quantal frames form a subclass of Ehresmann quantal frames. The main result of Section \ref{s:quantalization} is the following.

\begin{theorem}[Quantalization Theorem]\label{th:int1}
For appropriately defined classes of mor\-phisms, 
the category of complete restriction monoids and the category of restriction quantal frames are equivalent.
\end{theorem}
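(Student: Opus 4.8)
The plan is to realise the asserted equivalence as a pair of mutually inverse functors: a \emph{completion} functor $F$ sending a complete restriction monoid $S$ to a restriction quantal frame $F(S)$, and a \emph{distinguished-elements} functor $G$ sending a restriction quantal frame $Q$ to the sub-poset of its partial units, which I will argue is again a complete restriction monoid. I would then prove that $GF$ and $FG$ are naturally isomorphic to the respective identity functors. This mirrors Resende's correspondence between complete pseudogroups and inverse quantal frames, the essential novelty being that, in the absence of an involution, both the copy of $S$ inside $F(S)$ and the distinguished elements of $Q$ must be pinned down using only the domain and range (restriction) operations $(\cdot)^{+}$ and $(\cdot)^{*}$.

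For the object part of $F$, I would take $F(S)$ to be the frame $\mathsf{L}^{\vee}(S)$ of all down-closed subsets of $S$ that are in addition closed under those joins of compatible families which already exist in $S$; joins in $F(S)$ are computed by closing unions, and the principal down-sets $\downarrow s$ give an order embedding $S \to F(S)$. Multiplication is defined by letting $A\cdot B$ be the closure of the set of products $ab$ with $a\in A$ and $b\in B$, and the domain and range operations are transported pointwise and then closed. The first block of verifications is that $F(S)$ is genuinely a quantale, that is, that this multiplication distributes over \emph{all} joins and not merely over the compatible ones present in $S$; here the point is that it suffices to check distributivity on the generating down-sets $\downarrow s$ and then propagate through the closure using the frame structure, the distributivity over compatible joins in $S$ doing the real work. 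The second block is that the transported domain and range operations satisfy the Ehresmann and restriction identities in $F(S)$, which reduces to the corresponding identities in $S$ together with the fact that these operations, being defined by closure of images, preserve joins.

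For $G$, given a restriction quantal frame $Q$ with multiplicative unit $e$, I would take $G(Q)$ to be its set of distinguished elements: those $a\in Q$ whose pieces form, in the sense detected by the domain and range operations, a compatible family, concretely the requirement that the twisted cross-products of elements below $a$ stay below $e$, phrased through $(\cdot)^{+}$ and $(\cdot)^{*}$ rather than through an involution. One checks that $G(Q)$ is closed under multiplication and the restriction operations, that it inherits all compatible joins from $Q$, and hence that it is a complete restriction monoid. The two round-trips are then the crux. For $GF(S)\cong S$ the key computation is that a $\vee$-closed down-set of $\mathsf{L}^{\vee}(S)$ is distinguished exactly when its members form a compatible family in $S$; since $S$ is complete such a family has a join, which is then a greatest element, so the distinguished elements are precisely the principal down-sets $\downarrow s$ and we recover $S$. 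For $FG(Q)\cong Q$ I would use the defining axiom of a restriction quantal frame that its distinguished elements join to the top of $Q$: every element of $Q$ is then a join of partial units, and one shows that the completion of $G(Q)$ reproduces $Q$ on the nose, which is where the frame and quantale axioms are exploited most heavily.

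Finally I would fix the two morphism classes so as to render everything functorial and natural: morphisms of complete restriction monoids as maps preserving the unit, the multiplication, the domain and range operations, and all existing compatible joins; morphisms of restriction quantal frames as unital quantale homomorphisms that are also frame homomorphisms and are compatible with domain and range. The functor $F$ extends to morphisms by the universal property of the completion, since a compatible-join-preserving map out of $S$ extends uniquely to a join-preserving quantale map out of $\mathsf{L}^{\vee}(S)$, and $G$ extends by restriction to distinguished elements. I expect the main obstacle to be precisely the involution-free reconstruction together with its functoriality: identifying the correct intrinsic partial-unit condition purely in terms of $(\cdot)^{+}$ and $(\cdot)^{*}$, verifying that an arbitrary restriction-quantal-frame morphism really does carry distinguished elements to distinguished elements, and checking that the unit and counit just described are not only objectwise isomorphisms but natural in the chosen morphisms. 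It is the need to discharge all of this without the convenience of an involution that must be handled with care.
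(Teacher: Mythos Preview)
Your architecture --- complete $S$ to $\mathcal{L}^{\vee}(S)$, extract distinguished elements from $Q$, and show the round-trips are isomorphisms --- is exactly the paper's approach. There are, however, two genuine gaps.

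First, your notion of distinguished element is off. You propose elements whose ``twisted cross-products stay below $e$,'' evidently seeking an involution-free analogue of Resende's partial units $aa^{*}\le e$. The paper takes a different route: the correct objects are the \emph{partial isometries}, defined as those $a$ for which $b\le a$ in the sup-lattice order forces $b\le' a$ in the natural partial order of the underlying Ehresmann monoid (equivalently $b=a\lambda(b)=\rho(b)a$). The paper stresses that this is \emph{not} a generalization of partial units but a different concept, coinciding with partial units only in the inverse case. The two-order definition is what makes the reconstruction work: an element of $\mathcal{L}^{\vee}(S)$ is a partial isometry precisely when it is a principal ideal $s^{\downarrow}$, and this is the computation (Proposition~\ref{prop:p_units}) on which $GF\cong\mathrm{id}$ rests. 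Your compatibility-flavoured condition, even if it can be made precise, would need to be shown equivalent to this, and the paper gives no indication that such an equivalence holds in a general Ehresmann quantal frame.

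Second, your morphism classes need adjustment on both sides. You expect the main obstacle to be \emph{proving} that a restriction-quantal-frame morphism preserves distinguished elements --- but it need not, and the paper gives an explicit counterexample. The resolution is to make preservation of partial isometries part of the \emph{definition} of such a morphism. Relatedly, your morphisms of restriction quantal frames are too strong: the basic (type~1) morphisms in the paper are quantale maps that preserve $\lambda$, $\rho$ and partial isometries, with only the restriction to projections required to be a frame map; they are not frame maps globally. On the complete-restriction-monoid side the matching morphisms are restriction-monoid homomorphisms whose restriction to projections is a frame map; preservation of compatible joins then follows automatically (Lemma~\ref{lem:join_pres}) rather than being imposed.
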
  

Both of the structures involved in the Quantalization Theorem are generalizations of frames. The category of restriction quantal frames is, in a way,  a more natural generalisation of  the category of frames and is thus well-suited to obtain a link with a category which generalizes locales. A complete restriction monoid can be viewed as a kind of reduction of a restriction quantal frame which, however, carries enough data for the reconstruction of a restriction quantal frame.
The Quantalization Theorem extends a corresponding result by Resende \cite{Re} obtained for the setting of pseudogroups and inverse quantal frames. The passage from a complete restriction monoid to a restriction quantal frame is carried over by an adaptation of Resende's construction of the enveloping quantal frame. For the passage in the reverse direction we introduce the notion of a {\em partial isometry} of a restriction quantal frame. This is a different notion than Resende's partial units, rather than a generalization of the latter, though we show that for inverse quantal frames both notions do coincide.  Partial isometries arise from the interplay between two partial orders that may be defined on an Ehresmann quantale.
One of them is the underlying order of the sup-lattice structure, and  the other one is defined in terms of the Ehresmann monoid structure.
In the spatial case,  partial isometries correspond precisely to open {\em local bisections} of the underlying topological categories.
This is the significant example: partial isometries should be viewed as abstract local bisections.

The two main results of Section~\ref{s:corresp} relate  classes of quantales to classes of localic categories.

\begin{theorem}[Correspondence Theorem]\label{th:int2}
There is a bijective correspondence between multiplicative Ehresmann quantal frames and a class of localic categories, 
called {\em quantal localic categories}. These categories have the property that the multiplication map is only assumed to be semiopen, whereas the other structure maps are open.
\end{theorem}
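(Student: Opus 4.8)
The plan is to prove the bijection by exhibiting two explicit constructions, one in each direction, and showing they are mutually inverse. The governing dictionary is as follows: for a quantal localic category $C$ with locale of arrows $C_1$ and locale of objects $C_0$, the frame $\Omega(C_1)$ of opens of the arrow locale will be shown to carry the structure of a multiplicative Ehresmann quantal frame; conversely, from such a quantale $Q$ one recovers $C_1$ as the locale presented by $Q$ and $C_0$ as the locale presented by the subframe of projections determined by the domain and range operations. Since the theorem asserts only a bijective correspondence (morphisms being deferred), it suffices to work at the level of objects.

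First I would treat the passage from categories to quantales. The theory of open and semiopen maps of locales supplies, for each open map $f$, a left adjoint $f_!$ to the inverse-image homomorphism $f^*$ satisfying the Frobenius law $f_!(a \wedge f^* b) = f_!(a) \wedge b$, while a semiopen map is one for which $f_!$ merely exists as a sup-lattice morphism. Writing $p, q \colon C_1 \times_{C_0} C_1 \to C_1$ for the two projections from the locale of composable pairs and $m$ for the composition map, I would define the quantale multiplication on $Q = \Omega(C_1)$ by $a \cdot b = m_!(p^* a \wedge q^* b)$. The hypothesis that $m$ is only semiopen is precisely what is needed for $m_!$ to exist and for this product to preserve arbitrary joins in each variable. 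The unit is $u_!(1_{C_0})$, the open of identity arrows, and the domain and range operations arise from the open maps $d$ and $r$: since $d \circ u = r \circ u = \mathrm{id}_{C_0}$, the direct images $d_!$ and $r_!$ combine with the identification of $\Omega(C_0)$ with the projections below the unit to give operations $a \mapsto a^+$ and $a \mapsto a^*$. The associativity and unit laws of the localic category, together with the openness of $d$, $r$ and $u$, then translate into the quantale axioms and the Ehresmann identities.

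For the reverse passage I would start from a multiplicative Ehresmann quantal frame $Q$ and let $C_1$ be the locale presented by $Q$. The image of the domain operation is a subframe, and I take $C_0$ to be the locale it presents; its inclusion into $Q$ and the associated adjoints furnish the open maps $d$, $r$ and the unit $u$, while the quantale multiplication, being a sup-lattice bimorphism, induces a semiopen composition map $m$ on the fibre product. Verifying the category axioms here amounts to re-reading the Ehresmann and quantal frame identities through this geometric lens. One then checks that the two constructions are mutually inverse: starting from a quantale, building the category, and taking opens of the arrow locale returns $Q$, and symmetrically a quantal localic category is recovered from its quantale of opens, so the correspondence is bijective.

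The main obstacle, I expect, is the careful handling of the localic fibre product $C_1 \times_{C_0} C_1$. Its frame is not the tensor product but a quotient of $\Omega(C_1) \otimes_{\Omega(C_0)} \Omega(C_1)$, and controlling $m_!$ on this quotient, in interaction with $p^*$, $q^*$ and the Frobenius identities for $d$ and $r$, is where the real work lies. In particular, showing that associativity of the localic composition is equivalent to associativity of the quantale product requires comparing the two evident maps out of the triple fibre product $C_1 \times_{C_0} C_1 \times_{C_0} C_1$ and transporting the comparison along the relevant adjunctions; this is the step that most directly exploits the semiopenness of $m$ and the distributivity of the quantale product over joins.
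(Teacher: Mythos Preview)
Your overall strategy matches the paper's: build the two constructions $Q \mapsto \mathcal{C}(Q)$ and $C \mapsto \mathcal{O}(C)$ and verify they are mutually inverse. However, there is a genuine gap in your reverse passage. You write that ``the quantale multiplication, being a sup-lattice bimorphism, induces a semiopen composition map $m$ on the fibre product.'' This is where the argument would fail. A locale map $m$ is by definition a frame homomorphism $m^*$ going the other way; semiopenness is then the extra property that $m^*$ has a left adjoint $m_!$. So one must \emph{first} exhibit a frame map $m^*$. The quantale multiplication $\mu$ is the candidate for $m_!$; its right adjoint $m^*$ exists (since $\mu$ preserves joins) and preserves all meets automatically, but there is no reason for it to preserve joins, hence no reason for it to be a frame map. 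The hypothesis that $Q$ is \emph{multiplicative} is precisely the requirement that this right adjoint preserve joins, and the paper gives an explicit example (Example~3.10) of an Ehresmann quantal frame where it fails. You never invoke multiplicativity, so as written your construction does not produce a localic category.

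A secondary point: you say the frame of $C_1 \times_{C_0} C_1$ is ``not the tensor product but a quotient of $\Omega(C_1) \otimes_{\Omega(C_0)} \Omega(C_1)$.'' In fact it \emph{is} that tensor product: pullbacks of locales are pushouts of frames, and the pushout of $d^*$ and $r^*$ is exactly $\Omega(C_1) \otimes_{\Omega(C_0)} \Omega(C_1)$. The subtle step the paper isolates (its Bridging Lemma) is different: one must show that this frame pushout coincides with the \emph{bimodule} tensor product $Q \otimes_{e^{\downarrow}} Q$, where $e^{\downarrow}$ acts on $Q$ by multiplication rather than via $d^*$ and $r^*$. The link between these two descriptions is the identity $a \wedge d^*(f) = af$ (the Stability Lemma), and this is what allows the quantale product to live on the correct frame. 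Your anticipated obstacle is real, but its content is this identification, not an extra quotient.
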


Theorem \ref{th:int2} is, however, too general, to provide a link of quantal localic categories with complete restriction monoids. To obtain such a link, we need to restrict ourselves to a subclass of quantal localic categories we call {\em \'etale localic categories}. These have the domain and the range maps  \'etale and the multiplication and the unit maps open. 

\begin{theorem}[Etale Correspondence Theorem]\label{th:int3}
There is a bijective correspondence between restriction quantal frames and \'etale localic categories.\end{theorem}

Combining Theorem~\ref{th:int1} with Theorem~\ref{th:int3}, we arrive at a (non-functorial) equivalence of three types of structures which extends the main result of  \cite{Re}. But in Section~\ref{s:env} we go further and, unlike Resende's paper, augment the Etale Correspondence Theorem by morphisms. We thus prove the following result.

\begin{theorem}[Duality Theorem]\label{th:int4} 
The following are equivalent.
\begin{enumerate}

\item The category of complete restriction monoids and proper $\wedge$-morphisms.

\item The category of restriction quantal frames and proper $\wedge$-morphisms.

\item  The opposite of the category of  \'etale localic categories and localic sheaf functors. 

\end{enumerate}
\end{theorem}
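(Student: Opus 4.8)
The plan is to build the three-way equivalence by composing two pieces, reusing the object-level correspondences already in hand and concentrating the new work on the morphisms. The equivalence of (1) and (2) will come from the Quantalization Theorem (Theorem~\ref{th:int1}), once we verify that, under the enveloping-quantal-frame functor and its quasi-inverse, the class of proper $\wedge$-morphisms on one side is carried exactly onto the proper $\wedge$-morphisms on the other. The equivalence of (2) with the opposite of (3) will come from augmenting the Etale Correspondence Theorem (Theorem~\ref{th:int3}) with morphisms: the object bijection between restriction quantal frames and \'etale localic categories is given, and what remains is to define localic sheaf functors, to produce them \emph{contravariantly} from proper $\wedge$-morphisms, and to prove that this assignment is an isomorphism of hom-sets respecting composition. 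Composing the two, we obtain the stated equivalence of (1), (2) and (3)$^{\mathrm{op}}$.

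For (1) $\Leftrightarrow$ (2) I would argue as follows. By Theorem~\ref{th:int1} the enveloping functor $S \mapsto L(S)$ and the functor sending a restriction quantal frame to its sub-restriction-monoid of partial isometries are mutually quasi-inverse on the appropriately chosen morphism class. The task is therefore to identify that distinguished class with the proper $\wedge$-morphisms. In one direction, a proper $\wedge$-morphism of complete restriction monoids extends along the enveloping construction to a map preserving finite meets, arbitrary joins and the multiplication, and it remains proper because the extension is built join-continuously from the monoid map; in the other direction, restricting a proper $\wedge$-morphism of restriction quantal frames to partial isometries preserves meets and properness, since partial isometries are detected by the interplay of the two orders, which such a morphism respects. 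As the two operations are already inverse equivalences at the level of objects and of the broader morphism class, matching the proper $\wedge$-morphisms on both sides suffices.

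The substantive work is (2) $\Leftrightarrow$ (3)$^{\mathrm{op}}$. Here I would first fix the definition of a localic sheaf functor between \'etale localic categories, and then transport morphisms across the object bijection supplied by Theorem~\ref{th:int3}, specialised to restriction quantal frames and their corresponding \'etale localic categories. Given a proper $\wedge$-morphism $f\colon Q \to R$, its underlying sup-lattice map is a frame homomorphism of the relevant arrow frames, which under locale--frame duality yields a map of locales in the reverse direction. The content to be proved is that, because $f$ also preserves the multiplication, the top and finite meets, this locale map is compatible with all the structure maps (domain, range, unit and the open multiplication) and so descends to a functor $\mathcal{G}_R \to \mathcal{G}_Q$ of localic categories, and that the proper and $\wedge$-preserving conditions are exactly what force this functor to be a localic sheaf functor. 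Conversely, a localic sheaf functor $\mathcal{G}_R \to \mathcal{G}_Q$ induces, via inverse image on arrow frames together with the identification of local sections with partial isometries, a map $Q \to R$ shown to be a proper $\wedge$-morphism; one then checks that the two assignments are mutually inverse and contravariantly functorial, which accounts for the opposite in item~(3).

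The hard part will be the morphism matching in (2) $\Leftrightarrow$ (3)$^{\mathrm{op}}$: proving that $\wedge$-preservation and properness are not merely necessary but \emph{exactly} characterise the localic sheaf functors, and that the correspondence respects composition. The delicate points are that composition in an \'etale localic category is governed by the multiplication map, defined over a fibre product of locales, so that preservation of the quantale multiplication must be translated into functoriality across this pullback; that properness of the quantale morphism must be matched with the correct behaviour of the induced functor on identities and the unit; and that the contravariance introduced by the frame--locale duality is tracked consistently, so that composites compose in the right order. Verifying naturality, and that no information is lost under restriction to partial isometries so that the two constructions are genuinely inverse, is where the bulk of the technical effort will lie.
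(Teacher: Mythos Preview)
Your proposal is correct and follows essentially the same route as the paper: the equivalence (1)$\Leftrightarrow$(2) is exactly the type-4 case of the Quantalization Theorem, and for (2)$\Leftrightarrow$(3)$^{\mathrm{op}}$ the paper, like you, augments the object bijection of the Etale Correspondence Theorem with morphisms by showing that a proper $\wedge$-morphism of restriction quantal frames is precisely a frame map on arrow locales that both defines an internal functor and induces morphisms of the sheaves of local sections (the paper's definition of a localic sheaf functor). The paper's concrete mechanism passes through an intermediate characterisation of quantal-frame morphisms via conditions it labels (M1)--(M4), then shows that adding properness and meet-preservation upgrades (M4) to the full functor equation $(g_1^*\otimes g_1^*)m^*=m^*g_1^*$ and yields the commutation with $d^*,r^*,u^*$; this is exactly the ``hard part'' you flagged.
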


Our perspective is that Theorem \ref{th:int4} extends the classical duality between frames and locales where complete restriction monoids or restriction quantal frames are generalizations of frames, whereas \'etale localic categories are generalizations of locales.  

All the above results can be enriched by the addition of involutions to each class of objects which is done in Section~\ref{s:involutive}. As a result, all of Resende's main theorems~\cite{Re} can be derived from our more general standpoint, see Section~\ref{s:inv:setting}.

In Section~\ref{s:adjun} we project down from the localic world to that of topological spaces and prove the following result which is a non-commutative analogue of the classical adjunction between the categories of locales and topological spaces.

\begin{theorem}[Adjunction Theorem]\label{th:int5}
For suitably defined classes of morphisms, there are adjunctions between the category of \'etale localic categories and the category of \'etale topological categories. 
\end{theorem}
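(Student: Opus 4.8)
The plan is to lift the classical adjunction between the category of locales and the category of topological spaces to the level of internal categories. Recall that this adjunction is furnished by the opens functor $\Omega$, sending a space $X$ to its locale $\Omega(X)$ of open sets, and the points functor $\mathrm{pt}$, sending a locale $L$ to its space of completely prime filters (equivalently, of frame maps to the two-element frame), with $\Omega$ left adjoint to $\mathrm{pt}$. An \'etale localic category is precisely an internal category in the category of locales whose domain and range maps are \'etale and whose unit and multiplication maps are open --- the data being object and arrow locales $C_0$, $C_1$ together with structure maps $d,r\colon C_1\to C_0$, $u\colon C_0\to C_1$, and $m\colon C_1\times_{C_0}C_1\to C_1$ --- and likewise an \'etale topological category is such an internal category in spaces. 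The idea is that $\Omega$ and $\mathrm{pt}$ should be applied componentwise to $C_0$ and $C_1$ and that the structure maps should be transported, once we know the relevant finite limits are preserved.

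First I would construct the functor from \'etale localic categories to \'etale topological categories by applying $\mathrm{pt}$ to the object and arrow locales. Since $\mathrm{pt}$ is a right adjoint it preserves all limits, in particular the pullback $C_1\times_{C_0}C_1$ of composable pairs; hence $\mathrm{pt}(C_1\times_{C_0}C_1)\cong \mathrm{pt}(C_1)\times_{\mathrm{pt}(C_0)}\mathrm{pt}(C_1)$, and the images of $d,r,u,m$ assemble into an internal category in spaces. I would then check that \'etaleness and openness are inherited, using that $\mathrm{pt}$ sends \'etale localic maps to local homeomorphisms and open localic maps to open continuous maps.

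The harder direction, and the main obstacle, is the functor induced by $\Omega$ from \'etale topological categories to \'etale localic categories. Because $\Omega$ is only a left adjoint, it does not preserve arbitrary pullbacks, so it is not a priori clear that $\Omega(C_1\times_{C_0}C_1)$ computes the localic pullback of $\Omega(d)$ and $\Omega(r)$. This is exactly where the \'etale hypothesis earns its keep: along a local homeomorphism the composable-pairs square is locally a pullback of open inclusions, and $\Omega$ does preserve pullbacks of open maps. I would therefore first establish this preservation result --- that $\Omega$ carries the pullback of an \'etale map to the corresponding localic pullback --- and then conclude that $\Omega(m)$ is a legitimate localic multiplication, transporting the category axioms from $C$ to $\Omega(C)$ purely diagrammatically. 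The openness (and, in the weaker settings, semiopenness) of the transported structure maps follows because $\Omega$ sends open continuous maps to open localic maps.

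Finally, to obtain the adjunction itself I would lift the classical unit $X\to \mathrm{pt}\,\Omega(X)$ and counit $\Omega\,\mathrm{pt}(L)\to L$ componentwise and verify that they are internal functors, i.e.\ that they commute with the transported structure maps; this reduces to the naturality of the classical transformations together with the compatibility of the preserved pullbacks. The phrase \emph{suitably defined classes of morphisms} signals that one must fix a notion of morphism of internal \'etale categories (an internal functor, possibly subject to a condition guaranteeing compatibility with $\Omega$ and $\mathrm{pt}$) for which these componentwise transformations are natural; by varying that notion --- for instance by restricting to morphisms that reflect the \'etale structure --- one recovers the several adjunctions alluded to in the abstract. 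The triangle identities then follow immediately from those of the classical locale--space adjunction, completing the construction.
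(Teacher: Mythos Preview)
Your proposal has two genuine gaps.

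First, the claim that $\mathrm{pt}$ sends open localic maps to open continuous maps is unsupported; the paper explicitly remarks that this is doubtful in general. The openness of $\mathsf{pt}(u)$ and especially of $\mathsf{pt}(m)$ is instead established by direct computation exploiting the \'etale structure of $d$ and $r$: one shows $\mathsf{pt}(m)\bigl((X_a\times X_b)'\bigr)=X_{m_!(a\otimes b)}$ for local bisections $a,b$ by chasing homeomorphisms built from the local inverses of $\mathsf{pt}(d)$ and $\mathsf{pt}(r)$. Without this argument you cannot conclude that $\mathsf{Pt}(C)$ is an \'etale topological category.

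Second, and more substantially, your reading of ``suitably defined classes of morphisms'' as internal functors with varying side conditions misses the content of the theorem. On the localic side a morphism $C\to D$ is by definition the opposite of a morphism of restriction quantal frames $\mathcal{O}(D)\to\mathcal{O}(C)$; the four types are general, proper, $\wedge$-, and proper $\wedge$-morphisms, and only the last corresponds to an internal functor (the localic sheaf functors of the Duality Theorem). On the topological side the matching notion is a \emph{relational covering morphism}: a pair $(f_1,f_0)$ with $f_0$ continuous but $f_1\colon C_1\to\mathcal{P}(D_1)$ a \emph{multi-valued} map satisfying star-injectivity, star-surjectivity, lower-semicontinuity and compatibility axioms (RM1)--(RM6); the four types are unrestricted, at-least-single-valued, at-most-single-valued, and single-valued, and only the last is a continuous covering functor. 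The substance of the adjunction lies in showing that $\mathsf{Pt}$ carries a quantal-frame morphism $f_1^*$ to such a relational morphism via the explicit formula $\mathsf{pt}'(f_1)(q)=\{p_c : c\text{ a local bisection with }qf_1^*(c)=1\}$, that $\overline{\Omega}$ carries $(f_1,f_0)$ to the quantal-frame morphism $A\mapsto f_1^{-1}(A)$, and that proper (resp.\ $\wedge$-) on one side matches at-least- (resp.\ at-most-) single-valued on the other. A componentwise lift of the classical unit and counit along internal functors captures only the type~4 case and cannot produce the remaining three adjunctions.
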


The adjunctions in Theorem \ref{th:int5} are established via an extension of the spectrum and the open set functors which establish the classical adjunction between locales and topological spaces. 
We highlight an important specialization of the Adjunction Theorem, to the setting where \'etale localic categories are cancellative.  Under the adjunctions these correspond to so-called ample complete restriction monoids. 
We also establish variations of the Adjunction Theorem to the setting where the categories are involutive or are groupoids. 
Combining Theorems~\ref{th:int4} and~\ref{th:int5} we can produce several adjunctions between the categories of complete restriction monoids and \'etale topological  categories. This leads to a new perspective on the adjunction between pseudogroups and \'etale topological groupoids from \cite{LL1} and provides a direct link between the aproaches of the papers \cite{Re} and \cite{LL1,LL2}. For example, the functor from \cite{LL1}, which assigns to a pseudogroup $S$ an \'etale topological groupoid, in fact produces the spectrum of the \'etale localic groupoid associated to $S$ via the approach of \cite{Re}.

In Section~\ref{s:dualities} we restrict the Adjunction Theorem to sober-spatial and spectral-coherent settings and prove several duality theorems. 

\begin{theorem}[Topological Duality Theorem]\label{th:int6} For sui\-tab\-ly defined classes of morphisms, the category of sober (resp. spectral, strongly spectral) \'etale topological categories
is dually equivalent to the category of spatial  (resp. coherent, strongly coherent) complete restriction monoids.
\end{theorem}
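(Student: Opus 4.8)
The plan is to obtain the three dualities by restricting the adjunctions of the Adjunction Theorem (Theorem~\ref{th:int5}) to their fixed subcategories, exactly as in the commutative template where the classical adjunction between locales and topological spaces restricts to the spatial--sober equivalence and then, under a coherence hypothesis, to the coherent--spectral equivalence. First I would compose the adjunction of Theorem~\ref{th:int5} with the localic-side duality of Theorem~\ref{th:int4}, which identifies complete restriction monoids with the opposite of the category of \'etale localic categories. This produces a single adjunction whose two legs are a \emph{spectrum} functor, sending a complete restriction monoid to an \'etale topological category built from its points (the completely prime filters), and an \emph{open-set} functor, sending an \'etale topological category to its complete restriction monoid of open local bisections. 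Because the localic leg is a duality, the composite is contravariant, so the resulting equivalences will be dual equivalences, as asserted.

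The second step is the formal heart of the argument: for any adjunction $F \dashv G$ the unit $\eta$ and counit $\varepsilon$ restrict to natural isomorphisms on the full subcategories of those objects at which they happen to be invertible, and there $F$ and $G$ cut down to an (here dual) equivalence. I would therefore \emph{define} a complete restriction monoid to be \emph{spatial} precisely when the unit of the composite adjunction is an isomorphism at it, and an \'etale topological category to be \emph{sober} precisely when the counit is an isomorphism at it, and then verify that these abstract conditions coincide with the expected concrete ones: spatiality meaning the monoid has enough completely prime filters to separate its elements, and soberness meaning that the underlying space is sober in the usual sense, so that points are recovered from opens. This yields the first, spatial--sober, dual equivalence.

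For the coherent--spectral and strongly coherent--strongly spectral refinements I would impose the appropriate compactness conditions and verify both that they are preserved by the two functors and that they force an object into the fixed subcategory above. Concretely, a coherent complete restriction monoid carries a generating family of compact elements closed under the relevant operations; its spectrum should then be a spectral \'etale topological category, that is, one whose space is spectral with the compact-open local bisections forming a basis closed under the multiplication and the domain and range operations. Conversely the open-bisection monoid of a spectral \'etale topological category should be coherent, and in each case coherence forces spatiality while spectrality forces soberness, so the spatial--sober equivalence restricts to the coherent--spectral one. The strongly coherent--strongly spectral case adds a single global compactness hypothesis (compactness of the top element, respectively of the ambient space), and the argument runs in parallel.

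The main obstacle will be the bookkeeping that makes all of this functorial. One must specify on each side the precise class of morphisms that the spectrum and open-set functors preserve and reflect, and check that the unit and counit are natural with respect to those classes. In particular, the genuine content beyond the formal restriction-to-fixed-points lies in matching the finiteness conditions across the duality: showing that compact elements of the monoid correspond exactly to compact-open local bisections, that the basis and lattice conditions transfer, and that the coherence axioms translate precisely into spectrality of the space. Once the unit and counit have been computed on the relevant objects, the purely categorical restriction of the adjunction is automatic, so I expect these correspondences, rather than the abstract machinery, to absorb most of the work.
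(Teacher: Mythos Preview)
Your overall strategy is essentially the paper's: compose the Duality Theorem (Theorem~\ref{th:int4}) with the Adjunction Theorem (Theorem~\ref{th:int5}) to get a contravariant adjunction between complete restriction monoids and \'etale topological categories, then restrict to the fixed subcategories. The paper defines sober/spatial concretely (both $C_0$ and $C_1$ sober, respectively spatial) rather than abstractly as the fixed points, but then proves lemmas showing these conditions on $C_0$ and $C_1$ are equivalent to one another, so the two presentations meet. The spectral/coherent case is handled just as you describe, by imposing compactness conditions and checking they transfer along the equivalence.

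There is, however, a genuine misreading in your last paragraph. The adjective \emph{strongly} does not mean adding compactness of the top element or of the ambient space. In the paper's conventions, \emph{spectral} means the object space $C_0$ is spectral and \emph{strongly spectral} means the arrow space $C_1$ is spectral; correspondingly, a complete restriction monoid is \emph{coherent} when the frame of projections $e^{\downarrow}$ is coherent and \emph{strongly coherent} when the whole enveloping quantal frame $\mathcal{L}^{\vee}(S)$ is coherent. The passage from the weak to the strong form is not a compactness-of-the-top hypothesis but a shift of the coherence condition from the identity locale/frame to the full arrow locale/frame. (The word \emph{compact}, for the paper, is a separate axis: it means the identity $e$ is a finite element, and it distinguishes e.g.\ distributive restriction \emph{semigroups} from distributive restriction \emph{monoids}.) If you carry out your program with the wrong reading of ``strongly'' you will prove the wrong theorem; with the correct reading, the argument you outline goes through and the additional work is exactly what the paper does---show that strongly spectral implies spectral (via the local homeomorphism $d$) and that the compact-open local bisections provide the required basis.
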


We then establish an equivalence between coherent complete restriction monoids and distributive restriction semigroups, which extends the classical equivalence between coherent frames and distributive lattices. This leads to the following duality theorem.

\begin{theorem}[Topological Duality Theorem II]\label{th:int8}
For sui\-tab\-ly defined classes of morphisms, the category of distributive restriction semigroups (resp. $\wedge$-semigroups, monoids, $\wedge$-monoids) is dually equivalent to the category of spectral (resp. strongly spectral, compact spectral, compact strongly spectral) \'etale topological categories.
\end{theorem}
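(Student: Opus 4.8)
The plan is to obtain Theorem~\ref{th:int8} as a composite of two equivalences, mirroring the classical route from distributive lattices to spectral spaces that passes through coherent frames. The first and genuinely new ingredient is an equivalence between the category of distributive restriction semigroups and the category of coherent complete restriction monoids; composing this with the duality of Theorem~\ref{th:int6} (coherent $\leftrightarrow$ spectral, strongly coherent $\leftrightarrow$ strongly spectral) then yields the stated dualities, once the four ``resp.'' refinements are matched up.

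First I would construct the functor from coherent complete restriction monoids to distributive restriction semigroups by passing to the sub-poset of compact elements. One checks that in a coherent complete restriction monoid the compact elements are closed under the multiplication, under the domain and range operations, and under finite joins, so that they form a distributive restriction semigroup whose distinguished projections are the compact projections of the monoid. In the reverse direction I would send a distributive restriction semigroup $S$ to a suitable ideal completion: the complete restriction monoid whose elements are the appropriate downward-closed, join-closed, directed ideals of $S$, ordered by inclusion, with multiplication and the domain and range operations induced from those of $S$. The principal ideals then recover $S$ as the subsemigroup of compact elements, and every ideal is the directed join of the principal ideals it contains, so the completion is coherent. The two functors are shown to be mutually quasi-inverse: one direction is the principal-ideal map, and a coherent complete restriction monoid is recovered from its compacts by re-forming ideals, using coherence to identify a general element with the ideal of compacts beneath it.

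Next I would align the four refinements and the morphism classes. Adjoining a binary meet on the whole semigroup (a $\wedge$, as opposed to meets merely of projections) corresponds on the topological side to the \emph{strongly} variants, since it is the meet operation that forces the extra separation encoded in strongly coherent monoids and hence in strongly spectral categories; the presence of a monoid identity corresponds to compactness of the top element, i.e.\ to the compact variants of the \'etale topological categories. Checking that each of the four combinations of \{semigroup, monoid\} against \{with $\wedge$, without $\wedge$\} transports correctly through both the ideal-completion equivalence and the duality of Theorem~\ref{th:int6} is routine once the base equivalence is in place, provided the morphisms are chosen compatibly: proper $\wedge$-morphisms on the monoid side should restrict to morphisms of distributive restriction semigroups preserving the finite joins, meets and identities that are present, and conversely ideal completion must be shown to be functorial for these.

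The hard part will be the construction and verification of the ideal completion in the non-commutative restriction setting. Unlike the commutative frame case, one must verify that the induced product of two ideals is again an ideal of the required kind and that the domain and range operations interact correctly with it, so that the completion is genuinely a complete restriction monoid rather than merely a complete lattice; the subtlety is that the projection semilattice together with the unary domain and range maps has to be reconstructed coherently from $S$ and shown to be preserved by the unit and counit of the equivalence. Establishing distributivity of the resulting structure, and the closure of compacts under products --- which is precisely where the coherence hypothesis is used --- is the technical core, after which the bookkeeping for functoriality and for the four-way refinement follows the commutative template.
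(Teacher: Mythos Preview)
Your plan is correct and matches the paper's approach exactly: the paper proves the result by composing the duality of Theorem~\ref{th:int6} with an equivalence between coherent (resp.\ strongly coherent, compact coherent, compact strongly coherent) complete restriction monoids and distributive restriction semigroups (resp.\ $\wedge$-semigroups, monoids, $\wedge$-monoids), the latter being realized via the finite-element functor $\mathsf{K}$ and the ideal-completion functor $\mathsf{Idl}$ just as you describe. The one point to sharpen in your sketch is that the ideals in $\mathsf{Idl}(S)$ must be \emph{compatible} order ideals closed under binary joins (compatibility is what allows joins to exist at all in a restriction semigroup), and the key lemma making $\mathsf{K}(S)$ a subsemigroup is that $a$ is finite iff $\lambda(a)$ is finite iff $\rho(a)$ is finite.
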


Under the duality in Theorem \ref{th:int8}, distributive ample semigroups correspond to cancellative spectral \'etale topological categories. This may be important for the possible applications to operator algebras as restriction semigroups of partial isometries in Hilbert spaces are ample.

\subsection{Preliminaries}\label{sub:prelim}
In this section we provide an overview of the basic notions and results in frame and locale theory and category theory required for reading this paper. We also set up some notation that will be used throughout the paper.

\vspace{0.1cm}

\noindent{\bf Posets and adjoint maps.} 
Let $(X,\leq)$ be a poset and $A \subseteq X$.
Define 
$$A^{\downarrow} = \{ x \in X \colon \exists a \in A, x \leq a\}.$$
If $A = \{a\}$ we write $a^{\downarrow}$ instead of $\{a\}^{\downarrow}$.
If $A = A^{\downarrow}$ we say that $A$ is an {\em order ideal}.
Order ideals of the form $a^{\downarrow}$ are said to be {\em principal}.

A function $\theta \colon X \rightarrow Y$ between posets is said to be {\em monotone} if $x \leq y$ implies that $\theta (x) \leq \theta (y)$.
In posets, we will often use the convention that $\bigwedge$ and $\bigvee$ refer to arbitrary meets and joins, respectively,
whereas $\wedge$ and $\vee$ refer to finite meets and joins, respectively. If we have a pair of monotone maps
$$f \colon A \rightarrow B
\mbox{ and }
g \colon B \rightarrow A$$
such that
$$g(b) \leq a \Longleftrightarrow b \leq f(a),$$
we say that $g$ is a {\em left adjoint} of $f$ and $f$ is a {\em right adjoint} of $g$.
If a monotone map has an adjoint then that adjoint is unique.
We talk about an {\em adjoint pair} $(g,f)$.
In the following proposition, we collect some basic properties of adjoints that we shall use throughout this paper.

\begin{proposition} \label{prop: properties_of_adjoints} \mbox{}

\begin{enumerate}
\item  Let $f \colon A\to B$ and $g \colon B\to A$ be a pair of monotone maps where $f$ is left adjoint to $g$.
Then $fgf=f$ and  $gfg = g$ and both $fg$ and $gf$ are idempotents.
\item  Left adjoints preserve all joins, and right adjoints preserve all meets.
\item  Let $f \colon A\to B$ and $g \colon B\to A$ be a pair of monotone maps where $f$ is left adjoint to $g$.
If $b \in B$ then 
$$g(b) = \bigwedge\{a\in A \colon  b \leq f(a)\}.$$
If $a \in A$ then 
$$f(a)=\bigvee \{b \in B\colon g(b) \leq a\}.
$$
\item In the context of complete lattices, a monotone map has a right adjoint if and only if it preserves all joins
and a monotone map has a left adjoint if and only if it preserves all meets.
\end{enumerate}
\end{proposition}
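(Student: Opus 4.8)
The plan is to treat the defining biconditional $g(b)\le a \Leftrightarrow b\le f(a)$ as the single tool from which parts (1)--(3) flow formally, and then to build the adjoints by hand for part (4). First I would distill the biconditional into two inequalities: setting $a=g(b)$ gives $b\le f(g(b))$ for all $b$, and setting $b=f(a)$ gives $g(f(a))\le a$ for all $a$. Together with the monotonicity of $f$ and $g$, these two families of inequalities are all that is needed for the first three parts.

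For part (1) I would prove $fgf=f$ by a sandwich argument: applying $f$ to $g(f(a))\le a$ gives $fgf\le f$, while instantiating $b\le f(g(b))$ at $b=f(a)$ gives $f\le fgf$; the identity $gfg=g$ is obtained symmetrically, and idempotency of $fg$ and $gf$ then drops out since $(fg)(fg)=f(gfg)=fg$ and $(gf)(gf)=g(fgf)=gf$. For part (2) I would use the universal property of joins directly: if $\bigvee_i b_i$ exists then monotonicity makes $g(\bigvee_i b_i)$ an upper bound of $\{g(b_i)\}$, and for any other upper bound $a$ one transposes each $g(b_i)\le a$ to $b_i\le f(a)$, takes the join to get $\bigvee_i b_i\le f(a)$, and transposes back to get $g(\bigvee_i b_i)\le a$; the meet statement for right adjoints is the order dual. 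Part (3) is then just a rereading of the biconditional: $\{a: b\le f(a)\}$ is exactly the up-set $\{a: g(b)\le a\}$, whose meet is its least element $g(b)$, and dually $\{b: g(b)\le a\}$ is the down-set $\{b: b\le f(a)\}$, whose join is its greatest element $f(a)$ (so no completeness is needed here).

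The substance is the converse halves of part (4). Given a monotone $h$ between complete lattices that preserves all joins, the plan is to define a candidate right adjoint by the formula forced by part (3), namely $h^{\ast}(b)=\bigvee\{a: h(a)\le b\}$, which exists precisely because the lattice is complete, and then to verify the biconditional: the implication $h(a)\le b \Rightarrow a\le h^{\ast}(b)$ is immediate, while the reverse uses join-preservation through $h(h^{\ast}(b))=\bigvee\{h(a): h(a)\le b\}\le b$. The dual formula $\bigwedge\{a: b\le h(a)\}$ manufactures a left adjoint when $h$ preserves all meets. This converse is where I expect the only real obstacle to lie: it is the single place that genuinely consumes both completeness (to guarantee that the defining supremum or infimum exists) and the preservation hypothesis (to verify the nontrivial implication). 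The forward halves of (4) are immediate from part (2), and the uniqueness of the adjoint has already been recorded in the preliminaries, so no separate argument is needed there.
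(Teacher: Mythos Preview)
The paper states this proposition without proof, treating it as standard background material in the preliminaries. Your argument is correct and is the standard one: extract the unit and counit inequalities from the biconditional, use them together with monotonicity to get the triangle identities and hence part (1); transpose across the adjunction to see that the left adjoint preserves joins (and dually) for part (2); read part (3) directly off the biconditional as you do; and for part (4) define the candidate adjoint by the formula forced by part (3) and use join-preservation to close the loop. One very minor point: since the paper's definition of an adjoint pair requires both maps to be monotone, you should in principle note that your constructed $h^{\ast}$ is monotone, but this is immediate from the formula (enlarging $b$ enlarges the set being joined).
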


\vspace{0.1cm}

\noindent{\bf Frames and locales.}
We have used all the following references for the theory of frames and locales at various times \cite{Bo,J1,J,MM,Vickers}. 
We briefly summarize what we need.

A {\em sup-lattice} $L$ is a poset in which any family of elements $A$ has a join $\bigvee A$. 
The join of an empty family is the least element of the poset and is denoted by $0$. 
The join of all elements of the poset is the greatest element, or {\em top}, and is denoted by $1$ or $1_{L}$. 
If $A$ is a set of elements in a sup-lattice, then the join of all elements $y$ such that $y\leq a$ for all $a\in A$ is the meet $\bigwedge A$. 
So a sup-lattice is in fact a complete lattice. 
A {\em sup-lattice map}, or {\em sup-map}, is defined as a map $f:A\to B$ between sup-lattices that preserves all joins (but not necessarily meets).

A {\em frame} is a sup-lattice $L$, satisfying the condition that for any of its elements $x_i$, where $i\in I$, and $y$ we have
$$
y\wedge \left(\bigvee_{i}x_i\right)=\bigvee_{i}(y\wedge x_i).
$$
A map $f \colon A\to B$ between frames is called a {\em frame map} if it preserves finite meets and any joins of elements of $A$. 
The lattice of open sets $\Omega(X)$ of a topological space $X$ is a frame.
Let $X,Y$ be topological spaces and $f \colon X\to Y$ a continuous map. 
Then the inverse image map $f^{-1} \colon \Omega(Y)\to \Omega(X)$ is a frame map.

The category of {\em locales} is defined as the opposite category to the category of frames. 
We use the following notational convention.
If $X$ is a locale then $O(X)$ is the frame. In fact, $X$ and $O(X)$ are two different ways to denote the same object, and we take a convention to write $X$ or $O(X)$ depending on if  we regard it as a locale or as a frame, respectively. 
Hence a locale map $f \colon X\to Y$ is defined by a frame map $f^* \colon O(Y)\to O(X)$.

Let $L,M$ be locales. A locale map $f:L\to M$ is called {\em semiopen} if the frame map $f^*:O(M)\to O(L)$ preserves
arbitrary meets. Then $f^*$ has a left adjoint $f_!:O(L)\to O(M)$ which is called the {\em direct image map} of $f$. The map $f_!$ preserves arbitrary sups as a left adjoint but it does not, in general, even preserve binary meets. 
A semiopen locale map $f:L\to M$ is called {\em open} if
$$
f_!(a\wedge f^*(b))=f_!(a)\wedge b
$$
for all $a\in O(L)$ and $b\in O(M)$. The condition above is called the {\em Frobenius condition}. If $f \colon X\to Y$ is an open continuous map of topological spaces then it is open as a locale map. 
The converse, however, does not hold in general.
We refer the reader to \cite{J} for the details behind this condition.
An open locale map $f:L\to M$ is called a {\em local homeomorphism} or an {\em \'etale map} if there is $C\subseteq O(L)$ such that 
\begin{equation*}\label{eq:cover}
1_{O(L)}=\bigvee C
\end{equation*}
and for every $c\in C$ the frame map $O(M)\to c^{\downarrow}$ given by $x\mapsto f^*(x)\wedge c$ is surjective.

Pushouts of frames will play an important role in this paper, and we refer the reader to \cite{Bo} for the details.  
Let $f^{*} \colon L \rightarrow A$ and $g^{*} \colon L \rightarrow B$ be frame maps.
Then they have a pushout $h^{*} \colon A \rightarrow A \otimes_{L} B$ and $k^{*} \colon B \rightarrow A \otimes_{L} B$
given by $h^{*}(a) = a \otimes 1$ and $k^{*}(b) = 1 \otimes b$.
The pushout  frame $A \otimes_{L} B$ is defined by the following relations:
\begin{enumerate}
\item $\bigvee_{i} (a_{i} \otimes b) = \left( \bigvee_{i} a_{i} \right) \otimes b$, and dually. 
\item $(a \wedge f^{*}(l)) \otimes b = a \otimes (g^{*}(l) \wedge b)$.
\item $(a \wedge b) \otimes (a' \wedge b') = (a \otimes a') \wedge (b \otimes b')$.
\end{enumerate}
If $L = \{0,1\}$ then we get the {\em coproduct} of $A$ and $B$.

\vspace{0.1cm}

\noindent{\bf Localic and topological categories.} The following definition is taken from \cite{M}. A {\em localic category} is an internal category in the category of locales.
This means, precisely, the following.
We are given  
$$C=(C_1,C_0,u,d,r,m)$$ 
where $C_{1}$ is a locale, called the {\em object of arrows},
and $C_{0}$ is a locale, called the {\em object of objects},
together with four locale maps
$$
u \colon C_0\to C_1, \quad d,r \colon C_1\to C_0, \quad m \colon C_1\times_{C_0} C_1\to C_1,
$$
 called {\em unit}, {\em domain}, {\em codomain}, and {\em multiplication}, respectively,
where $C_1\times_{C_0} C_1$ is the {\em object of composable pairs} defined by the following pullback diagram in the category of locales
\begin{equation*}
\begin{tikzpicture}[baseline=(current  bounding  box.center)]
\node (bl) {$C_1$};
\node (br) [node distance=4cm, right of=bl] {$C_0$};
\node (ul) [node distance=1.8cm, above of=bl] {$C_1\times_{C_0} C_1$};
\node (ur) [node distance=1.8cm, above of=br] {$C_1$};
\path[->]
(bl) edge node[below]{$d$} (br);
\path[->]
(ul) edge node[above]{$\pi_2$} (ur);
\path[->]
(ul) edge node[left]{$\pi_1$} (bl);
\path[->]
(ur) edge node[right]{$r$} (br);
\end{tikzpicture}
\end{equation*}
The codomain map is sometimes referred to as a {\em range map}.
The four maps are subject to conditions that express the usual axioms of a category:
\begin{enumerate}
\item[(Cat1)] $du=ru=id$. 
\item[(Cat2)] $m(u \times id) = \pi_{2}$ and $m(id \times u) = \pi_{1}$.
\item[(Cat3)] $r \pi_{1} = rm$ and $d \pi_{2} = dm$.
\item[(Cat4)] $m(id \times m) = m(m \times id)$.
\end{enumerate}

{\em Topological categories} are defined similarly, as internal categories in the category of topological spaces.
If $C=(C_1,C_0)$ is a topological category then the space of composable pairs $C_1\times_{C_0} C_1$
equals
$$
\{(a,b)\in C_1\times C_1\colon d(a)=r(b)\}.
$$

Let $C=(C_1,C_0)$ and $D=(D_1,D_0)$ be two internal categories. 
By an {\em (internal) functor} $f \colon C \to D$ we mean a pair of morphisms $f_1 \colon C_1\to D_1$ and $f_0 \colon C_0\to D_0$ in the given category that 
commute with the structure maps of the categories: 
\begin{enumerate}
\item[(Fun1)] $m(f_1\times f_1) = f_1m$.
\item[(Fun2)] $df_1=f_0d$.
\item[(Fun3)] $r f_1=f_0 r$.
\item[(Fun4)] $uf_{0} = f_{1}u$.
\end{enumerate}

\subsection{The commutative setting}\label{sub:commutative}
We provide a brief overview of the relationship between the theory of locales and the theory of topological spaces, as well as of non-commutative Stone dualities which can be deduced from this relationship \cite{J1,MM}.  These results will be generalized to a non-commutative setting in Section~\ref{s:adjun}.
The category of locales will be denoted by ${\mathbf{Loc}}$ and the category of topological spaces by ${\mathbf{Top}}$. 

We first recall the classical adjunction between the categories ${\mathbf{Loc}}$ and ${\mathbf{Top}}$. 
Let $X$ be a topological space. By $\Omega(X)$ we will denote the locale of opens of $X$. In order not to overload notation, we will denote the frame of opens of $\Omega(X)$ also by $\Omega(X)$. Let $f\colon X\to Y$ be a continuous map. Then $f^{-1}\colon \Omega(Y) \to \Omega(X)$ is a frame map and thus defines a locale map $\Omega(f)\colon\Omega(X)\to \Omega(Y)$. 

Let $L$ be a locale and ${\bf 2}=\{0,1\}$ be a two-element frame. A {\em point} of $L$ is a frame map $f:O(L)\to {\bf 2}$. Let ${\mathsf{pt}}(L)$ be the set of points of $L$. It is sometimes called the {\em spectrum} of $L$. 

\begin{remark}
{\em If $f\in {\mathsf{pt}}(L)$ then $f^{-1}(1)$ is a {\em completely prime filter} of $L$, that is a non-empty subset $F$ of $L$ such that (i) $0\not\in F$; (ii) $a\in F$ and $b\geq a$ imply that $b\in F$; (iii) $a,b\in F$ imply that $a\wedge b\in F$ and (iv) if $\bigvee A\in F$ then $a\in F$ for some $a\in A$. The assignment $f\mapsto f^{-1}(1)$ is a bijection between the points of $L$ and completely prime filters of $L$. Therefore, points of $L$ can be equivalently viewed as completely prime filters of $L$, the approach adopted in \cite{LL1,LL2}.}
\end{remark}

For each $a\in O(L)$ we set
 $$X_a=\{f\in {\mathsf{pt}}(L)\colon f(a)=1\}.$$
 It is immediate that 
$$X_{\bigvee_{i}a_i}=\bigcup_{i}X_{a_i}\,\,  \text{ and }\,\, X_{a\wedge b}=X_a\cap X_b.$$
 It follows that the sets $X_a$ constitute a topology on ${\mathsf{pt}}(L)$. We always consider ${\mathsf{pt}}(L)$ as a topological space with respect to this topology. Let $\varphi\colon L\to M$ be a locale map. We define 
 $${\mathsf{pt}}(\varphi)\colon {\mathsf{pt}}(L) \to {\mathsf{pt}}(M), \,\,\, f\mapsto \varphi f^*.$$
The assignments $\Omega$ and ${\mathsf{pt}}$ are functorial. Moreover, the following holds.

\begin{theorem}[Classical Adjunction Theorem] \label{th:pr1} The functor $\Omega\colon {\mathbf{Top}}\to {\mathbf{Loc}}$ is a left adjoint to the functor ${\mathsf{pt}}\colon {\mathbf{Loc}}\to {\mathbf{Top}}$. For each $X\in {\mathrm{Ob}}({\mathbf{Top}})$ the component $\eta_X$ of the unit $\eta\colon 1_{{\mathbf{Top}}}\to {\mathsf{pt}}\,\Omega$ of the adjunction is given by $\eta_X(a)(B)=1$ if and only if $a\in B$.
\end{theorem}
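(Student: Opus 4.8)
The statement is the Classical Adjunction Theorem, asserting that $\Omega \colon \mathbf{Top} \to \mathbf{Loc}$ is left adjoint to $\mathsf{pt} \colon \mathbf{Loc} \to \mathbf{Top}$, together with an explicit description of the unit. My plan is to establish the adjunction by exhibiting the unit $\eta$ and counit $\varepsilon$ and verifying the triangle identities, since this is cleaner than directly producing a natural bijection of hom-sets (although I would keep the hom-set formulation in mind as a sanity check). First I would pin down the two natural transformations. For the unit, I would verify that the formula $\eta_X(a)(B) = 1 \iff a \in B$ does define, for each point $a \in X$ (equivalently each completely prime filter of opens at $a$), a frame map $O(\Omega(X)) = \Omega(X) \to \mathbf{2}$, i.e.\ an element of $\mathsf{pt}\,\Omega(X)$; concretely $\eta_X$ sends a point $a$ to the map ``$B \mapsto 1$ iff $a \in B$'', which is the neighbourhood filter made into a frame homomorphism. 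For the counit $\varepsilon \colon \Omega\,\mathsf{pt} \to 1_{\mathbf{Loc}}$, I would use the basic open sets $X_a = \{f \in \mathsf{pt}(L) : f(a) = 1\}$: the already-noted identities $X_{\bigvee_i a_i} = \bigcup_i X_{a_i}$ and $X_{a \wedge b} = X_a \cap X_b$ show that $a \mapsto X_a$ is a frame map $O(L) \to \Omega(\mathsf{pt}(L))$, and this frame map is exactly the comparison defining the locale map $\varepsilon_L$ in the reverse direction.

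**Key steps.** The verification proceeds in four stages. (1) Check $\eta_X$ is continuous, i.e.\ a morphism in $\mathbf{Top}$: the preimage under $\eta_X$ of the basic open $(\Omega(X))_B = \{f \in \mathsf{pt}\,\Omega(X) : f(B)=1\}$ is precisely $B$ itself, which is open, so $\eta_X$ is continuous; naturality in $X$ follows by chasing the defining formula through $\mathsf{pt}\,\Omega(f)$ for a continuous $f$. (2) Check that $a \mapsto X_a$ is a frame map, which is immediate from the two displayed identities plus the observations $X_0 = \varnothing$ and $X_1 = \mathsf{pt}(L)$; dually this gives the locale map $\varepsilon_L$, and naturality in $L$ is a routine diagram chase using the definition $\mathsf{pt}(\varphi)(f) = \varphi f^\ast$. (3) and (4) Verify the two triangle identities
\[
(\mathsf{pt}\,\varepsilon) \circ (\eta\,\mathsf{pt}) = 1_{\mathsf{pt}}, \qquad (\varepsilon\,\Omega) \circ (\Omega\,\eta) = 1_{\Omega},
\]
each of which reduces, after unwinding the definitions of $\eta$, $\varepsilon$, $\Omega$ and $\mathsf{pt}$ on the relevant objects, to an identity that can be checked by evaluating on a point and on a basic open set respectively.

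**Main obstacle.** I expect the only genuinely delicate point to be the bookkeeping in the triangle identities, where one must keep straight the distinction between a locale and its defining frame (the convention $X$ versus $O(X)$), and between ``apply $f$'' and ``apply $f^\ast$'', since $\mathsf{pt}$ acts by precomposition with $\varphi^\ast$ while $\Omega$ sends a continuous map to its inverse-image frame map. In particular, for the identity $(\mathsf{pt}\,\varepsilon)\circ(\eta\,\mathsf{pt}) = 1_{\mathsf{pt}}$ one starts with a point $f \in \mathsf{pt}(L)$, forms $\eta_{\mathsf{pt}(L)}(f) \in \mathsf{pt}\,\Omega\,\mathsf{pt}(L)$, then applies $\mathsf{pt}(\varepsilon_L)$, which is precomposition with $\varepsilon_L^\ast = (a \mapsto X_a)$; tracing this through and using that $f \in X_a \iff f(a) = 1$ should return $f$. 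Everything else is formal: the frame/locale axioms do all the work, and no completeness or sobriety hypothesis is needed because this is the bare adjunction before any equivalence is extracted. I would therefore present the continuity of $\eta_X$ and the frame-map property of $a \mapsto X_a$ in full, and indicate the triangle identities as evaluations on points and basic opens, leaving the remaining naturality squares as routine.
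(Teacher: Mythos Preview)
Your proposal is correct and follows the standard unit--counit approach to establishing the adjunction. However, the paper does not actually prove this theorem: it is stated in the preliminaries (Subsection~1.4) as a classical background result, with references to Johnstone and Mac~Lane--Moerdijk, and is then used as a black box to be generalized in later sections. So there is no proof in the paper to compare against; your outline is exactly the kind of argument one finds in those references.
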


A locale $L$ is said to be {\em spatial} if the  frame map 
$$\epsilon_L^*\colon O(L)\to \Omega{\mathsf{pt}}(L), \,\, a\mapsto X_a,
$$
which defines the component $\epsilon_L$ of the counit of the above adjunction, is a bijection. A topological space $X$ is said to be {\em sober} if the map $\eta_X$ is a bijection.
Theorem \ref{th:pr1} restricts to the following equivalence of categories.

\begin{theorem}\label{th:pr1a} The category of sober spaces is equivalent to the category of spatial locales.
\end{theorem}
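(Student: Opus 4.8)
The plan is to derive Theorem~\ref{th:pr1a} as a direct consequence of the adjunction in Theorem~\ref{th:pr1}. The general principle is that an adjunction $\Omega \dashv \mathsf{pt}$ with unit $\eta$ and counit $\epsilon$ always restricts to an equivalence between the full subcategory of objects on which the unit is an isomorphism and the full subcategory of objects on which the counit is an isomorphism. So first I would identify the two relevant subcategories: sober spaces are precisely those $X$ for which $\eta_X$ is a bijection (hence, being a continuous bijection whose inverse is the comparison map, a homeomorphism), and spatial locales are precisely those $L$ for which $\epsilon_L$ is an isomorphism of locales, equivalently the frame map $\epsilon_L^*$ is a bijection. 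These are exactly the definitions recorded just before the statement, so the identification is immediate.

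Next I would invoke the standard triangle identities for the adjunction, namely $\mathsf{pt}(\epsilon_L) \circ \eta_{\mathsf{pt}(L)} = \mathrm{id}_{\mathsf{pt}(L)}$ and $\epsilon_{\Omega(X)} \circ \Omega(\eta_X) = \mathrm{id}_{\Omega(X)}$. From the first identity, if $\epsilon_L$ is an isomorphism then $\mathsf{pt}(\epsilon_L)$ is an isomorphism, whence $\eta_{\mathsf{pt}(L)}$ is an isomorphism; this shows $\mathsf{pt}(L)$ is sober whenever $L$ is spatial. Dually, from the second identity, if $\eta_X$ is an isomorphism then $\Omega(\eta_X)$ is an isomorphism, forcing $\epsilon_{\Omega(X)}$ to be an isomorphism; so $\Omega(X)$ is spatial whenever $X$ is sober. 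Thus $\Omega$ and $\mathsf{pt}$ restrict to functors between the subcategory of sober spaces and the subcategory of spatial locales.

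It then remains to observe that on these restricted subcategories the unit and counit are, by construction, natural isomorphisms: for sober $X$ the component $\eta_X$ is a bijection and hence an isomorphism in $\mathbf{Top}$, and for spatial $L$ the component $\epsilon_L$ is an isomorphism in $\mathbf{Loc}$. An adjunction whose unit and counit are both natural isomorphisms is precisely an adjoint equivalence, so the restricted functors $\Omega$ and $\mathsf{pt}$ furnish the desired equivalence of categories.

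I do not expect a serious obstacle here, since everything flows formally from the adjunction once the subcategories are correctly matched to the invertibility of $\eta$ and $\epsilon$. The only point requiring mild care is the verification that a bijective $\eta_X$ is genuinely an isomorphism in $\mathbf{Top}$ rather than merely a set-theoretic bijection: one must check that its inverse is continuous, which follows because $\eta_X$ is a continuous open map (the open sets of $\mathsf{pt}\,\Omega(X)$ are exactly the sets $(\Omega X)_a$, and $\eta_X^{-1}((\Omega X)_a) = a$ recovers the open sets of $X$). Granting this standard fact about the topology on $\mathsf{pt}$, the rest is the formal restriction of an adjunction to an equivalence.
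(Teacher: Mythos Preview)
Your proposal is correct and follows exactly the approach the paper indicates: the paper does not give a detailed proof of Theorem~\ref{th:pr1a} at all, merely stating that ``Theorem~\ref{th:pr1} restricts to the following equivalence of categories,'' and your argument is precisely the standard unpacking of how such a restriction works via the triangle identities.
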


Let $L$ be a locale. 
 An element $a\in L$ is called {\em finite} if for every $M\subseteq L$ with $\bigvee M =a$ there exists a finite $F\subseteq M$ with $\bigvee F=a$. A locale $L$ is called {\em coherent}\footnote{We do not require that $1$ is a finite element.} if every element of $L$ is a join of finite elements and the meet of any two finite elements is finite. If in addition $1$ is a finite element, $L$ is said to be {\em compact coherent}.
A topological space $X$ is said to be {\em spectral}\footnote{We do not require a spectral space to be compact.} if it is sober and has a basis of compact-open sets that is closed under finite non-empty intersections.
A Hausdorff spectral space is called a {\em Boolean space}. A continuous map is {\em coherent} if the inverse images of compact-open sets are compact-open. Any continuous map between Boolean spaces is coherent.

By a {\em distributive lattice} (resp. a {\em Boolean algebra}) we understand one which possesses a bottom element $0$, but not necessarily a top element $1$. A {\em unital distributive lattice} (resp. a {\em unital Boolean algebra}) is a one which also possesses a top element.  A map between distributive lattices or Boolean algebras  is assumed to satisfy the condition that for every $a\in M$ there is $b\in L$ such that $f(b)\geq a$. If $L$ and $M$ are unital, the latter requirement reduces to $f(1_L)=1_M$. The following well-known results are consequences of  Theorem \ref{th:pr1}.

\begin{theorem}[Classical Stone Duality I]\label{th:pr2}
The following catego\-ries are equi\-valent:
\begin{enumerate}
\item The category of  spectral spaces (reps. compact spectral spaces) and coherent maps.
\item The category of coherent locales (resp. compact coherent locales) and coherent maps.
\item The opposite of the category of  distributive lattices (resp. unital distributive lattices) and their maps.
\end{enumerate}
\end{theorem}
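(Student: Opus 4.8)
The plan is to prove the two equivalences $(2)\simeq(3)$ and $(1)\simeq(2)$ separately and then compose them. The equivalence $(2)\simeq(3)$ is essentially algebraic: it is where coherence of a locale is converted into the structure of a distributive lattice. The equivalence $(1)\simeq(2)$, by contrast, is obtained by restricting the adjunction of Theorem~\ref{th:pr1}, after first checking that coherent locales are spatial so that Theorem~\ref{th:pr1a} applies. Throughout I would treat the non-compact and compact variants in parallel, the only difference being that in the compact/unital case preservation of the top element is additionally forced.

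For $(2)\simeq(3)$ I would let $K(L)$ denote the poset of finite elements of a coherent locale $L$. By the definition of coherence these are closed under finite meets and contain $0$, and a short argument shows they are closed under finite joins, so $K(L)$ is a distributive lattice, which is unital exactly when $1$ is finite. In the reverse direction I would send a distributive lattice $D$ to its frame of ideals $\mathrm{Idl}(D)$, whose finite elements are precisely the principal ideals. The two constructions are mutually inverse up to natural isomorphism: $K(\mathrm{Idl}(D))\cong D$ via $d\mapsto d^{\downarrow}$, and $\mathrm{Idl}(K(L))\cong L$ via $a\mapsto \{k\in K(L)\colon k\leq a\}$. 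The latter isomorphism is exactly the structural statement that in a coherent locale every element is the join of the finite elements below it and that finite elements are compact. On morphisms, a coherent locale map $f\colon L\to M$ is one whose frame map $f^{*}$ carries finite elements to finite elements, so $f^{*}$ restricts to a lattice homomorphism $K(M)\to K(L)$; this is contravariant and hence produces a covariant functor into the opposite category in $(3)$. The standing hypothesis that for each $a$ there is $b$ with $f(b)\geq a$ is precisely the properness condition that makes the correspondence exact on maps, reducing to $f(1_{L})=1_{M}$ in the unital case. Verifying naturality then completes this equivalence.

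For $(1)\simeq(2)$ I would invoke Theorem~\ref{th:pr1a}, which already identifies spatial locales with sober spaces, so it suffices to match coherent locales with spectral spaces inside this equivalence. The first task is to show that every coherent locale is spatial, that is, that the frame map $a\mapsto X_{a}$ is injective: given $a\not\leq b$, coherence supplies a finite element $k\leq a$ with $k\not\leq b$, and one must produce a point, i.e.\ a completely prime filter, containing $k$ but not $b$. Granting spatiality, $\mathsf{pt}(L)$ is sober, and the sets $X_{k}$ for $k$ finite form a basis of compact-open sets closed under finite non-empty intersections, since $X_{k\wedge k'}=X_{k}\cap X_{k'}$ and $k\wedge k'$ is again finite; thus $\mathsf{pt}(L)$ is spectral. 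Conversely, for a spectral space $X$ the compact-open sets are exactly the finite elements of $\Omega(X)$ and they witness coherence. Coherent continuous maps correspond to coherent locale maps because preimages of compact-opens are compact-open if and only if $f^{-1}$ preserves finite elements, and compactness of $\mathsf{pt}(L)$ corresponds to $1_{L}$ being finite, which handles the compact/unital variant.

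Composing the two equivalences yields $(1)\simeq(3)$, the classical Stone duality sending a spectral space to its lattice of compact-open sets. The one genuinely non-formal step, and the main obstacle, is the existence of enough points for a coherent locale: this rests on the prime ideal theorem, equivalently a Zorn's lemma argument producing a completely prime filter that is maximal among those containing $k$ and avoiding $b$, and it is the only place where a choice principle enters. Everything else is the routine verification that the functors $K$, $\mathrm{Idl}$, $\mathsf{pt}$ and $\Omega$ are mutually quasi-inverse and respect the three prescribed classes of morphisms.
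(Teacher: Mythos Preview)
The paper does not supply a proof of this theorem: it is stated in the preliminaries (Subsection~\ref{sub:commutative}) as a ``well-known result'' that is a consequence of Theorem~\ref{th:pr1}, with references to \cite{J1,MM} for details. Your proposal is a correct outline of the standard proof of this classical duality, proceeding via the equivalence between coherent locales and distributive lattices through the finite-elements/ideal-completion constructions, and then restricting the sober--spatial equivalence of Theorem~\ref{th:pr1a} after establishing spatiality of coherent locales via the prime ideal theorem; there is nothing to compare against since the paper simply cites the result.
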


Under the above equivalence compact-open sets of a spectral space correspond to finite elements of a coherent locale and form a distributive lattice.

\begin{theorem}[Classical Stone Duality II] \label{th:pr3}  The cate\-gory of Boolean al\-geb\-ras (resp. unital Boolean algebras) is dually equivalent to the category of  Boolean spaces (resp. compact Boolean spaces).
\end{theorem}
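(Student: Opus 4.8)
The plan is to deduce Theorem~\ref{th:pr3} directly from the Classical Stone Duality~I (Theorem~\ref{th:pr2}), by identifying Boolean algebras and Boolean spaces as the full subcategories of distributive lattices and spectral spaces, respectively, that are matched against each other under the duality already in hand. Recall from the discussion following Theorem~\ref{th:pr2} that under that duality the elements of a distributive lattice $L$ correspond bijectively to the compact-open subsets of its dual spectral space $X$, and that this bijection is an isomorphism onto the lattice of compact-open sets of $X$. Moreover $L$ has a top element if and only if $X$ is compact, so the unital case will drop out once the non-unital case is settled. Here I read ``Boolean algebra'' (no top required) as a relatively complemented distributive lattice with $0$, i.e.\ one in which every element of an interval $[0,b]$ has a complement inside that interval.

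The key step is to characterise the extra structure on each side and show the characterisations correspond. On the topological side I would prove that a spectral space $X$ is Hausdorff---hence a Boolean space---if and only if every compact-open subset of $X$ is clopen, equivalently if and only if the lattice of compact-open sets of $X$ is relatively complemented. Transporting relative complements across the lattice isomorphism then yields the chain: $L$ is a Boolean algebra $\iff$ the lattice of compact-open sets of $X$ is relatively complemented $\iff$ $X$ is a Boolean space.

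The two directions of this topological characterisation are the heart of the argument, and getting them right is the main obstacle. If $X$ is a Hausdorff spectral space and $U$ is compact-open, then $U$ is compact in a Hausdorff space, hence closed, so $U$ is clopen; the compact-open sets form a basis, so inside each compact-open $V \supseteq U$ one can produce the relative complement. Conversely, suppose every compact-open set is clopen. Since $X$ is sober, hence $T_0$, and has a basis of compact-open sets, any two distinct points are separated by a compact-open $U$ containing exactly one of them; as $U$ is clopen, $U$ and its complement separate the two points, so $X$ is Hausdorff. This is precisely where the spectral hypotheses (soberness together with a compact-open basis) are genuinely used.

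Finally I would check that the duality restricts on morphisms. A lattice homomorphism preserving $0$ between Boolean algebras automatically preserves relative complements, since if $a \wedge c = 0$ and $a \vee c = b$ then $f(a)\wedge f(c)=0$ and $f(a)\vee f(c)=f(b)$; thus Boolean-algebra maps are exactly the distributive-lattice maps between Boolean algebras. Dually, every continuous map between Boolean spaces is coherent, as noted before Theorem~\ref{th:pr2}, so the coherent maps between Boolean spaces are exactly all continuous maps. Hence the hom-sets match on both sides, the object correspondence gives essential surjectivity, and the equivalence of Theorem~\ref{th:pr2} restricts to a dual equivalence between Boolean algebras and Boolean spaces, and between unital Boolean algebras and compact Boolean spaces, which is the assertion of Theorem~\ref{th:pr3}.
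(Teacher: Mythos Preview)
The paper does not supply a proof of this theorem; it is stated as one of the ``well-known results'' that are ``consequences of Theorem~\ref{th:pr1}'' in the preliminaries, so there is no argument to compare against. Your derivation from Theorem~\ref{th:pr2} is correct and is the standard route: the characterisation of Hausdorffness of a spectral space via clopenness of compact-open sets, hence via relative complementation of the associated lattice, is exactly the right bridge, and your treatment of morphisms on both sides is accurate.
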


\section{The Quantalization Theorem}\label{s:quantalization}

The main goal of this section is to prove Theorem~\ref{th:quant_psgrps}, the Quantalization theorem. It will be then combined with the main theorem of the next section and lead to the Duality Theorem. 
\subsection{Ehresmann and restriction semigroups}\label{sub:semigroups}
In this subsections we introduce the two classes of semigroups that will play the main role in this paper:
the Ehresmann semigroups and their subclass called restriction semigroups. 

Let $S$ be a semigroup and $E(S)$ its set of idempotents. 
Let $E\subseteq E(S)$ be a fixed non-empty subset where we emphasize that it need not consist of all idempotents.
We call $E$ the set of {\em projections}.
We suppose in addition that there are two functions $\lambda$ and $\rho$ from $S$ to $E$, called the {\em structure maps}
or the {\em left and right supports}.
We say that $S$ is an {\em Ehresmann semigroup with respect to the set} $E$ if the following conditions are satisfied:
\begin{enumerate}[(ES1)]
\item\label{es1} $E$ is a commutative subsemigroup.
\item\label{es2} If $a\in E$ then $\lambda(a)=\rho(a)=a$.
\item\label{es3} $a \lambda (a) = a$ and $\rho (a)a = a$ for any $a\in S$.
\item\label{es4} $\lambda(\lambda(a)b)=\lambda(ab)$ and $\rho(a\rho(b))=\rho(ab)$ for any $a,b\in S$.
\end{enumerate}
It is easy to prove that $\lambda (a)$ is the smallest projection in the semilattice $E$ amongst all those
projections $e$ such that $ae = a$.
A dual result holds for $\rho$.
{\em Thus the structure maps are uniquely determined by the multiplication in the semigroup and the set of projections.}
The maps $\lambda$ and $\rho$ can be thought about as maps to $S$ whose range belongs to $E$, and thus as unary operations on $S$.

Ehresmann semigroups were introduced in  \cite{Law1} building mainly on the work of de Barros.
From the perspective of this paper, they can be regarded as wide-ranging generalizations of inverse semigroups.
In what follows, when the set $E$ is understood, we call an Ehresmann semigroup with respect to $E$ simply an {\em Ehresmann semigroup}. 
The reader is warned that the terminology surrounding classes of Ehresmann semigroups is  varied and potentially confusing.
This reflects the fact that they have been rediscovered in a number of different contexts, for example:
within Ehresmann's own work on ordered categories and their associated semigroups;
within category theory as part of attempts to formalize categories of partial maps;
and within semigroup theory as generalizations of the PP monoids studied by John Fountain and his students.

We fix $E\subseteq E(S)$ and all Ehresmann semigroups considered are with respect to such an $E$.
The following is an important motivating example.

\begin{example}\label{ex:bin_rel_new}
{\em Let $X$ be a non-empty set. Let $A\subseteq X\times X$ be a transitive and reflexive relation on $X$. 
By $\mathcal{P}(A)$ we denote the powerset of $A$. 
Since $A$ is transitive, $\mathcal{P}(A)$ is closed with respect to composition.
Since $A$ is reflexive, $\mathcal{P}(A)$ contains all subrelations of the identity relation. 
In particular, it is a monoid whose identity $e$ is the identity relation on $X$. 
As our set of projections, $E$,  we select all subrelations of $e$.
Clearly, $E$ is a commutative submonoid.
If $a \in \mathcal{P}(A)$, define 
$$\lambda (a) = \{(x,x) \in X \times X \colon  \exists y \in X \text{ such that } (y,x) \in a\}$$
and define
$$\rho (a) = \{(y,y) \in X \times X \colon  \exists x \in X \text{ such that } (y,x) \in a\}.$$
By construction, $\lambda (a),\rho (a)\in E$.
Observe that $a = \rho (a)a = a\lambda (a)$.
We show that $\lambda(\lambda(a)b)=\lambda(ab)$. 
Let $(x,x)\in \lambda(\lambda(a)b)$.
Then $(y,x)\in \lambda(a)b$ for some $y\in X$.
Hence $(y,y)\in\lambda(a)$ and $(y,x)\in b$.
Then $(t,y)\in a$ for some $t\in X$, and thus $(t,x)\in ab$.
It follows that $(x,x)\in\lambda(ab)$ and so $\lambda(\lambda(a)b)\subseteq\lambda(ab)$.
The reverse inclusion is shown similarly. 
Combined with a dual argument, we have shown that $\mathcal{P}(A)$ is an Ehresmann monoid.}
\end{example}

Ehresmann semigroups were explicitly introduced as a class of semigroups having a very close connection with categories.
Let $S$ be an Ehresmann semigroup with respect to $E$.
Define a partial binary operation on $S$, called the {\em restricted product}, as follows:
$a \cdot b$ exists precisely when $\lambda (a) = \rho (b)$ in which case it is equal to $ab$.
The proof of the following is straightforward.

\begin{proposition}\label{prop:restricted_product} Let $S$ be an Ehresmann semigroup with respect to $E$.
Then $S$ is a category with respect to the restricted product with set of identities $E$.
The semigroup product can be defined in terms of the category product since
$$ab = (ae) \cdot (eb)$$
where $e = \lambda (a)\rho (b)$.
\end{proposition}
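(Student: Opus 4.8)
The plan is to read off the category structure directly from the supports: the objects (identities) are the projections in $E$, the domain of an arrow $a$ is $\lambda(a)$ and its range is $\rho(a)$, so that the composability condition $\lambda(a)=\rho(b)$ for $a\cdot b$ says exactly that the domain of $a$ agrees with the range of $b$. Before checking the axioms I would first confirm that each projection is an identity: by (ES2) an element $e\in E$ satisfies $\lambda(e)=\rho(e)=e$, and for any $a$ the products $\rho(a)\cdot a$ and $a\cdot\lambda(a)$ are composable, since $\lambda(\rho(a))=\rho(a)$ and $\rho(\lambda(a))=\lambda(a)$, and they evaluate to $\rho(a)\cdot a=\rho(a)a=a$ and $a\cdot\lambda(a)=a\lambda(a)=a$ by (ES3).

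The heart of the argument, and the one computation I expect to carry the weight, is that the restricted product has the correct supports: assuming $\lambda(a)=\rho(b)$, I claim $\lambda(ab)=\lambda(b)$ and $\rho(ab)=\rho(a)$. For the first, (ES4) gives $\lambda(ab)=\lambda(\lambda(a)b)=\lambda(\rho(b)b)=\lambda(b)$, where the middle step substitutes $\lambda(a)=\rho(b)$ and the last uses $\rho(b)b=b$ from (ES3); the computation $\rho(ab)=\rho(a\rho(b))=\rho(a\lambda(a))=\rho(a)$ is dual. Once this is in hand, closure of composition under the domain and range maps is immediate, and associativity reduces to associativity of the semigroup product: both $(a\cdot b)\cdot c$ and $a\cdot(b\cdot c)$ are defined precisely when $\lambda(a)=\rho(b)$ and $\lambda(b)=\rho(c)$, and both equal $(ab)c=a(bc)$. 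Together with the identity laws above, this shows that $S$ is a category with set of identities $E$.

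For the final formula I would first record the auxiliary identities $\lambda(ae)=\lambda(a)e$ and $\rho(eb)=e\rho(b)$, valid for any $e\in E$: applying (ES4) with $b=e$ gives $\lambda(ae)=\lambda(\lambda(a)e)$, and since $\lambda(a)e$ is itself a projection, (ES2) collapses this to $\lambda(a)e$, the $\rho$-identity being dual. Taking $e=\lambda(a)\rho(b)$ and using idempotency of the projections, $\lambda(ae)=\lambda(a)e=e$ and $\rho(eb)=e\rho(b)=e$, so that $\lambda(ae)=\rho(eb)$ and the restricted product $(ae)\cdot(eb)$ is indeed defined. It then equals $(ae)(eb)=aeb=a\lambda(a)\rho(b)b=ab$, the last step using $a\lambda(a)=a$ and $\rho(b)b=b$ from (ES3), which completes the proof. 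The only genuine subtlety throughout is the support computation of the second paragraph; everything else is bookkeeping with (ES2) and (ES3).
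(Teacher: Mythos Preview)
Your proof is correct and is precisely the straightforward verification the paper omits: you check the domain/range computations $\lambda(ab)=\lambda(b)$ and $\rho(ab)=\rho(a)$ from (ES3)--(ES4), deduce associativity and identities, and then verify the formula $ab=(ae)\cdot(eb)$ via the auxiliary identities $\lambda(ae)=\lambda(a)e$, $\rho(eb)=e\rho(b)$. This is exactly the intended argument.
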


It is possible to formalize precisely the categories which arise in this way.
See~\cite{Law1}.

An additional structure defined on Ehresmann semigroups that will play an important role in our work is the following.
On any Ehresmann semigroup $S$, define the relation $a \leq b$  if and only if 
\begin{equation*}\label{eq:def_order}
a=bf=eb \text{ for some } e,f\in E.
\end{equation*} 

\begin{lemma}\label{lem:aaab}\mbox{}
\begin{enumerate}
\item \label{i:aaa1} $a \leq b$ if and only if $a = \rho (a)b = b \lambda (a)$.
\item \label{i:aaa2} The relation $\leq$ is a partial order.

\end{enumerate}
\end{lemma}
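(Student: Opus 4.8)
The plan is to establish part (\ref{i:aaa1}) first and then to use it as the main tool for deducing the three defining properties of a partial order in part (\ref{i:aaa2}). Throughout I rely on the characterization recorded just before the lemma, namely that $\lambda(a)$ is the smallest projection $g\in E$ with $ag=a$, and dually that $\rho(a)$ is the smallest projection $g\in E$ with $ga=a$, together with the semilattice structure of $E$ furnished by (ES1).

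For the backward implication of part (\ref{i:aaa1}) there is nothing to do: if $a=\rho(a)b=b\lambda(a)$ then, since $\rho(a),\lambda(a)\in E$, we may take $e=\rho(a)$ and $f=\lambda(a)$ in the defining condition, so $a\leq b$. For the forward implication I would start from $a=bf$ with $f\in E$. As $f$ is idempotent, $af=bff=bf=a$, so $f$ is a projection fixing $a$ on the right; hence $\lambda(a)\leq f$ in the semilattice $E$, that is $f\lambda(a)=\lambda(a)$. Then $b\lambda(a)=b(f\lambda(a))=(bf)\lambda(a)=a\lambda(a)=a$ by (ES3). The identity $a=\rho(a)b$ follows by the symmetric argument starting from $a=eb$ and using that $\rho(a)$ is the smallest projection fixing $a$ on the left.

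For part (\ref{i:aaa2}), reflexivity is immediate: read through part (\ref{i:aaa1}), the statement $a\leq a$ is exactly $a=\rho(a)a=a\lambda(a)$, which is (ES3). Transitivity is just as direct from the original definition: if $a=eb=bf$ and $b=e'c=cf'$ with $e,f,e',f'\in E$, then $a=(ee')c=c(f'f)$, and $ee',f'f\in E$ because $E$ is a subsemigroup, whence $a\leq c$.

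The step I expect to be the main obstacle is antisymmetry, and this is where part (\ref{i:aaa1}) and (ES4) do the real work. Assume $a\leq b$ and $b\leq a$. From $a=b\lambda(a)$ I would compute $\lambda(a)=\lambda(b\lambda(a))=\lambda(\lambda(b)\lambda(a))$, the second equality being an instance of (ES4); since $\lambda(b)\lambda(a)\in E$ and $\lambda$ fixes projections by (ES2), this reduces to $\lambda(a)=\lambda(b)\lambda(a)$, i.e. $\lambda(a)\leq\lambda(b)$. The symmetric computation from $b\leq a$ gives $\lambda(b)\leq\lambda(a)$, so $\lambda(a)=\lambda(b)$ by antisymmetry of the semilattice order on $E$. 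Then $a=b\lambda(a)=b\lambda(b)=b$ by (ES3), as required. The care needed here is to track the left and right supports correctly and to invoke only the commutativity of $E$, never of $S$, when rearranging products of projections.
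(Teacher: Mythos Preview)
Your proof is correct and follows essentially the same strategy as the paper's. For part~(\ref{i:aaa1}) both arguments exploit the minimality characterization of $\lambda(a)$ and $\rho(a)$; your computation $b\lambda(a)=b(f\lambda(a))=(bf)\lambda(a)=a\lambda(a)=a$ is a clean variant of what the paper does on the $\rho$ side.

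The minor differences in part~(\ref{i:aaa2}) are worth noting. For transitivity you work directly from the defining form $a=eb=bf$, whereas the paper uses part~(\ref{i:aaa1}) to write $a=\rho(a)b$, $b=\rho(b)c$, and then needs the auxiliary fact $\rho(a)\rho(b)=\rho(a)$; your route is slightly more elementary. For antisymmetry the paper computes $a=\rho(a)b=\rho(a)\rho(b)a=\rho(b)a$ and then reads off $a=b$ from $b=\rho(b)a$, while you instead establish $\lambda(a)=\lambda(b)$ via (ES4) and conclude $a=b\lambda(a)=b\lambda(b)=b$. Both are equally short and valid; yours has the small advantage of making the monotonicity $a\leq b\Rightarrow\lambda(a)\leq\lambda(b)$ explicit along the way, which is used later in the paper.
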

\begin{proof} \eqref{i:aaa1} Only one direction needs proving.
Let $a = eb = bf$ for some projections $e$ and $f$.
Then $ea = a$ and so  $e \rho (a) = \rho (a)$ by (ES6) and (ES4).
Thus $$a = \rho (a)eb = e \rho (a)b = \rho (a)b.$$
The dual result follows by symmetry.

\eqref{i:aaa2} Since $a = \rho (a)a = a\lambda (a)$ we have that $a \leq a$.
Suppose that $a \leq b$ and $b \leq c$.
Then $a = \rho (a)b$ and $b = \rho (b)c$.
Thus $a = \rho (a)\rho (b)c$.
But $\rho (a) \rho (b) = \rho (a)$ and so $a = \rho (a)c$.
By symmetry we get  $a \leq c$.
Suppose that $a \leq b$ and $b \leq a$.
Then by definition
$a = \rho (a)b = b \lambda (a)$ and $b = \rho (b)a = a \lambda (b)$.
But then $a = \rho (a)b = \rho (a)\rho (b)a = \rho (b)a$.
Hence $a = b$, as required.
\end{proof}

The relation $\leq$ is called the {\em natural partial order} on $S$.

\begin{remark}
{\em The notation used to denote natural partial order on a restriction semigroup (defined later on, just before Lemma~ \ref{lem:apr4}) will vary
throughout this paper for the sake of convenience. We shall frequently use $\leq$ or $\leq'$.}
\end{remark}

\begin{lemma}\label{lem:e} Let $S$ be an Ehresmann semigroup with respect to $E$.
\begin{enumerate}

\item If $E$ has a maximum element $e$, then $e$ is the unit and $E=e^{\downarrow}$ with respect to the natural partial order $\leq$.

\item If $S$ is a monoid then the unit $e$ is the maximum projection so that $E=e^{\downarrow}$.

\end{enumerate}
\end{lemma}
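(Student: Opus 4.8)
The plan is to treat both parts using only the defining axioms (ES2) and (ES3) together with the order characterization in Lemma~\ref{lem:aaab}(1). First I would record that on $E$ the natural partial order restricts to the usual semilattice order: for $e,f\in E$ we have $\rho(e)=\lambda(e)=e$ by (ES2), so $e\leq f$ holds precisely when $e=ef=fe$. Hence the phrase ``maximum element of $E$'' is unambiguous, and for any projection $f$ below the maximum $e$ we may freely use $f=fe=ef$.

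For part (1), let $e$ be the maximum of $E$. I would first show $e$ is a two-sided unit. Given $a\in S$, both $\lambda(a)$ and $\rho(a)$ lie in $E$, so $\lambda(a)=\lambda(a)e$ and $\rho(a)=e\rho(a)$. Combining this with (ES3), which gives $a=a\lambda(a)=\rho(a)a$, yields $ae=a\lambda(a)e=a\lambda(a)=a$ and $ea=e\rho(a)a=\rho(a)a=a$. Thus $ea=ae=a$ for all $a\in S$, so $e$ is the unit. For the identity $E=e^{\downarrow}$, the inclusion $E\subseteq e^{\downarrow}$ is immediate, since every $f\in E$ satisfies $f\leq e$ by maximality. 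Conversely, if $x\leq e$ then Lemma~\ref{lem:aaab}(1) gives $x=\rho(x)e$, and since $e$ is now known to be the unit this collapses to $x=\rho(x)\in E$; hence $e^{\downarrow}\subseteq E$, and the two inclusions give equality.

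For part (2), let $1$ denote the monoid identity. The only point requiring care is that $1$ is a priori merely an idempotent and need not visibly belong to the distinguished set $E$ of projections. This is resolved by (ES3): applied to $a=1$ it gives $1=1\lambda(1)$, but $1\lambda(1)=\lambda(1)$ because $1$ is the identity, whence $\lambda(1)=1$ and therefore $1\in E$. It is then the maximum projection, because for any $f\in E$ we have $f=f1=1f$, that is $f\leq 1$. Applying part (1) with $e=1$ now gives $E=1^{\downarrow}$.

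There is no genuine obstacle here; the statement is a direct unwinding of the axioms. The one subtlety worth flagging is the one just mentioned in part (2): one must not assume the monoid unit lies in $E$, but rather deduce it, and this deduction is exactly where (ES3) is essential.
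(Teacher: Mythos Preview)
Your proof is correct. Part (1) is essentially the same as the paper's argument, with only cosmetic differences in how $E=e^{\downarrow}$ is verified (you invoke Lemma~\ref{lem:aaab}(1) to get $x=\rho(x)e=\rho(x)$, while the paper uses the raw definition $x=ef$ together with (ES1) to place $x$ in $E$).

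For part (2) your route is genuinely shorter than the paper's. You observe directly from (ES3) that $1=1\lambda(1)=\lambda(1)$, so $1\in E$, and maximality is then immediate. The paper instead uses (ES4) to show that every $f\in E$ satisfies $f=\lambda(e)f=f\lambda(e)$, concludes that $\lambda(e)$ is the maximum projection, then invokes part (1) to see that $\lambda(e)$ is a unit, and finally appeals to uniqueness of units to get $e=\lambda(e)$. Your argument avoids (ES4) and the uniqueness detour entirely; the paper's approach, on the other hand, makes visible the slightly stronger fact that $\lambda(e)$ dominates $E$ even before one knows $e$ is itself a projection.
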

\begin{proof} (1)  Let $a\in S$. 
We have $a=a\lambda(a)=a\lambda(a)e=ae$, and similarly $a=ea$. 
This proves that $e$ is a multiplicative unit. 
Assume $a\leq e$. 
Then $a=ef=ge$ for some $f,g\in E$. 
This shows that $a\in E$ and thus $E=e^{\downarrow}$.

(2) We prove that the unit $e$ is a projection.
Let $f \in E$.
Then $f = ef$ and so $f = \lambda (f) = \lambda (ef) = \lambda (e)f$ and,
since $\lambda (e)$ and $f$ are both projections, 
they commute and so $f=\lambda (e)f = f \lambda (e)$. Thus $f\leq \lambda(e)$.
Since units are unique, $e = \lambda (e)$, and so $e$ the maximum projection.
\end{proof}

In this paper, we shall only be interested in Ehresmann monoids.
By the lemma above, the set of projections of an Ehresmann monoid is precisely the set of elements
below the unit with respect to the natural partial order.
It follows that we do not have to spell out the set of projections explicitly.

We shall now define a special class of Ehresmann semigroups.
The following definition is from \cite{CGH}.
An element $a \in S$ of an Ehresmann semigroup is said to be {\em bi-deterministic} if 
for all $f \in E$ we have that 
\begin{equation*}
fa = a \lambda (fa) \,\, \text{ and }\,\, af = \rho (af)a.
\end{equation*}
A semigroup $S$ is called a {\em restriction semigroup with respect to the set} $E$ if $S$ is an Ehresmann semigroup with respect to $E$ and, in addition,
every element is bi-deterministic. The key feature of a restriction semigroup is that projections can, in some sense, be moved through elements from
left-to-right and from right-to-left.

In this section, denote the natural partial order in a restriction semigroup by~$\leq$.

\begin{lemma}\label{lem:apr4} Let $S$ be a restriction semigroup and $a,b\in S$.
\begin{enumerate}
\item \label{i:apr4a}  $a\leq b$ if and only if $a=b\lambda(a)$ if and only if $a=\rho(a)b$.
\item \label{i:apr4b} If $a\leq b$ and $\lambda(a)=\lambda(b)$ (or $\rho(a)=\rho(b)$) then $a=b$.
\item \label{i:apr4c} $af, fa\leq a$ for all $a\in S$ and $f\in E$.
\end{enumerate}
\end{lemma}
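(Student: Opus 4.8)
The plan is to extract everything from the bi-determinism property, since this is exactly what upgrades the \emph{two-sided} characterization of the natural partial order in Lemma~\ref{lem:aaab}\eqref{i:aaa1} to the \emph{one-sided} characterizations asserted in~\eqref{i:apr4a}. In a general Ehresmann semigroup one needs both equations $a=\rho(a)b$ and $a=b\lambda(a)$ to conclude $a\leq b$; the point of a restriction semigroup is that bi-determinism lets one recover the missing equation from either one alone. Once~\eqref{i:apr4a} is in hand, parts~\eqref{i:apr4b} and~\eqref{i:apr4c} are immediate.

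For~\eqref{i:apr4a}, the implications $a\leq b\Rightarrow a=b\lambda(a)$ and $a\leq b\Rightarrow a=\rho(a)b$ are already contained in Lemma~\ref{lem:aaab}\eqref{i:aaa1}, so only the two converses require argument. Suppose $a=b\lambda(a)$. Setting $f=\lambda(a)\in E$, I apply the bi-determinism identity $bf=\rho(bf)b$ to the element $b$ to obtain $a=bf=\rho(bf)b$. Thus $a$ is simultaneously of the form $bf$ and $eb$ with $e=\rho(bf)\in E$, which is precisely the definition of $a\leq b$. Dually, if $a=\rho(a)b$, then writing $e=\rho(a)$ and using the identity $eb=b\lambda(eb)$ gives $a=eb=b\lambda(eb)$, again witnessing $a\leq b$. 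These two converses together with Lemma~\ref{lem:aaab}\eqref{i:aaa1} close the cycle of equivalences $a\leq b\iff a=b\lambda(a)\iff a=\rho(a)b$.

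For~\eqref{i:apr4b}, assume $a\leq b$ and $\lambda(a)=\lambda(b)$. By~\eqref{i:apr4a} we have $a=b\lambda(a)=b\lambda(b)=b$, the final equality being (ES3). The case $\rho(a)=\rho(b)$ is symmetric, using $a=\rho(a)b=\rho(b)b=b$. For~\eqref{i:apr4c}, bi-determinism of $a$ applied to the projection $f$ yields directly $af=\rho(af)a$ and $fa=a\lambda(fa)$; by the one-sided criteria established in~\eqref{i:apr4a} these say exactly that $af\leq a$ and $fa\leq a$ respectively.

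I anticipate no genuine obstacle here: the only real content sits in the two converses of~\eqref{i:apr4a}, and even there the work is light once bi-determinism is invoked. The one place to stay careful is the bookkeeping of which projection plays the role of $e$ and which plays the role of $f$ in the defining condition $a=eb=bf$ of the natural partial order, so that the correct bi-determinism identity ($bf=\rho(bf)b$ versus $eb=b\lambda(eb)$) is applied in each converse.
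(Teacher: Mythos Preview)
Your proof is correct and follows essentially the same approach as the paper. The only cosmetic difference is in part~\eqref{i:apr4c}: you invoke bi-determinism directly to get $af=\rho(af)a$ and then apply the $\rho$-criterion from~\eqref{i:apr4a}, whereas the paper first computes $\lambda(af)=\lambda(a)f$ via (ES4) and then applies the $\lambda$-criterion; both routes are one-line applications of the same machinery.
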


\begin{proof} 
\eqref{i:apr4a} Assume that $a=b\lambda(a)$. Then $a=\rho(b\lambda(a))b$ and thus $a\leq b$. 
\eqref{i:apr4b} We have $a=b\lambda(a)=b\lambda(b)=b$. \eqref{i:apr4c} We have $\lambda(af)=\lambda(\lambda(a)f)=\lambda(a)f$. It follows that $af=a\lambda(a)f=a\lambda(af)$ and so $af\leq a$ by part \eqref{i:apr4a}.
\end{proof}

We now record the fact that restriction semigroups are {\em partially ordered semigroups} with respect to their natural partial orders. This will be an important point when we come to construct quantales from restriction semigroups later.

\begin{lemma}\label{lem:aaa3} Let $S$ be a restriction semigroup.
The natural partial order  is compatible with the multiplication on the left and on the right.
\end{lemma}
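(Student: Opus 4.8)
The plan is to establish the two compatibility conditions separately, the key tool being the characterizations of the natural partial order recorded in Lemma~\ref{lem:apr4}\eqref{i:apr4a} together with the bi-deterministic property, which is precisely what upgrades an Ehresmann semigroup to a restriction semigroup. Concretely, I must show that $a \leq b$ implies $ac \leq bc$ and $ca \leq cb$ for every $c \in S$.

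For right compatibility, suppose $a \leq b$. By Lemma~\ref{lem:apr4}\eqref{i:apr4a} I may write $a = b\lambda(a)$. Multiplying on the right by $c$ gives $ac = b\lambda(a)c$. Now I would invoke bi-determinism: since $\lambda(a) \in E$, the element $c$ satisfies $\lambda(a)c = c\lambda(\lambda(a)c)$, so that $ac = (bc)g$ where $g = \lambda(\lambda(a)c) \in E$. Finally, Lemma~\ref{lem:apr4}\eqref{i:apr4c} tells us that $(bc)g \leq bc$, whence $ac \leq bc$.

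Left compatibility is handled by the symmetric argument. Writing $a = \rho(a)b$ via Lemma~\ref{lem:apr4}\eqref{i:apr4a} and multiplying on the left by $c$ gives $ca = c\rho(a)b$; bi-determinism of $c$ yields $c\rho(a) = \rho(c\rho(a))c$, so $ca = h(cb)$ with $h = \rho(c\rho(a)) \in E$, and Lemma~\ref{lem:apr4}\eqref{i:apr4c} again gives $h(cb) \leq cb$, that is, $ca \leq cb$.

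I do not expect a genuine obstacle here, but the conceptual crux — and the step that would fail for a merely Ehresmann (non-restriction) semigroup — is the use of bi-determinism to transport the projection $\lambda(a)$ (resp.\ $\rho(a)$) across the factor $c$. Without the ability to move projections through elements, $\lambda(a)c$ need not rewrite as a right multiple of $c$ by a projection, and the membership of the product in the order-ideal below $bc$ (resp.\ $cb$) could not be concluded. The remaining manipulations are routine applications of the order characterization and of the fact that multiplying by a projection drops an element in the natural partial order.
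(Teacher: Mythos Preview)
Your proof is correct and follows essentially the same route as the paper: write $a = \rho(a)b$ (or $a = b\lambda(a)$), multiply by $c$, use bi-determinism to push the projection across $c$, and conclude via Lemma~\ref{lem:apr4}. The paper treats only the left case $ca \leq cb$ and appeals to symmetry for the other, citing part~\eqref{i:apr4a} rather than~\eqref{i:apr4c} for the final step, but the argument is the same.
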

\begin{proof} 
Let $a \leq b$.
Then $a = \rho (a)b$ and thus  $ca = c \rho (a)b=\rho (c\rho(a))cb$.
Thus $ca\leq cb$ by part \eqref{i:apr4a} of Lemma \ref{lem:apr4}.
The result now follows by symmetry.
\end{proof}

The following result is stated in \cite{CGH} and the proof is immediate from the definitions.

\begin{lemma} Let $S$ be an Ehresmann semigroup with respect to $E$.
Then the set of bi-deterministic elements of $S$ contains $E$ and forms a restriction semigroup with respect to $E$.
\end{lemma}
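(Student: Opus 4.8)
The plan is to write $B$ for the set of bi-deterministic elements of $S$ and to verify three things in turn: that $E \subseteq B$, that $B$ is closed under the semigroup multiplication, and that $B$, carrying the Ehresmann structure inherited from $S$, is then a restriction semigroup with respect to $E$. Only the closure under multiplication requires any genuine computation; the remaining points are formal.

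First I would record the auxiliary identity $\lambda(af) = \lambda(a)f$ for every $a \in S$ and $f \in E$, together with its right-handed dual. This is immediate: putting $b = f$ in (ES4) gives $\lambda(\lambda(a)f) = \lambda(af)$, and since $\lambda(a)f$ lies in the commutative subsemigroup $E$ it is fixed by $\lambda$ by (ES2), whence $\lambda(af) = \lambda(a)f$. To see $E \subseteq B$, I would take $e,f \in E$ and use that $E$ is a commutative band: then $fe \in E$, so $\lambda(fe) = fe$ and $e\lambda(fe) = efe = fe$, which is the first defining equation of bi-determinism, the second being dual.

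The heart of the matter is to show $ab \in B$ whenever $a,b \in B$. Fixing $f \in E$, bi-determinism of $a$ lets me write $fa = a\lambda(fa) = ag$ with $g := \lambda(fa) \in E$, and bi-determinism of $b$ then lets me write $gb = b\lambda(gb) = bh$ with $h := \lambda(gb) \in E$; hence $fab = agb = abh$. It remains to recognise $h$ as the correct support, and this is exactly where the auxiliary identity is used:
$$
ab\,\lambda(fab) = ab\,\lambda(abh) = ab\,\lambda(ab)\,h = (ab)\,h = fab,
$$
the penultimate equality being (ES3). Thus $fab = ab\,\lambda(fab)$, and the dual computation yields $(ab)f = \rho((ab)f)\,(ab)$, so $ab$ is bi-deterministic.

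Finally I would observe that $B$ is an Ehresmann semigroup with respect to $E$: it contains $E$, it is closed under multiplication, and since $\lambda(a),\rho(a) \in E \subseteq B$ it is closed under the structure maps; hence axioms (ES1)--(ES4), being universally quantified identities valid throughout $S$, hold \emph{a fortiori} in $B$. Because the bi-determinism conditions refer only to projections, multiplication and the structure maps, an element of $B$ is bi-deterministic relative to $B$ precisely when it is so relative to $S$; every element of $B$ therefore qualifies, and $B$ is a restriction semigroup. The only real obstacle is bookkeeping the projections through the product in the closure step, which is precisely where bi-determinism of \emph{both} factors is invoked.
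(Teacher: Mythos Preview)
Your proof is correct. The paper itself does not give a proof of this lemma; it simply states that the result appears in \cite{CGH} and that ``the proof is immediate from the definitions.'' Your argument supplies exactly the details one would expect: the auxiliary identity $\lambda(af)=\lambda(a)f$, the verification that $E\subseteq B$, and the closure of $B$ under multiplication via the two-step pushing of the projection through the product. There is nothing to compare against, and your approach is the natural one.
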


Two elements $a,b$ of  a restriction semigroup will be called {\em compatible},
denoted by $a \sim b$,
if $a\lambda(b)=b\lambda(a)$ and $\rho(b)a=\rho(a)b$. 
A non-empty set  $A$ of elements is said to be \emph{compatible} if any two elements in $A$ are compatible.
As already the case for inverse semigroups, the compatibility relation is not, in general, transitive.

\begin{lemma}\label{lem:aug27} Let $S$ be a restriction semigroup and $a,b,c,d\in S$. If $a\sim c$ and $b\sim d$ then  $ab\sim cd$. That is to say, the compatibility relation is compatible with multiplication from the right and from the left.
\end{lemma}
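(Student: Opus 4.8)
The plan is first to resolve an evident typographical slip in the statement: with the hypotheses $a \sim c$ and $b \sim d$, and with the declared intention that $\sim$ be ``compatible with multiplication from the right and from the left,'' the conclusion to prove is $ab \sim cd$. The two purely one-sided assertions, namely $ab \sim cb$ (the case $d=b$) and $ab \sim ad$ (the case $c=a$), are then special instances, since $x \sim x$ holds for every $x$. One is tempted to deduce the general statement by chaining $ab \sim cb$ and $cb \sim cd$, but this is illegitimate: as observed just before the lemma, $\sim$ is not transitive. I would therefore verify $ab \sim cd$ \emph{directly} from the definition, that is, establish the two equalities
$$(ab)\lambda(cd) = (cd)\lambda(ab) \quad\text{and}\quad \rho(cd)(ab) = \rho(ab)(cd).$$

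The proof rests on three tools. First, minimality of the structure maps gives $\lambda(xy) \leq \lambda(y)$ and dually $\rho(xy) \leq \rho(x)$, since $(xy)\lambda(y)=xy$ and $\rho(x)(xy)=xy$; in particular $\lambda(cd)=\lambda(d)\lambda(cd)$ and $\lambda(ab)=\lambda(b)\lambda(ab)$. Second, bi-determinism lets one slide projections across elements: applying $fa = a\lambda(fa)$ with $f=\lambda(c)$, $a=d$, together with the identity $\lambda(\lambda(c)d)=\lambda(cd)$ from (ES4), yields $d\lambda(cd)=\lambda(c)d$, and likewise $b\lambda(ab)=\lambda(a)b$. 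Third, projections commute by (ES1), and the hypotheses supply $a\lambda(c)=c\lambda(a)$, $b\lambda(d)=d\lambda(b)$, $\rho(c)a=\rho(a)c$, $\rho(d)b=\rho(b)d$.

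With these in hand the first equality is a short rewrite. Starting from $ab\lambda(cd)=ab\lambda(d)\lambda(cd)$, I replace $b\lambda(d)$ by $d\lambda(b)$, commute the two projections, and use $d\lambda(cd)=\lambda(c)d$ to reach $ab\lambda(cd)=a\lambda(c)\,d\lambda(b)$; a final application of $d\lambda(b)=b\lambda(d)$ puts this in the form $a\lambda(c)\,b\lambda(d)$. Running the mirror-image reduction on $cd\lambda(ab)$ (using $\lambda(ab)\leq\lambda(b)$, then $d\lambda(b)=b\lambda(d)$, then $b\lambda(ab)=\lambda(a)b$, then $c\lambda(a)=a\lambda(c)$) brings it to the \emph{same} normal form $a\lambda(c)\,b\lambda(d)$, so the two sides agree. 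The second equality $\rho(cd)(ab)=\rho(ab)(cd)$ is the exact left-right dual: it uses $\rho(xy)\leq\rho(x)$, the dual bi-determinism identities $\rho(cd)c=c\rho(d)$ and $\rho(ab)a=a\rho(b)$, and the $\rho$-halves of the compatibility hypotheses, collapsing both sides to the common expression $\rho(a)c\,\rho(b)d$.

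The main obstacle is precisely the failure of transitivity of $\sim$, which rules out a lazy two-step argument and forces the explicit computation. Once that is accepted, the only real care needed is the bookkeeping of sliding projections through elements via bi-determinism, and the recognition that each of the two defining equalities reduces, from both sides, to a single common expression ($a\lambda(c)\,b\lambda(d)$ for the $\lambda$-condition, $\rho(a)c\,\rho(b)d$ for the $\rho$-condition).
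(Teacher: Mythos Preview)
Your proposal is correct and follows essentially the same route as the paper's proof: both reduce $ab\lambda(cd)$ and $cd\lambda(ab)$ to the common form $a\lambda(c)\,b\lambda(d)=c\lambda(a)\,d\lambda(b)$ via the inequality $\lambda(cd)\leq\lambda(d)$, the bi-determinism identity $d\lambda(cd)=\lambda(c)d$, commutation of projections, and the hypothesis $b\lambda(d)=d\lambda(b)$, then invoke left--right symmetry for the $\rho$-equality. Your identification and handling of the typo (the intended conclusion is $ab\sim cd$) is also correct.
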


\begin{proof} Assume that $a\sim c$ and $b\sim d$. We calculate
\begin{align*}
ab\lambda(cd) = ab\lambda(\lambda(c)d)& = ab\lambda(d)\lambda(\lambda(c)d) & \text{since } \lambda(\lambda(c)d)\leq \lambda(d)\\
& = ad\lambda(b)\lambda(\lambda(c)d) & \text{since } b\sim d\\
&  ad\lambda(\lambda(c)d)\lambda(b) &  \text{since projections commute}\\
& = a\lambda(c)d\lambda(b) & \text{using } p\lambda(q)=q \text{ for } p\geq p\\
& = a\lambda(c)b\lambda(d) & \text{since } b\sim d
\end{align*}
and by symmetry we also have $cd\lambda(ab)=c\lambda(a)d\lambda(b)$. Since $a\sim c$ and $b\sim d$ we obtain the equality $ab\lambda(cd)=cd\lambda(ab)$. The dual equality follows by symmetry, so that the elements $ab$ and $cd$ are compatible.
\end{proof}

The following lemma shows that being compatible is a necessary condition to have an upper bound or a join.

\begin{lemma}\label{lem:aug28} Let $S$ be a restriction semigroup and $A\subseteq S$. 
\begin{enumerate}
\item If $c\geq a$ for all $a\in A$ then $A$ is a compatible family.
\item If $\bigvee A$ exists in $S$ then $A$ is a compatible family.
\end{enumerate}
\end{lemma}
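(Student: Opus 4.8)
The plan is to prove part (1) directly and then read off part (2) as an immediate corollary: whenever $\bigvee A$ exists it is in particular an upper bound of every element of $A$, so applying part (1) with $c=\bigvee A$ finishes that case. Thus essentially all the content sits in part (1).

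For part (1), I would fix two arbitrary elements $a,b\in A$ and verify the two defining equalities of the compatibility relation, namely $a\lambda(b)=b\lambda(a)$ and $\rho(b)a=\rho(a)b$. The key tool is the characterization of the natural partial order in Lemma~\ref{lem:apr4}\eqref{i:apr4a}: from $a\leq c$ we may write $a=c\lambda(a)$ and $a=\rho(a)c$, and likewise from $b\leq c$ we get $b=c\lambda(b)$ and $b=\rho(b)c$. This is precisely the step that exploits the hypothesis of a \emph{common} upper bound: both $a$ and $b$ are recovered from the \emph{single} element $c$ by restricting with their own supports, which is what lets the two elements interact.

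The verification then reduces to the commutativity of projections, axiom (ES1). For the first equality I would compute
$$a\lambda(b)=c\lambda(a)\lambda(b)=c\lambda(b)\lambda(a)=b\lambda(a),$$
and dually, using the $\rho$-factorizations,
$$\rho(b)a=\rho(b)\rho(a)c=\rho(a)\rho(b)c=\rho(a)b.$$
Since $a,b\in A$ were arbitrary, this shows that any two elements of $A$ are compatible, i.e. that $A$ is a compatible family.

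There is no serious obstacle here; the argument is short once the right order characterization is fed in. The only points requiring a little care are to use the $\lambda$-factorization of $a$ and $b$ through $c$ for the first equality and the $\rho$-factorization for the second, and to remember that $\lambda(a),\lambda(b)$ (resp.\ $\rho(a),\rho(b)$) are genuine projections, so that their commuting is exactly the content of (ES1).
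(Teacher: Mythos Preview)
Your proof is correct and follows essentially the same approach as the paper: both factor $a$ and $b$ through the common upper bound $c$ and then use commutativity of projections to conclude $a\lambda(b)=c\lambda(a)\lambda(b)=c\lambda(b)\lambda(a)=b\lambda(a)$, with part (2) read off from part (1). Your version is in fact a touch more direct, since you immediately write $a=c\lambda(a)$ and multiply by $\lambda(b)$, whereas the paper first observes $a\lambda(b)\leq c$ and then computes $\lambda(a\lambda(b))=\lambda(a)\lambda(b)$ via (ES4); but the underlying idea is identical.
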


\begin{proof} (1) Suppose that $c\geq a,b$. 
Then $c\geq a\lambda(b)$ and $c\geq b\lambda(a)$.  
From the first inequality we obtain 
$a\lambda(b)=c\lambda (a\lambda(b)) c\lambda (\lambda(a)\lambda(b))=c\lambda(a)\lambda(b)$.
Similarly, $b\lambda(a)=c\lambda(a)\lambda(b)$. 
Therefore, $a\lambda(b)=b\lambda(a)$. 
The equality $\rho(b)a=\rho(a)b$ follows by symmetry. 
It follows that $a$ and $b$ are compatible.
(2) follows from (1).
\end{proof}

The following lemma shows that functions $\lambda$ and $\rho$ are monotone. We use this fact throughout the paper without further mention.

\begin{lemma}\label{lem:mon22}
$a\leq b$ implies $\lambda(a)\leq \lambda(b)$ and $\rho(a)\leq \rho(b)$.
\end{lemma}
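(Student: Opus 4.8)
The plan is to prove that $a \leq b$ implies both $\lambda(a) \leq \lambda(b)$ and $\rho(a) \leq \rho(b)$ by reducing everything to the characterization of the natural partial order available for restriction semigroups, namely Lemma~\ref{lem:apr4}\eqref{i:apr4a}, together with the structure-map axiom (ES4) and the fact that projections commute (ES1). I would treat $\lambda$ first and then invoke symmetry for $\rho$, exactly as the preceding lemmas in this section do.

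First I would start from the hypothesis $a \leq b$ and use Lemma~\ref{lem:apr4}\eqref{i:apr4a} to write $a = b\lambda(a)$. Applying $\lambda$ to both sides and using (ES4) in the form $\lambda(b\lambda(a)) = \lambda(\lambda(b)\lambda(a))$ is one natural route; since $\lambda(b)$ and $\lambda(a)$ are projections, they commute and multiply within the semilattice $E$, giving $\lambda(a) = \lambda(b)\lambda(a)$. This last equation says precisely that $\lambda(a) \leq \lambda(b)$ inside the semilattice $E$, which is exactly the natural partial order restricted to projections (here $\lambda(a) = \lambda(b)\lambda(a)$ exhibits $\lambda(a)$ as $\lambda(b)$ times a projection, so $\lambda(a) \leq \lambda(b)$).

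The step I expect to need the most care is the manipulation $\lambda(b\lambda(a)) = \lambda(b)\lambda(a)$: I want to be sure I am applying (ES4) correctly and that the idempotent computation in $E$ genuinely yields $\lambda(a) = \lambda(b)\lambda(a)$ rather than merely some weaker relation. One clean way to see it is to observe that $\lambda(a)$ is by definition a projection, so $\lambda(\lambda(a)) = \lambda(a)$ by (ES2), and then $\lambda(a) = \lambda(b\lambda(a)) = \lambda(\lambda(b)\lambda(a)) = \lambda(b)\lambda(a)$, where the middle equality is (ES4) and the final equality holds because $\lambda(b)\lambda(a) \in E$ forces $\lambda$ to act as the identity on it via (ES2). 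Once $\lambda(a) = \lambda(b)\lambda(a)$ is established, the conclusion $\lambda(a) \leq \lambda(b)$ is immediate from the definition of the order on $E$.

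Finally, the statement for $\rho$ follows by the left–right symmetry that is built into the Ehresmann axioms: starting instead from $a = \rho(a)b$ (the other half of Lemma~\ref{lem:apr4}\eqref{i:apr4a}) and applying the dual part of (ES4), namely $\rho(\rho(a)b) = \rho(\rho(a)\rho(b))$, together with (ES2), yields $\rho(a) = \rho(a)\rho(b)$ and hence $\rho(a) \leq \rho(b)$. I would simply remark that this case is dual and omit the repeated computation, in keeping with the style of Lemmas~\ref{lem:aaab} and~\ref{lem:aaa3}.
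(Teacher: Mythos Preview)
Your proof is correct and follows essentially the same approach as the paper: from $a = b\lambda(a)$ you apply $\lambda$, use (ES4) and (ES2) to obtain $\lambda(a) = \lambda(b)\lambda(a) \leq \lambda(b)$, and then invoke symmetry for $\rho$. The paper's argument is the same computation written in one line.
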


\begin{proof}
We have $a=b\lambda(a)$, and so $\lambda(a)=\lambda(b\lambda(a))=\lambda(b)\lambda(a)\leq \lambda(b)$. The inequality $\rho(a)\leq \rho(b)$ follows by symmetry.
\end{proof}

A restriction semigroup is said to be {\em ample} if $ac = bc$ implies $a\rho (c) = b \rho (c)$,
and dually.

\begin{example}\label{ex:bin_rel}
{\em We now return to Example~\ref{ex:bin_rel_new}.
Let $X$ be a non-empty set with at least two elements.
Let $A\subseteq X\times X$ be a transitive and reflexive relation on $X$,
and $\mathcal{P}(A)$ the powerset of $A$.  
Assume that there are distinct $i,j\in X$ such that $(j,i)\in A$. 
The relation $t=\{(i,i),(j,i)\}\in \mathcal{P}(A)$  is not bi-deterministic.
Indeed, let $t' = \{(j,i)\}$. 
We have $\lambda (t') = \{(i,i)\}$ and $\rho (t') = \{(j,j)\}$. 
Observe that $\rho(t')t= t'$ whereas $t\lambda(\rho(t')t) =  t\lambda(t')=t$.  
It follows that unless $A$ is the identity relation, $\mathcal{P}(A)$ is not a restriction monoid.

Denote by $\mathsf{I}(A)$ the subset of $\mathcal{P}(A)$ consisting
of all those relations $a$ that also satisfy the condition  that $(y,x),(y,x') \in a$ or $(x,y),(x',y) \in a$ implies that $x = x'$.
That is, $\mathsf{I}(A)$ consists of all partial bijections that are contained in $\mathcal{P}(A)$.
This is closed under multiplication and forms a restriction monoid.
It is easy to check that for any  $a,b,c\in \mathcal{I}(A)$ we have that
$ac=bc$ implies that $a\rho(c)=b\rho(c)$ and $ca=cb$ implies that $\lambda(c)a=\lambda(c)b$. 
Thus $\mathsf{I}(A)$ is, in fact, ample.
It is an inverse monoid precisely when the relation $A$ is symmetric.}
 \end{example}
 
In this paper, restriction semigroups will play the role that inverse semigroups played in \cite{LL1,LL2,Re} and, in fact,
a number of aspects of inverse semigroup theory can be easily generalized to restriction semigroups.

\begin{lemma} Let $S$ be a restriction semigroup with respect to $E$ and  $a,b\in E$. 
\begin{enumerate}
\item  $a\wedge b$ always exists in $S$, it is a projection, and equals $ab$. 
\item If $a\vee b$ exists in $S$ then it is a projection and is equal to the join $a\vee_E b$ of $a$ and $b$ in $E$.
\end{enumerate}
\end{lemma}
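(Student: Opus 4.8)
The plan is to exploit throughout that, since $E$ is a commutative band (by (ES1)), it is a meet-semilattice whose order is exactly the restriction of the natural partial order: for $x,y\in E$ one has $x\leq y$ if and only if $xy=x$, combining $\lambda(x)=x$ (by (ES2)) with the characterization in Lemma~\ref{lem:apr4}\eqref{i:apr4a}. Both parts then reduce to relating upper and lower bounds taken in $S$ to bounds taken inside $E$.

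For part (1) I would first check that $ab\in E$ is a lower bound of $a$ and $b$. Indeed $ab=a\cdot b\leq a$ by Lemma~\ref{lem:apr4}\eqref{i:apr4c} since $b\in E$, and $ab=ba\leq b$ by the same lemma (using commutativity of $E$). To see that $ab$ is the \emph{greatest} lower bound, I would take any $c\in S$ with $c\leq a$ and $c\leq b$ and aim to show $c\leq ab$. By Lemma~\ref{lem:apr4}\eqref{i:apr4a} both $c=a\lambda(c)$ and $c=b\lambda(c)$, so that
\[
ab\lambda(c)=a\bigl(b\lambda(c)\bigr)=ac=a\bigl(a\lambda(c)\bigr)=a\lambda(c)=c,
\]
the last step using idempotency of $a$. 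Applying Lemma~\ref{lem:apr4}\eqref{i:apr4a} once more gives $c\leq ab$, so $a\wedge b=ab$ is a projection.

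For part (2), suppose $c:=a\vee b$ exists in $S$. The crucial observation is that $\lambda(c)$ is itself an upper bound of $\{a,b\}$: since $a,b\leq c$, monotonicity of $\lambda$ (Lemma~\ref{lem:mon22}) together with $\lambda(a)=a$ and $\lambda(b)=b$ (by (ES2)) yields $a\leq\lambda(c)$ and $b\leq\lambda(c)$. As $c$ is the \emph{least} upper bound, $c\leq\lambda(c)$; but then Lemma~\ref{lem:apr4}\eqref{i:apr4a} forces $c=\lambda(c)\lambda(c)=\lambda(c)$ by idempotency, whence $c\in E$. To finish, I would note that any $d\in E$ which is an upper bound of $a,b$ within $E$ is also an upper bound within $S$ (the two orders agree on $E$), so $c\leq d$ by leastness of $c$; thus $c$ is the least upper bound of $a,b$ \emph{inside} $E$, which is exactly the assertion that $a\vee_E b$ exists and equals $c$.

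I expect the only genuinely non-routine step to be the trick in part (2): recognizing that $\lambda(c)$ (equivalently $\rho(c)$) is an upper bound of $a$ and $b$, which squeezes $c$ between $a,b$ and a projection and collapses it onto $\lambda(c)$. Everything else is bookkeeping with the order characterizations already recorded in Lemma~\ref{lem:apr4} and the semilattice structure of $E$.
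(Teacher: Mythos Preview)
Your proof is correct. For part (2) your argument is essentially identical to the paper's: both observe that $a=\lambda(a)\leq\lambda(c)$ and $b=\lambda(b)\leq\lambda(c)$, deduce $c\leq\lambda(c)$, and conclude $c=\lambda(c)\in E$. For part (1) there is a minor difference: the paper invokes the (unproved but easy) fact that $E$ is an order ideal in $S$, so any lower bound of $a,b$ in $S$ already lies in $E$, reducing the computation of $a\wedge b$ to the meet in the semilattice $E$; your direct verification that $ab\lambda(c)=c$ achieves the same conclusion without appealing to that lemma and is arguably more self-contained.
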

\begin{proof} (1) This follows from the easy result that the set of projections forms an order ideal in any restriction semigroup.

(2) Suppose that $a\vee b$ exists in $S$. 
Then since $a=\lambda(a)$ and $b=\lambda(b)$,
we have that 
$a,b \leq \lambda(a\vee b)$. 
It follows that $a\vee b\leq \lambda(a\vee b)$.
Thus $a \vee b$ is a projection.
The proof of the remaining claim is immediate.
\end{proof}

\begin{example} {\em The fact that $a\vee_E b$ exists does not in general imply that $a\vee b$ exists. Let $X=\{1,2,3,4\}$ and consider the following elements of the symmetric inverse monoid ${\mathcal I}_X$: $e=(1)(2)(3)(4)$, $a=(1)(2)(34)$, $f=(1)(2](3](4]$, $g=(1](2)(3](4]$ and $0=(1](2](3](4]$ (we used the standard cycle-chain notation, see \cite{GM}). Then $S=\{e,a,f,g,0\}$  is a restriction (and even inverse) semigroup with respect to $E=\{e,f,g,0\}$. We have $e=f\vee_E g$ but $f\vee g$ does not exist in $S$.} \end{example}

We now prove some results dealing with arbitrary non-empty joins in restriction semigroups.

\begin{lemma}\label{lem:joins22} Let $S$ be a restriction semigroup and $A\subseteq S$.
Suppose that  $\bigvee A$ and $\bigvee_{a\in A} \lambda(a)$ exist. Then
$$
 \lambda \left (\bigvee A\right )=\bigvee_{a\in A} \lambda(a)
$$
and dually.
\end{lemma}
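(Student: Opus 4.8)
Write $b = \bigvee A$ and $e = \bigvee_{a\in A}\lambda(a)$, both of which exist by hypothesis, and prove the equality $\lambda(b)=e$ by establishing the two inequalities separately; the dual statement for $\rho$ will follow by the left--right symmetry of the axioms. The easy inequality $e \leq \lambda(b)$ comes directly from monotonicity: for each $a \in A$ we have $a \leq b$, so Lemma~\ref{lem:mon22} gives $\lambda(a) \leq \lambda(b)$, whence $\lambda(b)$ is an upper bound of $\{\lambda(a)\colon a \in A\}$ and therefore $e \leq \lambda(b)$. As an immediate byproduct, since $\lambda(b)$ is a projection and the projections form an order ideal (as noted in the proof of the lemma on meets and joins of projections), the element $e$ lies below a projection and is hence itself a projection.

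The substance of the argument, and the step I expect to be the main obstacle, is the reverse inequality $\lambda(b) \leq e$. The plan is to show that $be = b$ and then invoke the fact that $\lambda(b)$ is the smallest projection $f$ with $bf = b$, which forces $\lambda(b) \leq e$ once we know $e \in E$. To prove $be = b$, I would argue that $be$ is an upper bound of $A$. Fix $a \in A$. Since $a \leq b$, Lemma~\ref{lem:apr4}\eqref{i:apr4a} gives $a = b\lambda(a)$; as $\lambda(a) \leq e$ and both are projections, $\lambda(a)e = \lambda(a)$, so
$$ae = b\lambda(a)e = b\lambda(a) = a.$$
Combining $a = ae$ with $a \leq b$ and the right compatibility of the natural partial order (Lemma~\ref{lem:aaa3}) yields $a = ae \leq be$. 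Thus $be$ is an upper bound of $A$, so $b = \bigvee A \leq be$; the opposite inequality $be \leq b$ is Lemma~\ref{lem:apr4}\eqref{i:apr4c}, and we conclude $be = b$.

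With $be = b$ and $e$ a projection in hand, the characterization of $\lambda(b)$ as the least projection absorbed by $b$ on the right gives $\lambda(b) \leq e$, and together with the first paragraph this proves $\lambda(b) = e$, i.e. $\lambda\!\left(\bigvee A\right) = \bigvee_{a\in A}\lambda(a)$. The dual equality $\rho\!\left(\bigvee A\right) = \bigvee_{a\in A}\rho(a)$, under the assumption that $\bigvee A$ and $\bigvee_{a\in A}\rho(a)$ exist, is obtained by running the same reasoning with left and right interchanged. The only point requiring care is that I never appeal to any distributivity of multiplication over joins (which need not hold in a general restriction semigroup); the entire argument instead rests on the order-theoretic characterization of $\lambda$ together with compatibility of multiplication with the natural partial order.
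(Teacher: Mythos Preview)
Your argument is correct and follows essentially the same route as the paper's: both establish the easy inequality $\bigvee_{a}\lambda(a)\leq\lambda(\bigvee A)$ by monotonicity, both show that $\bigl(\bigvee A\bigr)\bigl(\bigvee_{a}\lambda(a)\bigr)=\bigvee A$ by checking it is an upper bound of $A$ while also lying below $\bigvee A$, and both then extract the reverse inequality. The only cosmetic difference is the last step: the paper computes $\lambda\bigl(\bigvee A\bigr)=\lambda\bigl((\bigvee A)(\bigvee_a\lambda(a))\bigr)=\lambda(\bigvee A)\cdot\bigvee_a\lambda(a)\leq\bigvee_a\lambda(a)$ directly via (ES4), whereas you invoke the characterization of $\lambda(b)$ as the least projection $f$ with $bf=b$. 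Your write-up is in fact slightly more careful in making explicit that $\bigvee_a\lambda(a)$ is a projection (via the order-ideal property of $E$), a point the paper leaves implicit.
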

\begin{proof} 
For each $a\in A$ from $a \leq \bigvee A$, we have that $\lambda (a) \leq  \lambda (\bigvee A)$.
Thus 
$\bigvee_{a\in A} \lambda(a) \leq \lambda (\bigvee A)$.
Put $x=(\bigvee A)(\bigvee_{a\in A} \lambda(a))$. 
We have $x\geq a$ for all $a\in A$, and so $x\geq \bigvee A$. 
On the other hand  $\bigvee A \geq x$, and so we have the equality
$x=\bigvee A$. 
Therefore, 
$$
\lambda\left (\bigvee A\right ) = \lambda (x) = \lambda\left (\bigvee A\right )\bigvee_{a\in A} \lambda(a)\leq \bigvee_{a\in A} \lambda(a).
$$
\end{proof}
  
\begin{lemma}\label{lem:molly} Let $S$ be a restriction semigroup with respect to $E$
and  assume that all  (respectively, all  finite) compatible joins exist.
Then multiplication distributes over arbitrary  (respectively, arbitrary finite) compatible joins if and only if the semilattice of projections $E$ is a frame
(respectively, a distributive lattice).
\end{lemma}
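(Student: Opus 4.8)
The plan is to prove both directions by pushing everything down to the semilattice $E$, exploiting that supports behave well under joins (Lemma~\ref{lem:joins22}) and that the natural partial order is ``rigid'' on supports (Lemma~\ref{lem:apr4}\eqref{i:apr4b}). First I would record two preliminary facts that supply the lattice structure of $E$ for free. Since $E$ is commutative and $\lambda,\rho$ restrict to the identity on $E$, any two projections $e,f$ satisfy $e\lambda(f)=ef=f\lambda(e)$ and $\rho(f)e=fe=\rho(e)f$, so \emph{every} family of projections is compatible; hence the standing hypothesis forces all (resp. all finite) joins of subsets $A\subseteq E$ to exist in $S$. Such a join is again a projection: each $a=\lambda(a)\leq\lambda(\bigvee A)$ gives $\bigvee A\leq\lambda(\bigvee A)$, whence $\bigvee A=\lambda(\bigvee A)$ by Lemma~\ref{lem:apr4}\eqref{i:apr4a}, and it coincides with the join taken inside $E$. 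Thus $E$ is automatically a complete lattice (resp. lattice) with meets given by products, and the frame/distributive law in $E$ is \emph{literally} the statement $y\bigl(\bigvee_i x_i\bigr)=\bigvee_i(yx_i)$ for $y,x_i\in E$. The forward implication is then immediate: specialising distributivity to a compatible family $\{x_i\}\subseteq E$ and a projection $y$, and using that the $S$-join of the $x_i$ is their $E$-join while products of projections are meets, turns $y\bigvee_i x_i=\bigvee_i yx_i$ into exactly the frame (resp. distributive) law.

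The substantive direction is the converse, which I would attack in two stages. \emph{Stage one}: assuming $E$ is a frame, show that a projection distributes over compatible joins, i.e. $f\bigvee A=\bigvee_{a\in A}fa$ for $f\in E$ and compatible $A$ with $c=\bigvee A$. The inequality $\bigvee_a fa\leq fc$ is immediate from Lemma~\ref{lem:aaa3}. For the reverse I would compute right supports: from (ES4) one gets $\rho(fa)=\rho(f\rho(a))=f\wedge\rho(a)$, so using Lemma~\ref{lem:joins22} and the frame law of $E$,
$$\rho\Bigl(\bigvee_a fa\Bigr)=\bigvee_a\bigl(f\wedge\rho(a)\bigr)=f\wedge\bigvee_a\rho(a)=f\wedge\rho(c)=\rho(fc).$$
Since $\bigvee_a fa\leq fc$ and the two have equal right support, Lemma~\ref{lem:apr4}\eqref{i:apr4b} forces equality; right multiplication by a projection is symmetric.

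\emph{Stage two}: bootstrap to an arbitrary $b\in S$. Put $c=\bigvee A$ and $v=\bigvee_{a\in A}ba$, both existing by Lemma~\ref{lem:aug27} and the hypothesis, so that $v\leq bc$ by monotonicity. Using Lemma~\ref{lem:joins22}, (ES4), and the projection-distributivity of stage one applied to $f=\lambda(b)$,
$$\lambda(v)=\bigvee_a\lambda(ba)=\bigvee_a\lambda(\lambda(b)a)=\lambda\Bigl(\bigvee_a\lambda(b)\,a\Bigr)=\lambda(\lambda(b)c)=\lambda(bc).$$
Then $v\leq bc$ together with $\lambda(v)=\lambda(bc)$ gives $v=bc$ by Lemma~\ref{lem:apr4}\eqref{i:apr4b}, which is the desired left distributivity; right distributivity is dual. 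The finite case runs verbatim with ``frame'' replaced by ``distributive lattice'' and every join taken finite.

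The main obstacle is precisely this reverse inequality in the converse: distributivity cannot be read off directly, so the entire strategy is to convert it into an identity of supports (where the frame law of $E$ becomes available and Lemma~\ref{lem:joins22} transports joins through $\lambda,\rho$) and then upgrade the trivial inequality $v\leq bc$ to equality via the support-rigidity of Lemma~\ref{lem:apr4}\eqref{i:apr4b}. The only delicate bookkeeping is lining up the support identities $\rho(fa)=f\wedge\rho(a)$ and $\lambda(ba)=\lambda(\lambda(b)a)$ correctly with Lemma~\ref{lem:joins22} so that the frame law can be invoked at the right step.
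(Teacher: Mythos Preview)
Your proposal is correct and follows essentially the same approach as the paper: first establish distributivity for multiplication by a projection via a support computation (using Lemma~\ref{lem:joins22} and the frame law in $E$, then invoking Lemma~\ref{lem:apr4}\eqref{i:apr4b}), and then bootstrap to an arbitrary element $b$ by applying the projection case to $\lambda(b)$ and comparing $\lambda$-supports. Your write-up is slightly more explicit than the paper's in spelling out the forward implication and the preliminary facts about joins of projections, but the structure and the key steps are the same.
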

\begin{proof} Only one direction needs proving, and we shall also only prove the frame version since the other version is similar.
We begin with a special case.
Let $e$ be a projection and suppose that $\bigvee A$ exists.
We prove first that $$
e \left( \bigvee A \right) = \bigvee_{a\in A}ea.
$$
For each $a\in A$ from $a \leq \bigvee A$ we obtain $ea \leq e \left( \bigvee A \right)$.
By Lemma~\ref{lem:aug28} 
$\bigvee_{a\in A} ea$ exists and clearly $\bigvee_{a\in A} ea \leq e \left( \bigvee A\right)$.
We now calculate $\rho (\bigvee_{a\in A} ea)$ and $\rho (e \left( \bigvee A \right))$.
We use Lemma~\ref{lem:joins22} that $\rho$ preserves any joins that exist, 
and then use the fact that we have infinite distributivity for projections.
We deduce that  $$\rho \left(\bigvee_{a\in A} ea\right) = \rho \left(e \left( \bigvee A \right)\right)$$
and so  $e \left( \bigvee A \right) = \bigvee_{a\in A} ea$, as required.
We now deal with the general case.
We prove that 
$b \left( \bigvee A \right) = \bigvee_{a\in A} ba$.
As before, we quickly show that
$\bigvee_{a\in A} ba \leq b \left( \bigvee A \right)$.
We now calculate $\lambda$ of both sides and use our previous result to show that
$$ \lambda \left(\bigvee_{a\in A} ba\right) = \lambda \left(b \left( \bigvee A\right) \right).$$ 
It follows that 
$b \left( \bigvee A\right) = \bigvee_{a\in A} ba$, as required.
\end{proof}

Let $S$ be a restriction semigroup with respect to $E$. 
We say that it is {\em distributive} if $E$ is a distributive lattice and $S$ has joins of all non-empty finite compatible subsets.
We say that  it  is \emph{complete} if $E$ is a frame and $S$ has joins of all non-empty compatible subsets.
By Lemma \ref{lem:e},
it follows that multiplication distributes over finite and arbitrary non-empty joins, respectively.
Inverse semigroups are an important class of restriction semigroups.
A complete restriction monoid that is inverse is called a {\em pseudogroup}\footnote{Resende uses the term {\em abstract complete pseudogroup}.}.
Marco Grandis \cite{G1} defined totally cohesive categories which in our terminology  would be complete restriction {\em categories}.

\begin{proposition}\label{prop:meets}
Let $S$ be a complete restriction monoid. 
Then any non-empty family of elements of $S$ has a meet in $S$.
\end{proposition}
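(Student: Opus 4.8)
The plan is to realize $\bigwedge A$ as the join of the set of \emph{all} lower bounds of $A$. The only device available for producing joins is completeness, which supplies a join only for a non-empty \emph{compatible} family; since $A$ itself need not be compatible, we cannot simply form $\bigvee A$. The key observation is that the set of lower bounds, while a priori large, is bounded above, and bounded-above families are automatically compatible by Lemma~\ref{lem:aug28}(1). This lets us trade the family $A$ (whose meet we seek) for its set of lower bounds, to which completeness does apply.

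Concretely, I would proceed as follows. Fix some $a_0\in A$ (possible since $A\neq\emptyset$) and put $B=\{b\in S\colon b\leq a \text{ for all } a\in A\}$, the set of lower bounds of $A$. First, $B$ is non-empty, since it contains the least element $0$ of $S$, which is the bottom element of the frame $E$ and lies below every element. Next, every $b\in B$ satisfies $b\leq a_0$, so $B\subseteq a_0^{\downarrow}$ and $a_0$ is an upper bound of $B$; by Lemma~\ref{lem:aug28}(1) the family $B$ is therefore compatible. Being non-empty and compatible, $B$ has a join $m=\bigvee B$ by completeness. It then remains to check that $m=\bigwedge A$. For each $a\in A$, every element of $B$ lies below $a$, so $a$ is an upper bound of $B$, whence $m\leq a$ because $m$ is the \emph{least} upper bound of $B$; thus $m$ is itself a lower bound of $A$, i.e. $m\in B$. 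Finally, any lower bound $c$ of $A$ lies in $B$ and so $c\leq m$, so that $m$ is the greatest lower bound, $m=\bigwedge A$.

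The step I expect to carry the real weight is the existence of $\bigvee B$, and more precisely the verification that $B$ is a family to which completeness applies. Compatibility is the crux, and it is exactly here that Lemma~\ref{lem:aug28}(1) enters: one passes from $A$ (which need not be compatible, so $\bigvee A$ need not exist) to its set of lower bounds $B$, and the latter is bounded above by any chosen $a_0\in A$ and hence compatible for free. The only other point requiring attention is the non-emptiness of $B$, which rests on $S$ possessing a least element $0$; its presence is what guarantees that every family is bounded below. Once these two ingredients are in place, the identification of $\bigvee B$ with $\bigwedge A$ is a purely formal consequence of the definitions of join and meet.
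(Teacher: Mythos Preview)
Your strategy---realizing $\bigwedge A$ as the join of the set $B$ of all lower bounds of $A$---differs from the paper's. The paper instead forms the projection $f=\bigvee\{g\in E\colon gs=gt\text{ for all }s,t\in A\}$ (the join taken in the frame $E$) and argues that $z=fs$ is independent of the choice of $s\in A$ and equals the desired meet. Your route is the standard order-theoretic one; the paper's is more multiplicative. Under the extra hypothesis that the bottom $0$ of $E$ is a two-sided zero of $S$, both arguments are correct.

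That hypothesis, however, is precisely the gap in your proof, and the paper's proof shares it. Your claim $0\in B$ amounts to asserting $0\leq a$ for every $a\in S$, i.e.\ that $0\cdot a=a\cdot 0=0$; but the definition of \emph{complete restriction monoid} (that $E$ be a frame and that \emph{non-empty} compatible joins exist in $S$) does not force this. The paper runs into the same wall: when the set $G$ of equalizing projections happens to be empty, the step ``$fs=ft$ by distributivity'' tacitly uses $0\cdot s=0=0\cdot t$. A concrete counterexample: let $S=\{e,0,x\}$ be the two-element group $\{0,x\}$ (with $x^{2}=0$ and $0$ the group identity) together with an externally adjoined monoid identity $e$. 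This is an inverse monoid with $E(S)=\{0,e\}$ and $\lambda(x)=\rho(x)=x^{-1}x=0$; the only non-singleton compatible subset is $\{0,e\}$, whose join is $e$, so $S$ is a complete restriction monoid, indeed a pseudogroup. Yet $0\cdot x=x\neq 0$, hence $0\not\leq x$, the set of common lower bounds of $\{0,x\}$ is empty, and $0\wedge x$ does not exist. Thus the proposition as stated requires the additional hypothesis that $0$ is a zero of $S$ (which does hold for the complete restriction monoids the paper actually constructs, namely those of the form $\mathcal{PI}(Q)$ for a restriction quantal frame $Q$, since there $0$ is the quantale zero); granted that, your argument goes through cleanly.
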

\begin{proof}
We adapt the argument to be found in the proof of \cite[part (2) of Proposition 2.10]{Re}. 
Let $E$ be the frame of projections of $S$.
Let $T$ be a family of elements of $S$. 
Define
$$G=\{g\in E\colon gx=gy \text{ for all } x,y\in T\}$$
and put $f= \bigvee G$. 
For any $s,t \in T$ we then have $fs=ft$ by distributivity.
It follows that if we define $z=fs$, where $s\in T$,
then $z$ is independent of the choice of $s\in T$.
Clearly, $z \leq t$ for all $t \in T$. 
Observe that $\rho(z)=\rho(fs)=\rho(f\rho(s))=f\wedge \rho(s)$. 
Now let $w\in T$ be such that $w \leq t$ for all $t \in T$. 
Then $w=\rho(w)t$ for all $t\in T$, and so  $\rho(w)\leq f$. 
Thus $w=\rho(w)z$ giving $w\leq z$ and so $z=\bigwedge T$.
\end{proof}

\subsection{Ehresmann quantales and quantal frames}

Let $Q$ be a sup-lattice equip\-ped with a binary associative multiplication operation denoted by concatenation.
Recall that $Q$ is called a {\em quantale} provided that the multiplication distributes over any joins.
That is, for any $a_i, i\in I$, and $b$ in $Q$ we have
$$
b \left( \bigvee_{i}  a_i \right)  = \bigvee_{i} b a_i 
\mbox{ and }
\left(\bigvee_{i} a_i \right) b= \bigvee_{i} a_i  b.
$$
In this subsection, the underlying partial order of $Q$ is denoted by $\leq$.
The following useful fact is a direct consequence of the definition and will be used many times in what follows without further reference.

\begin{lemma}\label{lem:compat} 
Let $Q$ be a quantale. Then its underlying partial order $\leq$ is compatible with quantale multiplication. 
\end{lemma}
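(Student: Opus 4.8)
The plan is to show directly from the quantale distributivity laws that the underlying order $\leq$ is compatible with multiplication, meaning that $a \leq b$ implies both $ca \leq cb$ and $ac \leq bc$ for every $c \in Q$. The key observation is that in any sup-lattice, the order is expressible via joins: $a \leq b$ holds precisely when $a \vee b = b$, equivalently when $b = \bigvee\{a,b\}$. This reduces the compatibility statement to the assertion that multiplication respects this join-characterization of the order.

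First I would take $a \leq b$ in $Q$ and rewrite it as $b = a \vee b$. For left compatibility, I would multiply on the left by an arbitrary $c \in Q$ and apply the defining distributivity law of a quantale (here the binary instance $c(x \vee y) = cx \vee cy$, which follows by specializing the arbitrary-join distributivity to a two-element family). This yields
$$
cb = c(a \vee b) = ca \vee cb,
$$
and $cb = ca \vee cb$ says exactly that $ca \leq cb$. The argument for right compatibility is identical, using the right-hand distributivity law $(x \vee y)c = xc \vee yc$ to obtain $bc = (a \vee b)c = ac \vee bc$, whence $ac \leq bc$.

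I do not anticipate a genuine obstacle here: the result is essentially a restatement of the fact that a map preserving joins is monotone, applied to the two one-variable multiplication maps $x \mapsto cx$ and $x \mapsto xc$. The only point requiring a moment's care is justifying that the finite (binary) distributivity used above is indeed available; but this is immediate, since the quantale axiom demands distribution over \emph{all} joins $\bigvee_i a_i$, and a binary join is the special case of an indexing set with two elements. Thus the monotonicity of left and right multiplication is a direct consequence of the definition, which is precisely why the lemma is flagged as an immediate corollary to be used freely in the sequel.
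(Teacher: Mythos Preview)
Your proof is correct and is exactly the direct argument the paper has in mind; the paper itself does not spell out a proof, merely noting that the lemma ``is a direct consequence of the definition.'' Your reduction via $a \leq b \iff a \vee b = b$ together with binary distributivity is the standard one-line justification.
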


A quantale $Q$ is called {\em unital} if with respect to multiplication it is a monoid. 
 
 \begin{lemma}\label{lem:l1} Let $Q$ be a quantale with unit $e$.
 Assume that $e^{\downarrow}$ consists of idempotents. 
Then for $a,b\in e^{\downarrow}$ we have that $ab=a\wedge b$. Hence $e^{\downarrow}$ is a frame. In particular, it is commutative. 
\end{lemma}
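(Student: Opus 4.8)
The plan is to prove the three assertions in order: first that $ab = a \wedge b$ for $a, b \in e^{\downarrow}$, then that $e^{\downarrow}$ is a frame, and finally that it is commutative. The key structural fact I would exploit is that multiplication in a quantale preserves arbitrary joins in each variable, so it is monotone in each variable (Lemma~\ref{lem:compat}), and that $e$ is a two-sided unit with every element of $e^{\downarrow}$ idempotent.

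\textbf{Step 1: $ab = a \wedge b$.} First I would show $ab$ is a lower bound of both $a$ and $b$. Since $b \leq e$, monotonicity of multiplication on the right gives $ab \leq ae = a$; since $a \leq e$, monotonicity on the left gives $ab \leq eb = b$. Hence $ab \leq a \wedge b$. For the reverse inequality I would use idempotency: writing $c = a \wedge b$, we have $c \leq a$ and $c \leq b$, so by monotonicity $c = cc \leq ab$ (using that $c$ is idempotent, which holds since $c \leq e$ so $c \in e^{\downarrow}$, and here I must confirm $e^{\downarrow}$ is closed under meets — but $a \wedge b \leq a \leq e$ so indeed $a \wedge b \in e^{\downarrow}$). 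Combining, $ab = a \wedge b$.

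\textbf{Step 2: $e^{\downarrow}$ is a frame.} The set $e^{\downarrow}$ is closed under the arbitrary joins existing in $Q$ (a join of elements below $e$ is below $e$), so it is a sup-lattice, hence a complete lattice, and its binary meet is the quantale product by Step~1. The frame distributive law $a \wedge \bigvee_i b_i = \bigvee_i (a \wedge b_i)$ for elements of $e^{\downarrow}$ then reduces, via Step~1, to the quantale distributivity identity $a\bigl(\bigvee_i b_i\bigr) = \bigvee_i a b_i$, which holds by definition of a quantale. This gives the frame law directly.

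\textbf{Step 3: Commutativity.} Since $ab = a \wedge b$ and $ba = b \wedge a$ by Step~1, and meet is commutative, we get $ab = ba$. The whole argument is essentially routine once monotonicity and idempotency are in hand; the only point requiring a moment of care is the reverse inequality in Step~1, where one must check that $a \wedge b$ lies in $e^{\downarrow}$ (so that its idempotency is available) before invoking $c = cc \leq ab$. That small verification is the main, though minor, obstacle; the rest follows by transporting the quantale's join-distributivity into the frame law through the identification of product with meet.
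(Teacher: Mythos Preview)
Your proof is correct and follows essentially the same approach as the paper: both arguments obtain $ab \leq a \wedge b$ from monotonicity and the reverse inequality by observing that any lower bound $c$ of $a,b$ lies in $e^{\downarrow}$, hence is idempotent, so $c = c^2 \leq ab$. Your Steps~2 and~3 simply spell out in full what the paper leaves implicit in the word ``Hence.''
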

\begin{proof} 
Let $f,g\leq e$. 
We have $fg\leq fe=f$ and similarly $fg\leq g$. 
It follows that $fg\leq f\wedge g$. 
Assume $x\leq f,g$. 
Then $x^2\leq fg$. 
Therefore, $fg=f\wedge g$.
\end{proof}

Let $Q$ be a unital quantale with the top element $1=1_Q$ and unit $e=e_Q$.
We say that $Q$ is an {\em Ehresmann quantale} if it is equipped 
with maps $\lambda$, $\rho \colon Q\to e^{\downarrow}$ such that
the following axioms hold:
\begin{enumerate}[(EQ1)]
\item \label{l1} $\lambda$ and $\rho$ preserve arbitrary joins.
\item \label{l2} if $a\leq e$ then $\lambda(a)=\rho(a)=a$. 
\item \label{l3} $a = \rho(a)a$ and $a =  a\lambda(a)$ for all $a\in Q$.
\item \label{l4} $\lambda(ab)=\lambda(\lambda(a)b)$ and $\rho(ab)=\rho(a\rho(b))$ for all $a,b\in Q$.
\end{enumerate}
It is immediate by (EQ1) that $\lambda$ and $\rho$ are monotone, a fact we shall use many times in what follows.
Observe that $\lambda (1) = e = \rho (1)$. The axioms imply that $e^{\downarrow}$ consists of idempotents and thus
is a commutative subsemigroup of idempotents by Lemma \ref{lem:l1}. Therefore, under multiplication $Q$ is an Ehresmann monoid with respect to $e^{\downarrow}$.

\begin{remark} {\em Since $\leq$ is compatible with quantale multiplication, we have that $fa, af\leq a$ for any $f\leq e$ and $a\in Q$. We shall use this fact many times without further mention.}
\end{remark}

\begin{example}\label{ex:support}
{\em Any stably supported quantale in the sense of Resende \cite{Re} is an Ehresmann quantale
because if $\zeta$ is a stable support, then we may put $\rho=\zeta$ and $\lambda=\zeta \circ (-)^*$. 
This example motivated our paper.
For more details, see Proposition~\ref{prop:support}.}
\end{example}

Let $Q$ be a quantale. 
We will call it  a {\em quantal frame} if its underlying sup-lattice is a frame. 
To be precise,  for any elements $a$, $b_i$, where $i\in I$, of $Q$, the following equality holds
$$
a\wedge \left (\bigvee_{i} b_i\right )=\bigvee_{i} \left (a\wedge b_i\right ).
$$
By an {\em Ehresmann quantal frame} we mean an Ehresmann quantale that carries the structure of a quantal frame. 
We emphasize that an Ehresmann quantal frame carries three types of structure: 
of an Ehresmann monoid with respect to $e^{\downarrow}$ such that $\lambda$ and $\rho$ are sup-maps, 
of a unital quantale, 
and of a frame.

\begin{example} \label{ex:bin_rel1} {\em Observe that the Ehresmann semigroup ${\mathcal{P}}(A)$ from Example \ref{ex:bin_rel_new} is partially ordered by subset inclusion.
Being a powerset, it is a complete atomic Boolean algebra.
In particular it is a frame.
It is easy to see that the multiplication of binary relations distributes over the join, since the latter coincides with the union, and so  ${\mathcal{P}}(A)$ is a quantal frame.
It is evident that $\lambda$ and $\rho$ preserve joins.
Hence ${\mathcal{P}}(A)$ is an example of an Ehresmann quantal frame.}
\end{example}

\subsection{Partial isometries}
An Ehresmann quantale $S$ is equipped with two partial orders:
\begin{itemize}

\item  The underlying order of $S$ as a sup-lattice which in this subsection we denote by $\leq$. 

\item The natural partial order of its underlying Ehresmann semigroup which in this subsection we denote by $\leq'$.
\end{itemize}

The relationship between these two orders turns out to be very important.

\begin{lemma}\label{lem:l14} \mbox{} Let $S$ be an Ehresmann quantale with identity $e$.
\begin{enumerate} 
\item \label{i:a1} The order $\leq$ is a refinement of $\leq'$. That is, $a\leq' b$ implies $a\leq b$.
\item \label{i:a2} If $b\leq' e$, then $a\leq b \Leftrightarrow a\leq' b$. 
\end{enumerate}
\end{lemma}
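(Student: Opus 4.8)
The plan is to exploit the characterisation of the natural partial order recorded in Lemma~\ref{lem:aaab}\eqref{i:aaa1}: since under multiplication $S$ is an Ehresmann monoid with respect to $e^{\downarrow}$, we have $a \leq' b$ if and only if $a = \rho(a)b = b\lambda(a)$, where $\rho(a),\lambda(a) \in e^{\downarrow}$. The only other ingredient is the Remark following the definition of an Ehresmann quantale, namely that $fa \leq a$ and $af \leq a$ in the sup-lattice order $\leq$ whenever $f \leq e$; this is just compatibility of $\leq$ with quantale multiplication (Lemma~\ref{lem:compat}).

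For part~\eqref{i:a1} I would argue directly. Suppose $a \leq' b$. Then $a = \rho(a)b$ with $\rho(a) \leq e$, so that $a = \rho(a)b \leq eb = b$, using that $e$ is the unit. Hence $a \leq b$, which is all that is required.

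For part~\eqref{i:a2} one implication is already part~\eqref{i:a1}, so the work is in showing that $a \leq b$ forces $a \leq' b$ under the hypothesis $b \leq' e$. First I would observe that $b \leq' e$ makes $b$ a projection: by the characterisation, $b = \rho(b)e = \rho(b) \in e^{\downarrow}$. Then $a \leq b \leq e$ places $a$ in $e^{\downarrow}$ as well, so $a$ too is a projection and $\lambda(a) = \rho(a) = a$ by (EQ2). Now by Lemma~\ref{lem:l1} the product of two elements of $e^{\downarrow}$ is their meet, so $a \leq b$ gives $ab = a \wedge b = a$; consequently $a = \rho(a)b = b\lambda(a)$, which says exactly that $a \leq' b$.

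The individual steps are short, and the main point to be careful about is purely bookkeeping: keeping the two orders $\leq$ and $\leq'$ distinct while simultaneously using part~\eqref{i:a1} to identify the down-set of $e$ in both orders with the set of projections $e^{\downarrow}$. Once that identification is in place, the equivalence in part~\eqref{i:a2} reduces to the elementary fact that on the frame $e^{\downarrow}$ the two orders agree, because there multiplication computes meets.
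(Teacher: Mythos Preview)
Your proof is correct and follows essentially the same approach as the paper. For part~\eqref{i:a1} the paper uses $a=b\lambda(a)\leq be=b$ while you use the symmetric version with $\rho$; for part~\eqref{i:a2} the paper simply writes ``follows from Lemma~\ref{lem:l1}'', and your argument is exactly the unpacking of that reference.
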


\begin{proof}
\eqref{i:a1} Let  $a\leq' b$. Then $a=b\lambda(a) \leq be=b$.  
\eqref{i:a2} follows follows from  Lemma~\ref{lem:l1}.
\end{proof} 

\begin{example}\label{ex:bin_rel3} {\em Returning to Example \ref{ex:bin_rel},   we have $a\leq b$ in $\mathcal{P}(A)$ if and only if $a\subseteq b$. Assume that $A$ is not the identity relation. Then there are distinct $i$ and $j$ such that either $(i,j)\in A$ or $(j,i)\in A$.  Assume that $(i,j)\in A$. Consider the element $t= \{(i,i),(i,j)\}\in {\mathcal{P}}(A)$. Of course, $t'\leq t$. But $t'\not\leq' t$ since for any $f\in E$ the product $ft$ equals either $t$ or $0$.}
\end{example}

We now single out an important class of elements. Let $a\in S$. We say that $a$ is {\em partial isometry} if $b \leq a$ implies that $b \leq' a$. Hence $a$ is a partial isometry if and only if for any $b\in S$ we have that $b\leq a$ if and only if $b\leq' a$.  The set of partial isometries in $Q$ is denoted by ${\mathcal{PI}}(Q)$. By part \eqref{i:a2} of Lemma \ref{lem:l14} we have that $e^{\downarrow}\subseteq {\mathcal{PI}}(Q)$.

\begin{example}\label{ex:bin_rel2} {\em It is easy to see that in $\mathcal{P}(A)$ (see Examples \ref{ex:bin_rel},~\ref{ex:bin_rel1}) the set of partial isometries coincides with the set of all bi-deterministic elements and equals~the set of partial bijections $\mathsf{I}(A)$.}
\end{example}

In general, we have the following.

\begin{lemma} 
Every partial isometry is bi-deterministic.
\end{lemma}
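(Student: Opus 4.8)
The plan is to recognize that the two equations in the definition of bi-determinism are exactly the two halves of the characterization of the natural partial order $\leq'$, read off for the elements $fa$ and $af$. So let $S$ be the Ehresmann quantale in question, let $a \in S$ be a partial isometry, and fix an arbitrary projection $f \in e^{\downarrow}$; I must establish $fa = a\lambda(fa)$ and $af = \rho(af)a$.

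First I would produce the relevant inequalities in the sup-lattice order. Since $\leq$ is compatible with the quantale multiplication (Lemma~\ref{lem:compat}) and $f \leq e$, we obtain $fa \leq ea = a$ and $af \leq ae = a$; this is precisely the observation recorded in the remark following the definition of an Ehresmann quantale. Because $a$ is a partial isometry, the implication $b \leq a \Rightarrow b \leq' a$ holds, so applying it to $b = fa$ and to $b = af$ upgrades these to $fa \leq' a$ and $af \leq' a$ in the natural partial order.

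Finally I would unfold $\leq'$ using part~\eqref{i:aaa1} of Lemma~\ref{lem:aaab}: the relation $fa \leq' a$ means $fa = \rho(fa)a = a\lambda(fa)$, which in particular yields $fa = a\lambda(fa)$; dually, $af \leq' a$ means $af = \rho(af)a = a\lambda(af)$, yielding $af = \rho(af)a$. As $f$ was arbitrary, these are exactly the defining identities of a bi-deterministic element, so $a$ is bi-deterministic. There is no genuine obstacle here: the single point demanding care is keeping the two orders $\leq$ and $\leq'$ distinct and invoking the partial-isometry hypothesis at the right moment to pass from one to the other, after which the conclusion is just a direct reading of the definition of the natural partial order.
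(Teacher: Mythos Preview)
Your proof is correct and follows essentially the same approach as the paper: obtain $fa \leq a$ in the sup-lattice order, use the partial-isometry hypothesis to upgrade to $fa \leq' a$, and then read off $fa = a\lambda(fa)$ from the characterization of the natural partial order in Lemma~\ref{lem:aaab}. The paper's version is just terser, treating only one side and invoking symmetry for the other.
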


\begin{proof} Let $a$ be a partial isometry and let $f$ be any projection.
Then $fa \leq a$ and so, since $a$ is a partial isometry, we have that $fa \leq'  a$.
Thus $fa = a\lambda (fa)$.
The dual result follows by symmetry.
\end{proof}

The converse, however, does not hold in general.

\begin{example}\label{ex:pm} {\em Let $M$ be a monoid with the unit $e$. The powerset ${\mathcal{P}}(M)$ is a monoid with respect to subset multiplication with the unit $\{e\}$. We define $E$ to be $\{e\}^{\downarrow}$ and functions $\lambda$, $\rho$: ${\mathcal{P}}(M)\to E$ given by $A\mapsto \{e\}$ where $A\neq\varnothing$, and $\varnothing\mapsto \varnothing$. It is straightforward to verify that ${\mathcal{P}}(M)$ becomes an Ehresmann quantal frame. All elements are trivially bi-deterministic. But partial isometries are only singletons and the zero. }
\end{example}

\begin{remark}
{\em In this paper, partial isometries play a more important role than bi-deterministic elements in general. 
The reason for this is that partial isometries encode the interconnection between the two orders, $\leq$ and $\leq'$, whereas bi-deterministic elements are defined in a coarser context of Ehresmann semigroups without any additional  partial order structure.}
\end{remark}

\begin{lemma}\label{le: order_ideal} The set of partial isometries of an Ehresmann quantale forms an order ideal (with respect to both of the orders $\leq$ and $\leq'$).
\end{lemma}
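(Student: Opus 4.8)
The plan is to prove downward-closure with respect to the finer sup-lattice order $\leq$ and then to deduce the $\leq'$-statement for free. Indeed, by part~\eqref{i:a1} of Lemma~\ref{lem:l14} we have $c\leq' a\Rightarrow c\leq a$, so any subset of $S$ that is downward-closed for $\leq$ is automatically downward-closed for $\leq'$. Thus it suffices to establish the following: if $a\in{\mathcal{PI}}(S)$ and $c\leq a$, then $c\in{\mathcal{PI}}(S)$.

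To do this I would fix an arbitrary $b$ with $b\leq c$ and aim to show $b\leq' c$; by the definition of partial isometry, this is exactly what is needed to conclude $c\in{\mathcal{PI}}(S)$. First, transitivity of $\leq$ gives $b\leq a$, and since $a$ is a partial isometry we obtain both $b\leq' a$ and $c\leq' a$. Using the characterization of the natural partial order in part~\eqref{i:aaa1} of Lemma~\ref{lem:aaab}, these read $b=\rho(b)a=a\lambda(b)$ and $c=\rho(c)a=a\lambda(c)$.

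The core computation then exploits the interplay between the two orders. Since $\lambda$ is monotone for $\leq$ by axiom (EQ1) and $b\leq c$, we get $\lambda(b)\leq\lambda(c)$; as both lie in $e^{\downarrow}$, where multiplication coincides with meet by Lemma~\ref{lem:l1}, this gives $\lambda(c)\lambda(b)=\lambda(b)$. Hence
$$c\lambda(b)=a\lambda(c)\lambda(b)=a\lambda(b)=b,$$
and the dual computation with $\rho$ yields $\rho(b)c=\rho(b)\rho(c)a=\rho(b)a=b$, where $\rho(b)\rho(c)=\rho(b)$ again because $\rho(b)\leq\rho(c)$ in the frame $e^{\downarrow}$. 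Together these identities say $b=\rho(b)c=c\lambda(b)$, i.e.\ $b\leq' c$ by Lemma~\ref{lem:aaab}, as required.

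I expect the only real obstacle to be the bookkeeping in this last step: one must correctly translate each $\leq'$-relation into its two defining identities and then use the commuting-idempotent structure of $e^{\downarrow}$ to collapse the products $\lambda(c)\lambda(b)$ and $\rho(b)\rho(c)$. Once the monotonicity of $\lambda,\rho$ for $\leq$ and the meet-equals-product fact in $e^{\downarrow}$ are in place, no further difficulty arises, and the reduction described in the first paragraph then upgrades the $\leq$-ideal conclusion to the full statement for both orders.
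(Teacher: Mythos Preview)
Your proof is correct and follows essentially the same approach as the paper: reduce to the $\leq$-case via Lemma~\ref{lem:l14}\eqref{i:a1}, then use that both the candidate and the test element sit below the given partial isometry $a$ under $\leq'$, and collapse the resulting products of projections using monotonicity of $\lambda,\rho$ and the meet-equals-product structure of $e^{\downarrow}$. The only difference from the paper's version is the naming of the variables (your $b,c$ are the paper's $c,b$).
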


\begin{proof} We prove the statement for the order $\leq$. Then for $\leq'$ the statement also holds, since $\leq$ is finer. Let $a$ be a partial isometry and let $b \leq a$.
We prove that $b$ is a partial isometry.
Let $c \leq b$.
Then $c \leq a$.
Since $a$ is a partial isometry we have  $c = \rho (c)a = a \lambda (c)$.
Similarly, $b = \rho (b)a = a \lambda (b)$.
We also have that $\lambda (c) \leq \lambda (b)$ and $\rho (c) \leq \rho (b)$ since $\lambda$ and $\rho$ are sup-maps. 
Since $\leq$ and $\leq'$ coincide on $e^{\downarrow}$, 
it follows that $c = \rho (c)\rho (b)a = a \lambda (b) \lambda (c)$.
Thus $c \leq' b$, as required. 
\end{proof}

The set of partial isometries in an Ehresmann quantale is not, in general, closed under multiplication. 
Here is an example.

\begin{example}\label{ex:hogmannay}{\em
Let $S=\{1,x\}$ be a set. 
On ${\mathcal{P}}(S)$ we define a multiplication $\cdot$ as follows: $\{1\}$ is the unit, $\varnothing$ is the zero, $\{x\}\cdot \{x\} = \{x\}\cdot\{1,x\}=\{1,x\}\cdot \{x\}=\{1,x\}\cdot \{1,x\}=\{1,x\}$. 
It is straighforward to verify that ${\mathcal{P}}(S)$ is an Ehresmann quantal frame 
with set of projections $\{\varnothing, \{1\}  \}$
and maps $\rho$ and $\lambda$ sending all non-empty sets to $\{1\}$. 
The only non-projection partial isometry is $\{x\}$ but $\{x\} \cdot \{x\} = \{1,x\}$.}
\end{example}

\begin{proposition} \label{prop:pi6}
Let $Q$ be an Ehresmann quantale and let the set ${\mathcal{PI}}(Q)$ be closed under multiplication. Then ${\mathcal{PI}}(Q)$ is a complete restriction monoid.
\end{proposition}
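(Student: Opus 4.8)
The plan is to check three things in turn: that ${\mathcal{PI}}(Q)$ is a restriction monoid with respect to $e^{\downarrow}$, that $e^{\downarrow}$ is a frame, and that ${\mathcal{PI}}(Q)$ admits all non-empty compatible joins; completeness will then follow. The first two are bookkeeping. Since $e\in e^{\downarrow}\subseteq{\mathcal{PI}}(Q)$ and ${\mathcal{PI}}(Q)$ is closed under multiplication by hypothesis, it is a submonoid of $Q$ containing all the projections. The axioms (EQ1)--(EQ4) hold throughout $Q$, and $\lambda,\rho$ take values in $e^{\downarrow}\subseteq{\mathcal{PI}}(Q)$, so they restrict to structure maps on ${\mathcal{PI}}(Q)$; hence ${\mathcal{PI}}(Q)$ is an Ehresmann monoid with set of projections $e^{\downarrow}$, and because every partial isometry is bi-deterministic (shown above) it is in fact a restriction monoid. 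By Lemma~\ref{lem:l1}, $e^{\downarrow}$ is a frame.

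For completeness I would take a non-empty compatible family $A\subseteq{\mathcal{PI}}(Q)$ and form $w=\bigvee A$ in the sup-lattice $Q$, then show that $w$ is again a partial isometry and that it is simultaneously the join of $A$ with respect to the natural order $\leq'$. The second assertion is formal once the first is known: if $w$ is a partial isometry, then each $a\leq w$ forces $a\leq' w$, so $w$ is a $\leq'$-upper bound of $A$; and if $z\in{\mathcal{PI}}(Q)$ is any $\leq'$-upper bound, then $a\leq z$ for all $a$ by Lemma~\ref{lem:l14}, whence $w\leq z$ and therefore $w\leq' z$. So the crux is to prove that $w$ is a partial isometry, and this is where the real work lies.

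To do so, fix $b\leq w$; I must verify $b=w\lambda(b)$ and, dually, $b=\rho(b)w$. Frame distributivity gives $b=\bigvee_{a\in A}(b\wedge a)$, and each $b\wedge a\leq a$ is a partial isometry by Lemma~\ref{le: order_ideal}, so $b\wedge a=a\lambda(b\wedge a)$. Using that $\lambda$ preserves joins and that multiplication distributes over them, one gets $w\lambda(b)=\bigvee_{a,a'} a\lambda(b\wedge a')$. The decisive estimate is $a\lambda(b\wedge a')\leq b$ for all $a,a'\in A$: writing $g=\lambda(b\wedge a')\leq\lambda(a')$ and exploiting compatibility of $a$ and $a'$,
$$
ag=a\lambda(a')g=a'\lambda(a)g\leq a'g=a'\lambda(b\wedge a')=b\wedge a'\leq b.
$$
Hence $w\lambda(b)\leq b$, while $b=\bigvee_a a\lambda(b\wedge a)\leq\bigvee_a a\lambda(b)=w\lambda(b)$ follows from monotonicity of $\lambda$, so $b=w\lambda(b)$; the identity $b=\rho(b)w$ follows symmetrically from the other half of the compatibility relation $\rho(a')a=\rho(a)a'$. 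Thus $b\leq' w$ and $w$ is a partial isometry.

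Combining these facts, ${\mathcal{PI}}(Q)$ is a restriction monoid whose projections form a frame and which has all non-empty compatible joins; multiplication then distributes over such joins by Lemma~\ref{lem:molly}, so ${\mathcal{PI}}(Q)$ is a complete restriction monoid. I expect the compatibility estimate in the third paragraph to be the main obstacle: the inheritance of the algebraic structure is routine, but showing that the sup-lattice join of partial isometries is still a partial isometry is precisely the point at which the two orders $\leq$ and $\leq'$ must be reconciled, and this forces one to combine frame distributivity, join-preservation of $\lambda$ and $\rho$, and the compatibility relation in exactly the right way.
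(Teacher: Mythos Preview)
Your proof is correct and follows essentially the same strategy as the paper's: decompose $b\leq w$ via the frame law as $b=\bigvee_{a}(b\wedge a)$, use that each $b\wedge a$ sits below the partial isometry $a$, and exploit compatibility of $A$ to control the cross-terms. The paper reaches $b=w\lambda(b)$ by first proving the pointwise identity $b\wedge a=a\lambda(b)$ for each $a\in A$ and then summing, whereas you expand $w\lambda(b)=\bigvee_{a,a'}a\,\lambda(b\wedge a')$ and bound each term by $b$ directly; your compatibility estimate $ag=a\lambda(a')g=a'\lambda(a)g\leq a'g=b\wedge a'$ is exactly the mechanism the paper uses (somewhat elliptically, with a stray ``$b$'') to obtain $x\lambda(a)\leq a$.

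One caveat worth flagging: both your argument and the paper's invoke the identity $b=\bigvee_{a\in A}(b\wedge a)$ for $b\leq\bigvee A$, which is the frame law and is not guaranteed by the hypothesis ``Ehresmann quantale'' alone. The proposition is tacitly for Ehresmann quantal \emph{frames}, and indeed every subsequent application in the paper is in that setting, so this is harmless---but it is worth being aware of.
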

\begin{proof}
Since partial isometries are bi-deterministic, ${\mathcal{PI}}(Q)$ is  a restriction monoid. Let $A\subseteq {\mathcal{PI}}(Q)$ be a compatible family. We show that $\bigvee A\in {\mathcal{PI}}(Q)$. Let $x\leq \bigvee A$ and show that $x=(\bigvee A)\lambda(x)$. Let $a\in A$. Since $x\wedge a\leq a$ and $\lambda(x\wedge a)\leq \lambda(x)$ it follows that 
\begin{equation}\label{eq:5apr}
x\wedge a=(x\wedge a)\lambda(x)\leq a\lambda(x).
\end{equation}
Similarly we obtain that $x\wedge a\leq x\lambda(a)$. On the other hand, $x\lambda(a)\leq x$ and
$x\lambda(a)\leq (a\vee b)\lambda(a)=a\vee b\lambda(a)=a\vee a\lambda(b)=a$. Therefore, $x\wedge a=x\lambda(a)$. Hence $\lambda(x\wedge a)=\lambda(x\lambda(a))=\lambda(x)\lambda(a)=\lambda(a\lambda(x))$. Hence, in view of \eqref{eq:5apr}, we obtain $x\wedge a=a\lambda(x)$ by part \eqref{i:apr4b} of Lemma \ref{lem:apr4}. It follows that
$$
x=\bigvee_{a\in A}(x\wedge a)=\bigvee_{a\in A}a\lambda(x)=\left (\bigvee A\right )\lambda(x),
$$
as required. The equality $x=\rho(x)(\bigvee A)$ is established similarly. We have proved that $\bigvee A\in {\mathcal{PI}}(Q)$.
\end{proof}

\subsection{The Quantalization Theorem: objects}\label{sub:qt}

We now begin the process of connecting semigroups to quantales.
Let $S$ be a restriction monoid.  
In this subsection we denote the natural partial order on $S$ by $\leq$.  
Define ${\mathcal L}(S)$ to be the set of all order ideals of $S$. 

\begin{proposition}\label{prop:dec18} Let $S$ be a restriction monoid.
Then with respect to subset multiplication,  ${\mathcal L}(S)$ is an Ehresmann quantal frame with unit $e^{\downarrow}$.  
In addition, the map $\eta: S\to {\mathcal L}(S)$ given by $s\mapsto s^{\downarrow}$ is an injective monoid homomorphism which preserves $\lambda$ and $\rho$. 
In particular, $S$ is isomorphic to $\eta(S)$ as a restriction monoid.
\end{proposition}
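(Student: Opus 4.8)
The plan is to verify the three claimed structures on $\mathcal{L}(S)$ one at a time, then check that $\eta$ has the stated properties. First I would confirm that $\mathcal{L}(S)$ is a frame: ordered by inclusion, order ideals are closed under arbitrary unions and intersections, so $\mathcal{L}(S)$ is a complete lattice, and the frame distributivity law $A \cap \bigcup_i B_i = \bigcup_i (A \cap B_i)$ holds at the level of subsets and is inherited since all the sets involved are order ideals. Next I would check that subset multiplication $A \cdot B = \{ab : a \in A, b \in B\}$ lands in $\mathcal{L}(S)$; the key point is that $A\cdot B$ is downward closed, which follows from Lemma~\ref{lem:aaa3} (the natural partial order is compatible with multiplication on both sides): if $c \le ab$ with $a \in A$, $b \in B$, one must exhibit $c$ as a product of elements of $A$ and $B$, and here I expect to use Lemma~\ref{lem:apr4}\eqref{i:apr4c} to write $c = \rho(c)(ab) = (\rho(c)a)b$ with $\rho(c)a \le a \in A$, so $\rho(c)a \in A$. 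Associativity and distributivity of subset multiplication over unions are routine. The unit is $e^{\downarrow}$: since $e$ is the maximum projection (Lemma~\ref{lem:e}), $e^{\downarrow}$ is the set of projections, and $A \cdot e^{\downarrow} = A$ uses $a = a\lambda(a)$ with $\lambda(a) \le e$ together with downward closure.

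The central task is to equip $\mathcal{L}(S)$ with maps $\lambda, \rho \colon \mathcal{L}(S) \to (e^{\downarrow})^{\downarrow}$ verifying (EQ1)--(EQ4). I would define
$$
\lambda(A) = \left(\{\lambda(a) : a \in A\}\right)^{\downarrow}, \qquad \rho(A) = \left(\{\rho(a) : a \in A\}\right)^{\downarrow},
$$
using the semigroup-level $\lambda, \rho$, and then check each axiom. Preservation of arbitrary joins (EQ1) is immediate since $\lambda$ distributes over unions of index sets at the generator level and $(-)^{\downarrow}$ commutes with unions. For (EQ2), if $A \subseteq e^{\downarrow}$ then every $a \in A$ is a projection, so $\lambda(a) = a$ by (ES2), giving $\lambda(A) = A^{\downarrow} = A$. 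The substantive axioms are (EQ3) and (EQ4), where I expect the matching between the quantale identities and their pointwise semigroup analogues to require care. For (EQ3) I would show $A = \rho(A)\cdot A$: the inclusion $\rho(A) \cdot A \supseteq A$ follows from $a = \rho(a)a$ and downward closure, while $\subseteq$ uses that a typical element $f \cdot a$ with $f \le \rho(a')$ satisfies $fa \le \rho(a')a$, which I must locate inside $A$, again leaning on left-compatibility of $\le$. For (EQ4), the identity $\lambda(A \cdot B) = \lambda(\lambda(A) \cdot B)$ should reduce, at the level of generators, to the semigroup axiom (ES4) $\lambda(\lambda(a)b) = \lambda(ab)$, combined with the monotonicity of $\lambda$ from Lemma~\ref{lem:mon22} to control the downward closures.

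Finally I would verify the properties of $\eta(s) = s^{\downarrow}$. It is a monoid homomorphism because $s^{\downarrow} \cdot t^{\downarrow} = (st)^{\downarrow}$: the inclusion $\subseteq$ is left- and right-compatibility of $\le$, and $\supseteq$ comes from factoring $c \le st$ as $c = \rho(c)(st) = (\rho(c)s)(t)$ with the two factors below $s$ and $t$ respectively. Injectivity is clear since $s^{\downarrow} = t^{\downarrow}$ forces $s \le t$ and $t \le s$, hence $s = t$ by antisymmetry of the natural partial order (Lemma~\ref{lem:aaab}\eqref{i:aaa2}). That $\eta$ preserves $\lambda$ and $\rho$ amounts to checking $\lambda(s^{\downarrow}) = \lambda(s)^{\downarrow}$, which follows from $\lambda(s^{\downarrow}) = (\{\lambda(a) : a \le s\})^{\downarrow}$ together with $\lambda(a) \le \lambda(s)$ from Lemma~\ref{lem:mon22} and the fact that $\lambda(s)$ itself occurs as a generator. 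The last sentence is then immediate: $\eta$ is an injective homomorphism preserving all the structure, so $S \cong \eta(S)$ as a restriction monoid. I anticipate the main obstacle to be the careful bookkeeping in (EQ4), where one must pass between products of order ideals and products of generators without losing downward closure; the enabling lemmas throughout are the compatibility of the natural partial order with multiplication (Lemma~\ref{lem:aaa3}) and the monotonicity of $\lambda, \rho$ (Lemma~\ref{lem:mon22}).
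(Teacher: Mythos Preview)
Your proposal is correct and follows essentially the same route as the paper. The only cosmetic difference is that the paper defines $\overline{\lambda}(A)=\{\lambda(a):a\in A\}$ without the extra downward closure and then checks directly that this set is already an order ideal (using $b\le \lambda(a)\Rightarrow b=\lambda(ab)$ with $ab\in A$), whereas you take the closure explicitly; the two definitions coincide, and the paper in fact treats (EQ4) as routine, verifying only (EQ3) in detail just as you outline.
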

\begin{proof} Let $A,B\in {\mathcal L}(S)$. 
We show that $AB\in {\mathcal L}(S)$. 
Assume that $x\in AB$ and $y\leq x$. 
Then $x=ab$ where $a\in A$ and $b\in B$ and we may write $y=x\lambda(y)=ab\lambda(y)=a\cdot b\lambda(y)$. 
Note that $b\lambda(y)\leq b$ as $\lambda(y)\leq e$ and $\leq$ is compatible with multiplication. 
Hence  $b\lambda(y)\in B$, because $B$ is an order ideal. 
It follows that ${\mathcal L}(S)$ is a semigroup. 
We show that $AE=A$ for any $A\in {\mathcal L}(S)$. 
Let $a\in A$. We have $a=ae\in AE$. 
So $A\subseteq AE$. Let $af\in AE$. 
Then $af\leq a$ and so $af\in A$ since $A$ is an order ideal. 
Similarly one shows that $EA=A$.

It is clear that the map $\eta$ is injective. 
Let us show that $s^{\downarrow}t^{\downarrow}=(st)^{\downarrow}$. 
If $a\leq s$ and $b\leq t$ then $ab\leq st$ as $\leq$ is compatible with multiplication. 
If $x\leq st$ then $x=stf=s\cdot tf$. 
Since $tf\leq t$ we obtain that $x\in s^{\downarrow}t^{\downarrow}$.  

Note that ${\mathcal L}(S)$ is equipped with a natural partial order given by subset inclusion. 
It is straightforward to verify that it is in fact a frame where the meet and join operations coincide with set intersection and union, respectively.
It is now almost immediate that ${\mathcal L}(S)$ is a unital quantal frame with the unit  $E=e^{\downarrow}$.  

We define the functions $\overline{\lambda}, \overline{\rho}: {\mathcal L}(S) \to {\mathcal P}(e^{\downarrow})$ as follows:
\begin{equation}\label{eq:lam}
\overline{\lambda}(A)=\{\lambda(a)\colon a\in A\}\,\, \text{ and } \,\, \overline{\rho}(A)=\{\rho(a)\colon a\in A\}.
\end{equation}
Note that $\overline{\lambda}(A), \overline{\rho}(A)\in {\mathcal L}(S)$. 
To verify this, e.g., for $\overline{\lambda}(A)$ we let $b\leq \lambda(a)$ for some $a\in A$. 
Then $b\leq e$. We then have $b=\lambda(a)b=\lambda(\lambda(a)b)=\lambda(ab)$. 
It remains to note that $ab\leq ae=a$ and so $ab\in A$ as $A$ is an order ideal.
It is easy to verify that axioms (EQ1)--(EQ4) are satisfied. 
For example, we verify the equality $A=A\overline{\lambda}(A)$ which is a part of (EQ3). 
If $a\in A$ then $a=a\lambda(a)\in  A\overline{\lambda}(A)$. 
So $A\subseteq A\overline{\lambda}(A)$. 
Conversely, assume that $x\in A\overline{\lambda}(A)$. 
Then $x=a\lambda(b)$ with $a,b\in A$. 
But $a\lambda(b)\leq a$ yielding $x\in A$ since $A$ is an order ideal. 
It follows that $A\overline{\lambda}(A) \subseteq A$. 

We finally show that $\overline{\lambda}(\eta(s))=\eta(\lambda(s))$ for all $s\in S$. Clearly, $\lambda(s^{\downarrow})\subseteq (\lambda(s))^{\downarrow}$. Let $f\leq \lambda(s)$. 
Then $f=\lambda(sf)$, proving that $(\lambda(s))^{\downarrow}\subseteq \lambda(s^{\downarrow})$. 
The equality $\overline{\rho}(\eta(s))=\eta(\rho(s))$ follows by symmetry. 
\end{proof}

We now recall the following well-known fact (see, e.g., \cite[Proposition 2.1]{Re}) that ${\mathcal L}(S)$ is a join-completion of $S$ looked at as a poset.

\begin{lemma}\label{lem:4dec} For each monotone map $f$ from $S$ to a sup-lattice $L$ there is a unique join-preserving map $\overline{f}: {\mathcal{L}}(S)\to L$ such that $f=\overline{f}\eta$. The map $\overline{f}$ is given by $\overline{f}(U)=\bigvee_{x\in U}f(x)$.
\end{lemma}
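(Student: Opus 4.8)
The statement is a universal property: $\mathcal{L}(S)$ is the free join-completion (sup-lattice completion) of the poset $S$ via the embedding $\eta$. The plan is to verify three things in sequence: that the proposed formula $\overline{f}(U) = \bigvee_{x \in U} f(x)$ is well-defined as a map $\mathcal{L}(S) \to L$, that it is join-preserving and satisfies $f = \overline{f}\eta$, and that it is the \emph{unique} such map. Well-definedness is immediate since $L$ is a sup-lattice, so arbitrary joins (including the empty join, handling $U = \varnothing$) exist; here the map $f$ need only be a function into $L$, though monotonicity will be used to get the factorization right. The factorization identity is a one-line check: $\overline{f}(\eta(s)) = \overline{f}(s^{\downarrow}) = \bigvee_{x \leq s} f(x)$, and since $f$ is monotone, $f(x) \leq f(s)$ for all $x \leq s$ while $s \in s^{\downarrow}$ contributes $f(s)$ itself, so the join equals $f(s)$.

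For the join-preservation, I would take a family $\{U_i\}_{i \in I}$ of order ideals. Their join in $\mathcal{L}(S)$ is the union $\bigcup_i U_i$ (as noted in Proposition \ref{prop:dec18}, joins in $\mathcal{L}(S)$ are set unions, since a union of order ideals is again an order ideal). Then
\[
\overline{f}\left(\bigcup_i U_i\right) = \bigvee_{x \in \bigcup_i U_i} f(x) = \bigvee_i \bigvee_{x \in U_i} f(x) = \bigvee_i \overline{f}(U_i),
\]
using the associativity/commutativity of joins in the sup-lattice $L$ to regroup the indexing. This confirms $\overline{f}$ preserves all joins.

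For uniqueness, the key observation is that every order ideal $U$ is itself a join in $\mathcal{L}(S)$ of principal ideals: $U = \bigcup_{x \in U} x^{\downarrow} = \bigvee_{x \in U} \eta(x)$. Hence if $g \colon \mathcal{L}(S) \to L$ is any join-preserving map with $g\eta = f$, then
\[
g(U) = g\left(\bigvee_{x \in U} \eta(x)\right) = \bigvee_{x \in U} g(\eta(x)) = \bigvee_{x \in U} f(x) = \overline{f}(U),
\]
so $g = \overline{f}$. There is no genuine obstacle here; the only point requiring mild care is the role of monotonicity of $f$ in the factorization step (one must check the join over $s^{\downarrow}$ collapses to $f(s)$ rather than overshooting), and the identification of joins in $\mathcal{L}(S)$ with unions, which is already recorded in the preceding proposition.
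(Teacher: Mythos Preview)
Your proof is correct and complete. The paper does not actually prove this lemma; it merely cites it as a well-known fact about the join-completion of a poset, referring to \cite[Proposition~2.1]{Re}. Your argument supplies exactly the standard verification that the paper omits.
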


Assume that $S$ is a complete restriction monoid. Note that in general the homomorphism $\eta: S\to {\mathcal L}(S)$ does not preserve existing joins, even joins of projections. 
Indeed, let $f,g\leq e$ be such that $f\not\leq g$ and $g\not\leq f$. 
Then $\eta(f\vee g) = (f\vee g)^{\downarrow}$ and $\eta(f)\vee \eta(g)=f^{\downarrow} \cup g^{\downarrow}$. 
We see that, as $f\vee g$ is neither below $f$ nor below $g$, the inclusion $\eta(f\vee g)\subseteq \eta(f)\vee \eta(g)$ does not hold. 
Note, however, that the reverse inclusion $\eta(f)\vee \eta(g)\subseteq \eta(f\vee g)$ always holds.
Our aim now will be to `correct' this drawback of ${\mathcal L}(S)$. 

We recall the material on closure operators on sup-lattices and quantales and their role in characterizing quotients. See \cite{Ros} for more details.

Let $L$ be a sup-lattice. 
A {\em closure operator} on $L$ is a map $j:L\to L$, which is (i) {\em monotone}: $a\leq b$ implies $j(a)\leq j(b)$; (ii) {\em extensive}: $id_L\leq j$ and (iii) {\em idempotent}: $j^2=j$. 
If $j: L\to L$ is a closure operator then the set of {\em fixed points} $L_j=\{a\in L \colon j(a)=a\}$ is a sup-lattice under the same order and $j$ is a surjective homomorphism, so that $L_j$ is a quotient of $L$.
Let  $\wedge^j$ and $\vee^j$ denote the meet and join operations on $L_j$. 
For any $a,b\in L_j$ we have $a\wedge^j b=a\wedge b$ and $a\vee^j b=j(a\vee b)$. 
Moreover, any quotient of $L$ arises in the described way: if $f:L\to M$ is a surjective homomorphism of sup-lattices then $M\simeq L_j$ for the closure operator $j=f_*f$ where $f_*$ is the right adjoint of $f$.

Let now $Q$ be a quantale. 
A {\em quantic nucleus} on $Q$ is a closure operator $j:Q\to Q$ such that $j(a)j(b)\leq j(ab)$ for all $a,b\in Q$. 
The quotient sup-lattice $Q_j=\{a\in Q \colon j(a)=a\}$ is then a quantale with the multiplication $\cdot^j$ given by $a\cdot^j b=j(a\cdot b)$ where $\cdot$ is the multiplication on $Q$. 
The map $j$ is a surjective homomorphism of quantales, and $Q_j$ is a quotient quantale of $Q$. 
Any quotient quantale of $Q$ arises in this way. 
The notion of a {\em frame nucleus} is a special case of the above definition: if $F$ is a frame then a {\em frame nucleus} on $F$ is a closure operator $j:F\to F$ such that $j(a)\wedge j(b)\leq j(a\wedge b)$ for all $a,b\in F$. If $j$ is a nucleus then $F_j=\{a\in F \colon j(a)=a\}$ is a quotient frame of $F$ and $j:F\to F_j$ is a surjective homomorphism of frames. 
Any quotient frame of $F$ arises in this way.

The definitions above can be unified as follows. 
Let $L$ be a sup-lattice and $f$ a binary operation on $L$. 
We define a {\em nucleus on} $L$ {\em with respect to} $f$ as a closure operator $j$ on $L$ such that $f(j(a),j(b))\leq j(f(a),f(b))$ for all $a,b\in L$. 
We also make use of a modification of this definition to unary operations. 
Let $f:L\to L$ be a unary monotone  operation on a sup-lattice $L$. 
A {\em nucleus on} $L$ {\em with respect to} $f$  is a closure operator $j$ on $L$ such that $f(j(a))\leq j(f(a))$. 
This definition can be extended to operations of greater arities but we will not need this in this paper.

\begin{lemma}\label{lem:aug29} Let $L$ be a sup-lattice, and $f:L\to L$ be a unary monotone operation on $L$. 
Assume that $j:L\to L$ is a nucleus on $L$ with respect to $f$. 
On the set of fixed-points $L_j$ we define the operation $f^j$ via $f^j(a)=j(f(a))$. 
Then $j$ maps $f$ to $f^j$ in the sense that 
\begin{equation}\label{eq:aug291}
j(f(a))=f^j(j(a))
\end{equation} 
for all $a\in L$.
\end{lemma} 

\begin{proof} 
Since $a\leq j(a)$ and $jf$ is monotone, we have $jf(a)\leq jf(j(a))$. 
For the converse inequality,  note that
$f(j(a))\leq j(f(a))$ since $j$ is a nucleus. 
Since $j$ is monotone, the latter inequality is stable under applying $j$. 
So, as $j$ is idempotent, we obtain $j(f(j(a)))\leq j(f(a))$, as required.
\end{proof}

Let $S$ be a {\em complete} restriction monoid. 
A subset $A$ of $S$ will be called {\em $\vee$-closed} provided that if $X\subseteq A$ is a compatible family of elements of $S$ then $\bigvee X \in A$. 
Let ${\mathcal L}^{\vee}(S)$ denote the set of all $\vee$-closed order ideals of $S$. 
For $A\in {\mathcal L}(S)$ we set 
\begin{equation}\label{eq:j29}
j(A)=\left \{\bigvee X\colon X\subseteq A \text{ where } X \text{ is compatible}\right \}.
\end{equation}
Note that $\bigvee X$ as above exists in $S$, since $X$ is compatible.
It is easy to verify (or see \cite[p.179]{Re}) that $j(A)$ is the smallest element of ${\mathcal L}^{\vee}(S)$ that contains $A$. It is immediate that $j$ is a closure operator on ${\mathcal L}(S)$. 

Let us consider the functions $\overline{\lambda}$, $\overline{\rho}$ defined in \eqref{eq:lam} as unary operations on ${\mathcal L}(S)$. 
It is immediate that both $\overline{\lambda}$ and $\overline{\rho}$ are monotone.

\begin{theorem}[Ehresmann Quantal Frame ${\mathcal L}^{\vee}(S)$]\label{th:lvees}\mbox{}
\begin{enumerate}
\item \label{i:lvees1} The closure operator $j$ is a quantic and frame nucleus on ${\mathcal L}(S)$. 
In addition, it is a nucleus with respect to both $\overline{\lambda}$ and $\overline{\rho}$.
\item \label{i:lvees2} The quotient quantal frame ${\mathcal L}(S)_j={\mathcal L}^{\vee}(S)$ is an Ehresmann quantal frame where the left and right support are given by the functions $\overline{\lambda}^j$ and $\overline{\rho}^j$.
\end{enumerate}
\end{theorem}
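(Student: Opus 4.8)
The plan is to dispatch part~\eqref{i:lvees1} by verifying the four nucleus conditions one at a time, and then to obtain part~\eqref{i:lvees2} formally by a \emph{descent} argument: once $j$ is known to intertwine each operation on $\mathcal{L}(S)$ with the corresponding operation on $\mathcal{L}^{\vee}(S)$, the Ehresmann axioms (EQ1)--(EQ4), being universally quantified statements about those operations, transfer from $\mathcal{L}(S)$ (an Ehresmann quantal frame by Proposition~\ref{prop:dec18}) to its quotient. Throughout I use that in a complete restriction monoid multiplication distributes over arbitrary non-empty compatible joins (Lemma~\ref{lem:molly}), that a family with a common upper bound is compatible (Lemma~\ref{lem:aug28}), and that arbitrary non-empty meets exist (Proposition~\ref{prop:meets}).

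\emph{Quantic and support nuclei.} For the quantic condition $j(A)j(B)\subseteq j(AB)$, every element of $j(A)j(B)$ has the form $(\bigvee X)(\bigvee Y)$ with $X\subseteq A$ and $Y\subseteq B$ compatible. Distributing twice gives $(\bigvee X)(\bigvee Y)=\bigvee_{x\in X,\,y\in Y}xy$; the family $\{xy\}$ lies in $AB$ and is compatible by Lemma~\ref{lem:aug27}, so this join lies in $j(AB)$. For the $\overline{\lambda}$-nucleus condition $\overline{\lambda}(j(A))\subseteq j(\overline{\lambda}(A))$, a typical element of the left side is $\lambda(\bigvee X)$ with $X\subseteq A$ compatible, and by Lemma~\ref{lem:joins22} this equals $\bigvee_{x\in X}\lambda(x)$; the family $\{\lambda(x)\}$ lies in $\overline{\lambda}(A)$ and is compatible since any two projections are compatible, so its join lies in $j(\overline{\lambda}(A))$. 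The condition for $\overline{\rho}$ is dual.

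\emph{The frame nucleus---the crux.} The condition is $j(A)\cap j(B)\subseteq j(A\cap B)$. Given $z\in j(A)\cap j(B)$, write $z=\bigvee X=\bigvee Y$ with $X\subseteq A$, $Y\subseteq B$ compatible, and set $Z=\{x\wedge y: x\in X,\ y\in Y\}$, the meets existing by Proposition~\ref{prop:meets}. Each $x\wedge y$ lies in $A\cap B$ since an order ideal contains everything below a member, and $Z$ is compatible because all its elements are below $z$ (Lemma~\ref{lem:aug28}). The one substantial point is $\bigvee Z=z$, which I reduce to $x=\bigvee_{y\in Y}(x\wedge y)$ for each fixed $x\in X$. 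Since $x$ and every $y\in Y$ lie below $z$ they are compatible, so $y\lambda(x)=x\lambda(y)$; as $\lambda(x),\lambda(y)\leq e$ this common value sits below both $x$ and $y$, hence below $x\wedge y$. Using $x=(\bigvee Y)\lambda(x)=\bigvee_{y}y\lambda(x)$ (Lemma~\ref{lem:apr4} and distributivity) then gives $x\leq\bigvee_y(x\wedge y)\leq x$, as required; joining over $x\in X$ yields $z=\bigvee Z\in j(A\cap B)$. This meet-rewriting step is where the full restriction-monoid structure is genuinely used, so I expect it to be the main obstacle.

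\emph{Descent to part~\eqref{i:lvees2}.} The general theory of nuclei recalled above already makes $\mathcal{L}^{\vee}(S)=\mathcal{L}(S)_j$ a quotient quantale with product $A\cdot^{j}B=j(AB)$ and, via the frame nucleus, a quotient frame, so $\mathcal{L}^{\vee}(S)$ is a quantal frame. To install the Ehresmann structure I record the intertwiners $j(\bigvee_i A_i)=\bigvee^{j}_i j(A_i)$, $j(AB)=j(A)\cdot^{j}j(B)$ (both using $A\subseteq j(A)$ together with the quantic inequality), $j(\overline{\lambda}(A))=\overline{\lambda}^{j}(j(A))$ and its $\overline{\rho}$-analogue (these are precisely Lemma~\ref{lem:aug29}), and $j(E)=E$, so that $E$ is the unit. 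Since $j$ is a surjection obeying these relations, each equational axiom for $\overline{\lambda}^{j},\overline{\rho}^{j}$ is obtained by applying $j$ to the corresponding identity in $\mathcal{L}(S)$ and rewriting; for instance (EQ1) follows since $\overline{\lambda}$ is a sup-map and the intertwiner converts joins to $\bigvee^{j}$, while (EQ3) follows from $j(A)=j(A\,\overline{\lambda}(A))=j(A)\cdot^{j}\overline{\lambda}^{j}(j(A))$ evaluated on $A\in\mathcal{L}^{\vee}(S)$, where $j(A)=A$. The only non-equational axiom, (EQ2), is checked directly: if $A\subseteq E$ then $\lambda(a)=a$ for all $a\in A$, so $\overline{\lambda}(A)=A$ and $\overline{\lambda}^{j}(A)=j(A)=A$, and dually. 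Hence $\mathcal{L}^{\vee}(S)$ is an Ehresmann quantal frame with left and right supports $\overline{\lambda}^{j}$ and $\overline{\rho}^{j}$.
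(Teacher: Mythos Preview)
Your proof is correct and follows the same strategy as the paper: verify that $j$ is a nucleus for multiplication, meet, $\overline{\lambda}$, and $\overline{\rho}$, and then push the Ehresmann axioms through $j$ using the intertwining identity of Lemma~\ref{lem:aug29}. The only notable difference is that the paper defers the quantic and frame nucleus verifications to \cite[Lemma~3.23]{Re}, whereas you supply explicit arguments; your frame-nucleus step---reducing $z=\bigvee Z$ to $x=\bigvee_{y\in Y}(x\wedge y)$ via $x=(\bigvee Y)\lambda(x)=\bigvee_y y\lambda(x)$ and $y\lambda(x)=x\lambda(y)\leq x\wedge y$---is exactly the place where the restriction-monoid structure (compatibility plus distributivity) is needed, and your argument is sound. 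For part~\eqref{i:lvees2} the paper checks (EQ1)--(EQ4) by direct computation while you phrase the same calculations as descent along the intertwiners $j(AB)=j(A)\cdot^{j}j(B)$ and $j(\overline{\lambda}(A))=\overline{\lambda}^{j}(j(A))$; the content is identical.
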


\begin{proof} \eqref{i:lvees1} It can be verified that $j$ is both a quantic and a frame nucleus repeating the arguments to be found in the proof of \cite[Lemma 3.23]{Re}. 
Let us show that $j$ is a nucleus with respect to $\overline{\lambda}$ (note that the arguments from \cite{Re} are not applicable since they rely on the involution). 
Since all elements of $\overline{\lambda}(A)$ are projections, any of its subset is compatible, implying that $j(\overline{\lambda}(A))=(\bigvee\{\lambda(a)\colon a\in A\})^{\downarrow}$. 
On the other hand, we have
\begin{align*}\overline{\lambda}(j(A))& =  \overline{\lambda}\left(\left \{\bigvee X\colon X\subseteq A, \, X \text{ is compatible}\right \}\right) & \text{by } \eqref{eq:j29}\\
& = \left\{\lambda \left (\bigvee X\right )\colon X\subseteq A, \, X \text{ is compatible}\right \} & \text{by } \eqref{eq:lam}\\
& = \left\{\bigvee_{x\in X}\lambda(x)\colon X\subseteq A, \, X \text{ is compatible}\right\}.
\end{align*}
We show that the latter set is a subset of $(\bigvee\{\lambda(a)\colon a\in A\})^{\downarrow}$. That is, we need to see that if $X\subseteq A$ is compatible, then $\bigvee\{\lambda(x)\colon x\in X\} \leq \bigvee\{\lambda(a)\colon a\in A\}$. 
But the latter trivially holds.

 \eqref{i:lvees2} We first note that the multiplicative identity of ${\mathcal L}^{\vee}(S)$ is $e^{\downarrow}$. 
We now show that the fucntions $\overline{\lambda}^j$ and $\overline{\rho}^j$ on ${\mathcal L}^{\vee}(S)$ satisfy the axioms (EQ1)--(EQ4). 
We do the verifications for $\overline{\lambda}^j$ only, because for $\overline{\rho}^j$ they are analogous.

(EQ1) holds.  Let $X\subseteq {\mathcal L}^{\vee}(S)$ and verify that $\overline{\lambda}^j(\bigvee^j X)=\bigvee^j \{\overline{\lambda}^j(a)\colon a\in X\}$:
\begin{align*}
\overline{\lambda}^j \left (\bigvee X \right ) & = \overline{\lambda}^j j \left (\bigcup X \right ) & \text{by the definition of }\bigvee^j\\
& = j\overline{\lambda}\left (\bigcup X\right ) & \text{by }\eqref{eq:aug291}, \text{ since } j  \text{ is a nucleus with respect to } \overline{\lambda};
\end{align*}
\begin{align*}
\bigvee^j_{a\in X} \overline{\lambda}^j(a) & = \bigvee^j_{a\in X} j \left (\overline{\lambda}(a)\right ) & \text{by the definition of }\overline{\lambda}^j\\
& = j\left (\bigcup_{a\in X} j(\overline{\lambda}(a))\right )& \text{by the definition of }\bigvee^j\\
&=jj\overline{\lambda}\left (\bigcup X \right ) & \text{since } j \text{ and } \overline{\lambda} \text{ are sup-maps}\\
& = j\overline{\lambda}\left (\bigcup  X \right ) & \text{since } j \text{ is idempotent}.
\end{align*}

(EQ2) holds. Assume that $A\in {\mathcal L}^{\vee}(S)$ and  $A\subseteq  e^{\downarrow}$. Note that $A$ consists only of projections of $S$. 
Since any set of projections is clearly compatible and $A$ is $\vee$-closed, $A=a^{\downarrow}$ for some $a\in E$. 
Then we have 
$$\overline{\lambda}^j(a^{\downarrow})=j\overline{\lambda}(a^{\downarrow})=j(a^{\downarrow})=a^{\downarrow},$$
and (EQ2) follows.

The equalities in (EQ3) and (EQ4) follow because $j$ is a nucleus both with respect to $\overline{\lambda}$ and the multiplication, and the needed equalities involve only two these operations. 
To make this precise, we now verify that (EQ4) holds, leaving the easier calculation for (EQ3) to the reader.

We need to make some preparation. 
Note that since $j$ is a nucleus with respect to the multiplication and by the definition of $\cdot^j$, for any $a,b\in {\mathcal L}(S)$
we have the equalities
\begin{equation}\label{eq:aug292}
j(a\cdot b)=j(a)\cdot^j j(b)=j(j(a)\cdot j(b))
\end{equation} Then, applying idempotency of $j$, we may write
\begin{equation}\label{eq:aug293}
j(j(a)\cdot b)=j(jj(a)\cdot j(b))= j(j(a)\cdot j(b))=j(a)\cdot^j j(b)=j(a\cdot b).
\end{equation}

We now turn to verifying that (EQ4) holds. We need to see that $$\overline{\lambda}^j(a\cdot^j b)=\overline{\lambda}^j(\overline{\lambda}^j(a)\cdot^j b).$$

Indeed, we have
\begin{align*}
\overline{\lambda}^j(a\cdot^j b) & = \overline{\lambda}^j j (a\cdot b) =  j\overline{\lambda} (a\cdot b)  &  \text{by } \eqref{eq:aug291} \text{ for } \overline{\lambda}\\
& =  j\overline{\lambda} (\overline{\lambda}(a)\cdot b) &   \text{by (EQ4) }  \text{for } \overline{\lambda};
\end{align*}
\begin{align*}
\overline{\lambda}^j(\overline{\lambda}^j(a)\cdot^j b) & = \overline{\lambda}^j j(j\overline{\lambda}(a)\cdot b) &  \text{by the definition of }\cdot^j \text{ and } \overline{\lambda}^j\\
& = \overline{\lambda}^j j(\overline{\lambda}(a)\cdot b)  & \text{by  } \eqref{eq:aug293}\\
& = j \overline{\lambda}(\overline{\lambda}(a)\cdot b)  &  \text{by } \eqref{eq:aug291} \text{ for } \overline{\lambda}.
\end{align*}
This completes the proof.
\end{proof}

We will call ${\mathcal L}^{\vee}(S)$ the {\em enveloping Ehresmann quantal frame} of $S$. 
From now on, we will work with ${\mathcal L}^{\vee}(S)$, and not with ${\mathcal L}(S)$. 
To avoid complicated notation, we take a convention to denote $\overline{\lambda}^j$, $\overline{\rho}^j$ and 
$\cdot^j$ just by $\overline{\lambda}$, $\overline{\rho}$ and $\cdot$, respectively. The set ${\mathcal L}^{\vee}(S)$
is partially ordered by subset inclusion, the meet operation $\wedge^j$ on it is just subset intersection $\cap$, whereas  $A\vee^j B$ is $j(A\cup B)$. 
In what follows, we will usually denote the meet and join in ${\mathcal L}^{\vee}(S)$ just 
 by $\wedge$ and $\vee$, respectively. 
This will not cause ambiguity as long as it is clear to which sets the elements, whose join is being considered, belong. Regardless of this, for the reader's convenience, we will sometimes explain the usage of $\wedge$ and $\vee$ in words. 

Observe that $\eta(s) = s^{\downarrow}\in {\mathcal L}^{\vee}(S)$ for any $s\in S$.  
So there is a well-defined map $\eta': S\to {\mathcal L}^{\vee}(S)$, given by $\eta'(s)=\eta(s)$, which is just $\eta$, whose range is restricted to ${\mathcal L}^{\vee}(S)$. 
From now on, we will work only with the map $\eta'$, and we will denote this map just by $\eta$, in order to simplify notation.

\begin{proposition}\label{prop:lvees} Let $S$ be a complete restriction monoid.
$\eta:S\to {\mathcal L}^{\vee}(S)$ is an injective monoid homomorphism, which preserves the unary operations and all joins that exist  in $S$. 
\end{proposition}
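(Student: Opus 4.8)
The plan is to verify each of the four asserted properties of $\eta\colon S\to {\mathcal L}^{\vee}(S)$ in turn, leaning heavily on what was already established for the map $\eta\colon S\to {\mathcal L}(S)$ in Proposition~\ref{prop:dec18} together with the fact that $\eta(s)=s^{\downarrow}$ already lies in ${\mathcal L}^{\vee}(S)$, so that corestricting to the subframe of $\vee$-closed ideals changes nothing about injectivity, the monoid structure, or the supports. Concretely, $s^{\downarrow}$ is $\vee$-closed because any compatible family inside $s^{\downarrow}$ is bounded above by $s$ and hence has join at most $s$, so $\bigvee X\in s^{\downarrow}$; this places $\eta$ genuinely into ${\mathcal L}^{\vee}(S)$ and justifies the passage from $\eta'$ to the renamed $\eta$.

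First I would treat injectivity and the monoid-homomorphism property: these are inherited verbatim from Proposition~\ref{prop:dec18}, the only subtlety being that the product in ${\mathcal L}^{\vee}(S)$ is $A\cdot^j B=j(AB)$ rather than the set product $AB$. So I must check $\eta(s)\cdot^j\eta(t)=\eta(st)$, i.e. $j(s^{\downarrow}t^{\downarrow})=(st)^{\downarrow}$. Since Proposition~\ref{prop:dec18} gives $s^{\downarrow}t^{\downarrow}=(st)^{\downarrow}$ as sets, and $(st)^{\downarrow}$ is $\vee$-closed (hence fixed by $j$), this is immediate: $j((st)^{\downarrow})=(st)^{\downarrow}$. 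Preservation of the unary operations is handled the same way, using $\overline{\lambda}^j(s^{\downarrow})=j(\overline{\lambda}(s^{\downarrow}))=j((\lambda(s))^{\downarrow})=(\lambda(s))^{\downarrow}=\eta(\lambda(s))$, where the second equality is the support computation from Proposition~\ref{prop:dec18} and the last uses that $(\lambda(s))^{\downarrow}$ is $\vee$-closed.

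The one genuinely new claim, and the main obstacle, is that $\eta$ preserves \emph{all existing joins} in $S$ — precisely the property that failed for ${\mathcal L}(S)$ as the discussion before Theorem~\ref{th:lvees} showed. Suppose $\bigvee A$ exists in $S$ for some $A\subseteq S$; I must show $\eta(\bigvee A)=\bigvee^j_{a\in A}\eta(a)=j\!\left(\bigcup_{a\in A}a^{\downarrow}\right)=j(A^{\downarrow})$, where the join on the right is taken in ${\mathcal L}^{\vee}(S)$. The inclusion $j(A^{\downarrow})\subseteq(\bigvee A)^{\downarrow}$ follows because $A^{\downarrow}\subseteq(\bigvee A)^{\downarrow}$ and the right-hand side is $\vee$-closed, hence $j$-fixed. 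For the reverse inclusion, take $x\leq\bigvee A$; the key is to write $x$ as a join of a compatible family drawn from $A^{\downarrow}$, so that $x\in j(A^{\downarrow})$ by the definition \eqref{eq:j29} of $j$. The natural candidate is the family $\{x\wedge a\colon a\in A\}$, using completeness of $S$: by Lemma~\ref{lem:molly} (distributivity of multiplication over compatible joins, valid since $E$ is a frame) one computes $\bigvee_{a\in A}(x\wedge a)=x\wedge\bigvee A=x$, and each $x\wedge a\leq a$ lies in $A^{\downarrow}$. Here the existence of the meets $x\wedge a$ is guaranteed by Proposition~\ref{prop:meets}, and the family is compatible by Lemma~\ref{lem:aug28} since all its members lie below $x$. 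This realizes $x$ as $\bigvee X$ for a compatible $X\subseteq A^{\downarrow}$, giving $x\in j(A^{\downarrow})$ and completing the join-preservation argument.

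In summary, three of the four properties are essentially corestrictions of Proposition~\ref{prop:dec18}, and the real content is the distributivity-plus-completeness argument showing that $x\leq\bigvee A$ forces $x$ into $j(A^{\downarrow})$; I expect the bookkeeping around the two products ($AB$ versus $j(AB)$) and the two joins ($\cup$ versus $j(\cup)$) to be the only place where care is needed, but no deep new idea beyond Lemmas~\ref{lem:molly} and~\ref{lem:aug28} and Proposition~\ref{prop:meets} should be required.
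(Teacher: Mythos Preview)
Your treatment of injectivity, the monoid product, and the unary operations is correct and essentially matches the paper's reference back to Proposition~\ref{prop:dec18} (you are just more explicit about why $j$ fixes principal ideals).

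For the join-preservation step, however, your argument is both more complicated than necessary and contains a justification gap. You invoke Lemma~\ref{lem:molly} to get $\bigvee_{a\in A}(x\wedge a)=x\wedge\bigvee A$, but Lemma~\ref{lem:molly} asserts that \emph{multiplication} distributes over compatible joins, not that the meet does; these are different operations in a restriction semigroup, and distributivity of one does not formally imply distributivity of the other. The equality you need is in fact true, but it requires an extra step: from $x\leq\bigvee A$ one has $x=(\bigvee A)\lambda(x)=\bigvee_{a\in A}a\lambda(x)$ (this is where Lemma~\ref{lem:molly} applies), and then $a\lambda(x)\leq a$ together with $a\lambda(x)\leq(\bigvee A)\lambda(x)=x$ gives $a\lambda(x)\leq x\wedge a$, hence $x\leq\bigvee_{a\in A}(x\wedge a)\leq x$. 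Without this bridge, your argument as written has a gap.

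The paper's route is much shorter and sidesteps all of this: to show $(\bigvee A)^{\downarrow}\subseteq j(A^{\downarrow})$, it suffices to show $\bigvee A\in j(A^{\downarrow})$, since $j(A^{\downarrow})$ is an order ideal. But $A$ itself is a compatible subset of $A^{\downarrow}=\bigcup_{a\in A}a^{\downarrow}$, so by the very definition \eqref{eq:j29} of $j$ one has $\bigvee A\in j(A^{\downarrow})$ immediately. No meets, no distributivity argument, no Proposition~\ref{prop:meets} needed.
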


\begin{proof} In view Proposition \ref{prop:dec18}, we need only show that $\eta$ preserves existing joins.
Let $X$ be a compatible family of elements of $S$ and let $a=\bigvee X$. 
We need to show that $\eta(a)=\bigvee_{x\in X}\eta(x)$ (the latter join is in ${\mathcal L}^{\vee}(S)$). For all $x\in X$ the inequality $x\leq a$ implies that $\eta(x)\leq \eta(a)$. Hence $\bigvee_{x\in X}\eta(x)\subseteq \eta(a)$. 
For the reverse inclusion it suffices to show that $a\in  \bigvee_{x\in X}\eta(x)$. 
Indeed, then $a^{\downarrow}\subseteq \bigvee_{x\in X}\eta(x)$ since the latter set is an order ideal.
We have $\bigvee_{x\in X}\eta(x)=j(\bigcup_{x\in X} x^{\downarrow})$. 
Since $X$ is a compatible subset of $\bigcup_{x\in X} x^{\downarrow}$, from \eqref{eq:j29} we obtain $a=\bigvee X\in  j(\bigcup_{x\in X} x^{\downarrow})$, as required.
\end{proof}

We will also need the following universal property of ${\mathcal L}^{\vee}(S)$ as a sup-lattice.

\begin{proposition}\label{prop:un_pr} Let $L$ be a sup-lattice and $f: S\to L$ a monotone map which preserves all joins that exist in $S$ (these are precisely the joins of compatible families). 
Then there is a unique sup-map $\overline{f}: {\mathcal L}^{\vee}(S)\to L$, which agrees with $f$ on $S$ in that $f=\overline{f}\eta$. 
The map $\overline{f}$ is given by $\overline{f}(A)=\bigvee_{a\in A}f(a)$.
\end{proposition}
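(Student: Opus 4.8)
The plan is to obtain $\overline{f}$ as the restriction to $\mathcal{L}^{\vee}(S)$ of the canonical extension of $f$ to the full ideal completion $\mathcal{L}(S)$, and then to verify that the closure operator $j$ is invisible to this extension. First I would invoke Lemma~\ref{lem:4dec} applied to the monotone map $f \colon S \to L$: this yields a unique join-preserving map $\widehat{f} \colon \mathcal{L}(S) \to L$ given by $\widehat{f}(U) = \bigvee_{x \in U} f(x)$ and satisfying $f = \widehat{f}\eta$, where $\eta$ is momentarily regarded as landing in $\mathcal{L}(S)$. Since the order ideals form a frame in which joins coincide with unions (Proposition~\ref{prop:dec18}), the statement that $\widehat{f}$ is a sup-map reads $\widehat{f}(\bigcup_i U_i) = \bigvee_i \widehat{f}(U_i)$.

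The crucial step, and the single place where the hypothesis that $f$ preserves compatible joins enters, is to show that $\widehat{f}(j(A)) = \widehat{f}(A)$ for every $A \in \mathcal{L}(S)$. Since $A \subseteq j(A)$ and $\widehat{f}$ is monotone, the inequality $\widehat{f}(A) \le \widehat{f}(j(A))$ is automatic. For the reverse, recall from \eqref{eq:j29} that every element of $j(A)$ has the form $\bigvee X$ for some compatible family $X \subseteq A$. Because $f$ preserves such joins (these being precisely the joins that exist in $S$), we have $f(\bigvee X) = \bigvee_{x \in X} f(x) \le \widehat{f}(A)$, and taking the join over all members of $j(A)$ gives $\widehat{f}(j(A)) \le \widehat{f}(A)$, as required.

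Next I would define $\overline{f}$ to be the restriction of $\widehat{f}$ to the subset $\mathcal{L}^{\vee}(S) \subseteq \mathcal{L}(S)$, so that indeed $\overline{f}(A) = \bigvee_{a \in A} f(a)$. The identity $\overline{f}\eta = f$ is then immediate from $\widehat{f}\eta = f$ together with the fact that $\eta(s) = s^{\downarrow}$ already lies in $\mathcal{L}^{\vee}(S)$. That $\overline{f}$ is a sup-map follows by combining the two preceding facts: for a family $\{A_i\}$ in $\mathcal{L}^{\vee}(S)$, the join $\bigvee_i A_i$ computed in $\mathcal{L}^{\vee}(S)$ equals $j(\bigcup_i A_i)$, whence $\overline{f}(\bigvee_i A_i) = \widehat{f}(j(\bigcup_i A_i)) = \widehat{f}(\bigcup_i A_i) = \bigvee_i \widehat{f}(A_i) = \bigvee_i \overline{f}(A_i)$.

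For uniqueness, suppose $g \colon \mathcal{L}^{\vee}(S) \to L$ is any sup-map with $g\eta = f$. The key observation is that every $A \in \mathcal{L}^{\vee}(S)$ is the join, computed in $\mathcal{L}^{\vee}(S)$, of the principal ideals $\eta(a)$ with $a \in A$: indeed $\bigvee_{a \in A} \eta(a) = j(\bigcup_{a \in A} a^{\downarrow}) = j(A) = A$, using that $A$ is an order ideal and is $j$-closed. Applying $g$ and using that it preserves joins gives $g(A) = \bigvee_{a \in A} g(\eta(a)) = \bigvee_{a \in A} f(a) = \overline{f}(A)$, so $g = \overline{f}$. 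The only genuine obstacle in the whole argument is the compatibility step of the second paragraph; everything else is bookkeeping with the closure operator $j$ and the universal property of the ideal completion.
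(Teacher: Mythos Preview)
Your proof is correct and follows essentially the same approach as the paper: invoke Lemma~\ref{lem:4dec} to obtain the extension to $\mathcal{L}(S)$, then pass to the quotient $\mathcal{L}^{\vee}(S)$ using that compatible joins are preserved. The paper compresses this into a single sentence, whereas you spell out explicitly the key identity $\widehat{f}(j(A)) = \widehat{f}(A)$ and the uniqueness argument, but the underlying idea is identical.
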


\begin{proof} The statement follows from Lemma \ref{lem:4dec} and the property of ${\mathcal L}^{\vee}(S)$ to be the maximum quotient of ${\mathcal L}(S)$, determined by the condition that compatible joins are preserved under the embedding  $\eta: S\to {\mathcal L}^{\vee}(S)$.
\end{proof}

We now come to significant properties of the Ehresmann quantal frames of the form  ${\mathcal L}^{\vee}(S)$.

\begin{proposition}\label{prop:p_units} Let $S$ be a complete restriction monoid.
\begin{enumerate}

\item \label{i:pu1a} The partial isometries of  ${\mathcal L}^{\vee}(S)$ are precisely the elements of the form $\eta(s)$ where $s\in S$.
Consequently, partial isometries of  ${\mathcal L}^{\vee}(S)$ form a complete restriction monoid isomorphic to $S$.

\item \label{i:pu1b} Each element of ${\mathcal L}^{\vee}(S)$ is a join of partial isometries.

\end{enumerate}
\end{proposition}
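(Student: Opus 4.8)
The plan is to establish part~\eqref{i:pu1a} by proving the two inclusions $\eta(S)\subseteq{\mathcal{PI}}({\mathcal L}^{\vee}(S))$ and ${\mathcal{PI}}({\mathcal L}^{\vee}(S))\subseteq\eta(S)$, and then to read off the ``consequently'' clause; part~\eqref{i:pu1b} will then be immediate. For the first inclusion, let $s\in S$ and suppose $B\in{\mathcal L}^{\vee}(S)$ with $B\leq\eta(s)$, i.e.\ $B\subseteq s^{\downarrow}$. Every element of $B$ lies below $s$, so $B$ is a compatible family by Lemma~\ref{lem:aug28}, and $b=s\lambda(b)=\rho(b)s$ for each $b\in B$ by Lemma~\ref{lem:apr4}\eqref{i:apr4a}. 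Writing $\overline{\lambda}(B)=\bigvee_{b\in B}\eta(\lambda(b))$ (since $\overline{\lambda}$ is a sup-map and $B=\bigvee_{b\in B}\eta(b)$), and using that $\eta$ is a join-preserving monoid homomorphism and that quantale multiplication distributes over joins, both products collapse: $\eta(s)\,\overline{\lambda}(B)=\bigvee_{b\in B}\eta(s\lambda(b))=\bigvee_{b\in B}\eta(b)=B$, and dually $\overline{\rho}(B)\,\eta(s)=B$. By Lemma~\ref{lem:aaab}\eqref{i:aaa1} this gives $B\leq'\eta(s)$, so $\eta(s)$ is a partial isometry.

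The hard part is the reverse inclusion: every partial isometry $A$ has the form $\eta(s)$. Since $A$ is a $\vee$-closed order ideal, $A=\bigcup_{a\in A}a^{\downarrow}$, so if the join $s:=\bigvee A$ exists in $S$ then $s\in A$ by $\vee$-closedness, $A\subseteq s^{\downarrow}$ because $s$ is an upper bound, and the order-ideal property forces $s^{\downarrow}\subseteq A$; hence $A=s^{\downarrow}=\eta(s)$. Thus everything reduces to the main obstacle: showing that the underlying set of $A$ is a compatible family in $S$, so that $\bigvee A$ exists.

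To prove $a\sim b$ for $a,b\in A$, I would pass to $C:=\eta(a)\vee\eta(b)\leq A$, which is again a partial isometry because partial isometries form an order ideal for $\leq$ (Lemma~\ref{le: order_ideal}). Then $\eta(a)\leq C$ gives $\eta(a)\leq'C$, and expanding via Lemma~\ref{lem:aaab}\eqref{i:aaa1} yields $\eta(a)=C\,\overline{\lambda}(\eta(a))=C\,\eta(\lambda(a))$, that is $a^{\downarrow}=j\bigl(C\cdot\lambda(a)^{\downarrow}\bigr)$ at the level of $\vee$-closed ideals. Since $b\in C$ and $\lambda(a)\in\lambda(a)^{\downarrow}$, the element $b\lambda(a)$ lies in the subset product $C\cdot\lambda(a)^{\downarrow}\subseteq a^{\downarrow}$, whence $b\lambda(a)\leq a$. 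The three symmetric expansions ($\eta(a)=\eta(\rho(a))\,C$, and the two for $\eta(b)$) give $\rho(a)b\leq a$, $a\lambda(b)\leq b$ and $\rho(b)a\leq b$.

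Finally I would convert these inequalities into compatibility by a short calculation in the restriction monoid $S$. From $b\lambda(a)\leq a$ and Lemma~\ref{lem:apr4}\eqref{i:apr4a} we get $b\lambda(a)=a\,\lambda(b\lambda(a))$; since $\lambda(b\lambda(a))=\lambda(\lambda(b)\lambda(a))=\lambda(a)\lambda(b)$ by (ES4) together with (ES2), this simplifies using (ES3) to $b\lambda(a)=a\lambda(a)\lambda(b)=a\lambda(b)$. Dually, $\rho(a)b\leq a$ gives $\rho(a)b=\rho(b)a$, so $a\sim b$. Hence $A$ is compatible, $s=\bigvee A\in A$ exists, and $A=\eta(s)$, proving ${\mathcal{PI}}({\mathcal L}^{\vee}(S))=\eta(S)$. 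The ``consequently'' clause now follows at once: by Propositions~\ref{prop:dec18} and~\ref{prop:lvees} the map $\eta$ is an injective homomorphism of restriction monoids preserving $\lambda$, $\rho$ and all existing joins, so $\eta(S)$ is a copy of the complete restriction monoid $S$. For part~\eqref{i:pu1b}, any $A\in{\mathcal L}^{\vee}(S)$ satisfies $A=\bigcup_{a\in A}a^{\downarrow}=\bigvee_{a\in A}\eta(a)$ (the union is already $\vee$-closed, hence $j$-fixed), exhibiting it as a join of partial isometries.
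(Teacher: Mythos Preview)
Your proof is correct and follows the same overall strategy as the paper: show each $\eta(s)$ is a partial isometry, then show any partial isometry $A$ is pairwise compatible as a subset of $S$ (hence has a top element, hence equals some $\eta(s)$), with the key step being $b\lambda(a)\le a\Rightarrow b\lambda(a)=a\lambda(b)$.

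There is one genuine simplification in your execution of the reverse inclusion worth flagging. The paper works with the whole partial isometry $A$: from $\eta(a)\le' A$ it gets $\eta(a)=A\circ\eta(g)$ for \emph{some} projection $g$ via Lemma~\ref{lem:aug294}, and then needs a separate argument to show $\lambda(a)\le g$ before concluding $b\lambda(a)\in A\cdot\eta(g)\subseteq\eta(a)$. You instead pass to $C=\eta(a)\vee\eta(b)\le A$, invoke Lemma~\ref{le: order_ideal} to see $C$ is still a partial isometry, and then Lemma~\ref{lem:aaab}\eqref{i:aaa1} hands you the projection explicitly as $\overline{\lambda}(\eta(a))=\eta(\lambda(a))$; this bypasses Lemma~\ref{lem:aug294} and the auxiliary inequality $\lambda(a)\le g$ entirely. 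The trade-off is that you rely on Lemma~\ref{le: order_ideal}, whereas the paper's argument is self-contained at this point. Both routes arrive at $b\lambda(a)\le a$ and finish identically.
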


We will make use of the following lemma.

\begin{lemma}\label{lem:aug294} Let $F\in {\mathcal L}^{\vee}(S)$ and $F\subseteq e^{\downarrow}$. 
Then $F=f^{\downarrow}$ for some $f\leq e$.
\end{lemma}
\begin{proof} Note that $F$ consists only of projections and any two projections are compatible. 
Therefore, as $F$ is $\vee$-closed, $\bigvee F\in F$. 
If we set $f=\bigvee F$ then clearly $F=f^{\downarrow}$.
\end{proof}

\begin{proof}[Proof of Proposition \ref{prop:p_units}]
 \eqref{i:pu1a} 
Let  $s\in S$. 
Assume $A\subseteq s^{\downarrow}$ for some $A\in {\mathcal L}^{\vee}(S)$. 
Since  $s^{\downarrow}$ forms a compatible family of elements so does $A$. 
As $A$ is $\vee$-closed, we have that $\bigvee A\in A$. 
It follows that $A=t^{\downarrow}$ for some $t\leq s$. 
Therefore,  $t=fs=sg$, where $f,g$ are projections. 
It follows that $t^{\downarrow}=\eta(t)=\eta(f)\eta(s)=\eta(s)\eta(g)$. 
It follows that $\eta (s)$ is a partial isometry.

We now prove the converse.
Let $A\in {\mathcal L}^{\vee}(S)$ be a partial isometry.
We show that $A=\eta(s)$ for some $s\in S$. 
Let $a\in A$. 
We have $\eta(a)=a^{\downarrow}\subseteq A$ since $A$ is an order ideal. Until the end of this proof by $\circ$ we denote the multiplication in ${\mathcal L}^{\vee}(S)$, and by $\cdot$ the multiplication in ${\mathcal L}^{\vee}(S)$ which is the usual subset multiplication.
By the definition of a partial isometry and by Lemma \ref{lem:aug294}, we may write
$$\eta(a)=\eta(f)\circ A=A\circ \eta(g)$$ for some $f,g\leq e$.
We show that $\lambda(a)\leq g$. 
We have $\eta(a)=A\circ \eta(g) = j(A\cdot \eta(g))$.
If $x\in A\cdot \eta(g)$ then $x=xg$ and so $\lambda(x)\leq g$.
If  $x\in j(A \cdot \eta(g))$  then $x=\bigvee X$, where $X\subseteq A \cdot \eta(g)$ is a compatible family. 
It follows that 
$$\lambda(x)=\lambda\left (\bigvee X \right ) =\bigvee_{x\in X} \lambda(x)\leq g.$$ 
Since the latter inequality is established for an arbitrary $x\in A\circ \eta(g)$ and $a\in \eta(a)=A\circ \eta(g)$,  we obtain $\lambda(a)\leq g$. 
By symmetry we have that $\rho(a)\leq f$.

Now let $b\in A$ be arbitrary. 
Since $\lambda(a)\in \eta(g)$, we have $b\lambda(a)\in A\cdot \eta(g) \subseteq A\circ \eta(g)$. 
Thus $b\lambda(a)\in \eta(a)$ which means that $b\lambda(a)\leq a$. It follows that 
 $$b\lambda(a)=a\lambda(b\lambda(a))=a\lambda(\lambda(b)\lambda(a))=a\lambda(b)\lambda(a)=a\lambda(b).$$
 Similarly, from $\rho(a)\in \eta(f)$, we obtain the equality $\rho(a)b=\rho(b)a$. 
 It follows that $a\sim b$.
 We have proved that $a\sim b$ for any $a,b\in A$. 
 Thus $\bigvee A$ exists and thus is the top element of $A$ since $A$ is $\vee$-closed. 
 This implies that $A=\eta(\bigvee A)$.
 
\eqref{i:pu1b} Let  $A\in {\mathcal L}^{\vee}(S)$ and show that
\begin{equation*}\label{eq:o15}
A=\bigvee\left \{\eta(a)\colon a\in A\right \}.
\end{equation*}
If $a\in A$ then $a\in\eta(a)$, and so we have the inclusion  $A\subseteq \bigvee\{\eta(a)\colon a\in A\}$. 
For the reverse inclusion, 
let $y\in \bigvee\{\eta(a)\colon a\in A\}$. 
Then there is a compatible set  $Z\subseteq \bigcup\{\eta(a)\colon a\in A\}$ such that $y\in \bigvee Z$. 
But $z\in \eta(a)$ for $a\in A$ implies that $z\in A$ and $z\in \eta(z)$. 
So we may assume that  $y\in \bigvee_{x\in X} \eta(X)$ where $X\subseteq A$ and $\eta(x)$, $x\in X$, form a compatible family of elements. 
Since $\eta \colon S\to \eta(S)$ is an injective homomorphism of restriction monoids, compatibility  of the latter family implies that of the family $X$.  
Using \eqref{eq:j29} and the fact that $\eta$ preserves existing joins, we obtain 
$y\in \bigvee_{x\in X}\eta(x) = \eta\left (\bigvee X\right )$.
But $A$ is $\vee$-closed. Hence $y$ is below an element of $A$ yielding  $y\in A$, as required.
\end{proof}

We will call an Ehresmann quantal frame $Q$ {\em \'{e}tale} provided that $1$ is a join of partial isometries. It is easy to see that $Q$ is \'{e}tale if and only if every element in $Q$ is a join of partial isometries.
We now make our key definition. An Ehresmann quantal frame is said to be a {\em restriction quantal frame} if it is \'etale and the set of partial isometries is closed under multiplication.
If $S$ is a complete restriction monoid then Proposition \ref{prop:p_units} implies that ${\mathcal L}^{\vee}(S)$ is a restriction quantal frame. We now prove that any restriction quantal frame is of the form ${\mathcal L}^{\vee}(S)$ for some complete restriction monoid $S$.

\begin{proposition}\label{prop:11Jan} 
Every restriction quantal frame is isomorphic to a restriction quantal frame of the form
 ${\mathcal L}^{\vee}(S)$ for some complete restriction monoid $S$.
\end{proposition}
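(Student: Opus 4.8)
The plan is to take $S=\mathcal{PI}(Q)$, the monoid of partial isometries of the given restriction quantal frame $Q$, and to show that the canonical comparison map $\mathcal{L}^{\vee}(S)\to Q$ is an isomorphism of restriction quantal frames. Since $Q$ is a restriction quantal frame, $\mathcal{PI}(Q)$ is closed under multiplication, so by Proposition \ref{prop:pi6} the set $S=\mathcal{PI}(Q)$ is a complete restriction monoid. The crucial preliminary observation is that on $S$ the natural partial order $\leq'$ and the ambient sup-lattice order $\leq$ of $Q$ coincide: this is exactly the defining property of a partial isometry, together with part \eqref{i:a1} of Lemma \ref{lem:l14}. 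Moreover, it follows from Proposition \ref{prop:pi6} that the join in $S$ of a compatible family equals its join computed in $Q$. Consequently the inclusion $\iota\colon S\to Q$ is monotone and preserves all joins that exist in $S$, so by the universal property of Proposition \ref{prop:un_pr} it extends uniquely to a sup-map $\overline{\iota}\colon\mathcal{L}^{\vee}(S)\to Q$, given explicitly by $\overline{\iota}(A)=\bigvee_{a\in A}a$, the join being taken in $Q$.

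For surjectivity I would use that $Q$ is \'etale. Given $q\in Q$, put $A_q=\{s\in S\colon s\leq q\}$. This is a $\vee$-closed order ideal of $S$: it is downward closed for $\leq'$ because $t\leq' s\leq q$ forces $t\leq q$ by part \eqref{i:a1} of Lemma \ref{lem:l14}, and it is $\vee$-closed because a compatible join of elements below $q$ is again below $q$. Since every element of $Q$ is a join of partial isometries, each of which then lies in $A_q$, we obtain $\overline{\iota}(A_q)=\bigvee A_q=q$.

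The main work is injectivity, and this is the step I expect to be the genuine obstacle, since it is where the interplay between the two orders and frame distributivity must be used. Suppose $\overline{\iota}(A)=\overline{\iota}(B)$ and take $s\in A$; the goal is $s\in B$. Writing $b_0=\overline{\iota}(B)=\bigvee_{b\in B}b$ and noting $s\leq b_0$, the frame law gives $s=s\wedge b_0=\bigvee_{b\in B}(s\wedge b)$, a join in $Q$. For each $b$ the element $s\wedge b$ lies below both $s$ and $b$; since the set of partial isometries is an order ideal (Lemma \ref{le: order_ideal}), $s\wedge b$ is a partial isometry, and lying below the partial isometries $s$ and $b$ it satisfies $s\wedge b\leq' s$ and $s\wedge b\leq' b$, whence $s\wedge b\in B$ as $B$ is an order ideal. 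The family $\{s\wedge b\}_{b\in B}$ is bounded above by $s$, hence compatible by Lemma \ref{lem:aug28}, so $s=\bigvee_{b\in B}(s\wedge b)$ is a compatible join of elements of $B$ computed in $S$; since $B$ is $\vee$-closed, $s\in B$. Thus $A\subseteq B$, and by symmetry $A=B$, so $\overline{\iota}$ is a bijection.

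Being a bijective sup-map, $\overline{\iota}$ is automatically an order isomorphism and hence preserves all meets, so it is an isomorphism of the underlying frames. It then remains to verify compatibility with the quantale and Ehresmann structure, and here everything reduces to one routine principle: applying $j$ to a subset of $S$ leaves the join in $Q$ of its elements unchanged, since $j$ merely adjoins compatible joins of existing elements. Thus $\overline{\iota}$ sends the product $j(AB)$ to $\bigvee_{a\in A,\,b\in B}ab=\overline{\iota}(A)\,\overline{\iota}(B)$ by distributivity of quantale multiplication over joins, it sends the unit $e^{\downarrow}$ to $\bigvee e^{\downarrow}=e_Q$, and $\overline{\iota}(\overline{\lambda}^{j}(A))=\bigvee_{a\in A}\lambda(a)=\lambda\bigl(\bigvee_{a\in A}a\bigr)=\lambda(\overline{\iota}(A))$ using that $\lambda$ is a sup-map, with the dual statement for $\rho$. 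This shows $\overline{\iota}$ is an isomorphism of restriction quantal frames, completing the proof.
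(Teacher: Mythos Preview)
Your proof is correct and follows the same overall strategy as the paper: both take $S=\mathcal{PI}(Q)$ and define the comparison map $\mathcal{L}^{\vee}(S)\to Q$ by $A\mapsto\bigvee A$, with surjectivity coming from \'etaleness. The difference lies in the injectivity step. The paper observes that the map is injective on the order ideal of principal elements $\eta(s)$ and then invokes \cite[Proposition~2.2]{Re}, a general fact about sup-maps out of such completions, to conclude global injectivity. You instead give a direct, self-contained argument: writing $s=\bigvee_{b\in B}(s\wedge b)$ via frame distributivity, checking each $s\wedge b$ lies in $B$ (using that partial isometries form an order ideal and that $B$ is a $\leq'$-order-ideal), and then using $\vee$-closure. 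Your route avoids the external citation and makes explicit exactly where the quantal-frame axiom is used; the paper's route is terser but relies on the reader knowing Resende's result. Your explicit verification of multiplication, unit, $\lambda$, and $\rho$ is also more detailed than the paper's ``straightforward to verify''.
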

\begin{proof}
Let $Q$ be a restriction quantal frame.
Define 
$$\varepsilon_Q \colon {\mathcal L}^{\vee}({\mathcal{PI}}(Q))\to Q$$
by $\varepsilon(X)=\bigvee X$.
It is straightforward to verify that $\varepsilon_Q$ is a frame morphism and a morphism of Ehresmann monoids.
Let $s$ be a partial isometry of $Q$. 
Observe that $\varepsilon_Q(\eta(s))=s$, and so $\varepsilon_Q$ is surjective since $Q$ is \'{e}tale.
In addition, the restriction of $\varepsilon_Q$ to the elements of the form $\eta(s)$, where $s$ is a partial isometry, is injective.
Clearly, such elements form an order ideal with respect to the order that underlies the sup-lattice $Q$. 
It now follows from \cite[Proposition 2.2]{Re} that $\varepsilon_Q$ is injective. 
\end{proof}

\subsection{The Quantalization Theorem: morphisms}

To be able to formulate the main theorem of this section, we define appropriate morphisms between complete restriction monoids and between restriction quantal frames.  

Recall that a homomorphism of Ehresmann (or restriction) monoids is a semigroup homomorphism that commutes with the functions $\lambda$ and $\rho$. Let $S,T$ be complete restriction monoids, and $E_S$ and $E_T$ be their frames of projections,  respectively.
Let  $\varphi:S\to T$ be a map. 
We call $\varphi$ a {\em morphism of complete restriction monoids} if $\varphi$ is a homomorphism of restriction monoids and, 
restricted to $E_S$, 
is a frame morphism from $E_S$ to $E_T$.
We shall often refer to such maps as {\em morphisms}.

The following example shows that if $\varphi$ is a morphism of complete restriction monoids, it does not need to preserve binary meets. 
\begin{example} \label{ex:e1} {\em Let $G$ be a group with the identity $e$ and assume $|G|\geq 2$. 
Let $S$ be the group $G$ with the adjoined zero $0$.  
It is a complete restriction monoid with respect to $E=\{e,0\}$ with $\lambda$ and $\rho$ sending $0$ to $0$ and each non-zero element to $e$. 
Then the map $S\to S$, given $g\mapsto e$, $g\in G$, and $0\mapsto 0$,  is a morphism of complete restriction monoids. 
Let $g\in G$, $g\neq e$. Then $g\wedge e=0$ but at the same time
$0= \varphi (0)=\varphi(g\wedge e)\neq \varphi(e) \wedge \varphi(e)= e.$}
\end{example}

A morphism $\varphi:S\to T$ of complete restriction monoids will be called a $\wedge$-{\em morphism} if  $\varphi(s\wedge t)=\varphi(s)\wedge\varphi(t)$ for any $s,t\in S$. 
It will be called {\em proper}  if for any element $t\in T$, there is a subset $X\subseteq T$ such that $t=\bigvee X$ and for  each $x\in X$ there is $s\in S$ such that $\varphi(s)\geq x$.  By Lemma \ref{lem:aug28} such a set $X$, if it exists, is necessarily compatible. 
The morphism $\varphi$ from Example \ref{ex:e1} is not proper. 
Indeed,  let $g\in S$, $g\neq e$. 
Observe that $g$ is join-irreducible and there is no $x\in S$ with $\varphi(x)\geq g$.

The proof of the following adapts the arguments from \cite[Proposition 2.10]{Re}.

\begin{lemma}\label{lem:join_pres}
Let $\varphi:S\to T$ be a morphism of complete restriction monoids. 
Let $X$ be a compatible family of elements of $S$. 
Then the family of elements $\varphi(x)$, $x\in X$, is compatible and
$$
\varphi\left (\bigvee X \right )=\bigvee_{x\in X}\varphi(x).
$$
\end{lemma}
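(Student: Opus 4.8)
The plan is to establish the two assertions in turn: first that the family $\{\varphi(x):x\in X\}$ is compatible (which is what guarantees that the right-hand join exists in $T$, since $T$ is complete), and then the equality of the two joins, which I would split into the two inequalities. For compatibility I would simply transport the defining equations of $\sim$ across $\varphi$. Fix $x,y\in X$; compatibility of $X$ gives $x\lambda(y)=y\lambda(x)$ and $\rho(y)x=\rho(x)y$. As $\varphi$ is a homomorphism of restriction monoids it is multiplicative and commutes with $\lambda$ and $\rho$, so applying $\varphi$ yields $\varphi(x)\lambda(\varphi(y))=\varphi(y)\lambda(\varphi(x))$ and $\rho(\varphi(y))\varphi(x)=\rho(\varphi(x))\varphi(y)$, that is, $\varphi(x)\sim\varphi(y)$. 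Writing $a:=\bigvee X$ and $b:=\bigvee_{x\in X}\varphi(x)$, the goal becomes $\varphi(a)=b$. One inequality is routine monotonicity: each $x\le a$ gives $x=a\lambda(x)$ by Lemma \ref{lem:apr4}, whence $\varphi(x)=\varphi(a)\lambda(\varphi(x))$, so $\varphi(x)\le\varphi(a)$ again by Lemma \ref{lem:apr4}; taking the join gives $b\le\varphi(a)$.

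The substance of the lemma is the reverse inequality, and the strategy here is \emph{not} to manipulate $\varphi(a)$ directly but to compare left supports and then invoke the rigidity of the natural partial order. Since we already have $b\le\varphi(a)$, part \eqref{i:apr4b} of Lemma \ref{lem:apr4} reduces the problem to showing $\lambda(b)=\lambda(\varphi(a))$. I would compute $\lambda(b)$ using that $\lambda$ preserves existing joins (Lemma \ref{lem:joins22}; the join $\bigvee_{x}\lambda(\varphi(x))$ exists because all its terms lie in the frame $E_T$), obtaining $\lambda(b)=\bigvee_{x}\lambda(\varphi(x))=\bigvee_{x}\varphi(\lambda(x))$. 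The hypothesis that $\varphi$ restricts to a frame morphism on projections is exactly what lets me pull the join out, $\bigvee_{x}\varphi(\lambda(x))=\varphi\!\left(\bigvee_{x}\lambda(x)\right)$, and a second application of Lemma \ref{lem:joins22}, this time inside $S$, gives $\bigvee_{x}\lambda(x)=\lambda(a)$. Hence $\lambda(b)=\varphi(\lambda(a))=\lambda(\varphi(a))$, and Lemma \ref{lem:apr4}\eqref{i:apr4b} forces $b=\varphi(a)$.

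I expect the reverse inequality to be the only real obstacle. Without involutions—which is what the analogous argument of Resende relies on—one cannot symmetrize, so the whole proof hinges on reducing the desired equality to agreement of left supports and on the fact that $\varphi$ preserves \emph{arbitrary} joins of projections, not merely finite meets. The point requiring care is that every appeal to Lemma \ref{lem:joins22} is legitimate, i.e. that both the join of the elements and the join of their left supports exist; this is automatic here because $E_S$ and $E_T$ are frames and the families involved are compatible (the latter by Lemma \ref{lem:aug28}).
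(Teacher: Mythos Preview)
Your proof is correct and is precisely the adaptation of Resende's argument that the paper has in mind (the paper gives no detailed proof, only the reference to \cite[Proposition 2.10]{Re}). The key points you identify—transporting compatibility via the preservation of $\lambda,\rho$ and multiplication, using that $\varphi|_{E_S}$ is a frame morphism to pass $\varphi$ through the join of projections, and then invoking Lemma~\ref{lem:apr4}\eqref{i:apr4b} to upgrade $b\le\varphi(a)$ with $\lambda(b)=\lambda(\varphi(a))$ to equality—are exactly the intended ones, and your remark that the absence of an involution forces the argument to go through left supports rather than a symmetrized identity is on point.
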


Let $Q_1$ and $Q_2$ be restriction quantal frames and $\varphi: Q_1\to Q_2$  a quantale morphism that is a morphism of Ehresmann monoids. The following example shows that $\varphi$ does not need to map partial isometries to partial isometries.

\begin{example} {\em Let $M_1$ be the semilattice $\{e,x\}$ with $e\geq x$, and $M_2$ the semilattice $\{e',a,b\}$ with $e'\geq a\geq b$. 
Consider the restriction quantal frames $Q_i={\mathcal P}(M_i)$, $i=1,2$, and
define a map $\varphi:Q_1\to Q_2$ by join-extension of the map $\{e\}\mapsto \{e'\}$, $\{x\}\mapsto \{a,b\}$. 
This is a morphism of quantales and a morphism of Ehresmann monoids but it does not map the partial isometry $\{x\}$ to a partial isometry.
}
\end{example}

Define a {\em morphism} $\varphi: Q_1\to Q_2$ between restriction quantal frames as a quantale morphism that is a morphism of Ehresmann monoids 
(in particular, $\varphi$ preserves both $\lambda$ and $\rho$) and satisfying the additional requirement that $\varphi$ maps partial isometries to partial isometries. 

We define {\em proper morphisms} as those which preserve the top element, $\wedge$-{\em mor\-phisms} as those which preserve finite non-empty meets.  Note that proper $\wedge$-morphisms preserve any finite meets (including the empty one) and are thus also frame morphisms.

We will be interested in the four types of morphisms between complete restriction monoids and restriction quantal frames:
\begin{itemize} 
\item type $1$:  morphisms; 
\item type $2$: proper morphisms; 
\item type $3$: $\wedge$-morphisms;
\item type $4$:  proper $\wedge$-morphisms.
\end{itemize}

\begin{theorem}[Quantalization Theorem] \label{th:quant_psgrps}
For each $k=1,2,3,4$ the category of complete restriction monoids and morphisms of type $k$ is equivalent to the category of restriction quantal frames and morphisms of type $k$.
\end{theorem}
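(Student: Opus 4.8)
The plan is to establish the equivalence by constructing two functors—an enveloping functor $\mathcal{L}^\vee$ from complete restriction monoids to restriction quantal frames, and a partial-isometry functor $\mathcal{PI}$ in the reverse direction—and then showing they are mutually quasi-inverse. The object-level work is essentially already done: Proposition~\ref{prop:p_units} shows that $\mathcal{PI}(\mathcal{L}^\vee(S))\cong S$ as complete restriction monoids, and Proposition~\ref{prop:11Jan} shows that every restriction quantal frame $Q$ is isomorphic to $\mathcal{L}^\vee(\mathcal{PI}(Q))$ via the map $\varepsilon_Q$. So the heart of the theorem is the functoriality and the morphism-level bookkeeping, carried out uniformly for all four types $k=1,2,3,4$.

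\emph{First}, I would define $\mathcal{L}^\vee$ on morphisms. Given a morphism $\varphi\colon S\to T$ of complete restriction monoids, Lemma~\ref{lem:join_pres} shows $\varphi$ preserves compatible joins, so by the universal property of Proposition~\ref{prop:un_pr} there is a unique sup-map $\overline{\varphi}\colon\mathcal{L}^\vee(S)\to\mathcal{L}^\vee(T)$ with $\overline{\varphi}\,\eta=\eta\,\varphi$, given concretely by $\overline{\varphi}(A)=\bigvee\{\eta(\varphi(a))\colon a\in A\}$. I would then verify that $\overline{\varphi}$ is a quantale morphism and a morphism of Ehresmann monoids (both conditions reduce, via the universal property and the fact that every element is a join of the $\eta(a)$, to the corresponding properties of $\varphi$ on $S$), and that it maps partial isometries to partial isometries—this is immediate since by Proposition~\ref{prop:p_units} the partial isometries of $\mathcal{L}^\vee(S)$ are exactly the $\eta(s)$, and $\overline{\varphi}(\eta(s))=\eta(\varphi(s))$ is again a partial isometry. \emph{Second}, I would define $\mathcal{PI}$ on morphisms simply by restriction: a morphism $\psi\colon Q_1\to Q_2$ of restriction quantal frames maps partial isometries to partial isometries by definition, and commutes with $\lambda,\rho$ and the multiplication, hence restricts to a homomorphism of restriction monoids; that it is a morphism of \emph{complete} restriction monoids (i.e.\ a frame map on projections) follows because $\psi$ is a sup-map and the projections of $Q_i$ are exactly $e_{Q_i}^\downarrow$.

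\emph{Third}, and this is where the type-labelling must be tracked carefully, I would check that each functor preserves the four morphism types. The correspondence is: \emph{proper} on the monoid side matches \emph{proper} (top-preserving) on the quantale side, because properness of $\varphi$ says every $t\in T$ is a compatible join of elements dominated by images of $S$, which translates under $\eta$ precisely to $\overline{\varphi}$ sending $1_{\mathcal{L}^\vee(S)}$ to $1_{\mathcal{L}^\vee(T)}$; conversely top-preservation of $\psi$, restricted to partial isometries and using étaleness ($1$ is a join of partial isometries), gives properness of $\mathcal{PI}(\psi)$. Similarly the \emph{$\wedge$-morphism} conditions correspond: on the quantale side $\wedge$ is intersection of order ideals, and $\overline{\varphi}$ preserves binary meets iff $\varphi$ does, which I would verify by comparing $\overline{\varphi}(A\cap B)$ with $\overline{\varphi}(A)\cap\overline{\varphi}(B)$ using that meets in $S$ exist (Proposition~\ref{prop:meets}) and that every element is a join of partial isometries. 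Running these checks in both directions for all four combinations completes the claim that we have functors of type $k$ for each $k$.

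\emph{Finally}, I would assemble the equivalence by exhibiting the natural isomorphisms. The unit $\eta_S\colon S\to\mathcal{PI}(\mathcal{L}^\vee(S))$ is the isomorphism of Proposition~\ref{prop:p_units}, and the counit is $\varepsilon_Q\colon\mathcal{L}^\vee(\mathcal{PI}(Q))\to Q$ from Proposition~\ref{prop:11Jan}; naturality in each case is a routine diagram chase that reduces, via the universal property of $\mathcal{L}^\vee$ and the generation of $Q$ by partial isometries, to the defining equations $\overline{\varphi}\,\eta=\eta\,\varphi$. \textbf{The main obstacle} I anticipate is not any single deep step but the uniform verification that the type-$k$ conditions are respected in \emph{both} directions simultaneously—in particular confirming that properness and the $\wedge$-condition translate faithfully across the $\eta$/$\varepsilon$ correspondence, since these conditions are phrased quite differently on the two sides (a covering-by-images condition versus top-preservation, and a semigroup-meet condition versus an intersection-of-ideals condition). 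Once the dictionary $\eta(s)\leftrightarrow s$ between partial isometries and monoid elements is used systematically, each translation becomes a short calculation, but getting all eight implications (four types, two directions) stated correctly is the bulk of the work.
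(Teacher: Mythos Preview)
Your proposal is correct and follows essentially the same route as the paper: define $\overline{\varphi}$ on $\mathcal{L}^{\vee}(S)$ by the universal property (equivalently, $\overline{\varphi}(A)=\bigvee_{a\in A}\varphi(a)^{\downarrow}$), define the reverse functor by restriction to partial isometries, and use the isomorphisms $\eta_S$ and $\varepsilon_Q$ from Propositions~\ref{prop:p_units} and~\ref{prop:11Jan} as the natural isomorphisms, checking along the way that properness corresponds to top-preservation and that the $\wedge$-condition transfers via the description of $A\cap B$ in terms of elementwise meets. The paper's proof is slightly terser and leaves functoriality and naturality implicit, but your outline matches it point for point.
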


\begin{proof} 
Let $S$ be a complete restriction monoid.
By Proposition~\ref{prop:p_units}, 
the map $\eta_S \colon S\to {\mathcal{PI}}({\mathcal L}^{\vee}(S))$ given by $s\mapsto \eta(s)$ 
is an  isomorphism of complete restriction monoids and so is both proper and a $\wedge$-morphism. 
Let $Q$ be a restriction quantal frame.
By Proposition~\ref{prop:11Jan}, 
the map $\varepsilon_Q \colon {\mathcal L}^{\vee}({\mathcal{PI}}(Q))\to Q$ given by $\varepsilon(X)=\bigvee X$ 
is an isomorphism of restriction quantal frames and so is both proper and a $\wedge$-morphism.

We now show that the two constructions above define the object parts of functors that describe an equivalence of categories.
Let 
$\varphi \colon S\to T$ be a morphism of complete restriction monoids.
Define
$\overline{\varphi}$ 
by
$$
\overline{\varphi}(A)=\bigvee _{a \in A} \varphi(a)^{\downarrow},
$$
where $A\in {\mathcal L}^{\vee}(S)$. 
It is straighforward to check that this defines a morphism of restriction quantal frames
and leads to a functor. 
If $\psi \colon Q\to R$ is a morphism between restriction quantal frames then
the restriction of $\psi$ to the partial isometries of $Q$ defines a morphism of complete restriction monoids.
It is easy to check that these two functors do indeed define an equivalence of categories.

We next prove that under this equivalence, proper morphisms correspond to proper morphisms.
The top element of ${\mathcal{L}}^{\vee}(S)$ is $S$, and the top element of ${\mathcal{L}}^{\vee}(T)$ is $T$. 
Thus  $\overline{\varphi}$ is proper if and only if  $\overline{\varphi}(S) = T$.
Let $\varphi\colon S\to T$ be proper and let $t\in T$. Let $X_t\subseteq T$ be a compatible set such that  $t=\bigvee X_t$ and for each $x\in X_t$ there is $s_x\in S$ satisfying $\varphi(s_x)\geq x$. Then 
$$
\overline{\varphi} \left(\bigvee\{s_x^{\downarrow}\colon x\in X_t, t\in T\}\right) = \bigvee \{\varphi(s_x)^{\downarrow}\colon s_x\in X_t, t\in T\} \geq \bigvee \{t^{\downarrow}\colon t\in T\}=T.
$$
Thus $\overline{\phi}(S)=T$. Conversely, assume that $\overline{\varphi}(S) = T$.
By the definition of $\overline{\varphi}$ this means that $\bigvee_{s\in S} \varphi(s)^{\downarrow} = \bigvee_{t\in T}t^{\downarrow}$. Let $t\in T$. Then we obtain
$\bigvee_{s\in S}(\varphi(s)\wedge t)^{\downarrow}=t^{\downarrow}$. It follows that $t=\bigvee_{s\in S} (\varphi(s)\wedge t)$ where we have $\varphi(s)\geq \varphi(s)\wedge t$.

We check that under the equivalence, $\wedge$-morphisms correspond to $\wedge$-morphisms.
Let $\varphi \colon S\to T$ be a $\wedge$-morphism. 
We show that $\overline{\varphi}$ preserves binary meets as well.
Let $A,B\in {\mathcal{L}}^{\vee}(S)$. 
Then 
$A = \bigvee_{a \in A} a^{\downarrow}$ 
and 
$B = \bigvee_{b \in B}b^{\downarrow}$.
We now make two observations.
First, we have that $A \wedge B = A \cap B$
and, since $S$ has all binary meets, 
$$A \cap B = \{a \wedge b \colon a \in A, b \in B \}.$$
Second, $\phi (a \wedge b)^{\downarrow} = \phi (a)^{\downarrow} \cap \phi (b)^{\downarrow}$.
The proof of the fact that $\overline{\varphi} (A \cap B) = \overline{\varphi} (A) \cap  \overline{\varphi} (B)$
now readily follows using the fact that our quantales are quantal frames.

It is now immediate that proper $\wedge$-morphisms correspond to proper $\wedge$-morphisms.
\end{proof}

\section{The Correspondence Theorem}\label{s:corresp}

\subsection{Localic categories from Ehresmann quantal frames}\label{s:s4}

The constructions in this section are an adaptation to our setting of the corresponding constructions from  \cite[Section 4]{Re}.

Let $Q$ be an Ehresmann quantal frame with structure maps $\lambda$ and $\rho$, and unit $e$.
Since both $Q$ and $e^{\downarrow}$ are frames, 
they define two locales: $C_1$ with $O(C_1)=Q$ and $C_0$ with $O(C_0)=e^{\downarrow}$. 
We shall turn these two locales into an object of arrows and an object of objects, respectively, of a localic category. 
In order to do this, we need to define structure maps. 
Our starting point will be the following maps each of which is a sup-map between frames and so has a right adjoint:
\begin{itemize}

\item $\nu \colon e^{\downarrow}\to Q$, the sup-lattice inclusion with right adjoint denoted by $u^{*}$.

\item $\lambda \colon Q \rightarrow e^{\downarrow}$, a  sup-lattice map with right adjoint denoted by $d^{*}$.

\item $\rho \colon Q \rightarrow e^{\downarrow}$, a sup-lattice map with right adjoint denoted by $r^{*}$.

\end{itemize}

\begin{lemma}\label{le: august} Let $Q$ be an Ehresmann quantal frame.
With the above definitions, we have the following.
\begin{enumerate}

\item\label{o151} The right adjoint $u^*$ of $\nu$ is given by $u^*(a)=a\wedge e$ where $a\in Q$. 
It preserves arbitrary joins and defines an open locale map $u \colon C_0\to C_1$ such that $\nu=u_!$. 

\item\label{o152} The right adjoint $d^*$ of $\lambda$ is given by $d^*(f)=1f$ where $f\in e^{\downarrow}$. 
It preserves arbitrary joins and defines an open  locale map $d \colon C_1\to C_0$ such that $\lambda = d_!$. 

\item\label{o153} The right adjoint $r^*$ of $\rho$ is given by $r^*(f)=f1$ where $f \in e^{\downarrow}$. 
It preserves arbitrary joins and defines an open  locale map $r \colon C_1\to C_0$ such that $\rho = r_!$.

\end{enumerate}
\end{lemma}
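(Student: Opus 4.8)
The plan is to treat the three parts uniformly, exploiting that $\nu$, $\lambda$ and $\rho$ are all sup-maps and hence, by part (4) of Proposition~\ref{prop: properties_of_adjoints}, possess right adjoints $u^*$, $d^*$ and $r^*$. First I would pin these adjoints down explicitly. For $u^*$, part (3) of Proposition~\ref{prop: properties_of_adjoints} gives $u^*(a)=\bigvee\{f\leq e : \nu(f)\leq a\}=a\wedge e$, using that $\nu$ is the inclusion together with frame distributivity. For $d^*$ and $r^*$ I would instead verify the defining adjunction inequalities directly, namely $\lambda(a)\leq f \iff a\leq 1f$ and $\rho(a)\leq f \iff a\leq f1$ for $f\leq e$. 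The forward implications use $a=a\lambda(a)$ and $a=\rho(a)a$ (axiom (EQ3)) together with compatibility of $\leq$ with multiplication; the reverse implications use (EQ4) and $\lambda(1)=\rho(1)=e$ to compute $\lambda(1f)=\lambda(\lambda(1)f)=\lambda(f)=f$ and dually $\rho(f1)=f$. Uniqueness of adjoints then yields $d^*(f)=1f$ and $r^*(f)=f1$.

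Next I would establish that each right adjoint is a frame map, so that it genuinely defines a locale map, and that this map is semiopen with the asserted direct image. Preservation of arbitrary meets is automatic from part (2) of Proposition~\ref{prop: properties_of_adjoints}, since $u^*$, $d^*$, $r^*$ are right adjoints; here I would note in passing that meets and joins of subsets of $e^{\downarrow}$ agree whether computed in $e^{\downarrow}$ or in $Q$, as $e^{\downarrow}$ is a down-set. Preservation of arbitrary joins holds for $u^*$ by frame distributivity and for $d^*$, $r^*$ because quantale multiplication distributes over joins. Hence all three maps are frame maps and define locale maps $u\colon C_0\to C_1$ and $d,r\colon C_1\to C_0$. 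Since each right adjoint preserves arbitrary meets, these maps are semiopen, and their direct images are the corresponding left adjoints, that is $u_!=\nu$, $d_!=\lambda$ and $r_!=\rho$, which is exactly the claimed identification.

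The only substantial remaining task is the Frobenius condition witnessing openness. For $u$ it is immediate: for $x\leq e$ and $y\in Q$ both $u_!(x\wedge u^*(y))$ and $u_!(x)\wedge y$ collapse to $x\wedge y$, using $x\wedge e=x$. For $d$ I must show $\lambda(a\wedge 1b)=\lambda(a)\wedge b$ for $a\in Q$ and $b\leq e$ (and dually $\rho(a\wedge b1)=\rho(a)\wedge b$ for $r$). The inequality $\lambda(a\wedge 1b)\leq\lambda(a)\wedge b$ is pure monotonicity, using $\lambda(1b)=b$. The reverse inequality is the crux. Setting $g=\lambda(a)\wedge b$, the key computation is $\lambda(ag)=\lambda(\lambda(a)g)=\lambda(g)=g$, where $\lambda(a)g=\lambda(a)\wedge g=g$ by Lemma~\ref{lem:l1} and $g\leq\lambda(a)$. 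Since $g\leq e$ gives $ag\leq a$ and $g\leq b$ gives $ag\leq 1g\leq 1b$, we obtain $ag\leq a\wedge 1b$, whence $g=\lambda(ag)\leq\lambda(a\wedge 1b)$, as required. The argument for $r$ is the left--right mirror image, using $ga$ in place of $ag$ and the $\rho$-version of (EQ4).

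I expect this verification of the Frobenius condition --- specifically producing the witness $ag$ (respectively $ga$) for the nontrivial inequality --- to be the main obstacle; everything else reduces to formal adjunction bookkeeping and the Ehresmann axioms. It is worth emphasising that no bi-deterministic (restriction) hypothesis is needed anywhere, which is consistent with the lemma being stated at the level of Ehresmann quantal frames.
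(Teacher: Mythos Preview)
Your proof is correct and follows essentially the same overall scheme as the paper's: verify the adjunction identities directly, observe join-preservation, and then check Frobenius. The one noteworthy difference is in the Frobenius condition for $d$ (and $r$): the paper first isolates the \emph{Stability Lemma} $a\wedge 1f=af$ and then computes $\lambda(af)=\lambda(\lambda(a)f)=\lambda(a)f=\lambda(a)\wedge f$ in one stroke, whereas you avoid this lemma and instead supply the witness $ag$ (with $g=\lambda(a)\wedge b$) to prove the nontrivial inequality $\lambda(a)\wedge b\le\lambda(a\wedge 1b)$ directly. Both routes are clean; the paper's has the advantage that the Stability Lemma is reused later, while yours is self-contained and needs only the easy inequality $ag\le a\wedge 1b$ rather than the full equality.
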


We will need the following lemma.

\begin{lemma}[Stability Lemma]\label{le:stability} 
Let $Q$ be an Ehresmann quantale.
For any $a\in Q$ and $f \in e^{\downarrow}$,
we have that $a \wedge 1f=af$
and 
$a \wedge f1 = fa$.
\end{lemma}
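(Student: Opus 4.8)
The plan is to prove $a \wedge 1f = af$ by establishing the two inequalities separately; the companion identity $a \wedge f1 = fa$ then follows by the symmetric argument with $\rho$ in place of $\lambda$. The easy direction is $af \leq a \wedge 1f$. Since $f \leq e$ and the underlying order is compatible with quantale multiplication (Lemma~\ref{lem:compat}), we have $af \leq ae = a$; and since $a \leq 1$ we likewise have $af \leq 1f$. Hence $af$ is a lower bound for $\{a, 1f\}$, giving $af \leq a \wedge 1f$.

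For the reverse inequality I would set $b = a \wedge 1f$ and control its left support. Using $\lambda(1) = e$, axiom (EQ4), the fact that $e$ is the unit, and (EQ2), one computes $\lambda(1f) = \lambda(\lambda(1)f) = \lambda(ef) = \lambda(f) = f$. Since $b \leq 1f$ and $\lambda$ is monotone, this forces $\lambda(b) \leq f$.

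Now I would exploit that $\lambda(b)$ and $f$ both lie in $e^{\downarrow}$, where multiplication coincides with meet (Lemma~\ref{lem:l1}): because $\lambda(b) \leq f$ we get $\lambda(b)f = \lambda(b) \wedge f = \lambda(b)$. Combining this with (EQ3) yields $bf = b\lambda(b)f = b\lambda(b) = b$. Finally, from $b \leq a$ and compatibility of the order with multiplication we obtain $b = bf \leq af$, which is the sought reverse inequality. Together with the first paragraph this gives $a \wedge 1f = af$.

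I expect no serious obstacle here; the only non-mechanical step is the decision to measure $b = a \wedge 1f$ through its left support and to notice that $\lambda(1f) = f$ collapses to $bf = b$. It is worth recording that the argument invokes only the Ehresmann quantale axioms together with compatibility of the order with multiplication, and never uses frame distributivity, so the Stability Lemma indeed holds for arbitrary Ehresmann quantales as stated.
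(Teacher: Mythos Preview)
Your proof is correct and follows essentially the same route as the paper's: both establish $af \leq a \wedge 1f$ by monotonicity, and for the converse both use that any $b \leq 1f$ satisfies $\lambda(b) \leq \lambda(1f) = f$, then combine (EQ3) with $b \leq a$ to obtain $b \leq af$. The only cosmetic difference is that you first deduce $bf = b$ and then multiply $b \leq a$ by $f$, whereas the paper writes the single chain $c = c\lambda(c) \leq a\lambda(a)f = af$; these are the same idea.
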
 
\begin{proof}
From $f \leq e$ we get that $af \leq ae = a$.
From $a \leq 1$ we get that $af \leq 1f$.
It follows that $af \leq a \wedge 1f$.
Now let $c \leq a,1f$.
Then $\lambda (c) \leq \lambda (a)$
and $\lambda (c) \leq f$.
Thus $\lambda (c) \leq \lambda (a)f$.
But then $c = c \lambda (c) \leq a \lambda (a)f = af$.
It follows that $af = a \wedge 1f$.
The other claim follows by symmetry.
\end{proof}

\begin{proof}[Proof of Lemma \ref{le: august}] (1) We have to verify that for any $a\in Q$ and $f \in e^{\downarrow}$
$$
\nu(f)\leq a \quad \Longleftrightarrow \quad f \leq u^*(a).
$$
This reduces to
$$
f \leq a \quad \Longleftrightarrow \quad f \leq a\wedge e.
$$
The latter equivalence trivially holds since $f \leq e$. 
The claim about the preservation of joins is clear.
To see that $u$ is open, we check the Frobenius condition:
that is,  for any $f \in e^{\downarrow}$  and $b\in Q$ we check that $\nu(f)\wedge b=\nu(f\wedge u^*(b)).$ 
This reduces to
$$
f \wedge b=f \wedge (b\wedge e).
$$
The latter equality trivially holds, since $f \leq e$.

(2) We verify first that for any $a\in Q$ and $f \in e^{\downarrow}$:
$$
\lambda(a)\leq f \quad \Longleftrightarrow \quad a\leq d^*(f)
$$
which reduces to verifying that
$$
\lambda(a)\leq f \quad \Longleftrightarrow \quad a\leq 1f.
$$
If $\lambda(a)\leq f$ 
then  $a=a\lambda(a)\leq 1f$ since $a \leq 1$ and the order is compatible with the multiplication. 
Conversely, if $a\leq 1f$ then we may write $$\lambda(a)\leq \lambda(1f) =\lambda(\lambda(1)f)=ef=f.$$ 

The map $d^*$ preserves arbitrary joins since we are working in a quantale.

To show that $d$ is open,  we verify the Frobenius condition for $d$. 
This reduces to verifying that for any $a\in O(C_1)$ and $f\in O(C_0)$ we have
$$
\lambda(a\wedge 1f)=\lambda(a)\wedge f.
$$
We may write
\begin{align*}
\lambda(a\wedge 1f) & =\lambda(af) & \text{by Lemma \ref{le:stability}}\\
& =\lambda(\lambda(a)f) & \text{by (EQ4)}\\
& =\lambda(a)f = \lambda (a) \wedge f, & \text{since } a\wedge f\leq e \text{ and by  (EQ2)}
\end{align*}
which completes the proof.

(3) This follows by symmetry from part (2) above.
\end{proof}

We have therefore defined the {\em unit map}
$u \colon C_0\to C_1$, 
the {\em domain map} 
$d \colon C_1\to C_0$,
and the {\em codomain map} 
$r \colon C_1\to C_0$ 
of our putative localic category.

It remains to construct the composition map of our category. 
This depends on the following constructions and results.
Let $Q$ be an Ehresmann quantal frame.
Let $Q\otimes Q$ denote the coproduct of $Q$ with itself in the category of frames. 
The quantale multiplication is  a sup-map $Q \otimes Q \rightarrow Q$.
We may also regard the underlying quantale $Q$ as a right $e^{\downarrow}$-module and a left  $e^{\downarrow}$-module,
under multiplication,  and so we may construct the tensor product sup-lattice $Q\otimes_{e^{\downarrow}} Q$, which is also a frame because $Q\otimes Q$ is.

The following lemma is essentially the same as \cite[Lemma 4.5]{Re}.
The proof follows from the observation that the relations involved in the definition of the pushout
$$(a \wedge d^{*}(f)) \otimes b = a \otimes (r^{*}(f) \wedge b)$$
are equivalent by Lemma~\ref{le: august} to
$$(a \wedge 1f ) \otimes b = a \otimes (f1 \wedge b)$$
which by the Stability Lemma, Lemma~\ref{le:stability}, are equivalent to
$$af \otimes b = a \otimes fb.$$
The role of the Stability Lemma is to link multiplication by a projection with the meet operation.

\begin{lemma}[Bridging Lemma]\label{lem:l16}
The pushout of 
$d^{*} \colon e^{\downarrow} \rightarrow Q$ 
and 
$r^{*} \colon e^{\downarrow} \rightarrow Q$
in the category of frames is the frame $Q\otimes_{e^{\downarrow}} Q$
with the maps $\pi_{1}^{*}(a) = a \otimes 1$ 
and 
$\pi_{2}^{*}(a) = 1 \otimes a$.
\end{lemma}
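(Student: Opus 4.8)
The plan is to read the assertion directly off the explicit description of frame pushouts recalled in the preliminaries, once the two maps $d^{*}$ and $r^{*}$ have been replaced by their concrete formulas. By that description, the pushout of the frame maps $d^{*}\colon e^{\downarrow}\to Q$ and $r^{*}\colon e^{\downarrow}\to Q$ is the quotient of the frame coproduct $Q\otimes Q$ by the three listed relations, equipped with the coprojections $a\mapsto a\otimes 1$ and $a\mapsto 1\otimes a$; these are exactly the maps $\pi_{1}^{*}$ and $\pi_{2}^{*}$ of the statement. So the only thing to establish is that the frame produced this way coincides, as a frame, with the sup-lattice tensor product $Q\otimes_{e^{\downarrow}}Q$ of $Q$ regarded as a right and left $e^{\downarrow}$-module under multiplication, which was introduced above.

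First I would isolate the balancing relation, which is the only relation distinguishing the pushout from the bare coproduct. For $f\in e^{\downarrow}$ it reads
$$
(a\wedge d^{*}(f))\otimes b = a\otimes (r^{*}(f)\wedge b).
$$
Substituting the formulas $d^{*}(f)=1f$ and $r^{*}(f)=f1$ supplied by parts \eqref{o152} and \eqref{o153} of Lemma~\ref{le: august}, this becomes
$$
(a\wedge 1f)\otimes b = a\otimes (f1\wedge b).
$$
The Stability Lemma (Lemma~\ref{le:stability}) now converts each side into a product by a projection, since $a\wedge 1f=af$ and $f1\wedge b=fb$, so the balancing relation is precisely
$$
af\otimes b = a\otimes fb.
$$

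I would then observe that this is exactly the defining relation of the module tensor product $Q\otimes_{e^{\downarrow}}Q$. The remaining pushout relations, namely join-bilinearity and the frame meet-distributivity relation, are shared verbatim by the two constructions, so the quotient of $Q\otimes Q$ defining the frame pushout and the quotient defining the module tensor product are cut out by the same relations and therefore coincide. Hence the pushout is $Q\otimes_{e^{\downarrow}}Q$ with the stated coprojections.

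The computation is essentially automatic, so I do not expect a serious obstacle; the one point requiring care is the bookkeeping that guarantees the two quotients of $Q\otimes Q$ really are determined by the same relations, that is, that the Stability Lemma turns the \emph{meet}-based balancing relation of the frame pushout into the \emph{multiplication}-based balancing relation of the module tensor product, with no extra relation slipping in on either side. This is exactly the bridging role the Stability Lemma is meant to play, linking multiplication by a projection with the meet operation, as flagged in the discussion preceding the statement.
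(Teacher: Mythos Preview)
Your proposal is correct and follows essentially the same approach as the paper: substitute the formulas $d^{*}(f)=1f$, $r^{*}(f)=f1$ from Lemma~\ref{le: august} into the pushout balancing relation, then invoke the Stability Lemma to rewrite $(a\wedge 1f)\otimes b = a\otimes(f1\wedge b)$ as $af\otimes b = a\otimes fb$, which is the defining relation of the module tensor product. The paper's proof sketch (given in the paragraph immediately preceding the lemma) is exactly this computation, so there is nothing to add.
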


Our goal is to construct a locale map $m \colon C_{1} \times_{C_{0}} C_{1} \rightarrow C_{1}$.
The locale of composable pairs $C_1\times_{C_0} C_1$ is the pullback of the following diagram
\begin{center}
\begin{tikzpicture}
\node (bl) {$C_1$};
\node (br) [node distance=4cm, right of=bl] {$C_0$};
\node (ul) [node distance=1.8cm, above of=bl] {$C_1\times_{C_0} C_1$};
\node (ur) [node distance=1.8cm, above of=br] {$C_1$};
\path[->]
(bl) edge node[below]{$d$} (br);
\path[->]
(ul) edge node[above]{$\pi_2$} (ur);
\path[->]
(ul) edge node[left]{$\pi_1$} (bl);
\path[->]
(ur) edge node[right]{$r$} (br);
\end{tikzpicture}
\end{center}
which is just the pushout of the following diagram in the category of frames
\begin{center}
\qquad \qquad
\begin{equation}
\begin{tikzpicture}[baseline=(current  bounding  box.center)]
\node (bl) {$O(C_1)$};
\node (br) [node distance=4cm, right of=bl] {$O(C_1 \times_{C_{0}} C_1)$};
\node (ul) [node distance=1.8cm, above of=bl] {$O(C_0)$};
\node (ur) [node distance=1.8cm, above of=br] {$O(C_1)$};
\path[->]
(bl) edge node[below]{$\pi_2^*$} (br);
\path[->]
(ul) edge node[above]{$d^*$} (ur);
\path[->]
(ul) edge node[left]{$r^*$} (bl);
\path[->]
(ur) edge node[right]{$\pi_1^*$} (br);
\end{tikzpicture}
\end{equation}
\end{center}
By the Bridging Lemma, we have that $O(C_1 \times_{C_{0}} C_1)$ coincides with $Q \otimes_{e^{\downarrow}} Q$. 
So in order to reach our goal, we have to construct a frame map 
$m^{*} \colon Q\to Q \otimes_{e^{\downarrow}} Q$.
But the quantale multiplication $Q\otimes Q\to Q$ factors through the tensor product $Q\otimes_{e^{\downarrow}} Q$  
since the multiplication is associative 
and thus respects the relations $af\otimes b = a \otimes fb$, where $f \leq e$. 
It follows that the quantale multiplication induces the sup-map
$\mu \colon  Q \otimes_{e^{\downarrow}} Q  \rightarrow Q$ given by $\mu(a\otimes b) = ab$. 
We require that this map be a direct image map $m_!$ of the locale multiplication $m: C_1\times_{C_0} C_1\to C_1$. 
This is equivalent to the requirement that $\mu$ have a right adjoint
$m^{*} \colon Q \rightarrow Q \otimes_{e^{\downarrow}} Q$
which is a frame map. 
Observe that this map is defined by
$$m^{*} (a) = \bigvee_{xy \leq a} x \otimes y.$$ 
Being a right adjoint, the map $m^{*}$ preserves arbitrary meets but not necessarily arbitrary joins. 
This implies that $m^*$ is a frame homomorphism if and only if it preserves arbitrary joins. 
We now give an example showing that $m^* $ does not preserve joins in general.

\begin{example} \label{ex:joins} {\em We use the Ehresmann quantal frame constructed in Example~\ref{ex:hogmannay}.
Note that $m^*$ is just the inverse image map $m^{-1}$. 
We have $$m^{-1}(\{1\})=\{1\}\otimes\{1\}, \,\,
m^{-1}(\{x\})=\{1\}\otimes \{x\}\cup \{x\}\otimes \{1\}.$$
We see that  $\{x\}\otimes \{x\} \not\in m^{-1}(\{1\})\cup m^{-1}(\{x\})$, but $\{x\}\otimes \{x\}\in m^{-1}(\{1,x\})$.}
\end{example}

\begin{remark} {\em Employing the notion of a multisemigroup \cite{KM}, the above example can be generalized as follows. 
Let $S$ be a  multisemigroup with the identity element $e$. 
Then, as pointed out in \cite{KM}, it induces a quantale structure on ${\mathcal P}(S)$ which is clearly a quantal frame. 
Assume that the multiplication in $S$ is at least single-valued. 
Then the maps $\lambda$ and $\rho$ sending all non-empty sets to $\{e\}$ and $\varnothing$ to itself turn ${\mathcal P}(S)$ into an Ehresmann quantal frame. 
As in the example above, one can show that $m^*$ preserves arbitrary joins if and only if  the multiplication is single-valued, that is $S$ is a semigroup. }
\end{remark}

We shall say that $Q$ is {\em multiplicative} if the map $m^*$ preserves arbitrary joins.

By a {\em quantal localic category} we will mean a localic category where the structure maps $d,r,u$ are open and $m$ is semiopen.

\begin{theorem}\label{th:th2} 
Let $Q$ be a multiplicative Ehresmann quantal frame. 
Then the maps $u,d,r,m$ define a quantal localic category.
\end{theorem}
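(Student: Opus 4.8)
The plan is to verify that the four maps $u,d,r,m$ satisfy the internal-category axioms (Cat1)--(Cat4); the openness of $u,d,r$ and the semiopenness of $m$ have already been secured by Lemma~\ref{le: august} and by the multiplicativity hypothesis (which makes $m^{*}$ a frame map, while $m^{*}$ automatically preserves meets as a right adjoint, so that $m$ is semiopen). The key simplification is to argue entirely with \emph{direct image} maps. Each of $u,d,r,m$ is semiopen and so has a direct image, and these are exactly the natural ``forward'' maps of the Ehresmann structure:
\[
u_!=\nu,\qquad d_!=\lambda,\qquad r_!=\rho,\qquad m_!=\mu,
\]
where $\nu$ is the inclusion $e^{\downarrow}\hookrightarrow Q$ and $\mu(a\otimes b)=ab$. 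Since a locale map is determined by its direct image (the right adjoint being unique) and since $(gf)_!=g_!f_!$ for composable semiopen maps, it suffices to check each of (Cat1)--(Cat4) as an identity between direct image maps, using that openness and semiopenness are stable under the relevant composites and pullbacks so that all the direct images in question exist.

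First I would treat the projections. As $\pi_1$ (resp.\ $\pi_2$) is the pullback of the open map $r$ (resp.\ $d$), it is open, and combining the Frobenius condition for $\pi_1$ with the Beck--Chevalley identity $\pi_{1!}\pi_2^{*}=d^{*}r_!$ and the Stability Lemma (Lemma~\ref{le:stability}) gives, on the generators $a\otimes b=\pi_1^{*}(a)\wedge\pi_2^{*}(b)$ of $Q\otimes_{e^{\downarrow}}Q$,
\[
\pi_{1!}(a\otimes b)=a\wedge d^{*}(\rho(b))=a\wedge 1\rho(b)=a\rho(b),\qquad \pi_{2!}(a\otimes b)=\lambda(a)\,b.
\]
The same bookkeeping, licensed by the Bridging Lemma (Lemma~\ref{lem:l16}), identifies the iterated pullbacks $C_1\times_{C_0}C_0$, $C_0\times_{C_0}C_1$ and $C_1\times_{C_0}C_1\times_{C_0}C_1$ with $Q\otimes_{e^{\downarrow}}e^{\downarrow}\cong Q$, $e^{\downarrow}\otimes_{e^{\downarrow}}Q\cong Q$ and the triple tensor $Q\otimes_{e^{\downarrow}}Q\otimes_{e^{\downarrow}}Q$, and yields the direct images of $u\times\mathrm{id}$, $\mathrm{id}\times u$, $m\times\mathrm{id}$ and $\mathrm{id}\times m$ as $\mu\otimes\mathrm{id}$, $\mathrm{id}\otimes\mu$, and so on.

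Granting these identifications, the axioms fall out of the Ehresmann quantale identities. For (Cat1), $d_!u_!=\lambda\nu=\mathrm{id}$ and $r_!u_!=\rho\nu=\mathrm{id}$ on $e^{\downarrow}$ by (EQ2). For (Cat3), on $a\otimes b$ we compute $r_!\pi_{1!}(a\otimes b)=\rho(a\rho(b))=\rho(ab)=r_!\mu(a\otimes b)$ and $d_!\pi_{2!}(a\otimes b)=\lambda(\lambda(a)b)=\lambda(ab)=d_!\mu(a\otimes b)$, both by (EQ4). For (Cat2), after identifying $O(C_1\times_{C_0}C_0)\cong Q$ the direct image of $m(\mathrm{id}\times u)$ becomes $a\mapsto a\lambda(a)$ while that of $\pi_1$ becomes the identity, so the axiom is precisely $a\lambda(a)=a$; dually $m(u\times\mathrm{id})=\pi_2$ is $\rho(a)a=a$, that is, (EQ3). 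For (Cat4), both $m(\mathrm{id}\times m)$ and $m(m\times\mathrm{id})$ have direct image $a\otimes b\otimes c\mapsto abc$ on the triple tensor, so the axiom is exactly the associativity of the quantale multiplication.

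The main obstacle is the pullback bookkeeping rather than the closing identities. Concretely, one must (i) confirm that openness and semiopenness are stable under the pullbacks and binary products occurring in (Cat2) and (Cat4), so that all the maps involved possess direct images; (ii) identify the frames of the iterated pullbacks with the corresponding iterated tensor products over $e^{\downarrow}$; and (iii) compute the direct images of the projection and product maps as above. Once these are in place, each of (Cat1)--(Cat4) reduces to a one-line consequence of (EQ2), (EQ3), (EQ4), or associativity, and the assembled data $(C_1,C_0,u,d,r,m)$ is a quantal localic category.
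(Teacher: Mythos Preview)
Your approach is correct and is genuinely different from the paper's. The paper verifies (Cat1)--(Cat4) by a mixture of arguments on the frame side: (Cat1) and (Cat3) are checked directly for $u^{*},d^{*},m^{*},\pi_2^{*}$ (for (Cat3) one shows explicitly $m^{*}d^{*}(f)=1\otimes 1f=\pi_2^{*}d^{*}(f)$); (Cat2) is obtained by first proving $m_!(u_!\otimes\mathrm{id})\,\pi_2^{*}=\mathrm{id}$ from $ea=a$ and then using that $\pi_2$ is an isomorphism to pass back to inverse images; and (Cat4) is deferred to Resende's argument. By contrast, you organise the whole proof on the direct-image side, so that each category axiom becomes a single Ehresmann identity: (Cat1)$\leftrightarrow$(EQ2), (Cat2)$\leftrightarrow$(EQ3), (Cat3)$\leftrightarrow$(EQ4), (Cat4)$\leftrightarrow$associativity. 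This is cleaner and more uniform, and your computation of $\pi_{1!},\pi_{2!}$ via Frobenius, Beck--Chevalley and the Stability Lemma is exactly the right tool; the paper never makes this correspondence explicit.

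The one place where you should be more careful than your phrase ``openness and semiopenness are stable under the relevant composites and pullbacks'' suggests is (Cat4). Semiopenness is not automatically stable under pullback, so the existence of $(m\times\mathrm{id})_!$ and $(\mathrm{id}\times m)_!$ needs an argument. The point is that multiplicativity makes $m^{*}$ a sup-map, and once (Cat3) is in hand the induced map $m^{*}\otimes\mathrm{id}$ on the triple tensor is well defined and again a sup-map, so $(m\times\mathrm{id})$ is semiopen with $(m\times\mathrm{id})_!=m_!\otimes\mathrm{id}$; then both composites in (Cat4) have direct image $a\otimes b\otimes c\mapsto abc$. This is not hard, but it is the substantive bookkeeping step, and it is worth saying that it uses both multiplicativity and (Cat3) rather than a general stability principle. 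With that clarification your proof goes through.
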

\begin{proof} The proof follows the lines of the proof of \cite[Theorem 4.8]{Re} with appropriate changes and a few different arguments. 
Observe that $u,d,r$ are open and $m$ is semiopen. 
So it remains to verify that the equations for a category hold  with respect to these maps.

(Cat1) holds.
We show that $du=id$ holds, the equation  $ru=id$ holds by symmetry.
We need to verify $u^*d^*=id$. 
Let $f \leq e$.
Then 
$$
u^*d^*(f) 
=
1f\wedge e 
= 
ef
= f$$
by the construction of $u^*$ and $d^*$
and the Stability Lemma, Lemma \ref{le:stability}.

(Cat2) holds.
The following argument is taken from \cite[page 186]{Re}, but with the order reversed. 
We prove that $(u \times id)^{*}m^{*} = \pi_{2}^{*}$, the equation $(id^{*} \times u^{*})m^{*} = \pi_{1}^{*}$ holds by symmetry.
\begin{equation}\label{eq:def21}
\begin{tikzpicture}[baseline=(current  bounding  box.center)]
\node (ul) {$O(C_0)\otimes_{O(C_0)} O(C_1)$};
\node (ur) [node distance=5.5cm, right of=ul] {$O(C_1)\otimes_{O(C_0)} O(C_1)$};
\node (br) [node distance=1.8cm, below of=ur] {$O(C_1)$};
\path[->]
(ur) edge node[above]{$(u\times id)^*$} (ul);
\path[<-]
(ul) edge node[below]{$\pi_2^*$} (br);
\path[<-]
(ur) edge node[right]{$m^*$} (br);
\end{tikzpicture}
\end{equation}
Note that $(u\times id)^*=u^*\otimes id$, since $id^*=id$. 
Hence  $(u\times id)_!=u_!\otimes id$. 
Further, it is easy to verify that the map $\pi_2 \colon C_0\times_{C_0} C_1 \to C_1$ is an isomorphism with the map $\langle r, id\rangle$ being its inverse. 
It follows that the isomorphisms $\langle r, id\rangle^*$ and  $\pi_2^*$ form an adjoint pair.
In particular, $\langle r, id\rangle_!=\pi_2^*$. 
Since $ea=a$ for any $a\in Q$, we have
$$ea=m_!(u_!(1_{O(C_0)})\otimes a) = m_!(u_!\otimes id) (1_{O(C_0)}\otimes a) = m_!(u_!\otimes id) \pi_2^*(a).
$$
We therefore obtain the equality
$$
m_!(u_!\otimes id) \pi_2^*=id.
$$
Bearing in mind  that $\pi_2^*$ is an isomorphism, the latter means that $m_!(u_!\otimes id)$ is an isomorphism and, moreover, it is the inverse of $\pi_2^*$. 
Since $m_!(u_!\otimes id)=m_!(u\times id)_!$ and since an adjoint of an isomorphism is just its inverse, we obtain $\pi_2^*= (u\times id)^*m^*$, 
which means that the diagram \eqref{eq:def21} commutes as claimed.

(Cat3) holds. We verify that  $dm=d\pi_2$ holds, the equation $rm = r\pi_{1}$ holds by symmetry. 
Thus we need to verify 
that $m^*d^*(f)=\pi_2^*d^*(f)$ for any $f\leq e$. 
The latter equality is equivalent to the equality
$$
\bigvee_{xy\leq 1f} x\otimes y = 1\otimes 1f.
$$ 
Since $11f\leq 1f$ the join on the lefthand-side above is greater than or equal to $1\otimes 1f$. 
It remains to establish the reverse inequality. 
It suffices to show that if $xy\leq 1f$ then $x\otimes y\leq 1\otimes 1f$.
So assume that $xy\leq 1f$. 
Then $$\lambda(xy)\leq \lambda(1f)=\lambda(\lambda(1)f)=\lambda(f)=f.$$
But $\lambda(xy)=\lambda(\lambda(x)y)$. 
From $\lambda(\lambda(x)y)\leq f$ 
we get that $\lambda(x)yf=\lambda(x)y$. 
Then, applying the definition of the tensor product $Q\otimes_{e^{\downarrow}} Q$ and $x\lambda(x)=x$, 
we can write
$$
x\otimes y = x\otimes \lambda(x)y= x\otimes \lambda(x)yf= x\otimes yf \leq 1\otimes 1f.
$$

(Cat4) holds by repeating the arguments from \cite[page 187]{Re}.
\end{proof}

\subsection{Ehresmann quantal frames from quantal localic categories}

Let $C=(C_1,C_0)$ be a quantal localic category.
We shall show that the frame $O(C_{1})$  may be endowed with the structure of a multiplicative Ehresmann quantal frame. 

We first need to record the explicit formulae that determine the pushout frame
$O(C_1\times_{C_0} C_1)$
which we denote by  
$O(C_1)\otimes_{O(C_0)} O(C_1)$.
This is a quotient of the coproduct frame
$O(C_1)\otimes O(C_1)$ 
given by the map that identifies $\pi_2^*r^*(a)$ with $\pi_1^*d^*(a)$ for any $a\in O(C_0)$,
where  
$$\pi_2^*(a)=1\otimes a\,\,
\mbox{ and }
\,\,\pi_1^*(a)=a\otimes 1$$
for all $a\in O(C_0)$. 
Hence $O(C_1)\otimes_{O(C_0)} O(C_1)$ is determined by the identifications, for any $a\in O(C_0)$,
\begin{equation}\label{eq:jul10}
1\otimes r^*(a) = \pi_2^*r^*(a)=\pi_1^*d^*(a)=d^*(a)\otimes 1.
\end{equation}
In fact, these are determined by the {\em defining equations}
\begin{equation}\label{eq:comp}
(b\wedge d^*(a))\otimes c
=
b\otimes (r^*(a)\wedge c).
\end{equation}
The equalities \eqref{eq:jul10} are obtained from \eqref{eq:comp} by calculating the meet with $b\otimes c$, for any $b,c\in O(C_1)$,
whereas \eqref{eq:comp} are obtained from  \eqref{eq:jul10} by going to joins.
Hence $O(C_1)\otimes_{O(C_0)} O(C_1)$ is determined by the defining equations \eqref{eq:comp}, where $b,c$ run through $O(C_1)$ and $a$ through $O(C_0)$.

We now show how to define a binary multiplication operation on the set $O(C_{1})$.
The category multiplication is the map $m: C_1\times_{C_0} C_1 \to C_1$. 
By assumption it is semiopen, so its direct image map $m_!: O(C_1\times_{C_0} C_1) \to O(C_1)$ exists. 
Using Lemma \ref{lem:l16}, we may replace $O(C_1\times_{C_0} C_1)$ by $O(C_1)\otimes_{O(C_0)} O(C_1)$.
Define 
$$m_! \colon O(C_1)\otimes_{O(C_0)} O(C_1) \to O(C_1)$$ to be 
the left adjoint of $m^{*} \colon O(C_{1}) \rightarrow  O(C_{1}) \otimes_{O(C_{0})} O(C_{1})$
and so the direct image of $m$.
Being a left adjoint, $m_{!}$ is a sup-map.
Let $$q \colon O(C_1)\otimes O(C_1) \to O(C_1)\otimes_{O(C_0)} O(C_1)$$ be the quotient map that determines $O(C_1)\otimes_{O(C_0)} O(C_1)$. 
Then we define $m'=m_!q$, which is a map
from $O(C_1)\otimes O(C_1)$ to $O(C_1)$. 
Finally, for $a,b\in O(C_1)$ we set their product to be 
$$
ab=m'(a\otimes b).
$$
Observe that this multiplication is associative, which is established by repeating the arguments from \cite[page~187]{Re} but in the reverse order.

\begin{remark} {\em In what follows, we shall write $a \otimes b$ rather than $q (a \otimes b)$
with the understanding that we may apply the defining equations \eqref{eq:comp}.}
\end{remark}

By definition, the multiplication defined distributes over the join of $O(C_1)$ on both sides and thus $O(C_1)$ is a quantale. 
We make the following additional definitions:
\begin{itemize}

\item $\lambda=u_!d_!$. This map is defined as the composition of two left adjoints and so is itself a left adjoint.
It follows that it is a  sup-lattice map. 

\item $\rho=u_!r_!$.  This is a  sup-lattice map as above. Thus together we know that (EQ1) holds.

\item $e=u_!(1_{O(C_0)})$ where $1_{O(C_0)}$ is the maximum element of the frame $O(C_{0})$.

\end{itemize}

\begin{lemma}\label{lem:aux1} Let $C = (C_{1},C_{0})$ be a quantal localic category.
\begin{enumerate}

\item \label{i:aux2} $f\leq e$ if and only if $f=u_!(a)$ for some $a\in O(C_0)$.

\item \label{i:aux0} If $f \leq e$ then $\lambda(f)=f$ and $\rho(f)=f$. 

\item \label{i:aux0a} $e^{\downarrow}$ is a frame isomorphic to  $O(C_{0})$ with respect to the maps
$f \mapsto d_{!}(f)$ (or $f \mapsto r_{!}(f)$) and $a \mapsto u_{!}(a)$.

\item  \label{i:aux3} $b\wedge d^*(a)=bu_!(a)$ and $b\wedge r^*(a)=u_!(a)b$ for any $a\in O(C_0)$ and $b\in O(C_1)$.

\item \label{i:aux3a}  The meet in $e^{\downarrow}$ is just multiplication.

\item  \label{i:aux6a} $a\lambda(a)=a$ and $\rho(a)a=a$ for any $a\in O(C_1)$.

\item \label{i:aux4}  $O(C_1)$ is an $u_!(O(C_0))$-bimodule with respect to the actions by left and right multiplication. 
The pushout $O(C_1)\otimes_{O(C_0)} O(C_1)$ coincides with the tensor product of frames
$O(C_1)\otimes_{u_!(O(C_0))} O(C_1)$.

\item \label{i:aux13} In $O(C_1)\otimes_{O(C_0)} O(C_1)$ we have $1\otimes a = d^*r_!(a)\otimes a$ and $a\otimes 1=a\otimes r^*d_!(a)$ for any $a\in O(C_1)$.

\item \label{i:aux12} $\lambda(ab)=\lambda(\lambda(a)b)$ and  $\rho(ab)=\rho(a\rho(b))$ for any $a,b\in O(C_1)$.

\item\label{i:aux_new}  $u_!u^*(y)=y\wedge e$ for any $y\in O(C_1)$.
\end{enumerate}
\end{lemma}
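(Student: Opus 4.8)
The plan is to derive this identity as an immediate instance of the openness of the unit map. Since $C$ is a quantal localic category, by definition the locale map $u \colon C_0 \to C_1$ is open, and hence its direct image $u_! \colon O(C_0) \to O(C_1)$ satisfies the Frobenius condition relative to the frame map $u^* \colon O(C_1) \to O(C_0)$. Spelled out, this says that for every $a \in O(C_0)$ and every $b \in O(C_1)$ we have
$$u_!\bigl(a \wedge u^*(b)\bigr) = u_!(a) \wedge b.$$

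First I would specialize this identity to $a = 1_{O(C_0)}$, the top of the frame $O(C_0)$, and $b = y$. On the left-hand side, $1_{O(C_0)} \wedge u^*(y) = u^*(y)$, so the left-hand side collapses to $u_! u^*(y)$. On the right-hand side, recalling that $e = u_!(1_{O(C_0)})$ by the definition of $e$, we obtain $e \wedge y = y \wedge e$. Comparing the two sides gives $u_! u^*(y) = y \wedge e$, as required.

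The computation is entirely routine; the only point deserving attention is bookkeeping of domains and codomains, since $u^*$ lands in $O(C_0)$ while $u_!$ returns to $O(C_1)$, so that the composite $u_! u^*$ is correctly an endomorphism of $O(C_1)$ and the instantiation $a = 1_{O(C_0)}$, $b = y$ is legitimate. Once this is observed, there is no genuine obstacle: the statement is simply the Frobenius law evaluated at the top element of $O(C_0)$, which is why this item is recorded here for later convenient reference rather than proved by any substantive argument.
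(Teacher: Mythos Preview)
Your proof is correct and in fact simpler than the paper's own argument for this item. The paper instead proceeds in two steps: the inequality $u_!u^*(y)\leq y\wedge e$ is immediate from $u_!u^*\leq \mathrm{id}$ and $u_!u^*(y)\leq u_!(1_{O(C_0)})=e$; for the reverse inequality it invokes part~\eqref{i:aux3} to write $u_!u^*(y)=d^*u^*(y)\wedge e$, reduces to $y\wedge e\leq d^*u^*(y)$, passes via adjointness to $u_!d_!(y\wedge e)\leq y$, and finishes using part~\eqref{i:aux0a}. Your route via the Frobenius identity for $u$ with $a=1_{O(C_0)}$ is cleaner and avoids appealing to the earlier parts; indeed the paper itself uses exactly your computation later (just before Proposition~\ref{prop:cat}) and implicitly in the proof of part~\eqref{i:aux2}. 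Both approaches are valid; yours buys brevity and independence from the other items, while the paper's argument illustrates how the parts of the lemma interlock.
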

\begin{proof} 

\eqref{i:aux2} Let $f\leq e$. 
Applying the Frobenius condition for the open map $u$, we obtain
$$f=u_!(1_{O(C_0)})\wedge f =u_!(1_{O(C_0)}\wedge u^*(f)).$$
Conversely, if $a \in O(C_0)$
then  $u_!(a)\leq u_!(1_{O(C_0)})=e$.

\eqref{i:aux0} Let $f \leq e$. By part \eqref{i:aux2} we have that 
 $f = u_!(a)$ for some $a\in O(C_0)$. 
Then $$\lambda(f)=u_!d_!u_!(a)=u_!(a)=f$$
where we have used the identity $du=id$.  
The equality $\rho(f)=f$ is proved similarly.

\eqref{i:aux0a} follows from part \eqref{i:aux0} and the equalities $du=id$ and $ru=id$.

\eqref{i:aux3} 
We prove the first equality only, for the other one the arguments are similar. 
Let $x=e\wedge r^*(a)$. Then $r_!(x)=r_!(e\wedge r^*(a))=1_{O(C_0)}\wedge a=a$. By part \eqref{i:aux0} we have $x=u_!r_!(x)$. It follows that $x=u_!(a)$.
Using the fact that $e$ is the multiplicative unit, we have
\begin{align*}
b\wedge d^*(a) &= m_!((b\wedge d^*(a)) \otimes e)  & \\
&= m_!(b\otimes (r^*(a)\wedge e)) & \text{by } \eqref{eq:comp}\\
&= m_!(b\otimes u_!(a)) = bu_!(a), & \text{since } e\wedge r^*(a)=u_!(a)
\end{align*}
as required.

\eqref{i:aux3a}  Let $a,b \leq e$.
Then by part \eqref{i:aux3}, we have $ab=au_!d_!(b)=a\wedge d^*d_!(b)$. In particular, $ab\leq e$. Applying part \eqref{i:aux0} and the Frobenius condition for $d$ we obtain
$$
ab=u_!d_!(ab)=u_!d_!(a\wedge d^*d_!(b))=u_!(d_!(a)\wedge d_!(b))=\lambda(a)\wedge \lambda(b)=a\wedge b.
$$


\eqref{i:aux6a} We prove that $a\lambda(a)=a$, and the equality involving $\rho$ is proved similarly. 
Let $x=a\lambda(a)$. Then by part \eqref{i:aux3}, we have $x=au_!d_!(a)=a\wedge d^*d_!(a)$.
Note that $d_!(x)=d_!(a\wedge d^*d_!(a))=d_!(a)\wedge d_!(a)=d_!(a)$. It now follows from part \eqref{i:aux3a} that $x=a$.

\eqref{i:aux4} By part \eqref{i:aux3}, the defining equations take the form 
 $bu_!(a)\otimes c=b\otimes u_!(a)c$
for any $a\in O(C_0)$, $b,c\in O(C_1)$.
This is enough to prove the result, since as $a$ runs through $O(C_0)$,  
the elements $u_!(a)$ run through $e^{\downarrow}=u_!(O(C_0))$ by part \eqref{i:aux2}. 

\eqref{i:aux13} We have
\begin{align*}
1\otimes a & =  1\rho(a)\otimes a  =  1u_!r_!(a)\otimes a\\
& = d^*r_!(a)\otimes a & \text{by part }\eqref{i:aux3}.
\end{align*}
The second equality is proved similarly.

\eqref{i:aux12}
In order to prove that $\lambda (ab) = \lambda (\lambda (a)b)$,
we need to show that 
$$
u_!d_!m_! = u_!d_!m_!(ud\otimes id)_!.
$$
But two maps which have right adjoints are equal if and only if their right adjoints are equal.
So, equivalently, we need to prove the equality
$$
m^*d^*u^*=(d^*u^*\otimes id)m^*d^*u^*.
$$ 
From (Cat3), we have that
$dm = d\pi_{2}$, which means $m^*d^*=\pi_2^*d^*$. 
In view of this, what we actually prove is that
$$
\pi_2^*d^*u^*=(d^*u^*\otimes id)\pi_2^*d^*u^*.
$$
Let $a\in O(C_1)$. Since $\pi_2^*(a)=1\otimes a$, we may write 
$$
\pi_2^*d^*u^*(a)=1\otimes d^*u^*(a).
$$
Now we calculate
\begin{align*}
(d^*u^*\otimes id)\pi_2^*d^*u^*(a) & = (d^*u^*\otimes id)(1\otimes d^*u^*(a)) & \\
& = (d^*u^*\otimes id)(d^*r_!d^*u^*(a)\otimes d^*u^*(a)) & \text{by part } \eqref{i:aux13}\\
& = d^*u^*d^*r_!d^*u^*(a) \otimes d^*u^*(a) & \\
& = d^*r_!d^*u^*(a) \otimes d^*u^*(a) & \text{since } u^*d^*=id \\
& = 1\otimes d^*u^*(a)  & \text{by part } \eqref{i:aux13},
\end{align*}
and we are done.

\eqref{i:aux_new} Since clearly $u_!u^*(y)\leq y,e$ then $u_!u^*(y)\leq y\wedge e$, and we are left to prove the reverse inequality.
By part \eqref{i:aux3} we have $u_!u^*(y)=d^*u^*(y)\wedge e$ so we need to prove that $y\wedge e\leq d^*u^*(y)\wedge e$ which is equivalent to $y\wedge e\leq d^*u^*(y)$. By adjointness this is equivalent to $u_!d_!(y\wedge e)\leq y$. But the latter inequality holds as $u_!d_!(y\wedge e)=y\wedge e$ by part \eqref{i:aux0a}.
\end{proof}

\begin{theorem}\label{th:th1} With the above definitions
$(O(C_1), e, \lambda, \rho)$ is a multiplicative Ehresmann quantal frame.
\end{theorem}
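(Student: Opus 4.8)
The plan is to verify directly that the structure $(O(C_1),e,\lambda,\rho)$ satisfies the four Ehresmann-quantale axioms (EQ1)--(EQ4), together with the frame condition and multiplicativity, drawing on the dictionary already assembled in Lemma~\ref{lem:aux1}. The underlying sup-lattice $O(C_1)$ is a frame by hypothesis, and we have already observed that the induced multiplication $ab = m'(a\otimes b)$ distributes over arbitrary joins on both sides, so $O(C_1)$ is a quantal frame; associativity was noted to follow by reversing the arguments of \cite[p.~187]{Re}. Multiplicativity is immediate, since $m^*$ preserves arbitrary joins precisely because, being the right adjoint $m^*$ of the semiopen map $m$, it here arises from a genuine localic category whose $m$ we are reading off. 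Thus the real content is checking the four support axioms.

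First I would dispatch (EQ1): $\lambda = u_!d_!$ and $\rho = u_!r_!$ are composites of left adjoints, hence sup-maps, and by Lemma~\ref{lem:aux1}\eqref{i:aux2} their images land in $e^{\downarrow} = u_!(O(C_0))$, so they are well-defined maps $O(C_1)\to e^{\downarrow}$ preserving arbitrary joins. Axiom (EQ2) is exactly Lemma~\ref{lem:aux1}\eqref{i:aux0}: for $f\le e$ we have $\lambda(f)=f=\rho(f)$. Axiom (EQ3), that $a=\rho(a)a$ and $a=a\lambda(a)$, is precisely Lemma~\ref{lem:aux1}\eqref{i:aux6a}. Axiom (EQ4), that $\lambda(ab)=\lambda(\lambda(a)b)$ and $\rho(ab)=\rho(a\rho(b))$, is Lemma~\ref{lem:aux1}\eqref{i:aux12}. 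So once the preparatory lemma is in hand, the theorem is essentially an assembly of its parts, and I would write the proof as: \emph{(EQ1) holds by the definitions of $\lambda,\rho$ together with Lemma~\ref{lem:aux1}\eqref{i:aux2}; (EQ2) is Lemma~\ref{lem:aux1}\eqref{i:aux0}; (EQ3) is Lemma~\ref{lem:aux1}\eqref{i:aux6a}; and (EQ4) is Lemma~\ref{lem:aux1}\eqref{i:aux12}.}

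The one point still needing independent verification is that $e = u_!(1_{O(C_0)})$ is genuinely a two-sided multiplicative identity for the induced product, since (EQ3) only asserts $a\lambda(a)=a$ with $\lambda(a)\le e$, not $ae=a$ outright. Here I would argue that $ae = m_!(a\otimes e) = m_!(a\otimes u_!(1_{O(C_0)}))$, and use Lemma~\ref{lem:aux1}\eqref{i:aux3} (with $a$ replaced by $1_{O(C_0)}$, giving $b\wedge d^*(1_{O(C_0)}) = b\wedge 1 = b$) to collapse this to $a$; dually $ea=a$. Equivalently, unitality is encoded in axioms (Cat1)--(Cat2) of the localic category, which is the categorical shadow of $e$ being the identity, and I would invoke (Cat2) directly if the module computation proves fiddly.

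The main obstacle, and the step I would expect to absorb most care, is not any single axiom but rather ensuring the multiplication is well-defined and associative through the tensor identification \eqref{eq:comp}: one must confirm that $m'=m_!q$ genuinely factors through $O(C_1)\otimes_{O(C_0)}O(C_1)$ and that associativity survives the passage from the localic-category structure to the quantale product. Since associativity is asserted (by reversing \cite[p.~187]{Re}) rather than reproved, the delicate issue in the final theorem is really bookkeeping: every use of the product $ab$ must be traced back through $m_!$, the quotient $q$, and the defining relations, and one must check that $\lambda,\rho$ interact with this product exactly as (EQ3)--(EQ4) demand. Because Lemma~\ref{lem:aux1} has already done this tracing---in particular \eqref{i:aux12} already routes through (Cat3) and the tensor relations---the theorem itself should reduce to a short citation of the lemma's parts, and I would keep the proof correspondingly brief.
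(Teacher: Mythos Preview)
Your proposal is correct and follows essentially the same route as the paper: both proofs reduce the theorem to an assembly of the parts of Lemma~\ref{lem:aux1}, citing \eqref{i:aux0}, \eqref{i:aux6a}, and \eqref{i:aux12} for (EQ2)--(EQ4) respectively. The paper is even terser than you are---it does not pause over unitality or (EQ1)---and for multiplicativity it points specifically to Lemma~\ref{lem:aux1}\eqref{i:aux4}: since the pushout frame coincides with the tensor product $O(C_1)\otimes_{u_!(O(C_0))}O(C_1)$, the quantale multiplication $\mu$ is literally $m_!$, whose right adjoint $m^*$ is the frame map defining the locale map $m$ and hence preserves joins; your phrasing of this step is slightly loose (you call $m^*$ ``the right adjoint of the semiopen map $m$'') but the content is the same.
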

\begin{proof} By parts \eqref{i:aux0}, \eqref{i:aux6a} and  \eqref{i:aux12} of  Lemma \ref{lem:aux1}, we have shown that  $O(C_1)$,  with respect to the left support  $\lambda$, the right support $\rho$ 
and the identity element $e$, satisfies axioms (EQ2), (EQ3) and (EQ4), respectively, and so it is an Ehresmann quantal frame. 
By part \eqref{i:aux4} of Lemma \ref{lem:aux1}, the quantale multiplication $\mu$ has as domain the tensor product $O(C_1)\otimes_{u_!(O(C_0))} O(C_1)$. 
This multiplication is the direct image $m_!$ of $m$. 
This implies that $O(C_1)$ is multiplicative, which completes the proof.
\end{proof}

\subsection{Back and forth}

By Theorem \ref{th:th1} and Theorem \ref{th:th2}, we have the following two constructions.
\begin{itemize}

\item We may associate to a quantal localic category $C$ an Ehresmann quantal frame,  which we denote by ${\mathcal O}(C)$.  

\item We may associate to a  multiplicative Ehresmann quantal frame $Q$ a quantal localic category, which we denote  by  ${\mathcal C}(Q)$.

\end{itemize}
The following theorem shows that these two constructions establish a bijective correspondence.

\begin{theorem}[Correspondence Theorem] \label{th:jul17} \mbox{}
\begin{enumerate}
\item \label{o:1151} Let $Q$ be a multiplicative Ehresmann quantal frame. Then $Q = {\mathcal O}({\mathcal C}(Q))$. 
\item \label{o:1152} Let $C$ be a quantal localic category. Then $C\cong {\mathcal C}({\mathcal O}(C))$.
\end{enumerate}
\end{theorem}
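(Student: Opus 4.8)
The plan is to prove both statements by unwinding the two constructions ${\mathcal C}(-)$ and ${\mathcal O}(-)$ and checking that they invert one another; the only genuine work lies in keeping track of which data is literally equal and which is merely identified up to a canonical isomorphism.

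For part \eqref{o:1151} I would argue by direct verification. Starting from a multiplicative Ehresmann quantal frame $Q$, form ${\mathcal C}(Q)$, so that $O(C_1)=Q$ and $O(C_0)=e^{\downarrow}$. By Lemma~\ref{le: august} the direct images of the three structure maps of ${\mathcal C}(Q)$ are exactly the data of $Q$: namely $u_!=\nu$ is the inclusion $e^{\downarrow}\hookrightarrow Q$, $d_!=\lambda$, and $r_!=\rho$. I would then compute ${\mathcal O}({\mathcal C}(Q))$ from these using the defining formulae $e=u_!(1_{O(C_0)})$, $\lambda=u_!d_!$, $\rho=u_!r_!$, and $ab=m_!q(a\otimes b)$. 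Since $1_{O(C_0)}=e$ and $\nu$ is the inclusion, $u_!(1_{O(C_0)})=\nu(e)=e$ recovers the unit; $u_!d_!=\nu\circ\lambda$ and $u_!r_!=\nu\circ\rho$ recover the support maps as maps into $e^{\downarrow}$; and since ${\mathcal C}(Q)$ was built so that $m_!=\mu$ with $\mu(a\otimes b)=ab$ the original product, the induced multiplication $m_!q(a\otimes b)=ab$ is the original one. Hence ${\mathcal O}({\mathcal C}(Q))$ has the same underlying frame and the same structure as $Q$, so $Q={\mathcal O}({\mathcal C}(Q))$.

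For part \eqref{o:1152}, put $Q={\mathcal O}(C)$ and form ${\mathcal C}(Q)$, whose object of arrows again has frame $Q=O(C_1)$ and whose object of objects $C_0'$ has frame $e^{\downarrow}$. The key structural input is Lemma~\ref{lem:aux1}\eqref{i:aux0a}: the map $u_!\colon O(C_0)\to e^{\downarrow}$ is a frame isomorphism with inverse the restriction of $d_!$. Let $\Theta\colon C_0'\to C_0$ be the corresponding locale isomorphism, so $\Theta^*=u_!$. I would show that $F=(\mathrm{id}_{C_1},\Theta)$ is an isomorphism of localic categories ${\mathcal C}({\mathcal O}(C))\to C$ by checking the functor axioms (Fun1)--(Fun4). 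Axioms (Fun2)--(Fun4) reduce to frame identities already at hand: combining Lemma~\ref{le: august} for the maps of ${\mathcal C}(Q)$ with Lemma~\ref{lem:aux1}, one obtains $d^*(a)=1\cdot u_!(a)=(d')^*\Theta^*(a)$ and $r^*(a)=u_!(a)\cdot 1=(r')^*\Theta^*(a)$ from part~\eqref{i:aux3} taken with $b=1$, and $(u')^*(y)=y\wedge e=u_!u^*(y)=\Theta^*u^*(y)$ from part~\eqref{i:aux_new}.

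The main obstacle is (Fun1), the compatibility of the multiplications. Here I would first invoke Lemma~\ref{lem:aux1}\eqref{i:aux4}, which shows that the object of composable pairs $C_1\times_{C_0'}C_1$ of ${\mathcal C}({\mathcal O}(C))$, namely $Q\otimes_{e^{\downarrow}}Q$, coincides with the object of composable pairs $C_1\times_{C_0}C_1$ of $C$, namely $O(C_1)\otimes_{O(C_0)}O(C_1)$; thus the map induced by $F$ on composable pairs is the identity, and (Fun1) reduces to $m=m'$. To see this last equality, note that the multiplication $m'$ of ${\mathcal C}(Q)$ has direct image $\mu$ with $\mu(a\otimes b)=ab$, where $ab=m_!q(a\otimes b)$ is precisely the product that ${\mathcal O}(C)$ extracts from the original direct image $m_!$; hence $m'_!=m_!$. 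Since both $m$ and $m'$ are semiopen locale maps with the same source and target, and a semiopen map is determined by its direct image (equivalently by the right-adjoint frame map), we conclude $m'=m$. This establishes (Fun1) and shows that $F$ is an isomorphism, completing the proof.
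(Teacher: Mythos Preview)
Your proof is correct and follows essentially the same route as the paper. For part~\eqref{o:1151} the paper simply says the equality is immediate from the constructions, which is exactly your direct verification; for part~\eqref{o:1152} the paper builds the same isomorphism $(\mathrm{id},\beta)$ with $\beta^*=u_!$, checks (Fun1) via Lemma~\ref{lem:aux1}\eqref{i:aux4} and the fact that semiopen maps with equal direct images coincide, and checks (Fun2)--(Fun4) by the very calculations you cite from Lemma~\ref{le: august} and Lemma~\ref{lem:aux1}\eqref{i:aux3},\eqref{i:aux_new}.
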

\begin{proof} \eqref{o:1151} Let $Q$ be a multiplicative Ehresmann quantal frame. 
Then it is immediate by our constructions that $Q  = {\mathcal O}({\mathcal C}(Q))$. 

\eqref{o:1152} Let $C=(C_1,C_0)$ be a quantal localic category.  
By Theorem \ref{th:th1},  this gives rise to an Ehresmann quantal frame ${\mathcal O}(C)$. 
Adapting the arguments in  part of the proof of \cite[Theorem 5.11]{Re}, 
we shall prove that ${\mathcal O}(C)$ is multiplicative and that $C\cong {\mathcal C}({\mathcal O}(C))$. 
To the multiplicative Ehresmann quantal frame ${\mathcal O}(C)$, we can apply the procedure of Section~\ref{s:s4} and construct
a localic category $\mathcal{C}(\mathcal{O}(C)) = (\hat{C_1},\hat{C_0})$
where  
$O(\hat{C_1})=O(C_1)$,
and  
$O(\hat{C_0})=e^{\downarrow}$ 
the element $e$ being $u_!(1_{C_0})$.  
By part \eqref{i:aux0} of Lemma~\ref{lem:aux1},
the frame $e^{\downarrow}$ is isomorphic to the frame $O(C_{0})$
by the map 
$$\beta^* \colon O(C_{0}) \rightarrow e^{\downarrow}$$
which is the map $u_!$ with the range restricted to the frame $e^{\downarrow}$. 
We now show that the categories $C$ and ${\mathcal C}({\mathcal O}(C))$ are isomorphic. 
We claim that the pair  $\Phi = (id, \beta)$ is an isomorphism in the sense of an internal functor from ${\mathcal C}({\mathcal O}(C))$ to $C$.
Thus we have to verify (Fun1)--(Fun4).

(Fun1) holds.
By Lemma~\ref{le: august},
we may construct the maps 
$\hat{u} \colon \hat{C_0}\to \hat{C_1}$ 
and
$\hat{d} \colon \hat{C_1}\to \hat{C_0}$ 
and 
$\hat{r} \colon \hat{C_1}\to \hat{C_0}$. 
Let $\hat{m}$ denote the multiplication in the category $\mathcal{C}(\mathcal{O}(C))$.
The frame that underlies the locale of composable pairs of  ${\mathcal C}({\mathcal O}(C))$ is, by Lemma \ref{lem:l16}, the tensor product $O(C_1)\otimes_{e^{\downarrow}} O(C_1)$. 
The frame that underlies the locale of composable pairs of $C$ coincides with this by part \eqref{i:aux4} of Lemma \ref{lem:aux1}. 
It follows that $m=\hat{m}$ since both of them are semiopen maps with the same domains and codomains and a common direct image $m_!$.

We verify (Fun2), with (Fun3) holding by symmetry.
We have to show that $\hat{d}^{*}\beta^{*} = d^{*}$.
This follows from the following calculation
$$\hat{d}^{*}u_{!}(a) = 1u_{!}(a) = 1 \wedge d^{*}(a) = d^{*}(a).$$

It remains to prove (Fun4).
Thus we have to prove that $u_{!}u^{*} = \hat{u}^{*}$.
This follows from the following calculation
$$u_{!}u^{*}(a) = u_{!}(u^{*}(a) \wedge 1_{O(C_{0})}) = a \wedge u_{!}(1_{O(C_{0})}) = a \wedge e = \hat{u}^{*}(a).$$
\end{proof}

\subsection{The Etale Correspondence Theorem}
The goal of this subsection is to identify precisely which quantal localic categories correspond to restriction quantal frames.
Recall that in Section~\ref{sub:qt}, we defined an Ehresmann quantal frame to be \'etale if $1_{Q}$ is a join  of partial isometries. 

\begin{lemma}\label{le:12Jan} An Ehresmann quantal frame
$Q$ is \'etale if and only if both the structure maps $d$ and $r$ of the quantal localic category ${\mathcal C}(Q)$ are \'etale.
\end{lemma}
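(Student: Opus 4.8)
The plan is to unwind the definition of an étale map and reduce the surjectivity requirement to a statement about partial isometries. Recall that $d$ and $r$ are open by Lemma~\ref{le: august}, so being étale only adds, for a suitable cover $C\subseteq Q$ of $1_Q$, the requirement that for each $c\in C$ the frame map $g\mapsto d^{*}(g)\wedge c$ (respectively $g\mapsto r^{*}(g)\wedge c$) from $e^{\downarrow}$ onto $c^{\downarrow}$ be surjective. Since $d^{*}(g)=1g$ and $r^{*}(g)=g1$, the Stability Lemma (Lemma~\ref{le:stability}) rewrites these maps as $g\mapsto cg$ and $g\mapsto gc$, with $g$ ranging over $e^{\downarrow}$. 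So the first step is to record that $d$ (resp. $r$) is étale precisely when $1_Q$ is covered by elements $c$ for which $g\mapsto cg$ (resp. $g\mapsto gc$) is onto $c^{\downarrow}$.

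Next I would identify what this surjectivity means. Using $\lambda(cg)=\lambda(\lambda(c)g)=\lambda(c)g$ (axioms (EQ4), (EQ2) and Lemma~\ref{lem:l1}) together with $c\lambda(c)=c$, one gets $cg=c\lambda(cg)$; conversely $b=c\lambda(b)$ exhibits $b$ as $cg$ with $g=\lambda(b)$. Hence $g\mapsto cg$ is onto $c^{\downarrow}$ iff every $b\le c$ satisfies $b=c\lambda(b)$; call such a $c$ \emph{left-special}, and dually \emph{right-special} when every $b\le c$ satisfies $b=\rho(b)c$. By Lemma~\ref{lem:aaab} and the definition of a partial isometry, $c$ is a partial isometry exactly when it is both left-special and right-special. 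This gives the key dictionary: $d$ is étale iff $1_Q$ is a join of left-special elements, and $r$ is étale iff $1_Q$ is a join of right-special elements.

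The forward implication is then immediate: if $Q$ is étale, a cover of $1_Q$ by partial isometries is simultaneously a cover by left-special and by right-special elements, so both $d$ and $r$ are étale. For the converse the one genuine subtlety is that $d$ and $r$ come with possibly different covers, say $C_{d}$ of left-special elements and $C_{r}$ of right-special elements; I would merge them by passing to the common refinement $C=\{\,c\wedge c':c\in C_{d},\ c'\in C_{r}\,\}$. Frame distributivity gives $\bigvee C=\bigl(\bigvee C_{d}\bigr)\wedge\bigl(\bigvee C_{r}\bigr)=1_Q$. It therefore remains to see that each $c\wedge c'$ is a partial isometry. For this I would show that the classes of left-special and of right-special elements are order ideals: if $c$ is left-special and $b_{0}\le c$, then for $x\le b_{0}$ one has $x=c\lambda(x)$ and $b_{0}\lambda(x)=c\lambda(b_{0})\lambda(x)=c\lambda(x)=x$ (using $\lambda(x)\le\lambda(b_{0})$ and that the product of projections is their meet), so $b_{0}$ is left-special; dually for right-special. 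Since $c\wedge c'\le c$ is then left-special and $c\wedge c'\le c'$ is right-special, it is a partial isometry, and $1_Q=\bigvee C$ is a join of partial isometries, i.e.\ $Q$ is étale.

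The main obstacle is precisely this reconciliation of the two independent covers in the backward direction; everything else is a direct translation through the Stability Lemma and the Ehresmann axioms. The order-ideal stability of the left- and right-special classes is what makes the meet-refinement produce genuine partial isometries rather than merely one-sided data.
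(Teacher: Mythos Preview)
Your proof is correct and follows essentially the same route as the paper: use the Stability Lemma to rewrite the \'etale condition for $d$ (resp.\ $r$) as a cover by elements $c$ with $c^{\downarrow}=\{cg:g\le e\}$ (resp.\ $c^{\downarrow}=\{gc:g\le e\}$), and for the converse take the common refinement $\{c\wedge c'\}$. The only difference is that you explicitly verify the downward-closure of the left- and right-special classes, whereas the paper simply asserts that the elements $x\wedge y$ are partial isometries ``by our calculations above''; your version is thus a bit more careful but otherwise identical in strategy.
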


\begin{proof} 
Let $C={\mathcal C}(Q)$. By construction we have $O(C_1)=Q$ and $O(C_0)=e^{\downarrow}$. It follows in particular that $u_!(f)=f$ for all $f\leq e$.
Assume that both $d$ and $r$ are \'etale. 
For $d$ this means that there is a cover  $1 = \bigvee X$ such that for each $x\in X$ and $a \leq x$ we have $a=d^*(s)\wedge x$ for some $s\in e^{\downarrow}$. 
We have $a=d^*(s)\wedge x=  xs$ by part~\eqref{i:aux3} of Lemma~\ref{lem:aux1}.
Since $r$ is \'etale, we may similarly deduce that 
there is a cover $1 = \bigvee Y$ such that for each $y\in Y$ and $b\leq y$ we have
$b=ty$ for some $t\in e^{\downarrow}$.
We have
$$
1 = 1 \wedge 1 = \left (\bigvee X \right )  \wedge \left (\bigvee Y \right ) = \bigvee_{x\in X, y\in Y} (x\wedge y).
$$
The elements $x\wedge y$ are partial isometries by our calculations above.
Thus the maximum element 1 is a join of partial isometries.

Conversely, assume that  we may write $1 =\bigvee X$ where $X\subseteq {\mathcal{PI}}(Q)$. 
Let $x\in X$ and $a\leq x$. Since $x$ is a partial isometry we have that  $a=xs=d^*(s)\wedge x$ for some $s\leq e$. Hence the map $s\mapsto d^*(s)\wedge x$ from $e^{\downarrow}$ to $x^{\downarrow}$ is surjective. This proves that $d$ is \'etale. We may prove in a similar way that $r$ is \'etale. 
\end{proof}

We now turn to the multiplication map in a localic category.

\begin{proposition} \label{prop:aug30} Let $C$ be a localic category with all structure maps, including $m$, open.
Then the set  ${\mathcal{PI}}({\mathcal{O}}(C))$ is closed under multiplication and forms a complete restriction monoid.
\end{proposition}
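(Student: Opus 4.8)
The plan is to reduce the whole statement to the single assertion that ${\mathcal{PI}}({\mathcal{O}}(C))$ is closed under multiplication, because Proposition~\ref{prop:pi6} then upgrades this at once to a complete restriction monoid. First I would note that an open map is in particular semiopen, so $C$ is a quantal localic category and Theorem~\ref{th:th1} makes $Q := {\mathcal{O}}(C) = O(C_1)$ a multiplicative Ehresmann quantal frame. I write $\leq$ for the sup-lattice order and $\leq'$ for the natural partial order of the underlying Ehresmann monoid. Fixing partial isometries $a,b$, the goal is to show that every $c \leq ab$ in fact satisfies $c \leq' ab$.

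The engine of the argument is the Frobenius condition furnished by the \emph{openness} of $m$; this is precisely the hypothesis unavailable in the merely semiopen (multiplicative) case, and it is where I expect the real work to sit. For $c \leq ab$ I would apply the open-map identity $m_!(w \wedge m^*(c)) = m_!(w) \wedge c$ with $w = a \otimes b$. Since $m_!(a \otimes b) = ab$ and $c \leq ab$, the right-hand side equals $c$. Using $m^*(c) = \bigvee_{xy \leq c} x \otimes y$, frame distributivity in $Q \otimes_{e^{\downarrow}} Q$, and the tensor relation $(a \otimes b) \wedge (x \otimes y) = (a \wedge x) \otimes (b \wedge y)$, the left-hand side rewrites as $\bigvee_{xy \leq c}(a \wedge x)(b \wedge y)$, yielding the decomposition
\[
c = \bigvee_{xy \leq c}(a \wedge x)(b \wedge y).
\]

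The key auxiliary fact I would isolate is this: if $a,b$ are partial isometries and $a' \leq a$, $b' \leq b$ in the sup-order, then $a'b' \leq' ab$. Indeed, $a'$ and $b'$ lie below partial isometries, so by the definition of partial isometry $a' \leq' a$ and $b' \leq' b$, giving $a' = a\lambda(a') = \rho(a')a$ and likewise for $b'$. Since $a$ and $b$ are bi-deterministic (every partial isometry is, as shown earlier), I can push projections through in two ways: on one side $a'b' = a\lambda(a')\,b\lambda(b') = (ab)f$ with $f$ a projection, after using $\lambda(a')b = b\lambda(\lambda(a')b)$; on the other side $a'b' = \rho(a')a\,\rho(b')b = g(ab)$ with $g$ a projection, after using $a\rho(b') = \rho(a\rho(b'))a$. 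Having $a'b' = (ab)f = g(ab)$ with $f,g \leq e$ is exactly the definition of $a'b' \leq' ab$. The main obstacle is obtaining both the left and the right factorization at once; bi-determinism of $a$ and of $b$ is precisely what makes each direction go through, and without openness of $m$ there is no decomposition on which to deploy it.

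Finally I would combine the two ingredients. Each summand $c_i = (a \wedge x)(b \wedge y)$ has $a \wedge x \leq a$ and $b \wedge y \leq b$, so the auxiliary fact gives $c_i \leq' ab$, whence $c_i = (ab)\lambda(c_i) = \rho(c_i)(ab)$. Distributing quantale multiplication over the join and using that $\lambda$ and $\rho$ are sup-maps by (EQ1),
\[
c = \bigvee_i c_i = (ab)\Bigl(\bigvee_i \lambda(c_i)\Bigr) = (ab)\lambda(c), \qquad c = \Bigl(\bigvee_i \rho(c_i)\Bigr)(ab) = \rho(c)(ab),
\]
so $c \leq' ab$. As $c \leq ab$ was arbitrary, $ab$ is a partial isometry, ${\mathcal{PI}}(Q)$ is closed under multiplication, and Proposition~\ref{prop:pi6} completes the proof.
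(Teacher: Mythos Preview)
Your proof is correct and follows essentially the same approach as the paper: both apply the Frobenius condition for the open map $m$ to obtain $c = \bigvee_{xy \leq c}(a \wedge x)(b \wedge y)$, then use the partial-isometry (hence bi-deterministic) property of $a$ and $b$ to push projections through and conclude $c \leq' ab$, finishing with Proposition~\ref{prop:pi6}. The only difference is packaging: you abstract the projection-pushing into an auxiliary claim $a'b' \leq' ab$ for $a' \leq a$, $b' \leq b$, while the paper does this computation inline, writing $(a \wedge x)(b \wedge y) = ab\,h_{x,y}$ directly and setting $p = \bigvee h_{x,y}$.
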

\begin{proof} Let $\leq'$ denote the natural partial order on ${\mathcal O}(C)$ and let $\leq$ be the partial order that underlies the frame structure on ${\mathcal O}(C)$.
Let $a,b\in {\mathcal{PI}}({\mathcal O}(C))$ and $c\leq ab$.  Applying the Frobenius condition for $m$ we have
$$c=ab\wedge c =m_!(a\otimes b)\wedge c = m_!(a\otimes b \wedge m^*(c)).$$
This, in turn, equals
\begin{multline*}
m_!\left(  a \otimes b \wedge  \left( \bigvee_{xy \leq c} x\otimes y \right) \right)
= m_! \left( \bigvee_{xy \leq c}   (a\otimes b) \wedge (x\otimes y) \right) \\
= \bigvee_{xy \leq c} m_! (    (a \wedge x) \otimes (b \wedge y) ) = \bigvee_{xy \leq c} (a \wedge x)(b \wedge y).  
\end{multline*}
Now $a \wedge x \leq a$ and $a$ is a partial isometry so that $a \wedge x = af$ for some projection $f$.
Similarly,  $b \wedge y = bg$ for some projection $g$.
Thus $(a \wedge x)(b \wedge y) = afbg$.
But $fb \leq b$ and $b$ is a partial isometry and so there is a projection $f'$ such that $fb = bf'$.
It follows that we may write  $(a \wedge x)(b \wedge y) = abh_{x,y}$ for some projection $h_{x,y}$.
Define $p = \bigvee_{xy \leq c} h_{x,y}$, a projection.
Then by our above calculation, we have that $c = abp$ where $p$ is a projection.
By symmetry, we may write $c = qab$ for some projection $q$.
It follows that $c \leq' ab$ and so $c$ is a partial isometry, as required.
Hence ${\mathcal{PI}}({\mathcal O}(C))$ is closed under multiplication. 
By Proposition~\ref{prop:pi6} it is a complete restriction monoid.
\end{proof}

An Ehresmann quantal frame $Q$ will be called {\em strongly multiplicative} if $Q$ is multiplicative and the multiplication in ${\mathcal C}(Q)$ is open. 
The following statement follows from Proposition \ref{prop:aug30}.

\begin{corollary}\label{cor:dec4} 
If $Q$ is a strongly multiplicative Ehresmann quantal frame then ${\mathcal{PI}}(Q)$ is a complete restriction monoid. 
\end{corollary}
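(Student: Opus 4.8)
The plan is to assemble this directly from the Correspondence Theorem and Proposition~\ref{prop:aug30}, since the definition of \emph{strongly multiplicative} was tailored precisely to supply the hypotheses of the latter. First I would unpack the hypothesis: $Q$ being strongly multiplicative means that $Q$ is a multiplicative Ehresmann quantal frame \emph{and} that the multiplication $m$ in the associated localic category $\mathcal{C}(Q)$ is open.

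Next I would invoke Theorem~\ref{th:th2}, which guarantees that, because $Q$ is multiplicative, the data $u,d,r,m$ built in Section~\ref{s:s4} do indeed form a quantal localic category $\mathcal{C}(Q)$; in such a category $u$, $d$, and $r$ are automatically open while $m$ is only known to be semiopen. The strong multiplicativity assumption upgrades the last map, so that in $\mathcal{C}(Q)$ \emph{all} structure maps, including $m$, are open. Thus $C = \mathcal{C}(Q)$ satisfies exactly the hypotheses of Proposition~\ref{prop:aug30}.

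Applying Proposition~\ref{prop:aug30} to $C = \mathcal{C}(Q)$ then yields that $\mathcal{PI}(\mathcal{O}(\mathcal{C}(Q)))$ is closed under multiplication and is a complete restriction monoid. The final step is to transport this back to $Q$ itself: by part~\eqref{o:1151} of the Correspondence Theorem~\ref{th:jul17}, we have the genuine equality $Q = \mathcal{O}(\mathcal{C}(Q))$, not merely an isomorphism, so $\mathcal{PI}(Q)$ and $\mathcal{PI}(\mathcal{O}(\mathcal{C}(Q)))$ are literally the same set with the same operations. Hence $\mathcal{PI}(Q)$ is a complete restriction monoid, as claimed.

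There is no genuine obstacle here; the work has all been done in the preceding results, and the proof is a matter of chaining the definitions correctly. The only point demanding a little care is making sure one uses the equality $Q = \mathcal{O}(\mathcal{C}(Q))$ rather than a weaker statement, so that the conclusion about partial isometries of $\mathcal{O}(\mathcal{C}(Q))$ is immediately a conclusion about partial isometries of $Q$; this is exactly what part~\eqref{o:1151} of Theorem~\ref{th:jul17} provides.
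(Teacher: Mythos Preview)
Your proof is correct and follows exactly the same route as the paper, which simply states that the corollary ``follows from Proposition~\ref{prop:aug30}''. You have spelled out in more detail the implicit steps: that strong multiplicativity makes all structure maps of $\mathcal{C}(Q)$ open so that Proposition~\ref{prop:aug30} applies, and that the identification $Q = \mathcal{O}(\mathcal{C}(Q))$ from Theorem~\ref{th:jul17}\eqref{o:1151} transfers the conclusion back to $\mathcal{PI}(Q)$.
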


Let us call a localic category {\em open} if all its structure maps are open.
The claim of Theorem \ref{th:jul17}, the Correspondence Theorem,  can be restricted to the following statement.

\begin{theorem}[Open Correspondence Theorem] \label{th:n5} 
There is a  bijective  correspondence between strongly multiplicative Ehresmann quantal frames and open localic categories.
\end{theorem}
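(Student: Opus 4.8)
The plan is to obtain the Open Correspondence Theorem by simply restricting the bijection of the Correspondence Theorem, Theorem~\ref{th:jul17}, to the relevant subclasses on each side; essentially all of the substantive work has already been carried out, and what remains is to match up the two defining conditions. First I would record the elementary observation that \emph{open localic categories are exactly the quantal localic categories whose multiplication map is open}. Indeed, a quantal localic category already has $d$, $r$, $u$ open and $m$ semiopen, so demanding in addition that $m$ be open is precisely the requirement that all four structure maps be open; conversely, every open localic category is a quantal localic category because open maps are in particular semiopen. Thus the phrases ``open localic category'' and ``quantal localic category with $m$ open'' describe the same objects.

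Next I would invoke Theorem~\ref{th:jul17}: the assignments $Q \mapsto {\mathcal C}(Q)$ and $C \mapsto {\mathcal O}(C)$ set up a bijective correspondence between multiplicative Ehresmann quantal frames and quantal localic categories, with $Q = {\mathcal O}({\mathcal C}(Q))$ on the nose and $C \simeq {\mathcal C}({\mathcal O}(C))$. It then suffices to check that this bijection carries strongly multiplicative Ehresmann quantal frames onto open localic categories and back. In the forward direction this is immediate from the definitions: if $Q$ is strongly multiplicative then, by definition, $Q$ is multiplicative (so ${\mathcal C}(Q)$ is a quantal localic category) and the multiplication in ${\mathcal C}(Q)$ is open; by the first paragraph ${\mathcal C}(Q)$ is therefore an open localic category.

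For the reverse direction, let $C$ be an open localic category. Being a quantal localic category, it yields by Theorem~\ref{th:th1} a multiplicative Ehresmann quantal frame ${\mathcal O}(C)$, and I must verify that ${\mathcal O}(C)$ is in fact strongly multiplicative, that is, that the multiplication of ${\mathcal C}({\mathcal O}(C))$ is open. Here I would lean on the proof of part~\eqref{o:1152} of Theorem~\ref{th:jul17}, in which the isomorphism $\Phi=(id,\beta)\colon {\mathcal C}({\mathcal O}(C))\to C$ is constructed and the two multiplication maps are shown to coincide, $\hat{m}=m$, since both are semiopen maps with the same domain, the same codomain, and the common direct image $m_!$. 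Since $m$ is open in $C$, its identification $\hat{m}$ is open as well, so ${\mathcal O}(C)$ is strongly multiplicative by definition.

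I expect the only genuinely delicate point to be the transport of openness across this identification of categories: one must be certain that the Frobenius condition for $m$ is preserved, which reduces to the fact that $\hat{m}=m$ is an honest equality of locale maps rather than merely an abstract isomorphism that might distort the openness condition. Everything else is bookkeeping. As a consistency check one may also note Proposition~\ref{prop:aug30} and Corollary~\ref{cor:dec4}, which guarantee that for an open localic category the partial isometries of ${\mathcal O}(C)$ genuinely close up into a complete restriction monoid, exactly as the \'etale refinement in the subsequent results will require.
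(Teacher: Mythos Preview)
Your proposal is correct and matches the paper's approach exactly: the paper simply states that the Open Correspondence Theorem is obtained by restricting the Correspondence Theorem~\ref{th:jul17} to the relevant subclasses, without writing out a separate proof. Your explicit verification that the restriction goes through in both directions---in particular, your use of the identification $\hat m = m$ from the proof of Theorem~\ref{th:jul17}\eqref{o:1152} to transport openness back---is precisely the unspoken content behind the paper's one-line justification.
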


We now have the key result.

\begin{proposition} \label{prop:aug302} Let $Q$ be an \'etale Ehresmann quantal frame. 
Then the set of partial isometries of $Q$ is closed under multiplication if and only if $Q$ is strongly multiplicative. 
\end{proposition}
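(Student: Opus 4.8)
The statement is an equivalence whose two directions have very different characters, and I would treat them separately. The direction \emph{strongly multiplicative $\Rightarrow$ closure} is immediate and needs no étaleness: if $Q$ is strongly multiplicative then all structure maps of ${\mathcal C}(Q)$ are open, so Corollary~\ref{cor:dec4} (equivalently Proposition~\ref{prop:aug30} applied to $C={\mathcal C}(Q)$, using ${\mathcal O}({\mathcal C}(Q))=Q$ from Theorem~\ref{th:jul17}) already gives that ${\mathcal{PI}}(Q)$ is a complete restriction monoid, in particular closed under multiplication. So the real content is the converse.

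For the converse, assume $Q$ is étale with ${\mathcal{PI}}(Q)$ closed under multiplication; then $S:={\mathcal{PI}}(Q)$ is a complete restriction monoid by Proposition~\ref{prop:pi6}. I must check two things: that $Q$ is \emph{multiplicative} (so that ${\mathcal C}(Q)$ exists), i.e.\ that $m^{*}$ preserves arbitrary joins, and that the resulting map $m$ is \emph{open}. The common engine for both is a reduction to \emph{basic} tensors. Since $Q$ is étale, every element of $Q$ is a join of partial isometries, and since $\otimes$, the quantale multiplication, $\lambda$ and $\rho$ all preserve joins, one gets $m^{*}(a)=\bigvee\{s\otimes t : s,t\in {\mathcal{PI}}(Q),\ st\leq a\}$ and, more generally, every element of $Q\otimes_{e^{\downarrow}}Q$ is a join of tensors $s\otimes t$ with $s,t$ partial isometries. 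Throughout I use that ${\mathcal{PI}}(Q)$ is an order ideal (Lemma~\ref{le: order_ideal}), so any $c\leq st$ is again a partial isometry and hence $c=st\lambda(c)=\rho(c)st$ with $\lambda(c),\rho(c)\leq e$.

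\emph{Multiplicativity.} As $m^{*}$ is a right adjoint, $m^{*}(\bigvee_{i}a_{i})\geq \bigvee_{i}m^{*}(a_{i})$ holds by monotonicity, so only the reverse inequality is at issue, and by the reduction it suffices to show $s\otimes t\leq \bigvee_{i}m^{*}(a_{i})$ whenever $s,t$ are partial isometries with $st\leq \bigvee_{i}a_{i}$. Writing $c_{i}=st\wedge a_{i}$, frame distributivity gives $st=\bigvee_{i}c_{i}$; each $c_{i}=st\lambda(c_{i})=s(t\lambda(c_{i}))\leq a_{i}$ with $t\lambda(c_{i})\leq t$ a partial isometry, so $s\otimes(t\lambda(c_{i}))\leq m^{*}(a_{i})$, and taking joins yields $\bigvee_{i}s\otimes(t\lambda(c_{i}))=s\otimes t\lambda(st)$. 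The crux is then the identity $s\otimes t=s\otimes t\lambda(st)$ in the balanced tensor product $Q\otimes_{e^{\downarrow}}Q$: setting $u=\lambda(s)t$, the balancing relation $sf\otimes b=s\otimes fb$ (for $f\leq e$) gives $s\otimes t=s\lambda(s)\otimes t=s\otimes u$, while $\lambda(st)=\lambda(\lambda(s)t)=\lambda(u)$ by (EQ4), whence $s\otimes t\lambda(st)=s\otimes t\lambda(u)=s\lambda(s)\otimes t\lambda(u)=s\otimes\lambda(s)t\lambda(u)=s\otimes u\lambda(u)=s\otimes u=s\otimes t$. This gives $s\otimes t\leq\bigvee_{i}m^{*}(a_{i})$ and proves multiplicativity.

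\emph{Openness.} Granting multiplicativity, ${\mathcal C}(Q)$ exists with $m_{!}(x\otimes y)=xy$, and I verify the Frobenius condition $m_{!}(\alpha\wedge m^{*}(b))=m_{!}(\alpha)\wedge b$. The inequality $\leq$ holds for any semiopen map, and, using the reduction together with distributivity and the fact that $m_{!}$ is a sup-map, the reverse inequality reduces to showing $st\wedge b\leq m_{!}((s\otimes t)\wedge m^{*}(b))$ for partial isometries $s,t$ and $b\in Q$. Here $st\wedge b$ is a partial isometry below $st$, so $st\wedge b=st\cdot g=s(tg)$ with $g=\lambda(st\wedge b)\leq e$; since $tg\leq t$ and $s(tg)\leq b$ we get $s\otimes tg\leq (s\otimes t)\wedge m^{*}(b)$, whence $st\wedge b=m_{!}(s\otimes tg)\leq m_{!}((s\otimes t)\wedge m^{*}(b))$, as needed. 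Thus $m$ is open and $Q$ is strongly multiplicative. The main obstacle is the multiplicativity step, and specifically the balanced-tensor identity $s\otimes t=s\otimes t\lambda(st)$: the naive reassembly of $s\otimes t$ from the pieces $c_{i}$ produces $s\otimes t\lambda(st)$ with $t\lambda(st)\neq t$, and it is only the defining relations of $\otimes_{e^{\downarrow}}$ that render this equal to $s\otimes t$; the openness step is by comparison a routine use of the order-ideal property once the reduction to basic tensors is set up.
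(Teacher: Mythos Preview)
Your proof is correct and follows essentially the same approach as the paper: the same reduction to basic tensors $s\otimes t$ with $s,t\in\mathcal{PI}(Q)$, the same key identity $s\otimes t=s\otimes t\lambda(st)$ for the multiplicativity step, and the same Frobenius verification. The only differences are cosmetic: you derive $s\otimes t=s\otimes t\lambda(st)$ directly from the balancing relations via $u=\lambda(s)t$, whereas the paper first invokes the restriction-monoid identity $q\lambda(pq)=\lambda(p)q$; and in the Frobenius step you argue with the single tensor $s\otimes tg$ rather than the paper's explicit decomposition $st\wedge b=xy$ with $x\leq s$, $y\leq t$, which is a mild streamlining of the same idea.
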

\begin{proof} If $Q$ is strongly multiplicative, then ${\mathcal{PI}}(Q)$ is a complete restriction monoid by Corollary \ref{cor:dec4}.

Assume that  ${\mathcal{PI}}(Q)$ is closed with respect to multiplication. 
We first show that $Q$ is multiplicative. 
Let $\mu \colon Q\otimes_{e^{\downarrow}} Q \to Q$ be the quantale multiplication map. 
We prove that the right adjoint $m^*$ of $\mu$ preserves arbitrary joins.
Let $A\subseteq {\mathcal{PI}}(Q)$. It is enough to show that 
\begin{equation*}
m^*\left (\bigvee A \right ) \leq \bigvee_{a\in A} m^*(a).
\end{equation*}
The opposite inequality holds automatically by the monotonicity of $m^*$ and the definition of the join.
We must therefore prove from the assumption
\begin{equation}\label{eq:caca}
p\otimes q\leq m^*\left (\bigvee A\right),
\end{equation}
where $p$ and $q$ are partial isometries, it follows
that 
\begin{equation}\label{eq:dada}
p\otimes q\leq \bigvee_{a\in A} m^*(a)=\bigvee \{x\otimes y\colon xy\leq a \text{ for some } a\in A\}.
\end{equation}
We now need a small side calculation.
The element $pq$, being a product of partial isometries, is itself a partial isometry. 
Since we are in a restriction monoid, we may write $q\lambda(pq)=\lambda(p)q$. Hence 
$$
p\otimes q=p\lambda(p)\otimes q=p\otimes\lambda(p)q=p\otimes q\lambda(pq).
$$
By adjointness, our assumption \eqref{eq:caca} is equivalent to $pq\leq \bigvee A$.
It follows that $pq=\bigvee_{a\in A}(pq\wedge a)$. 
By Lemma \ref{lem:joins22} $\lambda$ preserves joins and so
$\lambda(pq)=\bigvee_{a\in A}\lambda(pq\wedge a).$ 
We therefore obtain
\begin{multline*}
p\otimes q
=p\otimes q\lambda(pq)
=p\otimes q \left( \bigvee_{a\in A}\lambda(pq\wedge a) \right)=\\
p\otimes \left( \bigvee_{a\in A} q\lambda(pq\wedge a) \right)
= \bigvee_{a\in A} p\otimes q\lambda(pq\wedge a).
\end{multline*}
This and the observation that $pq\lambda(pq\wedge a)=pq\wedge a\leq a$ imply  \eqref{eq:dada}.
It follows that $Q$ is multiplicative and $m^*$ defines a semiopen multiplication map 
$m \colon C_1\times_{C_0} C_1\to C_1$, where $O(C_1)=Q$ and $O(C_0)=e^{\downarrow}$ and $\mu=m_!$ (see  Subsection~\ref{s:s4}).

We now verify that $m$ is open, that is, it satisfies the Frobenius condition. 
It is enough to verify only the inequality
\begin{equation}\label{eq:fafa}
m_!(a\otimes b)\wedge c \leq m_!((a\otimes b)\wedge m^*(c))
\end{equation}
for any $a,b,c\in Q$  since the opposite inequality holds by the monotonicity of $m_!$  and the fact that $m_!m^*\leq id$.
Since $Q$ is \'etale and applying distributivity and  the fact that $m_!$ preserves joins, it is enough to verify the inequality \eqref{eq:fafa} only for the case where $a,b\in {\mathcal{PI}}(Q)$.  So we assume that $a,b\in {\mathcal{PI}}(Q)$.

We shall need the following observation.
Let  $c\leq ab$. 
Since $ab\in {\mathcal{PI}}(Q)$ by assumption, there is some $f\leq e$ such that $c=abf$. 
Now, since $b\in {\mathcal{PI}}(Q)$ and $bf\leq b$ it follows that $bf=gb$ for some $g\leq e$. 
Thus $c=abf=agbf=(ag)(bf)$. 
It follows that $c$ can be written as $c=xy$ with $x\leq a$ and $y\leq b$. 

Now $m_!(a\otimes b)\wedge c=ab\wedge c$. 
Since $ab\in {\mathcal{PI}}(Q)$ and $ab\wedge c\leq ab$, 
the observation above shows that $ab\wedge c=pq$ with $p\leq a$ and $q\leq b$. 
Of course we also have $pq\leq c$.
Consequently, we obtain 
\begin{multline*}
m_!((a\otimes b)\wedge m^*(c)) 
= m_!\left ((a\otimes b) \wedge \bigvee_{xy \leq c}   x\otimes y  \right) 
= \bigvee_{xy \leq c}  m_! ((a\otimes b) \wedge  (x\otimes y))  \\
= \bigvee_{xy \leq c}  (a\wedge x)(b\wedge y) 
\geq (a\wedge p)(b\wedge q)=pq=ab\wedge c.
\end{multline*}
 This implies the inequality \eqref{eq:fafa} and completes the proof. 
\end{proof}

We say that a localic category is an {\em \'{e}tale localic category} if the maps $u,m$ are open and $d,r$ are \'etale.
In the light of the results proved in this subsection, 
the correspondence given in Theorem \ref{th:jul17} restricts to the following.

\begin{theorem}[Etale Correspondence Theorem]\label{the:jan23} 
There is a bijective correspondence between restriction quantal frames and \'etale localic categories.
\end{theorem}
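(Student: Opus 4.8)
The plan is to show that the bijective correspondence of the Correspondence Theorem (Theorem~\ref{th:jul17}), effected by the mutually inverse constructions ${\mathcal O}$ and ${\mathcal C}$, restricts to a bijection between the two subclasses in question. The strategy is to recognise the two defining conditions of a restriction quantal frame as the images, under this correspondence, of the two defining conditions of an \'etale localic category, using the three preparatory results of this subsection.

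First I would reconcile the definition of a restriction quantal frame with the notion of strong multiplicativity. By definition a restriction quantal frame is an \'etale Ehresmann quantal frame whose set of partial isometries is closed under multiplication. By Proposition~\ref{prop:aug302}, for an \'etale Ehresmann quantal frame closure of the partial isometries under multiplication is equivalent to being strongly multiplicative. Hence a restriction quantal frame is precisely an Ehresmann quantal frame that is simultaneously \'etale and strongly multiplicative. In particular it is strongly multiplicative, hence multiplicative, so the Correspondence Theorem applies and produces the quantal localic category ${\mathcal C}(Q)$ with $Q = {\mathcal O}({\mathcal C}(Q))$.

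Next I would translate each of the two conditions across the correspondence. By the Open Correspondence Theorem (Theorem~\ref{th:n5}), $Q$ is strongly multiplicative if and only if ${\mathcal C}(Q)$ is an open localic category, that is, all four structure maps $u, d, r, m$ are open. By Lemma~\ref{le:12Jan}, $Q$ is \'etale if and only if the maps $d$ and $r$ of ${\mathcal C}(Q)$ are \'etale. Combining these, $Q$ is a restriction quantal frame exactly when $u, d, r, m$ are open and, moreover, $d$ and $r$ are \'etale. Since an \'etale map is in particular open, the openness of $d$ and $r$ is subsumed, and the condition reads precisely: $u, m$ open and $d, r$ \'etale---which is the definition of an \'etale localic category. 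Thus ${\mathcal C}(Q)$ is an \'etale localic category.

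For the converse I would start from an \'etale localic category $C$. As \'etale maps are open, all its structure maps are open, so $C$ is an open localic category; by the Open Correspondence Theorem ${\mathcal O}(C)$ is strongly multiplicative. Via the isomorphism $C \simeq {\mathcal C}({\mathcal O}(C))$ of the Correspondence Theorem, the \'etale maps $d, r$ of $C$ are carried to the corresponding maps of ${\mathcal C}({\mathcal O}(C))$, whence Lemma~\ref{le:12Jan} shows ${\mathcal O}(C)$ is \'etale; so ${\mathcal O}(C)$ is a restriction quantal frame. Since ${\mathcal O}$ and ${\mathcal C}$ are mutually inverse on the whole class of multiplicative Ehresmann quantal frames by Theorem~\ref{th:jul17}, they restrict to the claimed bijection. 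There is no genuine obstacle remaining, the substantive work having been carried out in Proposition~\ref{prop:aug302} and Lemma~\ref{le:12Jan}; the only point demanding care is the logical bookkeeping that identifies ``open category with $d, r$ \'etale'' with ``\'etale localic category'', which rests on the fact that \'etale maps are open.
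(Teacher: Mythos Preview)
Your proof is correct and follows exactly the approach the paper takes: the paper simply states that ``in the light of the results proved in this subsection, the correspondence given in Theorem~\ref{th:jul17} restricts'' to the desired bijection, and you have spelled out precisely those results (Lemma~\ref{le:12Jan}, Proposition~\ref{prop:aug302}, Theorem~\ref{th:n5}) and how they combine.
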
 

\section{The Duality Theorem} \label{s:env}

In this section we shall augment the Etale Correspondence Theorem by morphisms to obtain a genuine duality.

\subsection{Properties of morphisms between restriction quantal frames}\label{sub:morphisms}
Adopting the idea that locale maps are defined as frame maps going in the opposite direction, we can readily define morphisms of quantal (open, \'etale) localic categories as morphisms of their corresponding  Ehresmann  quantal frames going in the opposite direction. This is a trivial way to make the correspondences 
established in Theorems~\ref{th:jul17}, \ref{th:n5} and~\ref{the:jan23} functorial.
One would hope to obtain some kinds of functors between categories defined by morphisms of Ehresmann quantal frames, but since morphisms between restriction quantal frames (and thus also between wider classes of Ehresmann quantal frames) are not in general frame maps, they do not define functors between localic categories. The only type of morphisms defined which are frame maps are proper $\wedge$-morphisms between restriction quantal frames.  In this subsection we shall prove that proper $\wedge$-morphisms do give rise to functors, and these are special functors that preserve the \'etale structure of the \'etale category. This will lead to a genuine duality theorem. To establish this duality, we need to make a careful study of the properties of morphisms between restriction quantal frames. This analysis will be also used in  Section~\ref{s:adjun} where we show that all the four types of morphisms between restriction quantal frames give rise to natural classes of relational morphisms between \'etale  topological categories.

Let $C=(C_1,C_0)$ and  $D=(D_1,D_0)$ be \'{e}tale localic categories and 
$$g_1^*\colon {\mathcal O}(D)\to {\mathcal O}(C)$$
a morphism of restriction quantal frames.
We put 
$$g_0^*=d_!g_1^*u_!.$$
Observe that $g_0^*$ is a frame map. Indeed, $u_!$ is a frame map, then the restriction of  $g_1^*$ to the frame $e_{{\mathcal O}(D)}^{\downarrow}$ is a frame map whose image is in the frame $e_{{\mathcal O}(C)}^{\downarrow}$, and finally by part \ref{i:aux0a} of Lemma \ref{lem:aux1} the restriction of $d_!$ to $e_{{\mathcal O}(C)}^{\downarrow}$ is a frame map. Thus $g^*_0$ defines a locale map $g_0\colon C_0\to D_0$. We once again emphasize that $g_1^*$ does not in general define a locale map (as it is not assumed to preserve binary meets).

\begin{proposition} \label{prop:morphisms} Let $C=(C_1,C_0)$ and $D=(D_1,D_0)$ be \'etale  localic  categories and $g_1^*\colon {\mathcal O}(D)\to {\mathcal O}(C)$ be a sup-lattice map. We put $g_0^*=d_!g_1^*u_!$ and assume that it is a frame map. Then $g_1^*$ is a morphism of restriction quantal frames if and only if the following conditions hold:
\begin{enumerate}[{\em (M1)}]
\item $g_{1}^{*}$ maps partial isometries to partial isometries.
\item  $d_{!} g_{1}^{*} = g_{0}^{*}d_{!}$, and dually.
\item $g_{1}^{*}u_{!} = u_{!}g_{0}^{*}$. 
\item $(g_1^*\otimes g_1^*)m^* \leq m^* g_1^*$.
\end{enumerate}
\end{proposition}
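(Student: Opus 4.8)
The plan is to match the four conditions against the four requirements that make $g_1^*$ a morphism of restriction quantal frames: that $g_1^*$ be a sup-map (already assumed), that it preserve the multiplication, the unit $e$ and the two supports $\lambda,\rho$, and that it carry partial isometries to partial isometries. Condition (M1) is literally the last of these, so it needs no argument. Throughout I write the structure maps of $C$ and $D$ with the same symbols, relying on context, and use the identities $\lambda=u_!d_!$, $\rho=u_!r_!$ together with $d_!u_!=r_!u_!=\mathrm{id}$, the latter obtained from $du=ru=\mathrm{id}$ by passing to left adjoints.

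First I would dispose of the supports and the unit, showing that under the standing hypotheses (M2) and (M3) together are equivalent to ``$g_1^*$ preserves $\lambda$, $\rho$ and $e$''. For the reverse implication one computes $\lambda g_1^*=u_!d_!g_1^*=u_!g_0^*d_!=g_1^*u_!d_!=g_1^*\lambda$, using (M2) then (M3), and symmetrically for $\rho$ via the dual of (M2); preservation of $e=u_!(1_{O(C_0)})$ then follows from (M3) and the fact that $g_0^*$, being a frame map, preserves the top. For the forward implication one applies $d_!$ to $\lambda g_1^*=g_1^*\lambda$ and uses $d_!u_!=\mathrm{id}$ to read off (M2), and applies $u_!$ to the same identity to read off (M3), the dual of (M2) coming from $\rho$. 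I would record here the consequence, used below, that since $g_0^*$ is a frame map and $g_1^*|_{e^{\downarrow}}=u_!g_0^*d_!|_{e^{\downarrow}}$ by (M3), the restriction of $g_1^*$ to projections is a frame map, hence preserves meets of projections, i.e.\ products of projections.

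The substantial point is the equivalence of (M4) with multiplicativity. The forward direction is easy: if $g_1^*(xy)=g_1^*(x)g_1^*(y)$, then for every decomposition $xy\le c$ one has $g_1^*(x)g_1^*(y)=g_1^*(xy)\le g_1^*(c)$, so $g_1^*(x)\otimes g_1^*(y)\le m^*g_1^*(c)$, and joining over all such $x,y$ gives (M4). For the converse, applying the sup-map $m_!$ to (M4) and using $m_!m^*\le\mathrm{id}$ yields $\bigvee_{xy\le c}g_1^*(x)g_1^*(y)\le g_1^*(c)$; taking $c=ab$, $x=a$, $y=b$ gives $g_1^*(a)g_1^*(b)\le g_1^*(ab)$ for all $a,b$. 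The hard part, and the main obstacle, is the reverse inequality $g_1^*(ab)\le g_1^*(a)g_1^*(b)$, which (M4) does not supply directly and which must exploit the \'etale structure. Since every element is a join of partial isometries, $g_1^*$ is a sup-map, and multiplication distributes over joins, it suffices to prove $g_1^*(st)=g_1^*(s)g_1^*(t)$ for partial isometries $s,t$.

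To settle the partial-isometry case I argue in two steps. First, for a partial isometry $s$ and a projection $f$, I claim $g_1^*(sf)=g_1^*(s)g_1^*(f)$: both sides are partial isometries (partial isometries form an order ideal and $g_1^*$ preserves them by (M1)), and the inequality above gives $g_1^*(s)g_1^*(f)\le g_1^*(sf)$, whence $g_1^*(s)g_1^*(f)\le' g_1^*(sf)$ because $\le$ and $\le'$ coincide on partial isometries. Comparing left supports, $\lambda(g_1^*(sf))=g_1^*(\lambda(sf))=g_1^*(\lambda(s)\wedge f)=g_1^*(\lambda(s))\wedge g_1^*(f)$, while $\lambda(g_1^*(s)g_1^*(f))=\lambda(g_1^*(\lambda(s))g_1^*(f))=g_1^*(\lambda(s))\wedge g_1^*(f)$ as well, using (EQ4), preservation of $\lambda$, and that products of projections are meets; Lemma~\ref{lem:apr4}\eqref{i:apr4b} then forces equality, and the dual handles $g_1^*(fs)=g_1^*(f)g_1^*(s)$. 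Second, for partial isometries $s,t$ the product $st$ is again a partial isometry and $g_1^*(s)g_1^*(t)\le' g_1^*(st)$; here I compare right supports, reducing $\rho(g_1^*(s)g_1^*(t))=\rho(g_1^*(s)g_1^*(\rho(t)))$ to the already-proved product $g_1^*(s)g_1^*(\rho(t))=g_1^*(s\rho(t))$ and using $\rho(st)=\rho(s\rho(t))$, so that $\rho(g_1^*(s)g_1^*(t))=\rho(g_1^*(st))$ and Lemma~\ref{lem:apr4}\eqref{i:apr4b} again gives equality. A final technical point to verify is that $g_1^*\otimes g_1^*$ descends to the tensor over $e^{\downarrow}$; this is automatic once multiplicativity on projections is in hand, and in the converse direction the forward inequality is extracted from (M4) working only with the well-defined generators $g_1^*(x)\otimes g_1^*(y)$, so no circularity arises.
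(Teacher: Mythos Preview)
Your proof is correct and follows essentially the same strategy as the paper: translate (M2)--(M3) into preservation of $\lambda,\rho,e$, use Lemma~\ref{lem:march15} to reduce (M4) to the sub-multiplicativity $g_1^*(a)g_1^*(b)\le g_1^*(ab)$, and then upgrade this to equality on partial isometries via the ``equal supports plus comparability'' trick of Lemma~\ref{lem:apr4}\eqref{i:apr4b}.

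There is a mild structural difference worth noting. The paper first treats \emph{restricted} products $a\cdot b$ with $\lambda(a)=\rho(b)$ (comparing $\lambda$-values), and then reduces an arbitrary product $st$ to the restricted product $(sf)\cdot(fb)$ with $f=\lambda(s)\rho(t)$, which in turn requires the projection-product case $g_1^*(sf)=g_1^*(s)g_1^*(f)$. You instead do the projection-product case first and then handle general $st$ in one stroke by comparing $\rho$-values, reducing $\rho(g_1^*(s)g_1^*(t))$ via (EQ4) to $\rho(g_1^*(s\rho(t)))$ using Step~1. Your route is marginally more direct; the paper's has the virtue of isolating the restricted-product case, which is what the Quantalization Theorem then lifts to a global statement. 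Relatedly, the paper concludes by invoking Theorem~\ref{th:quant_psgrps} once multiplicativity on partial isometries is known, whereas you extend to all elements by hand using \'etaleness and distributivity of multiplication over joins; both are fine. Finally, your remark on the descent of $g_1^*\otimes g_1^*$ is correct but the paper sidesteps it entirely by working with the equivalent formulation of (M4) from Lemma~\ref{lem:march15}.
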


We will make use of the following result which was proved as \cite[Theorem 5.14]{Re}.

\begin{lemma}\label{lem:march15} The inequality $(g_1^*\otimes g_1^*) m^* \leq m^* g_1^*$ holds if and only if
$$g_{1}^{*}(a)g_{1}^{*}(b) \leq g_{1}^{*}(ab)$$ for all $a,b \in O(D_1)$. 
\end{lemma}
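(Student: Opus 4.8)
To keep the two categories apart, I would write $m_!^{C}\colon O(C_1)\otimes_{e^{\downarrow}} O(C_1)\to O(C_1)$ and $m_!^{D}\colon O(D_1)\otimes_{e^{\downarrow}} O(D_1)\to O(D_1)$ for the two multiplication sup-maps, with respective right adjoints $(m^{C})^{*}$ and $(m^{D})^{*}$, so that the inequality in the statement reads $(g_1^*\otimes g_1^*)(m^{D})^{*}\leq (m^{C})^{*}g_1^*$, and by definition $g_1^*(a)g_1^*(b)=m_!^{C}(g_1^*\otimes g_1^*)(a\otimes b)$ while $g_1^*(ab)=g_1^*m_!^{D}(a\otimes b)$. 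The plan is to route everything through the intermediate map inequality $m_!^{C}(g_1^*\otimes g_1^*)\leq g_1^*m_!^{D}$. First I would observe that this map inequality is equivalent to the pointwise condition $g_1^*(a)g_1^*(b)\leq g_1^*(ab)$ for all $a,b\in O(D_1)$: evaluating the map inequality on the elementary tensor $a\otimes b$ recovers the pointwise one, and conversely, since the elementary tensors join-generate $O(D_1)\otimes_{e^{\downarrow}}O(D_1)$ and both $m_!^{C}(g_1^*\otimes g_1^*)$ and $g_1^*m_!^{D}$ are sup-maps, the pointwise inequality propagates to all joins.

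The core step is then the equivalence of the two map inequalities
$$m_!^{C}(g_1^*\otimes g_1^*)\leq g_1^*m_!^{D} \quad\Longleftrightarrow\quad (g_1^*\otimes g_1^*)(m^{D})^{*}\leq (m^{C})^{*}g_1^*,$$
which is precisely the calculus of mates for the adjunctions $m_!^{C}\dashv (m^{C})^{*}$ and $m_!^{D}\dashv (m^{D})^{*}$. For the forward direction I would precompose the left-hand inequality with $(m^{D})^{*}$ and use the counit $m_!^{D}(m^{D})^{*}\leq \mathrm{id}$ to get $m_!^{C}(g_1^*\otimes g_1^*)(m^{D})^{*}\leq g_1^*m_!^{D}(m^{D})^{*}\leq g_1^*$; transposing across $m_!^{C}\dashv (m^{C})^{*}$ then yields $(g_1^*\otimes g_1^*)(m^{D})^{*}\leq (m^{C})^{*}g_1^*$. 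For the converse I would apply $m_!^{C}$ to the right-hand inequality and use the counit $m_!^{C}(m^{C})^{*}\leq\mathrm{id}$ to obtain $m_!^{C}(g_1^*\otimes g_1^*)(m^{D})^{*}\leq g_1^*$, then insert the unit $\mathrm{id}\leq (m^{D})^{*}m_!^{D}$ and use monotonicity to conclude $m_!^{C}(g_1^*\otimes g_1^*)\leq m_!^{C}(g_1^*\otimes g_1^*)(m^{D})^{*}m_!^{D}\leq g_1^*m_!^{D}$, as required.

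Everything above is formal once the maps are in place, so the genuine point requiring care --- and the main obstacle --- is making sure that $g_1^*\otimes g_1^*$ is a well-defined sup-map between the \emph{balanced} tensor products $O(D_1)\otimes_{e^{\downarrow}}O(D_1)$ and $O(C_1)\otimes_{e^{\downarrow}}O(C_1)$, that is, that it respects the defining relations $af\otimes b=a\otimes fb$ for $f\leq e$. I would dispose of this by rewriting these relations as meets via the Stability Lemma (Lemma~\ref{le:stability}) and invoking the compatibility of $g_1^*$ with the $e^{\downarrow}$-bimodule structure (it sends projections to projections and intertwines multiplication by a projection with the corresponding meet); this is the standing situation under which $g_1^*\otimes g_1^*$ is even meaningful in Proposition~\ref{prop:morphisms}. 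Once this is granted, the only remaining risk is purely clerical: keeping the directions of the units $\mathrm{id}\leq (m)^{*}m_!$ and counits $m_!(m)^{*}\leq\mathrm{id}$ straight across the two transpositions.
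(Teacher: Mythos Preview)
Your argument is correct: the equivalence is precisely the calculus of mates for the adjunctions $m_!^{C}\dashv (m^{C})^{*}$ and $m_!^{D}\dashv (m^{D})^{*}$, and you have carried out both transpositions cleanly. The reduction to elementary tensors via join-generation is also fine. The paper itself does not prove this lemma but simply attributes it to \cite[Theorem~5.14]{Re}, so there is no alternative approach in the text to compare against; your mates argument is the natural one and is almost certainly what Resende does as well.

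Your flag on the well-definedness of $g_1^*\otimes g_1^*$ over the \emph{balanced} tensor product is apt and is indeed the only genuine subtlety. Note, however, that your proposed disposal via the Stability Lemma alone is not quite enough: rewriting $af=a\wedge d^*(f)$ does not help directly, since $g_1^*$ is only a sup-map and need not preserve that meet. What one really needs is $g_1^*(af)=g_1^*(a)\,g_1^*(f)$ for projections $f$, and this follows in the ambient setting of Proposition~\ref{prop:morphisms} from (M1)--(M3): by (M3) the element $g_1^*(f)$ is a projection, and since $af\leq a$ with $a$ a partial isometry (after reducing to such by \'etaleness) and $g_1^*$ preserves partial isometries by (M1), one gets $g_1^*(af)\leq g_1^*(a)$, whence $g_1^*(af)=g_1^*(a)\lambda(g_1^*(af))=g_1^*(a)g_1^*(f)$ using (M2). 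So the lemma should be read, as you say, under the standing hypotheses that make $g_1^*\otimes g_1^*$ meaningful; you have the right instinct, just be aware that the verification leans on (M1)--(M3) rather than on the Stability Lemma by itself.
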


\begin{proof}[Proof of Proposition \ref{prop:morphisms}]  
Assume that $g_1^*$ is a sup-map, $g_0^*$ is a frame map and that axioms (M1)--(M4) are satisfied. By (M1), we have that $g_1^*$ maps partial isometries to partial isometries. It follows from
(M3) that $g_1^*u_!$ is a frame map.  In particular $g_1^*(e_{{\mathcal O}(D)})=e_{{\mathcal O}(C)}$. In other words, the restriction of  $g_1^*$ to $e_{{\mathcal O}(D)}^{\downarrow}$ is a frame map.

We show that $g_1^*$ preserves the product of partial isometries.
Let first $a,b \in {\mathcal{PI}}({\mathcal O}(D))$ be such that $\lambda (a) = \rho(b)$. Then 
$$\lambda(ab)=\lambda(\lambda(a)b)=\lambda(\rho(b)b)=\lambda(b).$$
By (M4) and Lemma~\ref{lem:march15}, we have that $g_1^*(a) g_1^*(b) \leq g_1^*(ab)$.
From (M2) and (M3) it follows that $g_1^*$ commutes with $\lambda$ and with $\rho$. Using this, we obtain
$$\lambda(g_1^*(ab))=g_1^*(\lambda(ab))=g_1^*(\lambda(b))=\lambda(g_1^*(b)).$$ 
Also,
$\lambda(g_1^*(a))=\rho(g_1^*(b))$ which implies that 
$$\lambda(g_1^*(a)g_1^*(b))=\lambda(\lambda(g_1^*(a)g_1^*(b)))=\lambda(g_1^*(b)).
$$ 
This and the inequality $g_1^*(a)g_1^*(b)\leq g_1^*(ab)$ yield the equality $g_1^*(a)g_1^*(b)= g_1^*(ab)$ by part \eqref{i:apr4b} of Lemma \ref{lem:apr4}. 

Now let $a, b \in {\mathcal{PI}}({\mathcal O}(D))$ be arbitrary.
Put $f = \lambda(a)\rho(b)$. 
Then $ab = (af)(fb)$ and the latter is a restricted product.
Thus 
\begin{equation}\label{eq:gaga1} g_1^*(ab) = g_1^*(af)g_1^*(fb).
\end{equation}
By part \eqref{i:aux3a} of Lemma \ref{lem:aux1} we have $f=\lambda(a)\wedge \rho(b)$. We thus have
\begin{equation} \label{eq:gaga} g_1^*(f) = g_1^*(\lambda(a))\wedge g_1^*(\rho(b))= g_1^*(\lambda(a))g_1^*(\rho(b))=\lambda(g_1^*(a))\rho(g_1^*(b)).
\end{equation}
We prove that $g_1^*(af) = g_1^*(a)g_1^*(f)$.
We have that $g_1^*(af), g_1^*(a)g_1^*(f) \leq g_1^*(a)$.
But $g_1^*(a)$ is a partial isometry and $\lambda(g_1^*(af)) = g_1^*(f) = \lambda(g_1^*(a)g_1^*(f))$.
Thus $g_1^*(af) = g_1^*(a) g_1^*(f)$ by part \eqref{i:apr4b} of Lemma \ref{lem:apr4}.
This equality, together with \eqref{eq:gaga1} and \eqref{eq:gaga}, imply that
\begin{multline*}
g_1^*(ab)=g_1^*(af)g_1^*(fb)=g_1^*(a) g_1^*(f)g_1^*(f)g_1^*(b)=\\g_1^*(a)\lambda(g_1^*(a))\rho(g_1^*(b))g_1^*(b)=g_1^*(a)g_1^*(b).
\end{multline*}
We have therefore proved that $g_1^*$ restricted to the partial isometries is a  homomorphism of complete restriction monoids.  By Theorem \ref{th:quant_psgrps} it follows that $g_1^*$ is morphism of restriction quantal frames.

Conversely, assume that $g_1^*$ is a morphism of restriction quantal frames and verify that axioms (M1)--(M4) are satisfied.  That (M1) holds follows from the definition of a morphism of restriction quantal frames.

(M2) holds. 
We verify only the first  equality, since the second one follows by symmetry.  
Since $g_1^*$ is a morphism of restriction monoids, it commutes with $\lambda$, that is we have $g_1^*u_!d_!=u_!d_!g_1^*$. 
Thus we have
$d_!g_1^*=d_!u_!d_!g_1^*=d_!g_1^*u_!d_!=g_0^*d_!,$
as required. 

(M3) holds. For any $x\in O(D_0)$ we have that $g_1^*u_!(x)\leq g_1^*(e_{{\mathcal O}(D)}) = e_{{\mathcal O}(C)}$. It follows that $u_!g_0^*=u_!d_!g_1^*u_!=g_1^*u_!$ as $\lambda(x)=x$ whenever $x\leq e_{{\mathcal O}(C)}$.

(M4) holds by  Lemma~\ref{lem:march15}.
\end{proof}

In what follows we sometimes apply the characterisation obtained in Proposition \ref{prop:morphisms} without further mention.

\begin{lemma}\label{lem:prop_mor} Let $C=(C_1,C_0)$ and $D=(D_1,D_0)$ be \'etale localic  categories and $g_1^*\colon {\mathcal O}(D)\to {\mathcal O}(C)$ be a morphism between restriction quantal frames. 
\begin{enumerate}
\item \label{i:proper6} If  $g_1^*$ is proper then $d^*g^*_0 =g^*_1d^*$ and $r^*g^*_0 =g^*_1r^*$. 
\item \label{i:wedge6} If $g_1^*$ is a $\wedge$-morphism then $u^{*}g_{1}^{*} = g_{0}^{*}u^{*}$.
\end{enumerate}
\end{lemma}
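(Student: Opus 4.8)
The plan is to verify each identity by transporting both sides into the ambient frame ${\mathcal O}(C) = O(C_1)$ and comparing them there, using the explicit descriptions of the structure maps. Throughout I would rely on three facts about a morphism $g_1^*$ of restriction quantal frames that are available once Proposition~\ref{prop:morphisms} applies: that $g_1^*$ commutes with $\lambda$ and with $\rho$ (established in the proof of Proposition~\ref{prop:morphisms} from (M2) and (M3)), that $g_1^* u_! = u_! g_0^*$ (axiom (M3)), and that $g_1^*(e_{{\mathcal O}(D)}) = e_{{\mathcal O}(C)}$. I would also use the formula $d^*(a) = 1_{{\mathcal O}(C)}\, u_!(a)$, which is the case $b = 1$ of part~\eqref{i:aux3} of Lemma~\ref{lem:aux1}, and dually $r^*(a) = u_!(a)\, 1_{{\mathcal O}(C)}$.

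For part~\eqref{i:proper6}, I would fix $x \in O(D_0)$, set $f = u_!(x) \le e_{{\mathcal O}(D)}$, and compute both sides. Using the formula for $d^*$ together with (M3), the left-hand side becomes
\[
d^* g_0^*(x) = 1_{{\mathcal O}(C)}\, u_! g_0^*(x) = 1_{{\mathcal O}(C)}\, g_1^*(f),
\]
while the right-hand side is $g_1^* d^*(x) = g_1^*(1_{{\mathcal O}(D)} f)$. So the claim reduces to the single equation $1_{{\mathcal O}(C)}\, g_1^*(f) = g_1^*(1_{{\mathcal O}(D)} f)$. One inequality comes from properness and Lemma~\ref{lem:march15}: since $g_1^*(1_{{\mathcal O}(D)}) = 1_{{\mathcal O}(C)}$, we have $1_{{\mathcal O}(C)}\, g_1^*(f) = g_1^*(1_{{\mathcal O}(D)})\, g_1^*(f) \le g_1^*(1_{{\mathcal O}(D)} f)$. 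For the reverse inequality I would note $\lambda(1_{{\mathcal O}(D)} f) = f$ by (EQ4), $\lambda(1)=e$, and (EQ2); then, since $g_1^*$ commutes with $\lambda$ and $a = a\lambda(a)$ for all $a$,
\[
g_1^*(1_{{\mathcal O}(D)} f) = g_1^*(1_{{\mathcal O}(D)} f)\, g_1^*(f) \le g_1^*(1_{{\mathcal O}(D)})\, g_1^*(f) = 1_{{\mathcal O}(C)}\, g_1^*(f),
\]
using $1_{{\mathcal O}(D)} f \le 1_{{\mathcal O}(D)}$ and properness once more. This gives equality, and the identity for $r^*$ follows by the left-right symmetric argument with $\rho$ in place of $\lambda$.

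For part~\eqref{i:wedge6}, since $u_!$ is injective it suffices to compare $u_! u^* g_1^*$ with $u_! g_0^* u^*$ as maps $O(D_1) \to O(C_1)$. Applying part~\eqref{i:aux_new} of Lemma~\ref{lem:aux1} (so $u_! u^*(y) = y \wedge e$) and (M3), for $y \in O(D_1)$ these become $g_1^*(y) \wedge e_{{\mathcal O}(C)}$ and $g_1^*(y \wedge e_{{\mathcal O}(D)})$ respectively. The $\wedge$-morphism hypothesis then closes the gap, since $g_1^*(y \wedge e_{{\mathcal O}(D)}) = g_1^*(y) \wedge g_1^*(e_{{\mathcal O}(D)}) = g_1^*(y) \wedge e_{{\mathcal O}(C)}$.

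The crux in both parts is the same: a general morphism of restriction quantal frames preserves only products and meets of partial isometries, so it merely satisfies $g_1^*(a) g_1^*(b) \le g_1^*(ab)$ and need not respect $\wedge$ or the top. The main obstacle is thus to upgrade this inequality to an equality involving the element $1$, which is not a partial isometry; this is exactly where properness (in part~\eqref{i:proper6}) and the $\wedge$-morphism condition (in part~\eqref{i:wedge6}) become indispensable. Once the reduction to $1_{{\mathcal O}(C)}\, g_1^*(f) = g_1^*(1_{{\mathcal O}(D)} f)$, respectively to the meet identity, is in place, the remaining steps are routine applications of the adjunction formulas.
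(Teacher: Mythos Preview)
Your proof is correct, but it rests on a misconception that makes part~\eqref{i:proper6} harder than it needs to be. You write that ``a general morphism of restriction quantal frames \ldots\ merely satisfies $g_1^*(a)g_1^*(b) \le g_1^*(ab)$'', but in fact such a morphism is \emph{by definition} a quantale morphism, so it preserves \emph{all} products exactly, not just products of partial isometries. The inequality you invoke from Lemma~\ref{lem:march15} is always an equality here.

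The paper exploits this. Its proof of part~\eqref{i:proper6} is a single line: since $g_1^*$ preserves multiplication and is proper,
\[
g_1^* d^*(x) = g_1^*\bigl(1_{{\mathcal O}(D)}\, u_!(x)\bigr) = g_1^*(1_{{\mathcal O}(D)})\, g_1^* u_!(x) = 1_{{\mathcal O}(C)}\, u_! g_0^*(x) = d^* g_0^*(x),
\]
using (M3) and the formula $d^*(a) = 1\, u_!(a)$ from Lemma~\ref{lem:aux1}\eqref{i:aux3}. Your two-inequality argument via $\lambda$ is valid but unnecessary; properness is only needed for $g_1^*(1_{{\mathcal O}(D)}) = 1_{{\mathcal O}(C)}$, not to upgrade an inequality.

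For part~\eqref{i:wedge6} your approach and the paper's are essentially the same, just routed through different adjoints: you compose with $u_!$ and invoke (M3), while the paper applies $d_!$ to the identity $u_! u^*(b) = b \wedge e$ to obtain $u^*(b) = d_!(b \wedge e)$ and then uses (M2). Both arguments reduce the claim to $g_1^*(y \wedge e_{{\mathcal O}(D)}) = g_1^*(y) \wedge e_{{\mathcal O}(C)}$, which is where the $\wedge$-morphism hypothesis enters.
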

\begin{proof} \eqref{i:proper6}
We prove the first equality and the second one follows by symmetry.
Let $x \in O(D_{0})$.
By part \eqref{i:aux3} of Lemma~\ref{lem:aux1} we have
$d^{*}(x) = 1_{{\mathcal O}(D)}u_{!}(x)$.
Thus
$$g_{1}^{*}d^{*}(x) 
= g_{1}^{*}(1_{{\mathcal O}(D)} u_{!}(x))
=
1_{{\mathcal O}(C)} g_{1}^{*}(u_{!}(x))
=
1_{{\mathcal O}(C)}u_{!} (g_{0}^{*}(x))
=
d^{*}g_{0}^{*}(x),$$ 
where we have used (M3), the fact that $g_1^*$ preserves multiplication and the assumption that $g_{1}^{*}(1_{{\mathcal O}(D)}) = 1_{{\mathcal O}(C)}$.

\eqref{i:wedge6}
Let $b \in O(C_{1})$. By part \eqref{i:aux_new} of Lemma \ref{lem:aux1} we have that
$u_!u^*(b)=b\wedge e_{{\mathcal O}(C)}$. Applying $d_!$ to both parts and using $du=id$ we obtain
$u^*(b)=d_!(b\wedge e_{{\mathcal O}(C)})$.
For any $a\in O(C_1)$ we now calculate, using the fact that $g_{1}^{*}$ preserves meets together with (M2) and (M3),
$$u^{*}g_{1}^{*}(a) = d_{!}(g_{1}^{*}(a) \wedge e_{{\mathcal O}(C)}) = d_{!}g_{1}^{*}(a \wedge e_{{\mathcal O}(D)})
= g_{0}^{*}d_{!}(a \wedge e_{{\mathcal O}(D)}) = g_{0}^{*}u^{*}(a).$$
\end{proof}

We come to a characterization of proper $\wedge$-morphisms between restriction quantal frames as functors between their corresponding \'etale localic categories going in the opposite direction that satisfy two additional conditions.

\begin{proposition}\label{prop:fun} Let $C=(C_1,C_0)$ and $D=(D_1,D_0)$ be \'etale  localic categories and $g_1^*\colon {\mathcal O}(D)\to {\mathcal O}(C)$ be a map. We put $g_0^*=d_!g_1^*u_!$. Then $g_1^*$ is a proper $\wedge$-morphism of restriction quantal frames if and only if the pair  $(g^*_{1},g^*_{0})$ defines a functor from $C$ to $D$
and conditions (M1) and (M2) hold.
\end{proposition}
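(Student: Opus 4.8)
The plan is to translate the functor axioms into statements about the frame maps and then match them against the conditions in Proposition~\ref{prop:morphisms} and Lemma~\ref{lem:prop_mor}. Passing to frames (reversing all arrows), the internal functor axioms (Fun1)--(Fun4) for the pair $(g_1,g_0)$ become, respectively,
$$
(\mathrm{F1})\ m^{*}g_1^{*}=(g_1^{*}\otimes g_1^{*})m^{*},\quad
(\mathrm{F2})\ g_1^{*}d^{*}=d^{*}g_0^{*},\quad
(\mathrm{F3})\ g_1^{*}r^{*}=r^{*}g_0^{*},\quad
(\mathrm{F4})\ g_0^{*}u^{*}=u^{*}g_1^{*},
$$
each structure map being read in the appropriate category. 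The point to observe at the outset is that $(\mathrm{F2})$--$(\mathrm{F4})$ are exactly the equations appearing in Lemma~\ref{lem:prop_mor}, so the only genuinely new equation is the multiplicative one $(\mathrm{F1})$; its inequality $\leq$ is, by Lemma~\ref{lem:march15}, nothing but condition (M4).

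\medskip
\noindent\emph{Forward direction.} Suppose $g_1^{*}$ is a proper $\wedge$-morphism. Then it is a frame map, and by Proposition~\ref{prop:morphisms} it satisfies (M1)--(M4); in particular (M1) and (M2) hold. Properness together with part~\eqref{i:proper6} of Lemma~\ref{lem:prop_mor} yields $(\mathrm{F2})$ and $(\mathrm{F3})$, and the $\wedge$-property with part~\eqref{i:wedge6} of the same lemma yields $(\mathrm{F4})$. It remains to upgrade (M4) to the equality $(\mathrm{F1})$. Here I would first record that both sides of $(\mathrm{F1})$ preserve arbitrary joins, since $m^{*}$ does (restriction quantal frames are multiplicative) and $g_1^{*}$, $g_1^{*}\otimes g_1^{*}$ are sup-maps; moreover $g_1^{*}$ is a homomorphism of quantales, i.e. $g_1^{*}(ab)=g_1^{*}(a)g_1^{*}(b)$, because it preserves joins and is multiplicative on partial isometries, which generate $O(D)$ under joins and are closed under products. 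Thus $(g_1^{*}\otimes g_1^{*})m^{*}\leq m^{*}g_1^{*}$ by (M4), and everything reduces to the reverse inequality.

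\medskip
\noindent\emph{The main obstacle: the reverse inequality in $(\mathrm{F1})$.} Since $C$ is étale and $\otimes$ preserves joins, it suffices to show, for partial isometries $a,b\in O(C)$ with $ab\leq g_1^{*}(c)$, that $a\otimes b\leq (g_1^{*}\otimes g_1^{*})m^{*}(c)$. Using that $g_1^{*}$ is proper, I would invoke Theorem~\ref{th:quant_psgrps} (under which proper morphisms correspond to proper morphisms) to write $a=\bigvee_i a_i$ and $b=\bigvee_j b_j$ with $a_i\leq g_1^{*}(s_i)$ and $b_j\leq g_1^{*}(t_j)$ for partial isometries $s_i,t_j$ of $O(D)$. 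For a fixed pair I would pass to the composable restrictions $a_i'=a_if_{ij}$, $b_j'=f_{ij}b_j$ with $f_{ij}=\lambda(a_i)\wedge\rho(b_j)$, so that $a_i\otimes b_j=a_i'\otimes b_j'$ and $a_i'b_j'\leq g_1^{*}(c)$. Setting $c''=s_it_j\wedge c$ and using that $g_1^{*}$ is a meet-preserving quantale homomorphism commuting with $\lambda,\rho$, one gets $a_i'b_j'\leq g_1^{*}(c'')$, whence $\rho(a_i')\leq g_1^{*}(\rho(c''))$ and $\lambda(b_j')\leq g_1^{*}(\lambda(c''))$. Then $x=\rho(c'')s_i$ and $y=t_j\lambda(c'')$ satisfy $xy=c''\leq c$, $a_i'\leq g_1^{*}(x)$ and $b_j'\leq g_1^{*}(y)$, so $a_i'\otimes b_j'\leq g_1^{*}(x)\otimes g_1^{*}(y)\leq (g_1^{*}\otimes g_1^{*})m^{*}(c)$; joining over $i,j$ gives the claim. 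The delicate step, and the place where properness is indispensable, is precisely this factorisation through an element below $c$ after restricting to a composable pair.

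\medskip
\noindent\emph{Backward direction.} Conversely, suppose $(g_1^{*},g_0^{*})$ is a functor and (M1), (M2) hold. The components of a localic functor are frame maps, so $g_1^{*}$ preserves finite meets and the top element; that is, it is a $\wedge$-morphism and is proper at the frame level. To conclude via Proposition~\ref{prop:morphisms} that $g_1^{*}$ is a morphism of restriction quantal frames I need (M1)--(M4); (M1), (M2) are assumed and (M4) is the $\leq$-half of $(\mathrm{F1})$. For (M3) it suffices to prove $g_1^{*}(e_{{\mathcal O}(D)})=e_{{\mathcal O}(C)}$, since then any $x\leq e_{{\mathcal O}(D)}$ has $g_1^{*}(x)\leq e_{{\mathcal O}(C)}$, which combined with $(\mathrm{F4})$ (giving $g_0^{*}(x)=g_1^{*}(x)\wedge e_{{\mathcal O}(C)}$) yields $g_1^{*}(x)=g_0^{*}(x)$, i.e. (M3). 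Put $w=g_1^{*}(e_{{\mathcal O}(D)})$; by (M1) it is a partial isometry. From $g_0^{*}=d_!g_1^{*}u_!$ and the fact that $g_0^{*}$ preserves the top one obtains $\lambda(w)=e_{{\mathcal O}(C)}$, while $(\mathrm{F4})$ gives $w\wedge e_{{\mathcal O}(C)}=g_0^{*}(e_{{\mathcal O}(D)})=e_{{\mathcal O}(C)}$, i.e. $e_{{\mathcal O}(C)}\leq w$. As $w$ is a partial isometry this forces $e_{{\mathcal O}(C)}\leq' w$, hence $e_{{\mathcal O}(C)}=we_{{\mathcal O}(C)}=w\lambda(w)=w$. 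Thus (M3) holds and Proposition~\ref{prop:morphisms} applies, so $g_1^{*}$ is a proper $\wedge$-morphism.
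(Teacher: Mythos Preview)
Your proof is correct and follows essentially the same route as the paper. In the forward direction you reduce (F1) to the reverse inequality on partial isometries, use properness to cover $a,b$ by pieces below $g_1^*(s_i),g_1^*(t_j)$, intersect with $c$ via meet-preservation, and factor; your explicit choice $x=\rho(c'')s_i$, $y=t_j\lambda(c'')$ is a mild streamlining of the paper's factorisation $s_it_j\wedge c=s't'$ but amounts to the same computation. The backward direction---deducing $g_1^*(e_D)=e_C$ from (M1) and (F4), then (M3), with (M4) read off from (F1)---matches the paper's argument as well.
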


\begin{proof} Assume that $g_1^*\colon {\mathcal O}(D)\to {\mathcal O}(C)$ is a proper $\wedge$-morphism of restriction quantal frames. We verify that the equations (Fun1)--(Fun4), defining a functor, hold. We first note that $g_1^*$ being a frame map, defines a locale map $g_1\colon C_1\to D_1$.

To prove (Fun1), we need only, by Lemma~\ref{lem:march15}, prove that $m^*  g_1^*\leq (g_1^*\otimes g_1^*)  m^*$. 
This is equivalent to proving that
\begin{equation}\label{eq:apr6aa}
\bigvee_{pq\leq g_1^*(x)}p\otimes q \leq \bigvee_{ab\leq x}g_1^*(a)\otimes g_1^*(b)
\end{equation}
for any $x \in {\mathcal O}(D)$. Since ${\mathcal O}(D)$ is \'etale and using the fact that the maps $g_1^*$, $m^*$ and $g_{1}^{*} \otimes g_{1}^{*}$ are sup-maps, it easily follows that it is enough to prove the inequality  \eqref{eq:apr6aa} for all $x\in {\mathcal{PI}}({\mathcal O}(D))$.
Next observe that we may assume that $p,q\in {\mathcal{PI}}({\mathcal O}(C))$.
This follows from the fact that ${\mathcal O}(C)$ is \'etale and properties of the tensor products. 
Hence, it is enough to prove the following:
if $x\in {\mathcal{PI}}({\mathcal O}(D))$ and $p,q\in {\mathcal{PI}}({\mathcal O}(C))$ then 
\begin{equation}\label{eq:apr6ab}
pq\leq g_1^*(x)
\,\, \Rightarrow \,\,
p\otimes q \leq \bigvee_{ab\leq x}g_1^*(a)\otimes g_1^*(b).
\end{equation}
 
By Theorem \ref{th:quant_psgrps}, the restriction of $g_1^*$ to ${\mathcal{PI}}({\mathcal O}(D))$ is a proper morphism of complete restriction monoids. 
Hence there are partial isometries $p_i$ and $s_{i}$, $i\in I$, such that 
$$p=\bigvee_{i}p_i  \mbox{ and } p_{i} \leq g_1^*(s_i).$$ 
Similarly, there are partial isometries $q_j$ and $t_{j}$, $j\in J$, such that
$$q= \bigvee_{j}q_j \mbox{  and } q_{j} \leq g_1^*(t_{j})$$
We have the equality $p\otimes q= \bigvee_{i,j} p_i\otimes q_j$.
Thus we shall be done if we can prove the following:
if
$p_{i}q_{j} \leq g_{1}^{*}(x)$,
where $p_{i} \leq g_{1}^{*}(s_{i})$
and $q_{j} \leq g_{1}^{*} (t_{j})$
then 
$$p_i\otimes q_j  \leq g_1^*(s')\otimes g_1^*(t')$$ 
where $s't' \leq x$. 

Put $p'=p_i\rho(q_j)$ and $q'=\lambda(p_i)q_j$. 
Then 
$p_iq_j=p'q'$ 
and
$p_i\otimes q_j = p'\otimes q'$.
Clearly, $p_{i}q_{j} \leq g_{1}^{*}(s_{i}t_{j})$.
By assumption, the map $g_{1}^{*}$ preserves binary meets
and so
$p_{i}q_{j} \leq g_{1}^{*}(s_{i}t_{j} \wedge x)$.
Now $s_it_j\wedge x \leq s_it_j$, a partial isometry.
It follows that 
$s_it_j\wedge x=s't'$ where $s'\leq s_i$, $t'\leq t_j$ and we may assume that $\lambda (s') = \rho (t')$.
We have that
$p'q' \leq g_{1}^{*}(s't')$.
Since $g_1^*$ commutes with $\lambda$ and $\lambda(s't')=\lambda(t')$, we have $\lambda(g_1^*(s't'))=\lambda(g_1^*(t'))$.
Hence $\lambda(q') \leq \lambda(g_1^*(t'))$.
We therefore have that
$$q' = q_j\lambda(q') \leq g_1^*(t_j)\lambda(g_1^*(t')) = g_1^*(t_j\lambda(t')) = g_{1}^{*}(t').$$
By symmetry, we have that $p' \leq g_{1}^{*}(s')$.
We obtain $p_i\otimes q_j = p'\otimes q' \leq g_1^*(s')\otimes g_1^*(t')$ 
where $s't'=s_it_j\wedge x\leq x$, which completes the proof of \eqref{eq:apr6ab}.

(Fun2), (Fun3) and (Fun4)  hold by Lemma~\ref{lem:prop_mor}.  

We now assume that the pair  $(g^*_{1},g^*_{0})$ defines a functor from $C$ to $D$
and that conditions (M1) and (M2) hold. We need to prove that $g_1^*$ is a morphism of restriction quantal frames.
We apply Proposition \ref{prop:morphisms}. Since $g_1^*$ is a sup-map and $g_0^*$ is a frame map we are left to show that conditions (M1)--(M4)  hold. (M1) and (M2) hold by assumption.
(M3) holds by (Fun1).  We show that (M4) holds. We first verify that $u_{!}g_{0}^{\ast}\leq g_{1}^{\ast}u_!$. By adjointness, this is equivalennt to
$g_0^*\leq u^*g_1^*u_!$. Applying $u^*g_1^*=g_0^*u^*$ this inequality reduces to $g_0^*\leq g_0^*u^*u_!$ which holds since $u^*u_!\geq id$. It follows that 
\begin{equation*}\label{eq:apr6ac} e_{{\mathcal O}(C)}=u_!g_0^*(1_{O(D_0)})\leq g_{1}^{\ast}u_!(1_{O(D_0)})=g_1^*(e_{{\mathcal O}(D)}).
\end{equation*} 
Let $x=g_1^*(e_{{\mathcal O}(D)})$. By assumption, this is a partial isometry. From $x\geq e_{{\mathcal O}(C)}$ we obtain $e_{{\mathcal O}(C)}=xe_{{\mathcal O}(C)}=x$. Thus we have proved that $g_1^*(e_{{\mathcal O}(D)})=e_{{\mathcal O}(C)}$. Using the definition of $g_0^*$ the required equality reduces to $u_!d_!g_1^*u_!=g_1^*u_!$. This equality holds since for any $x\in O(D_0)$ we have that 
$$g_1^*u_!(x)\leq g_1^*u_!(1_{O(D_0)})=g_1^*(e_{{\mathcal O}(D)})=e_{{\mathcal O}(C)}
$$
and if $x\leq e_{{\mathcal O}(C)}$ then $\lambda(x)=x$.
\end{proof}

\subsection{Localic sheaf functors and the duality}
Let $Q$ be an Ehresmann quantal frame and let $\leq$ be the partial order underlying the sup-lattice structure on $Q$.
An element $x\in Q$ is called a {\em right} (resp. {\em left}) {\em partial isometry} if the inequality $a\leq x$ implies that $a=x\lambda(a)$ (resp. $a=\rho(a)x$). It follows that $x$ is a partial isometry if and only if it is both a right and a left partial isometry.

We show first how to construct two sheaves of sets from $Q$. Let $f\leq e$.
Define 
$$\Sigma(\lambda)_{f} = \{ a \in \mathcal{RPI}(Q) \colon \lambda (a) = f \}.$$ 
If $g \leq f$ define a map $\Sigma(\lambda)_{f} \rightarrow \Sigma(\lambda)_{g}$ by $a \mapsto ag = a|^{f}_{g}$.
It is easy to check that in this way we obtain a presheaf of sets $\Sigma(\lambda)$
over the frame $e^{\downarrow}$.
In a similar way, we may also construct a presheaf of sets $\Sigma(\rho)$ using the structure map $\rho$.
The argument in the proof of Proposition~\ref{prop:pi6} shows that both of these presheaves are in fact sheaves.

\begin{remark} {\em Observe that $\lambda \colon \mathcal{PI}(Q) \rightarrow e^{\downarrow}$ defines a presheaf
of sets but not a sheaf.
This is because a family of partial isometries that is a matching family with respect to $\lambda$ is not in general a compatible family.}
\end{remark}

We now translate these definitions into the language of  \'etale localic categories.
We first make the following definitions. Let  $C=(C_1,C_0)$ be an \'etale localic category. Then $x\in O(C_1)$ will be called a {\em local bisection} (resp. a {\em right local section}, a {\em left local section}) if $x$, considered as an element of ${\mathcal O}(C)$, is a partial isometry (resp. right partial isometry, left partial isometry).  
 
Let $C=(C_1,C_0)$ and $D=(D_1,D_0)$ be  \'etale localic categories. 
For each $a\in O(C_0)$, define
$S_C(d)_{a}$ to be the set of all right local sections  $x\in O(C_1)$ satisfying the condition $d_!(x)=a$.
Then the assignment $a\mapsto S_{C}(d)_{a}$ defines a sheaf of sets $S_C(d)$ over the frame $O(C_0)$. 
In a similar way, we may use the map $r_{!}$ to define a sheaf of sets $S_{C}(r)$.

\begin{lemma}\label{lem:sheaf_a6} Let $g_1^*\colon {\mathcal O}(D)\to {\mathcal O}(C)$ be a morphism of restriction quantal frames. Then $g_1^*$ induces a {\em sheaf morphism} from $S_D(d)$ to $S_C(d)$ over $g_0^*$ in the sense that the following requirements are satisfied:
\begin{enumerate}[{\em (ShM1)}]
\item $g_1^*$ maps right local sections to right local sections.
\item $d_!g_1^*=g_0^*d_!$.
\item If $a\in O(D_1)$ is a right local section and $f\in O(D_0)$ are such that $f\leq d_!(a)$ then 
$$g_1^*(a|^{d_!(a)}_f)=g_1^*(a)|^{g_0^*(d_!(a))}_{g_0^*(f)}.$$
\end{enumerate}
Similarly, $g_1^*$ induces a sheaf morphism from $S_D(r)$ to $S_C(r)$.
\end{lemma}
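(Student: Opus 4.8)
The plan is to verify the three requirements (ShM1)--(ShM3) in turn, noting that the second and third are quick consequences of the characterization in Proposition~\ref{prop:morphisms} once the first is established. Indeed, (ShM2), the identity $d_!g_1^*=g_0^*d_!$, is literally axiom (M2), so nothing new is needed there. For (ShM3), recall that under the identification of $O(C_0)$ with $e^{\downarrow}$ via $u_!$ and $d_!$ (part~\eqref{i:aux0a} of Lemma~\ref{lem:aux1}), the restriction of a right local section $a$ to $f\leq d_!(a)$ is the product $a\,u_!(f)$. Hence, using that $g_1^*$ preserves multiplication together with (M3),
$$g_1^*\bigl(a|^{d_!(a)}_{f}\bigr)=g_1^*(a\,u_!(f))=g_1^*(a)\,g_1^*(u_!(f))=g_1^*(a)\,u_!(g_0^*(f)).$$
Once (ShM1) guarantees that $g_1^*(a)$ is again a right local section and (ShM2) gives $d_!(g_1^*(a))=g_0^*(d_!(a))$, the right-hand side is by definition $g_1^*(a)|^{g_0^*(d_!(a))}_{g_0^*(f)}$, which is (ShM3).

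The crux is therefore (ShM1): that $g_1^*$ carries right partial isometries to right partial isometries. The idea is to exploit the \'etale hypothesis to decompose a right partial isometry into genuine (two-sided) partial isometries, which $g_1^*$ is known to preserve by (M1). Given a right partial isometry $x$ of ${\mathcal O}(D)$, \'etaleness lets us write $x=\bigvee_i p_i$ with each $p_i\leq x$ a partial isometry. Since $x$ is a right partial isometry and $p_i\leq x$, we have $p_i=x\lambda(p_i)$; applying $g_1^*$ and using that it preserves products and $\lambda$ yields $g_1^*(p_i)=g_1^*(x)\lambda(g_1^*(p_i))$, with each $g_1^*(p_i)$ a partial isometry.

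To conclude that $g_1^*(x)=\bigvee_i g_1^*(p_i)$ is a right partial isometry, I would take an arbitrary $b\leq g_1^*(x)$ and prove $b=g_1^*(x)\lambda(b)$. By frame distributivity $b=\bigvee_i(b\wedge g_1^*(p_i))$; each $b_i:=b\wedge g_1^*(p_i)$ lies below the partial isometry $g_1^*(p_i)$, hence is itself a partial isometry by Lemma~\ref{le: order_ideal} and satisfies $b_i=g_1^*(p_i)\lambda(b_i)$. Substituting $g_1^*(p_i)=g_1^*(x)\lambda(g_1^*(p_i))$ and absorbing $\lambda(b_i)\leq\lambda(g_1^*(p_i))$ gives $b_i=g_1^*(x)\lambda(b_i)$. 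Summing over $i$ and using that quantale multiplication distributes over joins together with the fact that $\lambda$ is a sup-map (EQ1) gives $b=\bigvee_i g_1^*(x)\lambda(b_i)=g_1^*(x)\lambda\bigl(\bigvee_i b_i\bigr)=g_1^*(x)\lambda(b)$, as required. The main obstacle is exactly this reassembly: right partial isometries are not closed under arbitrary joins, so one cannot argue directly but must route through the two-sided partial isometries $g_1^*(p_i)$, where (M1) applies, and then check that the defining restriction identities survive passage to the join. The statement for the sheaves $S_D(r)$ and $S_C(r)$ follows by the symmetric argument, interchanging $\lambda,d$ with $\rho,r$ and right with left local sections. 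Note that only the sup-map property, preservation of multiplication and of $\lambda,\rho$, and (M1) are used, so no properness or $\wedge$-preservation is needed.
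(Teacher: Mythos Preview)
Your proof is correct. Both you and the paper decompose a right partial isometry $x$ as a join of partial isometries $p_i\leq x$ via \'etaleness and use (M1), but the reassembly differs. The paper argues structurally: since $x$ is a right partial isometry the $p_i$ satisfy $p_i\lambda(p_j)=p_j\lambda(p_i)$, so under $g_1^*$ they remain a matching family with respect to $d_!$, and the sheaf property of $S_C(d)$ (established just before the lemma) gives that their join is a right local section. You instead verify the defining condition $b=g_1^*(x)\lambda(b)$ directly for arbitrary $b\leq g_1^*(x)$ using frame distributivity and the identities $g_1^*(p_i)=g_1^*(x)\lambda(g_1^*(p_i))$; this is more elementary in that it avoids invoking the sheaf property and the compatibility machinery. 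For (ShM3) the paper exploits (ShM1) again, using that $g_1^*(a|^{d_!(a)}_f)\leq g_1^*(a)$ and the right-partial-isometry identity to write it as $g_1^*(a)u_!d_!g_1^*(a|^{d_!(a)}_f)$, then applies (ShM2); your route through preservation of multiplication and (M3) is equally valid and a touch more direct. Both arguments use only the quantale-morphism structure, (M1)--(M3), and the sup-map property of $\lambda$, so neither needs properness or $\wedge$-preservation.
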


\begin{proof} We first show that (ShM1) holds.  Let $x\in {\mathcal O}(D)$ be a right partial isometry. Then $x=\bigvee A$ where $A$ is a compatible family of partial isometries. The elements $g_1^*(a)$, $a\in A$, are  partial isometries by (M1). If $a\sim b$ then $g_1^*(a)\sim g_1^*(b)$ as $\sim$ is expressed in terms of $\lambda$, $\rho$ and the multiplication, and these all are preserved by $g_1^*$. It follows that the elements $g_1^*(a)$ form a matching family of right local sections with respect to $d_!$, and thus  $g_1^*(x)=\bigvee_{a\in A}g_1^*(a)$ is a right local section as a join of a matching family of right local sections. A similar result can be proved for left local sections.

Condition (ShM2) is a part of  (M2) which is satisfied by Proposition \ref{prop:morphisms}.  
We show that condition (ShM3) is satisfied.
Let $a\in O(D_1)$ be a right local section and $f\in O(D_0)$ be such that $f\leq d_!(a)$. 
Since $g_1^*(a)$ is a right local section, we have that $g_1^*(a|^{d_!(a)}_f)=g_1^*(a)u_!d_!g_1^*(a|^{d_!(a)}_f)$. 
Applying (ShM2), the latter equals 
$$g_1^*(a)u_!g_0^*d_!(a|^{d_!(a)}_f)=g_1^*(a)u_!g_0^*(f)=g_1^*(a)|^{g_0^*(d_!(a))}_{g_0^*(f)}.$$
\end{proof}

We arrive at the following characterization of proper $\wedge$-morphisms between restriction quantal frames. 

\begin{proposition}\label{prop:fun} Let $C=(C_1,C_0)$ and $D=(D_1,D_0)$ be \'etale localic  categories and $g_1^*\colon {\mathcal O}(D)\to {\mathcal O}(C)$ be a map. We put $g_0^*=d_!g_1^*u_!$. Then $g_1^*$ is a proper $\wedge$-morphism of restriction quantal frames if and only if the pair  $(g^*_{1},g^*_{0})$ defines a functor from $C$ to $D$
and $g_1^*$ induces sheaf morphisms 
from $S_{C}(d)$ to $S_{D}(d)$, and from $S_{C}(r)$ to $S_{D}(r)$ over $g_0^*$.
\end{proposition}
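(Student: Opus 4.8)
The plan is to reduce the statement to the characterization of proper $\wedge$-morphisms already obtained earlier in this subsection — namely, that $g_1^*$ is a proper $\wedge$-morphism if and only if $(g_1^*,g_0^*)$ is a functor from $C$ to $D$ and conditions (M1) and (M2) hold — combined with Lemma~\ref{lem:sheaf_a6}. Once $(g_1^*,g_0^*)$ is known to be a functor (so that, in particular, $g_1^*$ is a frame map), the entire content to be proved is that conditions (M1) and (M2) are equivalent to the requirement that $g_1^*$ induce the two sheaf morphisms of Lemma~\ref{lem:sheaf_a6}, over $d$ and, dually, over $r$.

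For the forward implication I would assume that $g_1^*$ is a proper $\wedge$-morphism. The earlier characterization then gives at once that $(g_1^*,g_0^*)$ is a functor. Since every proper $\wedge$-morphism is in particular a morphism of restriction quantal frames, Lemma~\ref{lem:sheaf_a6} applies directly and yields the induced sheaf morphism from $S_D(d)$ to $S_C(d)$, together with its dual from $S_D(r)$ to $S_C(r)$.

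For the converse I would assume that $(g_1^*,g_0^*)$ is a functor and that $g_1^*$ induces these two sheaf morphisms, and then verify (M1) and (M2), after which the earlier characterization finishes the proof. Condition (M2), i.e. $d_!g_1^*=g_0^*d_!$ and its dual $r_!g_1^*=g_0^*r_!$, is literally condition (ShM2) for the two sheaf morphisms. For (M1) I would use the fact, recorded at the start of this subsection, that an element is a partial isometry precisely when it is simultaneously a right and a left partial isometry, that is, both a right and a left local section: condition (ShM1) for the $d$-sheaf says that $g_1^*$ sends right local sections to right local sections, while (ShM1) for the $r$-sheaf says that it sends left local sections to left local sections, so the image of any partial isometry is again both, hence a partial isometry.

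The step needing the most attention is this extraction of (M1): it is essential that the hypothesis supplies two separate sheaf morphisms, one over $d$ and one over $r$, since a single one controls only one of the two one-sided section conditions and would not by itself force partial isometries to be preserved. Condition (ShM3) is not needed in this direction and may simply be ignored. Beyond this bookkeeping there is no genuine obstacle, as the substantive results — the functor correspondence and the passage between morphisms of restriction quantal frames and sheaf morphisms — are already in place.
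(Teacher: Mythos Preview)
Your proposal is correct and follows essentially the same approach as the paper: reduce to the earlier characterization (functor plus (M1)--(M2)), invoke Lemma~\ref{lem:sheaf_a6} for the forward direction, and in the converse read off (M2) from (ShM2) and its dual while obtaining (M1) from the two instances of (ShM1). Your justification of (M1) via the decomposition of partial isometries into simultaneous right and left local sections is in fact more explicit than the paper's terse ``It follows that $g_1^*$ maps partial isometries to partial isometries,'' but the underlying argument is the same.
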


\begin{proof} Let $g_1^*$ be a proper $\wedge$-morphism. Then by Proposition \ref{prop:fun}  $(g^*_{1},g^*_{0})$ defines a functor from $C$ to $D$ and by Lemma \ref{lem:sheaf_a6} it induces the required two sheaf morphisms. Conversely, assume that $(g^*_{1},g^*_{0})$ defines a functor from $C$ to $D$ and $g^*_{1}$ induces the two sheaf morphisms. It follows that $g_1^*$ maps partial isometries to partial isometries so (M1) holds and also (M2) holds by (ShM2) and its dual condition. By Lemma~\ref{lem:sheaf_a6} $g_1^*$ is a proper $\wedge$-morphism.
\end{proof}

We refer to functors $g\colon C\to D$ such that $g_1^*$ induces the two sheaf morphisms described in Proposition~\ref{prop:fun} as {\em localic sheaf functors}.
By Proposition~\ref{prop:fun}, Theorem~\ref{th:jul17} and Theorem~\ref{the:jan23} we have the following main result.

\begin{theorem}[Duality Theorem] \label{cor:main} The following categories are equivalent.
\begin{enumerate}
\item The category of complete restriction monoids and proper $\wedge$-morphisms.
\item The category of restriction quantal frames  and proper $\wedge$-morphisms.
\item The opposite of the category of \'{e}tale localic categories and localic sheaf functors. 
\end{enumerate} 
\end{theorem}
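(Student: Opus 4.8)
The plan is to assemble the three pieces that have already been established and to supply only the bookkeeping that turns them into an equivalence of categories. The equivalence of (1) and (2) is nothing more than the case $k=4$ of the Quantalization Theorem (Theorem~\ref{th:quant_psgrps}): complete restriction monoids with proper $\wedge$-morphisms and restriction quantal frames with proper $\wedge$-morphisms are already equivalent, with the functors given on objects by $S \mapsto {\mathcal L}^{\vee}(S)$ and $Q \mapsto {\mathcal{PI}}(Q)$. So I would only need to establish the equivalence of (2) and (3) and then compose.

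On objects, the Etale Correspondence Theorem (Theorem~\ref{the:jan23}), sharpened by the isomorphism $C \simeq {\mathcal C}({\mathcal O}(C))$ of the Correspondence Theorem (Theorem~\ref{th:jul17}), furnishes assignments $Q \mapsto {\mathcal C}(Q)$ and $C \mapsto {\mathcal O}(C)$ between restriction quantal frames and \'etale localic categories that are mutually inverse up to isomorphism. On morphisms, Proposition~\ref{prop:fun} is exactly what is needed: it states that a map $g_1^* \colon {\mathcal O}(D) \to {\mathcal O}(C)$ is a proper $\wedge$-morphism of restriction quantal frames if and only if the pair $(g_1^*, g_0^*)$, with $g_0^* = d_! g_1^* u_!$, is a localic sheaf functor $g \colon C \to D$. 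Since a frame-level morphism ${\mathcal O}(D) \to {\mathcal O}(C)$ induces a functor in the opposite direction $C \to D$, this correspondence is contravariant, which is precisely why the category of \'etale localic categories appears in (3) as an opposite: a proper $\wedge$-morphism $Q \to Q'$ corresponds to a localic sheaf functor ${\mathcal C}(Q') \to {\mathcal C}(Q)$, that is, to a morphism ${\mathcal C}(Q) \to {\mathcal C}(Q')$ in the opposite category.

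The only genuine work left is functoriality, and this is the step I expect to require the most care, though it is light. First one checks that the target really is a category, i.e.\ that localic sheaf functors are closed under composition; by Proposition~\ref{prop:fun} this is equivalent to the routine fact that a composite of proper $\wedge$-morphisms of restriction quantal frames is again a proper $\wedge$-morphism, since preservation of finite meets and of the top element is immediate, and being a quantale morphism, a morphism of Ehresmann monoids, and mapping partial isometries to partial isometries are each stable under composition. With this in hand, the hom-set bijection of Proposition~\ref{prop:fun} visibly sends identities to identities and reverses composition, the underlying frame maps composing contravariantly relative to the induced functors (using that the second components satisfy $g_0^* = d_! g_1^* u_!$ together with (M2) and (M3) of Proposition~\ref{prop:morphisms}). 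Hence the object assignment, which is bijective up to isomorphism, extends to a fully faithful functor from (2) to (3), yielding their equivalence. Composing with the equivalence of (1) and (2) completes the proof.
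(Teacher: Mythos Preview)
Your proposal is correct and follows essentially the same route as the paper: the paper's proof simply cites Proposition~\ref{prop:fun}, Theorem~\ref{th:jul17}, and Theorem~\ref{the:jan23}, leaving the equivalence of (1) and (2) implicit via the Quantalization Theorem, exactly as you have spelled out. Your added bookkeeping on functoriality and composition is a fair expansion of what the paper leaves to the reader.
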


\section{The involutive setting}\label{s:involutive}
In this section we add involutions to all objects we have studied so far and obtain involutive versions of our results.
This looks interesting on its own right and also makes a preparation for Section \ref{s:inv:setting} where we add a further
requirement that involutions are inversions.
 
Let $S$ be a restriction semigroup and $(-)^*:S\to S$ be a map. 
We call it an {\em involution} on $S$ provided that $(a^*)^*=a$, $(ab)^*=b^*a^*$ and $\lambda(a^*)=\rho(a)$ for all $a,b\in S$.
These axioms imply 
the equality $\rho(a^*)=\lambda(a)$ for all $a\in S$.
We call a complete restriction monoid $S$ {\em involutive} provided that it posesses an involution.

\begin{lemma} \label{lem:mon} Let $S$ be a restriction monoid and $(-)^*$ an involution on $S$. 
\begin{enumerate}
\item $g\leq e$ implies $g^*=g$.
\item $a\leq b$ implies $a^*\leq b^*$.
\end{enumerate}
\end{lemma}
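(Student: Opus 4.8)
The plan is to prove the two claims in order, using part (1) as an ingredient for part (2); the whole argument is a short manipulation of the involution identities together with the Ehresmann axioms.

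For part (1), I would begin from the observation that $g\leq e$ forces $g$ to be a projection, so that $\lambda(g)=\rho(g)=g$. The involution axiom $\lambda(a^*)=\rho(a)$ then gives $\lambda(g^*)=\rho(g)=g$, and applying axiom (ES3) to $g^*$ yields $g^*=g^*\lambda(g^*)=g^*g$. The decisive step is now to apply the involution to this last equation. Since $(ab)^*=b^*a^*$ and $(g^*)^*=g$, one computes $(g^*g)^*=g^*(g^*)^*=g^*g$, so taking adjoints of $g^*=g^*g$ produces
$$g=(g^*)^*=(g^*g)^*=g^*g=g^*,$$
which is the desired equality. The only thing to watch here is the bookkeeping of the anti-homomorphism rule together with $(g^*)^*=g$.

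For part (2), I would use the order characterization from part~\eqref{i:apr4a} of Lemma~\ref{lem:apr4}, namely that $a\leq b$ is equivalent to $a=b\lambda(a)$. Applying the involution to this equation gives $a^*=\lambda(a)^*b^*$. Now $\lambda(a)$ is a projection below $e$, so part (1) applies and gives $\lambda(a)^*=\lambda(a)$; combining this with the derived identity $\rho(a^*)=\lambda(a)$ rewrites the expression as $a^*=\rho(a^*)b^*$. By the same characterization in Lemma~\ref{lem:apr4}, this is precisely the statement $a^*\leq b^*$.

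I do not expect a genuine obstacle in either part. If anything, the subtle point lies in part (1): one must resist concluding merely that $g^*$ is idempotent and instead exploit the self-adjointness of $g^*g$ to pin down $g^*=g$ exactly. Once part (1) is in hand, part (2) follows formally by a single application of the involution to the defining relation of the natural partial order.
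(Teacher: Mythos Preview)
Your proposal is correct and follows essentially the same approach as the paper. For part (1), the paper reaches $gg^*=g^*$ by first observing $gg^*\leq g^*$ (from $g\leq e$) and then matching $\lambda$-values, whereas you obtain $g^*=g^*g$ directly from $\lambda(g^*)=\rho(g)=g$ and axiom (ES3); both arguments then finish identically by noting this product is self-adjoint and taking $(-)^*$. Part (2) is the same in both.
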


\begin{proof} (1) Since $e$ is a multiplicative identity, $e^*=e$.  
Assume that $g\in S$ is such that $gg^*\leq g^*$. 
Note that $\lambda(gg^*)=\lambda(\lambda(g)g^*)=\lambda(\rho(g^*)g^*)=\lambda(g^*)$. 
Thus we have $gg^*=g^*$. 
Therefore, $g=(g^*)^*=(gg^*)^*=gg^*=g^*$.  
If $g\leq e$ then $ga\leq a$ for all $a\in S$, and so in particular $gg^*\leq g^*$. 
Hence $g\leq e$ implies $g=g^*$.

(2) Let $a\leq b$. 
Then $a=b\lambda(a)$. 
Hence $a^*=\lambda(a)^*b^*=\lambda(a)b^*$, 
where we have $\lambda(a)^*=\lambda(a)$ by the above, since $\lambda(a)\leq e$. 
Therefore, $a^*\leq b^*$.
\end{proof}

\begin{lemma} \label{lem:inv} Assume $S$ is an involutive complete restriction monoid and $(-)^*$ is an involution on $S$.
Let $A$ be a compatible family of elements of $S$. 
Then
$$
\left( \bigvee A \right )^* = \bigvee_{a\in A} a^*.
$$
\end{lemma}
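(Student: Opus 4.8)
The plan is to exploit the single structural fact that the involution is an order-automorphism of $(S,\leq)$, and that an order-automorphism of a poset carries every supremum that happens to exist to a supremum. Indeed, Lemma~\ref{lem:mon}(2) gives $a\leq b \Rightarrow a^*\leq b^*$, and applying this to $a^*\leq b^*$ together with $(a^*)^*=a$ yields the converse implication; hence $(-)^*$ is a monotone bijection whose inverse (itself) is monotone, i.e.\ an order-automorphism. The only point that then requires care is guaranteeing that the right-hand join exists inside the complete restriction monoid $S$, which by definition supplies joins only of compatible families.

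\textbf{Existence of the right-hand join.} First I would check that $A^*:=\{a^*:a\in A\}$ is again a compatible family, so that $\bigvee A^*$ exists by completeness. Take $a,b\in A$, so $a\lambda(b)=b\lambda(a)$ and $\rho(b)a=\rho(a)b$. Applying $(-)^*$ to the second equation and using $(xy)^*=y^*x^*$ gives $a^*\rho(b)^*=b^*\rho(a)^*$; since $\rho(a),\rho(b)\leq e$ are projections, Lemma~\ref{lem:mon}(1) gives $\rho(a)^*=\rho(a)$ and $\rho(b)^*=\rho(b)$, and the identity $\lambda(x^*)=\rho(x)$ rewrites this as $a^*\lambda(b^*)=b^*\lambda(a^*)$. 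Applying $(-)^*$ to the first equation and using $\rho(x^*)=\lambda(x)$ likewise yields $\rho(b^*)a^*=\rho(a^*)b^*$. Thus $a^*\sim b^*$, so $A^*$ is compatible and $\bigvee A^*$ exists.

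\textbf{Identifying the join.} Put $b=\bigvee A$. By Lemma~\ref{lem:mon}(2), $a\leq b$ for every $a\in A$ gives $a^*\leq b^*$, so $b^*$ is an upper bound of $A^*$ and therefore $\bigvee A^*\leq b^*$. For the reverse inequality, write $c=\bigvee A^*$; then $a^*\leq c$ for all $a\in A$, and applying the monotone map $(-)^*$ gives $a=(a^*)^*\leq c^*$ for all $a\in A$, whence $b=\bigvee A\leq c^*$; applying $(-)^*$ once more gives $b^*\leq c$. Combining the two inequalities yields $b^*=c$, which is exactly the claimed equality $\left(\bigvee A\right)^*=\bigvee_{a\in A}a^*$.

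I expect the compatibility verification in the second step to be the only genuine obstacle: it is there that the involution axioms and the self-adjointness of projections (Lemma~\ref{lem:mon}(1)) actually enter. Everything else is the formal, and entirely routine, observation that an order-automorphism transports suprema to suprema.
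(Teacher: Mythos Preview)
Your proof is correct but takes a different route from the paper. The paper establishes only the inequality $\bigvee_{a\in A}a^*\leq(\bigvee A)^*$ via monotonicity, then computes $\lambda$ of both sides (using Lemma~\ref{lem:joins22} and the identity $\lambda(x^*)=\rho(x)$) to see they agree, and concludes by Lemma~\ref{lem:apr4}\eqref{i:apr4b} that the inequality is an equality. Your argument is more elementary and more general: you observe that $(-)^*$ is an order-automorphism of $(S,\leq)$ and use the routine fact that order-automorphisms carry existing suprema to suprema, getting both inequalities by symmetry. The paper's approach leans on the restriction structure (via Lemma~\ref{lem:apr4}\eqref{i:apr4b}), whereas yours would go through for any involution that is an order-isomorphism on any poset, once existence of the right-hand join is granted. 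Your explicit verification that $A^*$ is compatible is also cleaner than the paper's implicit appeal (the paper obtains compatibility of $A^*$ only indirectly, via the upper bound $(\bigvee A)^*$ and Lemma~\ref{lem:aug28}).
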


\begin{proof}  
The inequality $(\bigvee A )^* \geq \bigvee_{a\in A} a^*$ follows in a standard way from the definition of a join and
 the fact that $(-)^*$ is monotone, established above in Lemma~\ref{lem:mon}.
Applying Lemma \ref{lem:joins22}, we also have that
\begin{equation*}
\lambda \left(\left( \bigvee A \right )^*\right) = \rho \left( \bigvee A \right ) =
  \bigvee_{a\in A}\rho(a)  = \bigvee_{a\in A} \lambda(a^*) =
 \lambda\left (\bigvee_{a\in A} a^*\right).
\end{equation*}
The required equality now follows from part \eqref{i:apr4b} of Lemma \ref{lem:apr4}.
\end{proof}

Let $Q$ be a quantale. 
An {\em involution} on $Q$ is a sup-lattice endomorphism $(-)^*:Q\to Q$ such that $(a^*)^*=a$ and $(ab)^*=b^*a^*$ for any $a,b\in Q$. 
A quantale which has an involution is called an {\em involutive quantale}. 
A quantal frame which has an involution is called an {\em involutive quantal frame}.
Let $Q$ be an Ehresmann quantale equipped with an involution $(-)^*$. 
We call $Q$ an {\em involutive Ehresmann quantale} if $\lambda(a^*)=\rho(a)$ for all $a\in Q$. 
The axioms imply that $\rho(a^*)=\lambda(a)$ for all $a\in Q$.

\begin{lemma}\label{lem:frame_star} Let $(-)^*$ be an involution on an  Ehresmann quantal frame $Q$. 
Then $(a\wedge b)^*=a^*\wedge b^*$ for any $a,b\in Q$, so that $(-)^*$ is a frame map. 
Consequently, $(-)^*$ is a frame automorphism.
\end{lemma}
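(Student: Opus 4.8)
The plan is to observe that the involution $(-)^*$ is a join-preserving self-inverse bijection, hence an order automorphism of the underlying poset of $Q$, and then to invoke the elementary fact that order automorphisms preserve all existing meets. This reduces the meet-preservation claim to pure order theory, and the frame-automorphism claim to a short additional check.

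First I would record that $(-)^*$ is monotone; this does not have to be assumed separately. If $a\leq b$ then $a\vee b=b$, and since $(-)^*$ preserves joins we get $a^*\vee b^*=(a\vee b)^*=b^*$, so that $a^*\leq b^*$. The axiom $(a^*)^*=a$ says that $(-)^*$ is a bijection which is its own inverse; in particular its inverse is again monotone. Thus $(-)^*$ is an order automorphism of $Q$.

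The key step is then purely order-theoretic. Given $a,b\in Q$, from $a\wedge b\leq a$ and $a\wedge b\leq b$ monotonicity yields $(a\wedge b)^*\leq a^*$ and $(a\wedge b)^*\leq b^*$, so $(a\wedge b)^*$ is a lower bound of $\{a^*,b^*\}$. Conversely, if $c\leq a^*$ and $c\leq b^*$, then applying $(-)^*$ gives $c^*\leq a$ and $c^*\leq b$, whence $c^*\leq a\wedge b$ and, applying $(-)^*$ once more, $c\leq(a\wedge b)^*$. Therefore $(a\wedge b)^*$ is the greatest lower bound of $a^*$ and $b^*$, that is, $(a\wedge b)^*=a^*\wedge b^*$.

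Finally, to upgrade this to the claim that $(-)^*$ is a frame automorphism I would verify preservation of the remaining frame operations. Arbitrary joins are preserved by hypothesis and binary meets by the step above, so it remains only to treat the empty meet, i.e.\ the top element: since $1^*\leq 1$ and $(-)^*$ is monotone, applying it gives $1=(1^*)^*\leq 1^*$, so $1^*=1$. Hence $(-)^*$ preserves all finite meets and arbitrary joins, so it is a frame homomorphism; being a self-inverse bijection whose inverse is a frame homomorphism by the same argument, it is a frame automorphism. I expect no serious obstacle here: the only points requiring care are deriving monotonicity from join-preservation rather than presupposing it, and separately handling the top element when passing from binary meets to a genuine frame map.
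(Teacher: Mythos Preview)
Your proof is correct and follows essentially the same approach as the paper: both exploit that $(-)^*$ is a monotone self-inverse bijection (the paper uses this implicitly via the sup-lattice map hypothesis) to derive meet-preservation from the elementary order-theoretic fact that order automorphisms preserve meets. Your presentation is slightly more detailed---you explicitly derive monotonicity from join-preservation and separately verify $1^*=1$---but the underlying argument is the same.
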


\begin{proof} Let $a,b\in Q$. 
Let $\leq$ be the order that underlies the frame $Q$.
Since $a\wedge b\leq a,b$ and the involution is a sup-lattice map, we have
$(a\wedge b)^*\leq a^*\wedge b^*$. 
We also note that $a^*\wedge b^*\leq a^*,b^*$ yields
$(a^*\wedge b^*)^*\leq a\wedge b$. 
Hence, we obtain
$$
a\wedge b=((a\wedge b)^*)^*\leq (a^*\wedge b^*)^*\leq a\wedge b.
$$
The statement now follows since $(a^*)^*=a$ for any $a\in Q$.
\end{proof}

Now let $Q$ be an {\em involutive} quantale with unit $e$.  
An element $a\in Q$ is called a {\em partial unit} \cite{Re} if $aa^*, a^*a\leq e$.

\begin{lemma}\label{lem:p_un} Let $Q$ be an involutive Ehresmann quantale and $a\in Q$ a partial unit. 
\begin{enumerate}
\item \label{i:pu1} $\lambda(a)=a^*a$ and $\rho(a)=aa^*$.
\item \label{i:pu2} $a$ is a partial isometry.
\end{enumerate}
\end{lemma}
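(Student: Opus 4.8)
The plan is to establish part (1) first, because part (2) will depend on the explicit formulas $\lambda(a)=a^*a$ and $\rho(a)=aa^*$. The idea for (1) is to compute $\lambda(a^*a)$ in two ways. Since $a$ is a partial unit we have $a^*a\leq e$, so by (EQ2) the left-hand support is $\lambda(a^*a)=a^*a$. On the other hand, applying (EQ4), then the involutive identity $\lambda(a^*)=\rho(a)$, and finally (EQ3),
$$\lambda(a^*a)=\lambda(\lambda(a^*)a)=\lambda(\rho(a)a)=\lambda(a),$$
so that $\lambda(a)=a^*a$. The companion formula $\rho(a)=aa^*$ follows by the dual computation, using $aa^*\leq e$, the identity $\rho(a^*)=\lambda(a)$, and $a\lambda(a)=a$ from (EQ3).

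For part (2) I must show that $b\leq a$ implies $b\leq' a$, where $\leq$ is the sup-lattice order and $\leq'$ the natural partial order characterised in Lemma~\ref{lem:aaab}. By the definition of $\leq'$ it suffices to exhibit projections $f,g\leq e$ with $b=af=gb$, and the natural candidates are $g=aa^*$ and $f=a^*b$. Both lie in $e^{\downarrow}$: the first because $a$ is a partial unit, and the second because $b\leq a$ forces $a^*b\leq a^*a=\lambda(a)\leq e$, using part (1). The crux is then the single equality $b=aa^*b$, which immediately yields $b=(aa^*)b=gb$ and $b=a(a^*b)=af$, hence $b\leq' a$.

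To prove $b=aa^*b$ I argue by two inequalities. One direction is automatic: $aa^*\leq e$ together with compatibility of the order with multiplication (Lemma~\ref{lem:compat}) gives $aa^*b\leq eb=b$. The reverse inequality is where part (1) and the partial-unit hypothesis are genuinely needed: monotonicity of $\rho$ gives $\rho(b)\leq\rho(a)=aa^*$, so by (EQ3) and Lemma~\ref{lem:compat} we obtain $b=\rho(b)b\leq aa^*b$. This reverse step is the only point requiring care; everything else is a routine application of the Ehresmann-quantale axioms (EQ2)--(EQ4) and the characterisation of $\leq'$.
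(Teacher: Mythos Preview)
Your proof of part (1) is correct and matches the paper's argument exactly.

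In part (2) there is a genuine gap. You write that it suffices to find projections $f,g$ with $b=af=gb$, but the natural partial order on an Ehresmann semigroup requires $b=af=ga$, not $b=af=gb$. Your equality $b=aa^*b$ does give $b=af$ with $f=a^*b$ a projection, but rewriting it as $b=(aa^*)b$ only yields $b=gb$ with $g=aa^*$, which is not what is needed. Since an Ehresmann quantale is only an Ehresmann semigroup (not a restriction semigroup), you cannot invoke the one-sided characterisation of Lemma~\ref{lem:apr4}.

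The fix is to run your argument symmetrically: from $\lambda(b)\leq\lambda(a)=a^*a$ and (EQ3) you get $b=b\lambda(b)\leq ba^*a$, while $a^*a\leq e$ gives $ba^*a\leq b$, so $b=ba^*a=(ba^*)a$; since $ba^*\leq aa^*\leq e$, the element $g=ba^*$ is a projection and $b=ga$ as required. With this addition your approach is complete and arguably cleaner than the paper's, which first proves the auxiliary identity $ab^*=\rho(b)$ and then deduces $b=a\lambda(b)$ and $b=\rho(b)a$.
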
 

\begin{proof} (1) We have 
$a^*a=\lambda(a^*a)=\lambda(\lambda(a^*)a)=\lambda(\rho(a)a)=\lambda(a)$.
By symmetry $\rho(a)=aa^*$.

(2)
Assume $a$ is a partial unit and $b\leq a$. 
Since $(-)^*$ is a sup-lattice map, $b^*\leq a^*$. 
Since $\leq$ is compatible with the multiplication in $Q$, we obtain $ab^*\leq aa^*$. 
The latter inequality implies that $ab^*\leq' e$ since $\leq'$ and $\leq$ coincide on $e^{\downarrow}$.
Since $\rho$ is a sup-lattice map, $b^*\leq a^*$ implies $\rho(b^*)\leq \rho(a^*)$ which is equivalent to $\rho(b^*)\leq' \rho(a^*)$.
Then $b^*=\rho(b^*)b^*=\rho(a^*)b^*$. Therefore, we have
$$
ab^*=\lambda(ab^*)=\lambda(\rho(a^*)b^*)=\lambda(b^*)=\rho(b).
$$
This and (1) above yield $b=\rho(b)b=ab^*b=a\lambda(b)$. 
Similarly, we may show that $b=\rho(b)a$. 
It follows that $b\leq' a$.
\end{proof}

We also note the following useful fact.

\begin{lemma} \label{lem:aux24o} Let $a$ be a partial isometry and $a_i$, $i\in I$, be partial units such that $a=\bigvee_{i}a_i$.
Then $a$ is a partial unit.
\end{lemma}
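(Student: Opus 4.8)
The plan is to expand $aa^*$ and $a^*a$ as joins indexed by pairs, and to show that every summand lies below $e$. The mechanism is that each $a_i$ sits below the partial isometry $a$, which forces the $a_i$ to be mutually compatible, and compatibility is precisely what lets me reduce a mixed term $a_i a_j^*$ to something controlled by the single partial-unit inequality for $a_j$.

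First I would reduce to the summands. Since the involution is a sup-lattice map, $a^* = \bigvee_i a_i^*$, and since multiplication distributes over arbitrary joins in the quantale $Q$, we get $aa^* = \bigvee_{i,j} a_i a_j^*$ and $a^*a = \bigvee_{i,j} a_i^* a_j$. Hence it suffices to prove $a_i a_j^* \leq e$ and $a_i^* a_j \leq e$ for all $i,j$, where $\leq$ denotes the underlying sup-lattice order.

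Next I would establish compatibility. Each $a_i$, being a partial unit, is a partial isometry by Lemma~\ref{lem:p_un}, hence bi-deterministic; the same holds for $a$. Thus $a,a_i,a_j$ all lie in the restriction semigroup of bi-deterministic elements of $Q$. Since $a_i \leq a$ and $a$ is a partial isometry, we have $a_i \leq' a$ for the natural partial order $\leq'$, and likewise $a_j \leq' a$. Lemma~\ref{lem:aug28} then gives that $\{a_i,a_j\}$ is a compatible family, so $a_i\sim a_j$; concretely $a_i\lambda(a_j)=a_j\lambda(a_i)$ and $\rho(a_j)a_i=\rho(a_i)a_j$.

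Finally comes the key calculation. Using (EQ3) together with the involution identity $\rho(a_j^*)=\lambda(a_j)$ I write $a_j^*=\lambda(a_j)a_j^*$, whence $a_ia_j^*=a_i\lambda(a_j)a_j^*$; the first compatibility identity rewrites this as $a_j\lambda(a_i)a_j^*$. As $\lambda(a_i)\leq e$ and the order is compatible with multiplication (Lemma~\ref{lem:compat}), this is $\leq a_je\,a_j^*=a_ja_j^*\leq e$, the last step because $a_j$ is a partial unit. Thus $a_ia_j^*\leq e$ and so $aa^*\leq e$. A dual computation, writing $a_i^*=a_i^*\lambda(a_i^*)=a_i^*\rho(a_i)$, applying the second compatibility identity $\rho(a_i)a_j=\rho(a_j)a_i$, and using $a_i^*a_i\leq e$, yields $a_i^*a_j\leq e$ and hence $a^*a\leq e$; therefore $a$ is a partial unit. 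I expect the only delicate point to be this localisation step: compatibility is exactly what replaces the cross term $a_ia_j^*$ by an expression bounded by the single inequality $a_ja_j^*\leq e$. Without it, the naive substitutions $a_i=a\lambda(a_i)$ and $a_j^*=\lambda(a_j)a^*$ only bound $a_ia_j^*$ by $aa^*$, which is circular.
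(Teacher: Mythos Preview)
Your proof is correct and follows essentially the same route as the paper: reduce $aa^*$ and $a^*a$ to joins of cross terms, establish compatibility of the $a_i$ from the common upper bound $a$ (the paper does this directly via $a_i=a\lambda(a_i)$ rather than citing Lemma~\ref{lem:aug28}, but it amounts to the same thing), and then use compatibility to bound each cross term $a_ia_j^*$ by $a_ja_j^*\leq e$. The key calculation is identical to the paper's.
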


\begin{proof} We have $a_i=a\lambda(a_i)$ for all $i\in I$. 
Then $a_i\lambda(a_j)=a\lambda(a_i)\lambda(a_j)=a_j\lambda(a_j)$. 
This shows that elements $a_i$, $i\in I$, form a compatible family.
Then 
$$
a_ia_j^*=a_i\rho(a_j^*)a_j^*=a_i\lambda(a_j)a_j^*=a_j\lambda(a_i)a_j^*\leq a_ja_j^*\leq e.
$$
Similarly, $a_i^*a_j\leq e$.
We then have
$$aa^*=\left (\bigvee_{i}a_i\right )\left (\bigvee_{i}a^*_i\right )=\bigvee_{i,j}a_ia_j^*\leq e$$
and similarly $a^*a\leq e$.
\end{proof}

Let $Q$ be an involutive Ehresmann quantale.  
Denote by ${\mathcal I}(Q)$ the set of partial units of $Q$.

\begin{proposition} \label{prop:inv22} 
Let $Q$ be an involutive Ehresmann quantale.  
Then ${\mathcal I}(Q)$ is an inverse monoid and 
its set of idempotents is $e^{\downarrow}$.
\end{proposition}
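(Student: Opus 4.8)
The plan is to show that $\mathcal{I}(Q)$ is a regular submonoid of $Q$ closed under the involution, that its idempotents are exactly the projections $e^{\downarrow}$, and that these commute; a regular monoid with commuting idempotents is an inverse monoid, and the inverse of $a$ will turn out to be $a^{*}$. I would begin with two preliminary facts. First, $e^{*}=e$: applying (EQ3) to $e^{*}$ gives $e^{*}=e^{*}\lambda(e^{*})=e^{*}\rho(e)=e^{*}e$, and taking involutions yields $e=(e^{*})^{*}=(e^{*}e)^{*}=e^{*}e=e^{*}$. The same manipulation shows $f^{*}=f$ for every $f\leq e$. With this, $\mathcal{I}(Q)$ contains $e$ (since $ee^{*}=e\leq e$) and is closed under $(-)^{*}$ (since $a^{*}(a^{*})^{*}=a^{*}a\leq e$ and dually). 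Closure under multiplication is the estimate $(ab)(ab)^{*}=a(bb^{*})a^{*}\leq aea^{*}=aa^{*}\leq e$, using that $\leq$ is compatible with multiplication and $bb^{*}\leq e$, together with the symmetric bound for $(ab)^{*}(ab)$. Hence $\mathcal{I}(Q)$ is a submonoid of $Q$ carrying the restricted involution.

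Next I would establish regularity. By Lemma~\ref{lem:p_un}(1) every partial unit satisfies $\lambda(a)=a^{*}a$, so by (EQ3) we get $aa^{*}a=a\lambda(a)=a$, and applying this to $a^{*}$ gives $a^{*}aa^{*}=a^{*}$; thus $a^{*}$ is a semigroup-theoretic inverse of $a$ and $\mathcal{I}(Q)$ is regular. The heart of the argument, and the step I expect to be the main obstacle, is the identification of the idempotents with $e^{\downarrow}$. The inclusion $e^{\downarrow}\subseteq E(\mathcal{I}(Q))$ is easy: each $f\leq e$ is idempotent (the set $e^{\downarrow}$ consists of idempotents, as recorded for Ehresmann quantales via Lemma~\ref{lem:l1}) and is a partial unit because $f^{*}=f$ and $ff^{*}=f\leq e$. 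For the reverse inclusion let $h\in\mathcal{I}(Q)$ be idempotent. Using Lemma~\ref{lem:p_un}(1) and $h^{2}=h$ I compute $\lambda(h)h=(h^{*}h)h=h^{*}h^{2}=h^{*}h=\lambda(h)$, and since $\lambda(h)\leq e$ this gives $\lambda(h)=\lambda(h)h\leq eh=h$. Now I invoke Lemma~\ref{lem:p_un}(2), which says that $h$ is a \emph{partial isometry}: the relation $\lambda(h)\leq h$ therefore upgrades to $\lambda(h)\leq' h$, i.e.\ $\lambda(h)=h\lambda(\lambda(h))=h\lambda(h)=h$ by (EQ2) and (EQ3). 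Hence $h=\lambda(h)\leq e$, so every idempotent lies in $e^{\downarrow}$. This is precisely where the interplay between the two orders $\leq$ and $\leq'$ is essential: a purely $*$-regular semigroup need not have all of its idempotents among its self-adjoint projections, and it is the lattice-order condition $hh^{*},h^{*}h\leq e$ defining partial units that forces the collapse.

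Finally, since $e^{\downarrow}$ is a commutative band (again Lemma~\ref{lem:l1}), the idempotents of $\mathcal{I}(Q)$ commute. A regular semigroup whose idempotents commute is an inverse semigroup, with the now unique inverse of $a$ being its generalized inverse $a^{*}$; together with the unit $e$ this makes $\mathcal{I}(Q)$ an inverse monoid, and its set of idempotents is $e^{\downarrow}$ by the previous paragraph, as required.
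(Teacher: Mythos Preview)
Your proof is correct and follows essentially the same strategy as the paper: establish that $\mathcal{I}(Q)$ is a regular monoid, identify its idempotents with $e^{\downarrow}$ using the partial-isometry property of partial units (Lemma~\ref{lem:p_un}), and conclude via the standard fact that a regular monoid with commuting idempotents is inverse. Your computation for the key step is in fact slightly slicker than the paper's: from $\lambda(h)h=\lambda(h)\leq h$ you pass directly to $\lambda(h)\leq' h$ and hence $\lambda(h)=h\lambda(h)=h$, whereas the paper first shows $\lambda(b)b=b$ via Lemma~\ref{lem:apr4}\eqref{i:apr4b}, then argues by symmetry that $\lambda(b)=\rho(b)$ before concluding $b=b^{*}b\leq e$.
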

\begin{proof}
It is straightforward to verify that ${\mathcal I}(Q)$ is a monoid with the unit $e$. 
By Lemma~\ref{lem:p_un} 
all elements of ${\mathcal I}(Q)$ are partial isometries. Since partial isometries are bi-deterministic, ${\mathcal I}(Q)$ is a restriction monoid.
Let $a\in {\mathcal I}(Q)$. 
From Lemma~\ref{lem:p_un} it follows that $aa^*a=a\lambda(a)=a$. 
Likewise,
$a^*aa^*=a^*$. 
Thus ${\mathcal I}(Q)$ is a regular monoid. 
We show that the set of idempotents $E({\mathcal I}(Q))$ of ${\mathcal I}(Q)$ equals $e^{\downarrow}$. Since $e$ is a multiplicative identity, $e^*=e$.
Observe that $e^{\downarrow}\subseteq E({\mathcal I}(Q))$ because if $f\leq e$ then $f^*\leq e$ and so $ff^*,f^*f\leq e$. 
We now verify the reverse inclusion. 
Let $b\in E({\mathcal I}(Q))$. 
Applying part (1) of Lemma \ref{lem:p_un},  
we have
$$
b^*b=\lambda(b)=\lambda(b^2)=\lambda(\lambda(b)b).
$$
But $\lambda(b)b\leq b$. 
Since $b$ is a partial isometry, we conclude that $\lambda(b)b\leq' b$ where $\leq'$ is the natural partial order on the Ehresmann monoid underlying $Q$. 
This inequality and the equality $\lambda(b)=\lambda(\lambda(b)b)$ imply that $b=\lambda(b)b$ by part \eqref{i:apr4b} of Lemma \ref{lem:apr4}. It follows that $\lambda(b)\geq \rho(b)$. 
Likewise, we may show that $\rho(b)\geq\lambda(b)$. 
It follows that $\lambda(b)=\rho(b)$, or, by part (1) of Lemma \ref{lem:p_un}, $b^*b=bb^*$.
We obtain $b=bb^*b=b^*bb=b^*b\leq e$, and the inclusion $E({\mathcal I}(Q))\subseteq e^{\downarrow}$ follows.
Therefore, ${\mathcal I}(Q)$ is a regular monoid whose idempotents form a commutative submonoid.
It is well known \cite{Law2} that such a monoid is inverse.
\end{proof}

Notice that the proposition above is independent of  \cite[Theorem 3.15]{Re}, since we do not assume the inequality $\rho(a)\leq aa^*$ as is in \cite{Re}.

Let $S$ be an involutive complete restriction monoid, and $(-)^*$ be an involution.
Then  $(-)^*$ can easily be extended to an involution on ${\mathcal L}^{\vee}(S)$
making it an involutive restrictive quantale frame.
Clearly, if $(-)^*$ is an involution on ${\mathcal L}^{\vee}(S)$, its restriction to partial isometries is an involution on $\eta(S)$. 
We now require morphisms to be {\em involutive morphisms}.
We arrive at the involutive analogue of Theorem \ref{th:quant_psgrps}, the~Quantalization Theorem.

\begin{theorem}[Involutive Quantalization Theorem]\label{th:inv_quant_psgrps} 
 For every $k=1,2,3,4$ the category of involutive complete restriction monoids and involutive morphisms 
of type $k$ is equivalent to 
the category of involutive restriction quantal frames and involutive morphisms of type $k$.
\end{theorem}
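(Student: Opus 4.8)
The plan is to take the equivalence of Theorem~\ref{th:quant_psgrps} essentially for free and merely verify that its two functors, $S \mapsto {\mathcal L}^{\vee}(S)$ and $Q \mapsto {\mathcal{PI}}(Q)$, are compatible with the added involutions. Since the condition that a morphism be involutive is independent of whether it is proper or a $\wedge$-morphism, the upgrade is uniform in $k$: once the bare equivalence is known to respect involutions on objects and on morphisms, the involutive equivalence for each type $k = 1,2,3,4$ follows by intersecting the relevant morphism class with the involutive morphisms.

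On objects, I would make the extension of the involution to ${\mathcal L}^{\vee}(S)$ explicit by $A^* = \{ a^* \colon a \in A \}$. That $A^*$ again lies in ${\mathcal L}^{\vee}(S)$ rests on two facts about $(-)^*$ on $S$: it is monotone (Lemma~\ref{lem:mon}), so $A^*$ remains an order ideal, and it preserves compatible joins (Lemma~\ref{lem:inv}), so $A^*$ remains $\vee$-closed. The involution identities $(A^*)^* = A$ and $(AB)^* = B^* A^*$, together with $\overline{\lambda}(A^*) = \overline{\rho}(A)$, then drop out pointwise from the corresponding identities in $S$, so ${\mathcal L}^{\vee}(S)$ becomes an involutive restriction quantal frame. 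In the reverse direction, an involution on a restriction quantal frame $Q$ is in particular a frame automorphism (Lemma~\ref{lem:frame_star}), fixes the projections, and maps partial isometries to partial isometries, so it restricts to an involution on ${\mathcal{PI}}(Q)$. Finally, the structure isomorphisms $\eta_S$ and $\varepsilon_Q$ of Theorem~\ref{th:quant_psgrps} are involutive, since $\eta_S(s^*) = (s^*)^{\downarrow} = (s^{\downarrow})^* = \eta_S(s)^*$ and $\varepsilon_Q(X^*) = \bigvee X^* = \bigl(\bigvee X\bigr)^* = \varepsilon_Q(X)^*$ by Lemma~\ref{lem:inv}.

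On morphisms, for an involutive morphism $\varphi \colon S \to T$ I would check that its extension $\overline{\varphi}(A) = \bigvee_{a \in A} \varphi(a)^{\downarrow}$ is again involutive, which is the one-line computation
\[
\overline{\varphi}(A^*) = \bigvee_{a \in A} \varphi(a^*)^{\downarrow} = \bigvee_{a \in A} \bigl(\varphi(a)^*\bigr)^{\downarrow} = \bigvee_{a \in A} \bigl(\varphi(a)^{\downarrow}\bigr)^* = \overline{\varphi}(A)^*,
\]
using $\varphi(a^*) = \varphi(a)^*$ and the fact that $(-)^*$ is a sup-map on ${\mathcal L}^{\vee}(T)$. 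Dually, the restriction of an involutive morphism of restriction quantal frames to partial isometries plainly commutes with $(-)^*$. As these verifications never touch properness or meet-preservation, they apply verbatim to all four morphism types.

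The one point needing genuine care is the well-definedness of $A \mapsto A^*$ as a map on the quotient ${\mathcal L}^{\vee}(S)$, that is, that $(-)^*$ descends through the quantic nucleus $j$ of~\eqref{eq:j29}: I must show $j(A^*) = j(A)^*$. Expanding $j$ and rewriting $(\bigvee X)^* = \bigvee_{x \in X} x^*$ by Lemma~\ref{lem:inv}, this reduces to the assertion that $(-)^*$ carries a compatible family $X \subseteq A$ to a compatible family $X^* \subseteq A^*$. That in turn is immediate because the compatibility relation $\sim$ is defined purely through $\lambda$, $\rho$ and the product, all of which are interchanged or preserved by $(-)^*$ once one knows that projections are self-adjoint. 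This single structural fact—that $(-)^*$ respects compatibility and compatible joins—is what makes every other step above routine, and I expect it to be the only place where a real argument is required.
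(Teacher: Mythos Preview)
Your proposal is correct and follows the same approach the paper takes: the paper simply remarks that the involution ``can easily be extended to an involution on ${\mathcal L}^{\vee}(S)$'' and that ``clearly'' it restricts to partial isometries, then states the theorem; you have filled in precisely those details using Lemmas~\ref{lem:mon}, \ref{lem:inv} and \ref{lem:frame_star}. One small remark: your final paragraph speaks of ``well-definedness of $A\mapsto A^*$ on the quotient,'' but you already established that $A^*\in{\mathcal L}^{\vee}(S)$ directly; the identity $j(A^*)=j(A)^*$ is really needed not for well-definedness but to verify that $(-)^*$ is a homomorphism for the nucleus-corrected operations $\cdot^j$, $\overline{\lambda}^j$, $\overline{\rho}^j$ and $\vee^j$---which is indeed the substantive point, and your argument for it (that $(-)^*$ preserves compatibility and compatible joins) is the right one.
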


We now incorporate involutions into Theorem \ref{th:jul17}, the Correspondence Theorem.
Let $C=(C_1,C_0)$ be a localic category. 
We say that it is {\em involutive} if there is an additional structure map $i \colon C_1\to C_1$ satisfying the following axioms:
\begin{enumerate}
\item[(ICat1)] $i^2=id$.
\item[(ICat2)] $di=r$.
\item[(ICat3)] $im=m \langle i\pi_2, i\pi_1\rangle$.
\end{enumerate}
The equalities $di=r$ and $i^2=id$ imply the equality $ri=d$. 
Note that $i^2=id$ implies that $i$ is semiopen with $i^*=i_!$. 
In addition, $i^*$ is a frame isomorphism, which implies that $i$ is open.

\begin{proposition}\label{prop:o24}\mbox{}
\begin{enumerate}
\item \label{i:oct211} Let $Q$ be a multiplicative Ehresmann quantal frame, and $(-)^*$ be an involution on $Q$. 
Then the associated quantal localic category ${\mathcal C}(Q)$ is involutive where the involution $i$  is defined by the frame map $(-)^*$.
\item \label{i:oct212} Let $C=(C_1,C_0)$ be an involutive quantal localic category, and let $i$ denote the involution structure map. 
Then the map $i_!=i^*:O(C_1)\to O(C_1)$ is an involution on the associated Ehresmann quantal frame ${\mathcal O}(C)$.
\end{enumerate}
\end{proposition}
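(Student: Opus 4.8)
The plan is to exploit the dictionary provided by the Correspondence Theorem, under which the structure maps of ${\mathcal C}(Q)$ are recovered from the quantale data via $d_! = \lambda$, $r_! = \rho$, $u_! = \nu$ (the inclusion of $e^{\downarrow}$) and $m_! = \mu$ (quantale multiplication). Since by the remark preceding the proposition an involution $i$ on a localic category is always open with $i^* = i_!$, the two parts become mirror images of a single translation: the three categorical axioms (ICat1)--(ICat3) correspond exactly to the three algebraic axioms $(a^*)^* = a$, $\lambda(a^*) = \rho(a)$ and $(ab)^* = b^* a^*$, and in both directions the involution is carried by the single frame automorphism $(-)^* = i^* = i_!$ on $Q = O(C_1)$.

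For part \eqref{i:oct211} I would first invoke Lemma~\ref{lem:frame_star} to see that $(-)^*$ is a frame automorphism; being its own inverse it is semiopen and open with $i^* = i_! = (-)^*$, and (ICat1) is immediate from $((-)^*)^2 = \mathrm{id}$. For (ICat2) the computation $(di)^* = i^* d^*$ gives $(di)^*(f) = (1f)^* = f^* 1^*$, so I need the auxiliary facts $1^* = 1$, $e^* = e$ (from uniqueness of the unit, applied to $a^* = a^* e^* = e^* a^*$) and $f^* = f$ for $f \leq e$ (which follows since $f^* \leq e$ gives $\lambda(f^*) = f^*$, while the involution axiom together with (EQ2) gives $\lambda(f^*) = \rho(f) = f$); these reduce $(di)^*(f)$ to $f1 = r^*(f)$. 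For part \eqref{i:oct212} the same three identities are read backwards: $(a^*)^* = a$ is $i^2 = \mathrm{id}$, and $\lambda(a^*) = \rho(a)$ is obtained by taking direct images in (ICat2), namely $\rho = u_! r_! = u_! d_! i_! = \lambda \circ (-)^*$.

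The one step requiring genuine care, and the main obstacle, is the multiplicativity axiom (ICat3), $im = m\langle i\pi_2, i\pi_1\rangle$, which must be matched with $(ab)^* = b^* a^*$. Here I would work with direct images on the generators $a \otimes b$ of $Q \otimes_{e^{\downarrow}} Q = O(C_1 \times_{C_0} C_1)$. The crucial sublemma is the identification of the direct image of the swap map $\tau = \langle i\pi_2, i\pi_1\rangle$: from $\pi_1^*(a) = a \otimes 1$, $\pi_2^*(b) = 1 \otimes b$ and the relations $\tau^*\pi_1^* = \pi_2^* i^*$, $\tau^*\pi_2^* = \pi_1^* i^*$ one computes $\tau^*(a \otimes 1) = 1 \otimes a^*$ and $\tau^*(1 \otimes b) = b^* \otimes 1$, whence, using $a \otimes b = (a \otimes 1) \wedge (1 \otimes b)$, one gets $\tau_! = \tau^* \colon a \otimes b \mapsto b^* \otimes a^*$. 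One must also note that $\tau$ is well defined into the fibre product, which uses $di = r$ and $ri = d$. Granting this, (ICat3) read through direct images becomes $i_! m_! = m_! \tau_!$, that is $(ab)^* = b^* a^*$ on every $a \otimes b$, proving both directions at once; since all maps involved are sup-maps and the elements $a \otimes b$ generate the tensor product under joins, checking on generators suffices.
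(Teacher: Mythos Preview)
Your proposal is correct and follows essentially the same approach as the paper: both treat $(-)^* = i^* = i_!$ as a frame automorphism (via Lemma~\ref{lem:frame_star}), identify the swap map $\langle i\pi_2, i\pi_1\rangle$ as the self-inverse frame map $a\otimes b \mapsto b^*\otimes a^*$, and then match (ICat1)--(ICat3) with the three involution axioms on $Q$ by passing to direct images. The only cosmetic difference is in (ICat2): the paper takes adjoints and reduces $d_! i_! = r_!$ directly to $\lambda(a^*)=\rho(a)$, whereas you compute the inverse image $(di)^*(f)=(1f)^*=f1=r^*(f)$ using the auxiliary facts $1^*=1$, $e^*=e$, $f^*=f$ for $f\leq e$; both routes are equally short.
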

\begin{proof}
\eqref{i:oct211} Let ${\mathcal C}(Q)= (C_1, C_0)$. 
Denote the involution map on $Q$ by $\alpha$. 
That is, we have put $\alpha(a)=a^*$ for all $a\in Q$. 
By Lemma \ref{lem:frame_star} the map $\alpha$ is a frame automorphism, so that its right adjoint $\alpha^*$ equals $\alpha$. 
Let $i \colon C_1\to C_1$ be the locale map defined by $\alpha$. 
Then we have $i_!=\alpha$. 
We verify that $i$ satisfies axioms for a category involution. 

(ICat1) holds.  From $\alpha^2=id$, we get $(i^*)^2=id$ which means that $i^2=id$. 

(ICat2) holds. We verify that $(di)^*=r^*$. 
This is equivalent to $(di)_!=r_!$, or $d_!i_!=r_!$. 
Bearing in mind that $du=id$ and $ru=id$, the latter is equivalent to $u_!d_!i_!=u_!r_!$. 
But this equality is simply $\lambda(a^*)=\rho(a)$ for all $a\in Q$, which is one of the axioms of an involutive Ehresmann quantale. 

(ICat3) holds.
 Let $\langle i\pi_2, i\pi_1\rangle^*=[(i\pi_2)^*, (i\pi_1)^*]: Q\otimes_{e^{\downarrow}} Q \to Q\otimes_{e^{\downarrow}} Q$ 
be the frame map that defines the locale map $\langle i\pi_2, i\pi_1\rangle$. 
Its effect, by definition, is given by
\begin{multline*}
[(i\pi_2)^*, (i\pi_1)^*](a\otimes b)=(i\pi_2)^*(a)\wedge (i\pi_1)^*(b) = \pi_2^*(i^*(a))\wedge \pi_1^*(i^*(b)) =\\ (1\otimes a^*)\wedge (b^*\otimes 1) = b^*\otimes a^*.
\end{multline*}
The latter equality implies that $[(i\pi_2)^*, (i\pi_1)^*]^2=id$, and so this map is a frame automorphism. Thus $\langle i\pi_2, i\pi_1\rangle_!=[(i\pi_2)^*, (i\pi_1)^*]$.
To show $(im)^*=(m \langle i\pi_2, i\pi_1\rangle)^*$,
it is enough to verify that
$(im)_!=(m \langle i\pi_2, i\pi_1\rangle)_!$ or that $i_!m_!=m_! [(i\pi_2)^*, (i\pi_1)^*]$. 
We have that
$i_!m_!(a\otimes b)=i_!(ab)=(ab)^*$. 
Also, 
$$
m_! [(i\pi_2)^*, (i\pi_1)^*](a\otimes b)=m_!(b^*\otimes a^*)=b^*a^*.
$$
But $(ab)^*=b^*a^*$ holds for any $a,b\in Q$ since it is one of the axioms of the involution on $Q$.

\eqref{i:oct212} This is established in a similar fashion to \eqref{i:oct211} above, reversing the order of the arguments appropriately. 
For example, we verify that $i^*(ab)=i^*(b)i^*(a)$ for all $a,b\in {\mathcal O}(C)$.
We use the axiom $im=m \langle i\pi_2, i\pi_1\rangle$. 
We first note, in a way similar to that above, that $\langle i\pi_2, i\pi_1\rangle_!=\langle i\pi_2, i\pi_1\rangle^*$ and that $\langle i\pi_2, i\pi_1\rangle^*(a\otimes b)=i^*(b)\otimes i^*(a)$. 
The category axiom now implies the equality $(im)_!=(m \langle i\pi_2, i\pi_1\rangle)_!$ which quickly reduces to the required equality $i^*(ab)=i^*(b)i^*(a)$.
 \end{proof}
 
Let $C$ and $C'$ be two involutive localic categories. 
We require that a  functor of involutive categories $f$ from $C$ to $C'$ preserve the involution. 
That is, the equality $f_1i=i'f_1$ holds, where $i$ and $i'$ are the involution maps of $C$ and $C'$, respectively.
From Proposition \ref{prop:o24} and Theorem \ref{th:jul17}, one can now easily derive the following involutive analogue of Theorem \ref{th:jul17}.
 
\begin{theorem}[Involutive Correspondence Theorem] \mbox{}
\begin{enumerate}
\item Let $Q$ be an involutive multiplicative Ehresmann quantal frame. 
Then ${\mathcal C}(Q)$ is an involutive quantal localic category and  $Q={\mathcal O}({\mathcal C}(Q))$.
 \item Let $C$ be an involutive quantal localic category. 
Then ${\mathcal O}(C)$ is an involutive multiplicative Ehresmann quantal frame and $C\cong {\mathcal C}({\mathcal O}(C))$.
\end{enumerate}
\end{theorem}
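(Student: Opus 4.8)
The plan is to derive both statements directly from the non-involutive Correspondence Theorem, Theorem~\ref{th:jul17}, together with the two transfer-of-involution results recorded in Proposition~\ref{prop:o24}. Since Theorem~\ref{th:jul17} already supplies the underlying identification $Q = {\mathcal O}({\mathcal C}(Q))$ and the isomorphism $C \simeq {\mathcal C}({\mathcal O}(C))$ of bare (non-involutive) structures, the only additional work is to verify that the involutions are respected by these identifications. I expect this compatibility check to be the one place where any care is needed, and it amounts to essentially bookkeeping.

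For part (1), I would start with an involutive multiplicative Ehresmann quantal frame $Q$ with involution $(-)^{*}$. By part~\eqref{i:oct211} of Proposition~\ref{prop:o24} the category ${\mathcal C}(Q)$ is involutive, with involution structure map $i$ characterised by $i_{!} = (-)^{*}$, so the first assertion of part (1) is immediate. Next, part~\eqref{o:1151} of Theorem~\ref{th:jul17} gives $Q = {\mathcal O}({\mathcal C}(Q))$ as Ehresmann quantal frames. It then remains only to observe that the involution which part~\eqref{i:oct212} of Proposition~\ref{prop:o24} places on ${\mathcal O}({\mathcal C}(Q))$ is exactly $i^{*} = i_{!}$, which by the construction of $i$ equals $(-)^{*}$; hence the identification $Q = {\mathcal O}({\mathcal C}(Q))$ is an isomorphism of \emph{involutive} quantal frames, as required.

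For part (2), I would begin with an involutive quantal localic category $C$ carrying involution map $i$; recall from the remark preceding Proposition~\ref{prop:o24} that $i^{2} = id$ forces $i$ to be open with $i^{*} = i_{!}$. By part~\eqref{i:oct212} of Proposition~\ref{prop:o24}, ${\mathcal O}(C)$ is an involutive multiplicative Ehresmann quantal frame with involution $i^{*} = i_{!}$, and part~\eqref{o:1152} of Theorem~\ref{th:jul17} supplies the isomorphism $\Phi = (id,\beta)\colon {\mathcal C}({\mathcal O}(C)) \to C$ of localic categories. The point to verify is that $\Phi$ intertwines the involution maps, i.e.\ that the involution $\hat{i}$ furnished on ${\mathcal C}({\mathcal O}(C))$ by part~\eqref{i:oct211} of Proposition~\ref{prop:o24} (applied to ${\mathcal O}(C)$) agrees with $i$ under $\Phi$. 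Since $\Phi_{1} = id$ on the arrow locale under the identification $O(\hat{C_{1}}) = O(C_{1})$, the required relation $\Phi_{1}\hat{i} = i\Phi_{1}$ reduces to $\hat{i} = i$; and by construction $\hat{i}_{!}$ equals the involution on ${\mathcal O}(C)$, namely $i^{*} = i_{!}$, so $\hat{i}_{!} = i_{!}$. As an open locale map is determined by its direct image map, I conclude $\hat{i} = i$, whence $\Phi$ is an isomorphism of involutive categories.

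The main obstacle, such as it is, lies entirely in the compatibility verification just described; the essential observation that makes it go through is that the two involution constructions of Proposition~\ref{prop:o24} are mutually inverse by design---the first sends $(-)^{*}$ to the open map $i$ with $i_{!} = (-)^{*}$, and the second reads this back off as $i^{*} = i_{!}$---so no information is lost in passing through ${\mathcal C}$ and ${\mathcal O}$. All the category axioms (ICat1)--(ICat3) and the functoriality of $\Phi$ are already guaranteed by Proposition~\ref{prop:o24} and Theorem~\ref{th:jul17}, so nothing further need be checked.
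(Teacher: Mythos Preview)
Your proposal is correct and follows precisely the route indicated by the paper, which simply states that the result is easily derived from Proposition~\ref{prop:o24} and Theorem~\ref{th:jul17}. In fact you supply more detail than the paper does, carefully verifying that the involutions match under the identifications; your key observation that an open locale map is determined by its direct image (since adjoints are unique) is exactly what makes the compatibility check for part~(2) go through.
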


The involutive version of  Theorem  \ref{cor:main} is the following result.

\begin{theorem}[Involutive Duality Theorem] \label{cor:main_inv} \mbox{}
The following categories are equivalent.
\begin{enumerate}
\item The category of involutive complete restriction monoids and involutive proper $\wedge$-morphisms.
\item  The category of involutive restriction quantal frames and involutive proper  $\wedge$-morphisms.
\item The opposite of the category of involutive \'{e}tale localic categories and involutive localic sheaf functors.
\end{enumerate} 
\end{theorem}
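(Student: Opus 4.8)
The plan is to upgrade Theorem~\ref{cor:main}, the Duality Theorem, by showing that each of its three equivalences restricts cleanly to the involutive subcategories, the only genuinely new point being that involution-preservation on morphisms is matched on both sides. I would organize the argument as two equivalences, (1)$\leftrightarrow$(2) and (2)$\leftrightarrow$(3)$^{\mathrm{op}}$, and then compose them.

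For the equivalence between (1) and (2) I would simply invoke the $k=4$ case of Theorem~\ref{th:inv_quant_psgrps}, the Involutive Quantalization Theorem, which already furnishes an equivalence between involutive complete restriction monoids and involutive restriction quantal frames under involutive proper $\wedge$-morphisms. The only thing to record is that the structural isomorphisms $\eta_S$ and $\varepsilon_Q$ from the proof of Theorem~\ref{th:quant_psgrps} are automatically involutive, which is immediate because the involution on ${\mathcal L}^{\vee}(S)$ is by construction the join-extension of the involution on $S$, and $\varepsilon_Q$ is defined by joins.

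The substantive step is the equivalence between (2) and the opposite of (3). On objects this is taken care of by the Involutive Correspondence Theorem together with Proposition~\ref{prop:o24}: an involution $(-)^*$ on a restriction quantal frame $Q$ is the same datum as an involution structure map $i$ on ${\mathcal C}(Q)$, through the identifications $i_! = i^* = (-)^*$. What remains is the morphism level. Let $C,D$ be involutive \'etale localic categories and let $g_1^*\colon {\mathcal O}(D)\to {\mathcal O}(C)$ be a proper $\wedge$-morphism corresponding, by Proposition~\ref{prop:fun}, to a localic sheaf functor $g=(g_1,g_0)\colon C\to D$. Writing the involutions on ${\mathcal O}(D)$ and ${\mathcal O}(C)$ as $i_D^*$ and $i_C^*$, the condition that $g_1^*$ be an involutive morphism reads $g_1^* i_D^* = i_C^* g_1^*$. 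On the other hand, $g$ preserves the involution precisely when $g_1 i_C = i_D g_1$ as locale maps, which passing to frame maps is exactly $i_C^* g_1^* = g_1^* i_D^*$. Thus the two conditions are literally the same equation, so the correspondence of Theorem~\ref{cor:main} carries involutive proper $\wedge$-morphisms to involution-preserving localic sheaf functors and back, and assembling the two equivalences yields the stated triple equivalence.

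The main obstacle, such as it is, lies not in any hard calculation but in correctly aligning the two a priori different descriptions of the involution: the unary operation $(-)^*$ on the quantal frame on one side, and the structure map $i$ of the involutive localic category on the other. Once Proposition~\ref{prop:o24} is used to identify $(-)^*$ with $i^* = i_!$, the translation of the morphism conditions collapses to the single identity $g_1^* i_D^* = i_C^* g_1^*$, and functoriality of the two assignments $S\mapsto {\mathcal L}^{\vee}(S)$ and $Q\mapsto {\mathcal C}(Q)$ is inherited unchanged from the non-involutive proofs. No essentially new estimate or construction is required beyond this bookkeeping.
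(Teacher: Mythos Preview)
Your proposal is correct and matches the paper's approach: the paper states this theorem as ``the involutive version of Theorem~\ref{cor:main}'' without an explicit proof, treating it as an immediate consequence of the Involutive Quantalization Theorem~\ref{th:inv_quant_psgrps}, the Involutive Correspondence Theorem, and Proposition~\ref{prop:o24}. Your only added content is the explicit verification that the morphism-level involution conditions coincide under the identification $i^{*}=i_{!}=(-)^{*}$, which is exactly the bookkeeping the paper leaves implicit.
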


\section{Resende's duality theorem} \label{s:inv:setting}

The goal of this section is to show that the results of Resende \cite{Re} emerge naturally from ours.
We recall the definition of a stably supported quantale from \cite{Re}. 
Let $Q$ be a unital involutive quantale with the unit $e$ and the involution $(-)^*$. 
A {\em support} on $Q$ is a sup-lattice endomorphism $\zeta: Q\to Q$ satisfying, for all $a\in Q$, the following axioms:
\begin{enumerate}[(S1)]
\item  $\zeta(a)\leq e$.
\item  $\zeta(a)\leq aa^*$.
\item $a\leq \zeta(a)a$.
\end{enumerate}
A support $\zeta$ is called {\em stable} provided that
\begin{enumerate}[(S4)]
\item  $\zeta(ab)=\zeta(a\zeta(b))$.
\end{enumerate}
A {\em stably supported quantale} is a unital involutive quantale which is equipped with a specific stable support.
A {\em stable quantal frame} is a stably supported quantale which is also a frame.

\begin{proposition}\label{prop:support} Let $Q$ be a stably supported quantale with the stable support $\zeta$. Let $\rho$ be the map $\zeta$ with the codomain restricted to $e^{\downarrow}$ and put $\lambda(a)=\rho(a^*)$ for all $a\in Q$.  
Then $Q$ is an involutive Ehresmann quantale with the right support $\rho$ and the left support $\lambda$.
\end{proposition}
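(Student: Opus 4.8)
The plan is to verify, one at a time, the defining conditions (EQ1)--(EQ4) of an Ehresmann quantale together with the involutive compatibility $\lambda(a^*)=\rho(a)$, for the data $\rho=\zeta$ and $\lambda=\zeta\circ(-)^*$, deriving each from the support axioms (S1)--(S4) and the fact that both $\zeta$ and $(-)^*$ are sup-lattice endomorphisms. Two of these are essentially free. Condition (EQ1) is immediate, since $\rho=\zeta$ preserves arbitrary joins by hypothesis and $\lambda=\zeta\circ(-)^*$ does so as a composite of join-preserving maps. The involutive compatibility also holds by construction, because $\lambda(a^*)=\zeta(a^{**})=\zeta(a)=\rho(a)$, and (S1) guarantees that $\rho$ and $\lambda$ really do take values in $e^{\downarrow}$.

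The crucial preliminary step, and the one I expect to be the \emph{main obstacle}, is to control the behaviour on $e^{\downarrow}$: I want to show that every $f\le e$ satisfies both $\zeta(f)=f$ and $f^*=f$. First note $e^*=e$ (the element $e^*$ is a left identity, since $a^*=(ae)^*=e^*a^*$ for all $a$ and $(-)^*$ is a bijection, so $e^*=e$ by uniqueness of the unit), and that $(-)^*$ is monotone as it preserves joins. Hence $f\le e$ forces $f^*\le e^*=e$, so $ff^*\le fe=f$. Then (S2) gives $\zeta(f)\le ff^*\le f$, while (S3) with (S1) gives $f\le\zeta(f)f\le\zeta(f)e=\zeta(f)$, so $\zeta(f)=f$. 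Feeding this back into (S2) yields $f=\zeta(f)\le ff^*\le f$, whence $ff^*=f$; applying the involution, $f^*=(ff^*)^*=(f^*)^*f^*=ff^*=f$. This self-adjointness of projections is exactly the device that lets the $\lambda$-versions of each axiom be reduced to the $\rho$-versions.

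With this lemma the rest is short. For (EQ2), $\rho(f)=\zeta(f)=f$ and $\lambda(f)=\zeta(f^*)=\zeta(f)=f$ for $f\le e$. For (EQ3), $\rho(a)a=\zeta(a)a=a$, since $a\le\zeta(a)a$ by (S3) and $\zeta(a)a\le ea=a$ by (S1); dually $a=a\lambda(a)$ follows by applying (S3) to $a^*$, taking involutions, and using $\zeta(a^*)^*=\zeta(a^*)$ from the preliminary step, giving $a\le a\zeta(a^*)\le ae=a$. Finally, for (EQ4) the identity $\rho(ab)=\rho(a\rho(b))$ is precisely stability (S4); and since $\lambda(ab)=\zeta(b^*a^*)$ while $\lambda(\lambda(a)b)=\zeta(b^*\zeta(a^*)^*)=\zeta(b^*\zeta(a^*))$, a second application of (S4) gives $\zeta(b^*\zeta(a^*))=\zeta(b^*a^*)$, establishing $\lambda(ab)=\lambda(\lambda(a)b)$. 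This exhausts the axioms, so $Q$ is an involutive Ehresmann quantale with right support $\rho$ and left support $\lambda$.
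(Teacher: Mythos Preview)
Your proof is correct and follows essentially the same route as the paper, which simply says that $\zeta(a)=a$ for $a\le e$ and $a=\zeta(a)a$ are easy to derive from (S1)--(S3) (citing \cite[Lemma~3.3]{Re}) and then declares the $\lambda$-axioms ``immediate'' from $\lambda(a)=\rho(a^*)$. Your version is in fact more careful: you make explicit the self-adjointness lemma $f^*=f$ for $f\le e$, which is exactly what is needed to pass from the $\rho$-versions of (EQ3) and (EQ4) to the $\lambda$-versions, a step the paper's proof glosses over.
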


\begin{proof} By (S1), $\rho$ is well defined and of course $\rho(a)=\zeta(a)$ for all $a\in Q$. 
Since $\zeta$ preserves joins, so does $\rho$. 
That $\zeta(a)=a$ for $a\leq e$ and $a=\zeta(a)a$ for all $a\in Q$ is easy to derive from (S1)--(S3), or see \cite[Lemma 3.3]{Re}. 
This and (S4) show that $\rho$ satisfies  axioms (EQ1)--(EQ4). 
From $\lambda(a)=\rho(a^*)$, $a\in Q$, it is immediate that $\lambda$ satisfies axioms (EQ1)--(EQ4), too.  
\end{proof}

Let $Q$ be an involutive Ehresmann quantale. Let $\lambda$, $\rho$ and $(-)^*$ denote its left support, right suport and involution, respectively.
Assume $Q$ is a stably supported quantale, $\zeta$ is the stable support (as is shown in \cite{Re}, $\zeta$, if exists, is unique) and $(-)^*$ is the involution. 
Then, by Proposition \ref{prop:support}, setting $\rho'(a)=\zeta(a)$ and $\lambda'(a)=\zeta(a^*)$ makes $Q$  an involutive Ehresmann quantale. 
This, and the uniqueness of $\lambda$ and $\rho$,  yields that $\rho'=\rho$ and $\lambda'=\lambda$. 
It follows that if there is a structure of a stably supported quantale on an involutive Ehresmann quantale $Q$, then it must be $\zeta(a)=\rho(a)$. 

Comparing the definitions of an Ehresmann quantale and a stably supported quantale we obtain the following statement.

\begin{proposition}\label{prop:stably_sup} Let $Q$ be an involutive Ehresmann quantale. 
Then $Q$ is a stably supported quantale if and only if $\rho$ satisfies axiom (S2). 
If $Q$ is a stably supported quantale then its support $\zeta$ is given by $\zeta(a)=\rho(a)$, $a\in Q$.
\end{proposition}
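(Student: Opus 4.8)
The plan is to deduce the whole statement by matching the support axioms (S1)--(S4) against the Ehresmann axioms (EQ1)--(EQ4), with the single candidate $\zeta = \rho$. The crucial preliminary, already recorded in the paragraph preceding the proposition, is that if an involutive Ehresmann quantale $Q$ happens to carry the structure of a stably supported quantale, then its (unique, by \cite{Re}) support must coincide with $\rho$; this is justified via Proposition~\ref{prop:support} together with the uniqueness of $\lambda$ and $\rho$. Invoking this remark lets me treat $\rho$ as the \emph{only} possible support, so the entire proposition reduces to the assertion that $\rho$ is a stable support exactly when it satisfies (S2).

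For the forward implication, suppose $Q$ is a stably supported quantale with stable support $\zeta$. By the uniqueness remark above, $\zeta(a) = \rho(a)$ for all $a$, which is precisely the final displayed formula of the proposition. Since $\zeta$ satisfies (S2) by definition, so does $\rho$, and the forward direction is complete. For the converse I would assume $\rho$ satisfies (S2) and take $\zeta := \rho$, regarded as a self-map of $Q$ via the inclusion $e^{\downarrow} \hookrightarrow Q$ (this inclusion preserves joins, since the join in $Q$ of elements below $e$ again lies below $e$, so $\zeta$ is a sup-lattice endomorphism by (EQ1)). I then verify the four axioms directly: (S1) holds because $\rho$ takes values in $e^{\downarrow}$; (S2) is the standing hypothesis; (S3) follows from (EQ3), which gives $a = \rho(a)a$ and hence $a \leq \zeta(a)a$; and the stability axiom (S4), $\zeta(ab) = \zeta(a\zeta(b))$, is literally the $\rho$-half of (EQ4). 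Thus $\rho$ is a stable support and $Q$ is stably supported.

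The computations involved are entirely routine, so there is no genuine obstacle beyond careful bookkeeping. The one point deserving attention is the logical role of the uniqueness of the support: I must cite the earlier remark to guarantee that the ``only if'' direction really concerns $\rho$ itself rather than some a priori unrelated support, and to obtain the closing identity $\zeta(a) = \rho(a)$. Once that is in hand, the proof is just the side-by-side comparison of the two axiom systems.
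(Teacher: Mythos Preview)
Your proposal is correct and follows essentially the same approach as the paper: the paper's ``proof'' is just the sentence ``Comparing the definitions of an Ehresmann quantale and a stably supported quantale we obtain the following statement,'' together with the preceding paragraph establishing that any stable support must equal $\rho$. You have simply spelled out in detail the axiom-by-axiom comparison that the paper leaves implicit.
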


\begin{example} {\em Assume that the binary relation $A$ on a set $X$ is an equivalence relation. 
The Ehresmann quantal frame $\mathcal{P}(A)$ from Examples \ref{ex:bin_rel},  \ref{ex:bin_rel1}, \ref{ex:bin_rel2} is involutive where the involution
sends a relation to its converse. 
It is easy to see that axiom (S2) holds for $\rho$, so $\mathcal{P}(A)$ is a stable quantal frame.}
\end{example} 

Proposition \ref{prop:stably_sup} tells us that stably supported quantales form a subclass of involutive Ehresmann quantales. 
The following example shows that this subclass is proper.

\begin{example}\label{ex:pm2} {\em Let $M$ be a commutative monoid with the identity element $e$. 
We assume that $M$ is not a $2$-group. 
The powerset ${\mathcal{P}}(M)$ is an Ehresmann quantal frame with projections $\{e\}$ and the empty set. 
Equipped with a trivial involution, it becomes an involutive Ehresmann quantal frame. 
Let $a\in M$ be such that $a^2\neq e$. 
Then $\{e\} \not\subseteq \{a\}\{a\}^*=\{a^2\}$. 
Thus (S2) does not hold.}
\end{example}

We now turn to partial units of stably supported quantales and their relationship to partial isometries.

\begin{example} 
{\em Consider the Ehresmann quantal frame ${\mathcal{P}}(S)$ from Example \ref{ex:joins}. 
With a trivial involution, it becomes an involutive Ehresmann quantal frame.
It is a stable quantal frame since (S2) for $\rho$ trivially holds. 
We have that $\{x\}$ is a partial isometry, but not a partial unit. 
Note that in this example partial isometries are not closed with respect to multiplication since $\{x\}\cdot \{x\}=\{1,x\}$.}
\end{example}

\begin{proposition} \label{prop:isom_units} Let $Q$ be a stably supported quantale.
\begin{enumerate} 
\item Any partial unit of $Q$ is a partial isometry.
\item Assume that the set of partial isometries of $Q$ is closed under multiplication. Then any partial isometry of $Q$ is a partial unit.
\end{enumerate}
\end{proposition}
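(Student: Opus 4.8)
The plan is to exploit that a stably supported quantale is, by Proposition~\ref{prop:support}, an involutive Ehresmann quantale in which $\rho=\zeta$; this puts the whole Ehresmann toolkit, and in particular Lemma~\ref{lem:p_un}, at our disposal. Part~(1) is then immediate: Lemma~\ref{lem:p_un}\eqref{i:pu2} already asserts that every partial unit of an involutive Ehresmann quantale is a partial isometry, so nothing further is needed.

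For part~(2) I would first record two auxiliary facts about the involution. First, projections are self-adjoint: if $f\leq e$ then $f^*\leq e^*=e$, and using the axiom $\lambda(f^*)=\rho(f)=f$ together with (EQ3) one gets $f^*=f^*\lambda(f^*)=f^*f\leq f$, whence $f\leq f^*$ by monotonicity of the (frame automorphism, Lemma~\ref{lem:frame_star}) involution, so $f=f^*$. Second, the involution maps partial isometries to partial isometries: if $a\in{\mathcal{PI}}(Q)$ and $c\leq a^*$, then $c^*\leq a$, so $c^*\leq' a$ and $c^*=a\lambda(c^*)$; applying the involution and using that $\lambda(c^*)=\rho(c)$ is a self-adjoint projection gives $c=\rho(c)a^*$, i.e. $c\leq' a^*$ by Lemma~\ref{lem:apr4}\eqref{i:apr4a}. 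Hence $a^*\in{\mathcal{PI}}(Q)$.

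Now fix a partial isometry $a$ and set $b=aa^*$. Since $a$ and $a^*$ are partial isometries and these are assumed closed under multiplication, $b$ is a partial isometry. A short computation with (EQ4) and (EQ3) gives $\rho(b)=\rho(a\rho(a^*))=\rho(a\lambda(a))=\rho(a)$, while axiom (S2) for $a$ yields $\rho(a)\leq aa^*=b$. Thus $\rho(b)\leq b$ in the sup-lattice order, and because $b$ is a partial isometry this upgrades to $\rho(b)\leq' b$; by Lemma~\ref{lem:apr4}\eqref{i:apr4a} together with $\rho(\rho(b))=\rho(b)$ and (EQ3) this forces $\rho(b)=\rho(b)b=b$, so $aa^*=\rho(a)\leq e$. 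Applying the same argument to the partial isometry $a^*$ gives $a^*a=a^*(a^*)^*\leq e$, and therefore $a$ is a partial unit. The main obstacle is the second paragraph: one must verify that $aa^*$ is genuinely a partial isometry, which in turn rests on the involution preserving ${\mathcal{PI}}(Q)$, and on the delicate passage from the inequality $\rho(b)\leq b$ for the coarse order to $\rho(b)\leq' b$ for the natural order — this is exactly where the partial-isometry hypothesis does its work.
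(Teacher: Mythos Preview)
Your argument is correct and follows essentially the same route as the paper's: reduce part~(1) to Lemma~\ref{lem:p_un}\eqref{i:pu2}, then for part~(2) show projections are self-adjoint, show the involution preserves $\mathcal{PI}(Q)$, compute $\rho(aa^*)=\rho(a)$, and combine (S2) with the partial-isometry property of $aa^*$ to force $aa^*=\rho(a)\leq e$.

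One small correction: your two appeals to Lemma~\ref{lem:apr4}\eqref{i:apr4a} are misplaced, since that lemma is stated for restriction semigroups while you are working in the ambient Ehresmann quantale. In the first instance you derive only $c=\rho(c)a^*$, which by itself does not give $c\leq' a^*$; but applying the involution to the \emph{other} half of Lemma~\ref{lem:aaab}\eqref{i:aaa1}, namely $c^*=\rho(c^*)a$, yields $c=a^*\lambda(c)$, and the two together give $c\leq' a^*$ by definition. In the second instance the conclusion $\rho(b)=\rho(\rho(b))b=\rho(b)b$ already follows from Lemma~\ref{lem:aaab}\eqref{i:aaa1}, and then (EQ3) finishes. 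The paper handles the latter step slightly differently, invoking Proposition~\ref{prop:pi6} to place $\rho(a)$ and $aa^*$ inside the restriction monoid $\mathcal{PI}(Q)$ before appealing to Lemma~\ref{lem:apr4}\eqref{i:apr4b}; once your citations are adjusted, your version is in fact a touch more direct.
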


\begin{proof} The first statement follows from part (2) of Lemma~\ref{lem:p_un}.

Assume now that the set of partial isometries of $Q$ is closed with respect to multiplication. 
Let $\zeta$ be the support map.  
By proposition \ref{prop:stably_sup} we have that $\rho=\zeta$ and $\lambda=\zeta \circ (-)^*$ are the right and the left support of the involutive Ehresmann quantale $Q$. 
Note that $e^*=e$ since $e$ is a multiplicative unit. 
Let $f\leq e$. 
Then $f^*\leq e$. 
We have, applying Lemma \ref{lem:p_un}, $f=\rho(f)=ff^*=f^*f=\rho(f^*)=f^*$. 
Suppose that $a\in Q$ is a partial isometry. 
We first show that  $a^*$ is a partial isometry. 
Let $\leq$ be the partial order that underlies the frame $Q$, and $\leq'$ be the natural partial order of the Ehresmann monoid $Q$. 
Assume that $b\leq a^*$. 
Then $b^*\leq' a$ and so $b^*=fa$ for some $f\leq e$. 
It follows that $b=a^*f^*=a^*f$. 
Likewise, $b=ga^*$ for some $g\leq e$. 
Therefore, $b\leq' a^*$. 
This proves that $a^*$ is a partial isometry. 
Since partial isometries are by the assumption closed with respect to multiplication, $aa^*$ is a partial isometry, too. 
Note that $\rho(aa^*)=\rho(a\rho(a^*))=\rho(a\lambda(a))=\rho(a)=\rho(\rho(a))$.  
By (S2) we have $aa^*\geq \rho(a)$, which yields $aa^*\geq' \rho(a)$.  
Therefore, since partial isometries form a restriction monoid by Proposition \ref{prop:pi6}, it follows that $aa^*=\rho(a)$. 
Hence $aa^*\leq e$. 
Then also $a^*a=(aa^*)^*\leq e$. 
It follows that $a$ is a partial unit. 
\end{proof}

Let us call a multiplicative stable quantal frame $Q$ {\em strongly multiplicative} if it is strongly multiplicative as an Ehresmann quantal frame. 
This is the case precisely when the semiopen multiplication map of the localic category ${\mathcal C}(Q)$, associated to $Q$, is open. 

\begin{corollary} \label{cor:units} Let $Q$ be a strongly multiplicative stable quantal frame. 
Then an element of $Q$ is a  partial isometry if and only if it is a partial unit. 
\end{corollary}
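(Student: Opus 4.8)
The plan is to assemble the equivalence directly from the two halves of Proposition~\ref{prop:isom_units}, the only substantive point being to certify the hypothesis needed for its second part. Write $Q$ for the given strongly multiplicative stable quantal frame, with support $\zeta$, involution $(-)^*$, and induced structure maps $\rho(a)=\zeta(a)$, $\lambda(a)=\zeta(a^*)$ making $Q$ an involutive Ehresmann quantale as in Proposition~\ref{prop:support}.

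First I would dispose of the easy inclusion. Part~(1) of Proposition~\ref{prop:isom_units} already asserts that every partial unit of a stably supported quantale is a partial isometry, and this carries no extra hypothesis, so that direction is immediate and needs no use of strong multiplicativity. For the reverse inclusion I want to apply part~(2) of Proposition~\ref{prop:isom_units}, whose sole hypothesis is that the set ${\mathcal{PI}}(Q)$ is closed under the quantale multiplication. Thus the whole content of the corollary reduces to verifying this closure property.

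This is exactly what the strong-multiplicativity assumption provides. By definition a strongly multiplicative stable quantal frame is in particular strongly multiplicative as an Ehresmann quantal frame, so Corollary~\ref{cor:dec4} applies and tells us that ${\mathcal{PI}}(Q)$ is a complete restriction monoid; being a monoid, it is a fortiori closed under multiplication. (One could equally route through Proposition~\ref{prop:aug302} in the \'etale case, but Corollary~\ref{cor:dec4} is the cleaner appeal since it needs no \'etale hypothesis.) With closure established, Proposition~\ref{prop:isom_units}(2) yields that every partial isometry of $Q$ is a partial unit, and combined with the easy inclusion above this gives the claimed coincidence of the two notions.

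I do not anticipate any genuine obstacle here: all the real work has already been done in Propositions~\ref{prop:isom_units} and the chain leading to Corollary~\ref{cor:dec4}. The only care required is bookkeeping at the level of definitions, namely unwinding that the strong multiplicativity of a \emph{stable} quantal frame is literally the strong multiplicativity of its underlying Ehresmann quantal frame, so that Corollary~\ref{cor:dec4} is legitimately invoked, and observing that membership in a complete restriction monoid forces closure under products.
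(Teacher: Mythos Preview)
Your proof is correct and follows essentially the same approach as the paper, which simply cites Proposition~\ref{prop:isom_units} together with Proposition~\ref{prop:aug30}. Your appeal to Corollary~\ref{cor:dec4} in place of Proposition~\ref{prop:aug30} is an immaterial difference, since the former is an immediate consequence of the latter.
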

\begin{proof} The statement follows from Proposition \ref{prop:isom_units} and Proposition \ref{prop:aug30}.
\end{proof}
  
An {\em inverse quantal frame} \cite{Re} is a stable quantal frame in which the top element is a join of partial units. 
It follows from \cite[Theorem 4.19]{Re} and \cite[Example 5.6] {Re} that as an Ehresmann quantal frame it is strongly multiplicative. 
By Corollary~\ref{cor:units}, partial units of an inverse quantal frame are the same as partial isometries.
Therefore an inverse quantal frame is an \'etale Ehresmann quantal frame where partial isometries are closed with respect to the multiplication, and hence a restriction quantal frame.
Assume now that $Q$ is an involutive restriction quantal frame. 
Then $Q$ need not be an inverse quantal frame, as (S2) might not hold for $\rho$, see Example~\ref{ex:pm2}. 
However, from Proposition \ref{prop:stably_sup} it is easy to deduce the following.

\begin{proposition}\label{prop:inv_qf} Let $Q$ be an involutive restriction quantal frame. 
Then $Q$ is an inverse quantal frame if and only if $\rho$ satisfies axiom (S2). 
\end{proposition}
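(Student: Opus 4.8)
The plan is to read off both implications from the structural results already established, since essentially all of the analytic work has been packaged into Proposition~\ref{prop:stably_sup}, Proposition~\ref{prop:aug302} and Corollary~\ref{cor:units}; what remains is to match up definitions. Throughout I use that an involutive restriction quantal frame is in particular an involutive Ehresmann quantale which happens to be a frame, and that it is \'etale with its partial isometries closed under multiplication.

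For the direction ``$Q$ an inverse quantal frame $\Rightarrow$ $\rho$ satisfies (S2)'', I would argue as follows. By definition an inverse quantal frame is a stable quantal frame, hence in particular a stably supported quantale. By Proposition~\ref{prop:stably_sup} (equivalently, by the uniqueness of the structure maps of an Ehresmann monoid, which forces the support to equal $\rho$, i.e. $\zeta = \rho$), this is precisely the assertion that the right support $\rho$ satisfies (S2). So this implication is immediate.

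For the converse, suppose $\rho$ satisfies (S2). Then Proposition~\ref{prop:stably_sup} gives that $Q$ is a stably supported quantale with $\zeta = \rho$, and since $Q$ is a frame it is a stable quantal frame. I would next note that $Q$ is strongly multiplicative: being a restriction quantal frame it is \'etale and its partial isometries are closed under multiplication, so Proposition~\ref{prop:aug302} applies. Hence $Q$ is a strongly multiplicative stable quantal frame, and Corollary~\ref{cor:units} yields that in $Q$ the partial isometries and the partial units coincide. Finally, \'etaleness says that $1_Q$ is a join of partial isometries; by the previous sentence these are partial units, so $1_Q$ is a join of partial units and $Q$ is an inverse quantal frame, as required.

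I do not expect a genuine obstacle here, since the argument is a recombination of prior results. The one point demanding care is ensuring that every hypothesis of Corollary~\ref{cor:units} is in force in the converse direction: I must verify simultaneously that $Q$ is a stable quantal frame (supplied by (S2) through Proposition~\ref{prop:stably_sup} together with the frame structure) and that it is strongly multiplicative as an Ehresmann quantal frame (supplied by the restriction-quantal-frame hypothesis through Proposition~\ref{prop:aug302}). Lining up both of these is the only step where a definition could be misapplied.
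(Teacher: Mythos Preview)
Your proof is correct and follows the intended approach: the paper merely says the result is ``easy to deduce'' from Proposition~\ref{prop:stably_sup}, and you have filled in the details appropriately. One small simplification: in the converse direction you can bypass the strong-multiplicativity detour through Proposition~\ref{prop:aug302} and Corollary~\ref{cor:units} by appealing directly to Proposition~\ref{prop:isom_units}(2), since a restriction quantal frame already has its partial isometries closed under multiplication and (S2) makes $Q$ a stably supported quantale---this immediately gives that every partial isometry is a partial unit, and \'etaleness finishes the argument.
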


We now establish conditions under which an involutive complete restriction monoid is a pseudogroup.
The proof of the following is routine.

\begin{proposition}\label{prop:inv_ps} Let $S$ be an involutive complete restriction monoid. 
Then $S$ is a pseudogroup if and only if $aa^*=\rho(a)$ and $a^*a=\lambda(a)$ for all $a\in S$.
\end{proposition}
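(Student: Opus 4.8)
The plan is to prove the two implications separately, with the ``if'' direction carrying the real content and closely mirroring the proof of Proposition~\ref{prop:inv22}. Assume first that $aa^*=\rho(a)$ and $a^*a=\lambda(a)$ for all $a\in S$. I would begin by checking that $S$ is regular with $a^*$ an inverse of $a$: by (ES3) we have $aa^*a=\rho(a)a=a$, and since $\lambda(a)=\rho(a^*)$ (an immediate consequence of the involution axioms, already noted above), we get $a^*aa^*=\lambda(a)a^*=\rho(a^*)a^*=a^*$.

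The crucial step is to show that every idempotent $b=b^2$ of $S$ lies in $e^{\downarrow}$, i.e.\ is a projection. By (ES4), $\lambda(\lambda(b)b)=\lambda(b^2)=\lambda(b)$, while $\lambda(b)b\le b$ in the natural partial order by part~\eqref{i:apr4c} of Lemma~\ref{lem:apr4}; hence $\lambda(b)b=b$ by part~\eqref{i:apr4b} of Lemma~\ref{lem:apr4}, and, since $\rho(b)$ is the least projection left-fixing $b$, this forces $\rho(b)\le\lambda(b)$. The dual computation yields $b\rho(b)=b$ and $\lambda(b)\le\rho(b)$, so $\lambda(b)=\rho(b)$. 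Feeding this back into regularity and the hypotheses gives $b=bb^*b=(bb^*)b=(b^*b)b=b^*b^2=b^*b=\lambda(b)\le e$, so $b$ is a projection. As $e^{\downarrow}$ is a commutative semilattice, all idempotents of $S$ commute; thus $S$ is a regular monoid with commuting idempotents, hence inverse \cite{Law2}, and being a complete restriction monoid it is a pseudogroup. Uniqueness of inverses in an inverse monoid then identifies $a^*$ with $a^{-1}$.

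For the converse I would use that the involution of a pseudogroup is its inversion $a\mapsto a^{-1}$, that its projections are exactly its idempotents, and the standard identities $\lambda(a)=a^{-1}a$ and $\rho(a)=aa^{-1}$ for an inverse monoid regarded as a restriction monoid; with $a^*=a^{-1}$ these read precisely as $a^*a=\lambda(a)$ and $aa^*=\rho(a)$. The only genuinely delicate point is the interplay, in the ``if'' direction, between the two hypotheses and axioms (ES3)--(ES4) that pins every idempotent down to a projection; everything else is routine bookkeeping with the involution and restriction axioms, which is why the statement is labelled routine.
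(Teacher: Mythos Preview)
The paper declares the proof routine and gives no details, so there is nothing to compare your approach against; your ``if'' direction is correct and, as you say, follows the template of Proposition~\ref{prop:inv22} essentially verbatim.

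There is, however, a genuine gap in your ``only if'' direction. You assert that ``the involution of a pseudogroup is its inversion $a\mapsto a^{-1}$,'' but this does not follow from the hypotheses as literally stated. A pseudogroup is merely a complete restriction monoid whose underlying semigroup is inverse; the involution $(-)^*$ is additional data satisfying only $(a^*)^*=a$, $(ab)^*=b^*a^*$ and $\lambda(a^*)=\rho(a)$, and nothing in these axioms forces $a^*=a^{-1}$. For a concrete counterexample, take an abelian group $G$ containing an element of order greater than~$2$, regard it as a complete restriction monoid with $E=\{e\}$ and $\lambda=\rho\equiv e$, and equip it with the identity involution $a^*=a$ (an anti-homomorphism since $G$ is abelian, and trivially compatible with $\lambda,\rho$). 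Then $G$ is an involutive complete restriction monoid and a pseudogroup, yet $aa^*=a^2\neq e=\rho(a)$ whenever $a^2\neq e$.

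What the surrounding text makes clear (see the paragraph following the proposition and Theorem~\ref{th:resende2}) is that the intended reading of ``$S$ is a pseudogroup'' is ``$S$ is inverse \emph{and} the given involution coincides with inversion.'' Under that reading both directions are indeed routine: your ``if'' argument already yields $a^*=a^{-1}$ via uniqueness of inverses once you have shown $S$ is inverse, and the converse is then immediate from $\lambda(a)=a^{-1}a$, $\rho(a)=aa^{-1}$. You should state this interpretation explicitly rather than treat it as automatic.
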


Assume $S$ is a pseudogroup. 
Then it is easy to see that ${\mathcal L}^{\vee}(S)$ is such that (S2) holds for $\rho$. 
Indeed, let $a\in {\mathcal L}^{\vee}(S)$. 
Then $a = \vee B$ where $B\subseteq \eta(S)$. 
Then 
$$aa^*=\left (\bigvee B\right )\left (\bigvee_{b\in B}b^*\right )\geq \bigvee_{b\in B}bb^*=\bigvee_{b\in B}\rho(b)=\rho(a).
$$
Conversely, assume that $Q$ is an involutive restriction quantal frame such that (S2) holds for $\rho$. 
Let $a\in {\mathcal{PI}}(Q)$. 
Then $\rho(aa^*)=\rho(\lambda(a)a^*)=\rho(a^*)$. This and (S2) imply that $\rho(a)=aa^*$. It follows that ${\mathcal{PI}}(Q)$ is a pseudogroup. 
It is easy to see that the category of pseudogroups and their morphisms is a full subcategory of the category of involutive complete restriction monoids and their morphisms. A similar remark applies to the category of inverse quantal frames.
This discussion leads to the following consequence of Theorem \ref{th:inv_quant_psgrps}, which for the case of morphisms of type~$1$ was proved in \cite{Re}.

\begin{theorem}\label{th:resende1}
For each $k=1,2,3,4$ the category of pseudogroups and their morphisms of type $k$
is equivalent to the category of inverse quantal frames and their morphisms of type $k$. 
\end{theorem}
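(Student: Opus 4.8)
The plan is to derive Theorem~\ref{th:resende1} as a direct specialization of the Involutive Quantalization Theorem (Theorem~\ref{th:inv_quant_psgrps}) by identifying pseudogroups and inverse quantal frames as \emph{full subcategories} of involutive complete restriction monoids and involutive restriction quantal frames, respectively, that are matched by the equivalence. First I would observe that, by Theorem~\ref{th:inv_quant_psgrps}, for each $k = 1,2,3,4$ the functors $S \mapsto {\mathcal L}^{\vee}(S)$ and $Q \mapsto {\mathcal{PI}}(Q)$ establish an equivalence between the category of involutive complete restriction monoids and the category of involutive restriction quantal frames, with involutive morphisms of type $k$ on each side. The whole task is then to show that this equivalence \emph{restricts and corestricts} to the subcategories in question.

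The key step is to verify that the equivalence carries pseudogroups to inverse quantal frames and back. In the direction from monoids to quantales: if $S$ is a pseudogroup, then by Proposition~\ref{prop:inv_ps} we have $aa^* = \rho(a)$ and $a^*a = \lambda(a)$ for all $a \in S$, and I would reproduce the short computation already displayed just before the theorem statement to show that (S2), i.e.\ $\rho(a) \leq aa^*$, holds in ${\mathcal L}^{\vee}(S)$; by Proposition~\ref{prop:inv_qf} this makes ${\mathcal L}^{\vee}(S)$ an inverse quantal frame. In the reverse direction: if $Q$ is an involutive restriction quantal frame in which (S2) holds for $\rho$, then the computation $\rho(aa^*) = \rho(\lambda(a)a^*) = \rho(a^*)$ together with (S2) gives $\rho(a) = aa^*$ for each partial isometry $a$, so by Proposition~\ref{prop:inv_ps} the monoid ${\mathcal{PI}}(Q)$ is a pseudogroup. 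Since inverse quantal frames are precisely the involutive restriction quantal frames satisfying (S2) (by Proposition~\ref{prop:inv_qf}), these two facts show that the object assignments match the subclasses on the nose.

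Next I would invoke fullness: the discussion preceding the statement records that the category of pseudogroups is a \emph{full} subcategory of involutive complete restriction monoids, and likewise inverse quantal frames form a full subcategory of involutive restriction quantal frames. Because the ambient equivalence of Theorem~\ref{th:inv_quant_psgrps} already identifies all involutive morphisms of type $k$ on one side with those on the other, and because fullness means no morphisms are lost when passing to the subcategories, the restricted functors remain full, faithful, and essentially surjective onto the respective subcategories. The unit $\eta_S$ and counit $\varepsilon_Q$ of the original equivalence, being isomorphisms for every object, remain isomorphisms on the subcategories, so the natural isomorphisms witnessing the equivalence simply transport. This yields the equivalence of the category of pseudogroups and their morphisms of type $k$ with the category of inverse quantal frames and their morphisms of type $k$.

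The main obstacle, and the point deserving the most care, is confirming that the object correspondence is genuinely a \emph{bijection between the subclasses} rather than merely a one-sided inclusion: one must check both that every pseudogroup produces an inverse quantal frame and that every inverse quantal frame arises from a pseudogroup (equivalently, that ${\mathcal{PI}}(Q)$ is a pseudogroup exactly when $Q$ satisfies (S2)). This hinges entirely on Proposition~\ref{prop:inv_qf} and Proposition~\ref{prop:inv_ps}, so the real content is assembling those two characterizations—each phrased in terms of axiom (S2) for $\rho$—into a clean statement that the equivalence preserves (S2) in both directions. Everything else is bookkeeping: the involution is already handled by Theorem~\ref{th:inv_quant_psgrps}, and the morphism classes of type $k$ require no new argument beyond fullness of the subcategory inclusions.
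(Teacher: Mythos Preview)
Your proposal is correct and follows essentially the same approach as the paper: the theorem is derived as a restriction of the Involutive Quantalization Theorem (Theorem~\ref{th:inv_quant_psgrps}) to the full subcategories of pseudogroups and inverse quantal frames, using Propositions~\ref{prop:inv_qf} and~\ref{prop:inv_ps} together with the (S2) computations in the discussion preceding the statement to verify that the object assignments restrict appropriately, and fullness to handle morphisms.
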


Let $C=(C_1,C_0)$ be an involutive quantal localic category. 
This category is a groupoid with respect to $i$ provided that the involution map $i$ is in fact an inversion map. 
That is, the following axioms hold:
$$
m\langle id,i\rangle = ur \mbox{ and } m\langle i,id \rangle = ud.
$$
Note that since $i$ is an involution, each of these axioms is a consequence of the other.
It can be easily shown that these axioms hold if and only if the involutive Ehresmann quantal frame ${\mathcal O}(C)$ satisfies the identities
$$
d^*u^*(a) =\bigvee_{x^*y\leq a} x\wedge y
\mbox{ and }
r^*u^*(a) =\bigvee_{xy^*\leq a}x\wedge y,
$$
where $d,r,u$ are structure maps of $C$. 
Each of these axioms is a consequence of the other.

Observe that $r^*u^*(a)=(a\wedge e)1$ and $d^*u^*(a)=1(a\wedge e)$. 
Indeed, from part (3) of Lemma \ref{lem:aux1} we have $r^*u^*(a)=u_!u^*(a)1$. 
From the Frobenius condition for the open map $u$ we have 
$$u_!u^*(a)=u_!(u^*(a)\wedge 1_{{\mathcal O}(C)}) = a\wedge u_!(1_{{\mathcal O}(C)})=a\wedge e.$$ 
Then $u_!u^*(a)1=(a\wedge e)1$. 
The equality $d^*u^*(a)=1(a\wedge e)$ is shown similarly.

\begin{remark}{\em Note that the inversion axioms, which are used in \cite{Re}, are different from ours in that $d$ and $r$ are switched. 
In our notation $ai(a)=ur(a)$, whereas in the notation from \cite{Re}  we have that $ai(a)=ud(a)$. }
\end{remark}

\begin{proposition} \label{prop:cat} Let $C=(C_1,C_0)$ be an involutive quantal localic category.
\begin{enumerate}
\item \label{i:oa24} The inequalities 
$$ r^*u^*(a) \geq \bigvee_{xy^*\leq a}x\wedge y, \,\,\,\, d^*u^*(a) \geq\bigvee_{x^*y\leq a} x\wedge y
$$
hold for all $a\in {\mathcal O}(C)$ if and only if the map $\rho$ of the involutive Ehrsmann quantal frame ${\mathcal O}(C)$ satisfies axiom (S2).
\item \label{i:ob24} The inequalities 
$$ r^*u^*(a) \leq \bigvee_{xy^*\leq a}x\wedge y, \,\,\,\, d^*u^*(a) \leq \bigvee_{x^*y\leq a} x\wedge y
$$
hold for all $a\in {\mathcal O}(C)$ if and only if every element of ${\mathcal O}(C)$ is a join of partial units.
\item  \label{i:oc24} $C$ is a groupoid if and only if $C$ is an \'etale groupoid if and only if ${\mathcal O}(C)$ is an inverse quantal frame.
\end{enumerate}
\end{proposition}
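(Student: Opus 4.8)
The overall strategy is to read all three parts through the two explicit formulas recorded just before the statement, namely $r^*u^*(a)=(a\wedge e)1$ and $d^*u^*(a)=1(a\wedge e)$, together with the fact, also recorded there, that the groupoid axioms for $i$ are equivalent to the two join-identities $r^*u^*(a)=\bigvee_{xy^*\leq a}x\wedge y$ and $d^*u^*(a)=\bigvee_{x^*y\leq a}x\wedge y$. Thus each join-identity splits into a ``$\geq$'' half (Part \eqref{i:oa24}) and a ``$\leq$'' half (Part \eqref{i:ob24}), and Part \eqref{i:oc24} is their conjunction, reinterpreted via Proposition \ref{prop:stably_sup}. Throughout I would freely use that $\rho,\lambda$ are monotone, that $(-)^*$ is a frame automorphism (Lemma \ref{lem:frame_star}), that the order is compatible with multiplication (Lemma \ref{lem:compat}), and the identities $z=\rho(z)z=z\lambda(z)$ and $\rho(ab)=\rho(a\rho(b))$ from the Ehresmann quantale axioms.

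For Part \eqref{i:oa24} I would argue as follows. Assuming (S2): given $xy^*\leq a$, put $z=x\wedge y$; then $zz^*\leq xy^*\leq a$, so $\rho(z)\leq zz^*\leq a$ and $\rho(z)\leq e$, whence $\rho(z)\leq a\wedge e$ and $z=\rho(z)z\leq(a\wedge e)1=r^*u^*(a)$. This gives the first ``$\geq$'' inequality; the second follows symmetrically by applying (S2) to $z^*$ and using $\rho(z^*)=\lambda(z)$. For the converse I would test the $r^*u^*$-inequality at $b=aa^*$ with the single pair $x=y=a$ (admissible since $aa^*\leq aa^*$): this yields $a\leq(aa^*\wedge e)1$, and applying $\rho$ together with the easy identity $\rho\bigl((aa^*\wedge e)1\bigr)=aa^*\wedge e$ (a special case of $\rho(g1)=\rho(g)=g$ for $g\leq e$) gives $\rho(a)\leq aa^*\wedge e\leq aa^*$, i.e.\ (S2).

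For Part \eqref{i:ob24}, suppose every element is a join of partial units. Writing $1=\bigvee_i c_i$ with each $c_i$ a partial unit and distributing multiplication over the join, it suffices to check, for $f=a\wedge e$ and a partial unit $c$, that $w:=fc$ lies in $\bigvee_{xy^*\leq a}x\wedge y$; taking $x=y=w$ works once one computes $ww^*=fcc^*f\leq fef=f\leq a$ (using $f^*=f$ for $f\leq e$, which follows from $f^*=\lambda(f^*)=\rho(f)=f$, and $cc^*\leq e$), with the dual inequality symmetric. The converse is the step I expect to be the main obstacle, since a single pair with $xy^*\leq e$ only gives $zz^*\leq e$, not a genuine partial unit. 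The device I would use is to invoke \emph{both} reverse inequalities at $a=e$: each produces a cover $1=\bigvee x\wedge y$, and taking the meet of the two covers and distributing (the frame law) expresses $1$ as a join of elements $w=x\wedge y\wedge u\wedge v$ with $xy^*\leq e$ and $u^*v\leq e$; for such $w$ one has simultaneously $ww^*\leq xy^*\leq e$ and $w^*w\leq u^*v\leq e$, so $w$ is a partial unit. Finally, since any $q\leq p$ with $p$ a partial unit is itself a partial unit ($qq^*\leq pp^*\leq e$ and dually), frame distributivity upgrades ``$1$ is a join of partial units'' to ``every element is a join of partial units''.

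For Part \eqref{i:oc24} I would assemble the pieces. By the equivalence recalled above, $C$ is a groupoid iff both join-identities hold, i.e.\ iff both the ``$\geq$'' and ``$\leq$'' inequalities hold, which by Parts \eqref{i:oa24} and \eqref{i:ob24} happens iff $\rho$ satisfies (S2) \emph{and} every element of ${\mathcal O}(C)$ is a join of partial units. By Proposition \ref{prop:stably_sup} the first condition says precisely that the involutive Ehresmann quantal frame ${\mathcal O}(C)$ is a stable quantal frame, and adjoining the second is exactly the definition of an inverse quantal frame; hence $C$ is a groupoid iff ${\mathcal O}(C)$ is an inverse quantal frame. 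To close the three-way equivalence I would note that an inverse quantal frame is a restriction quantal frame whose underlying Ehresmann quantal frame is strongly multiplicative, so by the Etale Correspondence Theorem \ref{the:jan23} the associated category $C\simeq{\mathcal C}({\mathcal O}(C))$ is automatically étale; thus ${\mathcal O}(C)$ being an inverse quantal frame forces $C$ to be an étale groupoid, while an étale groupoid is in particular a groupoid, completing the cycle.
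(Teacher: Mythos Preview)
Your proposal is correct and follows essentially the same route as the paper. In Part~\eqref{i:oa24} you use the explicit formula $r^*u^*(a)=(a\wedge e)1$ where the paper uses the adjunction $u_!r_!\dashv r^*u^*$ directly, but these are interchangeable. In Part~\eqref{i:oc24} your assembly of the pieces matches the paper's argument almost verbatim.

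The one substantive difference is Part~\eqref{i:ob24}: the paper simply cites \cite[Lemma~4.18]{Re}, whereas you give a self-contained argument. Your forward direction (taking $x=y=fc$ for a partial unit $c$ and checking $ww^*\leq f\leq a$) is clean, and your converse---evaluating \emph{both} reverse inequalities at $a=e$ to get two covers of $1$, then meeting them via frame distributivity so that each $w=x\wedge y\wedge u\wedge v$ satisfies $ww^*\leq xy^*\leq e$ and $w^*w\leq u^*v\leq e$ simultaneously---is a nice trick that makes the proof independent of Resende's paper. This is arguably an improvement over the citation.
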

\begin{proof} (1) Assume that the stated inequalities hold. 
Let $a\in {\mathcal O}(C)$. 
From the first inequality we have 
$$
r^*u^*(aa^*) \geq \bigvee_{xy^*\leq aa^*}x\wedge y \geq a\wedge a=a.
$$
It follows by adjointness that $aa^*\geq u_!r_!(a)=\rho(a)$, and thus (S2) holds. 

Conversely, assume that (S2) holds and let $x,y\in {\mathcal O}(C)$. 
Then
$$u_!r_!(x\wedge y)=\rho(x\wedge y)\leq (x\wedge y)(x\wedge y)^*\leq xy^*.$$ 
Hence, by adjointness,
$x\wedge y\leq r^*u^*(xy^*)$. 
Then $x\wedge y\leq r^*u^*(a)$ whenever $xy^*\leq a$.
This implies the first inequality. The second one follows from the first one.

(2) The proof basically repeats the proof of \cite[Lemma 4.18]{Re}. 

(3) We combine (1) and (2) above. 
Assume $C$ is a groupoid. 
Since partial units of ${\mathcal O}(C)$ are partial isometries, 
(2) above implies that ${\mathcal O}(C)$ is \'etale. 
Since every partial isometry is a join of partial units, we may apply Lemma \ref{lem:aux24o} and conclude that partial isometries coincide with partial units. 
So the set of partial isometries is closed with respect to multiplication.  
By Theorem \ref{prop:aug302} we conclude that ${\mathcal O}(C)$ is strongly multiplicative. 
It follows that ${\mathcal O}(C)$ is a restriction quantal frame. 
Axiom (S2) holds for $\rho$ by (1) above. 
So ${\mathcal O}(C)$ is an inverse quantal frame. 
Conversely, if ${\mathcal O}(C)$ is an inverse quantal frame, partial isometries coincide with partial units by Proposition \ref{prop:isom_units}, 
and so $C$ is a groupoid by (1) and (2) above. 
This groupoid is automatically \'etale by Theorem \ref{cor:main_inv}, since ${\mathcal O}(C)$ is a restriction quantal frame.
\end{proof}

By an {\em \'etale localic groupoid} we mean an involutive  \'etale localic category where the involution map is a groupoid inversion.
In other words, an \'etale localic groupoid is a localic groupoid where the structure maps $u,i,m$ are open and $d,r$ are local homeomorphisms.

Proposition \ref{prop:cat} and Theorem \ref{th:resende1} lead to the following consequence of Theorem~\ref{cor:main_inv}.

\begin{theorem}[Resende's Duality Theorem] \label{th:resende2} \mbox{}
The following categories are equivalent:
\begin{enumerate}
\item The category of pseudogroups and proper $\wedge$-morphisms.
\item  The category of inverse quantal frames with proper involutive $\wedge$-morphisms.
\item The opposite of the category of  localic \'{e}tale groupoids and localic sheaf (groupoid) functors. 
\end{enumerate} 
\end{theorem}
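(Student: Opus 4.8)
The plan is to derive the theorem by restricting the Involutive Duality Theorem, Theorem~\ref{cor:main_inv}, to suitable full subcategories and then matching up the morphism classes. The equivalence of categories~(1) and~(2) is essentially already in hand: it is the case $k=4$ of Theorem~\ref{th:resende1}, which identifies pseudogroups with inverse quantal frames via the constructions $S\mapsto {\mathcal L}^{\vee}(S)$ and $Q \mapsto {\mathcal{PI}}(Q)$. Here I would only remark that between pseudogroups the involution is the semigroup inverse, so that any homomorphism of inverse monoids automatically preserves it; hence an involutive proper $\wedge$-morphism of pseudogroups is simply a proper $\wedge$-morphism, which accounts for the formulation in~(1). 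The same observation explains why~(2) is phrased with an explicit involution while~(1) is not.

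The substance of the proof lies in the equivalence of~(2) and~(3). By Theorem~\ref{cor:main_inv}, involutive restriction quantal frames with involutive proper $\wedge$-morphisms are equivalent to the opposite of the category of involutive \'etale localic categories with involutive localic sheaf functors. I would restrict this duality to the full subcategory of inverse quantal frames. By Proposition~\ref{prop:inv_qf} these are exactly the involutive restriction quantal frames for which $\rho$ satisfies axiom~(S2), and by part~\eqref{i:oc24} of Proposition~\ref{prop:cat} the construction $Q\mapsto {\mathcal C}(Q)$ carries precisely the inverse quantal frames to the localic \'etale groupoids, while applying ${\mathcal O}(C)$ to a localic \'etale groupoid yields an inverse quantal frame. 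Thus on objects the duality of Theorem~\ref{cor:main_inv} restricts to a duality between inverse quantal frames and localic \'etale groupoids.

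It remains to match the morphisms, which is the step I expect to require the most care. On one side we have involutive proper $\wedge$-morphisms between inverse quantal frames; on the other, involutive localic sheaf functors between localic \'etale groupoids. The point to verify is that the latter are exactly what the statement calls localic sheaf (groupoid) functors, namely localic sheaf functors that respect the groupoid structure. This follows because a functor between groupoids automatically commutes with the inversion maps, so the extra requirement of preserving the involution structure map is vacuous; consequently involutive localic sheaf functors between \'etale groupoids coincide with localic sheaf functors, the inversion being definable from the other structure maps which a functor already preserves. Assembling the restricted object correspondence with this identification of morphisms, and combining it with the equivalence of~(1) and~(2) already noted, yields the three-way equivalence asserted by the theorem.
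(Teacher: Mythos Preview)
Your proposal is correct and follows essentially the same route as the paper: the paper's proof simply states that the result is a consequence of Theorem~\ref{cor:main_inv} combined with Proposition~\ref{prop:cat} and Theorem~\ref{th:resende1}, which is exactly the restriction-to-full-subcategories argument you spell out. Your additional remarks explaining why involution preservation is automatic for morphisms (both on the pseudogroup side and on the groupoid side) make explicit what the paper leaves implicit.
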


The object part of the result above was proved in \cite{Re} but our result makes Resende's theorem functorial. 

\section{The Adjunction Theorem}\label{s:adjun}

The goal of this section is to establish the Adjunction Thorem, where we establish an adjunction between the category of \'etale localic categories and the category of \'etale topological categories. In fact, varying morphisms, we shall obtain four versions of this adjunction. The idea is to extend the classical spectrum and  open set functors which establish an adjunction between the categories of locales and topological spaces.

\subsection{Objects}
Let $C=(C_1,C_0)$ be a topological  category. Just similarly as with localic categories, we call $C$ {\em \'etale} if the structure maps $d,r$ are local homeomorphisms and $u,m$ are open. 

We first make a passage from localic objects to topological ones. Let $C$ be an \'etale localic category where $d,r,u$ and $m$ are its structure maps.  Applying the functor ${\mathsf{pt}}$ to this data we construct two topological spaces ${\mathsf{pt}}(C_1)$, ${\mathsf{pt}}(C_0)$ and the maps ${\mathsf{pt}}(d)$, ${\mathsf{pt}}(r)$, ${\mathsf{pt}}(u)$  and ${\mathsf{pt}}(m)$.  We aim to prove that the maps ${\mathsf{pt}}(d)$ and ${\mathsf{pt}}(r)$ are local homeomorphisms and the maps  ${\mathsf{pt}}(u)$  and ${\mathsf{pt}}(m)$ are open, so that the obtained data indeed defines an \'etale topological category.
We start from the following useful technical result.

\begin{lemma}\label{lem:homeom21} Let $f:L\to M$ be a semiopen locale map and let $a\in O(L)$.  

\begin{enumerate} 

\item[(1)] \label{i:21a} ${\mathsf{pt}}(f)(X_a)\subseteq X_{f_!(a)}$.
\end{enumerate}

Assume that the frame map $O(M)\to a^{\downarrow}$ given by $x\mapsto f^*(x)\wedge a$ is surjective. 

\begin{enumerate}
\item[(2)] \label{i:21b} The restriction of the map $O(M)\to a^{\downarrow}$, given by $x\mapsto f^*(x)\wedge a$, to $f_!(a)^{\downarrow}$ is an isomorphism of frames whose inverse isomorphism is $f_!|_{a^{\downarrow}}$.

\item[(3)] \label{i:21c} If $x\leq a$ then $x=f^*f_!(x)\wedge a$.

\item[(4)]  \label{i:21d} The map 
$${\mathsf{pt}}(f)|_{X_a}:X_a\to X_{f_!(a)}$$
a is homeomorphism and its inverse 
 is given by
$$
({\mathsf{pt}}(f)|_{X_a})^{-1}(q)=qf_!(-\wedge a).
$$
\end{enumerate}
\end{lemma}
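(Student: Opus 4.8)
The plan is to prove the four parts in the order (1), (3), (2), (4), since parts (2) and (4) rest on the identity established in (3) together with the Frobenius condition for $f$.

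First I would dispose of (1) and (3), which need only the adjunction $f_!\dashv f^{*}$ and the surjectivity hypothesis. Recall that for a point $p$ (a frame map $O(L)\to\mathbf{2}$) one has ${\mathsf{pt}}(f)(p)=p\circ f^{*}$, and that the unit of the adjunction gives $a\le f^{*}f_!(a)$ (Proposition~\ref{prop: properties_of_adjoints}). If $p\in X_a$, i.e.\ $p(a)=1$, then monotonicity of $p$ yields $p(f^{*}f_!(a))=1$, so ${\mathsf{pt}}(f)(p)(f_!(a))=1$ and ${\mathsf{pt}}(f)(p)\in X_{f_!(a)}$, proving (1). For (3), fix $x\le a$; the unit inequality together with $x\le a$ gives $x\le f^{*}f_!(x)\wedge a$. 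For the reverse inequality I would use surjectivity to pick $z\in O(M)$ with $f^{*}(z)\wedge a=x$; then $x\le f^{*}(z)$, so $f_!(x)\le f_!f^{*}(z)\le z$ by the counit, whence $f^{*}f_!(x)\wedge a\le f^{*}(z)\wedge a=x$. This yields the equality in (3).

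Next I would prove (2). Write $\psi\colon O(M)\to a^{\downarrow}$, $\psi(x)=f^{*}(x)\wedge a$, and set $A=a^{\downarrow}$, $B=f_!(a)^{\downarrow}$. Monotonicity shows $f_!$ carries $A$ into $B$ and $\psi$ carries $B$ into $A$; both are frame maps (for $\psi|_B$ note $\psi(f_!(a))=a$ by (3)). Part (3) is precisely $\psi\circ f_!|_A=\mathrm{id}_A$, so $f_!|_A$ is a section. The crucial remaining identity is $f_!\circ\psi=\mathrm{id}_B$: for $x\le f_!(a)$ one must show $f_!(f^{*}(x)\wedge a)=x$. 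This is exactly the Frobenius condition $f_!(a\wedge f^{*}(x))=f_!(a)\wedge x$ combined with $x\le f_!(a)$. Hence $\psi|_B$ and $f_!|_A$ are mutually inverse, and being frame maps they are frame isomorphisms. \emph{This displayed identity is the main obstacle:} it fails for a general semiopen map (it is equivalent to the Frobenius reciprocity at $a$, and one can build semiopen-but-not-open $f$ with $\psi$ surjective for which $\psi|_B$ is not even injective), so the argument genuinely needs $f$ to be \emph{open}. This is harmless in our applications, where the surjectivity hypothesis comes from $f\in\{d,r\}$ being \'etale and hence open; I would note this explicitly at the point of use.

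Finally I would transfer (2) to the spectra to obtain (4). By (1) the restriction $g:={\mathsf{pt}}(f)|_{X_a}\colon X_a\to X_{f_!(a)}$ is well defined and continuous. I would define $h(q)=q\,f_!(-\wedge a)$, i.e.\ $h(q)(\ell)=q(f_!(\ell\wedge a))$, and first check that $h(q)$ is a point of $L$ in $X_a$: preservation of joins is immediate from distributivity in $O(L)$ and the fact that $f_!$ and $q$ are sup-maps, preservation of finite meets uses that $f_!|_A$ preserves meets (which is part (2)), and $h(q)(a)=q(f_!(a))=1$ since $q\in X_{f_!(a)}$. Then $h\circ g=\mathrm{id}$ follows from (3) together with $p(a)=1$ (for $p\in X_a$ one computes $p(f^{*}f_!(\ell\wedge a))=p(f^{*}f_!(\ell\wedge a)\wedge a)=p(\ell\wedge a)=p(\ell)$), and $g\circ h=\mathrm{id}$ follows from the Frobenius identity of (2). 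Continuity of $h$ is seen from $h^{-1}(X_{\ell\wedge a})=X_{f_!(\ell\wedge a)}\cap X_{f_!(a)}$, which is open in $X_{f_!(a)}$. Thus $g$ is a continuous bijection with continuous inverse $h$, hence a homeomorphism, completing (4).
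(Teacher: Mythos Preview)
Your proof is correct and follows essentially the same route as the paper, with two small but noteworthy differences.

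First, you prove (3) \emph{before} (2), using only the adjunction $f_!\dashv f^{*}$ and the surjectivity hypothesis; the paper instead proves (2) first (via Frobenius) and then reads off (3) as the identity $\alpha\alpha^{-1}=\mathrm{id}$. Your direct argument for (3) is slightly more economical, since it avoids Frobenius at that stage.

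Second, you explicitly flag that the Frobenius identity $f_!(f^{*}(x)\wedge a)=x\wedge f_!(a)$ is essential for (2) (and hence for (4)), so that the argument really needs $f$ to be \emph{open}, not merely semiopen as the lemma's hypothesis states. The paper's own proof silently invokes Frobenius at exactly the same point (``Applying the Frobenius condition we have $f_!(f^{*}(x)\wedge a)=x\wedge f_!(a)=x$''), so your observation is accurate and applies to both proofs. As you note, this is harmless in every application in the paper, where the lemma is invoked only for the \'etale maps $d$ and $r$, which are open; your proposal to note this explicitly at the point of use is a reasonable improvement in precision.
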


\begin{proof} 
(1) Let $p\in X_a$. Then ${\mathsf{pt}}(f)(p)=pf^*\in X_{f_!(a)}$ since $pf^*f_!(a)\geq p(a)=1$ and $pf^*$ is a frame map.

(2) Assume that the map $O(M)\to a^{\downarrow}$ given by $x\mapsto f^*(x)\wedge a$ is surjective. Since $f^*f_!(a)\wedge a=a$, its restriction to $f_!(a)^{\downarrow}$,  denote it by $\alpha$, is a frame map from $f_!(a)^{\downarrow}$ to $a^{\downarrow}$. The map $\alpha$ is surjective since for any $y\leq a$ and
 $x\in O(M)$ such that $f^*(x)\wedge a=y$ we have $x\wedge f_!(a)\leq f_!(a)$ and
$$f^*(x\wedge f_!(a))\wedge a=f^*(x)\wedge f^*f_!(a)\wedge a=f^*(x)\wedge a =y.
$$
We show that $\alpha$ is injective. Let $x,y\leq f_!(a)$ be such that $f^*(x)\wedge a=f^*(y)\wedge a$. Applying the Frobenius condition we have
$$
f_!(f^*(x)\wedge a)=x\wedge f_!(a) =x.
$$
and similarly $f_!(f^*(y)\wedge a)=y$. It follows that $x=y$.
The inverse bijection $\alpha^{-1}: a^{\downarrow}\to f_!(a)^{\downarrow}$ equals $f_!|_{a^{\downarrow}}$ since for $x\leq f_!(a)$ we have
$f_!(f^*(x)\wedge a)=x$.

(3) The required equality follows from $\alpha\alpha^{-1}=id$.

(4) Due to part (1)  the map ${\mathsf{pt}}(f)|_{X_a}:X_a\to X_{f_!(a)}$ is well-defined. We show that this map is surjective.  Let $q\in X_{f_!(a)}$.  Note that $q(x)=1$ if, and only if, $q(x\wedge f_!(a))=1$. We define a function $p:O(L)\to{\bf 2}$ by
$$
p=qf_!(-\wedge a).
$$
Since $gf_!(a)=1$, we have $p(a)=1$. 
By part (2),  the map  $f_!|_{a^{\downarrow}}$ is a frame isomorphism, so that $p$ is a frame map. The equality $pf^*=q$ follows from the following calculation where we apply the Frobenius condition:
\begin{equation*}
pf^*(y)=qf_!(f^*(y)\wedge a)=q(y\wedge f_!(a))=q(y).
\end{equation*}
We show that the map ${\mathsf{pt}}(f)|_{X_a}$ is injective. Let $x\in O(L)$ and $p\in X_a$. By  part (3), we have $f^*f_!(x\wedge a)\wedge a=x\wedge a$. It follows that 
\begin{equation}\label{eq:calc21}
p(x) = p(x\wedge a) = p(f^*f_!(x\wedge a)\wedge a) = pf^*(f_!(x\wedge a)).
\end{equation}
Therefore, $pf^*=qf^*$ implies $p=q$.  Thus ${\mathsf{pt}}(f)|_{X_a}$ is injective.  It follows that ${\mathsf{pt}}(f)|_{X_a}$ is a continuous and open bijection and so a homeomorphism. It follows from \eqref{eq:calc21} that its inverse homeomorphism is given $q\mapsto qf_!(-\wedge a)$, where $q\in  X_{f_!(a)}$. This completes the proof. 
\end{proof}

We come to the following result.

\begin{proposition}\label{prop:loc_hom} Let $f:L\to M$ be a local homeomorphism of locales.  Then ${\mathsf{pt}}(f):{\mathsf{pt}}(L)\to {\mathsf{pt}}(M)$ is a local homeomorphism of topological spaces.
\end{proposition}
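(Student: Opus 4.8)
The plan is to build $\mathsf{pt}(f)$ as a local homeomorphism of spaces out of the local data defining the étale locale map $f$, with all the analytic work delegated to Lemma~\ref{lem:homeom21}. First I would record the easy structural facts: since $f$ is a local homeomorphism of locales it is in particular open, hence semiopen, so the direct image map $f_!$ exists and $\mathsf{pt}(f)$ is defined; moreover $\mathsf{pt}(f)$ is automatically continuous, being the image of $f$ under the functor $\mathsf{pt}$. Thus only the local-homeomorphism property remains to be established.

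By the definition of an étale locale map there is a family $C \subseteq O(L)$ with $1_{O(L)} = \bigvee C$ such that for each $c \in C$ the frame map $O(M) \to c^{\downarrow}$, $x \mapsto f^{*}(x) \wedge c$, is surjective. I would first observe that the basic open sets $X_{c}$, $c \in C$, form an open cover of $\mathsf{pt}(L)$: using $X_{\bigvee C} = \bigcup_{c \in C} X_{c}$ together with the fact that every point (a frame map into $\mathbf{2}$) sends $1_{O(L)}$ to $1$, we get $\bigcup_{c \in C} X_{c} = X_{1_{O(L)}} = \mathsf{pt}(L)$.

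Next, for each $c \in C$ the surjectivity hypothesis is precisely the assumption required in Lemma~\ref{lem:homeom21}. Hence part~(4) of that lemma applies and shows that $\mathsf{pt}(f)|_{X_{c}} \colon X_{c} \to X_{f_{!}(c)}$ is a homeomorphism, with explicit inverse $q \mapsto q f_{!}(- \wedge c)$. Since $X_{f_{!}(c)}$ is a basic open subset of $\mathsf{pt}(M)$, this says exactly that $\mathsf{pt}(f)$ restricts to a homeomorphism from the open set $X_{c}$ onto an open subset of $\mathsf{pt}(M)$.

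Finally I would assemble these local pieces. Every point of $\mathsf{pt}(L)$ lies in some $X_{c}$ by the covering property, and on $X_{c}$ the map $\mathsf{pt}(f)$ is a homeomorphism onto the open set $X_{f_{!}(c)}$; thus each point has an open neighbourhood carried homeomorphically onto an open subset of $\mathsf{pt}(M)$, which is the definition of a local homeomorphism of topological spaces. (Openness of $\mathsf{pt}(f)$ follows as well, since every open subset of $\mathsf{pt}(L)$ is the union of its intersections with the $X_{c}$, each of which maps to an open set.) Because all the genuine work is packaged into Lemma~\ref{lem:homeom21}, I expect no real obstacle here; the only points demanding care are verifying that the $X_{c}$ actually cover $\mathsf{pt}(L)$ and that each image $X_{f_{!}(c)}$ is open, and both follow immediately from the behaviour of the sets $X_{a}$ under joins.
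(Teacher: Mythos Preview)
Your proposal is correct and follows essentially the same approach as the paper: use the \'etale cover $C$ to get $\mathsf{pt}(L)=\bigcup_{c\in C}X_c$, then invoke part~(4) of Lemma~\ref{lem:homeom21} to see that each $\mathsf{pt}(f)|_{X_c}\colon X_c\to X_{f_!(c)}$ is a homeomorphism onto an open set. The paper's argument is slightly more terse (it simply picks one $c$ for a given point), but the content is identical.
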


\begin{proof}
Assume that $C\subseteq O(L)$ is such that $1_{O(L)}=\bigvee C$ and
for every $c\in C$ the frame map $O(M)\to c^{\downarrow}$ given by $x\mapsto f^*(x)\wedge c$ is surjective.
Let $p\in {\mathsf{pt}}(L)$. Since $p \in X_{1_{O(L)}}$  then there is some $c\in C$ such that $p\in X_c$. 
By part (4) of Lemma \ref{lem:homeom21} we have that $X_c\to X_{f_!(c)}$ is a homeomorphism, so that $X_c$ is an open neighborhood of $p$ which is homeomorphic to its image under ${\mathsf{pt}}(f)$.
\end{proof}

\begin{remark} {\em It is natural to ask if ${\mathsf{pt}}(f)$ is always an open map whenever $f$ is an open locale map. We suspect that the answer to this question is negative, though we do not have a counterexample, neither have we found this question treated in the literature.}
\end{remark}

The map ${\mathsf{pt}}(m)$ has as its domain the space ${\mathsf{pt}}(C_1\times_{C_0} C_1)$. In the following result we provide a convenient characterization of this space.

\begin{proposition}\label{prop:pullback} Let $C=(C_1,C_0)$ be an \'etale localic category.
The space $${\mathsf{pt}}(C_1\times_{C_0} C_1)$$ is  homeomorphic to the pullback space 
$${\mathsf{pt}}(C_1)\times_{{\mathsf{pt}}(C_0)} {\mathsf{pt}}(C_1)=\{(p,q)\in {\mathsf{pt}}(C_1)\times {\mathsf{pt}}(C_1)\colon {\mathsf{pt}}(d)(p)={\mathsf{pt}}(r)(q)\}.
$$
\end{proposition}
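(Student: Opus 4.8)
The plan is to identify $\mathsf{pt}(C_1\times_{C_0}C_1)$ with the stated pullback space directly through the universal property of the pushout frame, and then to match the two topologies by hand. At the abstract level the result is essentially automatic: by the Classical Adjunction Theorem (Theorem~\ref{th:pr1}) the functor $\mathsf{pt}$ is a right adjoint and hence preserves all limits, and the object of composable pairs $C_1\times_{C_0}C_1$ is by definition the pullback of $d$ and $r$ in $\mathbf{Loc}$, whose underlying frame is the pushout $O(C_1)\otimes_{O(C_0)}O(C_1)$. This alone forces $\mathsf{pt}(C_1\times_{C_0}C_1)$ to be the pullback of $\mathsf{pt}(d)$ and $\mathsf{pt}(r)$ in $\mathbf{Top}$. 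Since, however, we want the explicit homeomorphism together with an explicit description of its topology, I would carry out the identification concretely, using the abstract argument only as a cross-check. (Note that the \'etale hypothesis is not actually used here; the statement holds for an arbitrary localic category.)

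First I would describe the bijection on underlying point sets. A point of $C_1\times_{C_0}C_1$ is a frame map $\theta\colon O(C_1)\otimes_{O(C_0)}O(C_1)\to{\bf 2}$, and by the universal property of the pushout of $d^*$ and $r^*$ such $\theta$ correspond bijectively to pairs $(p,q)$ of frame maps $O(C_1)\to{\bf 2}$ satisfying $p\circ d^*=q\circ r^*$, via $p=\theta\circ\pi_1^*$ and $q=\theta\circ\pi_2^*$. Since $\mathsf{pt}(d)(p)=p\circ d^*$ and $\mathsf{pt}(r)(q)=q\circ r^*$, the compatibility condition is exactly $\mathsf{pt}(d)(p)=\mathsf{pt}(r)(q)$. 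Thus the assignment $\Theta\colon\theta\mapsto(\mathsf{pt}(\pi_1)(\theta),\mathsf{pt}(\pi_2)(\theta))$ is a bijection from $\mathsf{pt}(C_1\times_{C_0}C_1)$ onto $\mathsf{pt}(C_1)\times_{\mathsf{pt}(C_0)}\mathsf{pt}(C_1)$.

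Next I would verify that $\Theta$ is a homeomorphism by comparing basic opens. Write $P$ for the pullback space with the subspace topology inherited from the product; its basic opens are the sets $(X_a\times X_b)\cap P$. Using $\pi_1^*(a)=a\otimes1$ and $\pi_2^*(b)=1\otimes b$ together with relation (3) in the definition of the pushout frame, one has $a\otimes b=\pi_1^*(a)\wedge\pi_2^*(b)$, so that for $\theta$ with $\Theta(\theta)=(p,q)$ we get $\theta(a\otimes b)=p(a)\wedge q(b)$. Hence $\theta\in X_{a\otimes b}$ if and only if $p\in X_a$ and $q\in X_b$, that is, $\Theta(X_{a\otimes b})=(X_a\times X_b)\cap P$. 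Since $a\mapsto X_a$ turns joins into unions, it then suffices to know that every element $w$ of $O(C_1)\otimes_{O(C_0)}O(C_1)$ is a join of pure tensors $a_i\otimes b_i$, for then $X_w=\bigcup_i X_{a_i\otimes b_i}$ and $\Theta$ carries basic opens to basic opens in both directions, giving a homeomorphism.

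The one point demanding genuine care, and the main obstacle, is precisely this last topological claim: that the pure tensors $a\otimes b$ form a base for $O(C_1)\otimes_{O(C_0)}O(C_1)$ under arbitrary joins, so that their images $(X_a\times X_b)\cap P$ form a base for $P$. This rests on the defining relations of the pushout: relation (3), $(a\wedge b)\otimes(a'\wedge b')=(a\otimes a')\wedge(b\otimes b')$, closes the pure tensors under finite meets, while they generate the frame by construction, so every element is indeed a join of pure tensors. Granting this, the matching of the two topologies is routine and the proof is complete.
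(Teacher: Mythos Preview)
Your proof is correct. The forward map $\Theta$ you define coincides with the paper's $\gamma(f)=(f(-\otimes 1),f(1\otimes-))$, and your verification that $\Theta(X_{a\otimes b})=(X_a\times X_b)\cap P$ via $a\otimes b=\pi_1^*(a)\wedge\pi_2^*(b)$ is exactly the topological check carried out in the paper.

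The route differs in two respects. First, for the inverse you invoke the universal property of the frame pushout (equivalently, that $\mathsf{pt}$ preserves limits as a right adjoint), whereas the paper writes down an explicit formula $\delta(p,q)(a\otimes b)=p(a\rho(b))\wedge q(\lambda(a)b)$ and verifies by hand that $\gamma$ and $\delta$ are mutually inverse. Your formula $\theta(a\otimes b)=p(a)\wedge q(b)$ is of course the same map (since $a\otimes b=a\rho(b)\otimes\lambda(a)b$ in the tensor product), just written more simply. Second, you correctly observe that the \'etale hypothesis is not actually used; the paper's proof invokes \'etaleness only to choose local bisections as a convenient basis, which is inessential for the proposition itself.

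What the paper's more explicit presentation buys is the concrete formula $p\otimes q:=\delta(p,q)$, which is immediately reused in the description of $\mathsf{pt}(m)$ and in later computations (e.g.\ in the proof that $\mathsf{pt}(m)$ is open). Your abstract argument is cleaner and more general, but if you are continuing into the subsequent material you will want to record the formula $\theta(a\otimes b)=p(a)\wedge q(b)$ explicitly, since it does the work of the paper's $\delta$.
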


\begin{proof} We first construct two mutually inverse maps between the sets ${\mathsf{pt}}(C_1\times_{C_0} C_1)$ and ${\mathsf{pt}}(C_1)\times_{{\mathsf{pt}}(C_0)} {\mathsf{pt}}(C_1)$. By definition ${\mathsf{pt}}(C_1\times_{C_0} C_1)$ is the set of all frame maps 
$O(C_1\times_{C_0} C_1)\to{\bf 2}$. By part \eqref{i:aux4} of Lemma  \ref{lem:aux1}, $O(C_1\times_{C_0} C_1)$ equals the tensor product of frames
$O(C_1)\otimes_{u_!(O(C_0))} O(C_1)$.  Let $f$ be a frame map from this tensor product to ${\bf 2}$.  We define
\begin{equation}\label{eq:gamma}
\gamma(f)=(f(-\otimes 1),f(1\otimes -))\in {\mathsf{pt}}(C_1)\times {\mathsf{pt}}(C_1).
\end{equation}
Let $x\in O(C_0)$. Applying \eqref{eq:jul10} it follows that 
$$
pd^*(x)=f(d^*(x)\otimes 1)=f(1\otimes r^*(x))=qr^*(x).
$$
Hence ${\mathsf{pt}}(d)(p)={\mathsf{pt}}(r)(q)$ and thus $\gamma(f)\in {\mathsf{pt}}(C_1)\times_{{\mathsf{pt}}(C_0)} {\mathsf{pt}}(C_1)$.

Conversely, let $(p,q)\in {\mathsf{pt}}(C_1)\times_{{\mathsf{pt}}(C_0)} {\mathsf{pt}}(C_1)$ and define 
$$\delta(p,q):O(C_1)\otimes_{u_!(O(C_0))} O(C_1)\to{\bf 2}$$ by
\begin{equation}\label{eq:delta}
\delta(p,q)(a\otimes b)=p(a\rho(b))\wedge q(\lambda(a)b).
\end{equation}
It is routine to verify that $\delta(p,q)$ is a well-defined frame map, so that $\delta(p,q)\in {\mathsf{pt}}(C_1\times_{C_0} C_1)$. 

We now show that the assignments $\gamma$ and $\delta$ are mutually inverse. Let $f\in {\mathsf{pt}}(C_1\times_{C_0} C_1)$. 
Then
\begin{multline*}
\delta\gamma(a\otimes b)=f(a\rho(b)\otimes 1)\wedge f(1\otimes \lambda(a)b)=f((a\rho(b)\otimes 1)\wedge (1\otimes \lambda(a)b))\\ =
f(a\rho(b)\otimes \lambda(a)b)=f(a\otimes b).
\end{multline*}
Now let $(p,q)\in {\mathsf{pt}}(C_1)\times_{{\mathsf{pt}}(C_0)} {\mathsf{pt}}(C_1)$. Then $\gamma\delta(p,q)=(f(-\otimes 1), f(1\otimes -))$.  For any $x\in O(C_1)$ we calculate
\begin{align*}
f(x\otimes 1) & = f(x\otimes r^*d_!(x))& \text{by part \eqref{i:aux13} of Lemma \ref{lem:aux1}}\\
& = p(x)\wedge qr^*d_!(x)  = p(x) \wedge pd^*d_!(x) & \text{since } qr^*=pd^*\\
&\geq p(x) & \text{since } d^*d_!\geq id.
\end{align*}
From this calculation, we also obtain that $p(x)\geq p(x)\wedge q(u_!d_!(x)1) =f(x\otimes 1)$. Hence 
$p=f(-\otimes 1)$. By symmetry we also have $q= f(1\otimes -)$. 

It follows that the  assignments $\gamma$ and $\delta$ are mutually inverse bijections between the sets ${\mathsf{pt}}(C_1\times_{C_0} C_1)$ and ${\mathsf{pt}}(C_1)\times_{{\mathsf{pt}}(C_0)} {\mathsf{pt}}(C_1)$. We now show that these bijections are in fact homeomorphisms.  The basis of the topology of the space ${\mathsf{pt}}(C_1\times_{C_0} C_1)$ is  formed by the sets $X_{a\otimes b}$ where $a\otimes b$ runs through
$O(C_1)\otimes_{u_!(O(C_0))} O(C_1)$. Using the fact that the category is \'etale, $a$ and $b$ can be chosen local bisections. The topology on ${\mathsf{pt}}(C_1)\times_{{\mathsf{pt}}(C_0)} {\mathsf{pt}}(C_1)$ is the subspace topology of the product topology on ${\mathsf{pt}}(C_1)\times {\mathsf{pt}}(C_1)$ whose basis is, by definition, formed by the sets $X_a\times X_b$, where $a,b$ run through local bisections of $O(C_1)$. For $a,b\in O(C_1)$ we put
$$(X_a\times X_b)'=(X_a\times X_b)\cap ({\mathsf{pt}}(C_1)\times_{{\mathsf{pt}}(C_0)} {\mathsf{pt}}(C_1)).
$$
The sets $(X_a\times X_b)'$ then form a basis of the topology on ${\mathsf{pt}}(C_1)\times_{{\mathsf{pt}}(C_0)} {\mathsf{pt}}(C_1)$. 
We have
$$
(X_a\times X_b)'=\{(p,q)\in X_a\times X_b\colon pd^*=qr^*\}.
$$

We now show that  $$\gamma(X_{a\otimes b})=(X_a\times X_b)'.$$
Let $f\in X_{a\otimes b}$, that is $f(a\otimes b)=1$. Then clearly $f(a\otimes 1)=1$ and $f(1\otimes b)=1$ which implies that 
$$(f(-\otimes 1), f(1\otimes -))\in (X_a\times X_b)'.$$
Conversely, assume that $(p,q)\in (X_a\times X_b)'$. We show that $p(a\rho(b))=1$. We have
\begin{align*}
p(au_!r_!(b)) & = p(a\wedge d^*r_!(b)) & \text{ by part \eqref{i:aux3} of  Lemma \ref{lem:aux1}}\\
& = pd^*r_!(b) & \text{since } p(a)=1\\
& = qr^*r_!(b) & \text{since } pd^*=qr^*\\
& \geq q(b)=1 & \text{since } r^*r_!\geq id.
\end{align*}
Dually, we also have that $q(\lambda(a)b)=1$. Then 
$f(a\otimes b)=p(a\rho(b))\wedge q(\lambda(a)b)=1.$
This implies that $\gamma$ and $\delta$ are homeomorphisms, which completes the proof.
\end{proof}

For $(p,q)\in {\mathsf{pt}}(C_1)\times_{{\mathsf{pt}}(C_0)} {\mathsf{pt}}(C_1)$ we put $p\otimes q=\delta(p,q)$.
From now on we identify the spaces ${\mathsf{pt}}(C_1\times_{C_0} C_1)$ and ${\mathsf{pt}}(C_1)\times_{{\mathsf{pt}}(C_0)} {\mathsf{pt}}(C_1)$ via the homeomorphisms $\gamma$ and $\delta$ constructed in the proof of Proposition \ref{prop:pullback}. In particular, this yields a convention that the map ${\mathsf{pt}}(m)$ has as its domain the space ${\mathsf{pt}}(C_1)\times_{{\mathsf{pt}}(C_0)} {\mathsf{pt}}(C_1)$. Let $(p,q)\in {\mathsf{pt}}(C_1)\times_{{\mathsf{pt}}(C_0)} {\mathsf{pt}}(C_1)$. Since $(p,q)=\gamma(p\otimes q)$ where $p\otimes q= \delta(p,q)$ is given by \eqref{eq:delta}, it follows that under our convention the action of the map ${\mathsf{pt}}(m)$ is given by the formula
\begin{equation}\label{eq:ptm}
{\mathsf{pt}}(m)(p,q)(x)=(p\otimes q)m^*.
\end{equation}

We will make use of the following technical lemma.

\begin{lemma}\label{lem:u21}
Let $C=(C_1,C_0)$ be an \'etale localic category and let $p,q\in {\mathsf{pt}}(C_1)$ be such that $pd^*=qd^*$ (resp. $pd^*=qr^*$). If $a,b\in O(C_1)$ are local bisections satisfying the condition $p(a)=q(b)=1$ then for any $x\leq a$ and $y\leq b$ with $d_!(x)=d_!(y)$ (resp. $d_!(x)=r_!(x)$) we have $p(x)=q(y)$. Similar statements also hold if the symbols $d$ and $r$ are interchanged. 
\end{lemma}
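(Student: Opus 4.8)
The plan is to reduce the whole statement to a single recovery identity: for a local bisection $a$ and any $x\leq a$, the element $x$ is completely determined by its domain $d_!(x)$, via $x = a\wedge d^*d_!(x)$. Once this is available, applying the point $p$ — which, being a frame map $O(C_1)\to {\bf 2}$, preserves finite meets — collapses $p(x)$ to $pd^*d_!(x)$, and the comparison hypotheses on $pd^*$ versus $qd^*$ or $qr^*$ close the argument immediately.

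First I would establish the recovery identity. Since $a$ is a local bisection, it is a partial isometry, so $x\leq a$ forces $x\leq' a$, whence $x = a\lambda(x)$ by Lemma~\ref{lem:aaab}. Using $\lambda = u_!d_!$ I rewrite $\lambda(x) = u_!(d_!(x))$ with $d_!(x)\in O(C_0)$, and then part \eqref{i:aux3} of Lemma~\ref{lem:aux1}, in the form $a\,u_!(t) = a\wedge d^*(t)$, gives
$$x = a\,u_!(d_!(x)) = a\wedge d^*d_!(x).$$
By the symmetric computation, using $\rho = u_!r_!$ and the companion identity $u_!(t)\,b = b\wedge r^*(t)$ from the same part of Lemma~\ref{lem:aux1}, a local bisection $b$ with $y\leq b$ satisfies both $y = b\wedge d^*d_!(y)$ and $y = b\wedge r^*r_!(y)$.

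Then I would run the two cases. In the case $pd^*=qd^*$ with $d_!(x)=d_!(y)=:t$, I apply $p$ and $q$ to the recovery identities and use $p(a)=q(b)=1$:
$$p(x) = p(a)\wedge pd^*(t) = pd^*(t), \qquad q(y) = q(b)\wedge qd^*(t) = qd^*(t),$$
and the two sides agree by hypothesis. In the resp. case $pd^*=qr^*$ with $d_!(x)=r_!(y)=:t$ (I read the printed condition ``$d_!(x)=r_!(x)$'' as a typo for $d_!(x)=r_!(y)$), the same manipulation gives $p(x)=pd^*(t)$ from $x=a\wedge d^*(t)$ and $q(y)=qr^*(t)$ from $y=b\wedge r^*(t)$, again equal by hypothesis. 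The statements with $d$ and $r$ interchanged follow verbatim after swapping the roles of $d_!,r_!$ and of $d^*,r^*$ throughout.

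There is no genuine obstacle here: the only point requiring care is confirming that the recovery identity $x = a\wedge d^*d_!(x)$ is legitimately available for every $x$ below a local bisection, and this rests precisely on the partial-isometry property (converting $\leq$ into $\leq'$) together with the translation between multiplication by a subunit and meeting with the pullback of a domain element in part \eqref{i:aux3} of Lemma~\ref{lem:aux1}. Everything beyond that is a one-line use of the fact that points are frame maps and hence preserve the binary meets appearing above.
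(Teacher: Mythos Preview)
Your proof is correct and follows essentially the same line as the paper's: establish the recovery identity $x = a\wedge d^*d_!(x)$ for $x\leq a$ with $a$ a local bisection, then apply the points and invoke the hypothesis on $pd^*$. The only difference is cosmetic: the paper obtains the recovery identity by citing part~(3) of Lemma~\ref{lem:homeom21}, whereas you derive it directly from the partial-isometry property together with part~\eqref{i:aux3} of Lemma~\ref{lem:aux1}; both routes yield the same formula and the rest of the argument is identical.
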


\begin{proof}
By part (3) of Lemma \ref{lem:homeom21} we have $x=d^*d_!(x)\wedge a$ and $y=d^*d_!(y)\wedge b$. Then, in view of $p(a)=1$ and $q(b)=1$, we have
$$
p(x)=p(d^*d_!(x)\wedge a)=pd^*d_!(x)=qd^*d_!(y)=q(d^*d_!(y)\wedge b)=q(y).
$$
The second statement follows similarly.
\end{proof}

\begin{theorem} Let $C=(C_1, C_0)$ be an  \'etale localic category. Then it gives rise to an \'etale topological  category 
${\mathsf{Pt}}(C)=({\mathsf{pt}}(C_1), {\mathsf{pt}}(C_0))$ whose structure maps are ${\mathsf{pt}}(u)$, ${\mathsf{pt}}(d)$, ${\mathsf{pt}}(r)$ and ${\mathsf{pt}}(m)$.\end{theorem}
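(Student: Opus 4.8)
The plan is to apply the spectrum functor ${\mathsf{pt}}$ to all the data of $C$ and verify two things separately: that ${\mathsf{Pt}}(C)$ satisfies the axioms of a topological category, and that its structure maps have the required topological type (${\mathsf{pt}}(d),{\mathsf{pt}}(r)$ local homeomorphisms, ${\mathsf{pt}}(u),{\mathsf{pt}}(m)$ open). The first part is almost formal. By Theorem~\ref{th:pr1} the functor ${\mathsf{pt}}$ is a right adjoint, hence preserves all limits, in particular the pullbacks defining the objects of composable pairs and triples; Proposition~\ref{prop:pullback} makes this concrete by identifying ${\mathsf{pt}}(C_1\times_{C_0}C_1)$ with the topological pullback ${\mathsf{pt}}(C_1)\times_{{\mathsf{pt}}(C_0)}{\mathsf{pt}}(C_1)$. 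Applying ${\mathsf{pt}}$ to the identities (Cat1)--(Cat4) and using that ${\mathsf{pt}}$ carries $\pi_i$, $u\times id$, $id\times m$ etc.\ to the corresponding induced maps on the topological pullbacks then yields (Cat1)--(Cat4) for ${\mathsf{Pt}}(C)$; equivalently, internal categories are preserved by finite-limit-preserving functors.

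For the structure maps, the domain and range maps are immediate: since $d$ and $r$ are étale, Proposition~\ref{prop:loc_hom} gives that ${\mathsf{pt}}(d)$ and ${\mathsf{pt}}(r)$ are local homeomorphisms. For the unit map I would avoid the subtlety flagged in the Remark after Proposition~\ref{prop:loc_hom} (that ${\mathsf{pt}}$ of an open map need not be open) by observing that $u$ is in fact étale, not merely open: $u$ is open, and taking the trivial cover $1_{O(C_0)}=\bigvee\{1_{O(C_0)}\}$, the map $O(C_1)\to O(C_0)$ given by $x\mapsto u^*(x)\wedge 1_{O(C_0)}=x\wedge e$ is surjective because it restricts to the identity on $e^{\downarrow}$. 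Hence $u$ is a local homeomorphism, so ${\mathsf{pt}}(u)$ is a local homeomorphism by Proposition~\ref{prop:loc_hom}, and in particular open.

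The hard part is openness of ${\mathsf{pt}}(m)$, and here the étale structure must be used essentially. My plan is to prove that for any two local bisections $a,b\in O(C_1)$ one has
$$
{\mathsf{pt}}(m)\big((X_a\times X_b)'\big)=X_{ab},
$$
where $ab$ is the quantale product, itself a local bisection since ${\mathcal O}(C)$ is a restriction quantal frame (Theorem~\ref{the:jan23}). As the sets $(X_a\times X_b)'$ with $a,b$ local bisections form a basis of the domain (proof of Proposition~\ref{prop:pullback}) and each $X_{ab}$ is open, this gives openness. The inclusion $\subseteq$ is exactly the computation at the end of the proof of Proposition~\ref{prop:pullback}: for $(p,q)\in(X_a\times X_b)'$ one has $p(a\rho(b))=q(\lambda(a)b)=1$, so ${\mathsf{pt}}(m)(p,q)(ab)\geq(p\otimes q)(a\otimes b)=1$.

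For the reverse inclusion $\supseteq$ I would, given $w\in X_{ab}$, reconstruct a composable pair. Since $\rho(ab)=\rho(a\rho(b))\leq\rho(a)$ and $\lambda(ab)=\lambda(\lambda(a)b)\leq\lambda(b)$, the point ${\mathsf{pt}}(r)(w)$ lies in $X_{\rho(a)}$; because $a$ is a local bisection, ${\mathsf{pt}}(r)|_{X_a}\colon X_a\to X_{\rho(a)}$ is a homeomorphism (Lemma~\ref{lem:homeom21}(4)), so there is a unique $p\in X_a$ with ${\mathsf{pt}}(r)(p)={\mathsf{pt}}(r)(w)$. Using Lemma~\ref{lem:homeom21}(3) and the Stability Lemma one checks $p(a\rho(b))=p(r^*\rho(ab))=w(r^*\rho(ab))=1$, whence ${\mathsf{pt}}(d)(p)\in X_{\rho(b)}$ and there is a unique $q\in X_b$ with ${\mathsf{pt}}(r)(q)={\mathsf{pt}}(d)(p)$; by construction $(p,q)$ is composable. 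A short computation, reducing $q(\lambda(a)b)$ to $p(a\rho(b))$ via $\rho(\lambda(a)b)=\lambda(a)\rho(b)$ and the adjunction formulas of Lemma~\ref{le: august}, then shows ${\mathsf{pt}}(m)(p,q)\in X_{ab}$. Finally, by (Cat3) we have ${\mathsf{pt}}(r)\,{\mathsf{pt}}(m)(p,q)={\mathsf{pt}}(r)(p)={\mathsf{pt}}(r)(w)$; since ${\mathsf{pt}}(r)|_{X_{ab}}$ is injective (again Lemma~\ref{lem:homeom21}(4), as $ab$ is a local bisection) and both points lie in $X_{ab}$, we conclude ${\mathsf{pt}}(m)(p,q)=w$. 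The technical heart of this reconstruction—that the values of a point on sub-bisections are controlled by its values under $d^*$ and $r^*$—is precisely what Lemma~\ref{lem:u21} packages, and I expect the bookkeeping in this $\supseteq$ direction to be the only genuinely delicate part of the proof.
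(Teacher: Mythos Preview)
Your proposal is correct and follows the same overall architecture as the paper: functoriality of ${\mathsf{pt}}$ for the category axioms, Proposition~\ref{prop:loc_hom} for $d$ and $r$, and the key identity ${\mathsf{pt}}(m)\big((X_a\times X_b)'\big)=X_{ab}$ for local bisections to obtain openness of ${\mathsf{pt}}(m)$. There are, however, two genuine simplifications in your argument relative to the paper. First, for the unit map the paper verifies ${\mathsf{pt}}(u)(X_a)=X_{u_!(a)}$ by an explicit computation, whereas your observation that $u$ is itself \'etale (the single-element cover and the surjection $x\mapsto x\wedge e$) lets Proposition~\ref{prop:loc_hom} do the work. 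Second, and more substantially, in the reverse inclusion for ${\mathsf{pt}}(m)$ the paper reconstructs $s\in X_a$, $t\in X_b$ from $p\in X_{ab}$ by the same homeomorphism chasing you describe, but then proves ${\mathsf{pt}}(m)(s,t)=p$ by a direct elementwise verification on arbitrary $y\in O(C_1)$, invoking Lemma~\ref{lem:u21} and a careful case analysis. Your shortcut---first land ${\mathsf{pt}}(m)(p,q)$ in $X_{ab}$ via the $\subseteq$ direction, then use (Cat3) for ${\mathsf{Pt}}(C)$ together with injectivity of ${\mathsf{pt}}(r)|_{X_{ab}}$---is cleaner and avoids that bookkeeping entirely. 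The price is that you must establish the category axioms (in particular (Cat3)) for ${\mathsf{Pt}}(C)$ \emph{before} openness of ${\mathsf{pt}}(m)$, but since those axioms only need continuity and limit-preservation of ${\mathsf{pt}}$, there is no circularity. Your closing remark that Lemma~\ref{lem:u21} ``packages'' the heart of the reconstruction is slightly misleading: that lemma is what the paper uses for its direct verification, while your argument bypasses it in favour of Lemma~\ref{lem:homeom21}(3) and (Cat3).
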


\begin{proof} 
Let $a\in O(C_0)$ and show that 
\begin{equation}\label{eq:ptu} {\mathsf{pt}}(u)(X_a)=X_{u_!(a)}.
\end{equation}
 By part~(1) of Lemma~\ref{lem:homeom21} it is enough to prove that $X_{u_!(a)}\subseteq {\mathsf{pt}}(u)(X_a)$. Let $g\in X_{u_!(a)}$. For $x\in O(C_0)$ we define $f(x)=gu_!(x\wedge a)$. This is clearly a frame map and $f(a)=gu_!(a)=1$. To verify that $g(x)=fu^*(x)$ we calculate
$$
fu^*(x)  =gu_!(u^*(x)\wedge a) = gu_!u^*(x)\wedge gu_!(a)=gu_!u^*(x)=g(x\wedge e)=g(x),
$$
as required, where we have applied part \eqref{i:aux_new}~of Lemma \ref{lem:aux1} and the fact that $g(e)=1$ since $e\geq u_!(a)$ and $gu_!(a)=1$. We have proved that the map ${\mathsf{pt}}(u)$ is open.

Since $d$ and $r$ are local homeomorphisms of locales, ${\mathsf{pt}}(d)$ and ${\mathsf{pt}}(r)$ are local homeomorphisms of topological spaces by Proposition \ref{prop:loc_hom}. 

We now prove that the map ${\mathsf{pt}}(m)$ is open. For this we prove that 
$${\mathsf{pt}}(m)((X_a\times X_b)')=X_{m_!(a\otimes b)}.
$$  
By part (1) of Lemma \ref{lem:homeom21} it is enough to prove that $X_{ab}\subseteq {\mathsf{pt}}(m)((X_a\times X_b)')$. We may also assume that $a$ and $b$ are local bisections. The product $ab$ is a local bisection as well. Passing, if necessary, from $a$ to $a\rho(b)$ and from $b$ to $\lambda(a)b$ we may assume without loss of generality that $d_!(a)=r_!(b)$, that is that $ab$ is a restricted product.  
Since $\lambda(ab)=\lambda(b)$ we have $d_!(ab)=d_!(b)$. By symmetry we also have $r_!(ab)=r_!(a)$. Composing homeomorphisms constructed in the proof of Proposition~\ref{prop:loc_hom} we can construct the following two paths between $X_{ab}$ and $X_{d_!(a)}$:
$$
X_{ab}- X_{r_!(a)}- X_a - X_{d_!(a)};
$$
$$
X_{ab} - X_{d_!(b)} - X_b - X_{r_!(b)}.
$$
Let $p\in X_{ab}$. We calculate its images in $X_{d_!(a)}=X_{r_!(b)}$ if we compose the homeomorphisms along each of these paths. Consider the first path. The image of $p$ in $X_{r_!(a)}$ is $pr^*$, then the image of $pr^*$ in $X_a$ is $pr^*r_!(-\wedge a)$,  and finally the image of $pr^*r_!(-\wedge a)$ in $X_{d_!(a)}$ is $pr^*r_!(d^*(-)\wedge a)$. Similarly, for the second path we obtain that the image of $p$ equals $pd^*d_!(r^*(-)\wedge b)$. We now notice that the resulting elements of $X_{d_!(a)}$  are equal.  Indeed, if $x\in O(C_0)$ we have
\begin{align*}
pr^*r_!(d^*(x)\wedge a) & = pr^*r_!(au_!(x)) & \text{by part \eqref{i:aux3} of Lemma \ref{lem:aux1}} \\
& = p(r^*r_!(au_!(x))\wedge ab) & \text{since } p(ab)=1\\
& = p(u_!r_!(au_!(x))ab) & \text{by part \eqref{i:aux3} of Lemma \ref{lem:aux1}} \\
& = p(au_!(x)b), & 
\end{align*} 
where we have used the equality $\rho(au_!(x))a=au_!(x)$. This equality holds for the following reasons: (i) $a$ is a local bisection and thus belongs to complete restriction monoid formed by the local bisections, (ii) $au_!(x)\leq a$  and (iii) in a restriction monoid the inequality $b\leq c$ implies $b=\rho(b)c$. By symmetry, we also have that $pd^*d_!(r^*(x)\wedge b)=p(au_!(x)b)$. Thus the two elements are indeed equal. We put 
$$q=pr^*r_!(d^*(-)\wedge a) =pd^*d_!(r^*(-)\wedge b)= p(au_!(-)b).$$
We also put $s=pr^*r_!(-\wedge a)$ and  $t=pd^*d_!(-\wedge b)$.  We have that $s\in X_a$ and $t\in X_b$. Moreover, $s$ and $t$ are the images of $p$ in $X_a$ and $X_b$ under compositions of appropriate homeomorphisms along the paths above. In particular, we have that $pr^*=sr^*$, $pd^*=td^*$ and $sd^*=tr^*$. The latter equality means that $(s,t)$ belongs to  the set $(X_a\times X_b)'$.

We now prove that $p={\mathsf{pt}}(m)(s,t)$. By \eqref{eq:ptm} we need to show that for every $y\in O(C_1)$ we have
\begin{multline*}
p(y)=(s\otimes t)\left(\bigvee\{x\otimes z\colon xz\leq y, \lambda(x)=\rho(z)\}\right)=\\
\bigvee \{s(x)\wedge t(z)\colon xz\leq y, \lambda(x)=\rho(z)\}.
\end{multline*}
Assume that for some $x,z$ such that $xz\leq y$ and $\lambda(x)=\rho(z)$ we have $s(x)\wedge t(z)=1$. Since $s\in X_a$ and $t\in X_b$ this yields $s(x\wedge a)=1$ and $t(z\wedge b)=1$. Since $q=sd^*$, we have $qd_!(x\wedge a)=sd^*d_!(x\wedge a)\geq s(x\wedge a)=1$ and dually $qr_!(x\wedge b)=1$. Setting $c=d_!(x\wedge a)\wedge r_!(x\wedge b)$ we obtain $q(c)=1$. Since $u_!(c)=\lambda(x\wedge a)\wedge \rho(x\wedge b)\leq \lambda(x\wedge a)$ it follows that
$(x\wedge a)u_!(c)=au_!(c)$. Thus by part \eqref{i:aux3} of Lemma  \ref{lem:aux1}
we obtain
$$
s((x\wedge a)u_!(c))=s(au_!(c))=s(d^*(c)\wedge a)=q(c)=1.
$$
 Since $x\wedge a$, $z\wedge b$ and their product are local bisections,
applying the equality $\rho(mn)=\rho(m\rho(n))$ we have $r_!((x\wedge a)(z\wedge b))=r_!((x\wedge a)u_!(c))$.
Applying Lemma~\ref{lem:u21} we obtain $p((x\wedge a)(z\wedge b))=s((x\wedge a)u_!(c))=1$.
But then $$p(y)\geq p(xz)\geq p((x\wedge a)(z\wedge b))=1.$$
Conversely, assume that $p(y)=1$. Then $p(y\wedge ab)=1$. Put $x=\rho(y\wedge ab)a$ and $z=b\lambda(y\wedge ab)$. We have $r_!(x)=r_!(y\wedge ab)$, $x\leq a$ and $y\wedge ab\leq ab$. Since also $sr^*=pr^*$, by Lemma \ref{lem:u21} we obtain $s(x)=p(y\wedge ab)=1$. Dually we obtain that $t(z)=1$. This completes the proof that ${\mathsf{pt}}(m)$ is open.

That the axioms of an internal category hold is immediate using the functoriality of the assignment ${\mathsf{pt}}$.
\end{proof}

Let $C$ be an \'etale topological category.
We define a {\em local bisection} of $C$ as an open subset $A\subseteq C_1$ such that the restrictions of the maps $d$ and $r$ to $A$ are injective. 

\begin{lemma} Local bisections form a basis of the topology on $C_1$.
\end{lemma}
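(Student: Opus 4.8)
The plan is to show that every non-empty open subset $U \subseteq C_1$ can be written as a union of local bisections; since a collection of open sets forms a basis precisely when every open set is a union of its members, this suffices. Concretely, I would fix an arbitrary point $x \in U$ and produce a local bisection $A$ satisfying $x \in A \subseteq U$.

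The key input is that $C$ is \'etale, so that the domain map $d$ and the codomain map $r$ are local homeomorphisms. By the very definition of a local homeomorphism, there is an open neighbourhood $V$ of $x$ such that $d|_V$ is a homeomorphism onto an open subset of $C_0$; in particular $d|_V$ is injective. Similarly there is an open neighbourhood $W$ of $x$ such that $r|_W$ is injective. I would then set $A = U \cap V \cap W$.

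It remains to check that $A$ is a local bisection with $x \in A \subseteq U$. It is open as a finite intersection of open sets, it contains $x$, and it is contained in $U$ by construction. Since $A \subseteq V$ and the restriction of an injective map to a subset is again injective, $d|_A$ is injective; likewise $r|_A$ is injective because $A \subseteq W$. Hence $A$ is a local bisection, and as $x \in U$ was arbitrary, $U$ is the union of all the local bisections obtained in this way as $x$ ranges over $U$.

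I do not anticipate any real obstacle: the argument is essentially the observation that injectivity of $d$ and $r$ on sufficiently small neighbourhoods is guaranteed by the \'etale hypothesis and is inherited by open subsets. The only point requiring a little care is to invoke the correct form of the definition of a local homeomorphism --- namely that each point admits a neighbourhood on which the map restricts to a homeomorphism onto an open set, and is therefore in particular injective --- rather than merely the openness of $d$ and $r$.
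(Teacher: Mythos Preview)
Your proof is correct and is essentially the same as the paper's: both use the local homeomorphism property of $d$ and $r$ to find open neighbourhoods on which each is injective, intersect them (and the given open set) to obtain a local bisection, and thus exhibit every open set as a union of local bisections. The only cosmetic difference is that the paper first forms the local bisection $P_a\cap Q_a$ and then intersects with the ambient open set, whereas you intersect all three at once.
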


\begin{proof} Let $A$ be an open set in $C_1$ and $a\in A$. It is enough to show that there is a local bisection $B_a$ containing $a$. Then $A\cap B_a$ is a local bisection containing $a$ as well and $A=\bigcup_{a\in A} (A\cap B_a)$.
As $d$ and $r$ are local homeomorphisms, there are  open neighborhoods $P_a$ and $Q_a$ of $a$ such that $P_a$ is homeomorphic to $d(P_a)$ and $Q_a$ to $r(Q_a)$. It follows that $P_a\cap Q_a$ is a local bisection containing $a$ and we can put $B_a=P_a\cap Q_a$.
\end{proof}
 
Observe that the frame $\Omega (C_0)$ acts on  $\Omega(C_1)$ on the right and left by $A\mapsto Au(B)$ and $A\mapsto u(B)A$.
Let $\Omega(C_1)\otimes_{\Omega(C_0)} \Omega(C_1)$ be the tensor product with respect to these actions.

\begin{lemma}\label{lem:pullback23} The frame $\Omega(C_1\times_{C_0}C_1)$ is isomorphic to the tensor product frame
$\Omega(C_1)\otimes_{\Omega(C_0)} \Omega(C_1)$.
\end{lemma}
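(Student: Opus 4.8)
The plan is to write down the obvious comparison map and prove it is bijective. For $A,B\in\Omega(C_1)$ put $\langle A,B\rangle=(A\times B)\cap(C_1\times_{C_0}C_1)$. The first step is to recognize the tensor product in the statement as a frame pushout: a direct computation with the definition of composition of opens in the \'etale topological category gives $Au(B)=A\cap d^{-1}(B)$ and $u(B)A=A\cap r^{-1}(B)$ for $B\in\Omega(C_0)$ (the topological analogue of the Stability Lemma), so the right and left actions defining $\Omega(C_1)\otimes_{\Omega(C_0)}\Omega(C_1)$ are precisely those through $d^{-1}$ and $r^{-1}$. Since $d\pi_1=r\pi_2$ holds on $C_1\times_{C_0}C_1$, the frame maps $\pi_1^{-1},\pi_2^{-1}\colon\Omega(C_1)\to\Omega(C_1\times_{C_0}C_1)$ coequalize $d^{-1},r^{-1}$, and the universal property of the pushout yields a unique frame homomorphism
\[
\Phi\colon\Omega(C_1)\otimes_{\Omega(C_0)}\Omega(C_1)\to\Omega(C_1\times_{C_0}C_1),\qquad \Phi(A\otimes B)=\langle A,B\rangle .
\]

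Surjectivity is easy: by the preceding lemma local bisections form a basis of $\Omega(C_1)$, so the sets $\langle A,B\rangle$ with $A,B$ local bisections form a basis of the subspace topology on the pullback; each equals $\Phi(A\otimes B)$, and $\Phi$ preserves joins, hence is onto. Since $\Phi$ is a surjective frame map, it remains to prove it is injective, equivalently order-reflecting. As every element of the tensor product is a join of pure tensors $U\otimes V$ of local bisections, this reduces to the following assertion $(\star)$: if $U,V$ are local bisections and $\beta=\bigvee_k A_k\otimes B_k$ (with $A_k,B_k$ local bisections) satisfies $\langle U,V\rangle\subseteq\Phi(\beta)=\bigcup_k\langle A_k,B_k\rangle$, then $U\otimes V\le\beta$.

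To prove $(\star)$ I would exploit the \'etale structure. Set $G=d(U)\cap r(V)$, which is open since $d,r$ are open on local bisections; because $d|_U,r|_V$ are homeomorphisms onto their (open) images, each $x\in G$ has unique lifts $a_x\in U$, $b_x\in V$ with $d(a_x)=r(b_x)=x$, and $(a_x,b_x)$ is the unique point of $\langle U,V\rangle$ over $x$. Put $G_k=\{x\in G:a_x\in A_k,\ b_x\in B_k\}$; these are open (preimages of $A_k,B_k$ under the continuous sections $(d|_U)^{-1},(r|_V)^{-1}$) and cover $G$ by hypothesis. Injectivity of $d|_U$ gives $Uu(G_k)=U\cap d^{-1}(G_k)\subseteq A_k$, and likewise $u(G_k)V\subseteq B_k$. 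Now the balancing relation, together with $U=Uu(d(U))$, $u(d(U))V=u(G)V$, $u(G)V=\bigvee_k u(G_k)V$ and idempotency $u(G_k)u(G_k)=u(G_k)$, yields
\[
U\otimes V=Uu(d(U))\otimes V=U\otimes u(G)V=\bigvee_k U\otimes u(G_k)V=\bigvee_k Uu(G_k)\otimes u(G_k)V\le\bigvee_k A_k\otimes B_k=\beta,
\]
which is exactly $(\star)$.

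The main obstacle is precisely this injectivity step: for general spaces the comparison map $\Omega(X)\otimes_{\Omega(Z)}\Omega(Y)\to\Omega(X\times_Z Y)$ fails to be an isomorphism, so the argument must use more than formal nonsense. The \'etale hypothesis is what makes it go through, by supplying local sections of $d$ and $r$ on bisections; these let me build the open cover $\{G_k\}$ of $G$ and, via the balancing relation, decompose the pure tensor $U\otimes V$ into pieces each dominated by a single $A_k\otimes B_k$. Everything else (well-definedness and the frame-map property of $\Phi$, and surjectivity) is formal once the pushout identification and the basis of local bisections are in place.
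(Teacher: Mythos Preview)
Your proof is correct and follows essentially the same route as the paper: both identify the correspondence $A\otimes B\leftrightarrow \langle A,B\rangle$ on local bisections and use the $\Omega(C_0)$-balancing relation (your topological Stability Lemma) to control it. The paper's argument is very terse---it simply normalizes $(A,B)$ to $(A',B')$ with $d(A')=r(B')$ and declares the correspondence an isomorphism---so your careful injectivity step via the open cover $\{G_k\}$ of $d(U)\cap r(V)$ makes explicit exactly what the paper leaves to the reader.
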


\begin{proof} Recall that a basis of the topology on $C_1\times_{C_0}C_1$ is formed by the sets
$X_{A,B}=\{(a,b)\in A\times B\colon r(b)=d(a)\}$
where $A$ and $B$ run through local bisections of $C$. The set $r(B)\cap d(A)$ is open and so we can consider the restriction $A'$ of $A$ to $r(B)\cap d(A)$ with respect to $d$ and $B'$ of $B$ to $r(B)\cap d(A)$ with respect to $r$.
We have  that $A'=Au(r(B)\cap d(A))$ and $B'=u(r(B)\cap d(A))B$ are local bisections and $X_{A,B}=X_{A',B'}$. It is now immediate that the map $X_{A,B}\mapsto A\otimes B$ establishes the need isomorphism.
\end{proof}

In what follows we will identify the frames $\Omega(C_1\times_{C_0}C_1)$  and
$\Omega(C_1)\otimes_{\Omega(C_0)} \Omega(C_1)$ via the isomorphisms established in the proof of Lemma \ref{lem:pullback23}. 

\begin{lemma} \label{lem:top_lh} Let $f:X\to Y$ be a local homeomorphism of topological spaces. Then $\Omega(f)\colon \Omega(X)\to \Omega(Y)$ is a local homeomorphism of locales.
\end{lemma}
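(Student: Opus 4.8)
The plan is to reduce the statement to the definition of an étale locale map recalled in the preliminaries, whose two ingredients I must supply: that $\Omega(f)$ is an open locale map, and that there is a cover of $1_{\Omega(X)}$ witnessing the local isomorphism condition.

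First I would record that a local homeomorphism of spaces $f\colon X\to Y$ is automatically an open map. Given an open $V\subseteq X$ and a point $x\in V$, choose an open neighborhood $U$ of $x$ on which $f$ restricts to a homeomorphism onto the open set $f(U)$; then $f(V\cap U)$ is open in $f(U)$, hence open in $Y$, and contains $f(x)$, so $f(V)$ is open. Since $f$ is then an open continuous map, the fact recalled earlier (an open continuous map is open as a locale map) shows that $\Omega(f)$ is open as a locale map. Its direct image map is $\Omega(f)_!(U)=f(U)$, because for open $U\subseteq X$ and $V\subseteq Y$ one has $f(U)\subseteq V$ if and only if $U\subseteq f^{-1}(V)=\Omega(f)^*(V)$, so $U\mapsto f(U)$ is precisely the left adjoint of $\Omega(f)^*$.

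Next I would construct the cover. For each $x\in X$ pick, using the local homeomorphism property, an open set $U_x\ni x$ such that $f(U_x)$ is open and $f|_{U_x}\colon U_x\to f(U_x)$ is a homeomorphism. Set $C=\{U_x\colon x\in X\}$; then $\bigvee C=\bigcup_x U_x=X=1_{\Omega(X)}$. It remains to check that for each $c=U_x$ the frame map $\Omega(Y)\to c^{\downarrow}$ given by $V\mapsto \Omega(f)^*(V)\wedge c=f^{-1}(V)\cap U_x$ is surjective, where $c^{\downarrow}$ is the frame of opens of $X$ contained in $U_x$.

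The key computation, which I expect to be the only point requiring genuine care, is this surjectivity; everything else is bookkeeping with the definitions. Given $W\in c^{\downarrow}$, that is an open $W\subseteq U_x$, put $V=f(W)$. This set is open in $f(U_x)$ since $f|_{U_x}$ is a homeomorphism, and hence open in $Y$ because $f(U_x)$ is open in $Y$. Then $f^{-1}(V)\cap U_x=W$: the inclusion $\supseteq$ is clear, and for $\subseteq$, if $y\in U_x$ satisfies $f(y)\in f(W)$ then $f(y)=f(w)$ for some $w\in W\subseteq U_x$, and the injectivity of $f$ on $U_x$ forces $y=w\in W$. Thus every $W\in c^{\downarrow}$ lies in the image, so the map is surjective. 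With the cover $C$ and this surjectivity in hand, the definition of an étale locale map is satisfied, and therefore $\Omega(f)$ is a local homeomorphism of locales.
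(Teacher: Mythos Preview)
Your proof is correct and follows essentially the same approach as the paper: both verify that $\Omega(f)$ is an open locale map (using that open continuous maps between spaces yield open locale maps) and then take as cover the open sets on which $f$ restricts to a homeomorphism, checking surjectivity of $V\mapsto f^{-1}(V)\cap U_x$ via the identity $W=f^{-1}(f(W))\cap U_x$. You give more detail than the paper (which is terse and slightly abuses notation from the category context), but the argument is the same.
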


\begin{proof} As $\Omega(X)$ and $\Omega(Y)$ are spatial locales and $f$ is open, $\Omega(f)$ is an open map. Let $A\subseteq C_1$ be a local bisection and $B$ its open subset. Then $B=f^{-1}f(B)\cap A$ which implies that the map $\Omega(Y)\to A^{\downarrow}$ given by $Y\mapsto f^{-1}(Y)\cap A$ is surjective. The statement follows.
\end{proof}

\begin{proposition} Let $C=(C_1,C_0)$ be an \'etale topological category. Then it gives rise to an \'etale localic category
$\overline{\Omega}(C)=(\Omega(C_1),\Omega(C_0))$ with the structure maps $\Omega(u)$, $\Omega(d)$, $\Omega(r)$ and $\Omega(m)$.
\end{proposition}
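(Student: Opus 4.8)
The plan is to verify the three things required of an \'etale localic category: that $\Omega(d)$ and $\Omega(r)$ are \'etale maps of locales, that $\Omega(u)$ and $\Omega(m)$ are open locale maps, and that the category axioms (Cat1)--(Cat4) hold for $\overline{\Omega}(C)$. Throughout I use that $\Omega \colon {\mathbf{Top}} \to {\mathbf{Loc}}$ is a covariant functor, sending a continuous map $f \colon X \to Y$ to the locale map $\Omega(f) \colon \Omega(X) \to \Omega(Y)$ determined by the frame map $f^{-1}$.

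First I would treat the domain and range maps. Since $C$ is \'etale, $d$ and $r$ are local homeomorphisms of topological spaces, so Lemma~\ref{lem:top_lh} gives at once that $\Omega(d)$ and $\Omega(r)$ are local homeomorphisms, that is, \'etale maps, of locales. Next I would treat the open maps. The unit map $u$ is an open continuous map, so $\Omega(u)$ is open as a locale map by the fact recalled in the preliminaries that open continuous maps of spaces give open locale maps.

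The multiplication requires that its domain first be identified. The topological object of composable pairs is the pullback $C_1\times_{C_0}C_1$ in ${\mathbf{Top}}$, and by Lemma~\ref{lem:pullback23} its frame of opens is $\Omega(C_1)\otimes_{\Omega(C_0)}\Omega(C_1)$, which is precisely the pushout of the frame maps $d^{-1}$ and $r^{-1}$ in the category of frames, hence the frame underlying the localic pullback of $\Omega(d)$ and $\Omega(r)$. Thus $\Omega(C_1\times_{C_0}C_1)$ is the object of arrows of the localic object of composable pairs of $\overline{\Omega}(C)$, and since $m$ is an open continuous map, $\Omega(m)$ is an open locale map with this domain.

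It remains to verify the category axioms, and these follow by functoriality of $\Omega$. The equations (Cat1)--(Cat4) for the topological category $C$ are equalities between composites of the structure maps, so applying the covariant functor $\Omega$ yields the corresponding equalities for $\overline{\Omega}(C)$, provided the pullbacks occurring in these composites are preserved. For the double pullback this is exactly Lemma~\ref{lem:pullback23}, under which the projections $\Omega(\pi_1)$ and $\Omega(\pi_2)$ correspond to the canonical projections of the localic pullback; for the associativity axiom (Cat4) one applies the same identification to the triple pullback $C_1\times_{C_0}C_1\times_{C_0}C_1$, obtained by iterating Lemma~\ref{lem:pullback23}. The main point requiring care is precisely this compatibility of $\Omega$ with the pullbacks and with their projection maps; once it is in place the axioms are immediate, exactly as in the dual argument for ${\mathsf{pt}}$.
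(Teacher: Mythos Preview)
Your proof is correct and follows essentially the same route as the paper's: the paper also invokes Lemma~\ref{lem:top_lh} for $\Omega(d)$ and $\Omega(r)$, argues that the maps $\Omega(u)$ and $\Omega(m)$ are open (phrasing this as ``semiopen maps between spatial locales are open''), and appeals to functoriality of $\Omega$ for the category axioms. You are more explicit than the paper about the identification of the localic pullback via Lemma~\ref{lem:pullback23} and about the iterated pullback needed for (Cat4), which is a welcome level of care.
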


\begin{proof} The semiopen maps $\Omega(u)$, $\Omega(d)$, $\Omega(r)$ and $\Omega(m)$  between spatial locales are open. By Lemma \ref{lem:top_lh}  $\Omega(d)$ and $\Omega(r)$ are local homeomorphisms. 
Axioms of an internal category hold since the assignment $\Omega$ is functorial.
\end{proof}

A restriction semigroup is said to be {\em right ample} (resp. {\em left ample}) if $ac = bc$ implies $a\rho (c) = b \rho (c)$ (resp. $ca = cb$ implies $\lambda(c)a = \lambda(c)b$). It is {\em ample} if it is both right and left ample.
A topological category $C$ is said to be {\em right cancellative} (resp. {\em left cancellative}) if
for any $a,b,c\in C_1$ with $d(a)=d(b)=r(c)$ we have that $ac=bc$ implies that $a=b$ (resp. for any $a,b,c\in C_1$ with $r(a)=r(b)=d(c)$ we have that $ca=cb$ implies that $a=b$). It is said to be {\em cancellative} if it is both right and left cancellative.

\begin{proposition}\label{prop:ample} Let $S$ be a left ample (resp. right ample, ample) complete restriction monoid. Then the topological category
${\mathsf{Pt}}({\mathcal C}(\mathcal{L}^{\vee}(S)))$ is left cancellative (resp. right cancellative, cancellative).
\end{proposition}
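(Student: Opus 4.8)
The plan is to translate everything into the language of completely prime filters of $S$ and to reduce the statement to a cancellation property of a \emph{filter product}. I will treat the left ample case; the right ample case is dual, and the ample case combines the two. First I would set up the dictionary for points. Since $\mathcal{L}^{\vee}(S)$ is a restriction quantal frame whose partial isometries are exactly the elements $\eta(s)$, $s\in S$ (Proposition \ref{prop:p_units}), and every element is a join of partial isometries, a point $p$ of $C_1$ (a frame map $\mathcal{L}^{\vee}(S)\to\mathbf{2}$) is determined by $A_p=\{s\in S:p(\eta(s))=1\}$. Using that $\eta$ preserves the natural order, meets of principal ideals and compatible joins (Propositions \ref{prop:lvees}, \ref{prop:meets}), one checks that $A_p$ is a proper, $\le'$-up-closed, $\le'$-down-directed subset of $S$, closed under compatible joins and completely prime. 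A short computation with $d^{*}(f^{\downarrow})=1f^{\downarrow}$ and $r^{*}(f^{\downarrow})=f^{\downarrow}1$ (Lemma \ref{le: august}) identifies ${\mathsf{pt}}(d)(p)$ with the completely prime filter $\{\lambda(s):s\in A_p\}^{\uparrow}$ of $E$ and ${\mathsf{pt}}(r)(p)$ with $\{\rho(s):s\in A_p\}^{\uparrow}$. Thus for a composable pair $(p_c,p_a)$, writing $F=A_{p_c}$ and $G=A_{p_a}$, composability gives the common ``middle'' filter $\mathcal{H}:=\{\lambda(s):s\in F\}^{\uparrow}=\{\rho(t):t\in G\}^{\uparrow}$.

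The crucial technical tool is a lemma permitting \emph{controlled downward movement} inside a filter: if $t\in A_p$, $g\in\{\rho(\tau):\tau\in A_p\}^{\uparrow}$ and $g\le\rho(t)$, then $gt\in A_p$ (and dually for $\lambda$). This follows by using down-directedness to find $t_3\le' t$ in $A_p$ with $\rho(t_3)\le g$, observing $t_3=\rho(t_3)(gt)$, whence $t_3\le' gt$ and up-closure gives $gt\in A_p$. Next I would compute the multiplication. From \eqref{eq:ptm} and \eqref{eq:delta}, taking the joins over partial isometries, ${\mathsf{pt}}(m)(p_c,p_a)(\eta(w))=1$ iff there are $s,t\in S$ with $s\rho(t)\in F$, $\lambda(s)t\in G$ and $st\le' w$; using the downward lemma to match supports, this reduces to the matched filter product $F\cdot G=\{w:\exists\,s\in F,\ t\in G,\ \lambda(s)=\rho(t),\ st\le' w\}$.

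The heart of the proof is the cancellation step, where left ampleness enters. From $ca=cb\Rightarrow\lambda(c)a=\lambda(c)b$ one obtains the injectivity statement: if $\rho(x),\rho(y)\le\lambda(\sigma)$ and $\sigma x=\sigma y$, then $x=y$ (because $\lambda(\sigma)x=x$ and $\lambda(\sigma)y=y$). Now assume $(p_c,p_a)$ and $(p_c,p_b)$ are composable with ${\mathsf{pt}}(m)(p_c,p_a)={\mathsf{pt}}(m)(p_c,p_b)$, so $F\cdot G=F\cdot G'$ where $G=A_{p_a}$, $G'=A_{p_b}$. To show $G\subseteq G'$, take $t\in G$; using the downward lemma I may replace $t$ by a smaller element of $G$ and so assume there is $s\in F$ with $\lambda(s)=\rho(t)$, it then being enough to show this (possibly smaller) $t$ lies in $G'$. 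Then $st\in F\cdot G=F\cdot G'$ yields $s'\in F$, $t'\in G'$ with $\lambda(s')=\rho(t')$ and $s't'\le' st$. Refining $s,s'$ to $\sigma\in F$ with $\sigma\le' s,s'$ and putting $h=\lambda(\sigma)\le\rho(t)\wedge\rho(t')$, a direct computation (using $\sigma=sh=s'h$ and Lemma \ref{lem:aaa3}) gives $\sigma(ht)=\rho(\sigma)\,st=\sigma(ht')$, so the injectivity statement forces $ht=ht'$. Finally the downward lemma applied to $G'$ (note $h\in\mathcal{H}=\{\rho(\cdot):\cdot\in G'\}^{\uparrow}$ and $h\le\rho(t')$) gives $ht'\in G'$, hence $ht=ht'\in G'$; since $ht\le' t$, up-closure yields $t\in G'$. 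By symmetry $G=G'$, i.e. $p_a=p_b$.

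The main obstacle is exactly this interplay of directions: completely prime filters are only closed upward, whereas the cancellation naturally produces the element $ht'$ lying \emph{below} the filter element $t'$. The downward lemma, powered by the composability (range-germ) condition $h\in\mathcal{H}$, is precisely what allows $ht'$ to re-enter $G'$, while left ampleness is exactly what forces $ht=ht'$. The right ample (resp.\ ample) case follows by the symmetric argument with the roles of $\lambda$ and $\rho$ interchanged (resp.\ by combining both), giving right cancellativity (resp.\ cancellativity).
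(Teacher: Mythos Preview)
Your argument is correct and takes a genuinely different route from the paper's. The paper stays inside the quantal frame ${\mathcal O}(C)=\mathcal{L}^{\vee}(S)$ and works with local bisections: given $\mathsf{pt}(m)(p,q)=\mathsf{pt}(m)(p,s)$ and a local bisection $z$ with $q(z)=1$, it picks $y$ with $p(y)=1$, puts $x=yz$, unpacks the formula for $\mathsf{pt}(m)$ via \eqref{eq:ptm} to obtain $a,b$ with $ab\le x$, and then compares the elements $A=(y'\wedge a)b$ and $B=(y'\wedge a)z'$; compatibility under $x$ and equality of $\rho$ give $A=B$, and left ampleness then yields $\lambda(y'\wedge a)b=\lambda(y'\wedge a)z'$, from which $s(z)=1$ follows via a small auxiliary lemma (Lemma~\ref{lem:new7}). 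Your approach instead identifies points with completely prime filters of $S$, describes $\mathsf{pt}(m)$ as a matched filter product, and isolates a clean ``downward lemma'' that lets you shrink along the common germ filter $\mathcal{H}$; left ampleness then appears as the pointwise injectivity $\sigma x=\sigma y,\ \rho(x),\rho(y)\le\lambda(\sigma)\Rightarrow x=y$. What this buys you is a self-contained filter argument in the spirit of \cite{LL1,LL2}, avoiding explicit manipulation of $m^*$ and local bisections; what the paper's route buys is that it stays entirely in the quantal-frame calculus already developed, needing only the short Lemma~\ref{lem:new7} rather than the separate dictionary for filters. Both hinge on the same algebraic core: restricting two composable elements to a common support before invoking left ampleness.
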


\begin{proof}
We prove the statement for the case when $S$ is left ample. Then the case when $S$ is right ample follows by symmetry, and the case when $S$ is ample follows as a combination of these two cases. We put $C={\mathcal C}(\mathcal{L}^{\vee}(S))$.
Then $S$ is isomorphic to the complete restriction monoid ${\mathcal{PI}}({\mathcal O}(C))$, by Theorems~\ref{th:quant_psgrps} and \ref{the:jan23}.  Let $u,d,r,m$ denote the structure maps of $C$. We first prove the following lemma.

\begin{lemma}\label{lem:new7}
 Let $p,q\in {\mathsf{pt}}(C_1)$ be such that ${\mathsf{pt}}(d)(p)={\mathsf{pt}}(r)(q)$.
If $p(a)=1$ and $q(b)=1$ then $p(a\rho(b))=1$ and $q(\lambda(a)b)=1$.
\end{lemma}

\begin{proof}
We have $a\rho(b)=au_!r_!(b)=a\wedge d^*r_!(b)$. Then
$$p(a\rho(b))=pd^*r_!(b)=qr^*r_!(b)\geq q(b)=1$$ where we have used that $pd^*=qr^*$ and $r^*r_!\geq id$.
The equality $q(\lambda(a)b)=1$ is proved similarly. 
\end{proof}

Assume that $p,q,s\in {\mathsf{pt}}(C_1)$ are such that ${\mathsf{pt}}(d)(p)={\mathsf{pt}}(r)(q)={\mathsf{pt}}(r)(s)$ and ${\mathsf{pt}}(m)(p,q)={\mathsf{pt}}(m)(p,s)$. We aim to show that $q=s$. By symmetry, it is enough to show that $s\geq q$. So we assume that $q(z)=1$ and aim to show that $s(z)=1$. Clearly, it is enough to assume that $z\in O(C_1)$ is a local bisection.

We have that ${\mathsf{pt}}(m)(p,q)=(p\otimes q)m^*$, that is for every $x\in O(C_1)$
$$
{\mathsf{pt}}(m)(p,q)(x)=\bigvee\{p(y)\wedge q(z)\colon yz\leq x, \lambda(y)=\rho(z)\},
$$
and we can write a similar expression for ${\mathsf{pt}}(m)(p,s)$. Let $y$ be a local bisection such that $p(y)=1$ and put
$x=yz$.  Let $y'=y\rho(z)$ and $z'=\lambda(y)z$. Since $p(y)\wedge q(z)=1$ we have that $p(y')\wedge q(z')=1$  by Lemma \ref{lem:new7}. Since $p(y')\wedge q(z')=1$, $y'z'\leq x$ and $\lambda(y')=\rho(z')$, it follows that ${\mathsf{pt}}(m)(p,q)(x)=1$. By assumption then also ${\mathsf{pt}}(m)(p,s)(x)=1$. This means that there are local bisections $a,b\in O(C_1)$ such that $\lambda(a)=\rho(b)$, $ab\leq x$ and also
$p(a)=1$ and $s(b)=1$. 

We put $A=(y'\wedge a)b$ and $B=(y'\wedge a)z'$. Since 
$A,B\leq x$, then $A\sim B$ by Lemma~\ref{lem:aug28}. But 
$$\rho(A)=\rho((y'\wedge a)\rho(b)) = \rho(y'\wedge a),$$
where we have used the inequality $\rho(b)\geq \rho(y'\wedge a)$, and similarly $\rho(B)=\rho(y'\wedge a)$. It follows that $A=B$. Applying the assumption that $S$ is left ample it follows that 
$$\lambda(y'\wedge a)b=\lambda(y'\wedge a)z'.$$

Since $p(y'\wedge a)=q(z')=1$ it follows by Lemma \ref{lem:new7} that $q(\lambda(y'\wedge a)z')=1$. 
Similarly, as $p(y'\wedge a)=s(b)=1$ it follows by Lemma \ref{lem:new7} that $s(\lambda(y'\wedge a)b)=1$.
Applying $A=B$ we obtain $s(\lambda(y'\wedge a)z')=1$. It follows that $s(z)=1$, because $z\geq \lambda(y'\wedge a)z'$. 
\end{proof}

\begin{proposition}\label{prop:ample1}
Let $C$ be a left cancellative  (resp. right cancellative, cancellative) \'etale topological category. Then ${\mathcal{PI}}(\Omega(C))$ is a left ample (resp. right ample, ample) complete restriction monoid.
\end{proposition}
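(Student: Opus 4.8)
The plan is to prove the left cancellative / left ample case directly and pointwise; the right cancellative / right ample case then follows by the dual (right--left symmetric) argument, and the cancellative / ample case by combining the two.

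First I would unwind what the objects are. By the Open Correspondence Theorem and the construction of $\overline{\Omega}(C)$, the frame $\Omega(C_1)$ underlies a restriction quantal frame, and its partial isometries are exactly the local bisections of $C$ --- the open sets $A\subseteq C_1$ on which both $d$ and $r$ are injective; these already form a complete restriction monoid. For such open sets the quantale product is the pointwise composition
\[
XY=\{m(x,y):x\in X,\ y\in Y,\ d(x)=r(y)\},
\]
because $m$ is open (so $\Omega(m)_!=m(-)$ is the open set-image) and, via Lemma~\ref{lem:pullback23}, $X\otimes Y$ is identified with $\{(x,y)\in X\times Y : d(x)=r(y)\}$. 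Likewise $\lambda(C)=u_!d_!(C)$ and $\rho(C)=u_!r_!(C)$ are the sets of identity arrows $1_x=u(x)$ with $x\in d(C)$ and $x\in r(C)$ respectively. Thus I would first record the two key formulas
\[
\lambda(C)A=\{a\in A : r(a)\in d(C)\},\qquad A\rho(C)=\{a\in A : d(a)\in r(C)\}.
\]

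The heart of the argument is then a short membership chase. Assuming $CA=CB$, take $a\in\lambda(C)A$, so $a\in A$ and $r(a)\in d(C)$. Since $C$ is a local bisection there is a $c\in C$ with $d(c)=r(a)$; hence $ca\in CA=CB$, so $ca=c'b$ for some $c'\in C$, $b\in B$ with $d(c')=r(b)$. Comparing ranges gives $r(c)=r(ca)=r(c'b)=r(c')$, and injectivity of $r$ on the local bisection $C$ forces $c=c'$. Then $ca=cb$ with $r(a)=d(c)=r(b)$, so left cancellativity of $C$ yields $a=b\in B$; as $r(a)\in d(C)$ this gives $a\in\lambda(C)B$. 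Hence $\lambda(C)A\subseteq\lambda(C)B$, and the reverse inclusion follows by interchanging $A$ and $B$, proving left ampleness. The right ample case is dual: for $a\in A\rho(C)$ one picks $c\in C$ with $r(c)=d(a)$, writes $ac=bc'$ in $AC=BC$, uses injectivity of $d$ on $C$ to get $c=c'$, and applies right cancellativity.

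I do not expect a genuine obstacle in the combinatorics, which reduces to a one-line cancellation once the translation is in place. The only step needing care --- and the one I would write out most carefully --- is that translation: that on the partial isometries the quantale multiplication of $\Omega(C_1)$ coincides with the set-theoretic composition above, and that $\lambda,\rho$ are precisely the identity-arrow sets over $d(C)$ and $r(C)$. These facts rest on the openness of $m$ (so that $m_!$ is the open image), the identification of the pullback frame in Lemma~\ref{lem:pullback23}, and the descriptions of $\lambda=u_!d_!$, $\rho=u_!r_!$ and of $e^{\downarrow}$ supplied by Lemma~\ref{lem:aux1}. Once these are secured, the cancellation arguments above go through verbatim.
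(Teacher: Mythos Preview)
Your proposal is correct and follows essentially the same approach as the paper: both arguments identify the quantale operations on $\Omega(C_1)$ with pointwise composition of arrows and the sets of identities over $d(C)$, $r(C)$, and then exploit injectivity of $d,r$ on a local bisection together with cancellativity to deduce the ample condition elementwise. Your membership chase is in fact a slightly cleaner packaging of the same idea than the paper's parametrisation by $x\in d(A)\cap r(B)$.
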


\begin{proof} Again, it is enough to prove the statement only for the case when $C$ is left cancellative. Let $A,B,C\in \Omega(C_1)$ be local bisections and assume that $AB=AC$. Then 
$$\lambda(\rho(AB)A)= \lambda(A\rho(B))=\lambda(A)\rho(B).$$
Similarly, $\lambda(\rho(AC)A)=\lambda(A)\rho(C)$. It follows that $ud(A)ur(B)=ud(A)ur(C)$ which implies that $u(d(A)\wedge r(B))=u(d(A)\wedge r(C))$ and so $d(A)\wedge r(B)=d(A)\wedge r(C)$.
Let  $X=d(A)\wedge r(B)=d(A)\wedge r(C)$.  Then for every $x\in X$ there are the only elements $a_x\in A$, $b_x\in B$ and  $c_x\in C$ with $d(a_x)=r(b_x)=r(c_x)=x$ and
$$
AB=\{a_xb_x\colon x\in X\},\,\,\,\, AC=\{a_xc_x\colon x\in X\}.
$$
Since $AB=AC$ then $a_xb_x=a_xc_x$ for each $x\in X$. As $C$ is cancellative, it follows that $b_x=c_x$ for all $x\in X$. But this implies that $\lambda(A)B=\lambda(A)C$.  It follows that ${\mathcal{PI}}(\Omega(C))$ is a left ample complete restriction monoid.
\end{proof}


\subsection{Morphisms}
We now define the assignment ${\mathsf{Pt}}$ on morphisms. We define a morphism of \'etale localic categories as a morphism of the corresponding restriction quantal frames going in the opposite direction, see the discussion at the beginning of Subsection~\ref{sub:morphisms}. Thus, if $C=(C_1,C_0)$ and $D=(D_1,D_0)$ are \'etale localic categories, a morphisms from $C$ to $D$ is the opposite map to a map $f_1^*$ of restriction quantal frames from ${\mathcal O}(D)$ to ${\mathcal O}(C)$. We will denote this opposite map simply by $f_1$. 

We now  introduce the relevant class of maps between \'etale topological categories. 
Let $C=(C_1,C_0)$ and $D=(D_1,D_0)$ be \'etale topological categories. We define a {\em relational covering morphism} from $C$ to $D$ as a pair $f=(f_1,f_0)$, where 
\begin{itemize}
\item $f_0:C_0\to D_0$ is a continuous map,
\item $f_1:C_1\to {\mathcal{P}}(D_1)$ is a function,
\end{itemize}
 and the following axioms are satisfied:
\begin{enumerate}[(RM1)]
\item If  $b\in f_1(a)$ where $a\in C_1$ then $d(b)=f_0d(a)$ and $r(b)=f_0r(a)$. 
\item If $(a,b)\in C_1\times_{C_0}C_1$ and $(c,d)\in D_1\times_{D_0}D_1$ are such that $c\in f_1(a)$ and $d\in f_1(b)$ then $cd\in f_1(ab)$.
\item If $d(a)=d(b)$ (or $r(a)=r(b)$) where $a,b\in C_1$ and $f_1(a)\cap f_1(b)\neq \varnothing$ then $a=b$. 
\item If $p=f_0(q)$ and $d(s)=p$ (resp. $r(s)=p$) where $q\in C_0$ and $s\in D_1$ then there is $t\in C_1$ such that $d(t)=q$ (resp. $r(t)=q$) and $s\in f_1(t)$.
\item For any $A\in O(D_1)$: $f_1^{-1}(A)=\{x\in C_1\colon f_1(x)\cap A\neq \varnothing\}\in O(C_1)$.
\item $uf_0(t)\in f_1u(t)$ for any $t\in C_0$.

\end{enumerate}

Axiom
(RM2) is a weak form of preservation of multiplication;
(RM3) tells us that $f_1$ is {\em star-injective} and (RM4) that it is {\em  star-surjective};
(RM5) tells us that $f_1$ is a {\em lower-semicontinuous relation}. 

We remark that a relational covering morphism $f=(f_1,f_0)$ is entirely determined by $f_1$ and we have the equality $f_0(a)=df_1u(a)$ for any $a\in C_0$. Indeed,  by (RM6) we have that $uf_0(a)\in f_1u(a)$.  Then by (RM1) $duf_0(a)=df_1u(a)$. But $du=id$, so that $f_0(a)=df_1u(a)$.

\begin{lemma}\label{lem:l23} Let $C=(C_1, C_0)$ and $D=(D_1,D_0)$ be \'etale localic categories and
$f_1^*\colon {\mathcal{O}}(D)\to {\mathcal{O}}(C)$  a morphism of restriction quantal frames and put $f_0^*=d_!f_1^*u_!$. For  $q\in {\mathsf{pt}}(C_1)$ and a local bisection $c\in O(D_1)$ such that $qf_1^*(c)\neq 0$ we put
$$
p_c=({\mathsf{pt}}(d)|_{X_c})^{-1}({\mathsf{pt}}(f_0)({\mathsf{pt}}(d)(q))).
$$
Then $p_c\in {\mathsf{pt}}(D_1)$ is well defined and
$$
qf_1^*=\bigvee\{p_c\colon c\in O(D_1) \text{ is a local bisection and } qf_1^*(c)\neq 0\},
$$
where $(\bigvee_{c\in C}p_c)(x)=\bigvee_{c\in C}p_c(x)$.
\end{lemma}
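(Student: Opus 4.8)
The plan is to first put $p_c$ into a workable closed form, then verify the asserted identity pointwise in ${\bf 2}$ by proving two inequalities separately. Throughout I use that $f_0^*=d_!f_1^*u_!$ is a frame map, so ${\mathsf{pt}}(f_0)$ is defined, and condition (M2) of Proposition~\ref{prop:morphisms} in the form $f_0^*d_!=d_!f_1^*$, together with $d^*d_!\geq \mathrm{id}$ (adjointness).

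For well-definedness, set $s={\mathsf{pt}}(f_0)({\mathsf{pt}}(d)(q))=qd^*f_0^*$ and check that $s\in X_{d_!(c)}$, which is exactly what is needed to apply $({\mathsf{pt}}(d)|_{X_c})^{-1}$. Indeed $s(d_!(c))=q(d^*f_0^*d_!(c))=q(d^*d_!f_1^*(c))\geq q(f_1^*(c))=1$, the final equality being the hypothesis $qf_1^*(c)\neq 0$. Since $c$ is a local bisection and $d$ is \'etale, the proof of Lemma~\ref{le:12Jan} shows that $x\mapsto d^*(x)\wedge c$ maps $O(D_0)$ onto $c^{\downarrow}$, so Lemma~\ref{lem:homeom21}(4) makes ${\mathsf{pt}}(d)|_{X_c}\colon X_c\to X_{d_!(c)}$ a homeomorphism and $p_c\in X_c\subseteq{\mathsf{pt}}(D_1)$ a genuine point. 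The same lemma gives $p_c(x)=s(d_!(x\wedge c))=q(d^*d_!f_1^*(x\wedge c))$ for all $x\in O(D_1)$; writing $b=f_1^*(x\wedge c)$, which is a local bisection by (M1) and Lemma~\ref{le: order_ideal}, and using $d^*(g)=1g$, $d_!=\lambda$ from Lemma~\ref{le: august}, this reads $p_c(x)=q(1\lambda(b))$.

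For $qf_1^*\leq\bigvee_c p_c$, suppose $qf_1^*(x)=1$. Since ${\mathcal O}(D)$ is a restriction quantal frame (Theorem~\ref{the:jan23}), every element is a join of local bisections, so $f_1^*(x)=\bigvee f_1^*(c)$ over local bisections $c\leq x$; as $q$ preserves joins there is such a $c$ with $qf_1^*(c)=1$, and this $c$ is relevant. For it $x\wedge c=c$, hence $p_c(x)=q(d^*d_!f_1^*(c))\geq q(f_1^*(c))=1$. This direction is routine. The reverse inequality $qf_1^*\geq\bigvee_c p_c$ is where the work lies: I fix a relevant local bisection $c$ with $p_c(x)=1$ and must deduce $qf_1^*(x)=1$.

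The main obstacle is precisely this last implication, because $p_c(x)=q(1\lambda(b))$ and $1\lambda(b)\geq b$ may well exceed $f_1^*(x)$, so monotonicity of $q$ alone does not yield $q(f_1^*(x))=1$. The resolution is to use that $q$ is a frame map together with the relevance of $c$: since $b\leq f_1^*(c)$ and $f_1^*(c)$ is a local bisection, Lemma~\ref{lem:homeom21}(3) gives the key identity $b=d^*d_!(b)\wedge f_1^*(c)=1\lambda(b)\wedge f_1^*(c)$. Applying the meet-preserving map $q$ and using $q(1\lambda(b))=p_c(x)=1$ and $q(f_1^*(c))=1$ yields $q(b)=1$; as $b=f_1^*(x\wedge c)\leq f_1^*(x)$ we conclude $qf_1^*(x)\geq q(b)=1$. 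Combining the two inequalities proves $qf_1^*=\bigvee_c p_c$ pointwise, as claimed.
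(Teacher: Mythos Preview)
Your proof is correct and follows essentially the same approach as the paper's. Both arguments hinge on the identity $f_1^*(x\wedge c)=d^*d_!f_1^*(x\wedge c)\wedge f_1^*(c)$ obtained from Lemma~\ref{lem:homeom21}(3) (applicable because $f_1^*(c)$ is a local bisection by (M1)), combined with the fact that $q$ preserves binary meets and $qf_1^*(c)=1$. The paper packages this as the single equality $p_c(x)=qf_1^*(x\wedge c)$, from which both inequalities drop out immediately; you instead prove the two inequalities separately, but the substance is identical.
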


\begin{proof} Using $d_!f_1^*=f_0^*d_!$, we observe that 
$${\mathsf{pt}}(f_0)({\mathsf{pt}}(d)(q))(d_!(c))=qd^*f_0^*d_!(c)=qd^*d_!f_1^*(c)\geq qf_1^*(c)=1,
$$
so that $p_c$ is well defined. For any $x\in O(D_1)$ we have
$f_1^*(x\wedge c)=d^*d_!f_1^*(x\wedge c)\wedge f_1^*(c)$ by part (3) of Lemma \ref{lem:homeom21}. Then 
\begin{multline*}
p_c(x)=qd^*f_0^*d_!(x\wedge c)=qd^*d_!f_1^*(x\wedge c)=qd^*d_!f_1^*(x\wedge c)\wedge qf_1^*(c) \\=
q(d^*d_!f_1^*(x\wedge c)\wedge f_1^*(c))=qf_1^*(x\wedge c)\leq qf_1^*(x).
\end{multline*}
It follows that $qf_1^*\geq \bigvee p_c$, where the join is taken over the set of all local bisections $c$ such that $qf_1^*(c)\neq 0$. On the other hand, we have $qf_1^*(x)\leq \bigvee p_c$, as if $qf_1^*(x)=1$ for a local bisection $x$ then $p_x(x)=1$.
\end{proof}

\begin{remark} {\em It is easy to see, interchanging $d$ and $r$ in the proof of Lemma \ref{lem:l23}, that for a local bisection
$c\in O(D_1)$ such that $qf_1^*(c)\neq 0$ we also have
$$
p_c=({\mathsf{pt}}(r)|_{X_c})^{-1}({\mathsf{pt}}(f_0)({\mathsf{pt}}(r)(q))).
$$}
\end{remark}

We may therefore define a function ${\mathsf{pt}}'(f_1):{\mathsf{pt}}(C_1)\to {\mathcal{P}}({\mathsf{pt}}(D_1))$ by putting
$$
{\mathsf{pt}}'(f_1)(q)=\{p_c \colon c\in O(D_1) \text{ is a local bisection and } qf_1^*(c)\neq 0\}.
$$
From now on, we take a convention that in the case when ${\mathsf{pt}}'(f_1)(q)=\{p\}$ is a one-element set, we identify it with its the only element $p$. 

\begin{proposition} Let $C=(C_1, C_0)$ and $D=(D_1,D_0)$ be \'etale localic categories and
$f_1^*\colon {\mathcal{O}}(D)\to {\mathcal{O}}(C)$  a morphism of restriction quantal frames. Put $f_0^*=d_!f_1^*u_!$.  Then 
$${\mathsf{Pt}}(f_1)=({\mathsf{pt}}'(f_1),{\mathsf{pt}}(f_0)):{\mathsf{Pt}}(C)\to {\mathsf{Pt}}(D)$$
is a relational covering morphism.  The assignment ${\mathsf{Pt}}$ is functorial.
\end{proposition}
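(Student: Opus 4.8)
The plan is to replace the somewhat implicit definition of $\mathsf{pt}'(f_1)$ by a workable membership criterion, and then read off (RM1)--(RM6) and functoriality from it. Writing $\leq$ for the pointwise order on frame maps into $\mathbf{2}$, I claim that for $q\in\mathsf{pt}(C_1)$ and $p\in\mathsf{pt}(D_1)$,
\[
p\in\mathsf{pt}'(f_1)(q)\iff p\leq qf_1^{*}\ \text{ and }\ \mathsf{pt}(d)(p)=\mathsf{pt}(f_0)\mathsf{pt}(d)(q).
\]
The forward implication is exactly what is recorded in Lemma~\ref{lem:l23} and its proof: each $p_c$ satisfies $p_c\leq qf_1^{*}$ and, being $(\mathsf{pt}(d)|_{X_c})^{-1}(\mathsf{pt}(f_0)\mathsf{pt}(d)(q))$, lies over $\mathsf{pt}(f_0)\mathsf{pt}(d)(q)$. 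For the converse, since $D$ is \'etale and $p(1)=1$ there is a local bisection $c$ with $p(c)=1$; then $qf_1^{*}(c)\geq p(c)=1$, so $c$ is admissible, and $p,p_c$ both lie in $X_c$ and agree under the injective map $\mathsf{pt}(d)|_{X_c}$, whence $p=p_c$. I shall use throughout that $f_1^{*}$, being a morphism of restriction quantal frames, maps local bisections to local bisections, commutes with $\lambda$ and $\rho$, restricts to a frame morphism on projections (hence sends the top projection $e$ to $e$), and that $\mathsf{pt}(d)|_{X_b}$ is a homeomorphism whenever $b$ is a local bisection (Lemma~\ref{lem:homeom21}(4)).

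From this criterion the simpler axioms follow quickly. (RM1) is immediate from the formula for $p_c$ together with its dual in the Remark after Lemma~\ref{lem:l23}. For (RM3) a common point $p\in\mathsf{pt}'(f_1)(q_1)\cap\mathsf{pt}'(f_1)(q_2)$ has $p(c)=1$ for some local bisection $c$, forcing $q_1,q_2\in X_{f_1^{*}(c)}$; since $\mathsf{pt}(d)(q_1)=\mathsf{pt}(d)(q_2)$ and $\mathsf{pt}(d)$ is injective on $X_{f_1^{*}(c)}$, we get $q_1=q_2$. For (RM4), given $s$ with $\mathsf{pt}(d)(s)=\mathsf{pt}(f_0)(Q)$, pick a local bisection $c$ with $s(c)=1$; using $d_!f_1^{*}=f_0^{*}d_!$ one checks $Q\in X_{d_!(f_1^{*}(c))}$, so $t=(\mathsf{pt}(d)|_{X_{f_1^{*}(c)}})^{-1}(Q)$ is defined and $s=p_c$ with respect to $t$. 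For (RM5) the criterion gives $\mathsf{pt}'(f_1)^{-1}(X_a)=X_{f_1^{*}(a)}$ for each local bisection $a$ (as $p_a(a)=qf_1^{*}(a)$), and taking unions yields openness. Finally (RM6) follows by taking $c=e$: since $f_1^{*}(e)=e$ and $\mathsf{pt}(u)=(\mathsf{pt}(d)|_{X_e})^{-1}$ (because $u^{*}=d_!(-\wedge e)$ by Lemma~\ref{lem:aux1}), the point $p_e$ attached to $\mathsf{pt}(u)(T)$ is exactly $\mathsf{pt}(u)\mathsf{pt}(f_0)(T)$.

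The real work is (RM2). Here I use the explicit multiplication of points, $\mathsf{pt}(m)(p_1,p_2)(x)=\bigvee\{p_1(y)\wedge p_2(z)\colon yz\leq x,\ \lambda(y)=\rho(z)\}$, from the proof of Proposition~\ref{prop:ample} and equation~\eqref{eq:ptm}. Put $q=\mathsf{pt}(m)(q_1,q_2)$ and $p=\mathsf{pt}(m)(p_1,p_2)$. The domain condition $\mathsf{pt}(d)(p)=\mathsf{pt}(f_0)\mathsf{pt}(d)(q)$ follows from $dm=d\pi_2$ (axiom (Cat3), transported by functoriality of $\mathsf{pt}$) and (RM1) applied to $p_2\in\mathsf{pt}'(f_1)(q_2)$. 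For $p\leq qf_1^{*}$, suppose $p(x)=1$; then there are $y,z$ with $yz\leq x$, $\lambda(y)=\rho(z)$ and $p_1(y)=p_2(z)=1$, so $q_1(f_1^{*}(y))=q_2(f_1^{*}(z))=1$. By Lemma~\ref{lem:march15}, $f_1^{*}(y)f_1^{*}(z)\leq f_1^{*}(yz)\leq f_1^{*}(x)$, and since $f_1^{*}$ commutes with $\lambda,\rho$ we have $\lambda(f_1^{*}(y))=\rho(f_1^{*}(z))$; thus $(f_1^{*}(y),f_1^{*}(z))$ is an admissible term in the join computing $q(f_1^{*}(x))$, forcing $qf_1^{*}(x)=1$. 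The main obstacle is precisely this coordination of the composability equation $\lambda=\rho$ with the inequality $f_1^{*}(y)f_1^{*}(z)\leq f_1^{*}(yz)$, which is why a plain morphism (not a $\wedge$-morphism) already suffices.

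For functoriality, the $0$-component $\mathsf{pt}(f_0)$ is the classical spectrum functor applied to $f_0$, so identities and composites are handled by $(gf)_0^{*}=f_0^{*}g_0^{*}$, which itself follows from $g_1^{*}u_!=u_!g_0^{*}$ (condition (M3)). On the relation, the criterion shows that $w\in\mathsf{pt}'(f_1^{*}g_1^{*})(q)$ iff $w\leq qf_1^{*}g_1^{*}$ and $\mathsf{pt}(d)(w)=\mathsf{pt}((gf)_0)\mathsf{pt}(d)(q)$, and that this coincides with the relational composite $\bigcup\{\mathsf{pt}'(g_1)(p)\colon p\in\mathsf{pt}'(f_1)(q)\}$. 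The forward inclusion is just monotonicity of precomposition by $g_1^{*}$; the delicate backward inclusion factors a given $w$ (with $w(c)=1$ for some local bisection $c$ of $E_1$) through the intermediate point $p=p_{g_1^{*}(c)}\in\mathsf{pt}'(f_1)(q)$, the verification $w\leq pg_1^{*}$ resting on $g_1^{*}(y\wedge c)\leq g_1^{*}(y)\wedge g_1^{*}(c)$ and monotonicity of $qf_1^{*}$. Preservation of identities is immediate, since $\mathsf{pt}'(\mathrm{id})(q)=\{q\}$ under the singleton convention.
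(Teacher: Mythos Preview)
Your proof is correct and proceeds along the same lines as the paper's, but you have organised it more carefully around an explicit membership criterion, which is a genuine improvement. The paper's proof of (RM2) asserts that ``$p\in\mathsf{pt}'(f_1)(a)$ means $af_1^{*}\ge p$'' and that the needed membership is ``equivalent to $(p\otimes q)m^{*}\le (a\otimes b)m^{*}f_1^{*}$'', silently absorbing the domain condition; your criterion makes clear that the domain equality must be checked separately (and you do so via (Cat3)). Likewise, the paper handles (RM6) by the adjunction inequality $u^{*}f_1^{*}\ge f_0^{*}u^{*}$, whereas your ``take $c=e$'' argument via $\mathsf{pt}(u)=(\mathsf{pt}(d)|_{X_e})^{-1}$ is a pleasant shortcut. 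Finally, the paper merely asserts functoriality; your factorisation of $w$ through $p_{g_1^{*}(c)}$ and the computation $pg_1^{*}(x)=qf_1^{*}(g_1^{*}(x)\wedge g_1^{*}(c))\ge qf_1^{*}g_1^{*}(x\wedge c)$ is exactly what is needed to make that honest. One very small point: in (RM2) you invoke Lemma~\ref{lem:march15} for $f_1^{*}(y)f_1^{*}(z)\le f_1^{*}(yz)$, but in fact a morphism of restriction quantal frames is by definition a quantale morphism, so you have the equality $f_1^{*}(y)f_1^{*}(z)=f_1^{*}(yz)$ directly.
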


\begin{proof}
Applying Proposition \ref{prop:morphisms} we can assume that axioms (M1)--(M4) are satisfied. We verify axioms (RM1)--(RM5).
(RM1) holds by the definition of ${\mathsf{pt}}'(f_1)$ and Lemma \ref{lem:l23}.

(RM2) Let $(a,b)\in {\mathsf{pt}}(C_1)\times_{{\mathsf{pt}}(C_0)} {\mathsf{pt}}(C_1)$ and $(p,q)\in {\mathsf{pt}}(D_1)\times_{{\mathsf{pt}}(D_0)}{\mathsf{pt}}(D_1)$ be such that $p\in {\mathsf{pt}}'(f_1)(a)$ and $q\in {\mathsf{pt}}'(f_1)(b)$. This means that $af_1^*\geq p$ and $bf_1^*\geq q$. We need to show that ${\mathsf{pt}}(m)(p,q)\in {\mathsf{pt}}'(f_1)({\mathsf{pt}}(m)(a,b))$. This is equivalent to $(p\otimes q)m^*\leq (a\otimes b)m^*f_1^*$ or to
$$
\bigvee_{(y,z)\in A}p(y)\wedge q(z)\leq \bigvee_{(y,z)\in B}a(y)\wedge b(z)
$$
for every local bisection $x\in O(D_1)$ where $A$ is the set of pairs of local bisections $(y,z)$ such that $\rho(z)=\lambda(y)$ and $yz\leq x$, and $B$ is the set of pairs of local bisections $(y,z)$ such that $\rho(z)=\lambda(y)$ and $yz\leq f_1^*(x)$. So let $x,y,z\in O(D_1)$ be local bisections such that $(y,z)\in A$ and assume that $p(y)=q(z)=1$. Since $f_1^*$ preserves multiplication, 
we have  $f_1^*(y)f_1^*(z)= f_1^*(yz)\leq f_1^*(x)$. Since  $f_1^*$ preserves $\lambda$ and $\rho$, we have 
$\rho(f_1^*(z))=f_1^*(\rho(z))=f_1^*(\lambda(y))=\lambda(f_1^*(y))$
and $a(f_1^*(y))\geq p(y)=1$, $b(f_1^*(z))\geq q(z)=1$. The required inequality follows.

(RM3) Assume that ${\mathsf{pt}}(d)(p)={\mathsf{pt}}(d)(q)$ and $c\in {\mathsf{pt}}'(f_1)(p)\cap {\mathsf{pt}}'(f_1)(q)$. The latter means that 
$pf_1^*, qf_1^*\geq c$. Let $a$ be a local bisection such that $c\in X_a$ and put $b=f_1^*(a)$.
Then $pf_1^*(a)=qf_1^*(a)=1$ as $c(a)=1$. Hence $b$ is a local bisection such that $p,q\in X_b$ and ${\mathsf{pt}}(d)(p)={\mathsf{pt}}(d)(q)$. Since the map ${\mathsf{pt}}(d)|_{X_b}$ is injective, this yields $p=q$. A dual result also follows if $d$ is replaced by $r$.

(RM4) $p={\mathsf{pt}}(f_0)(q)$ and ${\mathsf{pt}}(d)(s)=p$. We need to show that there is $t\in {\mathsf{pt}}(C_1)$ such that ${\mathsf{pt}}(d)(t)=q$ and $s\in {\mathsf{pt}}'(f_1)(t)$. Let $a$ be a local bisection such that $s\in X_a$ and let $b=f_1^*(a)$. Since $p\in X_{d_!(a)}$ we have
$$
q(d_!(b))=qd_!f_1^*(a)=qf_0^*d_!(a)=p(d_!(a))=1.
$$
Put $t=({\mathsf{pt}}(d)|_{X_b})^{-1}(q)$. Then ${\mathsf{pt}}(d)(t)=q$ and also $s=({\mathsf{pt}}(d)|_{X_a})^{-1}({\mathsf{pt}}(f_0)({\mathsf{pt}}(d)(t))$
implying that $tf_1^*\geq s$.

(RM5) Let $X\in O({\mathsf{pt}}(D_1))$. Without loss of generality we may assume that $X=X_a$ for some local bisection $a$.
We show that $${\mathsf{pt}}'(f_1)^{-1}(X_a)=\{p\in {\mathsf{pt}}(C_1)\colon {\mathsf{pt}}'(f_1)(p)\cap X_a\neq \varnothing\}= X_{f_1^*(a)}.$$
Let $c\in {\mathsf{pt}}'(f_1)(p)\cap X_a$. Then $c(a)=1$ and $pf_1^*\geq c$. Hence $pf_1^*(a)=1$ and thus $p\in X_{f_1^*(a)}$.
Conversely, assume that $p\in X_{f_1^*(a)}$. Then $f_1^*(a)\neq 0$ and by  Lemma~\ref{lem:l23} we have that $p_a\in {\mathsf{pt}}'(f_1)(p)$.

(RM6) Let $s={\mathsf{pt}}(u)(t)$ and show that ${\mathsf{pt}}(u){\mathsf{pt}}(f_0)(t)\in {\mathsf{pt}}'(f_1)(s)$. That is, we need to show that 
$tu^*f_1^*\geq tf_0^*u^*$. It is enough to show that $u^*f_1^*\geq f_0^*u^*$. By adjointness, this is equivalent to
$f_1^*\geq u_!f_0^*u^*$. Applying $u_!f_0^*=f_1^*u_!$ we get the inequality $f_1^*\geq f_1^*u_!u^*$ which holds since $id\geq u_!u^*$.
 \end{proof}


In the following lemma, we give two important consequences of the definition of a relational covering morphism.

\begin{lemma}\label{lem:conseq}
Let $f=(f_1,f_0):C\to D$ be a relational covering morphism between \'etale topological categories. Then
\begin{enumerate}
\item \label{i:cons1} $df_1^{-1}(A)=f_0^{-1}d(A)$ and dually for any $A\in O(D_1)$.
\item \label{i:cons2} $f_1^{-1}u(A)=uf_0^{-1}(A)$ for any $A\in O(D_0)$.
\end{enumerate}
\end{lemma}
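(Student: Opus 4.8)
The plan is to establish both identities as equalities of subsets, proving each by double inclusion and reading off each inclusion directly from one of the axioms (RM1)--(RM6), together with the category identity $du=\mathrm{id}$ from (Cat1). No topology beyond the definitions is needed; recall throughout that $f_1^{-1}(B)=\{x\in C_1\colon f_1(x)\cap B\neq\varnothing\}$.

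For part \eqref{i:cons1}, I would argue as follows. If $q\in df_1^{-1}(A)$, then $q=d(x)$ for some $x$ with $f_1(x)\cap A\neq\varnothing$; choosing $b\in f_1(x)\cap A$ and applying (RM1) gives $d(b)=f_0d(x)=f_0(q)$, so that $f_0(q)=d(b)\in d(A)$ and hence $q\in f_0^{-1}d(A)$. Conversely, if $q\in f_0^{-1}d(A)$, pick $s\in A$ with $d(s)=f_0(q)$; then star-surjectivity (RM4), applied with $p=f_0(q)$, furnishes $t\in C_1$ with $d(t)=q$ and $s\in f_1(t)$, whence $t\in f_1^{-1}(A)$ and $q=d(t)\in df_1^{-1}(A)$. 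The dual statement is obtained verbatim with $r$ in place of $d$, using the $r$-clauses of (RM1) and (RM4).

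For part \eqref{i:cons2}, the inclusion $uf_0^{-1}(A)\subseteq f_1^{-1}u(A)$ is immediate from (RM6): if $x=u(q)$ with $f_0(q)\in A$, then $uf_0(q)\in f_1u(q)=f_1(x)$ and $uf_0(q)\in u(A)$, so $f_1(x)$ meets $u(A)$ and $x\in f_1^{-1}u(A)$.

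The reverse inclusion is where the only real work lies, since it requires showing that an arbitrary $x\in f_1^{-1}u(A)$ is forced to lie in the image of the unit map $u$. The main obstacle is precisely that $ud\neq\mathrm{id}$, so one cannot simply replace $x$ by $u(d(x))$; instead one must invoke star-injectivity. Given $x\in f_1^{-1}u(A)$, I would choose $b\in f_1(x)\cap u(A)$ and write $b=u(a)$ with $a\in A$. By (RM1) and $du=\mathrm{id}$ one gets $f_0d(x)=d(b)=du(a)=a\in A$, so $q:=d(x)\in f_0^{-1}(A)$. Now $u(q)$ and $x$ have equal $d$-image, namely $d(u(q))=q=d(x)$, and by (RM6) the element $uf_0(q)=u(a)=b$ lies in $f_1(u(q))$ as well as in $f_1(x)$; hence $f_1(u(q))\cap f_1(x)\neq\varnothing$. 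Star-injectivity (RM3) then forces $u(q)=x$, giving $x=u(q)\in uf_0^{-1}(A)$ and completing the argument.
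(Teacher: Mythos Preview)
Your proof is correct and follows essentially the same route as the paper's: both inclusions in part~\eqref{i:cons1} come from (RM1) and (RM4) respectively, and the nontrivial inclusion in part~\eqref{i:cons2} is obtained by comparing $x$ with $u(d(x))$ via (RM6) and (RM3). The only cosmetic difference is that the paper first reduces $A$ to a local bisection in part~\eqref{i:cons1} and first records that $u(A)$ is a local bisection in part~\eqref{i:cons2}; neither reduction is actually needed for the argument, and you correctly omit them.
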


\begin{proof}
\eqref{i:cons1} Since we are in an \'etale category and the maps preserve joins, we may assume that $A$ is a local bisection. Let $x\in df_1^{-1}(A)$. Then $x=d(y)$ where $f_1(y)\cap A\neq \varnothing$. Applying (RM1) it follows that $|f_1(y)\cap A|=1$. Let $f_1(y)\cap A=\{a\}$. By (RM1) we have $d(a)=f_0(x)$. Thus $x\in f_0^{-1}d(A)$ so that we have proved that $df_1^{-1}(A)\subseteq f_0^{-1}d(A)$. 

Conversely, assume that $x\in f_0^{-1}d(A)$. Then $f_0(x)\in d(A)$. Let $a$ be the only element in $A$ with $d(a)=f_0(x)$. By (RM4) we have that there is $b\in C_1$ with $d(b)=x$ such that $a=f_1(b)$. It follows that $x\in df_1^{-1}(A)$, and we have proved the inclusion $f_0^{-1}d(A)\subseteq df_1^{-1}(A)$. A dual equality for $r$ follows by symmetry.

\eqref{i:cons2} We show that $u(A)$ is a local bisection. Let $x,y\in u(A)$ and $d(x)=d(y)$. Assume that $x=u(s)$ and $y=u(t)$. Applying $du=id$ we obtain $$s=du(s)=d(x)=d(y)=du(t)=t$$ which yields $x=y$, as required. Let $x\in f_1^{-1}u(A)$. Since $u(A)$ is a local bisection, $|f_1(x)\cap u(A)|=1$ and let $f_1(x)\cap u(A)=\{a\}$. Put $y=ud(x)$. Then $uf_0d(x)\in f_1(y)$ by (RM6). On the other hand, $f_0d(x)=d(a)$ by (RM1). Hence $uf_0d(x)=ud(a)$. But $ud(a)=a$ since $a\in u(A)$ so that we obtain  $uf_0d(x)=a$. It follows that $a\in f_1(x)\cap f_1(y)$. But $d(x)=d(y)$, so that (RM3) yields $x=y$. Hence $x=ud(x)$ and thus $x\in uf_0^{-1}(A)$, so that we have proved the inclusion $f_1^{-1}u(A)\subseteq uf_0^{-1}(A)$. 

Conversely, let $x\in uf_0^{-1}(A)$. Then $x=u(t)$ where $f_0(t)\in A$. Since $du=id$ we have $t=d(x)$. Hence $uf_0(t)\in f_1(x)$ by (RM6), and so $x\in f_1^{-1}u(A)$. We have therefore proved that $uf_0^{-1}(A)\subseteq f_1^{-1}u(A)$.
\end{proof}

\begin{proposition}\label{prop:mor27} Let $f=(f_1,f_0):C\to D$ be a relational covering morphism between \'etale topological categories.  Then $f_1^{-1}$ is a morphism between restriction quantal frames from ${\mathcal O}(\overline{\Omega}(D))$ to ${\mathcal O}(\overline{\Omega}(C))$. The assignment $$f\mapsto \overline{\Omega}(f)=(f_1^{-1})^{op}$$ is functorial, where $(f_1^{-1})^{op}$ is the opposite map to $f_1^{-1}$.
\end{proposition}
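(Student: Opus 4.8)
The plan is to use the characterisation of morphisms of restriction quantal frames supplied by Proposition~\ref{prop:morphisms}. Write $g_1^\ast = f_1^{-1}$ for the preimage map $A\mapsto\{x\in C_1\colon f_1(x)\cap A\neq\varnothing\}$. By axiom (RM5) this sends opens to opens, so it is a well-defined map $\Omega(D_1)\to\Omega(C_1)$; and since $f_1(x)\cap\bigcup_i A_i\neq\varnothing$ iff $f_1(x)\cap A_i\neq\varnothing$ for some $i$, it preserves arbitrary unions, i.e.\ $g_1^\ast$ is a sup-lattice map between the underlying frames of $\mathcal O(\overline\Omega(D))$ and $\mathcal O(\overline\Omega(C))$. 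Because $C$ and $D$ are \'etale, the direct image maps $d_!,r_!,u_!$ of the localic categories $\overline\Omega(C),\overline\Omega(D)$ are the set-theoretic image maps of the open maps $d,r,u$, and the quantale multiplication on $\Omega(C_1)$ is the composition product $A\cdot B=\{ab\colon a\in A,\,b\in B,\,d(a)=r(b)\}$. Setting $g_0^\ast=d_!g_1^\ast u_!$ and evaluating on $B\in\Omega(D_0)$ gives $g_0^\ast(B)=d\bigl(f_1^{-1}(u(B))\bigr)=d\bigl(u(f_0^{-1}(B))\bigr)=f_0^{-1}(B)$, where the middle equality is Lemma~\ref{lem:conseq}\eqref{i:cons2} and the last uses $du=\mathrm{id}$. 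Thus $g_0^\ast=f_0^{-1}$, a frame map since $f_0$ is continuous. It then remains to verify (M1)--(M4).

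For (M1) I note that, $C$ and $D$ being \'etale, the partial isometries of $\mathcal O(\overline\Omega(C))$ and $\mathcal O(\overline\Omega(D))$ are precisely the local bisections, so I must show $f_1^{-1}$ carries a local bisection $A\subseteq D_1$ to one of $C$. If $x,y\in f_1^{-1}(A)$ with $d(x)=d(y)$, then choosing $a\in f_1(x)\cap A$ and $b\in f_1(y)\cap A$, axiom (RM1) gives $d(a)=f_0d(x)=f_0d(y)=d(b)$, whence $a=b$ as $A$ is a bisection, and star-injectivity (RM3) forces $x=y$; the $r$-argument is symmetric, so $f_1^{-1}(A)$ is a local bisection. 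Condition (M2), namely $d_!g_1^\ast=g_0^\ast d_!$ with its dual, is exactly Lemma~\ref{lem:conseq}\eqref{i:cons1}, and (M3), $g_1^\ast u_!=u_!g_0^\ast$, is Lemma~\ref{lem:conseq}\eqref{i:cons2}.

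For (M4) I invoke Lemma~\ref{lem:march15}, reducing it to $f_1^{-1}(A)\,f_1^{-1}(B)\leq f_1^{-1}(AB)$ for all $A,B\in\Omega(D_1)$. Using the composition-product description, a typical element of the left-hand side is a composable product $xy$ with $x\in f_1^{-1}(A)$, $y\in f_1^{-1}(B)$ and $d(x)=r(y)$. Picking $a\in f_1(x)\cap A$ and $b\in f_1(y)\cap B$, axiom (RM1) yields $d(a)=f_0d(x)=f_0r(y)=r(b)$, so $(a,b)$ is a composable pair in $D$; hence $ab\in AB$, and (RM2) gives $ab\in f_1(xy)$. Thus $f_1(xy)\cap AB\neq\varnothing$, i.e.\ $xy\in f_1^{-1}(AB)$, and taking joins establishes the inequality. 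By Proposition~\ref{prop:morphisms}, $f_1^{-1}$ is a morphism of restriction quantal frames.

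Finally, for functoriality of $f\mapsto(f_1^{-1})^{op}$, the composite of relational covering morphisms $f\colon C\to D$ and $g\colon D\to E$ has underlying relation the relational composite $(g\circ f)_1=g_1\circ f_1$, so that $(g\circ f)_1^{-1}(A)=\{x\colon g_1(f_1(x))\cap A\neq\varnothing\}=f_1^{-1}\bigl(g_1^{-1}(A)\bigr)$; since the identity morphism has $\mathrm{id}_1^{-1}=\mathrm{id}$, the assignment preserves identities and composition, the latter reversing order as required for the opposite maps. The main obstacle is (M4): one must know that the quantale multiplication on $\Omega(C_1)$ is genuinely the composition product and, more delicately, transport composability of pairs from $C$ to $D$ along $f_0$, which is exactly where the star conditions (RM1) combine with the weak multiplicativity (RM2) to do the work.
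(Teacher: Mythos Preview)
Your proof is correct and follows essentially the same approach as the paper: invoke Proposition~\ref{prop:morphisms}, verify (M1)--(M4) using (RM5), (RM3), Lemma~\ref{lem:conseq}, and Lemma~\ref{lem:march15} together with (RM2), then note functoriality. Your argument for (M1) is in fact more complete than the paper's terse appeal to (RM3): you correctly observe that (RM1) is needed first to conclude $d(a)=d(b)$ from $d(x)=d(y)$, after which the bisection property of $A$ forces $a=b$ and only then does (RM3) apply.
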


\begin{proof} By Proposition \ref{prop:morphisms} it is enough to show that axioms (M1)--(M4) hold.
(M1)~holds. Let $A\in \Omega(D)$ be a local bisection. Then $f_1^{-1}(A)$ is open by (RM5) and so a local bisection by (RM3). (M2) and (M3) follow from  Lemma \ref{lem:conseq}. (M4)~holds. In view of Lemma \ref{lem:march15} 
this is equivalent to $f_1^{-1}(A)f_1^{-1}(B)\subseteq f_1^{-1}(AB)$ where $A,B\in \Omega(D_1)$ are local bisections. The latter inequality easily follows from (RM2). Functoriality is straightforward to verify.
\end{proof}

A relational covering morphism $f=(f_1,f_0):C\to D$ between \'etale topological categories is {\em at least single-valued} if $|f_1(c)|\geq 1$ for each $c\in C_1$. It is {\em at most single-valued} if $|f_1(c)|\leq 1$ for each $c\in C_1$. Finally, it is {\em single-valued} provided that $|f_1(c)|=1$ for each $c\in C_1$.

\begin{lemma}\label{lem:mor26} \mbox{}
\begin{enumerate}
\item \label{i:mor26a} Let $C=(C_1, C_0)$ and $D=(D_1,D_0)$ be \'etale localic categories and let
$f_1^*\colon {\mathcal{O}}(D)\to {\mathcal{O}}(C)$  be a proper morphism (resp. a $\wedge$-morphims) of restriction quantal frames. Then ${\mathsf{Pt}}(f_1):{\mathsf{Pt}}(C)\to {\mathsf{Pt}}(D)$ is a at least single-valued (resp. at most single-valued).
\item \label{i:mor26b} Let $f=(f_1,f_0)\colon C\to D$ be at least single-valued  (resp. at most single-valued) relational covering morphism between \'etale topological categories. Then $f_1^{-1}\colon {\mathcal O}(\overline{\Omega}(D))\to {\mathcal O}(\overline{\Omega}(C))$ is a proper morphism (resp. a $\wedge$-mor\-phism) of restriction quantal frames.
\end{enumerate}
\end{lemma}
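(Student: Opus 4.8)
The plan is to reduce both parts to the explicit description of ${\mathsf{pt}}'(f_1)$ furnished by Lemma~\ref{lem:l23}, together with the defining properties of proper and $\wedge$-morphisms. Throughout I write $f_1^*$ for the morphism $\mathcal{O}(D)\to\mathcal{O}(C)$ and recall from the proof of Lemma~\ref{lem:l23} the key formula $p_c(x)=qf_1^*(x\wedge c)$ for a local bisection $c$ with $qf_1^*(c)\neq 0$, as well as the identity $qf_1^*=\bigvee\{p_c\colon qf_1^*(c)\neq 0\}$. For part~\eqref{i:mor26a} I first observe that, since $\mathcal{O}(D)$ is \'etale and $f_1^*$ is a sup-map, ${\mathsf{pt}}'(f_1)(q)=\varnothing$ holds precisely when $qf_1^*$ is the zero map. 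If $f_1^*$ is proper then $f_1^*(1_{\mathcal{O}(D)})=1_{\mathcal{O}(C)}$, so $qf_1^*(1_{\mathcal{O}(D)})=q(1_{\mathcal{O}(C)})=1$; writing $1_{\mathcal{O}(D)}$ as a join of local bisections and using that $q$ and $f_1^*$ preserve joins, some local bisection $c$ satisfies $qf_1^*(c)=1$. Hence ${\mathsf{pt}}'(f_1)(q)\neq\varnothing$ for every $q$, i.e. ${\mathsf{Pt}}(f_1)$ is at least single-valued.

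Now suppose $f_1^*$ is a $\wedge$-morphism. Then $qf_1^*$ preserves binary meets, since $q$ preserves binary meets and $f_1^*(a\wedge b)=f_1^*(a)\wedge f_1^*(b)$. I would then show that any two elements $p_c,p_{c'}$ of ${\mathsf{pt}}'(f_1)(q)$ coincide: for each $x$,
\[
p_c(x)=p_c(x)\wedge qf_1^*(c')=qf_1^*(x\wedge c)\wedge qf_1^*(c')=qf_1^*(x\wedge c\wedge c'),
\]
using $qf_1^*(c')=1$ and meet-preservation, and symmetrically $p_{c'}(x)=qf_1^*(x\wedge c\wedge c')$, whence $p_c=p_{c'}$. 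Thus $|{\mathsf{pt}}'(f_1)(q)|\leq 1$ and ${\mathsf{Pt}}(f_1)$ is at most single-valued.

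For part~\eqref{i:mor26b}, recall that $f_1^{-1}$ is already a morphism of restriction quantal frames by Proposition~\ref{prop:mor27}, so only the additional properties remain. The top element of $\mathcal{O}(\overline{\Omega}(D))=\Omega(D_1)$ is $D_1$, and $f_1^{-1}(D_1)=\{x\in C_1\colon f_1(x)\neq\varnothing\}$. If $f$ is at least single-valued this is all of $C_1$, so $f_1^{-1}$ preserves the top and is proper. If $f$ is at most single-valued, I would prove $f_1^{-1}(A)\cap f_1^{-1}(B)\subseteq f_1^{-1}(A\cap B)$, the reverse inclusion being automatic: for $x$ in the left-hand side, $f_1(x)$ meets both $A$ and $B$ and has at most one element, so $f_1(x)=\{b\}$ with $b\in A\cap B$, giving $x\in f_1^{-1}(A\cap B)$. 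Hence $f_1^{-1}$ preserves binary, and so all finite non-empty, meets, i.e. it is a $\wedge$-morphism.

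The arguments are essentially formal once the formula $p_c(x)=qf_1^*(x\wedge c)$ and the decomposition $qf_1^*=\bigvee p_c$ are in hand. The one step requiring genuine care is the $\wedge$-morphism case of part~\eqref{i:mor26a}, where one must verify that distinct defining local bisections $c,c'$ really yield the same point of ${\mathsf{pt}}(D_1)$; this is exactly the place where meet-preservation of $qf_1^*$ (inherited from the $\wedge$-morphism property of $f_1^*$ and the fact that $q$ is a frame map) is indispensable, and the \'etale hypothesis on $\mathcal{O}(D)$ is what licenses reducing all considerations to local bisections.
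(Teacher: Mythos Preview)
Your proof is correct and follows essentially the same approach as the paper. The arguments for the proper case of \eqref{i:mor26a} and both cases of \eqref{i:mor26b} are virtually identical to the paper's.

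For the $\wedge$-morphism case of \eqref{i:mor26a} your route is a minor but pleasant variation: the paper argues by taking $p\in X_b$, $q\in X_c$, using $f_1^*(b\wedge c)=f_1^*(b)\wedge f_1^*(c)$ to deduce that both $p$ and $q$ lie in $X_{b\wedge c}$, and then invoking that $b\wedge c$ is a local bisection together with ${\mathsf{pt}}(d)(p)={\mathsf{pt}}(d)(q)$ to force $p=q$. You instead use the explicit formula $p_c(x)=qf_1^*(x\wedge c)$ from the proof of Lemma~\ref{lem:l23} and compute directly that $p_c(x)=qf_1^*(x\wedge c\wedge c')=p_{c'}(x)$. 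Both arguments hinge on the same meet-preservation, but yours avoids the detour through the geometry of $X_{b\wedge c}$ and is slightly more self-contained.
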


\begin{proof}
\eqref{i:mor26a} Let
$f_1^*$  be a proper morphism and show that for every $q\in {\mathsf{pt}}(C_1)$ there is some local bisection $c\in O(D_1)$ such that $qf_1^*(c)\neq 0$. We have $f_1^*(1_{\mathcal{O}(D)})=1_{\mathcal{O}(C)}$ by assumption and $q(1_{\mathcal{O}(C)})=1$ as $q$ is a frame map, so that $qf_1^*(1_{\mathcal{O}(D)})=1$. The existence of an element $c$ with the required property now follows by \'etaleness and since  $qf_1^*$ is a sup-map.

Let $f_1^*$  be a  $\wedge$-morphism. Assume that $a\in {\mathsf{pt}}(C_1)$ and $p,q\in {\mathsf{pt}}(D_1)$ are such that
$p,q\in {\mathsf{pt}}'(f_1)(a)$. Let $p\in X_b$, $q\in X_c$ where $b,c$ are local bisections. By (RM1), we have ${\mathsf{pt}}(f_0){\mathsf{pt}}(d)(a)={\mathsf{pt}}(d)(p)$. Since $f_1^*(b\wedge c) =f_1^*(b)\wedge f_1^*(c)$ it follows that $af_1^*(b\wedge c)=1$. But then, due to Lemma \ref{lem:l23}, ${\mathsf{pt}}(f_0){\mathsf{pt}}(d)(a)\in X_{d_!(b\wedge c)}$. It follows that ${\mathsf{pt}}(d)(p)\in X_{d_!(b\wedge c)}$ and thus $p\in X_{a\wedge b}=X_a\cap X_b$. This is a contradiction as $p\not\in X_b$ (the latter is because $q\in X_b$ and $X_b$ is a local bisection).

\eqref{i:mor26b} Let $f\colon C\to D$ be at least single-valued and show that $f_1^{-1}(D_1)=C_1$. Let $x\in C_1$. By assumption $f_1(x)\neq \varnothing$ and thus
$f_1^{-1}(D_1)=\{x\colon f_1(x)\cap D_1 \neq \varnothing\}=C_1$, as required.

Let $f\colon C\to D$ be at most single-valued and let $b,c\in \Omega(D_1)$ be local bisections. We show that $f_1^{-1}(b\cap c)=f_1^{-1}(b)\cap f_1^{-1}(c)$. As $b\cap c\subseteq b,c$ we have that $f_1^{-1}(b\cap c)\subseteq f^{-1}(b)\cap f^{-1}(c)$. Assume that there is some $a\in C_1$ such that $a\in f^{-1}(b)\cap f^{-1}(c)$ but
$a\not\in f_1^{-1}(b\cap c)$. Then $f_1(a)\cap b\neq \varnothing$, $f_1(a)\cap c\neq \varnothing$ and $f_1(a)\cap (b\cap c)=\varnothing$. Let $p$ and $q$ be the only elements of $f_1(a)\cap b$ and $f_1(a)\cap c$, respectively. We have $d(p)=d(q)$ by (RM1) but on the other hand $p\neq q$ as $p\not\in b\cap c$. This contradicts our assumption that $f_1$ is at most single-valued.
\end{proof}

A functor $f=(f_1,f_0)\colon C\to D$ between \'etale topological categories is said to be $d$-{\em injective} if $f_1(a)=f_1(b)$ and $d(a)=d(b)$ imply that $a=b$. It is said to be $d$-{\em surjective} if $f_0(a)=b$ and $d(q)=b$ imply that there is $p\in C_1$ such that $d(p)=a$ and $f_1(p)=q$. $f$ is said to be  $d$-{\em bijective} if it is both $d$-injective and $d$-surjective. We make dual definitions involving $r$. A {\em covering functor} is one which is both $d$-bijective and $r$-bijective.
The functor $f$ is called {\em continuous} if the map $f_1$ is continuous (which yields that also $f_0$ is continuous).
It follows from the definitions that $f$ is a continuous covering functor if and only if it is a single-valued relational covering morphism.

From Lemma \ref{lem:mor26} we obtain the following corollary.

\begin{corollary} \label{cor:mor26m}\mbox{}
\begin{enumerate}
\item  \label{i:mor8a} Let $C=(C_1, C_0)$ and $D=(D_1,D_0)$ be \'etale localic categories and let
$f_1^*\colon {\mathcal{O}}(D)\to {\mathcal{O}}(C)$  be a proper $\wedge$-morphism of restriction quantal frames.
Then ${\mathsf{Pt}}(f_1):{\mathsf{Pt}}(C)\to {\mathsf{Pt}}(D)$ is a continuous covering functor. 
\item \label{i:mor8b} Let $f\colon C\to D$ be a continuous covering functor between \'etale topological categories. Then $f_1^{-1}\colon {\mathcal O}(\overline{\Omega}(D))\to {\mathcal O}(\overline{\Omega}(C))$ is a proper $\wedge$-morphism of restriction quantal frames.
\end{enumerate}
\end{corollary}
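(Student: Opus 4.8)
The plan is to obtain the corollary directly by combining the two \emph{resp.} statements of Lemma~\ref{lem:mor26} with the remark, recorded immediately before the corollary, that a map between \'etale topological categories is a continuous covering functor precisely when it is a single-valued relational covering morphism. The conceptual point is that a proper $\wedge$-morphism is at the same time a proper morphism and a $\wedge$-morphism, and dually that single-valuedness of a relational covering morphism is exactly the conjunction of being at least single-valued and at most single-valued; so each of the two named properties contributes one half of the desired conclusion.

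For part~\eqref{i:mor8a} I would proceed as follows. Let $f_1^*$ be a proper $\wedge$-morphism. Viewing it as a proper morphism, part~\eqref{i:mor26a} of Lemma~\ref{lem:mor26} shows that ${\mathsf{Pt}}(f_1)$ is at least single-valued; viewing it as a $\wedge$-morphism, the same part shows that ${\mathsf{Pt}}(f_1)$ is at most single-valued. Hence $|{\mathsf{pt}}'(f_1)(c)|=1$ for every $c\in {\mathsf{pt}}(C_1)$, that is, ${\mathsf{Pt}}(f_1)$ is single-valued, and by the cited equivalence it is therefore a continuous covering functor.

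For part~\eqref{i:mor8b} I would run the same argument backwards. A continuous covering functor $f$ is, by the same equivalence, a single-valued relational covering morphism, and so is both at least single-valued and at most single-valued. Feeding the first property into part~\eqref{i:mor26b} of Lemma~\ref{lem:mor26} gives that $f_1^{-1}$ is a proper morphism, and feeding in the second gives that $f_1^{-1}$ is a $\wedge$-morphism; together these say that $f_1^{-1}$ is a proper $\wedge$-morphism.

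Beyond this bookkeeping there is no genuine obstacle, since the substantive work has already been carried out in Lemma~\ref{lem:mor26}. The only points requiring care are the two logical reductions used throughout: that a proper $\wedge$-morphism decomposes as a proper morphism together with a $\wedge$-morphism, which holds because such maps preserve all finite meets and are in particular frame maps, and that single-valuedness is literally the conjunction of the two one-sided valuedness conditions, so that the independent \emph{resp.} clauses of Lemma~\ref{lem:mor26} may be applied separately and then merged.
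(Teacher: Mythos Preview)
Your proof is correct and matches the paper's approach exactly: the paper states the result as an immediate corollary of Lemma~\ref{lem:mor26} without further argument, and your write-up simply spells out the intended combination of the two \emph{resp.} clauses together with the identification of continuous covering functors with single-valued relational covering morphisms.
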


We define the following types of morphisms between \'etale topological categories:
\begin{itemize}
\item Type $1$: relational covering morphisms.
\item Type $2$: at least single-valued  relational covering morphisms. 
\item Type $3$: at most single-valued  relational covering morphisms.
\item Type $4$: single-valued relational covering morphisms, or, equivalently, continuous covering functors.
\end{itemize}

\subsection{Adjunction Theorems}
Let $C$ and $D$  be \'etale localic categories and let $f_1^*\colon {\mathcal O}(D)\to {\mathcal O}(C)$ be a morphism of restriction quantal frames. For each $k=1,2,3,4$ we say that $f_1\colon C\to D$ is a morphism of type $k$ if $f_1^*$ is a morphism of type $k$ of restriction quantal frames. For each $k=1,2,3,4$ by ${\mathsf{EL}}_k$ we denote the
category of \'etale localic categories and their morphisms of type $k$ by and by ${\mathsf{ET}}_k$ we denote  the category of \'etale topological categories and their morphisms of type~$k$. We also write ${\mathsf{EL}}$ for ${\mathsf{EL}}_1$ and ${\mathsf{ET}}$ for ${\mathsf{ET}}_1$. Let further ${\mathsf{Pt}}_k$ and $\overline{\Omega}_k$ denote the restrictions of the functors ${\mathsf{Pt}}$ and $\overline{\Omega}$ to morphisms of type $k$ (in particular we have ${\mathsf{Pt}}_1={\mathsf{Pt}}$ and $\overline{\Omega}_1=\overline{\Omega}$).

\begin{theorem}[Adjunction Theorem I] \label{th:adj1}
For each $k=1,2,3,4$ the functor 
$${\mathsf{Pt}}_k:{\mathsf{EL}}_k\to {\mathsf{ET}}_k$$
 is a right adjoint of the functor $$
 \overline{\Omega}_k:{\mathsf{ET}}_k\to {\mathsf{EL}}_k.$$
  The component $\eta_C=(\eta_{C,1},\eta_{C,0})$  of the unit $\eta$ of each of the adjunctions is single-valued and $\eta_{C,i}(a)(B)=1$ if and only if $a\in B$, $i=0,1$. 
 \end{theorem}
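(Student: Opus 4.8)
The plan is to lift the classical adjunction $\Omega \dashv {\mathsf{pt}}$ of Theorem~\ref{th:pr1} from spaces and locales to internal categories, componentwise, and then to reconcile it with the type-by-type correspondences between morphisms already established in Lemma~\ref{lem:mor26} and Corollary~\ref{cor:mor26m}. First I would build the unit. For an \'etale topological category $C=(C_1,C_0)$ I take $\eta_{C,i}$ to be the classical unit $\eta_{C_i}\colon C_i\to{\mathsf{pt}}\Omega(C_i)$ of Theorem~\ref{th:pr1}, so that $\eta_{C,i}(a)(B)=1$ iff $a\in B$, and I regard this genuine continuous map as a single-valued relation. I would then verify that $\eta_C=(\eta_{C,1},\eta_{C,0})$ is a continuous covering functor from $C$ to ${\mathsf{Pt}}(\overline{\Omega}(C))$: the functor equations (Fun1)--(Fun4) come from naturality of the classical unit applied to the four structure maps $u,d,r,m$ of $C$, together with the identification of ${\mathsf{pt}}$ of the pullback with the honest pullback from Proposition~\ref{prop:pullback}; the covering property ($d$- and $r$-bijectivity) uses that $d$ and $r$ are \'etale, so that $\eta_{C,1}$ restricts to a homeomorphism on each local bisection. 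Since $\eta_C$ is single-valued it is a morphism of every type $k$, so one and the same $\eta_C$ serves as unit for all four adjunctions.

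Next I would set up the hom-set bijection. For fixed $k$, an \'etale topological category $C$ and an \'etale localic category $D$, I must produce a natural bijection between $\mathrm{Hom}_{{\mathsf{EL}}_k}(\overline{\Omega}(C),D)$, that is, morphisms of restriction quantal frames $g_1^*\colon{\mathcal O}(D)\to{\mathcal O}(\overline{\Omega}(C))=\Omega(C_1)$ of type $k$, and $\mathrm{Hom}_{{\mathsf{ET}}_k}(C,{\mathsf{Pt}}(D))$, relational covering morphisms $f$ of type $k$. In one direction I send $f=(f_1,f_0)$ to the morphism determined by $g_1^*(A)=f_1^{-1}(X_A)$; this is exactly the composite of $\overline{\Omega}(f)=f_1^{-1}$ (Proposition~\ref{prop:mor27}) with the classical counit $A\mapsto X_A$, and it is a morphism of restriction quantal frames of type $k$ by Proposition~\ref{prop:mor27} and Lemma~\ref{lem:mor26}. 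In the other direction I send $g_1^*$ to ${\mathsf{Pt}}(g_1)\circ\eta_C$, whose type-$k$ character is controlled by Lemma~\ref{lem:mor26} and Corollary~\ref{cor:mor26m}. Naturality in $C$ and $D$ will then follow from naturality of the classical unit and counit and functoriality of ${\mathsf{Pt}}$ and $\overline{\Omega}$.

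The crux is to show these two assignments are mutually inverse, and this is where I expect the main obstacle. Because morphisms of restriction quantal frames are not frame maps, the classical space--locale adjunction cannot be applied directly to $C_1\to D_1$; instead I would chase a basic open set $X_A$ through ${\mathsf{Pt}}(g_1)\circ\eta_C$ using the explicit description of ${\mathsf{pt}}'(g_1)$ via the points $p_c$ of Lemma~\ref{lem:l23} together with the formula $\eta_{C,i}(a)(B)=1\Leftrightarrow a\in B$, computing its preimage to recover $g_1^*(A)$; conversely, starting from $f_1^{-1}(X_A)$ and reapplying ${\mathsf{Pt}}$ and $\eta_C$, one returns $f$ on the relational level only after invoking the star-injectivity (RM3) and star-surjectivity (RM4) axioms to pin down the individual points of $f_1(a)$. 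The delicate point throughout is that it is precisely the covering-morphism axioms (RM1)--(RM6) that force these round trips to collapse to the identity, and that the bookkeeping of the four morphism types must be kept consistent on both sides; once this is done, the stated single-valued form of $\eta$ and the simultaneous existence of all four adjunctions follow immediately.
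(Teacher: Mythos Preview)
Your proposal is correct and follows essentially the same route as the paper's proof: the same unit $\eta_{C,i}(a)(B)=1\Leftrightarrow a\in B$, the same transpose $g_1^*(A)=f_1^{-1}(X_A)$, the same verification via (M1)--(M4) and Lemma~\ref{lem:l23}, and the same appeal to Lemma~\ref{lem:mor26} for the type-by-type restrictions. The only cosmetic difference is that the paper argues via the universal property of the unit (constructing $g$ from $f$ and checking $f={\mathsf{Pt}}(g)\circ\eta_C$), whereas you phrase it as a hom-set bijection to be checked in both directions; these are of course equivalent, and uniqueness of $g$ is immediate from $\eta_{C,1}^{-1}(X_B)=B$ together with ${\mathsf{pt}}'(g_1)^{-1}(X_A)=X_{g_1^*(A)}$. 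One small caution: your shortcut ``$g_1^*$ is the composite of $\overline{\Omega}(f)$ with the classical counit $A\mapsto X_A$'' presupposes that $A\mapsto X_A$ is itself a morphism of restriction quantal frames from ${\mathcal O}(D)$ to ${\mathcal O}(\overline{\Omega}({\mathsf{Pt}}(D)))$; this is true but needs a line of justification (it is essentially the counit of the adjunction you are constructing), and the paper sidesteps this by verifying (M1)--(M4) for $g_1^*$ directly.
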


\begin{proof}
We emphasize that according to our convention the only element of the set $\eta_{C,1}(a)$ is denoted by $\eta_{C,1}(a)$. We first show that $\eta_C$ is a continuous covering functor. Since $\eta_{C,i}^{-1}(X_B)=B$, the maps $\eta_{C,i}$ are continuous. It is straightforward to verify that $\eta_C$ it is a functor. We show that it is $d$-injective. Let $a,b\in C_1$ be such that $d(a)=d(b)$ and $\eta_{C,1}(a)=\eta_{C,1}(b)$. Let $B\in \Omega(C_1)$ be a local bisection such that $\eta_{C,1}(a)\in X_B$. This means that $a,b\in B$ and together with the fact that $B$ is a local bisection and $d(a)=d(b)$ implies that $a=b$. We show that it is $d$-surjective. Let $b=\eta_{C,0}(a)$ and ${\mathsf{pt}}(d)(q)=b$.
Let $B$ be any local bisection of $C$ such that $q(B)=1$. This means that $q\in X_B$ and hence $b\in X_{d(B)}$. As  $\eta_{C,0}^{-1}(X_{d(B)})=d(B)$ we obtain that $a\in d(B)$. Let $p$ be the only element in $B$ such that $d(p)=a$ and let $A$ be a local bisection of $C$. We have that $\eta_{C,1}(p)(A)=1$ if and only if $p\in A$ if and only if $p\in A\cap B$ if and only if $a\in d(A\cap B)$ if and only if $b\in X_{d(A\cap B)}$ if and only if $q\in X_{A\cap B}$ if and only if $q\in X_A$. Therefore, $q=\eta_{C,1}(p)$ and $d$-bijectivity of $\eta_C$ is established. By symmetry we also obtain that $\eta_C$ is $r$-bijective, and hence a covering functor.

We first treat the case of the functors ${\mathsf{Pt}}$ and $\overline{\Omega}$. To verify naturality of $\eta$, we need to show that for any \'etale topological categories $C,D$ and any morphism $f:C\to D$ we have that $\eta_Df={\mathsf{Pt}}\, \overline{\Omega}(f)\eta_C$. The equality in the second component follows from Theorem \ref{th:pr1}. For the first component the equality we need quickly reduces to 
$$\bigvee_{q\in f_1(c)}\eta_{D,1}q(B)=\eta_{C,1}(c)f_1^{-1}(B),$$
 where $c\in C_1$ and $B$ is a local bisection of $D_1$. This equality holds since either of its sides equals $1$ if and only if $f_1(c)\cap B\neq\varnothing$.

Let $C\in {\mathrm{Ob}}({\mathsf{ET}})$, $D\in {\mathrm{Ob}}({\mathsf{EL}})$ and $f=(f_1,f_0):C\to {\mathsf{Pt}}(D)$ be a relational covering morphism.  We define $g^*_1\colon  {\mathcal O}(D) \to {\mathcal O}(\overline{\Omega}(C))$ by $$g_1^*(B)=f_1^{-1}(X_B),\, B\in O(D_1).$$ 

We put $g_0^*=dg_1^*{\mathsf{pt}}(u)$. Then for any $B\in O(D_0)$ we have 
$$
g_0^*(B)=df_1^{-1}(X_{u_!(B)})=f_0^{-1}(X_{d_!u_!(B)})=f_0^{-1}(X_B).
$$
To verify that $g_1^*$ is a morphism of restriction quantal frames we apply Proposition~\ref{prop:morphisms}. The map $g_1^*$ is a sup-map and $g_0^*$ is a frame map by definition. We verify that axioms (M1)--(M4) are satisfied.

(M1) Let $B\in O(D_1)$ and show that $f_1^{-1}(X_B)$ is a local bisection. By part (4) of Lemma \ref{lem:homeom21}
$X_B$ is a local bisection, and thus so is  $f_1^{-1}(X_B)$ by (RM5) and (RM3). 
(M2) reduces to the equality $df_1^{-1}(X_B)=f_0^{-1}{\mathsf{pt}}(d)(X_B)$, where $B\in O(D_1)$, which follows by part \eqref{i:cons1} of Lemma \ref{lem:conseq}. 
(M3) reduces to $f_1^{-1}{\mathsf{pt}}(u)(X_{A})=uf_0^{-1}(X_A)$, where $A\in O(D_0)$, which follows by part \eqref{i:cons2} of Lemma \ref{lem:conseq}. 
(M4) In view of Lemma~\ref{lem:march15}~this is equivalent to $f_1^{-1}(X_A)f_1^{-1}(X_B)\subseteq f_1^{-1}(X_AX_B)$, where $A$ and $B$ are local bisections. This easily follows from (RM2).

We now verify that
$f={\mathsf{Pt}}(g)\eta_C$. The equality $f_0={\mathsf{pt}}(g_0)\eta_{C,0}$ holds by Theorem~\ref{th:pr1}. We are left to verify that $f_1={\mathsf{pt}}'(g_1)\eta_{C,1}$.

Let $a\in C_1$. We need to show that $f_1(a)={\mathsf{pt}}'(g_1)\eta_{C,1}(a)$. We first observe that 
\begin{equation}\label{eq:aux26a}
f_1(a)\cap X_B\neq \varnothing  \,\, \text{ if and only if }\,\, \eta_{C,1}(a)g_1^*(B)=1
\end{equation}
for any $B\in O(D_1)$. Indeed,  $f_1(a)\cap X_B\neq \varnothing$ holds if and only if $a\in f_1^{-1}(X_B)$ which is equivalent to $a\in g_1^*(B)$, which, in turn, is equivalent to $\eta_{C,1}(a)g_1^*(B)=1$, as required. 
We also observe that 
\begin{equation}\label{eq:aux26}
f_0d(a)={\mathsf{pt}}(g_0){\mathsf{pt}}(d)\eta_{C,1}(a).
\end{equation}
Indeed by (RM1) we have the equality $\eta_{C,0}d(a)={\mathsf{pt}}(d)\eta_{C,1}(a)$ which reduces the needed equality to 
$f_0d(a)={\mathsf{pt}}(g_0)\eta_{C,0}d(a)$ which holds since $f_0={\mathsf{pt}}(g_0)\eta_{C,0}$.

Let $p\in f_1(a)$ and show that $p\in {\mathsf{pt}}'(g_1)\eta_{C,1}(a)$. Let $B\in O(D_1)$ be a local bisection such that $p\in X_B$.
It follows from \eqref{eq:aux26a} that $\eta_{C,1}(a)g_1^*(B)=1$ and thus 
$$
p_B=({\mathsf{pt}}(d)|_{X_B})^{-1}{\mathsf{pt}}(g_0){\mathsf{pt}}(d)\eta_{C,1}(a)\in {\mathsf{pt}}'(g_1)\eta_{C,1}(a).
$$
Note that $p_B$ may be characterized as the only element $q\in X_B$ satisfying the equality
${\mathsf{pt}}(d)(q)={\mathsf{pt}}(g_0){\mathsf{pt}}(d)\eta_{C,1}(a)$. Using \eqref{eq:aux26} we obtain that
$p_B$ is the only element $q\in X_B$ such that ${\mathsf{pt}}(d)(q)=f_0d(a)$. But ${\mathsf{pt}}(d)(p)=f_0d(a)$ by (RM1). It follows that $p_B=p$ and so $p\in {\mathsf{pt}}'(g_1)\eta_{C,1}(a)$.

Let $p\in {\mathsf{pt}}'(g_1)\eta_{C,1}(a)$ and show that $p\in f_1(a)$. By definition we have that $p=p_B$ for some $B\in O(D_1)$ such that $\eta_{C,1}(a)g_1^*(B)=1$. As above we observe that $p_B$ is the only element $q\in X_B$ such that
${\mathsf{pt}}(d)(q)=f_0d(a)$. By \eqref{eq:aux26a} there is $q\in f_1(a)\cap X_B$. Then ${\mathsf{pt}}(d)(q)=f_0d(a)$ by (RM1). It follows that $q=p_B=p$ and thus $p\in f_1(a)$. This completes the proof of the claim that the functor ${\mathsf{Pt}}$ is a right adjoint of the functor $\overline{\Omega}$.

For the remaining functors, we follow the lines of the proof of part \eqref{i:mor26b} of Lemma~\ref{lem:mor26} and obtain that if $f=(f_1,f_0):C\to {\mathsf{Pt}}(D)$ is at least (resp. at most) single-valued then $g_1^*$ is a proper morphism (resp. a $\wedge$-morphism). This and  Lemma \ref{lem:mor26} imply the needed statements.
\end{proof}

The Adjunction Theorem, together with Theorems \ref{th:jul17} and \ref{the:jan23} yield the following result.
\begin{theorem}[Adjunction Theorem II] \label{th:adj8a}
For each $k=1,2,3,4$ there is a dual adjunction between the category of complete restriction monoids and their morphisms of type $k$ and the category of \'etale topological categories and their morphisms of type $k$. 
\end{theorem}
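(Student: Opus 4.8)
The plan is to deduce this theorem purely formally, by composing the genuine adjunction of Theorem~\ref{th:adj1} (Adjunction Theorem~I) with the object-and-morphism correspondences already established, taking care of the contravariance that enters along the way. Throughout, write $\mathbf{CRM}_k$ for the category of complete restriction monoids and morphisms of type $k$, $\mathbf{RQF}_k$ for the category of restriction quantal frames and morphisms of type $k$, and recall the categories ${\mathsf{EL}}_k$ and ${\mathsf{ET}}_k$ of \'etale localic and \'etale topological categories with morphisms of type $k$.

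First I would record the (essentially definitional) identification of ${\mathsf{EL}}_k$ with the opposite of $\mathbf{RQF}_k$. By the convention fixed in Subsection~\ref{sub:morphisms}, a morphism $f_1\colon C\to D$ of \'etale localic categories \emph{is} the opposite of a morphism $f_1^*\colon{\mathcal O}(D)\to{\mathcal O}(C)$ of restriction quantal frames, while the assignments ${\mathcal O}$ and ${\mathcal C}$ of Theorems~\ref{th:jul17} and~\ref{the:jan23} give a bijective correspondence on objects. Hence ${\mathcal O}$ and ${\mathcal C}$ upgrade to mutually inverse functors witnessing an isomorphism of categories
$$
{\mathsf{EL}}_k \;\cong\; (\mathbf{RQF}_k)^{\mathrm{op}}.
$$
Next I would invoke the Quantalization Theorem, Theorem~\ref{th:quant_psgrps}, which for each $k$ supplies an equivalence $\mathbf{CRM}_k\simeq\mathbf{RQF}_k$ via $S\mapsto{\mathcal L}^{\vee}(S)$ and $Q\mapsto{\mathcal{PI}}(Q)$. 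Passing to opposite categories and composing with the isomorphism above yields an adjoint equivalence
$$
E\colon (\mathbf{CRM}_k)^{\mathrm{op}} \xrightarrow{\ \simeq\ } {\mathsf{EL}}_k,
$$
with a chosen quasi-inverse $E^{-1}$ satisfying $E^{-1}\dashv E$.

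Finally I would transport the adjunction $\overline{\Omega}_k\dashv{\mathsf{Pt}}_k$ of Theorem~\ref{th:adj1} across $E$. Since adjunctions compose, from $\overline{\Omega}_k\dashv{\mathsf{Pt}}_k$ (with $\overline{\Omega}_k\colon{\mathsf{ET}}_k\to{\mathsf{EL}}_k$ and ${\mathsf{Pt}}_k\colon{\mathsf{EL}}_k\to{\mathsf{ET}}_k$) together with $E^{-1}\dashv E$ I obtain
$$
E^{-1}\,\overline{\Omega}_k \;\dashv\; {\mathsf{Pt}}_k\,E,
$$
an adjunction between ${\mathsf{ET}}_k$ and $(\mathbf{CRM}_k)^{\mathrm{op}}$. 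An adjunction between ${\mathsf{ET}}_k$ and $(\mathbf{CRM}_k)^{\mathrm{op}}$ is by definition a dual adjunction between ${\mathsf{ET}}_k$ and $\mathbf{CRM}_k$, which is exactly the assertion of the theorem.

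All the substantive work—functoriality, type-preservation, and the construction of the unit and counit—is already contained in Theorems~\ref{th:quant_psgrps}, \ref{the:jan23} and~\ref{th:adj1}; there are no new calculations to perform. The only point requiring genuine care, and hence the main obstacle, is the opposite-category bookkeeping: one must check that the contravariance built into the definition of morphisms of \'etale localic categories is precisely what converts the ordinary (covariant) adjunction of Theorem~\ref{th:adj1} into a \emph{dual} adjunction on the semigroup side, so that the direction of the two adjoints, and of the unit and counit, is matched correctly and uniformly across all four morphism classes $k=1,2,3,4$.
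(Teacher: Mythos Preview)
Your proposal is correct and matches the paper's approach: the paper simply states that the result follows from the Adjunction Theorem (Theorem~\ref{th:adj1}) together with Theorems~\ref{th:jul17} and~\ref{the:jan23}, and your argument spells out exactly this composition (invoking additionally the Quantalization Theorem~\ref{th:quant_psgrps} to pass from restriction quantal frames to complete restriction monoids, which the paper leaves implicit). Your care with the opposite-category bookkeeping is appropriate and there is no gap.
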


In view of Propositions \ref{prop:ample} and \ref{prop:ample1} we also obtain the following result.

\begin{corollary}\label{cor:adj8b} For each $k=1,2,3,4$ there is a dual adjunction between the category of left ample (resp. right ample, ample) complete restriction monoids and their morphisms of type $k$ and the category of left cancellative (resp. right cancellative, cancellative) \'etale topological categories and morphisms of type $k$. 
\end{corollary}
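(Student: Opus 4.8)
The plan is to read Corollary \ref{cor:adj8b} as a restriction of the dual adjunction of Theorem \ref{th:adj8a} to suitable full subcategories, where the only genuine input is already contained in Propositions \ref{prop:ample} and \ref{prop:ample1}. So the work is to identify the composite functors involved and then invoke a standard restriction-of-adjunctions argument; no new computation with amplitude or cancellativity is required.

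First I would unwind the construction of the dual adjunction in Theorem \ref{th:adj8a}. By definition it is the composite of the equivalence of Theorems \ref{th:quant_psgrps} and \ref{the:jan23}, relating complete restriction monoids to \'etale localic categories through restriction quantal frames, with the adjunction of Theorem \ref{th:adj1}, relating \'etale localic categories to \'etale topological categories via ${\mathsf{Pt}}$ and $\overline{\Omega}$. Tracing these composites on objects, the functor from complete restriction monoids to \'etale topological categories sends $S$ to ${\mathsf{Pt}}(\mathcal{C}(\mathcal{L}^{\vee}(S)))$, while the functor in the opposite direction sends $C$ to $\mathcal{PI}(\mathcal{O}(\overline{\Omega}(C)))=\mathcal{PI}(\Omega(C))$. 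These are precisely the two assignments analyzed in Propositions \ref{prop:ample} and \ref{prop:ample1}.

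Next I would feed in those two propositions to see that the functors respect the relevant subclasses. Proposition \ref{prop:ample} shows that $S\mapsto {\mathsf{Pt}}(\mathcal{C}(\mathcal{L}^{\vee}(S)))$ carries left ample (resp.\ right ample, ample) complete restriction monoids into left cancellative (resp.\ right cancellative, cancellative) \'etale topological categories, and Proposition \ref{prop:ample1} shows that $C\mapsto \mathcal{PI}(\Omega(C))$ carries the cancellative categories back into the ample monoids. Since in each case the morphisms of type $k$ between objects of a subclass are simply all the type $k$ morphisms of the ambient category, these subclasses determine full subcategories of the categories appearing in Theorem \ref{th:adj8a}.

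Finally I would apply the routine categorical fact that a (dual) adjunction $F\dashv G$ restricts to full subcategories $\mathcal{A}'\subseteq\mathcal{A}$ and $\mathcal{B}'\subseteq\mathcal{B}$ as soon as $F(\mathcal{A}')\subseteq\mathcal{B}'$ and $G(\mathcal{B}')\subseteq\mathcal{A}'$. Concretely, for a left ample $S$ the unit component $\eta_S\colon S\to \mathcal{PI}(\Omega({\mathsf{Pt}}(\mathcal{C}(\mathcal{L}^{\vee}(S)))))$ has codomain again left ample by the two propositions, so by fullness it is a morphism of the subcategory; dually for the counit, and the triangle identities are inherited verbatim from Theorem \ref{th:adj8a}. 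Running this for each $k=1,2,3,4$ and for each of the three parities yields the claim. The point to handle carefully is not an obstacle but a bookkeeping check: that the composite functors of Theorem \ref{th:adj8a} agree on objects with the assignments of Propositions \ref{prop:ample} and \ref{prop:ample1}, and that fullness is exactly what lets the unit and counit restrict with no further verification.
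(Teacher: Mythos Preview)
Your proposal is correct and follows exactly the approach the paper takes: the paper simply states that the corollary follows ``in view of Propositions \ref{prop:ample} and \ref{prop:ample1}'', and your write-up spells out the standard restriction-of-adjunctions argument that makes this work.
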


We spell out the explicit constructions of  functors establishing the adjunctions in Theorem \ref{th:adj8a} and Corollary~\ref{cor:adj8b}. Consider, for example, the adjunctions between the category of complete restriction monoids and the category of \'etale topological categories. For objects, we have the following constructions. Let $C$ be an \'etale topological category. Then it is mapped to the complete restriction monoid  ${\mathcal{PI}}({\mathcal O}(\overline{\Omega}(C)))$. Conversely, let $S$ be a comlete restriction monoid. Then it is mapped to the \'etale topological category ${\mathsf{Pt}}(\mathcal C(\mathcal{L}^{\vee}(S)))$. Similarly, one can keep track of the maps of morphisms.

\subsection{The involutive and groupoid settings}
All established adjunction theorems can be readily  extended to the involutive and groupoid settings. 
For completeness, we include some details.

Let $C$ be an involutive \'etale localic category and let $i$ be the involution structure map. Then it is a direct consequence of the functoriality of the assignment ${\mathsf{pt}}$ that ${\mathsf{pt}}(i)$ is an involution of the \'etale topological category ${\mathsf{Pt}}(C)$. A similar remark applies also in the reverse direction. We now treat morphisms. A relational covering morphism $f=(f_1,f_0):C\to D$ between involutive \'etale topological categories is called {\em involutive} if 
for every $a\in C_1$ we have that $f_1(i(a))=i(f_1(a))$.

\begin{lemma}\mbox{}
\begin{enumerate}
\item \label{i:inv8a} Let $C$ and $D$ be involutive \'etale localic categories and let $f_1^*\colon {\mathcal O}(D)\to {\mathcal O}(C)$ be an involutive morphism of restriction quantal frames. Then ${\mathsf{Pt}}(f_1)$ is an involutive morphism of \'etale topological categories. 
\item \label{i:inv8b} Let $C$ and $D$ be involutive \'etale topological categories and $f\colon C\to D$ an involutive relational covering morphism. Then $f_1^{-1}$ is an involutive morphism of restriction quantal frames.
\end{enumerate}
\end{lemma}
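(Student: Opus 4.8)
Both statements follow from short computations; the content of \eqref{i:inv8a} is the interaction of the involution with the domain and range maps, governed by the identities $di=r$ and $ri=d$ (and their images ${\mathsf{pt}}(d){\mathsf{pt}}(i)={\mathsf{pt}}(r)$ under the functor ${\mathsf{pt}}$), whereas \eqref{i:inv8b} is a purely set-theoretic manipulation of preimages. I would first record the shape of the involutions. For \eqref{i:inv8a}, by Proposition~\ref{prop:o24} the involutions on ${\mathcal O}(C)$ and ${\mathcal O}(D)$ are $i_!=i^*$ and $i'_!=i'^*$, so the hypothesis that $f_1^*$ is involutive reads $f_1^*(y^*)=f_1^*(y)^*$; the involutions on ${\mathsf{Pt}}(C)$ and ${\mathsf{Pt}}(D)$ are ${\mathsf{pt}}(i)$ and ${\mathsf{pt}}(i')$, so, writing $a^*={\mathsf{pt}}(i)(a)$, the goal is the set equality ${\mathsf{pt}}'(f_1)(a^*)={\mathsf{pt}}(i')\big({\mathsf{pt}}'(f_1)(a)\big)$ for all $a\in{\mathsf{pt}}(C_1)$. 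For \eqref{i:inv8b}, the involution on ${\mathcal O}(\overline{\Omega}(C))$ is $\Omega(i)_!=\Omega(i)^*=i^{-1}$ (and likewise $i'^{-1}$ on the $D$ side), so the goal is $f_1^{-1}(i'^{-1}(B))=i^{-1}(f_1^{-1}(B))$ for every local bisection $B$.

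For \eqref{i:inv8a} I would argue through the description of ${\mathsf{pt}}'(f_1)(a)$ as the family of points $p_c$ of Lemma~\ref{lem:l23}, indexed by local bisections $c\in O(D_1)$ with $af_1^*(c)\neq 0$. Three preliminary facts drive the symmetry: (i) since $(-)^*$ is a frame automorphism preserving $\lambda,\rho$, it sends local bisections to local bisections (as in the proof of Proposition~\ref{prop:isom_units}), and $p\in X_c\Leftrightarrow{\mathsf{pt}}(i')(p)\in X_{c^*}$; (ii) $a^*f_1^*(c^*)=a\big((f_1^*(c^*))^*\big)=af_1^*(c)$, using that $f_1^*$ is involutive, so $c\mapsto c^*$ is a bijection between the index sets of ${\mathsf{pt}}'(f_1)(a)$ and of ${\mathsf{pt}}'(f_1)(a^*)$; and (iii) ${\mathsf{pt}}(d)(a^*)={\mathsf{pt}}(r)(a)$, from $di=r$ in $C$. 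Using the two formulas for $p_c$ — via the domain map from Lemma~\ref{lem:l23} and via the range map from the Remark following it — I would check that the point $p_{c^*}$ computed from $a^*$ and $c^*$ equals ${\mathsf{pt}}(i')(p_c)$: both are the unique point of $X_{c^*}\subseteq{\mathsf{pt}}(D_1)$ lying over ${\mathsf{pt}}(f_0){\mathsf{pt}}(r)(a)$ under the domain map of $D$, because $di'=r$ in $D$ forces ${\mathsf{pt}}(d)\big({\mathsf{pt}}(i')(p_c)\big)={\mathsf{pt}}(r)(p_c)$ and $d_!(c^*)=r_!(c)$. This gives $p_{c^*}={\mathsf{pt}}(i')(p_c)$, hence the desired set equality.

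For \eqref{i:inv8b} the computation is direct. By Proposition~\ref{prop:mor27}, $f_1^{-1}$ is already a morphism of restriction quantal frames, so only the involution identity remains; unwinding the definition of the preimage relation,
\begin{align*}
i^{-1}\big(f_1^{-1}(B)\big)
&=\{y\in C_1\colon f_1(i(y))\cap B\neq\varnothing\}
=\{y\colon i'(f_1(y))\cap B\neq\varnothing\}\\
&=\{y\colon f_1(y)\cap i'^{-1}(B)\neq\varnothing\}=f_1^{-1}(i'^{-1}(B)),
\end{align*}
where the middle equality uses the hypothesis $f_1\circ i=i'\circ f_1$ and the last uses that $i'$ is a bijection. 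Since $i^{-1}$ and $i'^{-1}$ are precisely the quantal-frame involutions, this is exactly the assertion that $f_1^{-1}$ is involutive.

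The main obstacle is the bookkeeping in part \eqref{i:inv8a}: one must keep straight which instance of $d$ or $r$ belongs to the source category $C$ and which to the target $D$, and must verify carefully that ${\mathsf{pt}}(i')$ intertwines the inverse homeomorphisms $({\mathsf{pt}}(r)|_{X_c})^{-1}$ and $({\mathsf{pt}}(d)|_{X_{c^*}})^{-1}$ using $di'=r$ and the uniqueness built into Lemma~\ref{lem:l23}. Everything else is a routine consequence of the functoriality of ${\mathsf{pt}}$ and $\Omega$ together with the already-noted fact that these assignments carry involution structure maps to involution structure maps.
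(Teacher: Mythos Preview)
Your proof is correct. For part \eqref{i:inv8b} it is essentially identical to the paper's argument.

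For part \eqref{i:inv8a} the paper takes a terser route: it simply asserts that the set equality ${\mathsf{pt}}'(f_1){\mathsf{pt}}(i)(a)={\mathsf{pt}}(i'){\mathsf{pt}}'(f_1)(a)$ reduces to the function identity $ai^*f_1^*=af_1^*i'^*$, which holds because $f_1^*$ is involutive and $i^*=i_!$, $i'^*=i'_!$ are the quantal-frame involutions (Proposition~\ref{prop:o24}). This reduction is legitimate because, by the formula $p_c(x)=qf_1^*(x\wedge c)$ extracted in the proof of Lemma~\ref{lem:l23}, the set ${\mathsf{pt}}'(f_1)(q)$ is completely determined by the function $qf_1^*$ together with the frame involution on $O(D_1)$ --- but the paper does not spell this out. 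Your argument makes this step explicit: you exhibit the bijection $c\mapsto c^*$ between the index sets and then verify directly, using $di=r$, $di'=r$ and the two descriptions of $p_c$ via the domain and range maps, that $p_{c^*}={\mathsf{pt}}(i')(p_c)$. This is exactly the content hidden behind the paper's one-line reduction; your version is more transparent about the bookkeeping, while the paper's version is quicker once one has internalised that ${\mathsf{pt}}'(f_1)(q)$ depends only on $qf_1^*$.
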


\begin{proof} \eqref{i:inv8a} To verify that ${\mathsf{pt}}'(f_1){\mathsf{pt}}(i)(a)={\mathsf{pt}}(i){\mathsf{pt}}'(f_1)(a)$ we need to verify that $f_1^*i^*a=i^*f_1^*a$, where $a\in {\mathsf{pt}}(C_1)$. But by part \eqref{i:oct212} of Proposition \ref{prop:o24} we have that $i^*=i_!$ is precisely the involution on a restriction quantal frame. It follows that the required equality holds as $f_1^*$ is an involutive morphism.

\eqref{i:inv8b} Since $i=i^{-1}$ we need to verify that $f_1^{-1}i(A)=if_1^{-1}(A)$ for any $A\in \Omega(D_1)$.
We have that $a\in f_1^{-1}i(A)$ holds if and only if when $f_1(a)\cap i(A)\neq \varnothing$. This is equivalent to
$i(f_1(a))\cap A\neq\varnothing$, which, applyint $if_1=f_1i$, is equivalent to $f_1(i(a))\cap A\neq\varnothing$.
This, in turn, is equivalent to $i(a)\in f_1^{-1}(A)$, or $a\in if_1^{-1}(A)$.
\end{proof}

To obtain the involutive analogue of Theorem \ref{th:adj1}, we need to verify that the map $g_1^*$ from the proof of Theorem \ref{th:adj1} is involutive whenever $f$ is, and also that $\eta_C$ is an involutive morphism. But this is straightforward to verify. 

In the case where involutive categoreis are groupoids,  it is immediate by functoriality of the assignments ${\mathsf{pt}}$ and $\Omega$ that the assignments ${\mathsf{Pt}}$ and $\overline{\Omega}$ map groupoids to groupoids.  Also, an involutive morphism of groupoids is just a groupoid morphism and an involutive morphism of  pseudogroups is just a pseudogroup morphism. In the next and some further statements by a morphism of an involutive object we always understand an involutive moprhism. 

\begin{corollary} \label{cor:adj:inv}\mbox{}
\begin{enumerate}
\item For each $k=1,2,3,4$ there is an adjunction between the category of involutive \'etale localic categories (resp. \'etale localic groupoids) and their morphisms of type $k$ and the category of involutive \'etale topological categories (resp. \'etale topological groupoids) and their morphisms of type $k$.
\item 
For each $k=1,2,3,4$ there is a dual adjunction between the category of involutive complete restriction monoids (resp. pseudogroups) and their morphisms of type $k$ and the category of \'etale topological categories (resp. \'etale topological groupoids) and their  morphisms of type $k$. 
\end{enumerate}
\end{corollary}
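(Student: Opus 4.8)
The plan is to derive both statements by \emph{restricting} the adjunctions already established in Theorem~\ref{th:adj1} (Adjunction Theorem~I) and Theorem~\ref{th:adj8a} (Adjunction Theorem~II) to the relevant subcategories, rather than constructing anything new. For part~(1), the first thing I would record is that the object assignments of ${\mathsf{Pt}}$ and $\overline{\Omega}$ already respect the extra structure: as noted in the discussion preceding the corollary, by functoriality of ${\mathsf{pt}}$ and $\Omega$ an involution structure map $i$ is carried to ${\mathsf{pt}}(i)$ (respectively $\Omega(i)$), which is again an involution, and the groupoid axioms, being expressed through $m,u,d,r,i$, are preserved as well. On morphisms, the lemma immediately preceding this corollary shows that ${\mathsf{Pt}}$ sends involutive morphisms to involutive morphisms and $\overline{\Omega}$ sends involutive relational covering morphisms to involutive morphisms of restriction quantal frames. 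Hence, for each $k$, the functors ${\mathsf{Pt}}_k$ and $\overline{\Omega}_k$ restrict to functors between the involutive (respectively groupoid) versions of ${\mathsf{EL}}_k$ and ${\mathsf{ET}}_k$.

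The key point is that the involutive subcategories are \emph{not full} — not every type-$k$ morphism between involutive objects commutes with the involution — so I cannot simply invoke the generic fact that adjunctions restrict to full subcategories; the adjunction data themselves must be checked to be involutive. There are exactly two such checks. First, the unit component $\eta_C$ of the adjunction of Theorem~\ref{th:adj1} is involutive whenever $C$ is, as already observed in the text. Second, the universal arrow from the proof of Theorem~\ref{th:adj1} — the morphism $g$ with $g_1^*(B) = f_1^{-1}(X_B)$ attached to a given $f \colon C \to {\mathsf{Pt}}(D)$ — is involutive whenever $f$ is. This last verification is the only genuine computation: one checks $g_1^* i^* = i^* g_1^*$ by unwinding $g_1^*(i(B)) = f_1^{-1}(X_{i(B)})$, using $f_1 i = i f_1$ together with the description of the involution on ${\mathcal O}(\overline{\Omega}(C))$ as $i^* = i_!$ supplied by Proposition~\ref{prop:o24}. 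Once these two facts are in place, applying the universal property of the non-involutive adjunction to an involutive $f$ produces a $g$ that automatically lies in the involutive subcategory, with uniqueness inherited; this is precisely what makes $(\overline{\Omega}_k,{\mathsf{Pt}}_k)$ an adjunction on the restricted categories. The groupoid case needs no new argument, since an involutive morphism of groupoids is just a groupoid morphism.

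For part~(2), I would transport the adjunction of part~(1) across the involutive analogue of the equivalence used to pass from Theorem~\ref{th:adj1} to Theorem~\ref{th:adj8a}. The Involutive Quantalization Theorem~\ref{th:inv_quant_psgrps} gives, for each type $k$, an equivalence between involutive complete restriction monoids and involutive restriction quantal frames; the latter are, by the convention set up at the start of Subsection~\ref{sub:morphisms}, the same as involutive \'etale localic categories with morphisms reversed. Composing this equivalence with ${\mathsf{Pt}}_k$ and $\overline{\Omega}_k$ from part~(1) yields the stated dual adjunction. For the groupoid case I would additionally invoke Theorem~\ref{th:resende1} (pseudogroups correspond to inverse quantal frames) and Proposition~\ref{prop:cat} (inverse quantal frames correspond to \'etale localic groupoids), so that the equivalence restricts to pseudogroups on one side and the dual adjunction lands in \'etale topological groupoids on the other.

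I do not expect a serious obstacle: all the heavy machinery lives in Theorems~\ref{th:adj1} and~\ref{th:adj8a} and in the involutive correspondence results, and the only real content is the compatibility of the construction $g \mapsto g_1^*$ with involutions. The main thing to be careful about is bookkeeping — tracking which morphism class is in play for each $k$ and confirming that involutivity is preserved \emph{simultaneously} with the type-$k$ property. Since involutivity is an independent closure condition imposed on top of the type-$k$ morphisms, rather than interacting with properness or the $\wedge$-condition, this compatibility is routine.
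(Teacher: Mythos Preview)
Your proposal is correct and follows essentially the same approach as the paper. The paper's proof is the brief discussion immediately preceding the corollary: it states that one needs to verify that the map $g_1^*$ from the proof of Theorem~\ref{th:adj1} is involutive whenever $f$ is, and that $\eta_C$ is an involutive morphism, declaring both ``straightforward to verify''; your writeup identifies exactly these two checks and adds the helpful observation that they are genuinely needed because the involutive subcategories are not full.
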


\begin{remark} {\em It is easy to see that the dual adjunction between the category of \'etale topological groupoids and their continuous covering functors (type $4$) and the category of pseudogroups and their proper $\wedge$-morphisms (type $4$) given by Corollary~\ref{cor:adj:inv} is precisely the one given in \cite[Theorem 2.22]{LL1}. What we have called {\em callitic morphisms} in \cite{LL1} are in fact proper $\wedge$-morphisms. Our current work therefore extends and clarifies that to be found in \cite{LL1}.}
\end{remark}
 
\section{Non-commutative Stone dualities}\label{s:dualities}

\subsection{Sober, spectral and Boolean categories}
An \'etale topological category $C=(C_1,C_0)$ is called {\em sober} if the spaces $C_0$  and $C_1$ are sober. 
An \'etale localic category $C=(C_1,C_0)$ is called {\em spatial} if the locales $C_0$ and $C_1$ are spatial. 
A complete restriction monoid is called {\em spatial} if the frames $e^{\downarrow}$ and ${\mathcal L}^{\vee}(S)$ are spatial. From these definitions and the results of Section~\ref{s:adjun} we immediately deduce the following.

\begin{theorem}\label{th:adjg} {\em [Topological duality theorem I] }For each $k=1,2,3,4$ the following categories are equivalent:
\begin{enumerate}
\item The category of  spatial \'etale localic categories (resp. involutive categories, groupoids) and their morphisms of type $k$;
\item The category of sober \'etale topological categories (resp. involutive categories, groupoids) and their morphisms of type $k$;
\item The opposite of the category of spatial complete restriction monoids (resp. involutive complete restriction monoids, pseudogroups) and their morphisms of type $k$.
\end{enumerate}
\end{theorem}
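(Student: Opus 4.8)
The plan is to deduce Theorem~\ref{th:adjg} by restricting the adjunctions of Section~\ref{s:adjun} to the fixed points of the unit and counit, following exactly the template of the classical passage from the Classical Adjunction Theorem~\ref{th:pr1} to the spatial-sober equivalence of Theorem~\ref{th:pr1a}. The equivalence of (1) and (3) is essentially the content of the Etale Correspondence Theorem~\ref{the:jan23} together with the Duality Theorem~\ref{cor:main} (and its involutive and groupoid refinements in Sections~\ref{s:involutive} and~\ref{s:inv:setting}): a complete restriction monoid $S$ is spatial precisely when the frames $e^{\downarrow}$ and ${\mathcal L}^{\vee}(S)$ are spatial, and under the bijective correspondence $S \leftrightarrow {\mathcal C}({\mathcal L}^{\vee}(S))$ these are exactly the frames $O(C_0)$ and $O(C_1)$ of the associated \'etale localic category. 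So the real work is the equivalence of (1) and (2).

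First I would recall that by Theorem~\ref{th:adj1}, for each $k$ the functor ${\mathsf{Pt}}_k$ is right adjoint to $\overline{\Omega}_k$, with unit $\eta$ described explicitly. The standard general fact about an adjunction $\overline{\Omega}_k \dashv {\mathsf{Pt}}_k$ is that it restricts to an equivalence between the full subcategory of objects on which the counit is an isomorphism and the full subcategory of objects on which the unit is an isomorphism. Thus the plan is to identify these two subcategories. On the topological side, the component $\eta_C$ at an \'etale topological category $C=(C_1,C_0)$ is built componentwise from the classical units $\eta_{C_1}$ and $\eta_{C_0}$ of Theorem~\ref{th:pr1}; by the formula $\eta_{C,i}(a)(B)=1 \Leftrightarrow a\in B$ it is exactly the pair of classical space-to-point maps. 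Hence $\eta_C$ is an isomorphism in ${\mathsf{ET}}_k$ precisely when both $\eta_{C_0}$ and $\eta_{C_1}$ are homeomorphisms, i.e.\ precisely when $C_0$ and $C_1$ are sober, which is the definition of a sober \'etale topological category. Dually, the counit at an \'etale localic category $D=(D_1,D_0)$ is assembled from the classical counits $\epsilon_{D_1}$ and $\epsilon_{D_0}$, so it is an isomorphism exactly when $D_0$ and $D_1$ are spatial, i.e.\ when $D$ is spatial.

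The key remaining step is therefore to verify that ``the unit/counit is an isomorphism componentwise at $C_0$ and $C_1$'' really is equivalent to ``it is an isomorphism as a morphism of \'etale categories of type $k$''. This requires checking that once $\eta_{C,0}$ and $\eta_{C,1}$ are bijective (hence homeomorphisms, the spaces being sober), the pair $\eta_C$ is invertible \emph{within} the chosen morphism class of type~$k$ — that is, its inverse is again a relational covering morphism of type~$k$. Since $\eta_C$ is already shown in the proof of Theorem~\ref{th:adj1} to be a single-valued relational covering morphism (a continuous covering functor), and single-valued morphisms belong to every type, the inverse is handled uniformly across $k=1,2,3,4$; one checks (RM1)--(RM6) for the inverse pair, which is routine given the explicit bijectivity. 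The same bookkeeping on the localic side shows spatiality of $D_0,D_1$ makes $\epsilon_D$ invertible in each ${\mathsf{EL}}_k$.

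I expect the main obstacle to be precisely this last verification that the morphism structure is preserved and reflected by the restriction, rather than merely the underlying objects: one must confirm that the sober/spatial conditions, which are purely about objects, suffice to make the unit and counit into \emph{iso-morphisms of the appropriate type}, and that the involutive and groupoid decorations are respected (for these, the preservation of the involution map $i$ under ${\mathsf{pt}}$ and $\Omega$ from Corollary~\ref{cor:adj:inv} does the job, so the groupoid and involutive cases add no genuinely new difficulty). Everything else — the identification of the fixed subcategories with the sober and spatial ones, and the splicing in of the complete-restriction-monoid description via Theorems~\ref{the:jan23} and~\ref{cor:main} — is a formal consequence of the adjunction machinery already in place.
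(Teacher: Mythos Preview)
Your proposal is correct and is essentially the same route the paper takes: the paper simply says the theorem is ``immediately deduced'' from the definitions of sober/spatial and from the adjunctions of Section~\ref{s:adjun}, and what you have written is a careful unpacking of that one sentence. Your identification of (1)$\Leftrightarrow$(3) via the Quantalization/\'Etale Correspondence data, and of (1)$\Leftrightarrow$(2) by restricting the adjunction ${\overline{\Omega}}_k \dashv {\mathsf{Pt}}_k$ to the objects where the (componentwise classical) unit and counit are isomorphisms, is exactly what the paper has in mind; the only extra care you take --- checking that $\eta_C$ and its inverse lie in the right morphism class for each $k$ --- is routine since $\eta_C$ is already a continuous covering functor, hence of type~$4$ and therefore of every type.
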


\begin{lemma}\label{lem:sober} Let $C=(C_1,C_0)$ be an \'etale topological category. Then the space $C_0$ is sober if and only if the space $C_1$ is sober.
\end{lemma}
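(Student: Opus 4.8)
The statement to prove is Lemma~\ref{lem:sober}: for an \'etale topological category $C=(C_1,C_0)$, the space $C_0$ is sober if and only if $C_1$ is sober. The key structural feature I would exploit is that in an \'etale category the domain map $d\colon C_1\to C_0$ (and equally the range map $r$) is a local homeomorphism, while the unit map $u\colon C_0\to C_1$ is a continuous section of both $d$ and $r$ (since $du=ru=\mathrm{id}$ by (Cat1)). Thus $u$ embeds $C_0$ as a retract of $C_1$, and $d$ exhibits $C_1$ as locally homeomorphic to $C_0$. Sobriety is a local property that is inherited by open subspaces and is reflected/preserved along local homeomorphisms and along retracts in the right directions, so both implications should follow from standard sobriety facts once these maps are in hand.

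\textbf{Direction $C_1$ sober $\Rightarrow C_0$ sober.} Here I would use that $u\colon C_0\to C_1$ is a continuous map which is a homeomorphism onto its image $u(C_0)\subseteq C_1$, with continuous inverse $d|_{u(C_0)}$ (indeed $du=\mathrm{id}$). The image $u(C_0)$ is open in $C_1$ because $u$ is an open map in an \'etale category. An open subspace of a sober space is sober; hence $u(C_0)$ is sober, and since $C_0\cong u(C_0)$, the space $C_0$ is sober. This direction is the easier one and reduces entirely to the fact that open subspaces of sober spaces are sober, together with $u$ being an open embedding.

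\textbf{Direction $C_0$ sober $\Rightarrow C_1$ sober.} This is the direction I expect to be the main obstacle, since one must propagate sobriety \emph{up} along the local homeomorphism $d\colon C_1\to C_0$. The plan is to use that $d$ is a local homeomorphism: by the lemma on local bisections proved just above (local bisections form a basis of the topology on $C_1$, and on each local bisection $B$ the restriction $d|_B\colon B\to d(B)$ is a homeomorphism onto an open subset of $C_0$). Given an irreducible closed subset $F$ of $C_1$, I would like to produce a unique generic point. The strategy is to show that sobriety is a property detected on a basis of open sets each of which is homeomorphic to an open subspace of $C_0$: each local bisection $B$ is homeomorphic via $d|_B$ to the open set $d(B)\subseteq C_0$, which is sober (open subspace of the sober space $C_0$), hence $B$ is sober. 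Since the local bisections form an open cover of $C_1$ by sober open subspaces, and being sober is a local property for $T_0$ spaces — a space with an open cover by sober open subsets is sober, provided the space is $T_0$ — I would conclude $C_1$ is sober. Thus the steps are: (i) note $C_1$ is $T_0$ (which follows since $C_0$ is sober hence $T_0$, and local homeomorphisms onto $T_0$ bases give $T_0$); (ii) cover $C_1$ by local bisections; (iii) identify each such bisection with an open sober subspace of $C_0$ via the homeomorphism $d|_B$; (iv) invoke the locality of sobriety for a $T_0$ space to assemble global sobriety.

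\textbf{Main obstacle and remark.} The crux is step (iv): the fact that a $T_0$ space admitting an open cover by sober subspaces is itself sober. I would either cite this standard result directly (it is the statement that sobriety is a local property among $T_0$ spaces, see e.g.\ \cite{J}) or give the short argument: an irreducible closed set $F$ meets some cover member $U$, the intersection $F\cap U$ is irreducible closed in $U$, hence has a unique generic point $x$ in $U$; one then checks that $\overline{\{x\}}=F$ in $C_1$ and that uniqueness across overlapping cover members forces $x$ to be the unique generic point of $F$, using $T_0$. The symmetry between $d$ and $r$ plays no essential role here — either local homeomorphism suffices — so I would phrase the argument using $d$ only, remarking that the same works verbatim with $r$.
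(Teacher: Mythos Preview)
Your proposal is correct and takes a genuinely different route from the paper. The paper's proof works entirely through the unit $\eta_C=(\eta_{C,1},\eta_{C,0})$ of the adjunction established in the Adjunction Theorem (Theorem~\ref{th:adj1}): sobriety of $C_i$ is expressed as bijectivity of $\eta_{C,i}$, and the paper shows $\eta_{C,1}$ is bijective iff $\eta_{C,0}$ is, by exploiting the axioms (RM1), (RM3), (RM4), (RM6) of the covering functor $\eta_C$. In contrast, you argue purely topologically: for one direction you use that $u$ is an open embedding (so $C_0$ is homeomorphic to an open subspace of $C_1$), and for the other you cover $C_1$ by local bisections, each homeomorphic via $d$ to an open subspace of the sober $C_0$, and then invoke the locality of sobriety.

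Your approach is more elementary and self-contained: it does not rely on the machinery of Section~\ref{s:adjun}, only on the definition of an \'etale topological category and standard facts about sober spaces. One small remark: your step~(i) establishing $T_0$ separately is harmless but in fact redundant, since a space admitting an open cover by sober (hence $T_0$) subspaces is automatically $T_0$---if two points lie in a common member of the cover, that member separates them; if not, the cover member containing one already separates them. So the locality-of-sobriety argument goes through without the preliminary $T_0$ check. The paper's approach, on the other hand, buys uniformity: the same covering-functor properties used here also drive the analogous Lemma~\ref{lem:spatial} on the localic side, so the two lemmas are proved in parallel via the adjunction rather than by ad hoc topological arguments.
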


\begin{proof} Assume that $C_1$ is sober. This means that the map $\eta_{C,1}$ is a bijection. We show that the map $\eta_{C,0}$ is a bijection. Let $x,y\in C_0$ be such that $x\neq y$. Since $\eta_{C,1}$ is single-valued and by (RM6) we have $u\eta_{C,0}(x)=\eta_{C,1}(x)\neq \eta_{C,1}(y)=u\eta_{C,0}(y)$. Since $u$ is a bijection, it follows that $\eta_{C,0}(x)\neq \eta_{C,0}(y)$. Let $x\in {\mathsf{pt}}\Omega(C_0)$. Then $u(x)\in {\mathsf{pt}}\Omega(C_1)$. As $\eta_{C,1}$ is surjective we have that $u(x)=\eta_{C,1}(y)$ for some $y\in C_1$. But then $x\in \eta_{C,0}(d(x))$ by (RM1). It follows that $C_0$ is sober.

Assume that $C_0$ is sober. Let $x,y\in C_1$ be such that $x\neq y$ and assume that $\eta_{C,1}(x)=\eta_{C,1}(y)$. Then $d(x)\neq d(y)$ by (RM3). Also $\eta_{C,0}d(x)=\eta_{C,0}d(y)$ by (RM1) which is a contradiction. Thus the map $\eta_{C,1}$ is injective. Let $x\in {\mathsf{pt}}\Omega(C_1)$. Since $\eta_{C,0}$ is surjective there is some $y\in C_0$ such that
$\eta_{C,0}(y)=d(x)$. By (RM4) we now obtain that there is some $z\in C_1$ with $\eta_{C,1}(z)=x$, so that $\eta_{C,1}$ is surjective. We have proved that $C_1$ is sober. 
\end{proof}

\begin{lemma}\label{lem:spatial} Let $C=(C_1,C_0)$ be an \'etale localic category. The locale $C_0$ is spatial if and only if the locale $C_1$ is spatial.
\end{lemma}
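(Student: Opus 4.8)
The plan is to use the order-theoretic characterization of spatiality and to treat the two implications separately, transporting separating points across the structure maps $d$ and $u$. First I would record the standard fact that the comparison map $\epsilon_L^{*}\colon O(L)\to \Omega\,{\mathsf{pt}}(L)$, $a\mapsto X_a$, is always a \emph{surjective} frame map (every open of ${\mathsf{pt}}(L)$ is of the form $X_a$, since $X_{\bigvee_i a_i}=\bigcup_i X_{a_i}$ and $X_{a\wedge b}=X_a\cap X_b$). Hence $L$ is spatial precisely when $\epsilon_L^{*}$ is injective, which is in turn equivalent to the assertion that the points of $L$ separate the order: whenever $a\not\leq b$ in $O(L)$ there is a point $p\in{\mathsf{pt}}(L)$ with $p(a)=1$ and $p(b)=0$. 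The whole argument then reduces to producing such separating points.

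For the implication ``$C_1$ spatial $\Rightarrow C_0$ spatial'' I would use only that $C_0$ is a locale retract of $C_1$. From $du=\mathrm{id}$ (axiom (Cat1)) we get $u^{*}d^{*}=\mathrm{id}_{O(C_0)}$, so $d^{*}$ is injective and order-reflecting. Given $a\not\leq b$ in $O(C_0)$, applying $u^{*}$ shows $d^{*}(a)\not\leq d^{*}(b)$; spatiality of $C_1$ then yields $q\in{\mathsf{pt}}(C_1)$ with $q(d^{*}(a))=1$ and $q(d^{*}(b))=0$, whereupon $p:={\mathsf{pt}}(d)(q)=qd^{*}$ separates $a$ from $b$ in $C_0$. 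This direction needs no \'etaleness.

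The substance lies in the converse, where I would exploit that $d$ is \'etale: there is a cover $1_{O(C_1)}=\bigvee C$ such that for each $c\in C$ the map $x\mapsto d^{*}(x)\wedge c$ sends $O(C_0)$ onto $c^{\downarrow}$. Given $a\not\leq b$ in $O(C_1)$, writing $a=\bigvee_{c\in C}(a\wedge c)$ produces some $c\in C$ with $a\wedge c\not\leq b\wedge c$. By part~(2) of Lemma~\ref{lem:homeom21} applied to $f=d$ and this $c$, the restriction of $d_{!}$ is a frame isomorphism $c^{\downarrow}\to d_{!}(c)^{\downarrow}$, so $s:=d_{!}(a\wedge c)$ and $t:=d_{!}(b\wedge c)$ satisfy $s\not\leq t$ in $O(C_0)$. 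Spatiality of $C_0$ now gives a point $p$ with $p(s)=1$ and $p(t)=0$; since $s\leq d_{!}(c)$ we have $p\in X_{d_{!}(c)}$. Finally, part~(4) of Lemma~\ref{lem:homeom21} supplies the homeomorphism ${\mathsf{pt}}(d)|_{X_c}\colon X_c\to X_{d_{!}(c)}$ whose inverse sends $p$ to the point $q:=p\,d_{!}(-\wedge c)$ of $C_1$; then $q(a)=p(s)=1$ and $q(b)=p(t)=0$, so $q$ separates $a$ from $b$, proving $C_1$ spatial.

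I expect the only genuine obstacle to be this last lifting step: manufacturing, from a separating point of $C_0$, an honest frame map $O(C_1)\to{\bf 2}$ that is supported on the chart $c$ and still separates $a$ and $b$. This is exactly where the \'etale hypothesis is indispensable, and it is resolved cleanly by the local homeomorphism $X_c\cong X_{d_{!}(c)}$ of Lemma~\ref{lem:homeom21}(4); the preliminary reduction to a single chart $c$ via the cover, together with the frame isomorphism $d_{!}|_{c^{\downarrow}}$, are the supporting technical moves. By symmetry the second half could equally be run with $r$ in place of $d$, but $d$ alone suffices.
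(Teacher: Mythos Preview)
Your proof is correct and rests on the same underlying mechanism as the paper's: exploiting the retraction $du=\mathrm{id}$ for the easy direction, and the local frame isomorphism $d_{!}|_{c^{\downarrow}}\colon c^{\downarrow}\to d_{!}(c)^{\downarrow}$ on an \'etale chart (Lemma~\ref{lem:homeom21}) for the hard one.

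The organization differs somewhat. For $C_1$ spatial $\Rightarrow C_0$ spatial, the paper transports along $u_{!}$ (using ${\mathsf{pt}}(u)(X_a)=X_{u_!(a)}$ and injectivity of $u_!$ via $d_!u_!=\mathrm{id}$), while you transport along $d^{*}$ (using injectivity of $d^*$ via $u^*d^*=\mathrm{id}$); these are the two adjoint faces of the same retraction. For $C_0$ spatial $\Rightarrow C_1$ spatial, the paper uses the criterion $X_a=X_b\Rightarrow a=b$ and proceeds in two stages: first for local bisections $a,b$ (where $X_a\cong X_{d_!(a)}$ via ${\mathsf{pt}}(d)$), then reduces the general case by writing $a=\bigvee_i a_i$ with $a_i$ local bisections and observing that each $a_i\wedge b$ is again a local bisection. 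Your order-separation formulation lets you bypass this two-step reduction: you pick a single chart $c$ from an \'etale cover on which $a\wedge c\not\leq b\wedge c$, push down via the frame isomorphism $d_!|_{c^{\downarrow}}$, obtain a separating point of $C_0$, and lift it back through the homeomorphism $X_c\cong X_{d_!(c)}$ of Lemma~\ref{lem:homeom21}(4). This is slightly more economical and makes explicit that only the \'etaleness of $d$ (not $r$) is needed.
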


\begin{proof} Assume that the locale $C_0$ is spatial and show that the locale $C_1$ is.
We show that $X_a=X_b$ implies that $a=b$ for any $a,b\in O(C_1)$.
Assume first that $a$ and $b$ are local bisections and  $X_a=X_b$. Then $X_a=X_a\cap X_b=X_{a\wedge b}$, so that ${\mathsf{pt}}(d)(X_a)={\mathsf{pt}}(d)(X_{a\wedge b})$.
 It follows that $X_{d_!(a)}=X_{d_!(a\wedge b)}$ by part (4) of Lemma \ref{lem:homeom21} as $d$ is a local homeomorphism. Since $O(C_0)$ is a spatial frame, we have $d_!(a)=d_!(a\wedge b)$. This equality, together with part (3) of Lemma \ref{lem:homeom21} and the inequality $d^*d_!\geq id$, yields
 $$
 a\wedge b=d^*d_!(a\wedge b)\wedge a = d^*d_!(a) \wedge a =a. 
 $$
 It follows that $b\geq a$. By symmetry we also obtain $a\geq b$. Hence $a=b$.

  We now consider the general case. Assume $a,b\in O(C_1)$ are such that  $X_a=X_b$ and show that $a=b$. By symmetry, it is enough to show that $b\geq a$.
 Since the restriction quantal frame ${\mathcal O}(C)$ is \'etale, there are local bisections $a_i$, $i\in I$, such that $a=\bigvee_{i}a_i$.  Since $X_{a_i}\cap X_a=X_{a_i}$, where $i\in I$, and $X_a=X_b$ it follows that for each admissible $i$ we have
 \begin{equation}\label{eq:spatial} X_{a_i}=X_{a_i}\cap X_{b}=X_{a_i\wedge b}.
 \end{equation} 
 Local bisections in $C$ are partial isometries in ${\mathcal O}(C)$. The latter form an order ideal by Lemma \ref{le: order_ideal}.
 It follows that $a_i\wedge b$ is a local bisection. Thus from \eqref{eq:spatial} and the special case considered above we obtain 
 $a_i=a_i\wedge b$. It follows that $b\geq a_i$ for each~$i$. Consequently,
$b\geq \bigvee_i a_i=a$, as required. It follows that the map $a\mapsto X_a$ from $O(C_1)$ to $\Omega{\mathsf{pt}}(C_1)$ is injective. This map is surjective as, by definition, the sets $X_a$, $a\in A$, constitute the topology on  ${\mathsf{pt}}(C_1)$. Therefore, the locale $C_1$ is spatial.

Assume now that the locale $C_1$ is spatial and show that so is the locale $C_0$. Let $a,b\in O(C_0)$ and $X_a=X_b$. In view of \eqref{eq:ptu}, we have that 
$$
X_{u_!(a)}={\mathsf{pt}}(u)(X_a)={\mathsf{pt}}(u)(X_b)=X_{u_!(b)}.
$$
Since $C_1$ is a spatial locale, it follows that $u_!(a)=u_!(b)$. Therefore, $$a=d_!u_!(a)=d_!u_!(b)=b.$$ 
It follows that the map $a\mapsto X_a$ from $O(C_0)$ to $\Omega{\mathsf{pt}}(C_0)$ is injective. Similarly as above, we conclude that this map is also surjective. Hence the locale $C_1$ is spatial.
\end{proof}

An \'etale topological category $C=(C_1,C_0)$ will be called {\em spectral} (resp. {\em strongly spectral}) if the space $C_0$ (resp. $C_1$) is spectral. It will be called {\em Boolean} (resp. {\em strongly Boolean}) if the space $C_0$ (resp. $C_1$) is Boolean. 
An \'etale localic category $C=(C_1,C_0)$ will be called {\em coherent} (resp. {\em strongly coherent}) if the locale $C_0$ (resp.~$C_1$) is coherent.

\begin{lemma}\label{lem:spectral} A strongly spectral  (resp. strongly Boolean) \'etale topological category is spectral (resp. Boolean). \end{lemma}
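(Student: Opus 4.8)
The plan is to push everything through the unit map $u\colon C_0\to C_1$. In an \'etale topological category $u$ is open by definition, and since $du=\mathrm{id}$ it is injective; hence $u$ is a homeomorphism of $C_0$ onto the \emph{open} subspace $u(C_0)$ of $C_1$. Thus it suffices to show that $u(C_0)$ inherits the relevant property from $C_1$. The reason openness of $u(C_0)$ is the crucial hypothesis is that, for a general subspace, neither the basis condition nor closure of compact-opens under intersection would descend; the whole argument hinges on being able to identify compact-opens of the subspace with compact-opens of $C_1$.

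First I would treat the spectral case. Assume $C_1$ is spectral; then it is sober, so by Lemma~\ref{lem:sober} the space $C_0$ is sober as well. The key step is to describe the compact-open subsets of $u(C_0)$. Since $u(C_0)$ is open in $C_1$, a subset of $u(C_0)$ is open in the subspace topology if and only if it is open in $C_1$, and compactness is an intrinsic property; consequently the compact-open subsets of $u(C_0)$ are exactly those compact-open subsets of $C_1$ that happen to be contained in $u(C_0)$. I would then verify the two defining conditions of a spectral space. For the basis condition, given $x\in W$ with $W$ open in $u(C_0)$, note that $W$ is open in $C_1$ and $W\subseteq u(C_0)$; since $C_1$ is spectral there is a basic compact-open $K$ of $C_1$ with $x\in K\subseteq W\subseteq u(C_0)$, and this $K$ is then a compact-open of $u(C_0)$ by the identification above. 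For closure under finite non-empty intersections, I would use the standard fact that in a spectral space the compact-opens form a lattice under intersection (every compact-open is a finite union of basic compact-opens by compactness, and intersection distributes over union, so $K_1\cap K_2$ is again compact-open); as $K_1\cap K_2\subseteq u(C_0)$ whenever $K_1,K_2\subseteq u(C_0)$, closure descends to $u(C_0)$. Together with sobriety this shows $C_0\cong u(C_0)$ is spectral, i.e. $C$ is spectral.

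For the strongly Boolean case I would simply add the Hausdorff requirement: here $C_1$ Boolean means $C_1$ is Hausdorff and spectral, so the spectral argument above already yields that $C_0$ is spectral, while $C_0\cong u(C_0)$ is a subspace of the Hausdorff space $C_1$ and is therefore Hausdorff. Hence $C_0$ is a Hausdorff spectral space, i.e.\ Boolean, and $C$ is Boolean. The main obstacle to be careful about is the identification of compact-opens of the open subspace $u(C_0)$ with compact-opens of $C_1$ lying inside it, and the resulting transfer of the ``basis closed under finite intersections'' condition; everything else is routine.
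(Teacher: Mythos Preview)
Your proof is correct and takes a cleaner route than the paper's. The key observation that $u$ is an open embedding (continuous, open, and injective because $du=\mathrm{id}$) reduces everything to the general fact that an open subspace of a spectral space is spectral in the paper's non-compact sense, and an open subspace of a Hausdorff space is Hausdorff. Your justification of the ``standard fact'' that arbitrary compact-opens in a spectral space are closed under finite intersection is also correct and is what makes the descent to $u(C_0)$ work. One minor simplification: you do not actually need Lemma~\ref{lem:sober} here, since once you know $C_0\cong u(C_0)$ is an open subspace of the sober space $C_1$, sobriety follows directly.

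By contrast, the paper's proof works through the local-bisection structure rather than through $u$ alone. It first refines the given basis of $C_1$ to one consisting of compact-open \emph{local bisections}, then transports these to $C_0$ via the local homeomorphism $d$ (so the candidate basis of $C_0$ consists of the sets $d(B)$ for $B$ a compact-open local bisection), and checks closure under intersection using $u(B_1)\cap u(B_2)=u(B_1\cap B_2)$. This is more hands-on with the category structure but proves the same thing. Your argument is more conceptual and applies verbatim to any situation where the object space sits as an open subspace of the arrow space; the paper's argument has the advantage of exhibiting an explicit basis of $C_0$ coming from local bisections, which is in the spirit of the surrounding material but is not needed for the bare statement of the lemma.
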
 

\begin{proof} Let $C=(C_1,C_0)$ be an \'etale topological category and the space $C_1$ be spectral. Let $X$ be a basis of compact-open subsets of $C_1$ closed under finite non-empty intersections. Then every local bisection of $C_1$ is a union of sets from $X$. But an open subset of a local bisection is a local bisection. So $C_1$ is a union of sets of $X$ which are compact-open bisections. As open bisections form a basis for the topology on $C_1$ it follows that we may assume that $X$ consists of compact-open local bisections. Let $A\subseteq C_0$ be an open set. Then $u(A)$ is an open local bisection. Then $u(A)=\bigvee_{i} B_i$ where $B_i\in X$ for each $i$. It follows that $A=\bigvee_{i} d(B_i)$. Hence $d(B)$, where $B$ runth through $X$, form a basis of the topology on $C_0$. Using the fact that $A\mapsto d(A)$ is a homeomorphism for a local bisection $A$, it easily follows that $d(A)$ is compact-open if and only if $A$ is. If $B_1=d(A_1)$ and $B_2=d(A_2)$ where $A_1,A_2$ are compact-open local bisections then $u(B_1)\cap u(B_2)=u(B_1\cap B_2)$ is a compact-open local bisection. Thus $B_1\cap B_2$ is compact-open. It follows that $C_0$ has a basis of compact-open sets that is closed under finite non-empty intersections. Since $C_1$ is sober, then by Lemma \ref{lem:sober} so is $C_0$. It follows that $C$ is spectral.

Let $C=(C_1,C_0)$ be an \'etale topological category and assume that the space $C_1$ is Boolean. It follows from the previous paragraph  that $C_0$ is a spectral space. We show that it is Hausdorff. Let $a,b\in C_0$ and let $A$ and $B$ be compact-open local bisections such that $u(a)\in A$, $b\in B$ and $A\cap B=\varnothing$. Then $a\in d(A)$, $b\in d(B)$, $d(A)\cap d(B)=\varnothing$ and $d(A)$ and $d(B)$ are compact-open sets. It follows that $C_0$ is Hausdorff and so it is Boolean. Thus $C$ is Boolean.
\end{proof}

\begin{corollary} \label{cor:coherent1} A strongly coherent \'etale localic category is coherent.
\end{corollary}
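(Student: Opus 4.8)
The plan is to transfer the already-established topological statement, Lemma~\ref{lem:spectral}, across the spectrum functor ${\mathsf{Pt}}$, using the classical fact (Theorem~\ref{th:pr2}) that coherent locales are precisely the opens of spectral spaces and are in particular spatial. Thus I would not argue directly with finite elements and local bisections in the localic setting, but instead reduce everything to the spectral case, which has already been handled.

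First I would note that since $C$ is strongly coherent, the locale $C_1$ is coherent, hence spatial; by Lemma~\ref{lem:spatial} the locale $C_0$ is then spatial as well. Next, applying the functor ${\mathsf{Pt}}$ yields an \'etale topological category ${\mathsf{Pt}}(C)=({\mathsf{pt}}(C_1),{\mathsf{pt}}(C_0))$. Because $C_1$ is a coherent locale, its spectrum ${\mathsf{pt}}(C_1)$ is a spectral space (Theorem~\ref{th:pr2}), so ${\mathsf{Pt}}(C)$ is strongly spectral. Lemma~\ref{lem:spectral} then gives that ${\mathsf{Pt}}(C)$ is spectral, i.e.\ the space ${\mathsf{pt}}(C_0)$ is spectral. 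Finally, since $C_0$ is spatial we have an isomorphism of frames $O(C_0)\cong\Omega({\mathsf{pt}}(C_0))$, and the opens of the spectral space ${\mathsf{pt}}(C_0)$ form a coherent locale (Theorem~\ref{th:pr2}); hence $C_0$ is coherent, which is exactly the assertion that $C$ is coherent.

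The only points requiring care are the two appeals to classical Stone duality: that a coherent locale is spatial (so that both the hypothesis on $C_1$ and the conclusion on $C_0$ can be read off their spectra), and that $\Omega$ of a spectral space is coherent. Both are contained in Theorem~\ref{th:pr2}, since the spectral--coherent equivalence there is a restriction of the sober--spatial equivalence and identifies compact-open sets with finite elements. I expect the main obstacle to be purely bookkeeping: verifying that ${\mathsf{Pt}}(C)$ is genuinely an \'etale topological category (supplied by the theorem that ${\mathsf{Pt}}$ sends \'etale localic categories to \'etale topological ones) and that ``strongly spectral'' for ${\mathsf{Pt}}(C)$ is exactly spectrality of ${\mathsf{pt}}(C_1)$, so that Lemma~\ref{lem:spectral} applies verbatim.
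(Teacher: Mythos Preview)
Your proposal is correct and follows essentially the same route as the paper's proof: pass to ${\mathsf{Pt}}(C)$, use Theorem~\ref{th:pr2} to get strong spectrality, apply Lemma~\ref{lem:spectral} to obtain spectrality of ${\mathsf{pt}}(C_0)$, and then pull back coherence of $C_0$ via spatiality. Your version is in fact slightly more explicit than the paper's, since you spell out the appeal to Lemma~\ref{lem:spatial} to ensure $C_0$ is spatial before invoking the sober--spatial equivalence, a step the paper leaves implicit.
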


\begin{proof} Let $C=(C_1,C_0)$ be strongly coherent. Then by Theorem \ref{th:pr2} we have that ${\mathsf{Pt}}(C)=({\mathsf{pt}}(C_1), {\mathsf{pt}}(C_0))$ is strongly spectral. It is then spectral by Lemma \ref{lem:spectral}. Thus it is sober. Applying Theorem \ref{th:pr1a} it follows that $C$ is spectral.
\end{proof}

The following result is interesting by itself and also clarifies the connection of our work with that to be found in \cite{LL1, LL2}.

\begin{lemma}Let $C=(C_1,C_0)$ be a Boolean \'etale topological category. The following statements are equivalent:
\begin{enumerate}
\item  \label{i:1aa} $C$ is strongly Boolean;
\item \label{i:1ab} $C_1$ is a spectral space;
\item \label{i:1ac} $C_1$ is a Hausdorff space.
\end{enumerate}
\end{lemma}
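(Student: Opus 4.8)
The plan is to prove the cycle \eqref{i:1aa} $\Rightarrow$ \eqref{i:1ab} $\Rightarrow$ \eqref{i:1ac} $\Rightarrow$ \eqref{i:1aa}, using throughout that $C$ being Boolean means $C_0$ is a Hausdorff spectral space, that local bisections form a basis of $C_1$, and that on a local bisection $A$ the map $d$ restricts to a homeomorphism onto the open set $d(A)$. The implication \eqref{i:1aa} $\Rightarrow$ \eqref{i:1ab} is immediate, since a Boolean space is by definition a Hausdorff spectral space and so in particular spectral.

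For \eqref{i:1ab} $\Rightarrow$ \eqref{i:1ac} I would first refine, exactly as in the proof of Lemma~\ref{lem:spectral}, to a basis of $C_1$ consisting of compact-open local bisections. Given $a\neq b$ in $C_1$, the easy case is $d(a)\neq d(b)$ (or $r(a)\neq r(b)$): since $C_0$ is Hausdorff we separate $d(a)$ and $d(b)$ by disjoint opens and pull them back along $d$. The genuine difficulty, and the main obstacle, is the case $d(a)=d(b)=p$ and $r(a)=r(b)=q$, that is, two distinct parallel arrows. Here I would choose compact-open local bisections $A\ni a$ and $B\ni b$, put $W=d(A)\cap d(B)$, and cut down to $A'=A\cap d^{-1}(W)$ and $B'=B\cap d^{-1}(W)$, which are compact-open local bisections mapped homeomorphically onto $W$ by $d$. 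If $A'\cap B'=\varnothing$ we are done; otherwise $A'\cap B'$ is compact-open (compact-open sets are closed under finite non-empty intersection in a spectral space), so its image $E=d(A'\cap B')$ is a compact-open, hence closed, subset of the Hausdorff space $C_0$. Since $a\neq b$ forces $p\notin E$, the set $W\setminus E$ is an open neighbourhood of $p$, and $A'\cap d^{-1}(W\setminus E)$ and $B'\cap d^{-1}(W\setminus E)$ are disjoint open neighbourhoods of $a$ and $b$; disjointness holds because any common point would lie in $A'\cap B'$ and thus have its domain in $E$. This is the step where spectrality of $C_1$ is really used, precisely to make the \emph{agreement set} $E$ compact-open and therefore closed.

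For \eqref{i:1ac} $\Rightarrow$ \eqref{i:1aa} I would manufacture the spectral structure on $C_1$ out of that of $C_0$. Let $\mathcal{B}_0$ be the basis of compact-open (equivalently clopen) sets of the Boolean space $C_0$. For a local bisection $A$ and $U\in\mathcal{B}_0$ with $U\subseteq d(A)$, the set $A\cap d^{-1}(U)$ is open and, via the homeomorphism $d|_A$, homeomorphic to the compact set $U$, hence compact-open; such sets form a basis of $C_1$. Assuming $C_1$ Hausdorff, every compact-open set is clopen, and a finite intersection of compact-open sets is a clopen subset of a compact set and so again compact-open; thus the compact-open subsets of $C_1$ form a basis closed under finite non-empty intersections. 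Being Hausdorff, $C_1$ is automatically sober, so $C_1$ is spectral, and together with Hausdorffness this gives that $C_1$ is Boolean. This closes the cycle, the only substantial work being the parallel-arrow separation in \eqref{i:1ab} $\Rightarrow$ \eqref{i:1ac}.
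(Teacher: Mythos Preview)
Your proof is correct and essentially the same as the paper's: the paper runs the cycle \eqref{i:1aa} $\Rightarrow$ \eqref{i:1ac} $\Rightarrow$ \eqref{i:1ab} $\Rightarrow$ \eqref{i:1aa} rather than your \eqref{i:1aa} $\Rightarrow$ \eqref{i:1ab} $\Rightarrow$ \eqref{i:1ac} $\Rightarrow$ \eqref{i:1aa}, but the two substantive arguments coincide --- your \eqref{i:1ab} $\Rightarrow$ \eqref{i:1ac} is the paper's \eqref{i:1ab} $\Rightarrow$ \eqref{i:1aa}, and your \eqref{i:1ac} $\Rightarrow$ \eqref{i:1aa} is the paper's \eqref{i:1ac} $\Rightarrow$ \eqref{i:1ab}, with your version of the latter spelling out the basis construction and soberness a bit more explicitly.
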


\begin{proof}
The implication \eqref{i:1aa} $\Rightarrow$ \eqref{i:1ac} is trivial. The implication \eqref{i:1ac} $\Rightarrow$ \eqref{i:1ab} follows from the fact that in a Hausdorff space the intersection of two compact sets is closed which implies that the intersection of two compact-open local bisections is a compact-open local bisection. We now prove the implication \eqref{i:1ab} $\Rightarrow$ \eqref{i:1aa}. So we assume that $C_1$ is a spectral space and prove that it is Boolean. By definition, it is enough to prove that it is Hausdorff. Let $a,b\in C_1$.  If $d(a)\neq d(b)$ then $d(a)$ and $d(b)$ can be separated by compact-open  sets, $P$ and $Q$, since $C_0$ is Hausdorff. Let $A$ and $B$ be compact-open local bisections containing $a$ and $b$, respectively. Then the restrictions of $A$ and $B$ to $P\cap d(A)$  and $Q\cap d(B)$, respectively, are disjoint compact-open local bisections containing $a$ and $b$, respectively. Now assume that $d(a)=d(b)$. Let $A$ and $B$ be compact-open bisections containing $a$ and $b$, respectively. Passing to restrictions, if necessary, we may assume that $d(A)=d(B)$. By assumption $A\cap B$ is a compact-open local bisection which contains neither $a$ nor $b$. Let $C=d(A)\setminus d(A\cap B)$. This is a compact-open set as the space $C_0$ is Boolean. Then the restrictions of $A$ and $B$ to $C$ are disjoint compact-open local bisections containing $a$ and $b$, respectively. This completes the proof.
\end{proof}

\begin{lemma}\label{lem:apr16}\mbox{}
\begin{enumerate}
\item \label{i:16a}
 A spectral (or strongly spectral) \'etale topological category is sober.
\item  \label{i:16b}
A coherent (or strongly coherent) \'etale localic category is spatial
\end{enumerate}
\end{lemma}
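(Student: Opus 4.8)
The plan is to reduce each assertion to a one-sided statement about a single locale or space, and then to transport the relevant property between the object of objects $C_0$ and the object of arrows $C_1$ using the two transfer lemmas already established: Lemma~\ref{lem:sober} for soberness and Lemma~\ref{lem:spatial} for spatiality. In both parts the hypothesis constrains only one of $C_0$, $C_1$ directly, whereas the definition of soberness (resp. spatiality) of a \emph{category} demands the property for both; the transfer lemmas are exactly what close this gap.

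For part~\eqref{i:16a}, I would start from the fact that a spectral space is, by the definition adopted in this paper, sober. Thus if $C$ is spectral then $C_0$ is sober, and if $C$ is strongly spectral then $C_1$ is sober. In either case one of the two spaces is sober, and Lemma~\ref{lem:sober}, which asserts that $C_0$ is sober if and only if $C_1$ is, then forces the other space to be sober as well. Hence both $C_0$ and $C_1$ are sober, which is precisely the statement that $C$ is sober.

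For part~\eqref{i:16b}, I would invoke the classical fact that every coherent locale is spatial; this is standard (see \cite{J1}) and is exactly the content which makes the correspondence between coherent locales and spectral spaces in Theorem~\ref{th:pr2} an equivalence rather than a mere adjunction. If $C$ is coherent then $C_0$ is a coherent locale and hence spatial, while if $C$ is strongly coherent then $C_1$ is coherent and hence spatial. In either case one of the locales is spatial, and Lemma~\ref{lem:spatial} then propagates spatiality to the other, so that both $C_0$ and $C_1$ are spatial and $C$ is spatial. The only non-bookkeeping ingredient is the classical spatiality of coherent locales, and the main (and rather mild) obstacle is simply to cite it correctly; once it is granted, both parts follow as immediate applications of the transfer lemmas.
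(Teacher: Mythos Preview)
Your proposal is correct and follows essentially the same approach as the paper: both parts reduce to the classical facts that spectral spaces are sober and coherent locales are spatial, together with the transfer Lemmas~\ref{lem:sober} and~\ref{lem:spatial} to pass between $C_0$ and $C_1$. The paper's proof is just a terser statement of exactly this argument.
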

\begin{proof} 
\eqref{i:16a} follows from Lemma \ref{lem:sober} and the fact that a spectral space is sober.
\eqref{i:16b}~follows from Lemma \ref{lem:spatial} and the fact that a coherent locale is spatial.
\end{proof}

A relational covering morphism $f=(f_1,f_0)$ between spectral  \'etale topological categories is called {\em coherent} if the map $f_0$ is coherent. 

A complete restriction monoid $S$ is called {\em coherent} (resp. {\em strongly coherent}) if the frame  $e^{\downarrow}$ (resp. ${\mathcal L}^{\vee}(S)$) is coherent. This is clearly a translation of the requirement that the category ${\mathcal{C}}(\mathcal{L}^{\vee}(S))$ is coherent (resp. strongly coherent).
A map between complete restriction monoids is called {\em coherent} if it maps finite projections to finite projections.
We thus have the following important result.

\begin{theorem}[Topological duality theorem II]\label{th:adjga} For each $k=1,2,3,4$ the category of (strongly) spectral \'etale topological categories (resp. involutive categories, groupoids) and their coherent morphisms of type $k$
is dually equivalent to the category of (strongly) coherent complete restriction monoids (resp. involutive complete restriction monoids, pseudogroups) and their coherent morphisms of type $k$. 
\end{theorem}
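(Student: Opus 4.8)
The plan is to derive Theorem~\ref{th:adjga} by restricting the dual equivalence of the Topological Duality Theorem~I (Theorem~\ref{th:adjg}) to the coherent--spectral level, the only genuinely new ingredient being the classical spectral--coherent correspondence of Theorem~\ref{th:pr2} applied to the object locales. Because a coherent locale is spatial and a spectral space is sober (Lemma~\ref{lem:apr16}), every object appearing in the statement already lies in the spatial--sober range in which Theorem~\ref{th:adjg} is an equivalence; so it suffices to check that the two object classes correspond, that the two morphism classes correspond, and then to invoke that an equivalence restricts to any pair of subcategories it matches.

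For objects I would use the frame identifications $O(C_0)=e^{\downarrow}$ and $O(C_1)={\mathcal L}^{\vee}(S)={\mathcal O}(C)$ built into the functors ${\mathsf{Pt}}$ and $\overline{\Omega}$. By Classical Stone Duality~I (Theorem~\ref{th:pr2}), restricted via Theorem~\ref{th:pr1a} to the spatial--sober equivalence, a locale is coherent precisely when its spectrum is a spectral space. Applying this to $C_0$ gives
\[
S \text{ coherent} \iff e^{\downarrow} \text{ coherent} \iff {\mathsf{pt}}(C_0) \text{ spectral},
\]
the last condition being exactly that ${\mathsf{Pt}}(C)$ is spectral; applying it to $C_1$ yields the corresponding chain for strongly coherent versus strongly spectral. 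Hence ${\mathsf{Pt}}$ and $\overline{\Omega}$ carry the (strongly) coherent objects to the (strongly) spectral ones and back.

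The substantive step is the morphism correspondence: I must show that a morphism $\varphi$ of complete restriction monoids is coherent, i.e.\ maps finite projections to finite projections, exactly when the associated topological morphism $f=(f_1,f_0)$ is coherent, i.e.\ $f_0$ is a coherent continuous map. The key observation is that in the constructions of Section~\ref{s:adjun} the object-frame component $g_0^*=d_!g_1^*u_!$ is, after the identification $O(C_0)=e^{\downarrow}$, simply the restriction of $\varphi$ to the frame of projections $E$; consequently $f_0^{-1}$ acts on opens as $g_0^*=\varphi|_E$. Invoking the classical fact recorded after Theorem~\ref{th:pr2} that the compact-open subsets of the spectral space $C_0$ correspond precisely to the finite elements of the coherent frame $e^{\downarrow}$, the requirement that $f_0$ pull back compact-opens to compact-opens is equivalent to $g_0^*$ preserving finite elements, that is, to $\varphi$ being coherent. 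Since the coherence condition on morphisms involves only $f_0$ (equivalently $E$) in both the spectral and the strongly spectral cases, this single equivalence handles all four situations.

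Finally I would assemble these facts: the units and counits of the adjunction are isomorphisms on spatial--sober objects and restrict to homeomorphisms, hence to coherent morphisms, on spectral objects, so the equivalence of Theorem~\ref{th:adjg} restricts to a dual equivalence of the subcategories with coherent morphisms of type~$k$ matched as above. The involutive and groupoid variants follow by carrying the involution through the whole argument exactly as in Corollary~\ref{cor:adj:inv}, the coherence conditions being insensitive to the involution. I expect the morphism step to be the main obstacle: one must track the contravariance carefully and verify precisely that $g_0^*$ is the restriction $\varphi|_E$, so that the topological coherence of $f_0$ and the algebraic coherence of $\varphi$ translate into one another through the compact-open/finite-element dictionary.
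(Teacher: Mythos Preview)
Your proposal is correct and follows essentially the same approach as the paper, which in fact does not write out a proof at all: after assembling Lemmas~\ref{lem:sober}--\ref{lem:apr16}, the classical spectral--coherent correspondence (Theorem~\ref{th:pr2}), and the definitions of coherent morphisms on both sides, the paper simply states Theorem~\ref{th:adjga} as an immediate consequence. Your write-up makes explicit exactly the restriction argument the paper leaves implicit, including the identification of $g_0^*$ with $\varphi|_E$ needed to match the two notions of coherent morphism.
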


\subsection{The locally compact setting}

Recall that a restriction semigroup $S$ is called {\em distributive} if the semilattice of projections $E$ is a distributive lattice and if $a,b\in S$ are compatible elements then $a\vee b$ exists. It is called a {\em distributive $\wedge$-semigroup} provided that in addition $a\wedge b$ exists for any $a,b\in S$. If $S$ is a distributive restriction monoid then its identity element $e$ is a projection and $E=e^{\downarrow}$.

To work out dualities, involving distributive restriction semigroups and $\wedge$-semi\-groups at the algebraic side, we link them with coherent complete restriction monoids. Thus complete restriction monoids appear as mediators between finitaty and topological objects. 

We first concentrate on objects. Let $S$ be a coherent complete restriction monoid. An element $a\in S$ is called {\em finite}, if $a=\bigvee A$, $A\subseteq S$, implies that there is a finite subset $F\subseteq A$ such that $a=\bigvee F$. Note that $a=\bigvee A$ in a restriction semigroup implies, by Lemma~\ref{lem:aug28}, that the set $A$ is compatible. We will need the following characterization of finite elements of complete restriction monoids.

\begin{lemma}\label{lem:29m} Let $S$ be a complete restriction monoid with unit $e$ and $a\in S$. The following are equivalent:
\begin{enumerate}
\item \label{i:29m1} $a$ is a finite element;
\item \label{i:29m2} $\lambda(a)$ is a finite element of $e^{\downarrow}$;
\item \label{i:29m3} $\rho(a)$ is a finite element of $e^{\downarrow}$.
\end{enumerate}
\end{lemma}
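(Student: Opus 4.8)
The plan is to establish the cycle of implications $\eqref{i:29m1} \Rightarrow \eqref{i:29m2} \Rightarrow \eqref{i:29m1}$ together with $\eqref{i:29m1} \Leftrightarrow \eqref{i:29m3}$, the latter following from the former by the left-right symmetry of the axioms (replacing $\lambda$ by $\rho$ throughout). So the real work is to prove $\eqref{i:29m1} \Leftrightarrow \eqref{i:29m2}$. The central tool will be Lemma~\ref{lem:joins22}, which tells us that $\lambda$ preserves any join that exists, combined with the fact (Proposition~\ref{prop:p_units} and the order-theoretic machinery) that for any element $a$ we have $a = a\lambda(a)$, i.e. $a$ is recovered from its restriction by $\lambda(a)$.

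For the direction $\eqref{i:29m1} \Rightarrow \eqref{i:29m2}$, I would suppose that $a$ is finite and prove $\lambda(a)$ is finite in $e^{\downarrow}$. Suppose $\lambda(a) = \bigvee B$ where $B \subseteq e^{\downarrow}$ is a family of projections (necessarily compatible, as any family of projections is). The idea is to pull this decomposition back through $a$: set $A = \{a f : f \in B\}$, and show that $a = \bigvee A$. One inclusion is clear since each $af \leq a$; for the reverse, compute $\lambda\left(\bigvee_{f\in B} af\right) = \bigvee_{f\in B}\lambda(af) = \bigvee_{f\in B}\lambda(\lambda(a)f) = \bigvee_{f\in B}\lambda(a)f = \lambda(a)\bigvee B = \lambda(a)$ using Lemma~\ref{lem:joins22}, axiom (ES4), the frame distributivity of $e^{\downarrow}$, and that $\lambda(a)f = f$ when $f \leq \lambda(a)$. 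Since $\bigvee_{f\in B} af \leq a$ and both have the same $\lambda$-value, part~\eqref{i:apr4b} of Lemma~\ref{lem:apr4} forces equality $a = \bigvee_{f\in B} af$. Finiteness of $a$ then yields a finite $F \subseteq B$ with $a = \bigvee_{f\in F} af$, and applying $\lambda$ once more gives $\lambda(a) = \bigvee_{f\in F} f$, a finite subjoin of $B$; hence $\lambda(a)$ is finite.

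For the converse $\eqref{i:29m2} \Rightarrow \eqref{i:29m1}$, suppose $\lambda(a)$ is finite in $e^{\downarrow}$ and let $a = \bigvee A$ with $A$ compatible. By Lemma~\ref{lem:joins22} we have $\lambda(a) = \bigvee_{x\in A}\lambda(x)$. Finiteness of $\lambda(a)$ produces a finite $F \subseteq A$ with $\lambda(a) = \bigvee_{x\in F}\lambda(x)$. I then claim $a = \bigvee F$: again $\bigvee F \leq a$ is immediate, and $\lambda(\bigvee F) = \bigvee_{x\in F}\lambda(x) = \lambda(a)$, so the same appeal to Lemma~\ref{lem:apr4}\eqref{i:apr4b} (using that $\bigvee F \leq a$ and the $\lambda$-values agree) gives $a = \bigvee F$. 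Thus $a$ is finite.

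The main obstacle, and the step requiring the most care, is verifying that $\bigvee_{f\in B} af$ (respectively $\bigvee F$) genuinely exists as an element of $S$ before one can apply $\lambda$ to it. This rests on compatibility: since each $af \leq a$, Lemma~\ref{lem:aug28}(1) guarantees $\{af : f\in B\}$ is a compatible family, so its join exists by completeness of $S$. Once existence is secured, the rest is a clean application of the fact that $\lambda$ preserves joins together with the cancellation-type property of Lemma~\ref{lem:apr4}\eqref{i:apr4b}; I do not anticipate difficulty there, only in making sure each join invoked is legitimate. The $\rho$-versions require no separate argument beyond citing the left-right dual.
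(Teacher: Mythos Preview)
Your proposal is correct and follows essentially the same route as the paper: both arguments exploit the mutually inverse order-isomorphisms $t\mapsto\lambda(t)$ and $f\mapsto af$ between $a^{\downarrow}$ and $\lambda(a)^{\downarrow}$ to transport finiteness back and forth, with the $\rho$-case handled by symmetry. The paper states this order-isomorphism up front and proceeds tersely, whereas you unpack it explicitly via Lemma~\ref{lem:joins22} and Lemma~\ref{lem:apr4}\eqref{i:apr4b}; the content is the same.
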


\begin{proof} \eqref{i:29m1}$\Leftrightarrow$ \eqref{i:29m2} We use the fact that the maps $t\mapsto \lambda(t)$ and $f\mapsto af$ establish mutually inverse order-isomorphisms between the sets $a^{\downarrow}$ and $\lambda(a)^{\downarrow}$. Assume that $a$ is finite and $\lambda(a)=\bigvee A$, $A\subseteq e^{\downarrow}$. Then
$a=\bigvee_{f\in A} af$ and so there is a finite $F\subseteq A$ such that $a\bigvee_{f\in F}af$. It follows that $\lambda(a)=\bigvee_{f\in F}\lambda(f)$, so that $\lambda(a)$ is a finite element. The converse implication is established similarly. The equivalence \eqref{i:29m1}$\Leftrightarrow$ \eqref{i:29m3} follows by symmetry.
\end{proof}

Let $\mathsf{K}(S)$ denote the set of all finite elements of $S$.

\begin{proposition}\label{prop:m31}
Let $S$ be a coherent (resp. strongly coherent) complete restriction monoid. 
Then $\mathsf{K}(S)$ is a distributive restriction semigroup (resp. distributive restriction $\wedge$-semigroup) with respect to the semilattice of projections $\mathsf{K}(e^{\downarrow})$. 
\end{proposition}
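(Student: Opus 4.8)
The plan is to treat the two assertions together, extracting the distributive-restriction-semigroup structure first and adding binary meets at the end. Throughout I will lean on Lemma \ref{lem:29m}, which reduces finiteness of an arbitrary element $a$ to finiteness of the projection $\lambda(a)$ (equivalently $\rho(a)$). First I would record that the projections lying in $\mathsf{K}(S)$ are exactly $\mathsf{K}(e^{\downarrow})$ and that $\lambda,\rho$ carry $\mathsf{K}(S)$ into $\mathsf{K}(e^{\downarrow})$ (both immediate from Lemma \ref{lem:29m}). Since $e^{\downarrow}$ is a coherent frame, $\mathsf{K}(e^{\downarrow})$ is closed under binary meets (coherence) and under finite joins (a standard fact: if $x,y$ are finite and $x\vee y=\bigvee C$, one extracts finite subcovers of $x$ and of $y$ and takes their union), and it contains $0=\bigvee\varnothing$; hence $\mathsf{K}(e^{\downarrow})$ is a distributive lattice with bottom. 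This settles the projection part in both cases.

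Next I would show $\mathsf{K}(S)$ is closed under multiplication. Given $a,b\in\mathsf{K}(S)$, set $f=\rho(b)$, a finite projection by Lemma \ref{lem:29m}. Using the axiom (ES4) and the fact that a product of projections is their meet, I compute $\lambda(af)=\lambda(\lambda(a)f)=\lambda(a)\wedge f$, which is finite since $\mathsf{K}(e^{\downarrow})$ is meet-closed; hence $af$ is finite by Lemma \ref{lem:29m}. Then $\rho(ab)=\rho(a\rho(b))=\rho(af)$ is finite (again Lemma \ref{lem:29m}, applied to the finite element $af$), so $ab\in\mathsf{K}(S)$. Consequently $\mathsf{K}(S)$ is a subsemigroup of $S$ containing $\mathsf{K}(e^{\downarrow})$ and closed under $\lambda,\rho$; the Ehresmann axioms (ES1)--(ES4) and bi-determinism are inherited from $S$, so $\mathsf{K}(S)$ is a restriction semigroup with respect to $\mathsf{K}(e^{\downarrow})$. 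For the last requirement of a distributive restriction semigroup, joins of non-empty finite compatible families, I would take a finite compatible $A\subseteq\mathsf{K}(S)$, whose join exists in the complete monoid $S$, and apply Lemma \ref{lem:joins22} to get $\lambda(\bigvee A)=\bigvee_{a\in A}\lambda(a)$, a finite join of finite projections, hence finite; by Lemma \ref{lem:29m}, $\bigvee A\in\mathsf{K}(S)$, and being the least upper bound in $S$ it is a fortiori the join inside $\mathsf{K}(S)$. This completes the coherent case.

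Finally, for the strongly coherent case I must produce binary meets. By Proposition \ref{prop:meets} the meet $a\wedge b$ exists in $S$, and since $\eta$ is the down-set embedding one checks $\eta(a\wedge b)=\eta(a)\cap\eta(b)$, the meet in ${\mathcal L}^{\vee}(S)$. The crux is to transfer finiteness across $\eta$: an element $x\in S$ is finite if and only if $\eta(x)$ is finite in ${\mathcal L}^{\vee}(S)$. The reflecting direction is routine from the fact that $\eta$ preserves existing joins (Proposition \ref{prop:lvees}) and is injective. The forward direction is the main obstacle, because a cover $\eta(x)=\bigvee_i U_i$ in ${\mathcal L}^{\vee}(S)$ is by order ideals, not by partial isometries: here I would expand each $U_i$ as the join of the partial isometries it contains (Proposition \ref{prop:p_units}), so that $x$ belongs to the closure of $\bigcup_i U_i$ and is therefore a compatible join $x=\bigvee X$ with each member of $X$ lying in some $U_i$, then invoke finiteness of $x$ in $S$ to choose a finite $X'\subseteq X$ with $\bigvee X'=x$, and finally collect the finitely many indices $i$ met by $X'$ to get a finite subcover of $\eta(x)$. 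Granting this equivalence and that ${\mathcal L}^{\vee}(S)$ is coherent (so meets of finite elements are finite), $\eta(a),\eta(b)$ finite forces $\eta(a\wedge b)=\eta(a)\cap\eta(b)$ finite, whence $a\wedge b$ is finite. Since strong coherence implies coherence (the translation of Corollary \ref{cor:coherent1}), $\mathsf{K}(S)$ is then a distributive restriction $\wedge$-semigroup.
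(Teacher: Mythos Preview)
Your proof is correct and follows essentially the same route as the paper: closure of $\mathsf{K}(S)$ under multiplication is obtained by reducing to $\lambda(a)\wedge\rho(b)\in\mathsf{K}(e^{\downarrow})$ via Lemma~\ref{lem:29m}, and the $\wedge$-semigroup clause is handled by passing through the embedding $\eta$ into ${\mathcal L}^{\vee}(S)$. You are in fact more careful than the paper in several places it leaves implicit: you verify closure under finite compatible joins (via Lemma~\ref{lem:joins22}), you spell out both directions of the finiteness transfer across $\eta$ (the paper uses both silently), and you check that $\mathsf{K}(e^{\downarrow})$ is closed under finite joins.
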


\begin{proof}  We first show that $\mathsf{K}(S)$ is a semigroup. We make several applications of Lemma \ref{lem:29m}. Let $a,b\in \mathsf{K}(S)$. It is enough to prove that $\lambda(ab)\in \mathsf{K}(e^{\downarrow})$. We have $\lambda(ab)=\lambda(\lambda(a)b)$ and $\lambda(\lambda(a)b)\in \mathsf{K}(e^{\downarrow})$ is equivalent to $\lambda(a)b\in \mathsf{K}(S)$. The latter, in turn, is equivalent to $\rho(\lambda(a)b)\in \mathsf{K}(e^{\downarrow})$. This reduces to $\lambda(a)\rho(b)\in \mathsf{K}(e^{\downarrow})$ which holds as $\lambda(a)\rho(b)=\lambda(a)\wedge \rho(b)$ and $\mathsf{K}(e^{\downarrow})$ is a distributive lattice and is thus closed with respect to taking binary meets.
It is clear that $\mathsf{K}(S)$ is closed with respect to $\lambda$ and $\rho$, so that it is a restriction subsemigroup of $S$. It is a submonoid if and only if $e$ is a finite element. Since in addition $\mathsf{K}(e^{\downarrow})$ is a distributive lattice, $\mathsf{K}(S)$ is a distributive restriction semigroup.

Assume that $S$ is strongly coherent and $a,b\in  \mathsf{K}(S)$. Then $\eta(a), \eta(b)\in \mathsf{K}({\mathcal L}^{\vee}(S))$ where $\eta: s\mapsto s^{\downarrow}$ is the inclusion map of $S$ into ${\mathcal L}^{\vee}(S)$. Since $\mathsf{K}({\mathcal L}^{\vee}(S))$ is a distributive lattice we have that $x=\eta(a)\wedge \eta(b)\in \mathsf{K}({\mathcal L}^{\vee}(S))$. As $\eta(S)$ is an order ideal in ${\mathcal L}^{\vee}(S)$ it follows that $x=\eta(c)$ for some $c\in S$. Using the fact that $\eta: S\to \eta(S)$ is an order-isomorphism it now easily follows that $a\wedge b$ exists and equals $c$.
\end{proof}

Let $S$ be a distributive restriction semigroup. By $\mathsf{Idl}(S)$ we denote the set of all compatible order ideals of $S$ which are closed with respect to taking binary joins.

\begin{proposition}\label{prop:dist_coh} Let $S$ be a distributive restriction semigroup (resp. $\wedge$-semigroup).
Then $\mathsf{Idl}(S)$ is a coherent (resp. strongly coherent) complete restriction monoid with the frame of projections $\mathsf{Idl}(E)$.
\end{proposition}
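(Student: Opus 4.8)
The plan is to realise $\mathsf{Idl}(S)$ as a finitary analogue of the enveloping construction $\mathcal{L}^{\vee}$ of Section~\ref{s:quantalization}, and then to deduce coherence from the classical fact that the ideal completion of a distributive lattice is a coherent frame. Throughout, for a compatible subset $X\subseteq S$ I write $\langle X\rangle$ for the set of all $y\in S$ with $y\le\bigvee F$ for some finite $F\subseteq X$; since $F$ is finite and compatible its join exists in $S$, and by Lemma~\ref{lem:aug28} any two elements of $\langle X\rangle$ lie below a common bound and are therefore compatible. Thus $\langle X\rangle$ is a compatible, binary-join-closed order ideal, i.e.\ $\langle X\rangle\in\mathsf{Idl}(S)$, and $\langle-\rangle$ is a closure operator whose fixed points are exactly the members of $\mathsf{Idl}(S)$.

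First I would equip $\mathsf{Idl}(S)$ with operations mirroring those on $\mathcal{L}^{\vee}$: the order is inclusion, the multiplication is $A\cdot B=\langle AB\rangle$ with $AB=\{ab:a\in A,\ b\in B\}$, the structure maps are $\overline{\lambda}(A)=\langle\{\lambda(a):a\in A\}\rangle$ and $\overline{\rho}(A)=\langle\{\rho(a):a\in A\}\rangle$, and the unit is $E$ itself (one checks $E\cdot A=A=A\cdot E$ using that $fa,af\le a$ for $f\in E$ and $a=\rho(a)a$). That $AB$ is again compatible, so that $\langle AB\rangle$ is defined, is exactly Lemma~\ref{lem:aug27}; the verification of the Ehresmann and restriction axioms then runs parallel to Proposition~\ref{prop:dec18} and Theorem~\ref{th:lvees}, with $\langle-\rangle$ in the role of the nucleus $j$. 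The join of a compatible family $\{A_i\}$ is $\langle\bigcup_i A_i\rangle$, where compatibility of $\bigcup_i A_i$ must be extracted from compatibility of the $A_i$ in $\mathsf{Idl}(S)$, using that $\sim$ is expressed through $\overline{\lambda},\overline{\rho}$ and multiplication. Since $E$ is a distributive lattice, $\mathsf{Idl}(E)$ is a frame; any subset of $E$ is compatible, so $e^{\downarrow}=\mathsf{Idl}(E)$, and together with the existence of all compatible joins this shows $\mathsf{Idl}(S)$ is a complete restriction monoid.

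For the coherent case it then suffices to observe that $\mathsf{Idl}(E)$, being the ideal completion of the distributive lattice $E$, is a coherent frame: its finite elements are precisely the principal ideals $f^{\downarrow}$ ($f\in E$), every ideal is their join, and $f^{\downarrow}\cap g^{\downarrow}=(f\wedge g)^{\downarrow}$ is finite by distributivity. This is the statement underlying Theorem~\ref{th:pr2}. By definition of coherence for complete restriction monoids, $\mathsf{Idl}(S)$ is therefore coherent, and by Lemma~\ref{lem:29m} its own finite elements are exactly the principal ideals $a^{\downarrow}$ with $a\in S$. For the strongly coherent case one must further show that $\mathcal{L}^{\vee}(\mathsf{Idl}(S))$ is a coherent frame when $S$ is a $\wedge$-semigroup. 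Here $\mathsf{Idl}(S)$ is a $\wedge$-monoid, since $A\cap B$ is again a compatible $\vee$-closed ideal and $a^{\downarrow}\cap b^{\downarrow}=(a\wedge b)^{\downarrow}$. I would then identify the finite elements of $\mathcal{L}^{\vee}(\mathsf{Idl}(S))$ as the finite joins $\bigvee_i\eta(a_i^{\downarrow})$: each $\eta(a^{\downarrow})$ is finite by the description \eqref{eq:j29} of $j$ together with finiteness of $a^{\downarrow}$; they generate because every element is a join of images $\eta(A)$ and every $A$ is a join of principal ideals; and they are closed under binary meets by frame distributivity and the identity $\eta(x)\wedge\eta(y)=\eta(x\wedge y)$, valid because $\eta(x)=x^{\downarrow}$ and meets in $\mathcal{L}^{\vee}$ are intersections. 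Coherence of $\mathcal{L}^{\vee}(\mathsf{Idl}(S))$ follows.

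I expect the main obstacle to be the book-keeping in the second paragraph rather than the coherence arguments. Because $S$ carries only \emph{finite} compatible joins, each operation must be followed by the closure $\langle-\rangle$, and one has to check repeatedly that compatibility is preserved before the restriction-monoid axioms can be transported from the $\mathcal{L}^{\vee}$ setting; the delicate point is that a family compatible in $\mathsf{Idl}(S)$ must be shown to yield an element-wise compatible union, so that its join is formed as a closure. Once this is in place, the coherent and strongly coherent claims reduce, as above, to the classical distributive-lattice theory applied to $E$ and to the finite elements $a^{\downarrow}$.
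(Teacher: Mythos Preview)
Your proposal is correct and the overall architecture matches the paper's: build the restriction monoid structure on $\mathsf{Idl}(S)$ by hand, note that $\mathsf{Idl}(E)$ is the ideal completion of a distributive lattice and hence a coherent frame, and then treat the $\wedge$-case separately. Two comparative remarks are worth making.

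First, your closure $\langle-\rangle$ is unnecessary for the operations: the paper shows directly that $AB$, $\{\lambda(a):a\in A\}$ and $\{\rho(a):a\in A\}$ are already members of $\mathsf{Idl}(S)$. For the product, once $AB$ is seen to be an order ideal (as in Proposition~\ref{prop:dec18}) and compatible (Lemma~\ref{lem:aug27}), closure under binary joins follows from $(a\vee c)(b\vee d)\ge ab\vee cd$ via Lemma~\ref{lem:molly}. So $\langle AB\rangle=AB$, and your nucleus viewpoint, while not wrong, adds a layer that can be removed.

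Second, and more interestingly, for the strongly coherent case you and the paper diverge. The paper invokes the duality already established: it passes to the spectral \'etale topological category ${\mathsf{Pt}}(\mathcal{C}(\mathcal{L}^{\vee}(\mathsf{Idl}(S))))$, observes that when $S$ is a $\wedge$-semigroup the space $C_1$ has a basis of compact-open local bisections closed under finite intersections, and concludes that $\mathcal{L}^{\vee}(\mathsf{Idl}(S))$ is coherent. Your route is purely order-theoretic: identify the finite elements of $\mathcal{L}^{\vee}(\mathsf{Idl}(S))$ as finite joins of the $\eta(a^{\downarrow})$, use $\eta(a^{\downarrow})\wedge\eta(b^{\downarrow})=\eta((a\wedge b)^{\downarrow})$ and frame distributivity to close them under binary meets, and note that they generate because every $A\in\mathsf{Idl}(S)$ is a directed union of principal ideals. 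Your argument is more elementary and self-contained; the paper's is shorter but leans on the topological machinery of Section~\ref{s:adjun}. Both are valid; which one prefers is a matter of taste.
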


\begin{proof} We first show that $\mathsf{Idl}(S)$ is closed under multiplication. Let $A,B\in \mathsf{Idl}(S)$ and assume that $a\in A$, $b\in B$ and $c\in S$ are such that $c\leq ab$. Then $c=ab\lambda(c)$. Since $B$ is an order ideal, $b\lambda(c)\in B$ and hence $c\in AB$. We have proved that $AB$ is an order ideal. Since by Lemma~\ref{lem:aug27} the compatibility relation on $S$ is compatible with multiplicaition, it follows that $AB$ is compatible. To show that $ab\vee cd\in AB$ we observe that by Lemma~\ref{lem:molly}
we have $(a\vee c)(b\vee d)=(ab\vee cb)\vee (ad\vee cd)\geq ab\vee cd$. 

Note that $\mathsf{Idl}(E(S))\subseteq \mathsf{Idl}(S)$. For  $A\in \mathsf{Idl}(S)$ we put 
$$\lambda(A)=\{\lambda(a)\colon a\in A\} \text{ and } \rho(A)=\{\rho(a)\colon a\in A\}.
$$
It is easy to check that $\lambda(A),\rho(A)\in \mathsf{Idl}(E)$. It is easy to verify that all axioms of a restriction semigroup are satisfied and it is clear that $E(S)$ is a unit element. Hence $\mathsf{Idl}(S)$ is a restriction monoid with respect to $\mathsf{Idl}(E)$. We show that joins of any compatible families exist in $\mathsf{Idl}(S)$. First, it is easy to verify that $A\sim B$ in $\mathsf{Idl}(S)$ if and only if $a\sim b$ for any $a\in A$ and $b\in B$. Let $X$ be a compatible family of elements. Then it is immediate that the set 
$$
A=\{a_1\vee \dots \vee a_n \colon n\geq 1, a_k\in \bigcup X\text{ for each }k\} 
$$
is the join of the family $X$. It follows that $\mathsf{Idl}(S)$ is a complete restriction monoid.
$\mathsf{Idl}(S)$ is coherent as $\mathsf{Idl}(E)$ is a coherent frame. 

Assume that $S$ is a $\wedge$-semigroup. Then the space $C_1$ of the spectral \'etale topological category $C={\mathsf{Pt}}({\mathcal C}({\mathcal L}^{\vee}(\mathsf{Idl}(S)))$ has a basis of compact-open local bisections which is closed under finite non-empty intersections. It follows that its frame of opens, which is isomorphic to ${\mathcal L}^{\vee}(\mathsf{Idl}(S))$, is coherent.
\end{proof}

\begin{lemma}\label{lem:compact31} let $S$ be a distributive restriction semigroup. Then $$\mathsf{K}(\mathsf{Idl}(S))=\{s^{\downarrow}\colon s\in S\}.$$ 
\end{lemma}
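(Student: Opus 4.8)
The plan is to prove both inclusions directly, using the explicit description of joins in $\mathsf{Idl}(S)$ recorded in the proof of Proposition~\ref{prop:dist_coh}: the join of a compatible family $X$ in $\mathsf{Idl}(S)$ is the set of all finite joins $a_1 \vee \dots \vee a_n$ with each $a_k \in \bigcup X$.

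For the inclusion $\{s^{\downarrow} \colon s \in S\} \subseteq \mathsf{K}(\mathsf{Idl}(S))$, I would take an arbitrary presentation $s^{\downarrow} = \bigvee_i A_i$ of a principal ideal as a join of a compatible family $\{A_i\}$ in $\mathsf{Idl}(S)$. Since $s$ itself lies in $\bigvee_i A_i$, the join formula yields $s = a_1 \vee \dots \vee a_n$ with $a_k \in A_{i_k}$ for suitable indices $i_k$. Then $s \in A_{i_1} \vee \dots \vee A_{i_n}$, so $s^{\downarrow} \subseteq A_{i_1} \vee \dots \vee A_{i_n}$; conversely each $A_{i_k} \subseteq \bigvee_i A_i = s^{\downarrow}$ gives the reverse inclusion, since $s^{\downarrow}$ is an order ideal closed under compatible joins. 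Hence $s^{\downarrow} = A_{i_1} \vee \dots \vee A_{i_n}$ is the join of a finite subfamily, so it is a finite element.

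For the reverse inclusion, I would start from a finite element $A \in \mathsf{Idl}(S)$ and first establish that $A = \bigvee_{a \in A} a^{\downarrow}$ in $\mathsf{Idl}(S)$. The family $\{a^{\downarrow} \colon a \in A\}$ is compatible because $A$ is (Lemma~\ref{lem:aug28}), one has $\bigcup_{a \in A} a^{\downarrow} = A$ as $A$ is an order ideal, and finite joins of elements of $A$ remain in $A$ because membership in $\mathsf{Idl}(S)$ forces closure under binary joins; feeding these facts into the join formula gives $\bigvee_{a \in A} a^{\downarrow} = A$. Finiteness of $A$ then provides a finite subset $\{a_1, \dots, a_n\} \subseteq A$ with $A = a_1^{\downarrow} \vee \dots \vee a_n^{\downarrow}$. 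The elements $a_1, \dots, a_n$ are pairwise compatible, so $s := a_1 \vee \dots \vee a_n$ exists in $S$ and lies in $A$, whence $s^{\downarrow} \subseteq A$. For the opposite inclusion I would observe that every element of $a_1^{\downarrow} \vee \dots \vee a_n^{\downarrow}$ is a finite join of elements each bounded above by some $a_k \leq s$, hence is itself $\leq s$; thus $a_1^{\downarrow} \vee \dots \vee a_n^{\downarrow} \subseteq s^{\downarrow}$, and therefore $A = s^{\downarrow}$.

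Since the argument is essentially bookkeeping with the join formula, I do not anticipate a genuine obstacle. The one point requiring care is the repeated passage between the set-theoretic join formula and the order-ideal/join-closure structure of members of $\mathsf{Idl}(S)$, in particular the verification that $a_1^{\downarrow} \vee \dots \vee a_n^{\downarrow}$ coincides with $(a_1 \vee \dots \vee a_n)^{\downarrow}$, which rests on the fact that a join of elements each bounded above by $s$ is again bounded above by $s$.
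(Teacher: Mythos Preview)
Your proof is correct and follows essentially the same approach as the paper's: both arguments show first that each $s^{\downarrow}$ is finite by expressing $s$ as a finite join drawn from any covering family, then show the reverse inclusion by writing an arbitrary $A\in\mathsf{Idl}(S)$ as a join of principal ideals, extracting a finite subfamily by finiteness, and collapsing via the identity $a_1^{\downarrow}\vee\cdots\vee a_n^{\downarrow}=(a_1\vee\cdots\vee a_n)^{\downarrow}$. Your version simply spells out in more detail what the paper's terse proof leaves implicit.
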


\begin{proof}
Show that $s^{\downarrow}$ is a finite element of  $\mathsf{Idl}(S)$ for each $s\in S$.  Note that $s^{\downarrow}=\bigvee T$ is equivalent to the fact that $s$ is expressible as a finite join of members of members of $T$. This is equivalent to $s^{\downarrow}=\bigvee F$ for some finite $F\subseteq T$. Every element of $\mathsf{Idl}(S)$ is trivially expressible as a join of principal ideals and so every element is a join of finite elements. It follows that any finite element can be expressed as a finite join of the elements of the form $s^{\downarrow}$.
Since for $s,t$ compatible we have $s^{\downarrow}\vee t^{\downarrow}=(s\vee t)^{\downarrow}$, we obtain that any finite element of $\mathsf{Idl}(S)$ coincides with some $s^{\downarrow}$. 
\end{proof}

We now connect coherent morphisms  between coherent complete restriction monoids with morphisms between distributive restriction semigroups. By a {\em morphism} $f:S\to T$ between distributive restriction semigroups we understand a  homomorphism of restriction semigroups whose restriction to the lattice of projections $E$ is a morphism of distributive lattices.

\begin{lemma}\label{lem:morphisms31}\mbox{}
\begin{enumerate}
\item \label{i:mor31a} Let $f\colon S\to T$ be a coherent morphism of coherent complete restriction monoids. Then $f$ takes finite elements to finite elements and $$
\mathsf{K}(f)=f|_{\mathsf{K}(S)}\colon \mathsf{K}(S)\to \mathsf{K}(T)
$$ is a morphism of distributive restriction semigroups.
\item \label{i:mor31b} Let $f\colon S\to T$ be a morphism of distributive restriction semigroups. For $A\in \mathsf{Idl}(S)$ we set
$\overline{f}(A)=\bigvee_{a\in A}f(a)^{\downarrow}$. Then $$\mathsf{Idl}(f)=\overline{f}\colon \mathsf{Idl}(S)\to \mathsf{Idl}(T)$$ is a coherent morphism of coherent complete restriction monoids.
\end{enumerate}
\end{lemma}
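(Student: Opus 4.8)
The plan is to handle the two parts separately, each time reducing to the support maps and leaning on the object-level correspondence already established in Propositions~\ref{prop:m31} and~\ref{prop:dist_coh} and in Lemma~\ref{lem:compact31}. For part~\eqref{i:mor31a}, I would first show that $f$ preserves finite elements by means of the characterization in Lemma~\ref{lem:29m}. If $a\in\mathsf{K}(S)$ then $\lambda(a)$ is a finite projection; since $f$ commutes with $\lambda$ and is coherent, $\lambda(f(a))=f(\lambda(a))$ is a finite projection of $T$, so $f(a)\in\mathsf{K}(T)$ by Lemma~\ref{lem:29m} applied in $T$. As $\mathsf{K}(S)$ and $\mathsf{K}(T)$ are restriction subsemigroups closed under the operations (Proposition~\ref{prop:m31}) and $f$ is a homomorphism, $\mathsf{K}(f)=f|_{\mathsf{K}(S)}$ is a homomorphism of restriction semigroups carrying $\mathsf{K}(E_S)$ into $\mathsf{K}(E_T)$. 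It then remains to check that its restriction to $\mathsf{K}(E_S)$ is a morphism of distributive lattices: preservation of $0$, of binary meets and of binary joins is inherited from the frame morphism $f|_{E_S}$ (compact elements being closed under finite joins), so the only genuine point is the covering condition.

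For the covering condition I would use coherence of $E_S$. Frame morphisms preserve the top, so $f(1_{E_S})=1_{E_T}$; writing $1_{E_S}=\bigvee\mathsf{K}(E_S)$ and distributing, any finite projection $t\in\mathsf{K}(E_T)$ satisfies $t=\bigvee_{b\in\mathsf{K}(E_S)}(t\wedge f(b))$. Finiteness of $t$ then produces finitely many $b_1,\dots,b_n\in\mathsf{K}(E_S)$ with $t=t\wedge f(b_1\vee\cdots\vee b_n)$, and $b=b_1\vee\cdots\vee b_n$ is a finite projection of $E_S$ with $f(b)\geq t$. This establishes part~\eqref{i:mor31a}.

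For part~\eqref{i:mor31b}, the construction of $\overline{f}$ mirrors the enveloping morphism $\overline{\varphi}$ of Theorem~\ref{th:quant_psgrps}, with $\mathsf{Idl}$ replacing ${\mathcal L}^{\vee}$, so much of the bookkeeping transcribes directly. First, $\overline{f}$ is well defined: since $f$ preserves $\lambda$, $\rho$ and products it preserves compatibility, hence for $A\in\mathsf{Idl}(S)$ the family $\{f(a)^{\downarrow}:a\in A\}$ is compatible in $\mathsf{Idl}(T)$ and its join exists there (computed as in the proof of Proposition~\ref{prop:dist_coh}). Since $\overline{f}(s^{\downarrow})=f(s)^{\downarrow}$ and every element of $\mathsf{Idl}(S)$ is a join of principal ideals, preservation of arbitrary compatible joins follows once one knows that $f$ preserves finite compatible joins; this is the two-element instance of the argument proving Lemma~\ref{lem:join_pres}. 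Preservation of multiplication, $\lambda$ and $\rho$ is then verified exactly as for $\overline{\varphi}$, using that multiplication in $\mathsf{Idl}$ is subset multiplication and that $\lambda(A)=\{\lambda(a):a\in A\}$.

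Three points finish part~\eqref{i:mor31b}. Unit preservation $\overline{f}(E_S)=E_T$ is where the covering condition re-enters: the inclusion $\overline{f}(E_S)\subseteq E_T$ is clear, while for the reverse every projection $t\in E_T$ satisfies $t\leq f(a)$ for some $a\in E_S$, whence $t\in f(a)^{\downarrow}\subseteq\overline{f}(E_S)$. That $\overline{f}$ restricts to a frame morphism on the projection frame $\mathsf{Idl}(E_S)$ reduces to preservation of binary meets of ideals of projections, which follows from distributivity together with $f|_{E_S}$ preserving binary meets, by the same computation as the $\wedge$-morphism step of Theorem~\ref{th:quant_psgrps}. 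Finally, by Lemma~\ref{lem:compact31} the finite projections of $\mathsf{Idl}(S)$ are exactly the $g^{\downarrow}$ with $g\in E_S$, and $\overline{f}(g^{\downarrow})=f(g)^{\downarrow}$ is a finite projection of $\mathsf{Idl}(T)$, so $\overline{f}$ is coherent. I expect the main obstacle to be the clean verification of join-preservation of $\overline{f}$ and of the unit equality, since these are precisely the places where the finite/infinite interface and the covering condition genuinely intervene; the multiplicative and meet identities are routine transcriptions of the Quantalization Theorem computations.
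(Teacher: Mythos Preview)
Your proposal is correct and follows essentially the same approach as the paper's proof. For part~\eqref{i:mor31a} both arguments reduce finiteness of $f(a)$ to finiteness of $\lambda(f(a))=f(\lambda(a))$ via Lemma~\ref{lem:29m}; you are simply more explicit about the covering condition on projections, which the paper leaves implicit. For part~\eqref{i:mor31b} the paper verifies well-definedness, multiplication, and the unary operations directly just as you do, but handles the projection frame (the frame-morphism property and the unit equality) in one stroke by invoking the classical correspondence of Theorem~\ref{th:pr2}, whereas you unpack these by hand using distributivity and the covering condition; both routes are valid and coincide on the coherence check via Lemma~\ref{lem:compact31}.
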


\begin{proof}
\eqref{i:mor31a} Let $a\in S$ be a finite element. Then $\lambda(a)$ is finite by Lemma \ref{lem:29m}. Hence, by definition, $f(\lambda(a))$ is finite. But $f$ commutes with $\lambda$, so that $\lambda(f(a))$ is finite. Then by Lemma  \ref{lem:29m} we have that $f(a)$ is finite. The statement follows. 

\eqref{i:mor31b} Show that $\overline{f}(A)\in \mathsf{Idl}(T)$. Clearly $\overline{f}(A)$ is an order ideal. If $a\sim b$ and $x\leq f(a)$, $y\leq f(b)$ then $x\sim y$. It follows that $\overline{f}(A)$ is compatible. If $x,y\in \overline{f}(A)$ then $x\leq f(a)$ and $y\leq f(b)$ for some $a,b\in A$ and so $x\vee y\leq f(a\wedge b)\in \overline{f}(A)$. It is routine to verify that $\overline{f}$ preserves the unary operations. If $z\leq f(ab)=f(a)f(b)$ then $z=f(a)f(b)\lambda(z)$ so that $z=xy$ with $x\leq a$ and $y\leq b$. It now easily follows that $\overline{f}$ preserves multiplication. By Theorem~\ref{th:pr2}  its restriction to $\mathsf{Idl}(E)$ is a morphism of distributive lattices. Since $\overline{f}(s^{\downarrow})=f(s)^{\downarrow}$ for each $s\in S$ we have that $\overline{f}$ is coherent by Lemma \ref{lem:compact31}.
\end{proof}

We now use the assignments $\mathsf{K}$ and $\mathsf{Idl}$ to translate various types of morphisms between coherent complete restriction monoids to appropriate types of morphisms between distributive restriction semigoups.  We start from proper morphisms. A morphism $f:S\to T$ between distributive restriction semigroups will be called {\em proper} if for every $t\in T$ there is $n\geq 1$ and $t_1,\dots, t_n\in T$, $s_1\dots, s_n\in S$ such that $t=\bigvee t_i$ and $f(s_i)\geq t_i$ for every $i=1,\dots, n$.

\begin{lemma}\label{lem:proper31}\mbox{}
\begin{enumerate}
\item \label{i:proper31a} Let $f\colon S\to T$ be a proper coherent morphism between coherent complete restriction monoids. Then $f|_{\mathsf{K}(S)}$ is a proper morphism.
\item \label{i:proper31b} Let $f\colon S\to T$ be a proper morphism between distributive restriction semigroups. Then $\overline{f}$ is a proper coherent morphism.
\end{enumerate}
\end{lemma}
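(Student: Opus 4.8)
The statement has two halves, and the second is routine while the first carries all the difficulty. Throughout I would use the characterisation of finite elements in Lemma~\ref{lem:29m} together with the preliminary fact, which I would record first, that in a coherent complete restriction monoid every element is a join of finite elements. Indeed, given $a$, write $\lambda(a)=\bigvee_j g_j$ with each $g_j$ a finite projection (possible since $e^{\downarrow}$ is a coherent frame); then $a=a\lambda(a)=\bigvee_j ag_j$ by distributivity of multiplication over compatible joins, and $\lambda(ag_j)=\lambda(a)g_j=g_j$ is finite, so each $ag_j$ is finite by Lemma~\ref{lem:29m}.

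For part \eqref{i:proper31a}, Lemma~\ref{lem:morphisms31} already tells me that $f|_{\mathsf{K}(S)}$ is a morphism of distributive restriction semigroups, so only properness must be checked. Fix $t\in\mathsf{K}(T)$. Since $f$ is proper I can write $t=\bigvee X$ with each $x\in X$ below some $f(s_x)$, $s_x\in S$; decomposing every $x$ into finite elements by the fact above and invoking finiteness of $t$, I obtain a finite expression $t=w_1\vee\dots\vee w_m$ in which each $w_p$ is finite and $w_p\le f(s_p)$ for some $s_p\in S$. The crux is to replace each $s_p$ by a finite element. The plan is to pass to the frame of projections: $\rho(w_p)$ is a finite (compact) element of the coherent frame $e^{\downarrow}$ and $\rho(w_p)\le\rho(f(s_p))=f(\rho(s_p))$; writing $s_p$ as a join of finite elements $\sigma_{p,r}$, using Lemma~\ref{lem:joins22} and the fact that $f$ restricts to a frame morphism on projections, compactness of $\rho(w_p)$ in the frame yields a finite $\tau_p\le s_p$ with $\rho(w_p)\le f(\rho(\tau_p))=\rho(f(\tau_p))$. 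Since $\tau_p\le s_p$ gives $f(\tau_p)\le f(s_p)$, one then computes $\rho(w_p)\,f(\tau_p)=\rho(w_p)\,f(s_p)=w_p$ using part~\eqref{i:apr4a} of Lemma~\ref{lem:apr4} (and that the product of projections $\rho(w_p)\le\rho(f(\tau_p))$ is $\rho(w_p)$), so that $w_p\le f(\tau_p)$ with $\tau_p\in\mathsf{K}(S)$. As $t=\bigvee_p w_p$ with every $w_p$ finite and dominated by the image of a finite element, this is exactly the properness of $f|_{\mathsf{K}(S)}$.

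For part \eqref{i:proper31b}, Lemma~\ref{lem:morphisms31} already gives that $\overline f=\mathsf{Idl}(f)$ is a coherent morphism of coherent complete restriction monoids, so again only properness is at issue, and here no finiteness constraint intervenes. Given $B\in\mathsf{Idl}(T)$, write $B=\bigvee_{b\in B}b^{\downarrow}$. For each $b\in B$, properness of $f$ supplies $b=b_1\vee\dots\vee b_n$ with $f(s_i)\ge b_i$, whence $b^{\downarrow}=b_1^{\downarrow}\vee\dots\vee b_n^{\downarrow}$ and $b_i^{\downarrow}\subseteq f(s_i)^{\downarrow}\subseteq\overline f(s_i^{\downarrow})$. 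Collecting the $b_i^{\downarrow}$ over all $b\in B$ produces a family whose join is $B$, each member of which lies below $\overline f(s_i^{\downarrow})$ with $s_i^{\downarrow}\in\mathsf{Idl}(S)$; this is precisely the definition of a proper morphism applied to $\overline f$.

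The main obstacle is the key step of part \eqref{i:proper31a}: one cannot simply intersect the finite element $t$ with the images $f(s_p)$, because in a merely coherent (as opposed to strongly coherent) complete restriction monoid the meet of two finite elements need not be finite, so the naive witnesses would fall outside $\mathsf{K}(T)$. Reducing the domination problem to the genuine frame $e^{\downarrow}$ through $\rho$, where compactness is well behaved, and then lifting back via the order characterisation $a\le b\iff a=\rho(a)b$, is what makes the argument go through; checking that this lift is legitimate is the one place that requires care.
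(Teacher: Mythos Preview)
Your argument is correct in both parts. Part~\eqref{i:proper31b} matches the paper's proof essentially verbatim.

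For part~\eqref{i:proper31a}, your proof is substantially more careful than the paper's. The paper's argument consists of a single line: from properness write $t=\bigvee_{i\in I}t_i$ with $f(s_i)\ge t_i$, then use finiteness of $t$ to pass to a finite subfamily $J\subseteq I$. It stops there, without addressing the point you correctly isolate as the crux: neither the $t_j$ nor the $s_j$ produced this way need lie in $\mathsf{K}(T)$ or $\mathsf{K}(S)$, and the naive fix of replacing $t_j$ by $t\wedge f(s_j)$ fails because in a merely coherent complete restriction monoid meets of finite elements need not be finite. Your route---first refining to finite $w_p$ below each $f(s_p)$, then transferring the domination problem to the coherent frame $e^{\downarrow}$ via $\rho$, exploiting compactness of $\rho(w_p)$ there to extract a finite $\tau_p\le s_p$, and lifting back through $w_p=\rho(w_p)f(\tau_p)$---is exactly what is needed to close this gap, and each step is sound (the lift uses Lemma~\ref{lem:apr4}\eqref{i:apr4a} and~\eqref{i:apr4c} as you indicate). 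So your proof not only agrees with the paper's strategy but completes it.
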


\begin{proof} \eqref{i:proper31a} Let $t\in \mathsf{K}(S)$. Since $f$ is proper there are $t_i\in T$ and $s_i\in S$, $i\in I$, with $t=\bigvee_I t_i$ and $f(s_i)\geq t_i$. But $t$ is finite and so $t=\bigvee_J t_j$ where $J\subseteq I$ is finite.

 \eqref{i:proper31b} Let $B\in \mathsf{Idl}(S)$. Then $B=\bigvee_{b\in B}b^{\downarrow}$. Since $f$ is proper, for  every $b\in B$ there are $n\geq 1$ and $b_1,\dots, b_n\in T$, $a_1,\dots, a_n\in S$ such that $f(a_i)\geq b_i$ for each $i$. This means that
 $\overline{f}(a_i^{\downarrow})\geq b_i^{\downarrow}$ for each $i$. Since $b^{\downarrow}=\bigvee b_i^{\downarrow}$, the statement follows.
 \end{proof}
 
We now turn to $\wedge$-morphisms. Let $f\colon S\to T$ be a coherent $\wedge$-morphism between coherent complete restriction monoids. Then $f|_{\mathsf{K}(S)}$ preserves existing meets and `potential' non-existing meets. We formalize this as follows.  We say that a morphism $f \colon S \rightarrow T$ of distributive restriction semigroups
is {\em weakly meet preserving} if given $t \leq f (a), f (b)$, there exists
$c \leq a,b$ such that $t \leq f(c)$.

\begin{lemma}\label{lem:wedge31}\mbox{}
\begin{enumerate}
\item \label{i:wedge31a} Let $f\colon S\to T$ be a coherent $\wedge$-morphism between coherent complete restriction monoids. Then $f|_{\mathsf{K}(S)}$ is a weakly meet-preserving morphism. 
\item \label{i:wedge31b} Let $f\colon S\to T$ be a weakly meet preserving morphism between distributive restriction semigroups. Then $\overline{f}$ is a coherent $\wedge$-morphism.
\end{enumerate}
\end{lemma}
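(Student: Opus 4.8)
The plan is to prove the two halves separately: the first uses coherence of $S$ to reduce the existence of a common lower bound to a statement about a single finite element, and the second uses the explicit description of $\overline{f}$ together with the join-closure of the ideals. For \eqref{i:wedge31a}, fix $a,b\in \mathsf{K}(S)$ and $t\in\mathsf{K}(T)$ with $t\leq f(a),f(b)$; I must produce a finite $c\leq a,b$ with $t\leq f(c)$. Since $S$ is a complete restriction monoid the meet $x:=a\wedge b$ exists by Proposition~\ref{prop:meets}, and as $f$ is a $\wedge$-morphism $f(x)=f(a)\wedge f(b)\geq t$. Coherence of $S$ lets me write $x=\bigvee_i c_i$ as a join of the finite elements $c_i\leq x$; these form a \emph{directed} family because finite elements are closed under finite joins (here I use Lemma~\ref{lem:29m} together with $\lambda(c_i\vee c_j)=\lambda(c_i)\vee\lambda(c_j)$ from Lemma~\ref{lem:joins22}). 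Because the family is compatible, Lemma~\ref{lem:join_pres} gives $f(x)=\bigvee_i f(c_i)$, so $t\leq\bigvee_i f(c_i)$. Now by part~\eqref{i:apr4a} of Lemma~\ref{lem:apr4} and right distributivity of multiplication over compatible joins,
\[
t=\Bigl(\bigvee_i f(c_i)\Bigr)\lambda(t)=\bigvee_i f(c_i)\lambda(t),
\]
where each $f(c_i)\lambda(t)\leq t$ and the family $\{f(c_i)\lambda(t)\}_i$ is again directed. Finiteness of $t$ then forces $t=f(c_{i_0})\lambda(t)\leq f(c_{i_0})$ for a single index $i_0$, and $c:=c_{i_0}$ is a finite element with $c\leq x\leq a,b$; hence $f|_{\mathsf{K}(S)}$ is weakly meet preserving.

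For \eqref{i:wedge31b}, recall that $\overline{f}$ is already a coherent morphism of coherent complete restriction monoids by part~\eqref{i:mor31b} of Lemma~\ref{lem:morphisms31}, so only the $\wedge$-condition remains. As for $\mathcal{L}^\vee(S)$ in the Quantalization Theorem, the natural partial order on $\mathsf{Idl}(S)$ is subset inclusion and $A\wedge B=A\cap B$, since $A\cap B$ is again a compatible, join-closed order ideal. Using that each ideal is closed under finite compatible joins and that $f$ preserves these (Lemma~\ref{lem:join_pres}), I first identify $\overline{f}(A)=\{t\in T:\ t\leq f(a)\ \text{for some}\ a\in A\}$: the right-hand side is a compatible, join-closed order ideal (two witnesses $a,a'\in A$ are dominated by $a\vee a'\in A$, so their images are compatible by Lemma~\ref{lem:aug28}), it contains every $f(a)$, and it is contained in any ideal doing so, hence equals the join $\bigvee_{a\in A}f(a)^{\downarrow}$. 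The inclusion $\overline{f}(A\cap B)\subseteq\overline{f}(A)\cap\overline{f}(B)$ is monotonicity. Conversely, if $t\in\overline{f}(A)\cap\overline{f}(B)$ then $t\leq f(a)$ and $t\leq f(b)$ for some $a\in A$, $b\in B$; weak meet preservation yields $c\leq a,b$ with $t\leq f(c)$, and $c\in A\cap B$ since $A,B$ are order ideals, so $t\in f(c)^{\downarrow}\subseteq\overline{f}(A\cap B)$. Thus $\overline{f}$ preserves binary meets and is a coherent $\wedge$-morphism.

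The main obstacle is the extraction step in \eqref{i:wedge31a}: because a complete restriction monoid need not be a frame, I cannot simply distribute $t\wedge(-)$ over the join $\bigvee_i f(c_i)$, and must instead route through the natural partial order via the identity $t=(\bigvee_i f(c_i))\lambda(t)$ and the directedness of the approximating family, so that finiteness of $t$ is applied to a genuine equality rather than to an inequality. Establishing directedness (closure of finite elements under finite joins) and the exact description of $\overline{f}(A)$ in \eqref{i:wedge31b} are the remaining delicate points, but both reduce to the finiteness criterion of Lemma~\ref{lem:29m} and to the join calculus already developed for these ideal monoids.
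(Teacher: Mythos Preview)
Your proof is correct and follows the same route as the paper. In part~\eqref{i:wedge31a} the paper also takes $x=a\wedge b$, writes it as a (compatible) join of finite elements, pushes $f$ through via Lemma~\ref{lem:join_pres}, and then extracts a finite subfamily; you are simply more explicit about why finiteness of $t$ applies, converting the inequality $t\leq\bigvee_i f(c_i)$ into the equality $t=\bigvee_i f(c_i)\lambda(t)$ before invoking the definition of a finite element (the paper glosses over this step). The directedness observation is a pleasant extra but not needed: once $t=\bigvee_{i\in F}f(c_i)\lambda(t)$ for a finite $F$, setting $c=\bigvee_{i\in F}c_i$ already gives a finite $c\leq a,b$ with $t\leq f(c)$, which is exactly how the paper concludes. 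In part~\eqref{i:wedge31b} your explicit identification $\overline{f}(A)=\{t:t\leq f(a)\text{ for some }a\in A\}$ is a clean way to phrase the paper's reduction, and the remainder of the argument (using weak meet preservation to find $c\leq a,b$ and hence $c\in A\cap B$) coincides with the paper's.
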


\begin{proof}
\eqref{i:wedge31a} Let $a,b\in \mathsf{K}(S)$ and $t\in \mathsf{K}(S)$ be such that $t\leq f(a), f(b)$. This is equivalent to $t\leq f(a)\wedge f(b)=f(a\wedge b)$.  It follows that
$t\leq \bigvee \{f(s)\colon s\in X\}$ where $X=\{s\colon s\leq a\wedge b, s\in \mathsf{K}(S)\}$.
Since $t$ is finite it follows that there is a finite subset $Y\subseteq X$ with $t\leq \bigvee \{f(s)\colon s\in Y\}=f(\bigvee Y)$. The statement follows as the element $c=\bigvee Y$ is finite as a finite join of finite elements.

\eqref{i:wedge31b} Let $A,B\in \mathsf{Idl}(S)$. We need to show that $\overline{f}(A\cap B)=\overline{f}(A)\cap \overline{f}(B)$. This reduces to
$$
\bigvee_{x\in A\cap B}f(x)^{\downarrow}=\bigvee_{y\in A,z\in B} (f(y)^{\downarrow}\cap f(z)^{\downarrow}).
$$
The inclusion $\subseteq$ clearly holds. For the reverse inclusion assume that $t\leq f(y),f(z)$ where $y\in A$ and $z\in B$. Since $f$ is weakly meet-preserving we have that there is $c\leq y,z$ such that $t\leq f(c)$, which completes the proof.\end{proof}

We thus define the following types of morphisms between distributive restriction semigroups.
\begin{itemize}
\item type $1$: morphisms;
\item type $2$: proper morphisms;
\item type $3$: weakly meet-preserving morphisms;
\item type $4$: proper and weakly meet-preserving morphisms.
\end{itemize}

\begin{lemma} A morphism between distributive restriction $\wedge$-semigroups is weakly meet preserving if and only if it is meet preserving.
\end{lemma}

\begin{proof} It is immediate that a meet-preserving morphism is also weakly meet preserving. Assume that $f:S\to T$ is weakly meet preserving and let $a,b\in S$. Since $f$ is monotone, it is easy to see that $f(a\wedge b)\leq f(a)\wedge f(b)$. Let $t=f(a)\wedge f(b)$. By definition there is $c\leq a,b$ such that $t\leq f(c)$. 
Then $c\leq a\wedge b$ and we obtain $f(a)\wedge f(b)\leq f(a\wedge b)$. \end{proof}

\begin{theorem}\label{th:equiv_dist} For each $k=1,2,3,4$ the category of coherent (resp. strongly coherent) complete restriction monoids and their morphisms of type $k$ is equivalent to the category of distributive restriction semigroups (resp. distributive restriction $\wedge$-semigroups) and their morprisms of type $k$.  
\end{theorem}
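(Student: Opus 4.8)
The plan is to exhibit the two assignments $\mathsf{K}$ and $\mathsf{Idl}$ as mutually quasi-inverse functors and then verify that they cut down to an equivalence on each of the four morphism classes. Almost all of the object- and morphism-level bookkeeping is already available: Proposition~\ref{prop:m31} shows $\mathsf{K}$ sends a (strongly) coherent complete restriction monoid to a distributive restriction (resp. $\wedge$-)semigroup, Proposition~\ref{prop:dist_coh} shows $\mathsf{Idl}$ goes the other way, and Lemmas~\ref{lem:morphisms31}, \ref{lem:proper31} and~\ref{lem:wedge31} show that both assignments act on morphisms and carry a morphism of type $k$ to a morphism of type $k$ (type $1$ from Lemma~\ref{lem:morphisms31}, the properness condition from Lemma~\ref{lem:proper31}, the weak meet-preservation from Lemma~\ref{lem:wedge31}, and type $4$ by combining the two). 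Functoriality, i.e. preservation of identities and composition, is a routine check in each case. Thus the real content that remains is the construction of the two natural isomorphisms $\mathsf{K}\circ\mathsf{Idl}\cong\mathrm{id}$ and $\mathsf{Idl}\circ\mathsf{K}\cong\mathrm{id}$; since every isomorphism of these structures is automatically a proper $\wedge$-morphism and hence a morphism of all four types, once the isomorphisms are produced they witness the equivalence in each of the four categories simultaneously.

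For the counit $\mathsf{K}(\mathsf{Idl}(S))\cong S$, I would invoke Lemma~\ref{lem:compact31}, which identifies $\mathsf{K}(\mathsf{Idl}(S))$ with $\{s^{\downarrow}\colon s\in S\}$, and check that the map $s\mapsto s^{\downarrow}$ is an isomorphism of distributive restriction semigroups. Injectivity and surjectivity onto $\mathsf{K}(\mathsf{Idl}(S))$ are immediate; preservation of multiplication is the identity $s^{\downarrow}t^{\downarrow}=(st)^{\downarrow}$ already established in Proposition~\ref{prop:dec18}; preservation of $\lambda$, $\rho$ is equally direct; and preservation of finite compatible joins is the equality $s^{\downarrow}\vee t^{\downarrow}=(s\vee t)^{\downarrow}$ for $s\sim t$, with binary meets handled similarly in the $\wedge$-semigroup case. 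Naturality of this family in $S$ reduces exactly to the computation $\overline{f}(s^{\downarrow})=f(s)^{\downarrow}$ recorded in Lemma~\ref{lem:morphisms31}.

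The more delicate isomorphism is the unit $S\cong\mathsf{Idl}(\mathsf{K}(S))$. I would define $\theta_S\colon\mathsf{Idl}(\mathsf{K}(S))\to S$ by $A\mapsto\bigvee A$ (the join exists because $A$ is compatible by Lemma~\ref{lem:aug28} and $S$ is complete), with candidate inverse $a\mapsto a^{\downarrow}\cap\mathsf{K}(S)$. The first thing to prove is that every $a\in S$ is the join of the finite elements below it: using Lemma~\ref{lem:29m} and coherence of the projection frame $e^{\downarrow}$ one writes $\lambda(a)=\bigvee_i f_i$ with each $f_i$ finite, whence $a=a\lambda(a)=\bigvee_i af_i$ by distributivity (Lemma~\ref{lem:molly}), and each $af_i$ is finite since $\lambda(af_i)=f_i$. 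This gives $\theta_S\theta_S^{-1}=\mathrm{id}$. For $\theta_S^{-1}\theta_S=\mathrm{id}$ one must show that a finite $x\le\bigvee A$ already lies in $A$: writing $x=\rho(x)(\bigvee A)=\bigvee_{a\in A}\rho(x)a$ with $\rho(x)a\in A$ (order ideal), finiteness of $x$ collapses this to a finite join, which lies in $A$ since $A$ is closed under binary joins. That $a^{\downarrow}\cap\mathsf{K}(S)$ is a genuine object of $\mathsf{Idl}(\mathsf{K}(S))$ follows from these same facts, and in the strongly coherent case one checks additionally that $\theta_S$ respects binary meets, using that $\mathsf{K}(S)$ is a $\wedge$-semigroup and the meet in $\mathsf{Idl}(\mathsf{K}(S))$ is intersection; naturality is again mechanical.

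The main obstacle I expect is precisely this unit isomorphism, and within it the step that propagates coherence from the projection frame $e^{\downarrow}$ to the statement that \emph{every} element of $S$ is a join of finite elements, together with the matching verification that binary meets transfer correctly in the strongly coherent case. Everything else is either quoted from the preceding lemmas or a direct structural check. Once both natural isomorphisms are in hand, the equivalence for each $k=1,2,3,4$ is immediate, since $\mathsf{K}$ and $\mathsf{Idl}$ preserve type $k$ and the isomorphism components are morphisms of every type; the strongly coherent versus coherent dichotomy runs in parallel throughout, governed by whether one works with $\mathcal{L}^{\vee}(S)$ or with $e^{\downarrow}$ and correspondingly with $\wedge$-semigroups or plain distributive restriction semigroups.
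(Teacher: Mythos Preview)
Your proposal is correct and follows essentially the same route as the paper: use Lemmas~\ref{lem:morphisms31}, \ref{lem:proper31}, \ref{lem:wedge31} for the morphism-level work, then establish natural isomorphisms via $s\mapsto s^{\downarrow}$ for $\mathsf{K}\circ\mathsf{Idl}$ and $s\mapsto\{t\in\mathsf{K}(S):t\le s\}$ (your $\theta_S^{-1}$) for $\mathsf{Idl}\circ\mathsf{K}$. The paper's own proof is much terser, so the verifications you supply (that every element is a join of finite elements via $\lambda(a)=\bigvee f_i$, and that a finite $x\le\bigvee A$ lies in $A$ via $x=\bigvee\rho(x)a$) are genuine additions but exactly what the paper leaves implicit.
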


\begin{proof}
By Lemmas \ref{lem:morphisms31}, \ref{lem:proper31} and \ref{lem:wedge31} the assignments $\mathsf{K}$ and $\mathsf{Idl}$ map morphisms of type $k$ to morphisms of type $k$ for each $k=1,2,3,4$ and it is immediate that these assignments are functorial.  Let $S$ be a distributive restriction semigroup. Define a map $\gamma: S\to \mathsf{K}(\mathsf{Idl}(S))$ be $s\mapsto \{s^{\downarrow}\colon s\in S\}$. Applying Lemma \ref{lem:compact31} it easily follows that $\gamma$ is a proper and weakly meet preserving isomorphism. Let $S$ be a coherent complete restriction monoid. Define a map $\theta: S\to  \mathsf{Idl}(\mathsf{K}(S))$ by $s\mapsto \{t\in \mathsf{K}(S) \colon t\leq s\}$. Then $\theta$ is a proper and $\wedge$-isomorphism. 
 \end{proof}

A coherent complete restriction monoid will be called {\em compact} if $e$ is a finite element. We can easily deduce a  monoid version of Theorem \ref{th:equiv_dist}.

\begin{corollary}\label{cor:equiv_dist} For each $k=1,2,3,4$ the category of compact coherent (resp. strongly coherent) complete restriction monoids and their morphisms of type $k$ is equivalent to the category of distributive restriction monoids (resp. distributive restriction $\wedge$-monoids) and their morprisms of type $k$.
\end{corollary}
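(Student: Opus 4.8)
The plan is to obtain the corollary by restricting the equivalence of Theorem~\ref{th:equiv_dist} to appropriate full subcategories, keeping the same functors $\mathsf{K}$ and $\mathsf{Idl}$. For each fixed $k$, the category of distributive restriction monoids (resp. $\wedge$-monoids) and type-$k$ morphisms sits inside the category of distributive restriction semigroups (resp. $\wedge$-semigroups) and type-$k$ morphisms as the full subcategory of those objects possessing an identity; likewise the compact objects form a full subcategory of the coherent (resp. strongly coherent) complete restriction monoids. Consequently it suffices to verify that $\mathsf{K}$ and $\mathsf{Idl}$ interchange these two classes of objects. Once this is done, the natural isomorphisms $\gamma$ and $\theta$ realizing the equivalence in Theorem~\ref{th:equiv_dist} restrict without change, and since the type-$k$ conditions and the assertions (Lemmas~\ref{lem:morphisms31}, \ref{lem:proper31} and~\ref{lem:wedge31}) that $\mathsf{K}$ and $\mathsf{Idl}$ preserve them are exactly the same, no separate argument about morphisms is required.

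The object-level verification I would run as follows. If $S$ is compact, its identity $e$ is a finite element; by the remark following Proposition~\ref{prop:m31} the subsemigroup $\mathsf{K}(S)$ of finite elements is a submonoid precisely because $e$ is finite, and as $e$ is then the maximal projection of $\mathsf{K}(S)$ this makes $\mathsf{K}(S)$ a distributive restriction monoid (a $\wedge$-monoid in the strongly coherent case). Conversely, if $S$ is a distributive restriction monoid with identity $e$, then the identity of $\mathsf{Idl}(S)$ is the ideal of projections $E(S)=e^{\downarrow}$, a principal ideal of $S$; by Lemma~\ref{lem:compact31} every such principal ideal is a finite element of $\mathsf{Idl}(S)$, so the identity of $\mathsf{Idl}(S)$ is finite and $\mathsf{Idl}(S)$ is compact (it is coherent, resp. strongly coherent, by Proposition~\ref{prop:dist_coh}). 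These two passages are mutually inverse at the object level, since $\mathsf{Idl}(S)$ is compact exactly when its identity $E(S)$ has the form $s^{\downarrow}$, i.e. when $E(S)$ has a greatest element, which is precisely the existence of an identity in $S$.

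The one point that deserves care — and the only place where the monoid case is not a purely formal restriction — is the behaviour of the proper morphisms (types $2$ and $4$); this I expect to be the main, though mild, obstacle. Here I would record that a proper morphism $f\colon S\to T$ between distributive restriction monoids automatically satisfies $f(e_S)=e_T$: writing $e_T=\bigvee_{i=1}^{n} t_i$ with $f(s_i)\ge t_i$, each $t_i$ is a projection (being $\le e_T$), so $f(\lambda(s_i))=\lambda(f(s_i))\ge \lambda(t_i)=t_i$ by monotonicity of $\lambda$, giving $e_T\le \bigvee_i f(\lambda(s_i))\le f(e_S)$; as $f(e_S)=\lambda(f(e_S))$ is a projection and hence $\le e_T$, we conclude $f(e_S)=e_T$. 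This is the exact non-commutative analogue of the convention in Theorem~\ref{th:pr2}, where for unital distributive lattices the properness requirement collapses to $f(1_L)=1_M$, and it confirms that the restricted morphism classes are the intended ones. With the object correspondence and this consistency check in hand, the corollary follows by restricting the equivalence of Theorem~\ref{th:equiv_dist} to the indicated full subcategories.
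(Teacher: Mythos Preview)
Your proof is correct and follows the same overall strategy as the paper: restrict the equivalence of Theorem~\ref{th:equiv_dist} to the appropriate full subcategories. The one genuine difference is in how the object-level correspondence is verified. The paper proves the biconditional ``$S$ is compact $\Leftrightarrow$ $\mathsf{K}(S)$ is a monoid'' directly; the backward direction requires a short but nontrivial argument (if $\mathsf{K}(S)$ has a unit $f$, one uses coherence of $e^{\downarrow}$ to write every $a\in S$ as a join of finite elements and then show $af=a$, forcing $f=e$). You instead check the two functors separately: $S$ compact $\Rightarrow$ $\mathsf{K}(S)$ a monoid, and $S$ a distributive restriction monoid $\Rightarrow$ $\mathsf{Idl}(S)$ compact (since its identity $e^{\downarrow}$ is principal, hence finite by Lemma~\ref{lem:compact31}). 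Your route is slightly cleaner, as it sidesteps the paper's backward argument entirely.

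Your final paragraph on proper morphisms is correct but redundant: the definition of a morphism of distributive restriction semigroups already requires the restriction to projections to be a distributive-lattice morphism, and the paper's convention for such maps (every element of the codomain is below some image) forces $f(e_S)=e_T$ automatically whenever both objects are monoids. So the monoid subcategory really is full, and no separate check is needed.
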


\begin{proof}
Assume that a coherent complete restriction monoids $S$ is compact.  Then clearly $\mathsf{K}(S)$ is a monoid with unit $e$. Conversely, assume that $\mathsf{K}(S)$ is a monoid with the unit $f$. Then $f$ is a finite element of $E(S)$. We show that $f=e$. Let $a\in S$. As $e^{\downarrow}$ is coherent, $\lambda(a)=\bigvee f_i$ is a join of finite elements. But then $a=\vee af_i$ and the elements $af_i$ are finite by Lemma \ref{lem:29m} as $\lambda(af_i)=f_i$. It follows that $af=\bigvee a_if = \bigvee a_i=a$. By symmetry we also obtain $fa=a$. It follows that $f$ is the unit of $S$ and so $f=e$ by uniqueness of the unit. It is immediate from the constructions in Lemmas \ref{lem:morphisms31}, \ref{lem:proper31}, \ref{lem:wedge31} that coherent morphisms of each of types $1, 2, 3, 4$ of compact coherent complete restriction monoids correspond to monoids moprhisms of respective types. Moreover,  if $S$ is a distributive restriction monoid then the assignment $\gamma$ of the proof of  Theorem \ref{th:equiv_dist} is a monoid morphism. 
\end{proof}

Theorem \ref{th:equiv_dist} and Corollary \ref{cor:equiv_dist} can be readily adapted to the involutive, cancellative and inverse settings. Combining these results with Theorem \ref{th:adjga} we obtain the following.

\begin{theorem}[Non-commutative Stone Duality I] \mbox{}\label{th:duality:1}
\begin{enumerate} 
\item For each $k=1,2,3,4$ the category of distributive restriction semigroups (resp. $\wedge$-semigroups, monoids, $\wedge$-monoids) is dually equivalent to the category of spectral (resp. strongly spectral, compact spectral, compact strongly spectral) \'etale topological categories and their morphisms of type $k$. Under this equivalence, classes of distributive ample semigroups correspond to respective classes of cancellative spectral \'etale topological categories.
\item For each $k=1,2,3,4$ the category of Boolean restriction semigroups (resp. $\wedge$-semigroups, monoids, $\wedge$-monoids) is dually equivalent to the category of Boolean (resp. strongly Boolean, compact Boolean, compact strongly Boolean) \'etale topological categories and their morphisms of type $k$. Under this equivalence, classes of Boolean ample semigroups correspond to respective classes of cancellative Boolean \'etale topological categories.
\end{enumerate} 
\end{theorem}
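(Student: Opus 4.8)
The plan is to derive both statements as a composition of two equivalences already in hand, so that the distributive finitary objects are linked to étale topological categories through the intermediate layer of complete restriction monoids. First I would invoke Theorem~\ref{th:equiv_dist}, together with Corollary~\ref{cor:equiv_dist} for the monoid variants, which for each $k$ supplies an equivalence between the category of distributive restriction semigroups (resp.\ $\wedge$-semigroups, monoids, $\wedge$-monoids) with morphisms of type $k$ and the category of coherent (resp.\ strongly coherent, compact coherent, compact strongly coherent) complete restriction monoids with coherent morphisms of type $k$, implemented by the functors $\mathsf{K}$ and $\mathsf{Idl}$. I would then compose this with the dual equivalence of Theorem~\ref{th:adjga} (Topological Duality Theorem~II), which matches coherent (resp.\ strongly coherent) complete restriction monoids to spectral (resp.\ strongly spectral) étale topological categories. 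Since an equivalence followed by a dual equivalence is again a dual equivalence, the composite yields the asserted duality between the distributive algebraic objects and the corresponding spectral categories.

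The bookkeeping steps are twofold. First, I must confirm that the four morphism classes line up across the composite: this is exactly the content of Lemmas~\ref{lem:morphisms31}, \ref{lem:proper31} and~\ref{lem:wedge31}, which show that $\mathsf{K}$ and $\mathsf{Idl}$ carry proper morphisms, weakly–meet–preserving morphisms, and their conjunction back and forth correctly, while Theorem~\ref{th:adjga} already matches these with coherent morphisms of the same type on the category side. Second, I must handle the compact (monoid) cases, which are not stated explicitly in Theorem~\ref{th:adjga}. For these I would restrict the duality to compact objects and use that compactness of $C_0$, i.e.\ that $1_{O(C_0)}$ is a finite element, translates precisely into $e$ being a finite element of the complete restriction monoid; by Corollary~\ref{cor:equiv_dist} this is exactly the condition isolating distributive restriction monoids, with the strongly compact case the analogous statement for $C_1$. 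That spectrality corresponds to coherence follows from the classical correspondences recalled in Theorems~\ref{th:pr1a} and~\ref{th:pr2}, already used in proving Theorem~\ref{th:adjga}.

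For the ample/cancellative clause I would first check that $S$ is a (left, right) ample distributive restriction semigroup if and only if $\mathsf{Idl}(S)$ is a (left, right) ample complete restriction monoid, which is routine from the definitions since the ample identities are inherited by principal ideals and lift through joins; I would then apply Propositions~\ref{prop:ample} and~\ref{prop:ample1}, identifying ample complete restriction monoids with cancellative étale categories. This shows that the cancellative categories appearing in the statement are exactly the images of the distributive ample semigroups. Part~(2), the Boolean case, follows by the identical chain after strengthening coherence to the requirement that the projection lattice be a (generalized) Boolean algebra: by Classical Stone Duality~II (Theorem~\ref{th:pr3}) such projection structures correspond to Boolean, rather than merely spectral, base spaces $C_0$ (resp.\ $C_1$ for the strong versions), so each occurrence of ``spectral'' is upgraded to ``Boolean'' throughout, and the ample clause transfers unchanged.

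The main obstacle I anticipate is the morphism matching in the compact and Boolean variants, where one must verify that the notion of coherent morphism used in Theorem~\ref{th:adjga} is transported by $\mathsf{K}$ and $\mathsf{Idl}$ to precisely the type-$k$ morphisms of the finitary objects, and that this transport respects the extra unit-preservation (compact) and complementation (Boolean) constraints without creating or losing morphisms. Everything beyond this is a formal composition of previously proved equivalences.
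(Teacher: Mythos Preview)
Your proposal is correct and follows essentially the same route as the paper: the paper's proof is the one-line observation that the result follows by combining Theorem~\ref{th:equiv_dist} and Corollary~\ref{cor:equiv_dist} with Theorem~\ref{th:adjga}, and you have expanded this composition with the appropriate bookkeeping (Lemmas~\ref{lem:morphisms31}--\ref{lem:wedge31} for morphism matching, Propositions~\ref{prop:ample} and~\ref{prop:ample1} for the ample/cancellative clause, and Theorem~\ref{th:pr3} for the Boolean upgrade). The only additional check you correctly flag---that ampleness of $S$ transfers to $\mathsf{Idl}(S)$ and back---is indeed routine and is what the paper subsumes under ``can be readily adapted to the \ldots\ cancellative \ldots\ settings.''
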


An adaptation of the theorem above to the inverse setting is the following result which extends the duality theorem by Lawson and Lenz \cite{LL1, LL2} to wider classes of morphisms and to the distributive $\wedge$-setting.

\begin{theorem}[Non-commutative Stone Duality II] \mbox{}
\begin{enumerate} 
\item For each $k=1,2,3,4$ the category of distributive inverse semigroups (resp. $\wedge$-semigroups, monoids, $\wedge$-monoids) is dually equivalent to the category of spectral (resp. strongly spectral, compact spectral, compact strongly spectral) \'etale topological groupoids and their morphisms of type $k$. 
\item For each $k=1,2,3,4$ the category of Boolean inverse semigroups (resp. $\wedge$-semigroups, monoids, $\wedge$-monoids) is dually equivalent to the category of Boolean (resp. strongly Boolean, compact Boolean, compact strongly Boolean) \'etale topological groupoids and their morphisms of type $k$. 
\end{enumerate} 
\end{theorem}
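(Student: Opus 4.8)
The plan is to obtain this result as the inverse/groupoid specialization of Non-commutative Stone Duality~I (Theorem~\ref{th:duality:1}), exactly as the latter was obtained by composing the equivalence of Theorem~\ref{th:equiv_dist} with the topological duality of Theorem~\ref{th:adjga}. The governing dictionary is: a distributive inverse semigroup is a distributive restriction semigroup carrying an involution that turns it into an inverse semigroup; under the functor $\mathsf{Idl}$ it should correspond to a coherent \emph{pseudogroup}; and under the involutive/groupoid form of the topological duality a coherent pseudogroup should correspond to a spectral \'etale topological \emph{groupoid}. Since pseudogroups are precisely the inverse complete restriction monoids and, by Proposition~\ref{prop:cat}, \'etale localic groupoids are precisely those \'etale localic categories whose associated Ehresmann quantal frame is an inverse quantal frame, each intermediate correspondence we need is a full-subcategory inclusion stable under the relevant constructions; the whole of Non-commutative Stone Duality~I therefore restricts once we check that its building blocks are compatible with the involution.

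First I would record the inverse version of Theorem~\ref{th:equiv_dist}. For the functor $\mathsf{K}$ one must check that the set $\mathsf{K}(S)$ of finite elements of a coherent pseudogroup $S$ is closed under the involution: this is immediate from Lemma~\ref{lem:29m} together with $\lambda(a^*)=\rho(a)$, since $a$ is finite iff $\rho(a)$ is finite iff $\lambda(a^*)$ is finite iff $a^*$ is finite, so the restricted involution makes $\mathsf{K}(S)$ a distributive inverse semigroup. For the functor $\mathsf{Idl}$ I would extend the involution of a distributive inverse semigroup $S$ to $\mathsf{Idl}(S)$ by $A\mapsto A^*=\{a^*\colon a\in A\}$; that $A^*\in\mathsf{Idl}(S)$ follows from Lemma~\ref{lem:mon} (monotonicity of $*$) and Lemma~\ref{lem:inv} ($*$ preserves compatible joins), and one verifies via Proposition~\ref{prop:inv_ps} that the resulting involutive coherent complete restriction monoid is in fact a pseudogroup. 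The mutually inverse isomorphisms $\gamma$ and $\theta$ from the proof of Theorem~\ref{th:equiv_dist} are visibly involutive, and by the remark preceding Non-commutative Stone Duality~I the four morphism types are carried back and forth; hence for each $k$ the category of distributive inverse semigroups with morphisms of type $k$ is equivalent to the category of coherent pseudogroups with involutive morphisms of type $k$, and likewise in the strongly coherent ($\wedge$-semigroup), compact (monoid) and compact strongly coherent ($\wedge$-monoid) cases via Corollary~\ref{cor:equiv_dist}.

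Next I would invoke the groupoid form of the topological duality. By the involutive/groupoid adjunctions of Corollary~\ref{cor:adj:inv} together with Theorems~\ref{th:resende1} and~\ref{th:resende2}, a coherent pseudogroup $S$ yields the inverse quantal frame ${\mathcal L}^{\vee}(S)$ (the discussion after Proposition~\ref{prop:inv_ps}), hence the \'etale localic \emph{groupoid} ${\mathcal C}({\mathcal L}^{\vee}(S))$ by Proposition~\ref{prop:cat}, and thence, applying ${\mathsf{Pt}}$, a spectral \'etale topological groupoid; soberness/spatiality and the coherence of morphisms are inherited exactly as in Theorem~\ref{th:adjga}, whose proof goes through verbatim in the involutive setting because the involution structure map is preserved by ${\mathsf{pt}}$ and $\Omega$. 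Composing this groupoid duality with the inverse version of Theorem~\ref{th:equiv_dist} gives part~(1). Part~(2) is the further restriction to the Boolean case: a Boolean inverse semigroup is one whose lattice of projections is a (possibly non-unital) Boolean algebra, and by Classical Stone Duality~II (Theorem~\ref{th:pr3}) this is precisely the condition that the object space $C_0$ of the corresponding groupoid be a Boolean space, so the same composition of functors restricts to the Boolean classes, with the strongly/compact variants handled as before.

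The main obstacle is the verification in the second paragraph that $\mathsf{Idl}(S)$ is genuinely a \emph{pseudogroup} and not merely an involutive complete restriction monoid. The inverse axioms $AA^*=\rho(A)$ and $A^*A=\lambda(A)$ of Proposition~\ref{prop:inv_ps} require combining the pointwise involution on ideals with the distributivity of multiplication over compatible joins (Lemma~\ref{lem:molly}) and the identities $aa^*=\rho(a)$ valid in $S$, and then checking that passing to the $\vee$-closure inside $\mathsf{Idl}(S)$ does not disturb these equalities; once this is in place, everything else reduces to a bookkeeping of full-subcategory restrictions already justified by the unrestricted theorems.
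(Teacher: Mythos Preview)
Your proposal is correct and follows essentially the same route as the paper: the paper states just before Theorem~\ref{th:duality:1} that ``Theorem~\ref{th:equiv_dist} and Corollary~\ref{cor:equiv_dist} can be readily adapted to the involutive, cancellative and inverse settings'' and then presents Non-commutative Stone Duality~II as ``an adaptation of the theorem above to the inverse setting,'' leaving the details implicit. You have simply supplied those details, namely that $\mathsf{K}$ and $\mathsf{Idl}$ respect the involution and that $\mathsf{Idl}(S)$ is a pseudogroup, which is exactly the content the paper suppresses.
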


\end{document}